\renewcommand{\epsilon}{\varepsilon}
\newcommand{\xto}{\xrightarrow}
\newcommand{\br}{\overline}
\newcommand{\CC}{\mathbb C}
\newcommand{\N}{\mathbb N}
\newcommand{\Z}{\mathbb Z}
\newcommand{\mbb}{\mathbb}
\newcommand{\ip}[1][\cdot,\cdot]{\left\langle #1 \right\rangle}
\newcommand{\R}{\mbb R}
\renewcommand{\d}{\mathrm{d}}
\newcommand{\DVS}{\mathrm{DVS}}
\newcommand{\CVS}{\mathrm{CVS}}
\newcommand{\dbar}{\br{\partial}}
\DeclareMathOperator{\Aut}{Aut} 
\DeclareMathOperator{\Sym}{Sym} \DeclareMathOperator{\Hom}{Hom}
\def\cC{\mathcal C}\def\cD{\mathcal D}
\def\cF{\mathcal F}\def\cG{\mathcal G}
\def\cK{\mathcal K}
\def\cO{\mathcal O}\def\cP{\mathcal P}
\def\cR{\mathcal R}\def\cS{\mathcal S}\def\cT{\mathcal T}
\def\cU{\mathcal U}
\def\AA{\mathbb A}\def\BB{\mathbb B}\def\CC{\mathbb C}
\def\HH{\mathbb H}
\def\II{\mathbb I}\def\KK{\mathbb K}\def\LL{\mathbb L}
\def\NN{\mathbb N}\def\PP{\mathbb P}
\def\RR{\mathbb R}
\def\ZZ{\mathbb Z}
\def\sA{\mathscr A}\def\sB{\mathscr B}\def\sC{\mathscr C}
\def\sE{\mathscr E}\def\sF{\mathscr F}\def\sG{\mathscr G}
\def\sJ{\mathscr J}\def\sK{\mathscr K}\def\sL{\mathscr L}
\def\sO{\mathscr O}
\def\sV{\mathscr V}\def\sW{\mathscr W}
\def\fA{\mathfrak A}
\def\fJ(E){\mathfrak E}
\def\fJ{\mathfrak J}
\def\fS{\mathfrak S}
\def\fU{\mathfrak U}\def\fV{\mathfrak V}
\def\fg{\mathfrak g}\def\fh{\mathfrak h}
\declaretheoremstyle[
spaceabove=7pt, spacebelow=7pt,
headfont=\normalfont\bfseries,
notefont=\mdseries, notebraces={(}{)},
bodyfont=\normalfont,
postheadspace=5pt,
headpunct = .
]{thm}
\declaretheoremstyle[
spaceabove=7pt, spacebelow=7pt,
headfont=\normalfont\bfseries,
notefont=\mdseries, notebraces={(}{)},
bodyfont=\normalfont,
postheadspace=10pt,
headpunct = .
]{def}
\declaretheoremstyle[
spaceabove=4pt, spacebelow=7pt,
headfont=\itshape,
postheadspace=5pt,
headpunct = :,
postheadspace = 3pt, qed = $\lozenge$
]{rem}
\declaretheorem[numbered = no, style = thm, name = Theorem]{utheorem}
\declaretheorem[numbered = yes, parent = section, style = thm]{theorem}
\declaretheorem[numbered = no, style = thm]{conjecture}
\declaretheorem[numbered = no, style = thm, name = Corollary]{ucorollary}
\declaretheorem[numbered = no, style = thm, name = Corollary A]{corA}
\declaretheorem[numbered = no, style = thm, name = Program]{Program}
\declaretheorem[sibling = theorem, style = thm, name = Theorem/Definition]{thm-def}
\declaretheorem[sibling = theorem, style = thm]{proposition}
\declaretheorem[sibling = theorem, style = thm]{lemma}
\declaretheorem[sibling = theorem, style = thm]{notation}
\declaretheorem[sibling = theorem, style = thm, name = Construction]{constr}
\declaretheorem[sibling = theorem, style = thm, name = Definition-Lemma]{deflem}
\numberwithin{equation}{section}
\declaretheorem[sibling = theorem, style = def]{definition}
\declaretheorem[sibling = theorem, style = rem]{remark}
\declaretheorem[sibling = theorem, style = rem]{example}
\newcommand{\cinfty}{C^{\infty}}
\newcommand\Obcl[1][~]{\ifthenelse{ \equal{#1}{~}} {
  \operatorname{Obs}^{cl}
}{
  \operatorname{Obs}^{cl}(#1)
}}
\newcommand\Obq[1][~]{\ifthenelse{ \equal{#1}{~}} {
  \operatorname{Obs}^{q}
}{
  \operatorname{Obs}^{q}(#1)
}}
\DeclareMathOperator{\tr}{tr}
\DeclareMathOperator{\id}{id}
\DeclareMathOperator{\im}{im}
\DeclareMathOperator{\supp}{supp}
\DeclareMathOperator{\sgn}{sgn}
\newcommand{\sEb}{\sE_\partial}
\newcommand{\sAb}{\sA_\partial}
\newcommand{\sBb}{\sB_\partial}
\newcommand{\Eb}{E_\partial}
\newcommand{\Ab}{A_\partial}
\newcommand{\Bb}{B_\partial}
\newcommand{\Qbgf}{Q_{\partial}^{GF}}
\newcommand{\diff}{\ell_1}
\newcommand{\Qb}{\ell_{1,\partial}}
\newcommand{\pertdiff}{\ell_{cross}}
\newcommand{\condfields}{\sE_\sL}
\newcommand{\condfieldsterm}{$\sL$-conditioned }
\newcommand{\condfieldscs}{\sE_{\sL,c}}
\newcommand{\Oloc}{\sO_{loc}(\condfields)}
\newcommand{\Olocred}{\sO_{loc,red}(\condfields)}
\newcommand{\bdyM}{\partial M}
\newcommand{\fullOloc}{\mathrm{DR}_{M,\bdyM}(E;L)}
\newcommand{\nocrossdiff}{\mathring \sE}
\newcommand{\condnocrossdiff}{\nocrossdiff_\sL}
\newcommand{\diffonnocrossdiff}{Q}
\newcommand{\QL}{Q_{\sL}}
\newcommand{\QLprime}{Q_{\sL'}}
\newcommand{\diffonnocrossdiffbdy}{\diffonnocrossdiff_\partial}
\newcommand{\ultrafields}{ \sE_{\widehat \sL}}
\newcommand{\ultrafieldsterm}{ultra-conditioned }
\newcommand{\Mdbl}{M_{DBL}}
\newcommand{\Udbl}{U_{DBL}}
\newcommand{\Edbl}{E_{DBL}}
\newcommand{\Adbl}{A_{DBL}}
\newcommand{\Bdbl}{B_{DBL}}
\newcommand{\involution}{\sigma}
\newcommand{\sEdbl}{\sE_{DBL}}
\newcommand{\sEdblcs}{\sE_{DBL,c}}
\newcommand{\sAdbl}{\sA_{DBL}}
\newcommand{\sBdbl}{\sB_{DBL}}
\newcommand{\tp}{\sV_{1,\cdots,k}}
\newcommand{\tpcs}{(\sV_{1,\cdots,k})_{\cK_1\times\cdots\times \cK_k}}
\newcommand{\tpbc}{(\sV_{1,\cdots,k})_{L_1,\cdots, L_k}}
\newcommand{\tpbccs}{(\sV_{1,\cdots,k})_{L_1,\cdots, L_k,\cK_1\times\cdots \times \cK_k}}
\newcommand{\ps}{\sO_P(\condfieldscs)}
\newcommand{\pertfunctionals}{\sO^+_{P}(\condfieldscs)}
\newcommand{\pertfunctionalspsmcs}{\sO^+_{P,sm}(\condfieldscs)}
\newcommand{\conddelsmoothdistr}{\widetilde{\sE}^!_\sL}
\newcommand{\prop}{P_\Phi^\Psi}
\newcommand{\rgflow}[3]{W\left(P({#1},{#2}),#3\right)}
\newcommand{\clrgflow}[3]{W_0\left(P({#1},{#2}),#3\right)}
\newcommand{\rgdefault}{\rgflow{\Phi}{\Psi}{I}}
\newcommand{\rgobs}[2]{W_{#1}^{#2}}
\newcommand{\Dens}{\mathrm{Dens}}
\newcommand{\tubnhd}{T}
\renewcommand{\bf}[1]{\mathbf{#1}}
\DeclareMathOperator{\Sing}{Sing}
\DeclareMathOperator{\Reg}{Reg}
\newcommand{\densM}{Dens_M}
\newcommand{\densbdyM}{Dens_{\bdyM}}
\DeclareMathOperator{\cone}{cone}
\newcommand{\delsmoothdistr}{\Omega^{n,n-1}_{M,tw}}
\newcommand{\delsmoothdistrE}{\widetilde {\sE}^!}
\def\d{{\rm d}}
\def\dt{\d t}
\def\del{\partial}
\def\delbar{{\overline{\partial}}}
\def\GF{{\rm GF}}
\def\RRge{\RR_{\geq 0}}
\def\RRgt{\RR_{> 0}}
\def\Cur{{\rm Cur}}
\def\hotimes{{\widehat{\otimes}}}
\def\im{{\rm im}}
\def\bu{{\bullet}}
\def\Obs{{\rm Obs}}
\def\q{{\rm q}}
\def\cl{{\rm cl}}
\newcommand{\Mbdy}{M_\del}
\newcommand{\dirforms}{\Omega^\bullet_{\RRge,c,D}}
\newcommand{\tpbccslf}{(\sV_{1,\cdots,k})_{W_1,\cdots, W_k,c}}
\newcommand{\dmdr}{\textrm{DR}^\bullet_M(D_M)}
\newcommand{\dmdrbdy}{\textrm{DR}^\bullet_{\bdyM}(D_M)}
\newcommand{\dmdrrel}{\textrm{DR}^\bullet_{M,\bdyM}(D_M)}
\newcommand{\dpush}{\iota_D}
\newcommand{\dpull}{\iota^D}
\newcommand{\drrel}{\textrm{DR}(M,\bdyM)}
\newcommand{\tdownup}{t_{a}^{\ \ a}}
\newcommand{\tupdown}{t_{\ \ a}^a}
\newcommand{\Mob}{\mathrm{M\ddot o b}}
\newcommand{\innerhom}[2]{\underline{CVS}\left( #1, #2\right)}
\newcommand{\innerhomsym}[3]{\underline{CVS}\left( #1, #2\right)_{S_#3}}
\tikzset{
    vector/.style={decorate, decoration={snake}, draw},
	provector/.style={decorate, decoration={snake,amplitude=2.5pt}, draw},
	antivector/.style={decorate, decoration={snake,amplitude=-2.5pt}, draw},
    fermion/.style={draw=black, postaction={decorate},
        decoration={markings,mark=at position .55 with {\arrow{>}}}},
    Ahalfedge/.style={draw=black, postaction={decorate},
        decoration={markings,mark=at position .25 with {\arrow{>}}, mark = at position 1 with {\arrow[draw=black,xshift=1.5pt]{Circle[length=3pt]}}}},
    Bhalfedge/.style={draw=black, postaction={decorate},
        decoration={markings,mark=at position .80 with {\arrow[draw=black]{>}}, mark = at position 0 with {\arrow[draw=black,xshift=1.5pt]{Circle[length=3pt]}}}},
 ABedge/.style={draw=black, postaction={decorate},
        decoration={markings,mark=at position .50 with {\arrow[draw=black]{chevron}}}},
ABfulledge/.style={draw=black, postaction={decorate},
        decoration={markings,mark = at position .50 with {\arrow[draw=black]{chevron}}, mark = at position .3 with {\arrow[draw=black]{>}}, mark = at position .75 with {\arrow[draw=black]{>}}, mark = at position .0 with {\arrow[draw=black,xshift=1.5pt]{Circle[length=3pt]}}, mark = at position 1 with {\arrow[draw=black,xshift=1.5pt]{Circle[length=3pt]}}}},
BBfulledge/.style={draw=black, postaction={decorate},
        decoration={markings,mark = at position .50 with {\arrow[draw=black]{newdiamond}}, mark = at position .3 with {\arrow[draw=black]{>}}, mark = at position .75 with {\arrow[draw=black]{<}}, mark = at position .0 with {\arrow[draw=black,xshift=1.5pt]{Circle[length=3pt]}}, mark = at position 1 with {\arrow[draw=black,xshift=1.5pt]{Circle[length=3pt]}}}},
 BBedge/.style={draw=black, postaction={decorate},
        decoration={markings,mark=at position .50 with {\arrow[draw=black]{newdiamond}}}},
    fermionbar/.style={draw=black, postaction={decorate},
        decoration={markings,mark=at position .55 with {\arrow{<}}}},
    fermionnoarrow/.style={draw=black},
    gluon/.style={decorate, draw=black,
        decoration={coil,amplitude=4pt, segment length=5pt}},
    scalar/.style={dashed,draw=black, postaction={decorate},
        decoration={markings,mark=at position .55 with {\arrow[draw=black]{>}}}},
    scalarbar/.style={dashed,draw=black, postaction={decorate},
        decoration={markings,mark=at position .55 with {\arrow[draw=black]{<}}}},
    scalarnoarrow/.style={dashed,draw=black},
    electron/.style={draw=black, postaction={decorate}/,
        decoration={markings,mark=at position .55 with {\arrow[draw=black]{>}}}},
	bigvector/.style={decorate, decoration={snake,amplitude=4pt}, draw},
	    ray/.style={draw=black, postaction={decorate},
        decoration={markings,mark=at position 1 with {\arrow[draw=black]{>}}, mark = at position 0 with {}}}
}
\begin{document}


\title  {Factorization Algebras for Bulk-Boundary Systems}
\author {Eugene S Rabinovich}
\degreesemester{Summer}
\degreeyear{2021}
\degree{Doctor of Philosophy}
\chair{Professor Peter Teichner}
\othermembers{Professor Nicolai Reshetikhin \\
  Professor Ori Ganor}
\numberofmembers{3}
\field{Mathematics}

\maketitle
\copyrightpage

\begin{abstract}
    
Costello and Gwilliam have given both 1) a general definition of perturbative quantum gauge theory on a manifold $M$ and 2) a construction of a factorization algebra $\Obq$ of quantum observables assigned to every quantum gauge theory.
In this dissertation, we extend these constructions to a certain general class of field theories on manifolds with boundary. 

\end{abstract}







\begin{frontmatter}

\begin{dedication}
\null\vfil
\begin{center}
This dissertation is dedicated to the memory of my grandmother, Esya Solomonovna Gurevich, who remains my role model in all aspects of life.
\end{center}
\vfil\null
\end{dedication}


\tableofcontents
\clearpage
\listoffigures
\clearpage

\begin{acknowledgements}
Long before this dissertation was even a whisper in the air, my family was encouraging my studies in physics and mathematics.
My particular gratitude goes to my uncle Oscar, whose love of physics and pedagogy is contagious. 
Thanks to Oscar, physics and mathematics are not simply subjects of study for me---they are lenses through which the world acquires profound beauty and mystery.
I thank my parents---Edward and Marina---for teaching me to value learning, reading, and thinking; and my sister Emily for being my first pupil, and a patient one at that!

I have had the great fortune of learning from many very skilled teachers in my life, and two from before my time at Berkeley are of particular relevance to my doctoral studies.
Sean Yee introduced me to concepts of higher mathematics---proof and abstract definitions in algebra and topology---long before I had to sit an exam on them.
His enthusiasm for pedagogy and lived commitment to sharing mathematics with others serve as models for me to this day.
At Duke, Ronen Plesser taught me the vast majority of what I know about quantum field theory.
I continue to be inspired by his generosity with his time and his commitment to sharing physics with a broad audience.

This dissertation is most directly the product of my six years at UC Berkeley, and so the list of people to thank from this period of my studies is much longer.
First, my thanks go to my advisor, Peter Teichner, who: first taught me what a factorization algebra is; guided me through difficult moments in my dissertation work; gave me many opportunities to speak about this work in his student seminar; and connected me to the wide network of scholars of which I am now fortunate to be a part.
My unofficial advisor, Owen Gwilliam, has played a similarly crucial role in the development of this dissertation.
I would like to thank him for: his constant belief in my skills as a mathematician; the opportunities to travel (to Canada and China, in particular) and to share my work which he secured for me; the innumerably many things he has taught me which are directly relevant to this dissertation; and his enthusiasm as a collaborator.
I owe most of what I know about factorization algebras and quantum field theory \`a la Costello to Owen and to Brian Williams, both of whom guided me as I learned about these subjects in detail.

I thank Owen and Brian Williams also for collaboration on the work (ar$\chi$iv: 2001.07888) that is essentially Chapter \ref{chap: freequantum}, and for granting their permission to reproduce that paper in this dissertation.

An earlier form of ideas from Chapters \ref{chap: classical} and \ref{chap: interactingquantum} was developed over the course of discussions with Benjamin Albert. 
I would like to sincerely thank Benjamin for these discussions.

I would also like to thank: Dylan Butson, Nicola Capacci, Kevin Costello, Fil Dul, Si Li, Matthias Ludewig, Aaron Mazel-Gee, Nima Moshayedi, Laura Murray-Wells, Denis Nardin, Pavel Safronov, Claudia Scheimbauer, and Philsang Yoo for discussions which have made this dissertation better.
In particular, I have learned a lot from the work of and discussions with Kevin, Si, and Pavel.
I warmly thank Minghao Wang for significant help in proving Lemma \ref{lem: propdefined}.

I would like to thank Paul Vojta for maintaining the ucbthesis \LaTeX class, which made it easy to format this thesis to conform with the university's requirements.
I have been supported for the majority of this project by the NSF Graduate Research Fellowship, under grant number DGE 1752814.
I would also like to thank the Max Planck Institut f\"ur Mathematik and the Perimeter Institute for Theoretical Physics for their hospitality during my stays there.

My sincere gratitude extends to my committee members, Peter Teichner, Nicolai Reshetikhin, and Ori Ganor, for agreeing to review this work.

I would like to thank my former officemate Mike Lindsey for jumping all the major hurdles of doctoral student life before me, and then supporting me with his acquired wisdom as I followed in his tracks.

Finally, I would like to thank my partner, Katie Bondy, for her steadfast support, even through difficult times.
That support has made an enormous difference in my life.

\end{acknowledgements}

\end{frontmatter}

\pagestyle{headings}  


\chapter{Introduction}
\label{chap: intro}
Quantum field theory (QFT) has been an enormously successful theoretical physical framework for explaining the behavior of the elementary particles of the universe. 
Just one of its many successes has been the correct prediction of the existence of the Higgs boson, whose discovery at the Large Hadron Collider was announced on July 4, 2012 \autocite{higgsdiscovery}.
Techniques of quantum field theory have also been used in other disciplines of theoretical physics, including string theory, quantum gravity, condensed matter theory, hydrodynamics, and statistical physics. 
Despite its clear utility in theoretical physics, QFT remains befuddling to mathematicians. 
A number of mathematical axiomatizations of QFT have been proposed, all with complementary virtues and drawbacks.
We will mention only the two approaches most relevant to this dissertation.
We note, however, that in so doing, we pass over a vast quantity of rich mathematics.

The first such framework, inspired by physicists' study of conformal field theory and topological field theory, is \emph{functorial field theory.}
Functorial field theory has the advantage of being described by a number of axioms natural from both the physical and mathematical perspective. 
The main drawback of the functorial field theory formalism, however, is that it remains somewhat distant from the traditional way physicists describe field theories---in terms of fields, action functionals, and symmetries. 
This has made it difficult to produce examples of ``non-trivial'' functorial field theories, although in this respect see \autocite{kandel}.

An alternative---and complementary---approach has been developed by Kevin Costello and Owen Gwilliam \autocite{cost, CG1, CG2}.
In this framework, one starts with a perturbative Lagrangian classical Batalin-Vilkovisky (BV) field theory.
Such a field theory can be described in mathematical terms familiar from physics: in terms of a space of fields and a classical action satisfying the so-called classical master equation.
Costello \autocite{cost} gives a precise definition of the notion of a perturbative quantization of such a theory.
More generally, a gauge theory may not possess a consistent quantization, and Costello also describes a homological-algebraic method to compute the obstruction theory of such quantizations.

One of the advantages of Costello's definition is that every quantum gauge theory in the sense of \autocite{cost} on a spacetime manifold $M$ gives rise to an object known as a \emph{factorization algebra} $\Obq$ on $M$ of quantum observables \autocite{CG1, CG2}.
A factorization algebra is a cosheaf-like local-to-global object on $M$. Special classes of factorization algebras give rise to more widely known objects. For example, the $(\infty, 1)$-category of locally constant factorization algebras on $\R^n$ has been shown by Lurie (see Theorem 5.5.4.10 of \autocite{higheralgebra}) to be equivalent to that of $E_n$ algebras, the latter objects having a deep importance in topology.
Locally constant factorization algebras are the sort of object which one expects to encode the observables of a topological quantum field theory.
In a similar vein, factorization algebras on $\CC$ of a holomorphic nature are meant to encode the observables of a chiral conformal field theory.
In \autocite{CG1}, Costello and Gwilliam show that such factorization algebras on $\CC$ give rise to vertex algebras, the latter objects having a deep importance in representation theory.

In this dissertation, we extend the work of Costello and Gwilliam to the case that $M$ has a boundary. Strictly speaking, we will only extend the constructions of Costello and Gwilliam for a certain broad class of theories, which we call ``topological normal to the boundary,'' following Butson and Yoo \autocite{butsonyoo}. 
We will use the abbreviation TNBFT for such field theories. 
We also study a particular class of boundary conditions for TNBFTs, and we will use the term ``bulk-boundary system'' to refer to a TNBFT together with such a boundary condition. 
The term ``bulk-boundary system'' deserves to apply to a much more general class of systems.
However, as already witnessed by the abbreviation TNBFT, appending qualifications to the term would create an unwieldy terminology; we will therefore use the overly broad terminology, hoping the reader will keep in mind that the particular class of bulk-boundary systems we consider is somewhat restricted.
For precise definitions, we refer the reader to Chapter \ref{chap: classical}.

We note that this is by no means the first mathematical work dealing with the perturbative quantization of field theories on manifolds with boundary.
In particular, the series of works produced by Cattaneo, Mn\"ev, Reshetikhin, and others (see \autocite{CMRquantumgaugetheories}, as well as the review \autocite{CMRreview}, and references therein) also aims to address this problem.
These works have served as valuable guidance in the preparation of these techniques.
The main distinction which can be made between the present work and that of Cattaneo Mn\"ev, and Reshetikhin concerns the final output of the formalism.
Here, the output of our constructions is a factorization algebra; in the work of Cattaneo, Mn\"ev, and Reshetikhin, the output is a functorial field theory.
Work of Dwyer-Stolz-Teichner \autocite{DST} has shown that factorization algebras (functorially) give rise to functorial field theories, so our approach and that of Cattaneo, Mn\"ev, and Reshetikhin are likely to be intimately related.
It is an important goal of our future work to relate the two approaches.

\section{Historical Aside}
Before we describe the approach of Costello and Gwilliam in more detail, we would like to make an important note.
Many of the ideas presented in \autocite{cost} are not due originally to Costello himself. 
The book \autocite{cost} is more a novel and highly original synthesis of physical ideas in a mathematical language than a new physical concept in and of itself.
However, because the work of Costello and Gwilliam is the work that we are attempting to generalize here, the references \autocite{cost, CG1, CG2} will remain the main source of background for this dissertation.
In this section, we attempt to document some of the main lines of influence that led to the Costello-Gwilliam techniques.

Renormalization in physics is nearing its centenary. It was pioneered by Hans Kramers, Hans Bethe, Julian Schwinger, Richard Feynman, Shin'ichiro Tomonaga, and Freeman Dyson in 1947-49 in their study of quantum electrodynamics. 
Even then, it was understood that the ``bare'' or ``scale zero'' interaction needed to be cured so that it would provide physically meaningful answers.
The renormalization group method used by Costello is more directly derived from techniques of Bogoliubov, Kadanoff, and Wilson; these were introduced in the `70s to study second-order phase transitions in statistical physics.

The techniques with which we treat gauge theories here are newer (dating to the `70s or so), but also have a rich history.
The first of these techniques is due to Ludwig Fadeev and Victor Popov.
These two physicists introduced the notion of ``ghost fields,'' which today are understood as serving the function served by the degree $+1$ elements $\fg^\vee$ of $C^\bullet(\fg, V)$ for a Lie algebra $\fg$ and $\fg$-module $V$.
Namely, the ghost fields are introduced to pass from the strict invariants $V^\fg$ to the \emph{derived} invariants $C^\bullet(\fg, V)$.
(In the physics context, $V$ is the space of classical observables and $\fg$ is the Lie algebra of infinitesimal gauge symmetries.)
Later, Becchi-Rouet-Stora and independently Tyutin developed what is now known as the BRST formalism.
In mathematical terms, one might say that they rediscovered the Chevalley-Eilenberg differential on $C^\bullet(\fg, V)$.
(This is certainly an oversimplification, at the very least since the BRST formalism involves also several important insights about gauge fixing.)
This discovery helped explain to physicists the reason for the appearance of the ``ghost fields'' $\fg$ in non-zero cohomological degree.
The BRST formalism was generalized even further by Igor Batalin and Grigori Vilkovisky into what is now known as the BV formalism.
This is the technique for quantum field theory which most directly forms the basis for the work of Costello and Gwilliam, and by extension for the present work.
Though it was not understood in precisely this way at the time, today we understand the classical BV formalism as the statement that a classical gauge theory is concretely axiomatized in the language of $(-1)$-shifted symplectic stacks. 
This latter language was only recently articulated by Pantev, To\"en, Vaqui\'e, and Vezzosi \autocite{ptvv}, with antecedents in the work of Alexandrov, Kontsevich, Schwarz, and Zaboronsky \autocite{AKSZ}; however, it is a concise mathematical way to describe the underlying mathematics of the (classical aspects of the) construction of Batalin and Vilkovisky.
As for the quantum aspects of the BV formalism, these are best understood as providing a homological theory of integration, and more precisely of integration over Lagrangians in the $(-1)$-shifted stacks described by the classical BV formalism.
This perspective has been elaborated by Edward Witten and Albert Schwarz. 
For an exposition of this perspective on the quantum BV formalism in a simple example, we recommend \autocite{GJF}.

The last major element of the approach of Costello and Gwilliam is the notion of a factorization algebra. 
This notion has much newer antecedents, notably the theory of $E_n$ algebras (see \autocite{cohenthesis} for what we believe is the first discussion of an operad-like structure on the spaces of embeddings of little disks) and the theory of chiral algebras as developed by Beilinson and Drinfeld in \autocite{BD}.
The working definition of factorization algebras in the language of Costello and Gwilliam very much resembles these other notions.

\section{Statement of Main Results}
\label{sec: mainresults}
In this section, we state the main results of this dissertation.
The statement of the results requires the introduction of terminology which has not been formally introduced yet; we direct the reader to the remainder of the introduction for an elaboration of some of this terminology.

The first main result of this dissertation is the definition classical and quantum bulk-boundary systems (cf. Definitions \ref{def: clblkbdysystem} and \ref{def: QTNBFT}).
The motivation for these definitions is discussed at length in Chapters \ref{chap: classical} and \ref{chap: interactingquantum}, respectively.

Given these definitions, the following Theorem summarizes the most important results of the dissertation (cf. Theorems \ref{thm: classobsformFA}, \ref{thm: P0classobsFA}, and \ref{thm: quantumFA}).

\begin{utheorem}
For each classical bulk-boundary system $(\sE,\sL)$ on a spacetime manifold $M$, there exists a $P_0$ factorization algebra $\Obcl_{\sE,\sL}$ of classical observables for $(\sE,\sL)$.
Similarly, for a quantum bulk-boundary system, there exists a factorization algebra
$\Obq_{\sE,\sL}$ of quantum observables for $(\sE,\sL)$.
\end{utheorem}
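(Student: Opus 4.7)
The proof splits into three parallel strands, one for each of the structural claims, each adapting the strategy of \autocite{CG1, CG2} to the presence of the boundary and the chosen boundary condition $\sL$.

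For the classical factorization algebra structure, I would define $\Obcl_{\sE,\sL}(U)$ for each open $U\subseteq M$ as the (commutative dg-)algebra of polynomial functions on $\condfieldscs$ restricted to $U$, equipped with the classical BV differential twisted by the bulk-plus-boundary action. The prefactorization structure on disjoint opens comes from the direct-sum decomposition of compactly supported conditioned fields together with multiplication of polynomials. The only nontrivial axiom is descent along Weiss covers, which reduces, as in \autocite{CG1}, to a density statement for compactly supported $\sL$-conditioned fields. Away from $\bdyM$ this is the usual Costello--Gwilliam argument; for opens meeting $\bdyM$ an additional partition-of-unity argument is needed for $\sL$-conditioned sections, and this goes through because the condition defining $\sL$ is a local condition along the boundary.

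For the $P_0$-structure, the commutative product is already present, so what remains is to construct the degree $+1$ Poisson bracket. Following \autocite{CG2} the bracket on $\Obcl_{\sE,\sL}$ is induced from the BV pairing on $\sE$ and the action functional. The essential check is that this bracket descends from functions on $\sE$ to functions on $\condfields$; this is precisely where the Lagrangian property of $\sL$ enters, since Lagrangianity is the condition that the boundary contribution to the symplectic pairing on conditioned fields vanishes. With that input, the $P_0$-identities follow from standard dg verifications.

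For the quantum factorization algebra, the plan is to produce a BD-quantization of the $P_0$-algebra via Costello's effective renormalization \autocite{cost}, adapted to the boundary. I would first establish the existence of a propagator $\prop$ coming from a gauge-fixing operator that sends $\sL$-conditioned fields to $\sL$-conditioned fields. Using this I would define the family of scale-$\Phi$ effective interactions via the RG flow $\rgflow{\Phi}{\Psi}{I}$, verify the quantum master equation on $\condfields$, and set $\Obq_{\sE,\sL}(U)$ to be the BD-quantization of the classical observables on $U$. The prefactorization product is given by the standard formula of \autocite{CG2} with propagators restricted to the relevant opens; descent along Weiss covers then reduces to the classical statement by a spectral-sequence argument in $\hbar$.

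The main obstacle is the quantum step, and two subtleties dominate. First, the construction and analytic control of the boundary-adapted propagator: the heat kernel one would like to use is more singular as one approaches $\bdyM$ than in the bulk case, and uniform estimates are required to make sense of the RG flow and of the limit $\Phi\to 0$. Second, verifying that the prefactorization product is a cochain map requires that the boundary terms produced by integration by parts in the Feynman-diagram expansion of the RG flow cancel; this cancellation is ultimately another manifestation of the Lagrangian nature of $\sL$, but making it precise requires a careful bulk-boundary diagrammatic analysis in which propagators can land on $\bdyM$ and hit the $\sL$-conditioning. These two points together are, I expect, the technical core of the chapters that follow.
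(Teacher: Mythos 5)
Your classical strand matches the paper's construction, but there are two genuine gaps. The smaller one is in the $P_0$ step: the degree $+1$ bracket cannot be ``induced'' on all of $\Obcl_{\sE,\sL}$ and then checked by standard dg verifications, because the observables are (completed symmetric powers of) distributional duals of $\condfields$ and the putative bracket involves pairing distributions. The paper, following \autocite{CG2}, defines the bracket only on the subalgebra $\widetilde{\Obcl_{\sE,\sL}}$ of functionals with smooth first derivative --- where your Lagrangianity remark does enter, via cyclicity of $\ip$ on $\condfields$ --- and then must prove that the inclusion $\widetilde{\Obcl_{\sE,\sL}}\hookrightarrow \Obcl_{\sE,\sL}$ is a quasi-isomorphism. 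That step requires a boundary-conditioned analogue of the Atiyah--Bott lemma, i.e.\ that $\condfieldscs[1](U)\to \condfields^\vee(U)$ is a quasi-isomorphism for suitable opens $U\cong U'\times[0,\delta)$, proved by deformation-retracting onto boundary data; nothing in your proposal supplies this, and without it the $P_0$ claim is not established on the model you define.

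The larger gap is in the quantum strand. You correctly identify the two hard points (a propagator compatible with the mixed boundary condition, and cancellation of boundary terms in the RG/QME machinery), but you leave them as expectations rather than resolving them, and the direct route you sketch --- construct boundary-adapted heat kernels on $M$ and prove uniform estimates near $\bdyM$ --- is not what makes the construction go through. The paper instead restricts to systems (and gauge fixings) \emph{amenable to doubling}, splits the differential as $\diffonnocrossdiff+\pertdiff$ and treats the cross term as a quadratic interaction, doubles $M$ along $\bdyM$ to a closed manifold $\Mdbl$ with involution $\involution$ (with the sign twist $+1$ on $L$, $-1$ on $L'$), and builds $\involution$-invariant parametrices there by ordinary elliptic theory; the fact that the resulting propagators act on $\sL$-conditioned fields is then a nontrivial lemma proved via the transmission condition for pseudodifferential operators. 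These ingredients (the extra amenability hypothesis, the image-charge construction of $P(\Phi)$ and $K_\Phi$, the splitting of $Q$, and the lemma that classical RG flow from scale zero is well defined on conditioned fields) are exactly what your plan is missing, and without some substitute for them the scale-$\Phi$ effective interactions, the QME, and the quantum factorization product are not actually defined.
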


In Chapter \ref{chap: examples}, we investigate a simple interacting example: BF theory with $B$ boundary condition on the manifold $\RR_{\geq 0}$.
BF theory requires as its input a Lie algebra $\fg$, which we require to be unimodular, for simplicity.
We find (cf. Theorems \ref{thm: 1dbfquantization} and \ref{thm: 1dbfobs}):

\begin{utheorem}
Let $\fg$ be a unimodular Lie algebra.
The classical bulk-boundary system specified by BF theory for $\fg$ with $B$ boundary condition on $\RR_{\geq 0}$ admits a quantization.
Further, there are the following quasi-isomorphisms
\begin{align*}
    C_\bullet(\fg)&\to \Obq_{\sE,\sL}(\RR_{\geq 0})\\
    C^\bullet(\fg, C_\bullet(\fg)) &\to \Obq_{\sE,\sL}(\RR_{>0})
\end{align*}
of cochain complexes describing the boundary and bulk observables, respectively.
\end{utheorem}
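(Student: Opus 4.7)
My plan has three parts, addressed below, each specializing the general obstruction and computation machinery of Chapters \ref{chap: classical} and \ref{chap: interactingquantum} to this explicit example. First, for quantization existence, I would run the Costello-style obstruction-deformation procedure for the BV action $S(A,B)=\int_{\RR_{\geq 0}} B\,(\d A+\tfrac12[A,A])$ with $B$-boundary condition (i.e.\ $A|_{\partial}=0$, with $B$ left free at $\{0\}$). The only interaction vertex is the cubic $BAA$ one, so a graph count localizes the potential one-loop anomaly to the single wheel diagram formed by this vertex closed with two $AB$-propagators and a lone external $B$-leg. Its value is proportional to $\tr_\fg \ad_{(-)}$, which is the modular character of $\fg$ and vanishes by assumption. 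No boundary-localised anomaly arises because the $B$-BC forces every $A$-line reaching $\{0\}$ to vanish. Higher-loop obstructions live in cohomological degrees of the local deformation complex that are zero by a standard $1$-dimensional spectral sequence argument.

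For the quasi-isomorphism $C_\bullet(\fg)\to \Obq_{\sE,\sL}(\RR_{\geq 0})$, the plan is to exhibit a deformation retract of the $\sL$-conditioned fields onto their cohomology. A direct computation gives this cohomology as $\fg^*[-1]$: the $A$-sector is killed by the $B$-BC since the relative de Rham cohomology $H^\bullet(\RR_{\geq 0},\{0\})$ vanishes, while the $B$-sector retracts to its constant mode. Functions on $\fg^*[-1]$ are $\Sym(\fg[1])$ linearly. Transporting the cubic interaction along the retract produces exactly the Chevalley-Eilenberg differential, identifying the retract with $C_\bullet(\fg)$. Homological perturbation lifts this to the quantum level; no further corrections in $\hbar$ appear, since the only candidate contribution is again the modular character, which vanishes.

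For the quasi-isomorphism $C^\bullet(\fg,C_\bullet(\fg))\to \Obq_{\sE,\sL}(\RR_{>0})$, the plan is the same calculation in the bulk, where no boundary condition is active. The cohomology of the BF field complex on the contractible $\RR_{>0}$ is $\fg[1]\oplus \fg^*[-1]$, and functions on it are $\Sym(\fg^*[-1])\otimes \Sym(\fg[1])$, which equals $C^\bullet(\fg)\otimes C_\bullet(\fg)$ as a graded vector space. The transferred cubic interaction then provides precisely the differential making this into $C^\bullet(\fg,C_\bullet(\fg))$, with $\fg$ acting adjointly on $C_\bullet(\fg)$. Because the bulk factorization algebra is locally constant and every pair of nested intervals in $\RR_{>0}$ induces a quasi-isomorphism on observables, the cosheaf colimit defining $\Obq_{\sE,\sL}(\RR_{>0})$ is computed on any single interval.

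The main obstacle I foresee is verifying that the contracting homotopy on $\sE_\sL(\RR_{\geq 0})$ chosen for the boundary computation is compatible with the $B$-BC and with a suitable gauge-fixing operator, so that the resulting propagator is regular up to $\partial$ and no spurious $\partial$-supported counterterms appear at one loop. The general formalism of Chapter \ref{chap: interactingquantum} is designed to address exactly this issue, so the concrete content reduces to checking that BF theory with $B$-BC meets the technical hypotheses of that chapter. The bulk computation on $\RR_{>0}$ should then be essentially a translation of the standard Costello-Gwilliam treatment of $1$D topological BF theory and is expected to be routine.
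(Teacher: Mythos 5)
Your outline gets the final answers right but the mechanism you propose for the two quasi-isomorphisms is backwards on the classical/quantum split, and this is a genuine gap rather than a presentational issue. Classically, homotopy transfer onto the cohomology of the $\sL$-conditioned fields gives $\Sym(\fg[1])$ with \emph{zero} differential on $\RR_{\geq 0}$ and $C^\bullet(\fg,\Sym(\fg[1]))$ on $\RR_{>0}$ (this is exactly Proposition \ref{prop: 1dbfclass}, where the classical factorization algebra is $\cF_{A,M}$ with $A=C^\bullet(\fg,\Sym(\fg[1]))$ and $M=\Sym(\fg[1])$). The Chevalley--Eilenberg \emph{chains} differential is a second-order operator on $\Sym(\fg[1])$, so it cannot arise as the Chevalley--Eilenberg differential of any classically transferred $L_\infty$ structure: it is precisely the one-loop ($\hbar\Delta$) correction. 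Thus your claims that ``transporting the cubic interaction along the retract produces exactly the Chevalley--Eilenberg differential'' and that ``no further corrections in $\hbar$ appear, since the only candidate contribution is again the modular character'' are false as stated—the $\hbar$-corrections are the entire content of the theorem. In the paper's proof of Theorem \ref{thm: 1dbfobs}, the classical diagram on the boundary contributes zero (because the $A$-input vanishes at $t=0$), and the chains differential comes from two one-loop diagrams built from the image-charge heat kernel, each with analytic factor $\tfrac12$ (Lemma \ref{lem: diff1dbfobs}); in the bulk the quantum diagrams produce the third-order term $\tfrac12 f^{ab}_c\del_a\del_b\del^c$, and the induced differential is not the standard one on $C^\bullet(\fg,C_\bullet(\fg))$ but its conjugate $d^q=e^{\del_{tr}}\,d\,e^{-\del_{tr}}$ (Lemma \ref{lem: diffon1dbfobsblk}); the quasi-isomorphism is then obtained by conjugating and comparing filtrations with the classical computation. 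Without tracking these loop contributions and their analytic normalization (which involves the boundary image terms), your identifications of the boundary and bulk observables have no proof.

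On existence of the quantization, your higher-loop argument is also not right: the deformation complex here is equivalent to $C^\bullet(\fg)[1]$, so the obstruction group is essentially $H^2(\fg)$, which does not vanish for general unimodular $\fg$ (e.g.\ the Heisenberg algebra); there is no ``vanishing by cohomological degree.'' The correct statement is that RG flow of the cubic BF vertex is one-loop exact, and the one-loop anomaly must be shown to vanish by hand. Unimodularity enters exactly where you say ($\Delta_\epsilon I=0$, i.e.\ the $\tr\,\ad$ wheel), but the boundary-supported part of the potential anomaly—the wheel with one heat-kernel edge, including the image-charge pieces of the doubled kernel—does not vanish ``because the $A$-lines vanish at $\{0\}$'' as a one-line remark: in the paper this is the bulk of Propositions \ref{prop: 1dbffinite} and \ref{prop: 1dBFQME}, an explicit case analysis of Gaussian integrals whose key step is an integration by parts that is legitimate only because the $A$-inputs vanish at $t=0$. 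Note also that the paper does not argue abstractly via obstruction theory at all: it constructs the quantization directly as $I[L]=\lim_{\epsilon\to 0}W(P(\epsilon,L),I)$ with no counterterms, using the image-charge propagator, and then verifies the QME. Your ``main obstacle'' paragraph correctly identifies where the real work lies, but that work is the proof, not a hypothesis to be cited from Chapter \ref{chap: interactingquantum}.
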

We refer the reader to the corresponding theorem statements in Chapter \ref{chap: examples} for a more precise statement of the above theorem.

When we are given a \emph{free} bulk-boundary system, a great simplification occurs and one may describe a much more elegant model for the factorization algebra of quantum observables of a free bulk-boundary system.
We explore the consequences of this simplification in Chapter \ref{chap: freequantum}.
The main example that we explore in this context is abelian Chern-Simons theory with the chiral Wess-Zumino-Witten boundary condition (cf. Section \ref{sec: cswzw}):

\begin{utheorem}
Consider the abelian Chern-Simons/chiral Wess-Zumino-Witten bulk-boundary system for the abelian Lie algebra $\fA$ on $\Sigma\times \RR_{\geq 0}$, where $\Sigma$ is a Riemann surface.
Let $\pi: \Sigma \times \RR_{\geq 0}$ denote the projection $\Sigma\times\RR_{\geq 0}\to \Sigma$.
Then, there is an equivalence
\[
\mathrm{KM}\to \pi_*\Obq_{\sE,\sL}
\]
of factorization algebras on $\Sigma$, where $\mathrm{KM}$ is the Kac-Moody factorization algebra for the abelian Lie algebra $\fA$.
\end{utheorem}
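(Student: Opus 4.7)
The plan is to reduce to the free-theory model of quantum observables established in Chapter \ref{chap: freequantum}, and then carry out an explicit Hodge-theoretic computation along the $\RRge$ direction. Because $\fA$ is abelian, the Chern-Simons/chiral WZW bulk-boundary system $(\sE,\sL)$ on $\Sigma\times\RRge$ is \emph{free}, so $\Obq_{\sE,\sL}$ is a Heisenberg-type construction built from a symmetric algebra on the compactly-supported $\sL$-conditioned fields, together with a BV-type differential twisted by the $(-1)$-shifted symplectic pairing of Chern-Simons theory. My first step is to write down this model explicitly: the bulk fields are $\Omega^\bullet(\Sigma\times\RRge)\otimes\fA[1]$ with de~Rham differential, and the chiral WZW Lagrangian $\sL$ imposes that the boundary restriction lies in $\Omega^{0,\bullet}(\Sigma)\otimes\fA[1]\subset \Omega^\bullet(\Sigma)\otimes\fA[1]$.

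Next, I would compute the pushforward $\pi_*\Obq_{\sE,\sL}$ prefactorization-algebra-ically: for each open $U\subset\Sigma$, evaluate on $U\times\RRge$ and integrate over the interval direction. The essential analytic step is the construction of a deformation retract from the complex of $\sL$-conditioned fields on $U\times\RRge$ onto the complex of boundary Dolbeault forms $\Omega^{0,\bullet}_c(U)\otimes\fA[1]$. This is a fiberwise elliptic argument along $\RRge$: using the Poincar\'e lemma for the de~Rham complex on $\RRge$ adapted to the boundary condition, together with a K\"unneth-type splitting of $\Omega^\bullet(\Sigma)$ into its $(p,q)$-components, the $\sL$-conditioned fields on $U\times\RRge$ quasi-isomorphically collapse onto $\Omega^{0,\bullet}(U)\otimes\fA[1]$. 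Transferring the classical $P_0$ structure via this homotopy (using homological perturbation) computes the pushforward of the classical factorization algebra; by the free-quantum formalism of Chapter \ref{chap: freequantum}, the quantum factorization algebra transfers compatibly.

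The third step is to identify the transferred BV differential with the Kac-Moody differential. The Chern-Simons action $\tfrac12\int_{\Sigma\times\RRge}\kappa(A,dA)$ restricted to the boundary via the retract yields a bilinear form on $\Omega^{0,\bullet}_c(U)\otimes\fA$ whose holomorphic-derivative contribution is exactly the Tate/Kac-Moody cocycle $\int_\Sigma \kappa(\alpha,\partial\beta)$. Consequently, the quantum differential transferred to $\Sym\bigl(\Omega^{0,\bullet}_c(U)\otimes\fA[1]\bigr)$ is the $\dbar$ plus the level-$\kappa$ BV operator, which is the definition of $\mathrm{KM}(U)$ for the abelian Lie algebra $\fA$ at level $\kappa$. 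Checking that the retract respects the prefactorization structure maps (disjoint inclusions in $\Sigma$ extend to the product with $\RRge$) then promotes the pointwise quasi-isomorphism to an equivalence of prefactorization algebras, which is automatically an equivalence of factorization algebras since both sides satisfy local-to-global descent.

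The main obstacle I anticipate is the third step: correctly tracking the coefficient and the sign of the cocycle produced by transferring the CS symplectic pairing across the deformation retract, and, more subtly, verifying that the transferred differential produces \emph{exactly} the Kac-Moody cocycle rather than a homologous but distinct cocycle. This requires choosing the homotopy along $\RRge$ carefully (likely the standard averaging/Dirichlet-to-Neumann homotopy adapted to $\sL$) and executing the homological perturbation sum to first order in $\hbar$ without sign errors. A secondary technical point is ensuring the necessary topological/functional-analytic convergence of the transfer so that the quasi-isomorphism descends to the completed symmetric algebras used in the definitions of both $\pi_*\Obq_{\sE,\sL}$ and $\mathrm{KM}$.
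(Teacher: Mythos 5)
Your proposal is correct in outline and shares the paper's basic geometric idea (collapse the $\RRge$-direction and identify the resulting cocycle with the Kac--Moody one), but it differs at exactly the step you flag as the main obstacle. The paper does not transfer the quantum structure by homological perturbation. Instead, in Theorem \ref{thm: maingenlcl} it constructs an explicit degree-zero inclusion of the boundary complex into the compactly supported $\sL$-conditioned fields,
\begin{equation}
\II^{\cl}(U)(\alpha)=\alpha\wedge\phi\,\d t-(-1)^{|\alpha|}(\Phi-1)Q_{rel}\alpha,
\end{equation}
together with an explicit retraction $\PP^{\cl}$ and homotopy $\KK^{\cl}$, and then in Theorem \ref{thm: maingenlq} checks that this map intertwines the cocycles \emph{on the nose}: the $(-1)$-shifted Chern--Simons pairing pulled back along $\II^{\cl}$ equals $2\mu(\alpha_1,\alpha_2)\int_{\RRge}\phi(1-\Phi)\,\d t=\mu(\alpha_1,\alpha_2)$, where $\mu(\alpha_1,\alpha_2)=\int\kappa(\alpha_1,\partial\alpha_2)$ is precisely the Kac--Moody (Schwinger) cocycle coming from $Q_{rel}=\partial$. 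Because the cocycles match strictly, the map of twisted enveloping (Heisenberg-type) factorization algebras exists immediately, and the quasi-isomorphism follows from the classical statement by the $\hbar$-filtration; no perturbative transfer of $\hbar\Delta$, and none of the sign/coefficient bookkeeping you anticipate, is needed. The identification of the boundary observables with $\mathrm{KM}$ is then quoted from Chapter 5 of \autocite{CG1}. Your HPT route can be made to work (and even your worry about landing on a merely cohomologous cocycle is not fatal, since twisted envelopes of cohomologous local cocycles are equivalent), but it is genuinely heavier: you must verify that the transferred differential is again of Heisenberg type on the completed symmetric algebra, whereas the paper's correction term $(\Phi-1)Q_{rel}\alpha$ in the inclusion is exactly what makes the pairing restrict correctly and sidesteps the whole issue.

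One small but real slip: the chiral WZW boundary condition is $\sL=\Omega^{1,\bullet}_\Sigma\otimes\fA$, not the condition that boundary restrictions lie in $\Omega^{0,\bullet}_\Sigma\otimes\fA[1]$; the latter space is the chosen complement $\sL^\perp$, which is where the boundary observables live and onto which your retract should (and does, in your later formulas) collapse the conditioned fields. As written, your stated boundary condition is inconsistent with your stated retract target, so make sure the roles of $\sL$ and $\sL^\perp$ are kept straight when you set up the contraction along~$\RRge$.
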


The significance of this result is the following.
The BV formalism traditionally describes the theory of (--1)-shifted symplectic geometry.
However, one may adapt the BV formalism to also describe the theory of shifted Poisson geometry.
Further, a boundary condition for a classical BV theory possesses the structure of a Poisson BV theory \autocite{butsonyoo}.
One may naturally be interested in the construction of factorization algebras of observables for Poisson BV theories.
However, there is no general procedure for the quantization of such theories.
Nevertheless, for the case of the chiral WZW boundary condition discussed above, there is an ansatz for the quantum observables of the boundary condition, namely the factorization algebra $\mathrm{KM}$.
The above theorem shows that the quantization of the bulk-boundary CS/WZW system n $\Sigma\times\RR_{\geq 0}$ induces the expected quantization on the boundary $\Sigma$.
In this dissertation, we have constructed a factorization algebra of quantum observables for any quantum bulk-boundary system.
One may therefore understand the results of this dissertation as shedding insight into the quantization of Poisson BV theories corresponding to boundary conditions.
We elaborate on this perspective in Section \ref{subsec: centers}.

\section{A sketch of the methods of Costello and Gwilliam}
\label{sec: CGoverview}
Let us describe the approach of Costello and Gwilliam in more detail, since it is the basis for the work presented here.
We choose a manifold $M$ to serve as the "background" spacetime for our field theories, and assume for simplicity that $M$ is closed.
A perturbative \emph{classical} field theory is described by a space of fields $\sE$ which is the space/sheaf of (global) sections of a vector bundle $E\to M$ over a closed spacetime manifold, and a local action functional $S: \sE \to \RR$. 
We will not be precise about what we mean by ``local'' here, though we will do so in Section \ref{sec: localfcnls}.
The equations of motion for the field theory are the PDEs imposed on $\sE$ via a variational calculus procedure for $S$.
For example, we may choose $M$ to be Riemannian, and $E$ to be the trivial bundle, so that $\sE=\cinfty(M)$.
We may further choose 
\begin{equation}
S(\varphi) = \frac{1}{2}\int_M \varphi (\Delta_g+m^2) \varphi \mathrm{dVol_g} +\frac{1}{6}\int_M \varphi^3 \mathrm{dVol}_g,
\end{equation}
where $m\in \RR^\times$ is the ``mass'' of the theory and $\Delta_g$ is the Laplacian on $M$, chosen by convention to have non-negative eigenvalues.
For the remainder of this introduction, we will consider only this example, though we note that one can generalize this example dramatically.

In quantum field theory, one wishes to make sense of path integrals of the form 
\begin{equation}
Z(\phi_0) = \hbar \log\left( \int_{\sE}e^{-S(\varphi+\phi_0)/\hbar} [\mathrm{D}\varphi]\right);
\end{equation}
if $M$ is zero-dimensional, then these are simply finite-dimensional integrals. 
In such a situation, one can make formal sense of $Z$ as a power series in $\hbar$ and the parameters appearing in the action functional $S$, even if the actual integral defining $Z$ diverges.
Further, the combinatorics of this formal power series is encoded in Feynman diagrams.
However, if $M$ has positive dimension greater than one, the path integral ``measure'' $[D\varphi]$ has no known definition.
Moreover, if one tries to generalize in the most na\"ive way possible the power series obtained via Feynman diagram calculus in the case that $M$ is zero dimensional, one quickly runs into divergences arising from the subtleties of functional analysis.

One of these functional analytic subtleties is the fact that the inverse to $\Delta_g+m^2$ is distributional in nature.
Define
\begin{equation}
    P:=(\Delta_g+m^2)^{-1}.
\end{equation}
The operator $P$ can be understood as a distributional function on $M\times M$.
Because of the distributional nature of $P$, the putative values assigned to Feynman diagrams would involve multiplication of distributions, a well-known analytical difficulty.

The key to the approach elaborated by Costello is to note that the kinetic term $\Delta_g+m^2$ induces a one-parameter semi-group $K_L= \exp{-(\Delta_g+m^2)L}$ of trace-class (in fact, smoothing) operators which approximate the identity for $L>0$ (and for $L=0$ give the identity).
The propagator can be expressed in terms of $K_L$ by the following equation:
\begin{equation}
P = \int_0^\infty K_L \d L.
\end{equation}
More generally, define
\begin{equation}
P(\epsilon,L) = \int_{\epsilon}^L K_L \d L
\end{equation}
with $0\leq \epsilon< L\leq \infty$.
For $\epsilon>0$, $P(\epsilon, L)$ is a smoothing operator, which will be crucial in the sequel.
Given $\Phi\in \cinfty(M\times M)$ (for example, one could take $\Phi=P(\epsilon,L)$ for $\epsilon>0$) and $I\in \widehat\Sym (\sE^\vee)[[\hbar]]$ which is cubic in $\sE$ modulo $\hbar$, let 
\begin{equation}
W(\Phi, I)
\end{equation}
denote the element of $\widehat\Sym(\sE^\vee)[[\hbar]]$ obtained by the following (roughly sketched) assignment:
\begin{equation}
W(\Phi, I) = \sum_{\Gamma} \frac{1}{|\Aut(\Gamma)|} \hbar^{\beta_1(\Gamma)}w_{\Gamma}(\Phi, I).
\end{equation}
Here, the sum is over connected graphs $\Gamma$ which are allowed to have ``internal'' (($\geq$3)-valent) and ``external'' (univalent) vertices. 
Each $w_{\Gamma}(\Phi, I)$ is an element of $\widehat\Sym(\sE^\vee)$ obtained by ``placing'' $I$ at each internal vertex, $\Phi$ on each internal edge, and the functional inputs $\phi_1,\ldots, \phi_k$ on each external vertex/edge.
Crucially, because $\Phi\in \cinfty(M\times M)$, $w_{\Gamma}(\Phi,I)$ induces no functional analytic difficulties.
One can show the following important property satisfied by the assignment $W(\Phi,I)$:
\begin{equation}
\tag{$\dagger$}
W\left( \Phi, W(\Psi, I)\right) = W(\Phi+\Psi, I)
\end{equation}
(cf. the Equation immediately preceding the beginning of Section 2.3.5 of \autocite{cost}).
Now, choose $I = \frac{1}{6}\int_M \phi^3 \textrm{dVol}_g$, and pretend that one could define
\begin{equation}
I[L]\overset{!}{=} W(P(0,L), I).
\end{equation}
Here, the exclamation point reminds us that this definition is mathematically ill-defined.
Suppose that $\epsilon<L$, and note that $P(0,L) = P(0,\epsilon)+P(\epsilon, L),$ so in particular $P(0,L)-P(0,\epsilon)$ is a smoothing operator.
Using, $(\dagger)$, therefore, we would find that 
\begin{equation}
\tag{$\ddagger$}
I[\epsilon] \overset{!}{=} W(P(\epsilon,L), I[L]).
\end{equation}
Again, because $I[\epsilon]$ and $I[L]$ are ill-defined, the above equation is ill-defined.
However, if $I[L]$ \emph{were} well-defined, $I[\epsilon]$ would be determined from $I[L]$ by ($\ddagger)$, which is in fact a well-defined equation in such a situation.
Equation ($\ddagger$) is called the ``homotopy renormalization group flow equation.''
The reason for the adjective ``homotopy'' will become a bit clearer in Chapter \ref{chap: interactingquantum}.

In the approach of Costello, one throws away the pseudo-definition of $I[L]$ and instead \emph{defines} a quantum field theory to be a collection $\{I[L]\}_{L> 0}$
of functionals satisfying the equation $(\ddagger)$.
The original (local) functional $I$ would be the limit $\lim_{L\to 0} I[L]$ if it existed.
However, Equation $(\ddagger)$ shows that this limit does not, in general, exist, since this limit would require the evaluation of Feynman diagrams with $I[L]$ at the vertices and the distribution $P(0,L)$ at the edges.

From this perspective, the fundamental physical object is the collection $\{I[L]\}$ and not the ``scale-zero interaction'' $I$.
How does this fact reconcile itself with the way physicists understand field theory?
In the physicists' perspective, one attempts to make the pseudo-definition of $I[L]$ work and, upon finding that the result diverges, ``subtracts away'' the divergence to yield a ``physically sensible'' $I[L]$.

To a mathematician, this procedure may seem meaningless, but Costello explains it thus. 
The ``true'' or ``canonical'' object of study is the collection $\{I[L]\}$.
This is the object that, for example, most naturally encodes the information of the operator product expansion.
Costello shows that there is a \textbf{non-canonical} (!!) bijection $\psi$ between the space of field theories in his sense (collections $\{I[L]\}$ related by exact RG flow) and the set of local (``scale-zero'') action functionals $I$.
On the other hand, a physicist would say that a quantum field theory is defined not just by the action functional $I$ but also by the ``renormalization scheme'' one chooses, i.e. by the choice one makes about how to turn the physically meaningless divergences arising from $I$ into physically sensible answers.
(This is not how most quantum field theory textbooks explain the notion of renormalization, at least not at first, but it is, in the author's experience, how physicists actually understand renormalization in practice.)
In other words, physicists would also say that the collection $\{I[L]\}$ is the canonical physical object, and the non-canonical choice $I$ is canceled by another non-canonical choice (the choice of bijection $\psi$).

The assignment of the field theory $\{I[L]\}$ to a local action functional goes as follows.
First, one studies the small-$\epsilon$ asymptotics of 
\begin{equation}
W(P(\epsilon, L), I);
\end{equation}
one finds that, for small $\epsilon$, one can expand $W(P(\epsilon,L),I)$ as a sum of terms like $g(\epsilon) \Psi$ where $g(\epsilon)$ is a smooth function of $\epsilon\in (0,1)$ and $\Psi$ is a smoothly $L$-dependent (non-local, in general) functional.
(Cf. Theorem 9.3.1 of Chapter 2 of \autocite{cost}.)
Then, one makes sense (this is where the arbitrary choice comes in) of the ``purely singular'' part of the functions $g(\epsilon)$ as $\epsilon\to 0$, as follows.
The space of functions in $\cinfty((0,1))$ which admit an $\epsilon\to 0$ limit is certainly a subspace $V$ of $\cinfty((0,1))$.
Costello calls a choice of complement to $V$ in $\cinfty((0,1))$ a \emph{renormalization scheme}.
Such a choice allows one to make sense of which functions of $\epsilon$ are ``purely'' divergent. 
For example, while it is indisputable that the function $\sin(\epsilon)$ is non-singular, both $\log(2\epsilon)$ and $\log(\epsilon)$ are divergent as $\epsilon \to 0$.
But the two functions differ by a non-singular function, so they cannot both be ``purely divergent''.
A renormalization scheme is the choice that allows one to say a statement like ``the singular part as $\epsilon\to 0$ of $\log(2\epsilon)$ is $\log(\epsilon)$ and the non-singular part is $\log(2)$''.
The upshot is that this choice is implicitly what allows the physicist to ``subtract away the divergences of field theory.''

The discussion above describes the procedure for quantizing classical field theories without gauge symmetries. 
However, most of the theories physicists study in practice are indeed gauge theories.
Moreover, physicists are often concerned with the essential question of whether or not the procedure of renormalization violates the classical symmetries.
In \autocite{cost}, Costello uses the Batalin-Vilkovisky (BV) formalism to provide mathematical descriptions of these issues.
Namely, the gauge invariance of the classical theory is encoded in the ``classical master equation'' for the classical interaction $I$.
After renormalization, one obtains a family $\{I[L\}$ of interactions.
At each scale $L$, one may write down the ``scale $L$ quantum master equation,'' which is meant to describe the gauge-invariance of the quantum interaction $I[L]$.
The main insight of Costello is that if $I[L]$ satisfies the scale $L$ quantum master equation, then $I[\epsilon] = W(P(\epsilon,L),I[L])$ satisfies the scale $\epsilon$ quantum master equation.
In other words, the quantum master equations at different scales are compatible with the exact RG flow equation.

Putting these observations together, Costello defines a quantum gauge theory as a collection $\{I[L]\}$ of interactions which are related to each other by the exact RG flow, satisfy the quantum master equation at one (and therefore all) scales, and which satisfy a certain asymptotic locality condition as $L\to 0$.
We will elaborate on this locality condition in the body of this dissertation.
Suffice it to say that this condition is designed to capture the locality of the classical ``scale-0'' interaction $I$.

Given this definition of a quantum field theory, Costello and Gwilliam \autocite{CG2} show that a quantum gauge theory on a manifold $M$ gives rise to a factorization algebra on $M$.
A factorization algebra assigns a cochain complex $\Obq_{\sE,\sL}(U)$ to every open subset $U\subset M$ together with ``multiplication maps'' 
\begin{equation}
\Obq(U_1)\otimes \cdots \otimes \Obq(U_k)\to \Obq(V)
\end{equation}
for every inclusion of a collection of disjoint open subsets $U_1, \ldots U_k $ into a larger open subset $V$ of $M$.
To extract the underlying cochain complex (respectively, vector space) from a locally constant (respectively, holomorphic) factorization algebra, one takes $\Obq(D)$ for $D$ the unit disk in $\RR^n$ (respectively, $H^0(D)$, for $D$ the formal unit disk in $\CC$).
The multiplication maps of the factorization algebra then provide the additional structure maps arising in the corresponding $E_n$ algebra (respectively, vertex algebra).
A key feature of the factorization algebras in both cases is that $\Obq(D)$ doesn't depend on the size and center of the disk $D$; this is what allows one to turn the structure maps of the factorization algebra, which allow multiplications only for disjoint disks, into a structure solely on the cochain complex $\Obq(D)$.
In the case of locally constant factorization algebras, the resulting $E_n$ algebra encodes all of the homotopy theoretic information contained in the locally constant factorization algebra which produced it.
A theorem of a similar sort is not known for holomorphic factorization algebras, but for a partial reconstruction result, see \autocite{bruegmann}.
For more general field theories, the resulting factorization algebras are not known to reduce to familiar objects.
Instead, a general factorization algebra encodes an enormous amount of information: a cochain complex for each open subset $U$ of $M$ and multiplication maps for all possible inclusions of disjoint open subsets into a larger one.
So, factorization algebras are in general very rich objects, and indeed any quantum field theory on a manifold $M$ in the sense of Costello gives rise to one.

\section{Bulk-boundary systems as BV theories}
\label{sec: bbs}
In this section, we summarize the main theoretical elements that underlie our extension of the work of Costello and Gwilliam.
For $\bdyM=\emptyset$, the Batalin-Vilkovisky formalism is naturally articulated in the language of $(-1)$-shifted symplectic geometry.
The space of fields $\sE$ is the space of sections of some $\ZZ$-graded vector bundle on $M$, and this infinite-dimensional space is endowed with a sort of $(-1)$-shifted symplectic structure.
The infinite-dimensional nature of $\sE$ poses some problems for quantization, however, and Costello resolves these problems in a homotopically coherent way.
The essence of the approach we take here is to maintain as much of this articulation as possible when $\bdyM\neq \emptyset$.
To be able to do this, however, we need to impose boundary conditions on the fields $\sE$.
Let us consider an example to see why this is the case. Let $M=S^1$. Let $(V,\omega)$ be a symplectic vector space.
\emph{Topological mechanics} with values in $V$ is the theory on $M$ whose space of fields $\sE$ is  
\begin{equation}
\left(\Omega^\bullet_M(M) \otimes V,d \right)
\end{equation}
and whose degree --1 (symplectic) pairing $\ip$ is
\begin{equation}
\ip[\alpha,\beta] = \int_M \omega (\alpha \wedge \beta).
\end{equation}
In the language of action functionals, these data determine the action 
\begin{equation}
S(\alpha) = \frac{1}{2}\int_M \omega(\alpha\wedge d_{dR}\alpha).
\end{equation}
The main condition for this structure to indeed be a symplectic structure on $\sE$ is the equation
\begin{equation}
\label{eq: invpairing}
\ip[d_{dR}\alpha, \beta]+(-1)^{|\alpha|}\ip[\alpha, d_{dR}\beta]=0.
\end{equation}
Equation \ref{eq: invpairing} describes the compatibility of the de Rham differential with the pairing $\ip$. Its derivation uses Stokes's theorem and the fact that $\bdyM = \emptyset$.
Equation \eqref{eq: invpairing} has a geometric interpretation.
Thinking of $\sE$ as a graded manifold, $\ip$ as a closed two-form on this manifold, and $d$ as a degree +1 vector field $X$ on this manifold, Equation \eqref{eq: invpairing} can be interpreted as the equation
\begin{equation}
L_X\ip = 0,
\end{equation}
which is to say that $X$ is a symplectic vector field.
Alternatively, Equation \ref{eq: invpairing} is a key ingredient in showing that $S$ satisfies the so-called ``classical master equation.''

The manifold $S^1$ is the simplest example of a closed one-manifold.
Let us now consider what happens when we choose instead $M=[0,1]$, the simplest compact one-manifold with a non-empty boundary.
Note that Equation \ref{eq: invpairing} no longer holds in this case. Instead, letting $\iota_0, \iota_1$ denote the inclusions $\star \hookrightarrow[0,1]$ determined by the respective endpoints of the interval $[0,1]$, we find that \begin{equation}
\label{eq: noninvariantpairing}
\ip[d_{dR}\alpha, \beta]+(-1)^{|\alpha|}\ip[\alpha, d_{dR}\beta]=\omega(\iota_1^* \alpha, \iota_1^*\beta)-\omega(\iota_0^* \alpha, \iota_0^*\beta).
\end{equation}
The situation may be summarized as follows.
On the boundary $\bdyM$, one obtains a symplectic vector space $V\oplus (-V)$. (Here, $-V$ denotes $V$ with the symplectic structure $-\omega$.)
Pullback to the boundary determines a map $\rho: \sE \to V\oplus (-V)$.
Equation \ref{eq: noninvariantpairing} then states that $\ip$ determines a (derived) isotropic structure on $\rho$. 
In fact, one can show further that $\ip$ determines a (derived) Lagrangian structure on $\rho$ (cf. Lemma \ref{lem: fieldslagrangian}).
If one chooses a Lagrangian $L\subset V\oplus (-V)$, one may form the homotopy pullback 
\begin{equation}
\sE_L := \sE\times^h_{V\oplus(- V)} L;
\end{equation}
by general considerations of derived symplectic geometry \autocite{ptvv}, $\sE_L$ is a (--1)-shifted symplectic space, and hence an object amenable to the techniques of BV quantization. 
In our context, we will see below that the strict pullback provides a model for the homotopy pullback. One may therefore take $\sE_L$ to be the space of $V$-valued forms on $[0,1]$ whose pullback to the boundary lies in $L$.
For fields $\alpha,\beta\in \sE_L$, it follows directly that Equation \ref{eq: noninvariantpairing} reduces to Equation \ref{eq: invpairing}.
We will refer to $L$ as a ``boundary condition'' for obvious reasons.

We have therefore seen that a field theory on $M$ together with a boundary condition determine the data of a classical BV-like theory. With a few modest generalizations, all of the examples we will consider will fall into this form; more precisely, we will consider field theories which are ``topological normal to the boundary'' (see Definition \ref{def: tnbft}).
In a very rough sense, then, one may simply replace all appearances of $\sE$ (the space of fields of the theory) in the formalism of Costello and Gwilliam with $\sE_L$ (the space of fields with the boundary condition imposed) to obtain the results of this dissertation.
Because $\sE_L$ is infinite-dimensional, however, the story is not as straightforward as this characterization may seem.
In fact, the main work of this dissertation is to address the difficulties that arise precisely because of this property of $\sE_L$.
Unfortunately, these aspects of the work are somewhat technical.
We invite the interested reader to dive into the body of the text to see further details on this approach.

\section{Factorization algebras: Definitions and Examples}

\label{sec: review}
In this section, we review basic definitions regarding factorization algebras and present some elementary examples.
None of the content of this chapter is original; our main reference throughout is \autocite{CG1}.

\subsection{Basic Definitions}

There is a notion of a \emph{prefactorization algebra}, which stands in the same relationship to the notion of factorization algebra that the notion of presheaf does to the notion of a sheaf.
A factorization algebra, then, will be a prefactorization algebra satisfying a descent condition, just as sheaves are preshesaves satisfying an additional descent condition.
We will follow this division here.

Let us begin with the definition of a prefactorization algebra. 
This definition makes sense with values in any symmetric monoidal (possibly $(\infty,1)$-)category, but since we have no reason to consider any other such category besides the category of cochain complexes of vector spaces, we will present the definition only for this standard $(\infty,1)$-category.

\begin{definition}
Let $M$ be a topological space. A \textbf{prefactorization algebra} on $M$ consists of a functor
\begin{equation}
\cF: \mathrm{Opens}(M) \to \mathrm{Ch}(\mathrm{Vect})
\end{equation}
together with maps
\begin{equation}
m_{U_1,\ldots,U_k}^V : \cF(U_1)\otimes \cdots \otimes \cF(U_k) \to \cF(V)
\end{equation}
for every collection $U_1,\ldots, U_k$ of pairwise disjoint open subsets of $M$ which include into $V$.
(Here $\mathrm{Ch}(\mathrm{Vect})$ is the symmetric monoidal category of chain complexes of vector spaces over a field, which we almost always take to be $\RR$ or $\CC$.)
These data are required to satisfy the following axioms:
\begin{enumerate}
    \item Given an inclusion $U\subset V$, the map $m_U^V$ coincides with the map $\cF(U\subset V)$.
    \item Let $\sigma\in \Sigma_k$ be a permutation of the $k$-element set. Using the symmetric monoidal structure of $\mathrm{Ch(Vect)}$, one obtains a map
    \begin{equation}
    \sigma^*: \cF(U_1)\otimes \cdots\otimes \cF(U_k) \to \cF(U_{\sigma(1)})\otimes \cdots \otimes \cF(U_{\sigma(k)}).
    \end{equation}
    We require that $m_{U_{\sigma(1)},\ldots,U_{\sigma(k)}}^V\circ \sigma^* = m_{U_1,\ldots, U_k}^V$, i.e. the structure maps $m_{U_1,\ldots, U_k}^V$ be compatible with the natural $\Sigma_k$ action on the tensor factors of their domains.
    \item The map $m_{U,V}^{U\sqcup V}$ is a quasi-isomorphism.
    \item Suppose $U_{i,1}\sqcup\ldots\sqcup U_{i,k_i}\subset V_i$ for $i=1,\ldots, r$ and $V_1\sqcup\cdots\sqcup V_r\subset W$. Then we require that the triangle
    \begin{equation}
    \begin{tikzcd}
    \otimes_{i=1}^r \otimes_{j=1}^{k_i} \cF(U_{i,j}) \ar[rrr,"\otimes_i m_{U_{i,1},\ldots, U_{i,k_i}}^{V_i}"] \ar[rrrd, "m_{U_{i,1},\ldots, U_{r, k_r}}^W"]&&& \otimes_{i=1}^r \cF(V_i)\ar[d,"m_{V_1,\ldots, V_r}^W"]\\
    &&& \cF(W)
    \end{tikzcd}
    \end{equation}
    commute. This is a natural associativity condition.
\end{enumerate}
\end{definition}

\begin{remark}
\label{rmk: DVSforFAs}
We are being imprecise about which class of vector spaces we consider. 
Since we are interested in quantum field theory, the vector spaces we study are in general infinite-dimensional, and we would ideally like to work with an abelian category of such vector spaces.
The standard approach to the study of infinite-dimensional vector spaces is the theory of locally convex topological vector spaces, which infamously does not furnish an abelian category.
Hence, we will implicitly work in the context of differentiable vector spaces (DVS, see Appendix B of \autocite{CG1}).
We will provide a brief overview of these subtleties later in this chapter.
\end{remark}

\begin{remark}
Note that a prefactorization algebra $\cF$ is, in particular, a precosheaf, but $\cF$ also possesses structure maps $m_{U_1,\ldots, U_k}^V$ for $k>1$.  
\end{remark}

\begin{definition}
Let $\cF, \cG$ be prefactorization algebras on a space $M$. A \textbf{morphism of prefactorization algebras} $\cF\to \cG$ is a natural transformation $\varphi$ of the underlying functors
\begin{equation}
\textrm{Opens}(M) \to \textrm{Ch}(\textrm{Vect})
\end{equation}
such that all squares of the form
\begin{equation}
\begin{tikzcd}
\cF(U_1)\otimes \cdots \otimes \cF(U_k) \ar[r,"m_{U_1,\ldots, U_k}^V"] \ar[d,"\bigotimes_i \varphi(U_i)"]& \cF(V)\ar[d,"\varphi(V)"]\\
\cG(U_1)\otimes \cdots \otimes \cG(U_k) \ar[r, "m_{U_1,\ldots, U_k}^V"] & \cG(V)
\end{tikzcd}
\end{equation}
commute.
\end{definition}

\begin{remark}
Note that the above condition for $k=1$ is simply the requirement that $\varphi$ indeed be a map of precosheaves.
\end{remark}

Just as the category of sheaves is a full subcategory of the category of presheaves, the category of factorization algebras is a full subcategory of the category of prefactorization algebras. Namely, it will be the category of prefactorization algebras satisfying \emph{Weiss codescent}:

\begin{definition}
Let $U\subset M$ be an open subset, $\cF$ a prefactorization algebra on $M$, and $\cU:=\{U_\alpha\}_{\alpha \in A}$ a cover of $U$.
\begin{enumerate}
    \item The cover $\cU$ is a \textbf{Weiss cover} of $U$ if for every finite subset $S\subset U$, there exists a $U_\alpha \in \cU$ such that $S\subset U_\alpha$. Equivalently, $\cU$ is a Weiss cover if (and only if) the collection 
    \begin{equation}
    \cU^k:=\{U_\alpha^k\}_{\alpha\in A}
    \end{equation}
    is a cover of $U^k$ for all $k=1,2,\ldots$.
    \item The \textbf{\v{C}ech complex} for the pair $\cF, \cU$ is the cochain complex
    \begin{equation}
    \check{C}(\cU, \cF):=\bigoplus_{n=0}^\infty \bigoplus_{\substack{U_{\alpha_1},\ldots U_{\alpha_n}\in \cU\\ \alpha_i\neq \alpha_j}} \cF(U_{\alpha_1}\cap \cdots \cap U_{\alpha_n})[k]
    \end{equation}
    together with the total differential induced from the usual \v{C}ech differential and the internal differentials on the cochain complexes $\cF(V)$ for $V$ any open subset of $M$.
    \item The prefactorization algebra $\cF$ satisfies \textbf{Weiss codescent on $U$} if for all Weiss covers $\cU$, the natural map
    \begin{equation}
    \check{C}(\cU, \cF) \to \cF(U)
    \end{equation}
    is a quasi-isomorphism.
    
    \item A prefactorization algebra satisfying Weiss codescent for all open $U$ is denoted a \textbf{factorization algebra.}
    \item A \textbf{morphism of factorization algebras} is a morphism of the underlying prefactorization algebras.
\end{enumerate}
\end{definition}

\begin{remark}
In a bit more detail, the \v{C}ech differential on $\check{C}(\cU,\cF)$ is induced from the alternating sum of the structure maps
\begin{equation}
\cF(U_{\alpha_1}\cap\cdots \cap U_{\alpha_n}) \to \cF(U_{\alpha_1}\cap \cdots \cap\widehat{U_{\alpha_i}} \cap \cdots \cap U_{\alpha_n}),
\end{equation}
where $\widehat{U_{\alpha_i}}$ denotes that $U_{\alpha_i}$ is to be omitted from the intersection.
\end{remark}

\subsection{Basic Examples and Constructions}

\begin{example}
Let $(A,\mu)$ be an associative algebra over $\RR$ with unit map 
\[
\eta: \RR\to A.
\]
There is a natural factorization algebra $\cF_A$ on $\RR$ given by the assignment
\begin{equation}
\cF_A(\emptyset) = \RR;
\end{equation}
moreover,
\begin{equation}
\cF_A(I) = A
\end{equation}
for any interval $I$, 
\begin{equation}
m_{\emptyset}^I =\eta
\end{equation}
for any interval $I$,
\begin{equation}
m_{I}^J = \cF_A(I\subset J) = \id
\end{equation}
for any inclusion of intervals, and 
\begin{equation}
m_{I_1,I_2}^J=\mu: A\otimes A \to A
\end{equation}
for any triple of intervals $I_1, I_2, J$ with $I_1\sqcup I_2\subset J$.
See the Example in Section 3.1.1 of \autocite{CG1} for further details.
\end{example}

\begin{example}
The above example can be extended from $\RR$ to $\RR_{\geq 0}$; in so doing, one must choose, in addition to an associative algebra $A$, a pointed right $A$-module $M$, with point $m\in M$.
With these choices in hand, we may define a factorization algebra $\cF_{M,A}$ on $\RR_{\geq 0}$ as follows. Set $\cF_{M,A}([0,a))=M$ for any $a>0$, and $\cF_{M,A}((a,b))$ for any $a,b\geq 0$.

If $I_1$ and $I_2$ are intervals of the same type (either both open or both including 0), then $m_{I_1}^{I_2}$ is simply the identity on $A$ or $M$ as the case may be.
If $I_1$ is an open interval and $I_2$ is an interval containing 0, then $m_{I_1}^{I_2}$ is the map $\phi_m:A\to M$ determined by $m$, namely $\phi_m(a)=ma$.
If $I_1$, $I_2$, and $I_3$ are all open intervals (with $I_1\sqcup I_2\subset I_3$, then the structure map $m_{I_1,I_2}^{I_3}$ is simply the multiplication on $A$, as for $\cF_A$ above.
If $I_1,I_2,$ and $I_3$ are such that both $I_1$ and $I_3$ contain $0$, then the multiplication map 
\begin{equation}
m_{I_1,I_2}^{I_3}: M\otimes A \to M
\end{equation}
is determined by the $A$-module structure on $M$.

Further, we assign $\cF_{M,A}(\emptyset)=\RR$. The maps $\RR\to M$ and $\RR\to A$ corresponding to $m_\emptyset^I$ for $I$ an interval containing zero or not, respectively, are just those determined by the point $m$ and the unit map of $A$, respectively.

The remaining factorization algebraic structures are determined by the axioms of a factorization algebra.
One may check that the associativity of $A$ and the axioms of a right-$A$-module structure on $M$ guarantee that these assignments are well-defined.
\end{example}

\noindent
\begin{figure}
\begin{tikzpicture}[line width=.2mm]
		\draw ++(1,0) node {}; 
		\draw[Circle->] +(2, 0)--+(10, 0);
		\draw +(9.85, 0.45) node {$\RR_{\geq 0}$}; 
		\draw +(4, 0) node {)}; 
		\draw +(2.08,0) node {[};
		\draw +(3, .45) node {$U_1$};
		\draw +(13.5,0) node (domain) {$\cF_{A,M}(U_1)\otimes \cF_{A,M}(U_2)=M\otimes A$};
		\draw +(7, 0) node {)}; 
		\draw +(6,.45) node {$U_2$};
		\draw +(5,0) node {(};
		\draw[Circle->, yshift = -3cm] +(2,0)--+(10, 0);
		\draw[yshift = -3 cm] +(9.85, 0.45) node {$\RR_{\geq 0}$}; 
		\draw[yshift = -3 cm] +(2.08,0) node {[};
		\draw[yshift = -3 cm] +(13.5, 0) node (m) {$\cF_{A,M}(V)=M$};
		\draw[yshift = -3 cm] +(7, 0) node {)}; 
		\draw[yshift = -3 cm] +(4.5, .45) node {$V$};
		\draw[->] (domain) -- (m)
		node[pos = 0.5, right] {$m_{U_1,U_2}^V$};

\end{tikzpicture}
\caption{A picture illustrating one structure map for $\cF_{A,M}$.}
\end{figure}
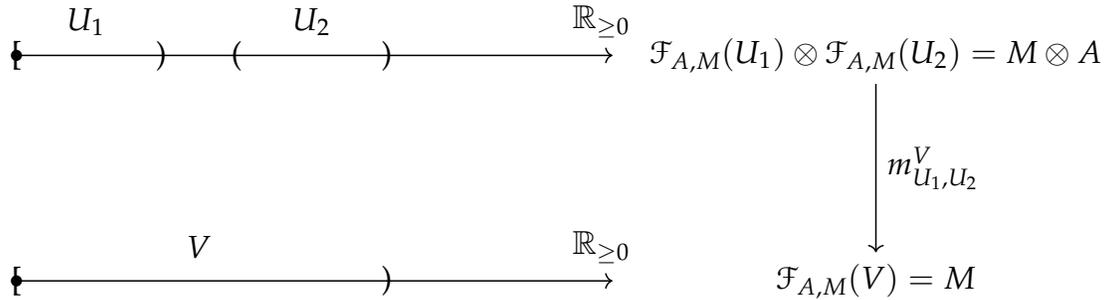

\begin{example}
\label{ex: factenvelope}
Let $\fg$ be a Lie algebra. The \textbf{$n$-factorization envelope of $\fg$} is the factorization algebra on $\RR^n$ which assigns, to an open subset $U\subset \RR^n$, the cochain complex
\begin{equation}
\cF_{\fg, n}(U):=C_\bullet(\Omega^\bullet_{\RR^n,c}(U)\otimes \fg);
\end{equation}
here, $C_\bullet$ is the Chevalley-Eilenberg chains functor. (There are some subtleties here because $\Omega^\bullet_{\RR^n,c}(U)$ is infinite-dimensional, i.e. the underlying graded vector space of $\cF_{\fg,n}(U)$ is given as a direct sum of \emph{completed} symmetric powers of $\Omega^\bullet_{\RR^n,c}(U)\otimes \fg$ rather than algebraic symmetric powers.)
The structure maps for $\cF_{\fg,n}$ are given as follows:
\begin{enumerate}
    \item The maps $m_U^V$ are induced from the extension by zero maps 
    \begin{equation}
    \Omega^\bullet_{\RR^n,c}(U)\to \Omega^\bullet_{\R^n,c}(V),
    \end{equation}
    and
    \item The structure maps $m_{U_1,\ldots, U_k}^V$ are the composites
    \begin{equation}
    \begin{tikzcd}
    \cF_{\fg, n}(U_1)\otimes \cdots \otimes \cF_{\fg,n}(U_k) \ar[rr,"m_{U_1}^V\otimes \ldots \otimes m_{U_k}^V"] && \cF_{\fg, n}(V)\otimes\cdots\otimes \cF_{\fg,n}(V)\ar[r, "\mu"]& \cF_{\fg, n}(V),
    \end{tikzcd}
    \end{equation}
    where the map $\mu$ is the multiplication on the underlying symmetric algebra.
\end{enumerate}
We note that $\mu$ is not a cochain map. 
This is true for the same reasons that the Chevalley-Eilenberg chains of a Lie algebra are not themselves a differential graded commutative algebra 
(though they do form a differential graded cocommutative coalgebra).
However, since the $U_i$ are disjoint, the $\Omega^\bullet_{\RR^n,c}(U_i)$ mutually commute in $\Omega^\bullet_{\RR^n,c}(V)$, and it follows that the composite $m_{U_1,\ldots, U_k}^V$ is in fact a cochain map.
\end{example}

There is a relationship between the two examples discussed above. Indeed, Proposition 3.4.1 of \autocite{CG1} shows that 
\begin{equation}
H^\bullet(\cF_{\fg,1})\cong \cF_{U(\fg)}.
\end{equation}
This is one justification of the use of the term ``$n$-factorization envelope'' to describe $\cF_{\fg, n}$.

We now describe a number of basic operations which produce new factorization algebras from existing factorization algebras.

\begin{definition}
Let $f:X\to Y$ be a map of topological spaces and $\cF$ a factorization algebra on $X$. The \textbf{pushforward} factorization algebra $f_*\cF$ is the factorization algebra on $Y$ whose value on an open set $U\subset Y$ is given by
\begin{equation}
f_* \cF(U) = \cF(f^{-1}(U)),
\end{equation}
with 
\begin{equation}
m_{U_1,\ldots, U_k}^V := m_{f^{-1}(U_1),\ldots, f^{-1}(U_k)}^{f^{-1}(V)}
\end{equation}
This definition extends the pushforward operation on precosheaves. 
\end{definition}

\begin{definition}
Let $\cF$ be a factorization algebra on a space $X$. Let $U$ be an open subset of $X$. The \textrm{restriction} factorization algebra $\cF\mid_U$ is defined via the composite (partially symmetric monoidal) functor
\begin{equation}
\begin{tikzcd}
\textrm{Opens}(U) \ar[r] &\textrm{Opens}(X) \ar[r,"\cF"]& \textrm{Ch}(\textrm{Vect}).
\end{tikzcd}
\end{equation}
\end{definition}

\section{Bulk-boundary systems and centers of factorization algebras}
\label{subsec: centers}

Perhaps the archetypal example of a bulk-boundary system in mathematics is the Swiss cheese algebra given by the pair $(HH^\bullet(A), A)$, where $A$ is an associative algebra and $HH^\bullet(A)$ is its Hochschild cochain complex.
The latter object is an $E_2$ algebra, according to the Deligne conjecture. The former object may be understood as an $E_1$ algebra.
The Swiss cheese algebra $(HH^\bullet(A), A)$ is the terminal object in the category of Swiss cheese algebras whose underlying $E_1$ algebra is $A$ \autocite{Thomas_2016}.

The physical interpretation of this pair is as follows. The algebra $A$ encodes the algebra of observables of a classical or quantum mechanical system. 
Meanwhile, the $E_2$ algebra of Hochschild cochains encodes the universal two-dimensional topological field theory admitting $A$ as a boundary condition. 
This perspective is implicit in the work of Kontsevich on the deformation quantization of Poisson manifolds \autocite{KontPSM}, which studies the case that $A$ is the algebra of smooth functions on a Poisson manifold $X$. 
Cattaneo and Felder \autocite{CFPSM} gave an interpretation of the construction of Kontsevich in the language of two-dimensional quantum field theory.
Namely, to a Poisson manifold $X$ one may associate a two-dimensional topological quantum theory on any oriented two-dimensional manifold and a canonical boundary condition for this theory.
Quantization of the associated bulk-boundary system provides a deformation quantization of the algebra of functions on $X$ whose semi-classical approximation is dictated by the Poisson bracket for $X$.
The theory is known as the Poisson sigma model.

Recently, Butson and Yoo \autocite{butsonyoo} have proposed a more general framework in which to understand the Poisson sigma model.
In this framework, one understands the Poisson manifold $X$ as defining a Batalin-Vilkovisky-type field theory on $\RR$ which is ``degenerate'' in nature (we prefer to call such theories \textbf{Poisson BV theories}).
One might call this theory ``Poisson topological mechanics.''
Poisson topological mechanics is ``degenerate'' because it is described by a Poisson target $X$ as opposed to a symplectic target as is the case for ``ordinary'' topological mechanics.
Using the language of derived geometry, Poisson BV theories are described by $(-1)$-shifted \emph{Poisson} stacks as opposed to $(-1)$-shifted \emph{symplectic} stacks.
In terms familiar to physicists, Poisson BV theories are one class of ``non-Lagrangian'' field theories.
Butson and Yoo assign---to each Poisson BV theory $\cT$ on a manifold $M_0$---a(n ``ordinary'') BV theory on $M_0\times \RR_{\geq 0}$ known as the ``universal bulk theory'' of $\cT$. 
The universal bulk theory of $\cT$ also possesses a canonical boundary condition associated to $\cT$.
In the case of Poisson topological mechanics, the universal bulk theory is the Poisson sigma model.
One of the insights implicit in Kontsevich's work was that one may quantize Poisson topological mechanics by quantizing the universal bulk-boundary system on the upper half-plane $\HH^2$ associated to it.
The work of Butson and Yoo systematized this intuition for more general Poisson BV theories.
The advantage of this perspective is that one needs to study only a theory described in the usual BV formalism, albeit on a manifold with boundary.
Moreover, this bulk-boundary system is of precisely the sort amenable to the techniques of this dissertation.
One might therefore understand the results of this dissertation as a continuation of the following program which was initiated by Butson and Yoo:

\begin{Program}
Develop a framework for the study of the quantization of Poisson BV theories generalizing that of Costello and Gwilliam for ordinary BV theories.
\end{Program}

This, perhaps, is the most important application of the techniques presented herein. One may, on the one hand, view these techniques as an extension of the methods of Costello and Gwilliam to a certain class of bulk-boundary systems on manifolds with boundary. 
On the other hand, one may view them as an extension of the techniques of Costello and Gwilliam to arbitrary (perturbative) Poisson BV theories on manifolds with boundary.

\section{Comparison to Related Work}
We are aware of a number of other works discussing the quantization of gauge theories on manifolds with boundary using the techniques of homological algebra. Among them are \autocite{CMRquantumgaugetheories} (discussed above), \autocite{mnevschiavinawernli}, \autocite{equivariantbv}, \autocite{edgemodesyangmills}. 
Our approach has a number of features in common with these references. 
For example, all of the mentioned references use the Batalin-Vilkovisky formalism and study what happens when the classical master equation fails to be satisfied. 
In \autocite{mnevschiavinawernli}, the possibility of local operators which are not simply integrals over all of spacetime is introduced. 
In \autocite{edgemodesyangmills}, the authors impose boundary conditions via homotopy pullbacks. 
These ideas are present also in this work; however, by restricting to the class of TNBFTs, we enable a discussion of renormalization and the construction of factorization algebras of observables.

\section{A Brief Primer on Differentiable Vector Spaces}
\label{sec: DVS}

As we have remarked in Section \ref{sec: review} (see Remark \ref{rmk: DVSforFAs}), we would like our factorization algebras to take values in a symmetric monoidal category of chain complexes of vector spaces.
Typically, the vector spaces we consider are infinite-dimensional, and endowed with topologies; the natural tensor products in this context have universal properties in categories of topological vector spaces where the morphisms are continuous linear maps.
For example, if $E\to M$ is a vector bundle, then the space of sections $\cinfty(M;E)$ possesses the so-called Whitney topology.
There is a natural tensor product on a subcategory of the category of all topological vector spaces, called the completed projective tensor product and denoted $\hotimes_\pi$, with the property that for $F\to N$ another vector bundle, the equation 
\begin{equation}
\label{eq: cptp}
\cinfty(M; E)\hotimes_\pi \cinfty(N; F)\cong \cinfty(M\times N; E\boxtimes F)
\end{equation}
holds, where $E\boxtimes F$ is the external tensor product of vector bundles (see Section \ref{sec: notation}).
Such an equation simply does not hold if one takes the algebraic tensor product of the two spaces of sections.

Based on this discussion and the definition of a factorization algebra, we seek a category $\cC$ of vector spaces such that
\begin{enumerate}
\item $\cC$ is an abelian category,
\item $\cC$ is endowed with a tensor product in which equations like Equation \eqref{eq: cptp} hold.
\end{enumerate}
One standard category meant to encode the theory of infinite-dimensional vector spaces---namely, that of locally convex topological vector spaces---is infamously non-abelian. 
Moreover, we would like Equation \eqref{eq: cptp} to hold even if $M$ and $N$ are non-compact, and one takes compactly-supported sections throughout.
This, too, fails to obtain in the category of locally convex topological vector spaces.

The approach we follow here is to consider the category of differentiable vector spaces $\DVS$, as discussed in Appendix B of \autocite{CG1}.
The category $\DVS$ has the following properties:
\begin{enumerate}
\item It is abelian.
\item It has a multi-category structure.
\item It possesses an enrichment over itself, i.e. one can form a differentiable vector space of linear maps between two differentiable vector spaces.
\end{enumerate}

Very briefly, a differentiable vector space $V$ is a sheaf of vector spaces $\cinfty(\cdot, V)$ (we will use both notations interchangeably to represent $V$) on the site of smooth manifolds.
Moreover $\cinfty(\cdot, V)$ must be endowed with the structure of a module over the sheaf of rings $\cinfty$ and a flat connection
\begin{equation}
\nabla: \cinfty(\cdot, V) \to \cinfty(\cdot, V)\otimes_{\cinfty} \Omega^1(\cdot).
\end{equation}
For any DVS $V$, the vector space $\cinfty(\{*\}, V)$ encodes the underlying vector space of $V$, and $\cinfty(M,V)$ encodes the space of smooth maps from $M$ to $V$.
For example, given a vector bundle $E\to M$, the differentiable vector space $\cinfty(M; E)$ is given by the assignment
\begin{equation}
\cinfty(X,\cinfty(M;E)):= \cinfty(X\times M; \pi_2^* E),
\end{equation}
where $\pi_2: X\times M\to M$ is the natural projection.
(Note that we use the notation $\cinfty(M;E)$ to denote global sections of $E\to M$, and the notation $\cinfty(X,V)$ to denote smooth maps from $X$ to $V$.)
We refer the reader to the aforementioned Appendix of \autocite{CG1} for further details.

The category $\DVS$ has all of the categorical properties that we desired in a category of infinite-dimensional vector spaces.
Nevertheless, it is somewhat difficult to work with differentiable vector spaces directly.
Hence, Costello and Gwilliam study another category of infinite-dimensional vector spaces, namely the category of \emph{convenient} vector spaces $\CVS$.
The category $\CVS$ possesses the following properties:
\begin{enumerate}
    \item The objects in $\CVS$ are vector spaces endowed with additional structure (namely a \emph{bornology}), and morphisms are spaces of bounded linear maps.
    \item The category $\CVS$ possesses all limits and colimits.
    \item The category $\CVS$ is closed symmetric monoidal. The symmetric monoidal structure on $\CVS$ is denoted by the symbol $\hotimes_\beta$.
    \item Vector spaces of smooth, compactly-supported, distributional, and compactly-supported distributional sections of a vector bundle $E\to M$ can be described by objects in $\CVS$.
    \item There is a functor $dif: \CVS\to \DVS$ which preserves all limits. Moreover, the functor preserves inner hom objects, and the symmetric monoidal product $\hotimes_\beta$ represents the multi-category structure on~$\DVS$.
    \item Equation \eqref{eq: cptp} holds, with $\hotimes_\beta$ replacing $\hotimes_\pi$, both for spaces of sections of a bundle $E\to M$ and spaces of compactly-supported sections of $E$.
    \item The functor $dif$ embeds $\CVS$ as a full subcategory of $\DVS$.
\end{enumerate}
Again, we refer the reader to Appendix B of \autocite{CG1} for further details.
It is far easier to work directly with convenient vector spaces than with differentiable vector spaces.
However, the category $\CVS$ is not abelian, so one may not use classical homological algebra to study chain complexes of convenient vector spaces.
Further, the functor $\CVS\to \DVS$ does not preserve colimits, so in the computation of homology groups, it matters whether a complex 
\[
\cdots \to V_i \to V_{i+1}\to \cdots
\]
of convenient vector spaces is considered to be a complex of convenient or differentiable vector spaces via the functor~$dif$.
We adopt the following conventions, which we follow implicitly throughout the dissertation.
First, nearly all of our chain complexes of differentiable vector spaces will arise from chain complexes of convenient vector spaces.
To verify that a map of complexes $dif(f):dif(A^\bullet) \to dif(B^\bullet)$ is a quasi-isomorphism, we never compute the cohomology groups of the underlying complexes of convenient vector spaces $A^\bullet, B^\bullet$ directly.
Instead, we allow ourselves only to use some combination of the following tools:
\begin{enumerate}
    \item Construction of a homotopy inverse for $f$, in which case both $f$ and $dif(f)$ are quasi-isomorphisms by functoriality.
    \item Use of standard homological-algebraic techniques in $\DVS$, such as spectral sequences and the snake lemma.
\end{enumerate}
This allows us to avoid computing cohomology groups directly in $\DVS$, and instead to perform explicit computations in $\CVS$, doing so in a way that is ``kosher.''

\section{Notations and Conventions}
\label{sec: notation}
In this section, we collect some conventions and notation which we use repeatedly in the body of this dissertation.
\begin{itemize}
    \item $M$ will denote the fixed manifold with boundary on which our bulk-boundary systems are formulated, with $\bdyM$ its boundary, and $\iota: \bdyM\to M$ the inclusion. We will often fix a tubular neighborhood $\tubnhd$ of $\bdyM$ inside $M$.
    The letter $n$ will stand for the dimension of $M$.
    \item We will often use Latin letters in regular font to denote vector bundles on $M$ (e.g. $V,E,L,\ldots$). 
    We use the corresponding script letters (e.g. $\sV,\sE,\sL,\ldots$) to denote the sheaves of sections of the corresponding bundles.
    We will append the subscript $_c$ to denote the cosheaf of compactly-supported sections (e.g. $\sV_c,\sE_c,\sL_c$, etc.).
    Very rarely, we will use the script letter to denote the space of global sections of the corresponding bundle.
    We will usually be explicit when this is the case.
    \item If $E\to M$ is a vector bundle and $x\in M$, then $E_x$ denotes the fiber of $E$ over~$x$.
    \item We introduce the following notation for standard bundles on $M$:
        \begin{itemize}
            \item $\underline \RR$ denotes the trivial $\RR$ bundle on $M$, with sheaf of sections $\cinfty_M$ (hence, we force ourselves into the somewhat redundant notation $\cinfty_M(M)$ to denote the space of global sections of $\underline \RR$).
            \item $TM$ and $T^*M$ denote the tangent and cotangent bundles, respectively, with sheaves of sections $Vect_M$ and $\Omega^1_M$.
            \item More generally, $\Lambda^k(T^*M)$ denotes the bundle of differential $k$ forms on $M$, with sheaf of sections $\Omega^k_M$.
            \item $\Dens_M$ and $\mathfrak o$ denote the bundle of densities and the orientation line bundle on $M$, respectively. 
            That is, $\Dens_M$ is the associated bundle associated to the rank one representation of $GL_n$ given by the absolute value of the determinant, and $\mathfrak o$ by the sign of the determinant.
            We denote by $\Omega^n_{M,tw}$ the sheaf of sections of $\Dens_M$ and by $\Omega^0_{M,tw}$ the sheaf of sections of $\mathfrak o$.
            \item More generally, we denote by $\Omega^k_{M,tw}$ the sheaf of sections of the bundle $\Lambda^k(T^*M)\otimes \mathfrak o$ of twisted $k$-forms.
        \end{itemize}
        \item If $E\to M$ is a vector bundle, then $E^\vee$ denotes the fiberwise dual vector bundle, and $E^!$ denotes the bundle~$E^\vee\otimes \Dens_M$.
    \item We use the symbols $\Upsilon$ and $\digamma$ repeatedly when all other symbols seem to be taken. 
    They are intended to have a local scope, not exceeding a few results immediately preceding and succeeding their appearance.
    \item We use the symbol $C$ with extremely local scope, i.e. its referent changes frequently. The main function of this symbol is to represent some numerical factors which are irrelevant in the computation of a particular limit or integral.
    The referent of $C$ is often a product of a half-integral power of $\pi$ and a half-integral power of $4$ .
    \item We use the superscript $^\vee$ to denote the dual of a vector space or the fiberwise dual of a vector bundle as above.
    \item If $E\to M$ and $F\to N$ are two bundles, the symbol $E\boxtimes F$ denotes the bundle on $M\times N$ whose fiber at the point $(x,y)$ is $E_x\otimes F_y$.
    Alternatively, $E\boxtimes F = \pi_1^* E\otimes \pi_2^* F$, where the $\pi_i$ are the projections from $M\times N$ to its corresponding factors.
    \item If $V=\oplus_{i\in \ZZ}V^i$ is a $\ZZ$-graded vector space, then $V[k] = \oplus_{i\in \ZZ}V^{i+k}$, i.e. $V[k]$ is the graded vector space whose $i$-th graded summand is the $(i+k)$-th graded summand of $V$.
\end{itemize}

\section{Outline of the Dissertation}
In the remainder of this introduction, we give a detailed outline of the dissertation.

Chapter \ref{chap: classical} is based on the pre-print \autocite{classicalarxiv} and defines the classical bulk-boundary systems which we consider in this dissertation (Definitions \ref{def: tnbft}, \ref{def: bdycond}, and  \ref{def: clblkbdysystem}).
These definitions, except Definition \ref{def: clblkbdysystem}, are due to Butson and Yoo.
The main result of the Chapter is the construction in Section \ref{sec: FAs} of a $P_0$ factorization algebra of classical observables for a classical bulk-boundary system.
We actually construct \emph{two} equivalent factorization algebras: one which is easy to define and understand (Definition \ref{def: classFA} and Theorem \ref{thm: classobsformFA}), and one for which the $P_0$ structure is manifest (Theorem \ref{thm: P0classobsFA}).
We also expand on the derived geometric interpretation of bulk-boundary systems sketched in Section \ref{sec: bbs}.
In particular, we show that the canonical map from bulk fields to boundary fields is a Lagrangian map (Definition \ref{def: loclagrangianstr} and Lemma \ref{lem: fieldslagrangian}), and we show that the imposition of boundary conditions serves to form a homotopy intersection of two Lagrangians in the boundary fields (Lemma \ref{lem: tildefieldsmodel}).
In Section \ref{sec: localfcnls}, which is somewhat outside the main line of development of the rest of the Chapter, we study the theory of local functionals on manifolds with boundary.
This section does not appear in the pre-print \autocite{classicalarxiv}.
We give an interpretation of local functionals in terms of $D_M$-modules analogous to the discussion in Section 6 of Chapter 5 of \autocite{cost}.
The theory of local functionals is helpful in the study of the deformation theory of quantum bulk-boundary systems.
Finally, in Section \ref{sec: classexamples}, we study two simple bulk-boundary systems and their factorization algebras of observables, namely topological mechanics and one-dimensional BF theory.
We formulate these theories on $\RR_{\geq 0}$ and show that they can be described by factorization algebras of the form $\cF_{A,M}$ (see Section \ref{sec: review} for the definition of $\cF_{A,M}$) for a suitable algebra $A$ and right pointed $A$-module $M$.

In Chapter \ref{chap: freequantum}, we execute the quantization of \emph{free} bulk-boundary systems. Chapter \ref{chap: freequantum} is based on joint work with Owen Gwilliam and Brian Williams and has appeared as \autocite{GRW}.
For free theories, one may use a functional analytic result known as the Atiyah-Bott lemma to provide a more convenient model of the classical observables than given for general bulk-boundary systems in Chapter \ref{chap: classical}.
This more convenient model also exhibits a very natural deformation quantization. 
This discussion culminates in Theorems \ref{thm: obcl} and \ref{thm: freequantumFA}.
Butson and Yoo have shown that boundary conditions, in the generality which we consider in this dissertation, can be described by Poisson BV theories, and that these boundary conditions possess $P_0$ factorization algebras of classical observables on the boundary $\bdyM$.
In the case of free theories, there is a natural ansatz for a deformation quantization of this factorization algebra on $\bdyM$.
In the bulk, that is in the interior $\mathring M$,  Costello and Gwilliam have given a construction of a factorization algebra of quantum observables for a free theory.
The main theorem, Theorem \ref{thm: maingenlq}, of Chapter \ref{chap: freequantum} shows that the bulk-boundary quantum observables reproduce---in the precise way specified in the statement of the theorem---both the bulk quantum observables and the boundary quantum observables.
We then apply this theorem to a number of examples: topological mechanics, the Poisson sigma model with linear target, abelian Chern-Simons/chiral WZW theory, and higher-dimensional analogues of the CS/WZW system.

Chapter \ref{chap: interactingquantum} extends the results of the previous chapter to define general quantum bulk-boundary systems and their factorization algebras.
To extend the heat kernel methods outlined in Section \ref{sec: CGoverview}, we make use of a doubling procedure familiar from physics, which we introduce in Section \ref{sec: firstdbling}.
(In physics, the method is also known as the ``method of image charges'' or the ``method of mirror images.'')
This doubling procedure will require a further restriction on the bulk-boundary systems we consider, but the ``universal bulk theories'' of Poisson BV theories and almost all of the examples of interest to us survive this cut.
In Section \ref{sec: parametrices}, we define the appropriate generalizations of the regularized propagators $P(\epsilon, L)$ discussed in Section \ref{sec: CGoverview}.
Finally, in Section \ref{sec: qbbsdef} (Definition \ref{def: QTNBFT}), we define what we mean by a quantum bulk-boundary system.
We then proceed to study the renormalization and obstruction theory of quantum bulk-boundary systems, extending the results of Costello.
Finally, in Section \ref{sec: quantumFA} (Theorem \ref{thm: quantumFA} and the discussion immediately succeeding it), we construct the quantum observables of a quantum bulk-boundary system.

Chapter \ref{chap: examples} applies the general formalism of the preceding chapters to an example where the spacetime manifold is $\RR_{\geq 0}$.
The example is BF theory in one dimension.
This example has the following interpretation in terms of representation theory.
Given a Lie algebra $\fg$, we consider the 0-shifted symplectic formal moduli problem $T^* (B\fg)$. 

The cotangent fiber defines a Lagrangian in this 0-shifted symplectic formal moduli problem; the algebra of functions on this Lagrangian is $\Sym(\fg[1])$.
As expected from the theory of Butson and Yoo, this algebra has a natural $(+1)$-shifted Poisson bracket which, on linear elements $x,y\in \fg[1]$ is simply given by the Lie bracket:
\begin{equation}
\{x,y\} = [x,y].
\end{equation}
(Note: this Poisson bracket is \emph{not} compatible with the algebra structure on $\Sym(\fg[1])$.)
One may then understand the Chevalley-Eilenberg chains $C_\bullet(\fg)$ as a deformation quantization of the $P_0$ algebra $\Sym(\fg[1])$. 
We will see in a moment that this information encodes the information of one-dimensional BF theory at the boundary $\{0\}\subset \RR_{\geq 0}$.

The representation theory corresponding to the behavior of 1D BF theory in the bulk is also interesting.
First, we note that $C^\bullet(\fg, \Sym(\fg[1]))$---the algebra of functions on $T^*(B\fg)$---is naturally a $P_1$ algebra, i.e. a Poisson algebra, since it is the algebra of functions on a 0-shifted symplectic space.
There is a forgetful functor from the category of $P_1$ algebras to the category of (homotopy) $P_0$ algebras.
The natural $P_0$ deformation quantization of $\sO(T^*(B\fg))$ is then $C^\bullet(\fg, C_\bullet(\fg))$.

The main Theorems of Chapter \ref{chap: examples} (cf. Theorems \ref{thm: 1dbfquantization} and \ref{thm: 1dbfobs}) show that:
\begin{enumerate}
    \item There exists a quantum bulk-boundary system quantizing classical BF theory on $\RR_{\geq 0}$, and
    \item The cochain complex of boundary observables for this bulk-boundary system is equivalent to $C_\bullet(\fg)$ and the cochain complex of bulk observables is equivalent to $C^\bullet(\fg, C_\bullet(\fg))$.
\end{enumerate}
The bulk observables form a locally constant factorization algebra on $\RR_{>0}$ and hence one should be expect the structure of a dg associative algebra on $C^\bullet(\fg, C_\bullet(\fg))$.
This algebra should be a deformation quantization of the classical Poisson algebra $\sO(T^*(B\fg))$.
Kostant and Sternberg \autocite{KSBRS} have given a description of this algebra structure, and we expect that one may refine the results of Chapter \ref{chap: examples} to make a statement about the bulk \emph{factorization algebra}.
We leave this, however, to future work.

In section \ref{sec: DVS}, we have introduced some of the main functional-analytic ingredients used in our formalism.
We have also shown how to endow spaces of (compactly-supported) sections of a vector bundle $E\to M$ with the structure of a convenient or differentiable vector space.
Further, we have introduced a preferred tensor product $\hotimes_\beta$, and have obtained an explicit characterization of the completed bornological tensor product of two vector spaces arising as the spaces of sections of vector bundles.
We have also given a similar characterization of the internal hom in the category DVS.
However, we need to extend these characterizations to spaces of sections \emph{with boundary conditions imposed}.
That is the task of the Appendix.
In Section \ref{sec: tensorproduct}, we characterize the completed bornological tensor product of vector spaces of sections of vector bundles with boundary conditions imposed.
In Section \ref{sec: innerhom}, we do the same thing for the inner homs of such spaces of sections, specifically with target $\RR$.
Finally, we prove a lemma generalizing the Atiyah-Bott lemma.
This lemma allows the simple description of the quantum observables of a free theory in Chapter \ref{chap: freequantum}.


\chapter{Classical Bulk-Boundary Systems}
\label{chap: classical}
\section{Overview of Chapter}
In this chapter, we define and study factorization algebras for classical bulk-boundary systems. 
A preliminary version of this chapter, excluding Section \ref{sec: localfcnls}, has appeared on the preprint server ArXiv as \autocite{classicalarxiv}.
We will fix throughout a manifold $M$ \index[notation]{M@$M$} with boundary $\bdyM$ \index[notation]{Mb@$\bdyM$} and we will denote the inclusion $\bdyM \hookrightarrow M$ with the letter $\iota$ \index[notation]{iota@$\iota$}. T
he manifold $M$ serves as the spacetime background for the theory. 
As discussed above, the bulk-boundary systems we will consider arise from a certain class of theories, the TNBFTs (see Definition \ref{def: tnbft}, below).  

In \autocite{CG2}, Costello and Gwilliam construct factorization algebras of observables for classical and quantum field theories on manifolds without boundary. 
They also exhibit a $P_0$ ($+1$-shifted Poisson, i.e. the bracket has degree $+1$ as an operator) structure on the factorization algebra of classical observables. The main ingredient in their constructions at the classical level is the Batalin-Vilkovisky (BV) formalism \autocite{originalBV}, which has a geometric interpretation in terms of (--1)-shifted symplectic structures \autocite{geometryofBV}, \autocite{AKSZ}, \autocite{ptvv}. 

Our aim in the present work is to execute the same constructions for certain field theories on manifolds with boundary. In Chapter \ref{chap: freequantum}, based on joint work with Owen Gwilliam and Brian Williams, we discuss the quantization of the classical factorization algebras described here for \emph{free} bulk-boundary systems.
Finally, in Chapter \ref{chap: interactingquantum}, we perform the quantization of the factorization algebras for general, i.e. interacting, bulk-boundary systems.

In the version of the BV formalism of \autocite{cost}, \autocite{CG1}, and \autocite{CG2}, a classical perturbative BV theory on the manifold without boundary $\mathring{M}:=M\backslash \bdyM$ is specified by a $\Z$-graded vector bundle $E\to \mathring{M}$ (whose space of sections $\sE$ furnishes the space of fields for the theory) together with a $(-1)$-shifted fiberwise symplectic structure $\ip_{E}$ and a cohomological degree 0 local action functional $S$ satisfying the classical master equation $\{S,S\}=0$. 
One may easily extend $E$, $\ip_{E}$, and $S$ to $M$; 
however, the presence of the boundary $\bdyM$ makes the consideration of the classical master equation more subtle, since, in almost all cases, one uses integration by parts to verify that $\{S,S\}=0$ on $\mathring{M}$.
Put in a different way, the space of fields ceases to be (--1)-shifted symplectic once we pass from $\mathring{M}$ to $M$.  

Our solution is to impose boundary conditions on the fields (the space $\sE$) so that the boundary terms obstructing the classical master equation vanish when restricted to the fields with this boundary condition. 
We note that the imposition of boundary terms is not a novel idea.
What is notable about our approach, however, is that we impose boundary conditions in a way that is consistent with the interpretation of the BV formalism in terms of the geometry of shifted symplectic spaces.
Namely, we are careful to ensure that the imposition of the boundary conditions happens in a homotopically coherent way.
The payoff is two-fold. 
First, we find that the space of fields with the boundary condition imposed is indeed (--1)-shifted symplectic, using general facts about shifted symplectic geometry. 
Second, we guarantee that our constructions naturally take into account the gauge symmetry of the problem---there is no need to separately impose boundary conditions on the fields, ghosts, etc. 

Furthermore, we ask for our boundary conditions to be suitably local (see Definition \ref{def: bdycond}). 
The locality condition ensures that even after the boundary condition is imposed, the fields remain the space of global sections of a sheaf on $M$: this allows us to carry through the constructions of \autocite{CG2} almost without change, namely we can construct factorization algebras of observables by following \autocite{CG2} \emph{mutatis mutandis}.
 
Let us say a bit more about the class of field theories we consider. 
These are the theories which are so-called ``topological normal to the boundary'' (first defined in \autocite{butsonyoo}; see also Definition \ref{def: tnbft}). 
We will often use the acronym TNBFT to stand for ``field theory which is topological normal to the boundary.'' For a TNBFT we require that, roughly speaking, in a tubular neighborhood $\tubnhd\cong \bdyM\times [0,\epsilon)$ of the boundary $\bdyM$, the space of fields $\sE$ (the sheaf of sections of the bundle $E$ introduced above) admits a decomposition
\begin{equation}
\sEb\mid_{\tubnhd}\hotimes_\beta \Omega^{\bullet}_{[0,\epsilon)},
\end{equation}
where $\sEb$ is a 0-shifted symplectic space living on the boundary $\bdyM$, and arising, like $\sE$, from a $\Z$-graded bundle $\Eb\to \bdyM$.
We require all relevant structures on $\sE$ to decompose in a natural way with respect to this product structure.
A pair consisting of a TNBFT $\sE$ and a boundary condition for $\sE$ will be termed a \emph{classical bulk-boundary system}. Though this term properly applies to a much more general class of objects, we use it here to avoid bulky terminology.
The reason for restricting our attention to TNBFTs is that such theories have a prescribed simple behavior near the boundary $\bdyM$. This behavior makes it easier to verify the Weiss cosheaf condition for the factorization algebra of observables of a classical bulk-boundary system. 
This behavior also enables us to study heat kernel renormalization of bulk-boundary systems in Chapter \ref{chap: interactingquantum}.

Important examples of TNBFTs are of course the topological theories: BF theory, Chern-Simons theory, the Poisson sigma model, and topological mechanics. 
However, even for topological theories, we may consider boundary conditions which are not topological in nature. 
For example, one may study in Chern-Simons theory the chiral Wess-Zumino-Witten boundary condition, whose definition requires the choice of a complex structure on the boundary surface.
One may also study theories which have a dependence on arbitrary geometric structures on the boundary $\bdyM$, as long as their dependence in the direction normal to $\bdyM$ is topological. Mixed BF theory, Example \ref{ex: mixedbf} is one such example; it depends on the choice of a complex structure on the boundary of a three manifold of the form $N\times \R_{\geq 0}$, where $N$ is a surface.

As discussed in Chapter \ref{chap: intro}, the ``universal bulk theories'' of Poisson BV theories are naturally TNBFTs,
and so fall in the class of theories we consider.
For the sake of brevity, we do not discuss universal bulk theories at any length; however, the techniques presented here are designed to apply in particular to universal bulk theories (with their canonical boundary condition).

\section{Classical Bulk-Boundary Systems}
\label{sec: tnbfts}
In this section, we introduce the basic classical field-theoretic objects with which we work, namely we introduce the notion of a classical bulk-boundary system.
We use the Batalin-Vilkovisky (BV) formalism as the natural language for quantum field theory, which encodes both the equations of motion and the symmetries of a field theory in an intrinsically homotopically invariant fashion. 
The theories we consider are called ``topological normal to the boundary'' (TNBFT), a notion which we discuss in Section \ref{sec: classTNBFTs}.  
The definition we use here is due to Butson and Yoo \autocite{butsonyoo}. For a manifold with boundary $M$, a TNBFT is a field theory on  $\mathring M$ (the interior of $M$) which has a specified behavior of the field theory near the boundary. 
As the name suggests, the solutions to the equations of motion of a TNBFT are constant along the direction normal to the boundary $\bdyM$ in some tubular neighborhood $\tubnhd\cong \bdyM\times [0,\epsilon)$ of the boundary.

In Section \ref{sec: boundcnds}, we introduce boundary conditions for TNBFTs and discuss the homotopical interpretation of the resulting bulk-boundary systems.

\subsection{Classical Field Theories on Manifolds with Boundary}
\label{sec: classTNBFTs}

Before elaborating on the definition of a TNBFT, let us first recall what is meant by a perturbative classical field theory on $\mathring M$. The definition here follows \autocite{CG2}.

\begin{definition}
\label{def: clbv}
A \textbf{perturbative classical BV theory} on $\mathring M$ consists of
\begin{enumerate}
\item a $\Z$-graded vector bundle $E\to \mathring M$ \index[notation]{E@$E$},
\item sheaf maps 
\begin{equation}
\ell_k: (\sE[-1])^{\hotimes_\beta k} \to \sE[-1] 
\end{equation}
\index[notation]{ell@$\ell_k$}
of degree $2-k$,
\item and a degree $-1$ bundle map 
\begin{equation}
\ip_{loc} : E\otimes E \to \mathrm{Dens}_{\mathring M},
\end{equation}
\index[notation]{iploc@$\ip_{loc}$}
The objects $E,\ell_k, \ip_{loc}$ are required to satisfy the following properties:
\begin{itemize}
\item the maps $\ell_k$ turn $\sE[-1]$ into a sheaf of $L_\infty$ algebras;
\item the maps $\ell_k$ are polydifferential operators (we will call the pair $(\sE[-1],\ell_k)$ a \textbf{local $L_\infty$ algebra} on $\mathring M$;
\item the complex $(\sE,\ell_1)$ is elliptic;
\item the pairing $\ip_{loc}$  is fiberwise non-degenerate;
\item let us denote by $\ip$ \index[notation]{ip@$\ip$} the induced map of precosheaves 
\begin{equation}
\sE_c\otimes \sE_c \to \underline{\RR}
\end{equation}
given by using the fiberwise pairing $\ip_{loc}$, and then integrating the result. (Here, $\underline \RR$ is the constant pre-cosheaf assigning $\RR$ to each open subset.) We use the same notation for the pairing induced on $\sE_c[-1]$. We require that, endowed with this pairing, $\sE[-1]$ becomes a precosheaf of cyclic $L_\infty$ algebras (on $\sE_c[-1]$, the pairing has degree --3). 
\end{itemize}
\end{enumerate}
\end{definition}
We will often think of $\sE$ as a sheaf of 0-shifted symplectic formal moduli problems on $\mathring M$.
This is simply a way of recasting the axioms above in a more geometric language.
\begin{remark}
\label{rmk: classfcnl}
To a field theory in the above sense, we may associate the following element of 
\begin{equation}
\prod_{k\geq 1} \CVS\left(\sE_c^{\hotimes_\beta k}, \RR \right)^{S_k},
\end{equation}
(the space of functionals on $\sE$):
\begin{equation}
S(\varphi) = \sum_{k\geq 1}\frac{1}{(k+1)!} \ip[\varphi, \ell_k(\varphi,\ldots, \varphi)],
\end{equation}
which is usually denoted the \textbf{classical action functional} of the theory.
We also make the definition 
\begin{equation}
I(\varphi ) = \sum_{k\geq 2} \frac{1}{(k+1)!}\ip[\varphi, \ell_k(\varphi,\ldots, \varphi)]
\end{equation}
(i.e. $I$ remembers only the brackets of arity at least 2).
This is the \textbf{classical interaction functional} of the theory.
The cyclicity of the brackets $\ell_k$ with respect to the pairing $\ip$ guarantees that $S$ and $I$ are symmetric in their inputs.
\end{remark}

\begin{remark}
The ellipticity of the complex $(\sE, \diff)$ is not a necessary requirement from the standpoint of physics, and in fact, there is no need for this requirement in the definition of a classical BV theory. We include it here because the theory of elliptic PDE furnishes a wealth of tools which make it possible to develop a framework for renormalization. This latter point will be clearer in Chapter \ref{chap: interactingquantum}.
\end{remark}

We now specify the precise definition of a TNBFT. The following definition is adapted from Definitions 3.8 and 3.9 of \autocite{butsonyoo}.

\begin{definition}
\label{def: tnbft}
A \textbf{field theory  on $M$ which is topological normal to the boundary} is specified, as in Definition \ref{def: clbv}, by a $\Z$-graded bundle $E\to M$, a collection of sheaf maps $\ell_k$, and a bundle map $\ip_{loc}$. We also specify the following data:
\begin{enumerate}
\item a $\Z$-graded bundle $\Eb\to \bdyM$ \index[notation]{Eb@$\Eb$};
\item a collection of sheaf maps
\begin{equation}
\ell_{k,\partial} : (\sEb[-1])^{\otimes k}\to \sEb[-1];
\end{equation}
\index[notation]{ellb@$\ell_{k,\del}$}
\item and a degree 0 bundle map 
\begin{equation}
\ip_{loc,\partial}: \Eb \otimes \Eb \to \densbdyM.
\end{equation}
\index[notation]{iplocb@$\ip_{loc,\del}$}
\item  In some tubular neighborhood $\tubnhd\cong \bdyM\times [0,\epsilon)$ of $\bdyM$, an isomorphism
\begin{equation}
\phi :E\mid_{\tubnhd}\cong \Eb\boxtimes \Lambda^\bullet T^* [0,\epsilon).
\end{equation}

\end{enumerate}

We require the following to hold:
\begin{itemize}
\item The $k$-th graded summand $E^k$ of $E$ is zero for $|k|>>0$.
\item When $E$, $\ell_k$, and $\ip_{loc}$ are restricted to $\mathring M$, the resulting data satisfy the conditions to be a classical BV theory on $\mathring M$.
\item The data $(\sEb,\ell_{k,\partial}, \ip_{loc,\partial})$ satisfy all the requirements of Definition \ref{def: clbv} (with the degree $-1$ pairing $\ip_{loc}$ replaced by the degree 0 pairing $\ip_{loc,\partial}$).
\item The isomorphism $\phi$ respects all relevant structures. More precisely, we require
\begin{itemize}
\item Over $\tubnhd$, the induced isomorphism 
\begin{equation}
\varphi: \sE[-1]\mid_{\tubnhd} \cong \sEb[-1]\hotimes_\beta \Omega^\bullet_{[0,\epsilon)}
\end{equation}
is an isomorphism of (sheaves of) $L_\infty$ algebras. Here, the target of $\varphi$ has the $L_\infty$-algebra structure induced from its decomposition as a tensor product of an $L_\infty$ algebra with a commutative differential graded algebra.
\item Over $\tubnhd$, the fiberwise pairing $\ip_{loc}$ is identified with the tensor product of the pairing $\ip_{loc,\partial}$ and the wedge product pairing $\bigwedge$ on $\Lambda^\bullet T^* [0,\epsilon)$. 
\end{itemize}
\end{itemize}
\end{definition}

Before listing examples of TNBFTs, let us mention a few of their properties which follow directly from the definitions. First, however, we need to establish a definition:

\begin{definition}
Let $\iota: \bdyM \to M$ denote the inclusion. Given a TNBFT $(\sE,\sEb,\cdots)$, the \textbf{canonical submersion} is the composite sheaf map
\begin{equation}
\rho: \sE \to \iota_* \sEb 
\end{equation}
\index[notation]{rho@$\rho$}
which arises as the composite of 
\begin{enumerate}
\item The restriction
\begin{equation}
\sE(U) \to \sE(U\cap \tubnhd),
\end{equation}
\item the isomorphism
\begin{equation}
\sE(U\cap \tubnhd) \cong (\sEb\hotimes_\beta \Omega^\bullet_{[0,\epsilon)})(U\cap \tubnhd),
\end{equation}
\item
and the ``pullback to $t=0$'' map
\begin{equation}
(\sEb\hotimes_\beta \Omega^\bullet_{[0,\epsilon)})(U\cap \tubnhd) \to \sEb(U\cap \bdyM).
\end{equation}
\end{enumerate}
\end{definition}

\begin{remark}
One can extract a BV-BFV theory (\autocite{CMRquantumgaugetheories}) from a classical TNBFT, using $\sE$ as the bulk space of fields and $\sEb$ as the boundary fields. The assumptions in the definition guarantee that the general procedure of symplectic reduction in \autocite{CMRquantumgaugetheories} outputs $\sEb$ as the phase space for $\sE$.
\end{remark}

\begin{remark}
Occasionally, when we wish to emphasize the interpretation of $\sE$ (respectively, $\sEb$) as formal moduli spaces (as in \autocite{CG2}), we will refer to $\sE$ (respectively, $\sEb$) as a \textbf{local, (--1)-shifted (respectively, 0-shifted) symplectic formal moduli problem}.
\end{remark}

The following facts follow in a straightforward manner from the definitions.

\begin{proposition}
\label{prop: propsofTNBFTs}
\begin{enumerate}
\item The canonical surjection $\rho$, when thought of as a map
\begin{equation}
\sE[-1] \to \iota_* \sEb[-1],
\end{equation}
is a strict map of sheaves of $L_\infty$ algebras on $M$ (i.e. it strictly intertwines the operations $\ell_k$ and $\ell_{k,\partial}$).
\item The only failure of $(\sE_c[-1],\ell_k, \ip)$ to be a precosheaf of cyclic $L_\infty$ algebras on $M$ is encoded in the equation 

\begin{equation}
\label{eq: boundaryterm}
\ip[\ell_1 e_1, e_2]+(-1)^{|e_1|} \ip[e_1,\ell_1 e_2] = \ip[\rho e_1, \rho e_2]_{\partial},
\end{equation} 
where $e_1$ and $e_2$ are compactly-supported sections of $E\to M$ (the support of such sections may intersect $\bdyM$). 
\end{enumerate}
\end{proposition}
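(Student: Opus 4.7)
My plan is to reduce both statements to the product structure of fields near $\bdyM$ together with Stokes's theorem on $[0,\epsilon)$.

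For part (1), I unpack $\rho$ as its three-step composite: the restriction $\sE \to \sE|_{\tubnhd}$, the isomorphism $\varphi$, and the map induced by evaluation at $t=0$ on the de Rham factor. Restriction is a strict $L_\infty$-morphism because the $\ell_k$ are sheaf maps; $\varphi$ is a strict $L_\infty$-isomorphism by hypothesis; and $\mathrm{ev}_0\colon \Omega^\bullet_{[0,\epsilon)} \to \RR$ is a strict map of CDGAs, so tensoring with $\id_{\sEb}$ produces a strict $L_\infty$-morphism out of $\sEb \hotimes_\beta \Omega^\bullet_{[0,\epsilon)}$. Composing the three gives the claim.

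For part (2), I must verify (a) that $(\sE[-1], \{\ell_k\})$ satisfies the $L_\infty$-axioms on all of $M$, and (b) that the only obstruction to cyclicity of $\ip$ with respect to $(\ell_k)$ is precisely Equation \eqref{eq: boundaryterm}. The $L_\infty$ axioms are local; they hold on $\mathring M$ by the classical BV hypothesis there and on $\tubnhd$ because $\sEb \hotimes_\beta \Omega^\bullet_{[0,\epsilon)}$ inherits an $L_\infty$-structure as the tensor product of an $L_\infty$-algebra with a CDGA and $\phi$ is an isomorphism. Since $\{\mathring M, \tubnhd\}$ covers $M$, (a) follows.

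For cyclicity, I use a partition of unity subordinate to $\{\mathring M, \tubnhd\}$ to write each input $e_i = e_i^\circ + e_i^T$, with $e_i^\circ$ compactly supported in $\mathring M$ and $e_i^T$ in $\tubnhd$. Contributions with full support in $\mathring M$ vanish by the classical BV axiom. The remaining contributions are local to $\tubnhd$, so I transport them through $\phi$ to $\sEb \hotimes_\beta \Omega^\bullet_{[0,\epsilon)}$, where the brackets and pairing take the explicit forms
\begin{align*}
\ell_k(e_1 \otimes \omega_1, \ldots, e_k \otimes \omega_k) &= \pm\, \ell_{k,\partial}(e_1,\ldots,e_k) \otimes (\omega_1 \wedge \cdots \wedge \omega_k)\quad (k \geq 2),\\
\ell_1(e \otimes \omega) &= (\ell_{1,\partial} e)\otimes \omega + (-1)^{|e|}\, e \otimes d\omega,\\
\ip[e\otimes \omega, e'\otimes \omega'] &= \pm \ip[e,e']_\partial \int_{[0,\epsilon)} \omega\wedge \omega'.
\end{align*}
For $k \geq 2$, cyclicity of $\ell_{k,\partial}$ on $\sEb$ together with strict cyclicity of the wedge yields cyclicity of $\ell_k$ with no boundary term. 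For $k=1$, the $\ell_{1,\partial}$-contributions cancel between the two halves of the Leibniz combination by cyclicity of $\ell_{1,\partial}$; the surviving $d$-contributions assemble via the graded Leibniz rule into $\pm \ip[e_1,e_2]_\partial \int_{[0,\epsilon)} d(\omega_1\wedge \omega_2)$, which Stokes's theorem on $[0,\epsilon)$ converts into an evaluation of $\omega_1 \wedge \omega_2$ at $t=0$. Chasing the definition of $\rho$, this is precisely $\ip[\rho e_1, \rho e_2]_\partial$.

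The main labor is sign-tracking: Koszul signs for the tensor product of an $L_\infty$-algebra with a CDGA, the placement of form degrees in the pairing, and the Stokes orientation on $[0,\epsilon)$ all require careful bookkeeping. The geometric content of the argument is otherwise transparent.
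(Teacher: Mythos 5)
Your argument is correct and is essentially the verification the paper has in mind: the paper gives no written proof, asserting only that Proposition \ref{prop: propsofTNBFTs} follows ``in a straightforward manner from the definitions,'' and the straightforward route is precisely yours---use part (4) of Definition \ref{def: tnbft} to reduce everything to the product structure $\sEb\hotimes_\beta\Omega^\bullet_{[0,\epsilon)}$ over $\tubnhd$, let the interior cyclicity axiom kill contributions away from the boundary, and extract the boundary term \eqref{eq: boundaryterm} from Stokes's theorem on $[0,\epsilon)$, while part (1) is the composite of three strict morphisms exactly as you say. When writing it up, just make explicit that the cross terms in your partition-of-unity expansion (one input supported in $\mathring M$, the other in $\tubnhd$) have integrand supported in $\tubnhd$ and produce no $t=0$ contribution because the interior piece vanishes in a neighborhood of $\bdyM$ (equivalently, $\rho$ kills it), so only the $\tubnhd$-$\tubnhd$ term feeds the boundary pairing.
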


\begin{remark}
We avoid interpreting TNBFTs in terms of local action functionals solving the classical master equation because of the subtleties which arise in defining the Poisson bracket of local functionals on manifolds with boundary. These subtleties will disappear once we impose boundary conditions.
\end{remark}

Let us move on to discuss some examples of TNBFTs.

\begin{example}
\label{ex: toplmech}
Let $M=[a,b]$, and $V$ a symplectic vector space.  For $\sE$ we take $\Omega^\bullet_M\otimes V$ with the de Rham differential and the ``wedge-and-integrate'' pairing. It is straightforward to verify that $\sE$ is a TNBFT, with $\sEb= V\oplus V$. This theory is called \textbf{topological mechanics}.
\end{example}

\begin{example}
\label{ex: bf}
Let $M$ be an oriented $n$-manifold with boundary $\bdyM$; let $\fg$ be a finite-rank Lie algebra. \textbf{(Topological) BF theory on $M$} has space of fields
\begin{equation}
\sE = \left(\Omega^\bullet_{M}\otimes \fg[1]\right)\oplus \left(\Omega^\bullet_{M}\otimes \fg^\vee[n-2]\right). 
\end{equation}
The $L_\infty$ structure on $\sE[-1]$ is the natural such structure obtained from considering $\sE[-1]$ as the tensor product of the differential graded commutative algebra $\Omega^\bullet_M$ with the graded Lie algebra $\fg\oplus \fg[n-3]$.
The pairing $\ip_{loc}$ is given by wedge product of forms, followed by projection onto the top form degree.

The boundary data are given similarly by
\begin{equation}
\sEb = \left(\Omega^\bullet_{\bdyM}\otimes \fg[1]\right)\oplus \left(\Omega^\bullet_{\bdyM} \otimes \fg^\vee[n-2]\right)
\end{equation}
with analogous $L_\infty$ structure and pairing $\ip_{loc, \partial}$.

In general, if $\fg$ is an $L_\infty$ algebra, the same definitions can be made. Furthermore, if $M$ is not orientable, one can make the same definitions by replacing the $\fg^\vee$-valued forms with $\fg^\vee$-valued \emph{twisted} forms.
\end{example}

\begin{example}
\label{ex: mixedbf}
Let $\Sigma$ be a Riemann surface and $\fg$ a Lie algebra. \textbf{Mixed BF theory} is a theory on $\Sigma \times \R_{\geq 0}$ whose space of fields is 
\begin{equation}
\sE = \Omega^{0,\bullet}_{\Sigma}\hotimes_\beta \Omega^\bullet_{\R_{\geq 0}}\otimes \fg[1]\oplus\Omega^{1,\bullet}_{\Sigma}\hotimes_\beta \Omega^\bullet_{\R_{\geq 0}}\otimes \fg^\vee;
\end{equation}
the differential on the space of fields is $\bar \partial+ d_{dR, t}$ (where $t$ denotes the coordinate on $\R_{\geq 0}$); the only non-zero bracket of arity greater than 1 is the two-bracket, which arises from the wedge product of forms and the Lie algebra structure of $\fg$ (and its action on $\fg^\vee$).
One defines $\ip_{loc}$ in an analogous way to the corresponding object in topological BF theory.

The boundary data are given by the space of boundary fields
\begin{equation}
\sEb = \Omega^{0,\bullet}_\Sigma \otimes \fg[1]\oplus \Omega^{1,\bullet}_\Sigma \otimes \fg,
\end{equation}
with similar definitions for the  brackets and pairing.

Mixed BF theory may be similarly formulated on any manifold of the form $X\times N$, where $X$ is a complex manifold and $N$ is a smooth manifold with boundary.
\end{example}

\begin{example}
Let $M$ be an oriented 3-manifold with boundary, and $\fg$ a Lie algebra with a symmetric, non-degenerate pairing $\kappa$. \textbf{Chern-Simons theory} on $M$ is given by space of fields
\begin{equation}
\sE = \Omega^\bullet_M\otimes \fg[1];
\end{equation}
$\sE[-1]$ has the $L_\infty$ structure induced from considering it as the tensor product of the commutative differential graded algebra $\Omega^\bullet_M$ with the Lie algebra $\fg$. The pairing $\ip_{loc}$ uses the wedge product of forms and the invariant pairing $\fg$. 

The boundary data are encoded in the boundary fields
\begin{equation}
\sEb = \Omega^\bullet_\bdyM \otimes \fg[1],
\end{equation}
with brackets and pairing $\ip_{loc, \partial}$ defined similarly to the analogous structures on the bulk fields $\sE$.

We note that, on manifolds of the form $\Sigma \times \R_{\geq 0}$, Chern-Simons theory is a deformation of mixed BF theory (see \autocite{ACMV} or \autocite{GW} for details). 
\end{example}

\begin{example}
Let $N$ be a manifold without boundar, and let $(\sG, \ell_1, \ell_2)$ be an elliptic differential graded Lie algebra on $N$ (i.e. $\sG$ is a local differential graded Lie algebra such that the complex $(\sG,\ell_1)$ is elliptic).
$\sG$-BF theory is the theory on $N\times \RR_{\geq 0}$ with sheaf of fields
\begin{equation}
\sG\hotimes_\beta \Omega^\bullet_{\RR_{\geq 0}}[1]\oplus \sG^!\hotimes_\beta \Omega^\bullet_{\RR_\geq 0}[-1];
\end{equation}
the brackets and pairing on this sheaf of fields are defined by analogy with the preceding examples.
\end{example}

\begin{remark}
Many of the theories we consider are fully topological in nature (topological BF theory, Chern-Simons theory, the Poisson sigma model, and topological mechanics are all of this form). However, we will see that even for topological theories, we can choose a non-topological boundary condition. This is notably true for Chern-Simons theory.
\end{remark}

\begin{remark}
As noted in the introduction, one of the main sources of examples of TNBFTs is the class of so-called ``degenerate'' BV theories (\autocite{butsonyoo}). 
Every degenerate theory $\cT$ on a manifold $N$ gives rise to a (``non-degenerate'') TNBFT on $N\times \RR_{\geq 0}$, which Butson and Yoo call the ``universal bulk theory for $\cT$''. 
We refer the reader to \autocite{butsonyoo}, Definitions 2.34 and 3.18 for the definitions of these notions, since we do not use them explicitly at any length. 
We note only that BF theory, mixed BF theory, the Poisson sigma model, Chern-Simons theory, and $\sG$-BF theory (on spaces of the form $N\times \R_{\geq 0}$) arise in this way. 
\end{remark}

\subsection{Boundary conditions and homotopy pullbacks}
\label{sec: boundcnds}
In this section, we study boundary conditions for TNBFTs.
Our main goal is to provide a derived geometric interpretation for the imposition of boundary conditions, as discussed in Chapter \ref{chap: intro}.
\subsubsection{Boundary conditions}
Equation \eqref{eq: boundaryterm} tells us that the space of fields $\sE$ of a TNBFT on $M$ is \emph{not} $(-1)$-shifted symplectic. However, we do have a map $\rho: \sE(M)\to \sEb(\bdyM)$ and $\sEb(\bdyM)$ has a $0$-shifted symplectic structure. We will construct a Lagrangian structure on the map $\rho$, using Equation \eqref{eq: boundaryterm}. If we are given another Lagrangian $\sL(M)\to \sEb(M)$, then by Example 3.2 of \autocite{calaquedag}, we can expect that the homotopy fiber product \[
\sE(M)\times^h_{\sEb(\bdyM)}\sL(M)
\]
have a $(-1)$-shifted symplectic structure. We will call this second Lagrangian $\sL(M)$ a \emph{boundary condition}. The purpose of this section is to carry out this general philosophy more precisely, and in a sheaf-theoretic manner on $M$.

\begin{definition}
\label{def: loclagrangianstr}
Let $\sF$ be the sheaf of sections of a bundle $F\to M$ and suppose that $\sF[-1]$ is a local $L_\infty$-algebra on $M$ with brackets $\ell_i$. Suppose that $\sG$ is a 0-shifted symplectic local formal moduli problem on $\partial M$ (i.e. $\sG$ satisfies the axioms that $\sEb$ does in Definition \ref{def: tnbft}), and $\rho: \sF[-1]\to \iota_*\sG[-1]$ a map of sheaves with the following properties:
\begin{itemize}
\item $\rho$ intertwines the $L_\infty$ brackets (strictly), and
\item $\rho$ is given by the action of a differential operator acting on $\sF$, followed by evaluation at the boundary, followed by a differential operator 
\begin{equation}
\cinfty(\bdyM, F\mid_{\bdyM})\to \sG.
\end{equation}
\end{itemize}

Suppose 
\begin{equation}
h_F: F\otimes F \to \densM.
\end{equation}
is a bundle map. We write $h_{loc}$ for the induced map
\begin{equation}
h_{loc}: \sF\otimes \sF \to \Omega^n_{M,tw}
\end{equation}
and $h$ for the induced pairing on compactly-supported sections of $F$.
\begin{enumerate}
\item  The pairing $h$ is \textbf{a constant local isotropic structure on $\rho$}  precisely if
\begin{equation}
\label{eq: diff}
h(\ell_1 f_1, f_2)+(-1)^{|f_1|}h(f_1, \ell_1 f_2) = \ip[\rho f_1, \rho f_2]_{\sG},
\end{equation}

where $\ip_{\sG}$ is the symplectic pairing on $\sG$, and
\begin{equation}
\label{eq: higherbrackets}
h( \ell_k(f_1,\cdots, f_k), f_{k+1}) \pm h(f_1, \ell_k (f_2, \cdots, f_{k+1}))=0
\end{equation}
for $k>1$.
\item A constant local isotropic structure induces a map of complexes of sheaves
\begin{align}
\Psi:Cone(\rho) &\to \sF_{c}^\vee,\\
\Psi(f,g)(f')&=h(f,f')-\omega(g,\rho(f')).
\end{align}
We say that the isotropic structure is \textbf{Lagrangian} (i.e. that there is a constant local Lagrangian structure on $\rho$) if this map of complexes of sheaves is a quasi-isomorphism. Here, the symbol $\vee$ denotes the sheaf which, to an open $U\subset M$, assigns the strong continuous dual to $\sF_c(U)$. In other words, $\sF^\vee_c$ is the sheaf of distributional sections of $F^!$. 
\end{enumerate}
\end{definition}

\begin{remark}
Let us give a more geometric interpretation of the definition of isotropic structure. To this end, let $\fg$ and $\fh$ be $L_\infty$ algebras and $\rho: \fg\to \fh$ a strict $L_\infty$ map. The algebras $\fg$ and $\fh$ define formal moduli spaces $B\fg$ and $B\fh$. Let us suppose that $B\fh$ is 0-shifted symplectic with symplectic form $\ip_{\fh}$. Then, an isotropic structure on the map $\rho$ is an element $h\in \Omega^2_{cl}(B \fg)$ (the complex of closed two-forms on $B\fg$) such that 
\begin{equation}
\label{eq: isotropicdef}
Q^{TOT} h = \rho^*\left( \ip_{\fh}\right),
\end{equation}
where $Q^{TOT}$ is the total differential on $\Omega^2_{cl}(B\fg)$, which includes a Chevalley-Eilenberg term and a de Rham term. If one requires that both $\ip_{\fh}$ and $h$ be constant on $B\fh$ (i.e. are specified by elements of $\Lambda^2 (\fh[1])^\vee$), then one obtains precisely Equations \eqref{eq: diff} and \eqref{eq: higherbrackets} from Equation \eqref{eq: isotropicdef}.
\end{remark}

\begin{remark}
Definition \ref{def: loclagrangianstr} is an adaptation to the setting of local $L_\infty$ algebras of the corresponding definitions for derived Artin stacks given in Section 2.2 of \autocite{ptvv}. 
\end{remark}

\begin{lemma}
\label{lem: fieldslagrangian}
The pairing $\ip_{\sE}$ endows the map $\sE\to \sEb$ with a constant local Lagrangian structure.
\end{lemma}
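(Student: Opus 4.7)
The proof proceeds in two stages: first, showing that $\ip_\sE$ defines an isotropic structure on $\rho$; second, upgrading this to the Lagrangian property.

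For the isotropic structure, both required identities follow directly from Proposition~\ref{prop: propsofTNBFTs}. Applied with $F = E$ and $\sG = \sEb$, Equation~\eqref{eq: diff} is precisely the boundary identity~\eqref{eq: boundaryterm}. For Equation~\eqref{eq: higherbrackets} I would invoke the second statement of the proposition: the \emph{only} failure of $(\sE_c[-1], \ell_\bullet, \ip)$ to be a cyclic precosheaf of $L_\infty$ algebras is concentrated in the $k=1$ boundary term, so cyclicity for $k \geq 2$ holds strictly. Equivalently, the higher brackets $\ell_k$ are pointwise operations at the bundle level whose cyclicity is a fiberwise statement that integration by parts does not spoil.

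For the Lagrangian property, one must verify that $\Psi : \mathrm{Cone}(\rho) \to \sE_c^\vee$ is a quasi-isomorphism of sheaves on $M$. Since this is a local statement, my plan is to check it on a cofinal family of open subsets. If $U \subset \mathring M$ is disjoint from $\bdyM$, then $\iota_*\sEb(U) = 0$, so $\mathrm{Cone}(\rho)(U) = \sE(U)$ and $\Psi$ reduces to the familiar pairing-induced map $\sE(U) \to \sE_c(U)^\vee$. This is a quasi-isomorphism by the standard Atiyah--Bott-type argument used in \autocite{CG2}: the fiberwise non-degeneracy of $\ip_{loc}$ identifies $E \cong E^!$, and the elliptic complex $(\sE, \ell_1)$ has the same cohomology as its distributional version.

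For opens $U$ meeting $\bdyM$, it suffices to treat a cofinal family of product neighborhoods $U \cong U_0 \times [0, \delta) \subset \tubnhd$. The TNBFT product decomposition from Definition~\ref{def: tnbft} gives $\sE(U) \cong \sEb(U_0) \hotimes_\beta \Omega^\bullet_{[0, \delta)}$ and identifies $\rho$ with evaluation at $t = 0$. Under this decomposition, both $\mathrm{Cone}(\rho)(U)$ and $\sE_c(U)^\vee$ split as tensor products over the two factors $U_0$ and $[0, \delta)$, and $\Psi$ respects this splitting. The $U_0$-factor of the quasi-isomorphism reduces to the interior case applied to the $0$-shifted symplectic structure on $\sEb$ over the boundaryless manifold $\bdyM$, while the $[0, \delta)$-factor amounts to an elementary one-dimensional computation comparing the cone of $\mathrm{ev}_0 : \Omega^\bullet_{[0, \delta)} \to \RR$ with the distributional dual of $\Omega^\bullet_c([0, \delta))$. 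I expect the main obstacle to be controlling these splittings at the level of differentiable (or convenient) vector spaces rather than merely at the level of underlying vector spaces; in particular, one must track how $\hotimes_\beta$ interacts with the duals $\sE_c^\vee$ of sheaves of sections with boundary behavior, a task addressed in the tensor-product and inner-hom results forecasted in Section~\ref{sec: DVS} and the Appendix.
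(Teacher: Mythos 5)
Your proposal is correct and is essentially the paper's argument: isotropy from Proposition \ref{prop: propsofTNBFTs}, the interior case via the Atiyah--Bott lemma, and the boundary case via the product decomposition $\sE(U)\cong \sEb(U_0)\hotimes_\beta \Omega^\bullet_{[0,\delta)}$ with $\rho = \id\otimes \mathrm{ev}_0$. The one refinement worth noting is that near the boundary you do not need any K\"unneth-type statement or the boundary Atiyah--Bott map for $\sEb$ at all: both one-dimensional factors ($\cone(\mathrm{ev}_0)$ and $\Omega^\bullet_{c}([0,\delta))$, hence its dual) are acyclic with explicit contracting homotopies, so both $\cone(\rho)(U)$ and $\sE_c(U)^\vee$ are simply contractible --- the paper implements exactly this by writing the homotopy $K(e)(t)=\pm\int_t^\delta e_2(s)\,ds$ (the 1d homotopy tensored with the identity on $\sEb(U_0)$, then dualized) and by comparing $\cone(\rho)(U)$ with the cone of the identity on $\sEb(U_0)$. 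Phrasing it this way also dissolves the functional-analytic worry you flag about whether $\sE_c(U)^\vee$ factors as a completed tensor product of duals: one never needs that factorization, only the dual of an explicit homotopy on $\sE_c(U)$ itself.
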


\begin{proof}
Equation \eqref{eq: boundaryterm} implies Equation \eqref{eq: diff}, while the invariance of $\ip_\sE$ under the higher ($k>1$) brackets implies Equation \eqref{eq: higherbrackets}. Hence $\ip_\sE$ gives a constant local isotropic structure on $\rho$. We need only to check that the induced map of sheaves
\begin{equation}
\left( \sE[1]\oplus \iota_* \sEb, \diff+\Qb \pm \rho\right) \to \sE_c^\vee
\end{equation}
is a quasi-isomorphism of sheaves. We prove this as follows: first, we note that for an open $U\subset M$ which does not intersect $\partial M$, we are studying the map
\begin{equation}
\sE[1](U)\to \sE^!(U) \to \sE_c(U)^\vee,
\end{equation}
where the first map is induced from the isomorphism $E[1]\to E^!$ arising from $\ip_{E}$. The composite map is a quasi-isomorphism because the first map is an isomorphism and the second map is the quasi-isomorphism of the Atiyah-Bott lemma (see, e.g., Appendix D of \autocite{CG1}). 

Now, let $U\cong U'\times [0,\delta)$, where $U'$ is an open subset of $\partial M$ and $U\subset \tubnhd$. We will show that both 
\begin{equation}
(\sE(U)[1]\oplus \sEb(U'),Q_{cone})
\end{equation}
and $\sE_c^\vee$
are acyclic. 
Let's start with the latter complex. 
The following heuristic argument explains why one should expect the complex $\sE_c(U)$ to be acyclic.
Because the theory topological normal to the boundary, any $e\in \sE_c(U)$ is cohomologous to its evaluation at some~$t\in [0,\delta)$.
Because $e$ is compactly-supported, evaluation at a sufficiently large $t$ will give 0. 

To make the preceding argument precise, write $e=e_1+e_2dt$ for $e\in \sE_{c}(U)$, and consider the map 
\begin{align}
&K: \sE_{c}(U)[-1]\to \sE_{c}(U)\\
&K(e)(t)=(-1)^{|e_2|+1}\int_t^\delta e_2(s)ds;
\end{align}
one verifies that $K$ is a contracting homotopy for $\sE_{c}(U)$: 
\begin{align}
\diff K(e) &= e_2\wedge dt+(-1)^{|e_2|+1}\int_t^\delta \Qb e_2(s)ds\\
K\diff(e)(t)& = -e_1(\delta)+e_1(t)+(-1)^{|e_2|}\int_t^\delta \Qb e_2(s)ds,
\end{align}
so that $\diff K+K\diff=id$. 
Here, we have used the fact that $e_1(\delta)$ is 0 by the compact-support condition.
By duality we have constructed a contracting homotopy for $\sE_c^\vee$.

Next, consider $(\sE(U)[1]\oplus \sEb(U'), Q_{cone})$. There is a natural map $\phi: \sEb(U')\to \sE(U)\cong \sEb(U')\hotimes_\beta \Omega^\bullet_{[0,\epsilon)}([0,\delta))$ induced from the map $\CC\to \Omega^\bullet_{[0,\epsilon)}([0,\delta))$. Let $C$ denote the mapping cone for the identity map on $\sEb(U')$, and define the map
\begin{align}
\Phi: C&\to \cone(\rho)(U)\\
\Phi(\alpha_0,\alpha_1) &=(\phi(\alpha_0),\alpha_1).
\end{align}
It is straightforward to check that $\Phi$ is a quasi-isomorphism. Hence, $\cone(\rho)(U)$ is acyclic. 
We have shown that, for every point $x\in M$, and any neighborhood $V$ of $x$, we have a neighborhood $U\subset V$ of $x$ on which the sheaf map under study is a quasi-isomorphism. This map is therefore a quasi-isomorphism of complexes of sheaves. 
\end{proof}

We would now like to choose another Lagrangian $\sL\to \sEb$, so that the homotopy pullback $\sL\times^h_{\sEb}\sE$ is $(-1)$-shifted. To make this precise, we need to choose a model category in which to take the homotopy pullback, and we need to introduce an appropriate class of Lagrangians $\sL$ which we will study. The following definition is intended to fulfill the latter aim.

\begin{definition}
\label{def: bdycond}
Let $\sE$ be a TNBFT. A \textbf{local boundary condition} for $\sE$ is a subbundle $L\subset \Eb$ \index[notation]{L@$L$} endowed with brackets $\ell_{L,i}$ \index[notation]{ellL@$\ell_{L,i}$} making $L[-1]$ into a local $L_\infty$ algebra satisfying the following properties:
\begin{itemize}
\item The induced map $\sL\to \sEb$ of sheaves of sections on $\bdyM$ intertwines (strictly) the brackets,
\item $\ip_{\Eb}$ is identically zero on $L\otimes L$, and
\item there exists a vector bundle complement $L'\subset \Eb$ on which $\ip_{\Eb}$ is also zero.
\end{itemize}
Such data are considered a \emph{local} boundary condition because the map $\sL\to \sEb$ arises from a bundle map $L\subset \Eb$. Since we have no need for boundary conditions which are not of this form, we will use the term ``boundary condition'' when we mean ``local boundary condition.''
\end{definition}

\begin{example}
\label{ex: toplmechbc}
Recall from Example \ref{ex: toplmech} that a symplectic vector space $V$ provides a TNBFT on $\R_{\geq 0}$. It is straightforward to check that a Lagrangian subspace $L\subset V=\sEb$ gives a local boundary condition for this theory.
\end{example}

\begin{example}
\label{ex: bfbdycond}
One can define two boundary conditions for BF theory (Example \ref{ex: bf}).
Recall that the space of boundary fields is 
\[
\sEb = \Omega^\bullet_{\bdyM}\otimes \fg [1]\oplus \Omega^\bullet_{\bdyM,tw}\otimes \fg^\vee[n-2];
\]
we may take either of these two summands as a boundary condition. 
We will call the former boundary condition the $A$ condition and the latter the $B$ condition.
\end{example}

\begin{example}
For Chern-Simons theory on an oriented 3-manifold $M$, the sheaf of boundary fields is
\begin{equation}
\Omega^\bullet_{\bdyM}\otimes \fg[1];
\end{equation}
let us choose a complex structure on $\bdyM$. Then, it is straightforward to verify that $\Omega^{1,\bullet}_{\bdyM}\otimes \fg$ gives a local boundary condition, the \textbf{chiral Wess-Zumino-Witten boundary condition}. The Chern-Simons/chiral Wess-Zumino-Witten system for abelian $\fg$ is the central example of Chapter \ref{chap: freequantum}.
\end{example}

\subsubsection{The \condfieldsterm fields}
In this section, we define what we mean by a classical bulk-boundary system, and we give the promised homotopy-theoretic interpretation of the imposition of the boundary condition.
\begin{definition}
\label{def: clblkbdysystem}
Given a TNBFT $(\sE,\sEb,\ip)$ and a boundary condition $\sL\subset \sEb$, the \textbf{space of \condfieldsterm fields} $\condfields$ is the complex of sheaves 
\begin{equation}
\condfields(U):= \{e\in \sE(U)\mid \rho(e)\in (\iota_*\sL)(U)\}
\end{equation}
\index[notation]{EL@$\condfields$}
of fields in $\sE$ satisfying the boundary condition specified by $\sL$. In other words, $\condfields$ is the (strict) pullback $\sE\times_{\iota_*\sEb}(\iota_*\sL)$ taken in the category of presheaves of complexes on $M$.

A \textbf{classical bulk-boundary system} is a TNBFT together with a boundary condition.
\end{definition}

\begin{remark}
The term ``bulk-boundary system'' is appropriate for a much more general class of field-theoretic information. However, since we study only this specific type of bulk-boundary system, we omit qualifying adjectives from the terminology.
\end{remark}

\begin{lemma}
\label{ref: tildefieldsBV}
The brackets on $\sE[-1]$ descend to brackets on $\condfields[-1]$, and 
\begin{equation}
(\condfields,\ell_i,\ip_{\sE})
\end{equation}
forms a classical BV theory in the sense of Definition \ref{def: clbv}, except that $\condfields$ is not the sheaf of sections of a vector bundle on $M$.
\end{lemma}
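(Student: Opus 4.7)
The plan is to verify, step by step, that the two main algebraic pieces of structure — the $L_\infty$ brackets and the cyclic pairing — descend from $\sE$ to $\condfields$. The exemption of the vector-bundle condition in the statement reflects the fact that $\condfields$ is a strict pullback of sheaves of sections, which generally does not correspond to the sections of a single bundle on $M$.

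I would first show that the brackets $\ell_k$ restrict to $\condfields$. By Proposition \ref{prop: propsofTNBFTs}, the canonical map $\rho: \sE \to \iota_* \sEb$ is a strict morphism of sheaves of $L_\infty$ algebras, and by Definition \ref{def: bdycond} the inclusion $\sL \hookrightarrow \sEb$ is a strict $L_\infty$-subalgebra. Hence, given $e_1, \ldots, e_k \in \condfields(U)$, strictness of $\rho$ yields
\[
\rho\bigl(\ell_k(e_1, \ldots, e_k)\bigr) = \ell_{k,\partial}\bigl(\rho(e_1), \ldots, \rho(e_k)\bigr) \in \sL,
\]
so $\ell_k(e_1, \ldots, e_k) \in \condfields$. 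The $L_\infty$ relations for the restricted brackets are inherited automatically from those on $\sE[-1]$, since the pullback is strict in both factors.

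Next, I would verify that $\ip_\sE$ gives $\condfields_c[-1]$ the structure of a precosheaf of cyclic $L_\infty$ algebras. The cyclic invariance with respect to brackets of arity $k \geq 2$ already holds on all of $\sE_c$, and therefore restricts to $\condfields_c$. The only subtlety is the differential $\ell_1$: on $\sE_c$ it fails to be skew with respect to $\ip$, with obstruction equal to $\ip[\rho e_1, \rho e_2]_\partial$, by Equation \eqref{eq: boundaryterm}. But for $e_1, e_2 \in \condfields_c$, each $\rho(e_i)$ lies in $\sL$, and $\ip_{\Eb}$ vanishes on $\sL \otimes \sL$ by the Lagrangian condition in Definition \ref{def: bdycond}. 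Hence the boundary correction vanishes and cyclicity is restored.

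The remaining items from Definition \ref{def: clbv} — ellipticity and fiberwise non-degeneracy of the pairing — must be interpreted carefully in the absence of a bundle structure on $\condfields$. The main obstacle is conceptual rather than computational: one must phrase these analytical axioms for a subsheaf cut out by a local boundary condition. Ellipticity is automatic in the interior $\mathring M$, where $\condfields = \sE$, and near $\bdyM$ the product structure of $E$ together with the subbundle $L \subset \Eb$ and its complementary Lagrangian $L'$ ensure that $\condfields$ is carved out of $\sE$ by a well-posed boundary-value problem for the elliptic operator $\ell_1$. Non-degeneracy of the pairing, in the appropriate sheaf-theoretic sense, follows similarly from the Lagrangian and complementary-Lagrangian data on $\Eb$. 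Together with the algebraic descent established above, these observations give $\condfields$ the full BV-theoretic content required, modulo the indicated departure from the literal vector-bundle axiom.
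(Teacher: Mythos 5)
Your argument matches the paper's own proof: the brackets descend because $\condfields[-1]$ is a strict pullback of sheaves of $L_\infty$-algebras on $M$ — which is exactly your strictness-of-$\rho$ computation combined with $\sL\hookrightarrow\sEb$ being a subalgebra — and cyclicity holds because the only failure, recorded in Equation \eqref{eq: boundaryterm}, vanishes once each $\rho(e_i)$ lies in $\sL$ and $\ip_{\Eb}$ is zero on $L\otimes L$. Your closing paragraph on ellipticity and non-degeneracy goes beyond what the paper verifies (its proof stops at cyclicity, treating that as the only remaining point), so while it is harmlessly vague, it is not needed for the lemma as stated.
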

\begin{proof}
The first statement follows from the fact that $\condfields[-1]$ is a pullback of sheaves of $L_\infty$-algebras on $M$. The only thing that remains to be verified is that the the pairing $\ip_{\sE}$ is cyclic with respect to the brackets $\ell_i$, once we restrict to $\condfields$. By our assumptions, the only failure of the brackets to be cyclic for $\sE$ is captured in Equation \eqref{eq: boundaryterm}. Upon restriction to $\condfields$, however, the boundary term in that equation vanishes.
\end{proof}

The previous lemma shows that the pullback 
\begin{equation}\sE\times_{\iota_*\sEb}(\iota_*\sL)\end{equation}
in the category of presheaves of shifted $L_\infty$ algebras on $M$ has a $(-1)$-shifted symplectic structure. In the rest of this section, we explain why this is not an accident. 

As we have noted in Lemma \ref{lem: fieldslagrangian}, the map $\sE\to \iota_* \sEb$ is a Lagrangian map. 
Moreover, the map $\sL\to \sEb$ is also Lagrangian by assumption. 
We should therefore expect the homotopy pullback $\sE\times^h_{\iota_*\sEb}(\iota_*\sL)$ (in an appropriate model category) to have a $(-1)$-shifted symplectic structure.
The space of $\sL$-conditioned fields $\condfields$ is \emph{a priori} only the strict pullback.
However, on Lemma \ref{lem: tildefieldsmodel} below, we will show that $\condfields$ is indeed a model for this homotopy pullback. 

Let us first, however, describe the model category in which we would take the homotopy fiber product. The sheaves $\sE[-1],\iota_*\sEb[-1],\iota_*\sL[-1]$ are presheaves of $L_\infty$-algebras on $M$. In \autocite{hinichsheaves} (Theorem 2.2.1), a model structure on the category of such objects is given. The weak equivalences in this model category are those which induce quasi-isomorphisms of complexes of sheaves after sheafification, and the fibrations $f:M\to N$ are the maps such that $f(U):M(U)\to N(U)$ is surjective (degree by degree) for each open $U$ and such that for any hypercover $V_\bullet\to U$, the corresponding diagram 
\begin{equation}
\begin{tikzcd}
M(U)\ar[r]\ar[d] & \check{C}(V_\bullet,M)\ar[d]\\
N(U)\ar[r]& \check{C}(V_\bullet,N)
\end{tikzcd}
\end{equation}
is a homotopy pullback. This information about the model category is enough to show the following lemma:

\begin{lemma}
\label{lem: tildefieldsmodel}
In the model category briefly described in the preceding paragraph, the sheaf of \condfieldsterm fields $\condfields[-1]$ is a model for the homotopy pullback
 \begin{equation}(
 \sE[-1])\times^h_{\iota_*\sEb[-1]}(\iota_*\sL[-1])
 \end{equation}
 of presheaves of $L_\infty$-algebras on $M$.
\end{lemma}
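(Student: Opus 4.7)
The plan is to exhibit the strict pullback as a homotopy pullback by showing that one of the structure maps in the cospan is a fibration in Hinich's model structure on presheaves of $L_\infty$-algebras. Since the canonical submersion $\rho: \sE[-1] \to \iota_*\sEb[-1]$ is a better candidate than the other leg $\iota_*\sL[-1] \to \iota_*\sEb[-1]$ (the latter, being induced by a strict subbundle inclusion, is never levelwise surjective), I would show that $\rho$ is a fibration. Recall that fibrations in Hinich's model structure are characterized by (i) degreewise surjectivity on every open set and (ii) the hypercover compatibility recalled just above the statement of the lemma.

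For condition (i), I would use the TNBFT product decomposition $\sE\mid_{\tubnhd} \cong \sEb \hotimes_\beta \Omega^\bullet_{[0,\epsilon)}$. Given any homogeneous $\eta \in \sEb(U \cap \bdyM)$, the element $\eta \otimes 1$ is a section of $\sE$ over $U \cap \tubnhd$ whose pullback to $\bdyM$ is $\eta$. Choosing a smooth cutoff $\chi$ on $[0,\epsilon)$ which equals $1$ near $0$ and is compactly supported away from $\epsilon$, the product $\chi \cdot (\eta \otimes 1)$ extends by zero to a global section in $\sE(U)$, and its image under $\rho(U)$ is $\eta$. This produces the required degreewise lift; essentially the same argument works fiberwise when $U$ does not meet $\bdyM$, in which case $\iota_*\sEb(U) = 0$ and there is nothing to check.

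For condition (ii), I would observe that both $\sE$ and $\iota_*\sEb$ are honest sheaves on $M$: the first because it is the sheaf of sections of a graded vector bundle, and the second as the pushforward of such a sheaf along the closed embedding $\iota$. For any sheaf the canonical map from its value on $U$ to the Čech totalization with respect to a hypercover $V_\bullet \to U$ is a quasi-isomorphism. The naturality square
\[
\begin{tikzcd}
\sE[-1](U) \ar[r,"\sim"]\ar[d,"\rho(U)"] & \check{C}(V_\bullet,\sE[-1])\ar[d]\\
\iota_*\sEb[-1](U) \ar[r,"\sim"] & \check{C}(V_\bullet,\iota_*\sEb[-1])
\end{tikzcd}
\]
therefore has both horizontal arrows weak equivalences, and is automatically a homotopy pullback. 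Combined with (i) this shows $\rho$ is a fibration, and consequently the strict pullback $\condfields[-1] = \sE[-1] \times_{\iota_*\sEb[-1]} \iota_*\sL[-1]$ computes the homotopy pullback.

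The main potential obstacle is technical rather than conceptual: one must confirm that Hinich's model structure and its characterization of fibrations port over to our enriched setting of $L_\infty$-algebras valued in differentiable (or convenient) vector spaces rather than plain vector spaces, and that the cutoff-and-extend construction in step (i) is compatible with the DVS structure on $\sE$. I expect both points to be verifiable without serious modification, so modulo these enrichment issues the argument is essentially formal, resting only on the TNBFT product structure near $\bdyM$ and on Čech descent for sheaves of sections.
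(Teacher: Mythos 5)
Your overall strategy coincides with the paper's: exhibit the canonical map $\rho: \sE[-1]\to\iota_*\sEb[-1]$ as a fibration in Hinich's model structure by checking levelwise surjectivity and the hypercover condition, and conclude that the strict pullback computes the homotopy pullback. Your surjectivity argument (cutoff in the normal coordinate of the tubular neighborhood, extend by zero) is exactly the content behind the paper's remark that surjectivity is ``manifest,'' and your observation that $\iota_*\sEb(U)=0$ when $U\cap\bdyM=\emptyset$ is fine.

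There is, however, a genuine gap in your step (ii). You assert that ``for any sheaf the canonical map from its value on $U$ to the \v{C}ech totalization with respect to a hypercover $V_\bullet\to U$ is a quasi-isomorphism.'' This is false as a general statement: the sheaf condition only controls the degree-zero part of the \v{C}ech complex of an ordinary cover, and says nothing about the higher \v{C}ech differentials or about hypercovers. A constant sheaf on a manifold already fails your claim (its \v{C}ech complex computes nontrivial cohomology), and for a complex of sheaves one needs acyclicity of the terms on the relevant opens, not merely the sheaf property in each degree. What saves the argument here is that $\sE$, $\iota_*\sEb$, and $\iota_*\sL$ are complexes of \emph{fine} sheaves (modules over $\cinfty_M$, resp.\ pushforwards of $\cinfty_\bdyM$-modules along the closed embedding $\iota$), so partitions of unity give \v{C}ech descent for ordinary covers --- this is the content of Lemma B.7.6 of \autocite{CG1} invoked in the paper --- and one must then separately upgrade descent for covers to descent for arbitrary hypercovers; the paper does this via Theorem 7.2.3.6 and Proposition 7.2.1.10 of \autocite{HTT} together with the boundedness of $E$ and the Dold--Kan correspondence. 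So your conclusion is correct for these particular presheaves, but the justification you give would not survive scrutiny: you need to use the specific $\cinfty$-module structure (fineness) and supply the cover-to-hypercover step, rather than appeal to the sheaf property alone.
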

\begin{proof}
We first note the following: $\sE$ satisfies \v{C}ech descent for arbitrary covers in $M$, and $\sEb,\sL$ do the same on $\bdyM$, by Lemma B.7.6 of \autocite{CG1}. 
Because the \v{C}ech complex for $\iota_*\sL$ (resp. $\iota_*\sEb$) with cover $\{U_\alpha\}$ is identically the \v{C}ech complex for $\sL$ (resp. $\sEb$) with cover $\{U_\alpha\cap \bdyM\}$, $\iota_*\sL$ and $\iota_*\sEb$ satisfy \v{C}ech descent as presheaves on $M$. By Theorem 7.2.3.6 and Proposition 7.2.1.10 of \autocite{HTT}, these presheaves satisfy descent for arbitrary hypercovers. 
(Strictly speaking, the cited results are only proved for presheaves of simplicial sets on $M$; however, using the boundedness of $E$ stipulated in Definition \ref{def: tnbft}, we can shift all objects involved to be concentrated in non-positive degree and then use the Dold-Kan correspondence to show that hyperdescent and descent coincide for presheaves of (globally) bounded complexes.)

We claim that the map $\sE\to \iota_*\sEb$ is a fibration, whence the lemma follows immediately. It is manifest that the maps $\sE(U)\to \sEb(U\cap \bdyM)$ are surjective for every open $U\subset M$. So, it remains to check that the square  
\begin{equation}
\begin{tikzcd}
\sE(U)\ar[r]\ar[d] & \check{C}(V_\bullet,\sE)\ar[d]\\
\sEb(U)\ar[r]& \check{C}(V_\bullet,\sEb)
\end{tikzcd}
\end{equation}
is a homotopy pullback square of complexes for any hypercover $V_\bullet$. This is true because the above square is the outer square of the diagram
\begin{equation}
\begin{tikzcd}
\sE(U) \ar[rrr]\ar[ddd]\ar[rd,"\sim"]&&&\check{C}(V_\bullet,\sE)\ar[ddd]\ar[ld,"\sim"]\\
&\check{C}(V_\bullet,\sE)\ar[r,"\id"]\ar[d]&\check{C}(V_\bullet,\sE)\ar[d]&\\
&\check{C}(V_\bullet,\sEb)\ar[r,"\id"]&\check{C}(V_\bullet,\sEb)&\\
\sEb(U)\ar[rrr]\ar[ur,"\sim"]&&&\check{C}(V_\bullet,\sEb)\ar[lu,"\sim"]
\end{tikzcd};
\end{equation}
all the diagonal maps in the diagram are quasi-isomorphisms, and the inner square is clearly a homotopy pullback square.
\end{proof}

\section{Local Action Functionals and $D$-modules}
\label{sec: localfcnls}
In this section, we use the language of $D$-modules to describe the local action functionals in a TNBFT. Our approach extends the discussion of Section 6 of Chapter 5 of \cite{cost}. 
In so doing, we step somewhat outside the main line of development of this chapter.
We will construct a complex of local action functionals; this complex can be thought of as the complex of first-order deformations of the bulk-boundary system $(\sE, \ell_k, \cdots,\sL)$, i.e. as the tangent space to the space of classical bulk-boundary systems at the point defined by $(\sE, \ell_k, \cdots, \sL)$.  
The results of this section are not needed in the construction of a factorization algebra of classical observables, which we discuss in the next section; however, we present them here because they concern the classical theory and will be relevant in the obstruction theory for the quantization of a given classical theory.

\subsection{Local action functionals}

When $\partial M=\emptyset$, one defines local action functionals to be of the form 
\begin{equation}
\varphi \mapsto \int_M D_1 \varphi\cdots D_k \varphi d\mu,
\end{equation}
where the $D_i$ are differential operators $\sE\to \cinfty_M$ and $d\mu$ is a density on $M$. We will define these to be local action functionals in the case $\bdyM\neq \emptyset$ as well, but one finds that certain local functionals in this sense (if they are total derivatives) can be written as integrals of a similar nature, \emph{but over $\partial M$}. Hence, the distinction between a local action functional and a codimension-one operator becomes blurry.

Before we define local functionals, we make the following definition, which will be useful in the sequel---in this chapter and especially in Chapter \ref{chap: interactingquantum}.

\begin{definition}
Let $V$ be a locally convex topological vector space.
The space \textbf{power series} on $V$ is the following concenient vector space
\begin{equation}
    \sO(V):= \prod_{k=0}^\infty \underline{CVS}(V^{\hotimes_\beta k},\RR),
\end{equation}
where the subscript ``bdd'' means we take only the bounded linear maps of topological vector spaces, and the tensor product $\hotimes_\beta$ is the completed bornological tensor product (Section B.5.2 of \autocite{CG1}). 
\end{definition}

We may therefore define $\sO(\sE), \sO(\sE_c), \sO(\condfields),$ and $\sO(\condfieldscs)$.
\index[notation]{OE1@$\sO(\sE)$}\index[notation]{OE2@$\sO(\condfields)$}\index[notation]{OE3@$\sO(\sE_c)$}\index[notation]{OE4@$\sO(\condfieldscs)$}

\begin{definition}
\label{def: localfcnls}
The \textbf{space of local action functionals} for the field theory $\sE$ with boundary condition $\sL$ is the (linear) subspace of $\sO(\condfieldscs)$
spanned by functionals of the form
\begin{equation}
\label{eq: localfcnls}
I(\varphi)= \int_M D_1\varphi \cdots D_j\varphi d\mu.
\end{equation}
Here, $\condfieldscs$ denotes the subspace of those fields which have compact support and satisfy the required boundary condition; all $D_i$ are differential operators on $\sE$, and in particular, may depend on the normal derivatives of the field $\varphi$ near the boundary. 
\end{definition}

\begin{notation}
\label{not: lclfcnls}
Let us establish the following notation:
\begin{itemize}
\item We denote the space of local functionals by $\Oloc$.
\item We denote by the symbol $\Olocred$ the quotient of $\Oloc$ by the space of constant functionals.
\item We denote by the symbol $\sO_{loc}(\sE)$ the space of functionals in $\sO(\sE_c)$ of the form \eqref{eq: localfcnls}.
\item We denote by the symbol $\sO_{loc,red}(\sE)$ the quotient of $\sO_{loc}(\sE)$ by the space of constant functionals.
\end{itemize}
\end{notation}
\begin{remark}
\label{rem: loclfcnlsrmk}
\begin{enumerate}
\item Though $\Oloc$ is a subspace of $\sO(\condfieldscs)$ (and this latter space is endowed with a topology), we do not wish to view $\Oloc$ as a topological vector space. We simply remember the vector space structure underlying $\Oloc$.
\item Note that the same functional can be written in two different ways, i.e. the representation of a local functional $I(\phi)$ in the form $\int_M D_1\varphi \cdots D_j \varphi d\mu$ is not unique.  
\end{enumerate}
\end{remark}

We note that $\Oloc$ contains functionals of the form 
\begin{equation}
I(\varphi)=\int_{\partial M} D_{1}(\varphi)\mid_{\partial M}\cdots D_{k}(\varphi)\mid_{\partial M}\d\nu=-\int_M \frac{\del}{\del t}\left( f_0(t) D_1\phi \cdots D_k\phi\right) \d \nu\, \d t,
\end{equation}
where the $D_i$ are differential operators on $\sE$ and $d\nu$ is a density on $\bdyM$. 
Here, $f_0$ is a compactly supported function on $[0,\epsilon)$ which is $1$ at $t=0$.
Writing $I$ as on the right-hand side of the above equation makes clear that it is local.

We obtain a natural filtration of $\Oloc$ with $F^0\Oloc$ the space of such functionals and $F^1\Oloc = \Oloc$. This filtration will be useful to us in the sequel.

The Chevalley-Eilenberg differential on $\sO(\condfieldscs)$ preserves $\Oloc$, so that $\Oloc$ is a cochain complex. Moreover, $\Oloc$ evidently has the structure of a presheaf, since $\condfieldscs$ is a cosheaf. Thus, $\Oloc$ is a complex of presheaves. It is also easily seen to be a complex of sheaves on $M$.

\subsection{The case $\bdyM=\emptyset$}
\label{sec: localfcnlsnobdy}
Let us first briefly review the relationship between $D$-modules and local action functionals in the case $\bdyM=\emptyset$. Our discussion follows Section 6 of Chapter 5 of \cite{cost}.

\begin{notation}
Given a vector bundle $E$ over a manifold $M$, we let $J(\sE)$ denote the (infinite-rank) bundle over $M$ whose fiber at a point $x\in M$ is the space of formal germs of sections of $E$ at the point $x$. We let $\sJ(\sE)$ denote the sheaf of sections of $J(\sE)$. $\sJ(\sE)$ has a natural topology which it obtains as the inverse limit of the finite-rank bundles of $r$-jets (as $r\to \infty$). We let $\sJ(\sE)^\vee$ denote the $\cinfty_M$ module
\begin{equation}
\Hom_{\cinfty_M}(\sJ(\sE),\cinfty_M),
\end{equation}
where $\Hom_{\cinfty_M}$ denotes $\cinfty_M$-module homomorphisms which are continuous for the inverse limit topology on $\sJ(\sE)$.

We let 
\begin{equation}
\sO(\sJ(\sE))=\prod_{k\geq 0}\Sym^k_{\cinfty_M} \sJ(\sE)^\vee.
\end{equation} 
The symbol $\sO_{red}(\sJ(\sE))$ denotes a similar product over $k>0$.
\end{notation}
$J(\sE)$ carries a canonical connection and so $\sJ(\sE)$ carries the structure of a left $D_M$-module. This $D_M$ module structure extends to $\sJ(\sE)^\vee$ and $\sO(\sJ(\sE))$. The $L_\infty$ brackets on $\sO(\sE)$ can be used to endow $\sO(\sJ(\sE))$ with a Chevalley-Eilenberg differential. Let us recall also that the bundle of densities $\Dens_M$ carries a structure of right $D_M$-module, where vector fields act on densities by the formal adjoints of their actions on functions (equivalently, by Lie derivatives).
When $\bdyM\neq \emptyset$, we will see that these two $D_M$-module structures are no longer the same; in fact, one of these $D_M$-module structures will cease to be defined.

The following is Lemma 6.6.1 of \cite{cost}:
\begin{lemma}
Suppose $\bdyM=\emptyset$. There is a canonical map of sheaves
\begin{equation}
\Dens_M\otimes_{D_M}\sO(\sJ(\sE))\to \sO_{loc}(\sE),
\end{equation}
and it is an isomorphism.
\end{lemma}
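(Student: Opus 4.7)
The plan is to write down an explicit candidate map $\Phi$, verify that it factors through the $D_M$-coinvariants by integration by parts, establish surjectivity directly from the definition, and then confront injectivity, which will be the real obstacle. Since both source and target are sheaves on $M$ and the candidate map is manifestly natural under restriction, it suffices to construct $\Phi$ on sections and check the isomorphism property locally.

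To define $\Phi$, observe that an element $f \in \Sym^k_{\cinfty_M}\sJ(\sE)^\vee$ is, fiberwise, a symmetric polydifferential operator $\sE^{\otimes k}\to \cinfty_M$, because continuous $\cinfty_M$-linear functionals on $\sJ(\sE)$ are exactly such operators. I would set
\[
\Phi(\mu \otimes f)(\varphi) := \int_M \mu\cdot f(j^\infty \varphi,\ldots, j^\infty \varphi),
\]
which is manifestly local in the sense of Definition~\ref{def: localfcnls}. The assignment is $\cinfty_M$-bilinear, and only finitely many symmetric powers contribute to any fixed polynomial degree in $\varphi$, so $\Phi$ extends from pure tensors to all of $\Dens_M\otimes_{\cinfty_M}\sO(\sJ(\sE))$.

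To see that $\Phi$ descends to $\Dens_M\otimes_{D_M}\sO(\sJ(\sE))$, the key input is that the Grothendieck connection on $\sJ(\sE)$ is designed so that $\varphi\mapsto j^\infty\varphi$ is horizontal. Dualizing, the left $D_M$-action on $\sJ(\sE)^\vee$ is characterized by $(X\cdot f)(j^\infty\varphi) = X\bigl(f(j^\infty\varphi)\bigr)$ for every vector field $X$. Since $\bdyM=\emptyset$, integration by parts gives
\[
\int_M \mu\cdot (X\cdot f)(j^\infty\varphi) = \int_M \mu\cdot X\bigl(f(j^\infty\varphi)\bigr) = -\int_M (L_X\mu)\cdot f(j^\infty\varphi),
\]
and the right $D_M$-action on $\Dens_M$ is $\mu\cdot X = -L_X\mu$, so this is exactly the coinvariance relation $\mu\otimes(X\cdot f) = (\mu\cdot X)\otimes f$. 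A routine induction on order promotes this from vector fields to arbitrary elements of $D_M$. Surjectivity is then immediate: a generator $\int_M (D_1\varphi)\cdots(D_j\varphi)\, d\mu$ of $\sO_{loc}(\sE)$ is the image of $d\mu\otimes (D_1\cdots D_j)$, each $D_i$ being viewed as an element of $\sJ(\sE)^\vee$ via its action on jets.

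The main obstacle will be injectivity, and my plan is to reduce it to a local statement. Working in a chart trivializing $E$, the space $\sO(\sJ(\sE))$ becomes formal power series in jet coordinates $\{x_I^a\}$ with smooth coefficients on the chart, and the claim becomes that such an expression $\mu\cdot f$ lies in the image of the $D_M$-action if and only if the associated local functional vanishes identically on $\sE$. One direction is the integration-by-parts computation above. For the converse, I would use Borel's theorem---every formal jet at a point is realized by an actual smooth section---together with the fundamental lemma of the calculus of variations to conclude that vanishing of $\Phi(\sum\mu_i\otimes f_i)(\varphi)$ for all $\varphi$ forces $\sum \mu_i\cdot f_i$ to be a total divergence. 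This identification of total divergences with the image of $D_M$ is the horizontal Poincar\'e lemma for the variational bicomplex on the jet bundle. Finally, since $\Phi$ is canonical and both sides are sheaves, a partition-of-unity argument glues the local isomorphisms to the global statement.
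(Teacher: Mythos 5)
The paper does not actually prove this statement—it is quoted verbatim as Lemma 6.6.1 of \autocite{cost}—so the relevant comparison is with the standard argument, and your outline (the explicit evaluation map, descent to $D_M$-coinvariants by integration by parts using $\bdyM=\emptyset$ and the compact support of $\varphi$, surjectivity by inspection, reduction of injectivity to ``a Lagrangian that integrates to zero against all compactly supported fields is a total divergence,'' and gluing via fineness of the sheaves involved) is exactly that standard route. The construction of $\Phi$, the characterization of the $D_M$-action on $\sJ(\sE)^\vee$ via horizontality of $j^\infty\varphi$, and the descent computation are all correct for the Taylor components of positive polynomial degree, where $f(j^\infty\varphi)$ is compactly supported so the boundary-free integration by parts is legitimate.

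The soft spot is the injectivity step as literally written. Borel's theorem plus the fundamental lemma of the calculus of variations do not by themselves force the Lagrangian to be a total divergence: what they yield, after differentiating the vanishing functional along a compactly supported variation and using that jets of compactly supported sections exhaust each fiber of $J(E)$ (and after separating homogeneous components by scaling $\varphi\mapsto t\varphi$), is only that the Euler--Lagrange expression of each component vanishes identically on the jet bundle. The passage from ``vanishing Euler--Lagrange derivative'' to ``locally a total divergence'' is the genuinely nontrivial input—the local exactness of the variational sequence at the Lagrangian node, which one can prove by applying the first-variation formula along the scaling homotopy, writing a Lagrangian with identically vanishing Euler--Lagrange term as its field-independent part plus a horizontal differential. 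This is not the horizontal Poincar\'e lemma, which you invoke only for the final identification of total divergences with the $D_M$-relations; that identification is an elementary coordinate computation (essentially your own descent calculation run backwards). So the load-bearing theorem is absent from the chain as stated and the labels are attached to the wrong steps; once it is inserted, the argument goes through in positive polynomial degree. Separately, the $\Sym^0$ (field-independent) component must be excluded or handled by the reduced convention, as the paper does from Lemma \ref{lem: derhamloc} onward: there nothing is compactly supported, the integrals need not converge, your well-definedness argument over $\otimes_{D_M}$ breaks down, and a field-independent density has vanishing Euler--Lagrange derivative without being a divergence.
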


Note that the $D_M$-module tensor product ensures that Lagrangian densities which are total derivatives give zero as functionals.

We would like to resolve $\Dens_M$ as a right $D_M$-module. 

\begin{definition}
The \textbf{$D_M$-de Rham complex} is the complex of sheaves
\begin{equation}
\dmdr:=\Omega^\bullet_{M,tw}\otimes_{\cinfty_M} D_M;
\end{equation}
this is the de Rham complex for the flat vector bundle $D_M\otimes \mathfrak o $, where $\mathfrak o$ is the orientation line bundle on $M$. The complex of sheaves $\dmdr$ has the canonical structure of a right $D_M$ module. 
\end{definition}

There is a canonical map $\dmdr[n] \to \Omega^n_{M,tw}$ induced from the right $D_M$-module action
\begin{equation}
\Omega^n_{M,tw}\otimes_{\cinfty_M} D_M \to \Omega^n_{M,tw}
\end{equation}
This map is a quasi-isomorphism, since this complex is locally the Koszul resolution of $\Omega^n_{M,tw}(U)$ with respect to the regular sequence $\{\partial_I\}_{i=1}^n$ of $D_M(U)$. 
Combining this information and Lemma 6.6.4 of Chapter 5 of \cite{cost}, we have 
\begin{lemma}
\label{lem: derhamloc}
Let $\bdyM=\emptyset$. The canonical map 
\begin{equation}
\dmdr[n]\otimes_{D_M}\sO_{red}(\sJ(\sE))\to \Olocred
\end{equation}
is a quasi-isomorphism.
\end{lemma}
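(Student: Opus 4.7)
The plan is to combine two observations: the Koszul-resolution argument noted in the excerpt, which shows that $\dmdr[n] \to \Dens_M$ is a quasi-isomorphism of complexes of right $D_M$-modules; and Lemma 6.6.1 of \autocite{cost}, which identifies the underived tensor product $\Dens_M \otimes_{D_M} \sO_{red}(\sJ(\sE))$ with $\Olocred$. The map in the statement of the lemma will be exhibited as the canonical comparison map from derived to underived tensor product, and the bulk of the work is to show that this comparison is itself a quasi-isomorphism.

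For the flat-resolution step, I would observe that each term $\Omega^k_{M,tw} \otimes_{\cinfty_M} D_M$ of $\dmdr$ is locally free as a right $D_M$-module, since $\Omega^k_{M,tw}$ is a locally free $\cinfty_M$-module. Hence $\dmdr[n]$ is a flat resolution of $\Dens_M$, so that
\[
\dmdr[n] \otimes_{D_M} \sO_{red}(\sJ(\sE)) \simeq \Dens_M \otimes^{\mathbb L}_{D_M} \sO_{red}(\sJ(\sE)),
\]
and the map in question is precisely the augmentation of the derived tensor product down to the underived one, whose target is $\Olocred$ by Lemma 6.6.1.

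It remains to show that $\mathrm{Tor}^{D_M}_{i}(\Dens_M, \sO_{red}(\sJ(\sE))) = 0$ for $i > 0$; this is the main obstacle. I expect to handle it by a local argument on a coordinate chart $U$ trivializing $E$: the canonical flat connection on $\sJ(\sE)$ provides, via Taylor expansion, a horizontal frame which identifies $\sJ(\sE)^\vee|_U$ with a pro-free left $D_U$-module, so that each finite symmetric power $\Sym^k_{\cinfty_M} \sJ(\sE)^\vee|_U$ is a flat $D_U$-module. The higher Tors therefore vanish on each bounded piece of the filtration of $\sO_{red}(\sJ(\sE))$ by symmetric degree. To transfer this vanishing to the completed product $\prod_{k > 0} \Sym^k_{\cinfty_M} \sJ(\sE)^\vee$, I would then run the spectral sequence associated to this filtration, whose convergence is ensured by the boundedness of each internal bidegree and by the completeness and exhaustiveness of the filtration. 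Collapse of the spectral sequence at the first page yields the desired Tor vanishing and thereby the lemma.
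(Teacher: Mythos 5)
Your overall strategy is the right one, and it is essentially the route hiding behind the paper's own (very short) argument: the paper observes that $\dmdr[n]\to\Omega^n_{M,tw}$ is a quasi-isomorphism via the Koszul resolution and then simply cites Lemma 6.6.4 of Chapter 5 of \autocite{cost} for everything else, so what you are really doing is reproving the content of that citation. The difficulty is that the one genuinely nontrivial step --- flatness of $\sO_{red}(\sJ(\sE))$ as a left $D_M$-module, equivalently the vanishing of the higher Tor groups --- is exactly where your sketch has a gap. You claim that a horizontal frame identifies $\sJ(\sE)^\vee|_U$ with a ``pro-free'' left $D_U$-module, ``so that each finite symmetric power $\Sym^k_{\cinfty_M}\sJ(\sE)^\vee|_U$ is a flat $D_U$-module.'' First, $\sJ(\sE)^\vee$ is an ind-object, namely $\Diff(\sE,\cinfty_M)\cong D_M\otimes_{\cinfty_M}\sE^\vee$ (differential operators of bounded order), not a pro-free module; but more importantly, freeness or flatness of $\sJ(\sE)^\vee$ over $D_U$ does not formally imply flatness of its symmetric powers, because those are taken over $\cinfty_M$ and carry the diagonal (Leibniz-rule) $D_M$-action rather than the action through a single factor. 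What is actually needed is the induced-module identification together with a projection-formula manipulation, $(D_M\otimes_{\cinfty_M}F)\otimes_{\cinfty_M}N\cong D_M\otimes_{\cinfty_M}(F\otimes_{\cinfty_M}N)$ as left $D_M$-modules, which shows that $(\sJ(\sE)^\vee)^{\otimes_{\cinfty_M}k}$, and hence its direct summand $\Sym^k_{\cinfty_M}\sJ(\sE)^\vee$ for $k\geq 1$, is again induced from a $\cinfty_M$-module and therefore flat. This is precisely the content of Costello's flatness lemma (the same input the paper invokes again in Lemma \ref{lem: derivedisunderived}); as written, your proposal asserts its conclusion rather than proving it.

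Two further points on the final step. Passing from the individual $\Sym^k$'s to the completed product $\prod_{k>0}\Sym^k_{\cinfty_M}\sJ(\sE)^\vee$ cannot be settled by ``the spectral sequence associated to this filtration, whose convergence is ensured by the boundedness of each internal bidegree'': the filtration by symmetric degree is unbounded in every cohomological degree, so convergence is not automatic. The clean fix avoids the spectral sequence altogether: the terms of $\dmdr$ have the form $\Omega^j_{M,tw}\otimes_{\cinfty_M}D_M$, and $(\Omega^j_{M,tw}\otimes_{\cinfty_M}D_M)\otimes_{D_M}N\cong\Omega^j_{M,tw}\otimes_{\cinfty_M}N$ commutes with the product over $k$, so the whole complex splits as a product of the degree-$k$ pieces. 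Finally, both sides of the lemma carry the Chevalley--Eilenberg differential (the paper defines $\Olocred$ and $\sO_{red}(\sJ(\sE))$ as cochain complexes), which your proposal never mentions; since the CE differential preserves or raises symmetric degree, one still needs a comparison argument for the complete filtration by symmetric degree to upgrade the Tor-vanishing statement to the full differential. None of this invalidates the strategy --- it is exactly what the cited lemma of Costello supplies --- but the central flatness claim and the convergence step need real arguments before the proof stands on its own.
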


\subsection{Pushforwards and pullbacks of $D$-modules}

Before generalizing to $\bdyM\neq \emptyset$, we review a number of constructions common in the formalism of $D$-modules. 
In the sequel, there will be a number of situations where we will be presented with a $D_M$-module but will want a $D_\bdyM$-module or vice versa;
this section is intended to give us the fluidity to move between the two notions.
We will discuss two functors, $\dpush$ and $\dpull$.
The functor $\dpush$ will take right $D_\bdyM$-modules to right $D_M$-modules, and $\dpull$ will do the opposite for \emph{left} $D$-modules.
We note that these techniques can be used in a more general situation, namely one where $\bdyM$, $M$, and $\iota$ are replaced with general smooth manifolds $X$ and $Y$ and a general smooth map $f:X\to Y$.
Though we do not explicitly make the constructions for the general setup, 
it will be clear how to make the necessary (very small) modifications for the general situation.

We make no claims of originality here.

In both constructions, we will make use of the following sheaf on $\bdyM$:
\begin{equation}
\label{eq: pullD}
 D_{\bdyM \to M}:=\cinfty_\bdyM \otimes_{\iota^{-1}(\cinfty_M)} \iota^{-1}(D_M);
\end{equation}
this sheaf possesses a manifest right $\iota^{-1}(D_M)$ action.
Let $V\to M$ momentarily denote the bundle on $M$ whose sheaf of sections is~$D_M$.
(Concretely, $V= J(\underline \RR)^\vee$.)
One observes that $D_{\bdyM\to M}$ is simply the sheaf of sections of~$\iota^*(V)$.
From this characterization of $D_{\bdyM \to M}$, one finds also a natural left $D_{\bdyM}$ action on $D_{\bdyM \to M}$, using the pullback connection on~$\iota^*(V)$.
In the language of Equation \ref{eq: pullD}, given $X\in \textrm{Vect}(\bdyM)$, $f\in \cinfty_\bdyM$, and $D\in \iota^{-1}(D_M)$, we define
\begin{equation}
\label{eq: pullbackconnection}
X\cdot \left( f\otimes D \right):= (Xf)\otimes D +f\otimes (\widehat X D),
\end{equation}
where $\widehat X$ is a vector field on $M$ such that 
\begin{equation}
\widehat X \mid_\bdyM = X
\end{equation}
in the space $\Gamma(\bdyM,\iota^* TM)$.
One checks readily that this definition does not depend on $\widehat X$ and respects the tensor product over $\iota^{-1}(\cinfty_M)$.

\subsubsection{Pushforward}
Let $P$ be a right $D_{\bdyM}$ module. We may define 
\begin{equation}
\label{eq: pushM}
\dpush P = \iota_*\left(P\otimes_{D_\bdyM} D_{\bdyM \to M}\right);
\end{equation}
the sheaf 
\begin{equation}
\iota_*(D_{\bdyM \to M})
\end{equation}
possesses a natural right $\iota_*\iota^{-1}D_M$ module structure, and hence a natural right $D_M$ structure via the map of sheaves of algebras $D_M\to \iota_*\iota^{-1}D_M$.
It follows that $\dpush P$ is a right $D_M$ module.

Note that, for $P=D_\bdyM$, one finds~$\dpush D_\bdyM \cong \iota_*(D_{\bdyM \to M})$.
\subsubsection{Pullback}

If $\sV$ is the sheaf of sections of a bundle $V\to M$, then to give a left $D_M$-module structure on $\sV$ is to give a flat connection on $V$.
We may form the pullback $\iota^* V$ on $\bdyM$; the bundle $\iota^*V$ has a natural ``pullback'' flat connection, so that its sheaf of sections is a $D_\bdyM$-module.
In other words, for $D_M$-modules arising in this way as the sheaf of sections of a vector bundle, we have discussed a construction of pullback to $\bdyM$.
Our aim in this section is to give the analogous construction for general left $D_M$-modules.

Hence, given a left $D$ module $P$, define 
\begin{equation}
\label{eq: pullM}
\dpull P := D_{\bdyM \to M} \otimes_{\iota^{-1}(D_M)} \iota^{-1}(P).
\end{equation}
If $P$ is the left $D_M$ module $D_M$, one finds that~$\dpull D_M \cong D_{\bdyM \to M}$.

\subsubsection{A useful compatibility between $\dpull$ and $\dpush$}
In this subsection, we note 

\begin{lemma}
\label{lemma: pullpush}
Let $P$ be a right $D_\bdyM$ module and $Q$ a left $D_M$-module. There are natural isomorphisms
\begin{equation}
\dpush P \otimes_{D_M} Q \cong \iota_* \left( P\otimes_{D_\bdyM} \dpull Q\right)
\end{equation}
and
\begin{equation}
\iota^{-1}\left( \dpush P\otimes_{D_M} Q\right) \cong P \otimes_{D_{\bdyM}} \dpull Q.
\end{equation}
of sheaves on $M$ and $\bdyM$, respectively.
\end{lemma}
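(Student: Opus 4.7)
The plan is to establish the first isomorphism directly by unwinding definitions and invoking associativity of the tensor product together with a projection-formula type argument, then deduce the second from the first by applying $\iota^{-1}$ and using that $\iota$ is a closed embedding.

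Unwinding definitions, the RHS of the first claimed isomorphism is
\[
\iota_*\bigl( P \otimes_{D_\bdyM} \bigl(D_{\bdyM \to M} \otimes_{\iota^{-1}D_M} \iota^{-1}Q\bigr)\bigr),
\]
while the LHS is
\[
\iota_*\bigl(P \otimes_{D_\bdyM} D_{\bdyM\to M}\bigr) \otimes_{D_M} Q.
\]
Associativity of the tensor product gives a natural map of sheaves on $\bdyM$
\[
P \otimes_{D_\bdyM} \bigl(D_{\bdyM\to M}\otimes_{\iota^{-1}D_M}\iota^{-1}Q\bigr) \xrightarrow{\;\sim\;} \bigl(P \otimes_{D_\bdyM} D_{\bdyM\to M}\bigr)\otimes_{\iota^{-1}D_M}\iota^{-1}Q,
\]
so after applying $\iota_*$ it suffices to exhibit a natural isomorphism, for any right $\iota^{-1}D_M$-module $F$ on $\bdyM$, of the form
\[
\iota_*\bigl(F \otimes_{\iota^{-1}D_M} \iota^{-1}Q\bigr) \;\cong\; \iota_*(F)\otimes_{D_M}Q. \tag{$\star$}
\]
This is the main step; I would apply it to $F := P\otimes_{D_\bdyM}D_{\bdyM\to M}$, whose right $\iota^{-1}D_M$-structure is inherited from the second tensor factor.

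To prove $(\star)$, I would construct the map from right to left via the canonical $\iota^{-1}D_M \to D_M$-compatible map $\iota^{-1}Q \to \iota^{-1}\iota_*\iota^{-1}Q$ and the associativity/module-structure maps, and then verify bijectivity stalkwise. For $x\in M\setminus\bdyM$ both sides of $(\star)$ have vanishing stalk, since pushforward from a closed subspace is zero away from that subspace. For $x\in\bdyM$, the stalk of $\iota_*F$ at $x$ is simply $F_x$, the stalk of $\iota^{-1}Q$ at $x$ equals $Q_x$, and the stalk of $\iota^{-1}D_M$ at $x$ equals $D_{M,x}$; both sides of $(\star)$ therefore reduce to $F_x\otimes_{D_{M,x}}Q_x$. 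The induced stalkwise map coincides (tautologically) with this identification, so $(\star)$ is an isomorphism.

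For the second isomorphism, I would simply apply $\iota^{-1}$ to the first. Since $\iota$ is a closed embedding, the adjunction counit $\iota^{-1}\iota_* \to \mathrm{id}$ is an isomorphism of functors on sheaves over $\bdyM$, which immediately yields
\[
\iota^{-1}\bigl(\dpush P\otimes_{D_M} Q\bigr)\;\cong\;\iota^{-1}\iota_*\bigl(P\otimes_{D_\bdyM}\dpull Q\bigr)\;\cong\;P\otimes_{D_\bdyM}\dpull Q.
\]
Naturality in $P$ and $Q$ follows at each step from naturality of associativity, of the projection formula $(\star)$, and of the unit/counit of the $(\iota^{-1},\iota_*)$ adjunction, so both isomorphisms are natural. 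The only mildly delicate point is bookkeeping the bimodule structures on $D_{\bdyM\to M}$ (and in particular verifying that the left $D_\bdyM$-action of \eqref{eq: pullbackconnection} is what makes $\dpull Q$ a left $D_\bdyM$-module so that the tensor product $P\otimes_{D_\bdyM}\dpull Q$ is well defined); I expect this, rather than the stalkwise verification of $(\star)$, to be the most error-prone part of the argument.
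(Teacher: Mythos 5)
The paper states Lemma \ref{lemma: pullpush} without proof (the surrounding section explicitly disclaims originality), so there is no argument of the paper to compare against; your proof is correct and is exactly the standard one: associativity of the tensor product plus the projection formula for the closed embedding $\iota$, with the second isomorphism obtained by applying $\iota^{-1}$ and using $\iota^{-1}\iota_*\cong \mathrm{id}$. The only cosmetic remark is that the comparison map in your step $(\star)$ is most cleanly produced by adjunction --- a map $\iota_*(F)\otimes_{D_M}Q\to \iota_*\left(F\otimes_{\iota^{-1}D_M}\iota^{-1}Q\right)$ corresponds to a map $\iota^{-1}\left(\iota_*(F)\otimes_{D_M}Q\right)\to F\otimes_{\iota^{-1}D_M}\iota^{-1}Q$, which is immediate since $\iota^{-1}$ commutes with tensor products and $\iota^{-1}\iota_*\cong\mathrm{id}$ --- after which your stalkwise verification (both stalks being $F_x\otimes_{D_{M,x}}Q_x$ on $\bdyM$ and $0$ off it) finishes the argument; the bimodule bookkeeping for $D_{\bdyM\to M}$ that you flag is indeed fine because the left $D_\bdyM$-action of Equation \eqref{eq: pullbackconnection} commutes with right multiplication by $\iota^{-1}(D_M)$.
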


\subsection{$\sO_{loc}(\sE)$ in the $D$-module language}
\label{sec: OlocE}
We now proceed to generalize the discussion of Section \ref{sec: localfcnlsnobdy} to the case that $\bdyM\neq \emptyset$. There are two new subtleties that arise in this situation. 
The first is that, when $\bdyM\neq\emptyset$, Lagrangian densities which are total derivatives no longer necessarily give zero as local action functionals.
The second is that we have imposed boundary conditions on the fields, so some functionals vanish because they depend on components of the boundary value of fields which are constrained to be zero.
In this section, we address the first subtlety.
In Section \ref{sec: OlocEL}, we address the second.
To the aim of understanding this first subtlety, consider the inclusion of $\cinfty_M$ modules $\Omega^{n}_{M,tw} \hookrightarrow \cD_M$ (where $\cD_M$ object is the sheaf of distributions on $M$). 
Whether or not $\bdyM = \emptyset$, $\cD_M$ has a $D_M$-module structure; however, only when $\bdyM=\emptyset$ is the subspace of smooth densities closed under this $D_M$ action.
To see this, let $\omega$ be a density on $M$, let $T_\omega$ denote the corresponding distribution on $M$, i.e.
\begin{equation}
T_\omega(f) = \int_M f\omega.
\end{equation}
Let $X$ be a vector field on $M$; then, we find upon integration by parts that
\begin{equation}
\label{eq: dmdens}
\left(T_{\omega}\cdot X\right) (f) = T_\omega(X\cdot f) = \int_M (Xf)\omega = -\int_M f L_X \omega + \int_\bdyM f\mid_{\bdyM} \iota^* i_X \omega.
\end{equation}
From the last term in this equation, it follows that $(T_\omega)\cdot X$ is no longer in general a smooth density on $M$ if $\bdyM\neq \emptyset$.

On the other hand, $\Omega^n_{M,tw}$ does possess a $D_M$-module structure, even when $\bdyM\neq \emptyset$, namely the one induced from the action of vector fields on smooth densities by Lie derivatives.
(In other words, this is the action obtained by keeping the first of the two terms on the right hand side of Equation \eqref{eq: dmdens}.)
We emphasize that this is \emph{not} the $D_M$-module structure relevant for the construction of local functionals.
To see this, note that if $D_1\cdot\ldots \cdot D_k \in \sO(\sJ(\sE))$ and $\mu\in \Omega^{n}_{M,tw}$, we would like to understand $\mu\otimes D_1\cdots \ldots \cdot D_k$ as the functional
\begin{equation}
\int_M \mu\, D_1\phi \,\cdot \ldots \cdot \, D_k \phi\,;
\end{equation}
in other words, densities appear in local functionals in their capacity as distributions on $M$, whether or not $\bdyM=\emptyset$.
Of course, when $\bdyM=\emptyset$, Equation \eqref{eq: dmdens} shows that this $D_M$-module structure coincides with the one obtained by viewing densities as distributions.

We therefore seek the minimal sub-$D_M$-module of $\cD_M$ containing $\Omega^n_{M,tw}$.
That is the purpose of the following definition.

\begin{definition}
The sheaf of \textbf{$\partial$-smooth densities} $\delsmoothdistr$ is the smallest right sub-$D_M$ module of $\cD_M$ containing the smooth densities. 
\end{definition}

Let us come to a better understanding of the sheaf~$\delsmoothdistr$. 
This sheaf certainly contains the sheaf~$\Omega^n_{M,tw}$.
By Equation \eqref{eq: dmdens}, we see that it also contains the sheaf~$\iota_*\left(\Omega^{n-1}_{\bdyM,tw}\right)$, and therefore also the $D_M$-module this sheaf generates in $\cD_M$.
This last $D_M$ module is characterized in the following Lemma:

\begin{lemma}
\label{lem: bdydens}
The right $D_M$-module generated by $\iota_*\left(\Omega^{n-1}_{\bdyM,tw}\right)$ inside of $\cD_M$ is isomorphic to the right $D_M$-module
\begin{equation}
F^0 \delsmoothdistr:= \dpush \Omega^{n-1}_{\bdyM,tw}
\end{equation}
(see Equation \ref{eq: pushM} for the notation~$\dpush$).
\end{lemma}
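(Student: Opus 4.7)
The plan is to construct a natural right $D_M$-linear map
\[
\Phi\colon \dpush\Omega^{n-1}_{\bdyM,tw} \longrightarrow \cD_M,
\]
identify its image as the right $D_M$-submodule generated by $\iota_*\Omega^{n-1}_{\bdyM,tw}$, and verify injectivity by a computation in local boundary coordinates.

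For the construction, I would send an elementary tensor $\omega \otimes (f \otimes D)$, where $\omega$ is a local section of $\Omega^{n-1}_{\bdyM,tw}$, $f \in \cinfty_{\bdyM}$, and $D$ a local section of $\iota^{-1} D_M$, to the distribution
\[
g \longmapsto \int_\bdyM f \cdot \iota^{*}(D g) \cdot \omega.
\]
Well-definedness on the $\iota^{-1}(\cinfty_M)$-balanced product defining $D_{\bdyM \to M}$ is immediate, since a function $h \in \iota^{-1}(\cinfty_M)$ moved across the tensor contributes the factor $\iota^{*} h$ inside the integral. Compatibility with the $D_\bdyM$-balanced product defining $\dpush \Omega^{n-1}_{\bdyM,tw}$ amounts to the identity
\[
\int_\bdyM f \cdot \iota^{*}(D g) \cdot L_X \omega \;=\; -\int_\bdyM \bigl[ (Xf)\,\iota^{*}(Dg) + f \cdot X(\iota^{*}(Dg)) \bigr] \cdot \omega
\]
for $X \in \textrm{Vect}(\bdyM)$, which follows from Stokes's theorem on the closed manifold $\bdyM$ together with the identity $\iota^{*}(\widehat{X} g) = X(\iota^{*} g)$ that unpacks the left $D_\bdyM$-action of~\eqref{eq: pullbackconnection}. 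Right $D_M$-linearity of $\Phi$ is immediate from the definition. Writing $T_\omega := \Phi(\omega \otimes (1 \otimes 1))$, which is the distribution $g \mapsto \int_\bdyM \iota^{*} g \cdot \omega$, we see that the image of $\Phi$ is a $D_M$-submodule containing every $T_\omega$; conversely any element of $\dpush \Omega^{n-1}_{\bdyM,tw}$ is a finite sum $\sum_i (f_i \omega_i) \otimes (1 \otimes D_i)$ mapping to $\sum_i T_{f_i \omega_i} \cdot D_i$, so the image is precisely the $D_M$-submodule generated by $\iota_* \Omega^{n-1}_{\bdyM,tw}$.

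The crux is injectivity, which I would handle in a coordinate chart $(t, x_1, \ldots, x_{n-1})$ near a boundary point, with $t \geq 0$ the normal coordinate. In such a chart, $D_{\bdyM \to M}$ is a free left $D_\bdyM$-module on the basis $\{ \partial_t^k \}_{k \geq 0}$: taking the lifts $\widehat{\partial_{x_i}} = \partial_{x_i}$ in~\eqref{eq: pullbackconnection} shows that any monomial $\partial_t^k \partial_x^\alpha$ is obtained from $\partial_t^k$ by the left action of $\partial_x^\alpha \in D_\bdyM$. Consequently $\dpush \Omega^{n-1}_{\bdyM,tw}$ is locally isomorphic to $\bigoplus_{k \geq 0} \Omega^{n-1}_{\bdyM,tw}$, and any section is uniquely of the form $\sum_k \omega_k \otimes \partial_t^k$ with only finitely many nonzero $\omega_k$. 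Its image under $\Phi$ evaluated on a test function $g$ equals $\sum_k \int_\bdyM \iota^{*}(\partial_t^k g) \cdot \omega_k$. Since the boundary jet $(\iota^{*} \partial_t^k g)_{k \geq 0}$ of a smooth $g$ can be prescribed to be any finite sequence of smooth functions on $\bdyM$ by an elementary bump-function construction, vanishing of this distribution forces every $\omega_k$ to vanish, giving local injectivity. Injectivity globalizes because both source and target are sheaves and $\Phi$ is natural.

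The main obstacle is the $D_\bdyM$-balancing step, where one must carefully match the right $D_\bdyM$-action on $\Omega^{n-1}_{\bdyM,tw}$ by Lie derivatives against the left $D_\bdyM$-action on $D_{\bdyM \to M}$ encoded in the pullback connection of~\eqref{eq: pullbackconnection}, and exploit the fact that $\bdyM$ is closed so that no boundary terms arise when integrating by parts. All remaining steps are either formal or routine coordinate calculations.
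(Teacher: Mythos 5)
Your proposal is correct and follows essentially the same route as the paper's own proof: the same integration map on elementary tensors, the same balancing checks (with Stokes on $\bdyM$ handling the $D_\bdyM$-relations), the same local trivialization $D_{\bdyM\to M}\cong D_\bdyM[\partial_t]$ writing sections uniquely as $\sum_k \omega_k\otimes\partial_t^k$ for injectivity, and the identification of the image as the submodule generated by $\iota_*\Omega^{n-1}_{\bdyM,tw}$ (which the paper phrases as a minimality statement). The only nitpick is that $\bdyM$ need not be compact, so "closed manifold" should read "manifold without boundary"; the Stokes argument still applies because the test function $g$, and hence the integrand, has compact support.
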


\begin{remark}
The notation $F^0\delsmoothdistr$ is designed to suggest that this submodule is the zeroth component of a filtration on $\delsmoothdistr$. We will see later that this is indeed the case.
\end{remark}

\begin{proof}
Let us first construct a map 
\begin{equation}
\Upsilon: F^0 \delsmoothdistr\to \delsmoothdistr.
\end{equation}
Let $U\subset M$, with $V=U\cap \bdyM$. Let $\nu\in \Omega^{n-1}_{\bdyM,tw}(V)$, $f\in \cinfty_{\bdyM}(V)$, $g\in \cinfty_{c,M}(U)$, and let $D$ be the germ of a differential operator on a neighborhood of $V$ in $M$. Define
\begin{equation}
\Upsilon(\nu\otimes f\otimes D)(g) =\int_\bdyM (Dg)\mid_{\bdyM}f\nu.
\end{equation}
The map $\Upsilon$ is manifestly a sheaf map, assuming that it is well-defined.
Let us show that it is well-defined, i.e. that the relations imposed by the tensor products in $F^0\delsmoothdistr$ go to zero in $\delsmoothdistr$.
To this end, suppose that $h$ is a germ of a smooth function in a neighborhood of $V$ in $M$. Then, it is straightforward to verify that
\begin{equation}
\Upsilon( \nu\otimes f(h\mid_{\bdyM})\otimes D- \nu\otimes f\otimes hD)(g) = 0.
\end{equation}
It is likewise easy to verify that $\Upsilon$ is linear over $\cinfty_\bdyM\subset D_{\bdyM}$ and respects the right $D_M$-module structures.
Finally, let $X\in Vect(\bdyM)$ and choose some extension $\widehat X\in Vect(M)$ as in the discussion preceding Equation \eqref{eq: pullbackconnection}.
Then,
\begin{align}
\Upsilon(\nu\cdot X\otimes &f\otimes D -\nu \otimes X\cdot f\otimes D-\nu\otimes f\otimes \widehat X D)(g)
\\&=
-\int_\bdyM (Dg)\mid_\bdyM f L_X\nu-\int_\bdyM (Dg)\mid_{\bdyM} L_X f\nu - \int_\bdyM (\widehat X Dg)\mid_{\bdyM} f\nu.
\end{align}
One checks that $(\widehat X Dg)\mid_{\bdyM} = X\cdot (Dg\mid_{\bdyM})$, so that we have
\begin{align}
\Upsilon(\nu\cdot X\otimes &f\otimes D -\nu \otimes X\cdot f\otimes D-\nu\otimes f\otimes \widehat X D)(g)
\\&= -\int_\bdyM L_X\left( (Dg)\mid_{\bdyM} f\nu\right) = 0.
\end{align}
We have therefore established that $\Upsilon$ is a well-defined sheaf map.

It remains to show that $\Upsilon$ is a monomorphism, and that the image of $F^0\delsmoothdistr$ under $\Upsilon$ is the smallest submodule of $D_M$ containing the densities on $\bdyM$.
Choosing a tubular neighborhood (and a normal coordinate $t$), one obtains an isomorphism $D_{\bdyM \to M}\cong D_\bdyM[\partial_t]$ (of sheaves).
Here, by $D_\bdyM[\partial_t]$, we mean the sheaf $D_\bdyM[\partial_t]\otimes \underline{\RR[\partial_t]}$, where $\underline{\RR[\partial_t]}$ is the locally constant sheaf on $\bdyM$ assigning the space of polynomials in the variable $\del_t$.
One therefore finds that 
\begin{equation}
\label{eq: noncanonicaliso1}
F^0\delsmoothdistr \cong \iota_*\left( \Omega^{n-1}_{\bdyM, tw}[\partial_t]\right).
\end{equation}
This is an isomorphism of $\cinfty_M$-modules (in any case, we have not even described a $D_M$-module structure on the right-hand side of the equation).
The composite of this isomorphism with $\Upsilon$ takes $\nu \partial_t^n$ to the distribution (on $M$)
\begin{equation}
\label{eq: codim1dens}
g\mapsto \int_\bdyM (\partial_t^n g)\mid_{t=0}\nu. 
\end{equation}
From this characterization of $\Upsilon$, it is clear that $\Upsilon$ is a monomorphism.

Finally, we verify that $F^0\delsmoothdistr$ is the smallest sub-$(D_M)$-module of $\cD_M$ containing the densities on $\bdyM$.
Here, again, the isomorphism of Equation \ref{eq: noncanonicaliso1} is useful. Any sub $D_M$-module of $\cD_M$ which contains distributions of the form
\begin{equation}
g\mapsto \int_{\bdyM} g\mid_{t=0}\nu
\end{equation}
must also contain distributions of the form 
\begin{equation}
g\mapsto \int_{\bdyM}(\partial^n_tg)\mid_{t=0}\nu,
\end{equation}
by $D_M$-closedness.
But, Equation \ref{eq: noncanonicaliso1} tells us that $F^0\delsmoothdistr$ is precisely the space of all such distributions.

We have therefore seen that any sub-$D_M$-module of $\cD_M$ which contains $\Omega^{n-1}_{tw, \bdyM}$ also contains $F^0\delsmoothdistr$; since $F^0\delsmoothdistr$ is a sub-$D_M$-module of $\cD_M$ (via $\Upsilon$), this completes the proof.
\end{proof}

As in Notation \ref{not: lclfcnls}, let $\sO_{loc}(\sE)$ denote the space of functions on $\sE_c$ which are of the same form as those which define $\Oloc$ (the only difference being we do not identify two such if they agree on $\condfieldscs$). 

\begin{lemma}
There is a natural map 
\begin{equation}
 \digamma: \delsmoothdistr\otimes_{D_M}\sO(\sJ(\sE))\to \sO_{loc}(\sE)
\end{equation}
of $\cinfty_M$-modules.
\end{lemma}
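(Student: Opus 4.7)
The plan is to define $\digamma$ on simple tensors by the formula
\[
\digamma(\mu\otimes P)(\phi) \;=\; \mu\!\bigl(P(\phi)\bigr),
\]
where, for $\phi\in \sE_c(U)$, the compactly supported smooth function $P(\phi)\in \cinfty_{c,M}(U)$ is obtained by evaluating the jet-polynomial $P$ on the $\infty$-jet of $\phi$, and the $\partial$-smooth density $\mu$ is then paired with this function in its capacity as a distribution. Extending linearly (and summing over the symmetric powers in $\sO(\sJ(\sE))$) produces a candidate element of $\sO(\sE_c)(U)$. It remains to check (i) that the formula descends to the $D_M$-tensor product, (ii) that its image lies in $\sO_{loc}(\sE)$, and (iii) that the resulting assignment is $\cinfty_M$-linear; the last of these will be manifest from the construction.

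For (i), the non-trivial relation to verify is $\digamma(\mu\cdot X\otimes P)=\digamma(\mu\otimes X\cdot P)$ for $X\in Vect(U)$. On one side, the right $D_M$-action on distributions recorded in Equation \eqref{eq: dmdens} gives $(\mu\cdot X)(P(\phi))=\mu\bigl(X\!\cdot\! P(\phi)\bigr)$. On the other, the left $D_M$-action on $\sJ(\sE)^\vee$ comes from the canonical flat connection, under which $j^\infty\phi$ is a flat section of $\sJ(\sE)$ for every $\phi\in \sE$; hence the evaluation map $P\mapsto P(\phi)$ intertwines the $D_M$-action on $P$ with ordinary differentiation of the smooth function $P(\phi)$, yielding $\mu\bigl((X\cdot P)(\phi)\bigr)=\mu\bigl(X(P(\phi))\bigr)$. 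Compatibility on higher symmetric powers then follows from the Leibniz rule, since $\Sym^k_{\cinfty_M}\sJ(\sE)^\vee$ inherits its $D_M$-action by derivation. The $\cinfty_M$-relations are handled analogously but more easily.

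For (ii), I would decompose an arbitrary $\mu\in \delsmoothdistr(U)$ by support as $\mu = \mu_M + \mu_\partial$, with $\mu_M\in \Omega^n_{M,tw}(U)$ a smooth bulk density and $\mu_\partial\in F^0\delsmoothdistr(U)$ supported on $\bdyM$. The bulk piece yields $\int_M P(\phi)\,\mu_M$, which by construction has the form prescribed by Definition \ref{def: localfcnls}. The boundary piece, via the description in Lemma \ref{lem: bdydens} and Equation \eqref{eq: codim1dens}, is a finite sum of distributions of the form $g\mapsto \int_\bdyM (\partial_t^k g)\vert_{t=0}\,\nu_k$; applied to $P(\phi)$ this becomes a finite sum of boundary integrals of polynomial expressions in the tangential and normal jets of $\phi\vert_\bdyM$, and the observation immediately following Definition \ref{def: localfcnls} rewrites each such boundary integral as a bulk integral of a total normal derivative of the form \eqref{eq: localfcnls}, placing it in $\sO_{loc}(\sE)$.

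The main obstacle I expect is the bookkeeping in (i) at higher symmetric powers: one must verify carefully that the action of $D_M$ on $\Sym^k_{\cinfty_M}\sJ(\sE)^\vee$ -- which incorporates $\cinfty_M$-linearity relations coming from the symmetric product over $\cinfty_M$ -- corresponds under evaluation on $\phi$ to honest differentiation of the product of jet evaluations that defines $P(\phi)$. This reduces to the Leibniz and chain rules, but is the one point where explicit computation, rather than abstract symbol manipulation, is required.
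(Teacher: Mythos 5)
Your proposal is correct and follows essentially the same route as the paper's own proof: the same evaluation formula $\mu\otimes P\mapsto \mu(P(\phi))$ with $\mu$ acting as a distribution, descent through the $D_M$-tensor product precisely because $\delsmoothdistr$ carries the distributional right $D_M$-action, and locality of the image via the bulk/boundary decomposition of $\mu$ together with the cutoff-function trick that rewrites boundary integrals (of normal jets of $\phi$) as bulk integrals of total normal derivatives. The only cosmetic difference is that you spell out the flatness-of-jet-prolongation argument for the descent check, where the paper simply cites the boundaryless case.
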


\begin{proof}
The map is constructed just as in the case $\bdyM=\emptyset$.
More precisely, one can identify $\sO(\sJ(\sE))$ with the space of polydifferential operators on $E$ with values in smooth functions on $M$.
In other words, $\sO(\sJ(\sE))$ consists of spaces of operators which take in some number of sections of $E$ and produce a function on $M$.
We write such an object as $D_1\cdot D_2\cdot \cdots \cdot D_k$.
We define
\begin{equation}
\digamma (\omega \otimes D_1\cdot \cdots \cdot D_k) = \omega\left( D_1\phi \cdots D_k\phi\right).
\end{equation}
Here, we are viewing $\omega$ as a distribution on $M$.
(The space $\delsmoothdistr$ consists of distributions of a particular form.)
The map $\digamma$ respects the $D_M$-module tensor product for the same reason that it did in the case $\bdyM=\emptyset$.
There, it was important to use the right $D_M$-module structure on $\Omega^n_{tw, M}$ which is induced from the embedding $\Omega^n_{tw,M}\hookrightarrow \cD_M$.
By defining the $\delsmoothdistr$ as we have---namely as a subspace of the distributions on $M$---we have guaranteed that the same argument applies here.
It remains only to check that when $\omega\in \delsmoothdistr$, 
\begin{equation}
\omega\left( D_1\phi \cdots D_k\phi\right)
\end{equation}
is a local action functional.
To this end, our description of $\delsmoothdistr$ in the preceding lemma will be useful.
The distribution $\omega$ contains summands of two forms. Summands of the first type are associated to a smooth density $\mu$ on $M$ and manifestly produce local action functionals.
Summands of the second type are associated to a smooth density $\nu$ on $\bdyM$ and a natural number $n$ and are of the form in Equation \ref{eq: codim1dens}.
Such summands produce functionals of the form
\begin{equation}
I_{\nu, n}(\phi)=\int_\bdyM \nu \partial_t^n\mid_{t=0} \left( D_1\phi\cdots D_k\phi\right).
\end{equation}
Note that such a description requires a choice of tubular neighborhood $\tubnhd\cong \bdyM\times [0,\epsilon)$ of $\bdyM$ in $M$.
We continue to use this choice as follows. 
Let $f_0$ be a compactly-supported function on $[0,\epsilon)$ which is 1 at $t=0$.
Together with $\nu$, such a function determines a density on $M$ which we will call $(f_0 dt)\wedge\nu$.
It is straightforward to check that the functional
\begin{align}
J_{\nu, n}:=\int_M \frac{\del}{\del t}\left( f_0 dt \wedge \nu \left(\frac{\del}{\del t}\right)^n\left(D_1\phi\cdots D_k \phi\right) \right)&= \int_M \frac{df_0}{dt}\wedge \nu \left( \frac{\del}{\del t}\right)^n \left(D_1\phi \cdots D_k \phi\right)\nonumber\\
&+\int_M f_0 dt \wedge \nu \left(\frac{\del}{\del t}\right)^{n+1}\left( D_1\phi \cdots D_k\phi\right)
\end{align}
coincides with $I_{\nu, n}$.
Since $J_{\nu, n}$ is manifestly local in our definition, the lemma follows.
\end{proof}

\begin{lemma}
The natural map 
\begin{equation}
 \digamma: \delsmoothdistr\otimes_{D_M}\sO(\sJ(\sE))\to \sO_{loc}(\sE)
\end{equation}
is an isomorphism of sheaves on $M$.
\end{lemma}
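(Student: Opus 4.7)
The plan is to introduce a two-step increasing filtration $F^0 \subset F^1$ on both sides so that $\digamma$ is filtered, verify that it induces isomorphisms on associated graded pieces, and conclude by the five lemma. On the right-hand side, let $F^0 \sO_{loc}(\sE)$ consist of those local functionals admitting a presentation as an integral over $\bdyM$, i.e.\ of the form $\int_\bdyM \nu \, D_1\phi \cdots D_k\phi$ where $\nu$ is a density on $\bdyM$ and the $D_i$ are differential operators which may involve normal derivatives; let $F^1 \sO_{loc}(\sE) = \sO_{loc}(\sE)$. Correspondingly, set $F^0 \delsmoothdistr = \dpush \Omega^{n-1}_{\bdyM, tw}$ (cf.\ Lemma \ref{lem: bdydens}) and $F^1 \delsmoothdistr = \delsmoothdistr$. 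The explicit formula for $\digamma$ makes it manifest that it respects the filtration, and the quotient $\delsmoothdistr / F^0 \delsmoothdistr$ is identified with $\Omega^n_{M, tw}$ from the local description of $\delsmoothdistr$ established in the preceding lemma.

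On the quotient piece $F^1/F^0$, the computation takes place away from $\bdyM$, so Lemma 6.6.1 of \autocite{cost} (underlying Lemma \ref{lem: derhamloc}) directly identifies $\Omega^n_{M, tw} \otimes_{D_M} \sO(\sJ(\sE))$ with $\sO_{loc}(\sE)/F^0 \sO_{loc}(\sE)$; the $D_M$-module tensor product imposes exactly the integration-by-parts relations, and Stokes' theorem shows that the ``boundary terms'' which obstruct vanishing of total derivatives land precisely in $F^0 \sO_{loc}(\sE)$.

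For the subobject $F^0$, I would apply Lemma \ref{lemma: pullpush} to rewrite
\begin{equation}
\dpush \Omega^{n-1}_{\bdyM, tw} \otimes_{D_M} \sO(\sJ(\sE)) \;\cong\; \iota_*\!\bigl( \Omega^{n-1}_{\bdyM, tw} \otimes_{D_\bdyM} \dpull \sO(\sJ(\sE)) \bigr).
\end{equation}
Locally over a tubular neighborhood $\tubnhd\cong \bdyM\times [0,\epsilon)$, the left $D_\bdyM$-module $\dpull \sO(\sJ(\sE))$ is identified with $\sO(\sJ(\sEb))\hotimes \RR[[\del_t]]$, recording boundary values of the bulk field together with all normal jets. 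The $D_\bdyM$-module analogue of Costello's lemma, applied on the closed manifold $\bdyM$, then identifies the right-hand side with $F^0 \sO_{loc}(\sE)$: local functionals realized as integrals over $\bdyM$ of polydifferential expressions in boundary values and their normal derivatives.

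The main obstacle I anticipate is the careful bookkeeping between the intrinsic $D_M$-module structure on $\delsmoothdistr$ (as a submodule of the distributions on $M$) and its local tubular-neighborhood presentation from Equation \eqref{eq: codim1dens}, in order to verify that the $\otimes_{D_M}$ relations kill precisely the ambiguity in local presentations of a functional flagged in Remark \ref{rem: loclfcnlsrmk}(2); in particular, one must check that two boundary integrals differing by a tangential total derivative are identified and that the $\partial_t$-action is correctly accommodated. Once the isomorphism is established on both associated graded pieces, the five lemma upgrades it to an isomorphism of the filtered objects themselves.
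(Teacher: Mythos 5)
Your proposal is correct and follows essentially the same route as the paper: the identical two-step filtration on both sides, reduction of the $F^1/F^0$ piece to the closed-manifold case, and the use of Lemma \ref{lemma: pullpush} to identify the $F^0$ piece with $\iota_*\bigl(\Omega^{n-1}_{\bdyM,tw}\otimes_{D_\bdyM}\dpull\sO(\sJ(\sE))\bigr)$, with $\dpull\sO(\sJ(\sE))$ accounting for the normal jets. No substantive differences.
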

\begin{proof}
Both the domain and codomain of $\digamma$ are filtered as follows. $F^0\sO_{loc}(\sE)$ is the space of all action functionals which may be written as integrals over $\bdyM$, and $F^1\sO_{loc}(\sE):=\sO_{loc}(\sE)$.
Similarly, we set 
\begin{equation}
F^0\left(\delsmoothdistr\otimes_{D_M}\sO(\sJ(\sE))\right) = (F^0\delsmoothdistr)\otimes_{D_M}\sO(\sJ(\sE)),
\end{equation}
and 
\begin{equation}
F^1\left(\delsmoothdistr\otimes_{D_M}\sO(\sJ(\sE))\right)=\delsmoothdistr\otimes_{D_M}\sO(\sJ(\sE)).
\end{equation}
The map $\digamma$ manifestly preserves these filtrations.
The associated graded object for $\sO_{loc}(\sE)$ is the direct sum 
\begin{equation}
\textrm{LMB}\oplus F^0\sO_{loc}(\sE),
\end{equation}
where $\textrm{LMB}$ is the space of local functionals modulo those which arise as integrals over the boundary.
On the other hand, the associated graded object for the domain of $\digamma$ is 
\[(\Omega^n_{M,tw}\oplus F^0\Omega^{n,n-1}_{M,tw})\otimes_{D_M} \sO(\sJ(\sE)),\] 
where $\Omega^{n}_{M,tw}$ has the right $D_M$-module structure given by Lie derivative along vector fields.
The map $\digamma$ is an isomorphism on the $F^1/F^0$ components by the same argument as for the case $\bdyM=\emptyset$.
To complete the proof, therefore, we need to argue that the $\digamma$ is an isomorphism on the $F^0$ pieces.
To this end, we use Lemma \ref{lemma: pullpush} to find:
\begin{equation}
F^0 \Omega^{n,n-1}_{\bdyM,tw}\otimes_{D_M}\sO(\sJ(\sE))= \dpush \Omega^{n-1}_{\bdyM, tw} \otimes_{D_M} \sO(\sJ(\sE))\cong \iota_*\left( \Omega^{n-1}_{\bdyM,tw}\otimes_{D_{\bdyM}} \dpull \sO(\sJ(\sE))\right).
\end{equation}
And now, we proceed as in the case $\bdyM=\emptyset$; the $D_\bdyM$-module tensor product now tells us that total derivatives on $\bdyM$ are zero.
The only difference here is that we use $\dpull \sO(\sJ(\sE))$ instead of $\sO(\sJ(\sF))$ for some vector bundle $F\to \bdyM$.
However, this precisely matches the fact that $F^0\sO_{loc}(\sE)$ contains functionals which depend on the normal derivatives of fields at the boundary.
\end{proof}

In fact, we also have a corresponding statement for the \emph{derived} $D_M$ module tensor product:

\begin{lemma}
\label{lem: derivedisunderived}
Let $\sO_{red}(\sJ(\sE)):=\sO_{red}(\sJ(\sE))/\cinfty_M$; then $\sO_{red}$ is a flat left $D_M$-module and hence 
\begin{equation}
\delsmoothdistr\otimes_{D_M}\sO_{red}(\sJ(\sE))\simeq  \Omega_{M,tw}^{n,n-1}\otimes_{D_M}^\LL\sO_{red}(\sJ(\sE))
\end{equation}
and there is a quasi-isomorphism
\begin{equation}
\delsmoothdistr\otimes_{D_M}^\LL\sO(\sJ(\sE))\simeq \sO_{loc,red}(\sE)
\end{equation}
of sheaves (of $\cinfty_M$ modules).
\end{lemma}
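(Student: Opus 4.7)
The plan is to reduce the claim to flatness of $\sJ(\sE)$ as a left $D_M$-module, which is the standard input used in the $\bdyM=\emptyset$ case (see Chapter 5 of \autocite{cost}). Once flatness is established, the previous lemma identifying $\delsmoothdistr\otimes_{D_M}\sO(\sJ(\sE))$ with $\sO_{loc}(\sE)$ does all the work, upon passing to reduced functionals.

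First I would recall why $\sJ(\sE)$ is a flat left $D_M$-module. Locally on $M$, after choosing flat coordinates and a trivialization of $E$, the $D_M$-module $\sJ(\sE)$ is isomorphic to $D_M\otimes_{\cinfty_M} \widehat{\Sym}(T^*M)\otimes E_0$, i.e. it is induced from a locally free $\cinfty_M$-module along $\cinfty_M \hookrightarrow D_M$. Induced modules of this shape are flat over $D_M$ because $D_M$ is flat over $\cinfty_M$ (on either side) and the induction functor sends flat $\cinfty_M$-modules to flat $D_M$-modules. Next I would propagate flatness to the symmetric algebra: each finite symmetric power $\Sym^k_{\cinfty_M}\sJ(\sE)^\vee$ is a flat left $D_M$-module (using that tensor products and completions of flat $D_M$-modules in this jet setting remain flat, by a filtered colimit argument on jet order), and the product $\sO_{red}(\sJ(\sE))=\prod_{k\geq 1}\Sym^k_{\cinfty_M}\sJ(\sE)^\vee$ is again flat since the relevant computations reduce, on any relatively compact open, to finite truncations in jet degree.

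Given flatness of $\sO_{red}(\sJ(\sE))$, the derived tensor product $\Omega^{n,n-1}_{M,tw}\otimes^\LL_{D_M}\sO_{red}(\sJ(\sE))$ is computed by the underived tensor product, and the canonical comparison map from the derived to the underived tensor product is a quasi-isomorphism; this gives the first displayed equivalence. For the second, I would combine the preceding lemma's isomorphism
\[
\delsmoothdistr \otimes_{D_M}\sO(\sJ(\sE))\;\cong\;\sO_{loc}(\sE)
\]
with the fact that the constant summand $\cinfty_M\subset \sO(\sJ(\sE))$ contributes exactly the constant functionals, so that passing to $\sO_{red}(\sJ(\sE))$ on the left gives $\sO_{loc,red}(\sE)$ on the right. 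Flatness of $\sO_{red}(\sJ(\sE))$ then upgrades the underived tensor product to the derived one, yielding the quasi-isomorphism $\delsmoothdistr\otimes^\LL_{D_M}\sO(\sJ(\sE))\simeq \sO_{loc,red}(\sE)$ as sheaves of $\cinfty_M$-modules.

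The main obstacle I anticipate is justifying flatness of the \emph{completed} (infinite product) symmetric algebra $\sO_{red}(\sJ(\sE))$ as a $D_M$-module, as opposed to its finite symmetric powers, since flatness does not in general pass through infinite products. I would handle this by working locally and exploiting that, for computing Tor against $\delsmoothdistr$ (whose local structure is controlled by Lemma \ref{lem: bdydens} as a filtered union of finitely generated $D_M$-modules along normal derivatives), one can interchange the Tor with the product by a finiteness argument on jet order: for any fixed order of normal differentiation and fixed polynomial degree in fields, only finitely many Sym-factors contribute, so higher Tor groups vanish term by term and therefore in the product. This verifies that the canonical map from the derived to the underived tensor product is a quasi-isomorphism, finishing the proof.
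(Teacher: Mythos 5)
Your overall strategy --- prove flatness (or at least Tor-acyclicity) of $\sO_{red}(\sJ(\sE))$ over $D_M$ and then feed it into the preceding identification of $\delsmoothdistr\otimes_{D_M}\sO(\sJ(\sE))$ with $\sO_{loc}(\sE)$ --- is exactly the route the paper takes; its own proof is one line, deferring to the argument of Lemma 6.6.2 of Costello's book. The gap is in your flatness step. First, you conflate $\sJ(\sE)$ with its dual: the jet module is \emph{co}induced, locally of the form $\Hom_{\cinfty_M}(D_M,\,\cdot\,)$ (a pro-finite-rank object), not an induced module $D_M\otimes_{\cinfty_M}(\cdots)$, so the description you give of it is wrong, and flatness of $\sJ(\sE)$ itself is neither clear nor what is needed. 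The module that actually enters $\sO_{red}(\sJ(\sE))=\prod_{k\geq 1}\Sym^k_{\cinfty_M}\sJ(\sE)^\vee$ is the dual $\sJ(\sE)^\vee\cong \Diff(E,\underline\RR)\cong D_M\otimes_{\cinfty_M}\Gamma(M,E^\vee)$, which \emph{is} induced and locally free of finite rank over $D_M$. Second, the principle you invoke to pass to symmetric powers, that tensor products over $\cinfty_M$ of flat $D_M$-modules remain flat, is false in general: $\cinfty_M\otimes_{\cinfty_M}\cinfty_M=\cinfty_M$ is not flat over $D_M$. The correct mechanism is the projection formula for induced modules, $(D_M\otimes_{\cinfty_M}V)\otimes_{\cinfty_M}N\cong D_M\otimes_{\cinfty_M}(V\otimes_{\cinfty_M}N)$ as left $D_M$-modules, which shows that tensor powers, and hence (in characteristic zero) symmetric powers, of $\sJ(\sE)^\vee$ are again induced from $\cinfty_M$-flat modules and therefore $D_M$-flat. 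This is the substance of the argument the paper cites.

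Your instinct that the infinite product over $k$ is the remaining delicate point is right, but the finiteness argument you sketch (fixing the normal-derivative order and polynomial degree) does not by itself show that Tor against $\delsmoothdistr$ commutes with the product. A clean repair using results already in the paper: by Lemmas \ref{lem: drres} and \ref{lmm: dmdrrelflat}, $\delsmoothdistr$ admits the finite resolution $\dmdrrel$ by flat right $D_M$-modules which are locally finitely presented (the $\dmdr$ part is locally free of finite rank over $D_M$, and the boundary part is locally cyclic with finitely generated relations), so $\dmdrrel\otimes_{D_M}(-)$ commutes with the countable product over symmetric degree; hence the higher Tor of $\delsmoothdistr$ against $\prod_{k\geq 1}\Sym^k\sJ(\sE)^\vee$ is the product of the higher Tor's, which vanish by the flatness of each $\Sym^k\sJ(\sE)^\vee$. (Alternatively, simply carry over Costello's Lemma 6.6.2 verbatim, as the paper does: flatness of the left module $\sO_{red}(\sJ(\sE))$ is independent of which right module one tensors against, so nothing about the boundary enters that step.) With these repairs, your concluding step --- combining with the previous lemma and splitting off the constant functionals to pass to the reduced objects --- is fine.
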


\begin{proof}
The proof is exactly the same as the argument used in the proof of Lemma 6.6.2 of \cite{cost}.
\end{proof}

Since Lemma \ref{lem: derivedisunderived} holds, we can replace $\delsmoothdistr$ with a quasi-isomorphic $D_M$ module, and we would have a quasi-isomorphic model of $\sO_{loc,red}(\sE)$. Hence, we seek a resolution of $\delsmoothdistr$ as a right $D_M$-module.
This is what we proceed to do now.
As before, the flat connection on $D_M$ provides a differential on the graded sheaf
\begin{equation}
\dmdr:=\Omega^\bullet_{M,tw}\otimes_{\cinfty_M} D_M;
\end{equation}
a similar argument endows the graded sheaf (on $M$)
\begin{equation}
\dmdrbdy:=\iota_*\left(D_{\bdyM\to M}\right)
\end{equation}
with a differential, and a natural right $D_M$-module structure via the map $D_M\to \iota_*\iota^{-1} D_M$.
There is a map
\begin{equation}
\label{eq: pullbackdeRham}
\iota^*: \dmdr \to \dmdrbdy
\end{equation}
defined using the pullback of forms and the unit map $D_M \to \iota_*\iota^{-1}D_M$.

\begin{definition}
The \textbf{universal de Rham resolution of }$\delsmoothdistr$ is the shifted mapping cone
\begin{equation}
\dmdrrel := \cone(\iota^{*})[n-1]
\end{equation}
of the map $\iota^*$.
\end{definition}

The following lemma justifies our introduction of $\dmdrrel$.

\begin{lemma}
\label{lem: drres}
There is a natural quasi-isomorphism
\begin{equation}
\Phi:\dmdrrel \to \delsmoothdistr
\end{equation}
of right $D_M$-modules.
\end{lemma}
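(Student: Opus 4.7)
The plan is to define $\Phi$ in degree zero using the right $D_M$-action on distributions, verify the chain-map property via integration by parts, and then establish the quasi-isomorphism by a five-lemma argument applied to the short exact sequence of complexes coming from the cone.

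First, I would define $\Phi$ on the two summands in degree zero of $\dmdrrel$. On $\Omega^n_{M,tw}\otimes_{\cinfty_M} D_M$ (from $\dmdr[n]$), send $\omega\otimes D$ to the distribution $T_\omega\cdot D\in \delsmoothdistr$, using the right $D_M$-action on $\cD_M$. On $\iota_*(\Omega^{n-1}_{\bdyM,tw}\otimes_{\cinfty_\bdyM} D_{\bdyM\to M})$ (from $\dmdrbdy[n-1]$), send $\nu\otimes(f\otimes D)$ to the distribution $g\mapsto \int_\bdyM (Dg)|_\bdyM\, f\nu$, which is precisely the map $\Upsilon$ landing in $F^0\delsmoothdistr$ constructed in the proof of Lemma~\ref{lem: bdydens}. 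In all negative degrees, $\Phi$ vanishes. The chain-map condition is verified by Stokes's theorem: unwinding the differential on $\dmdrrel$ for an element in degree $-1$, the contribution from the $D_M$-de Rham part pairs against a test function $g$ to give a bulk integral $\int_M (Dg)\,d\omega$, while the contribution from the cone map $\iota^*$ gives $\int_\bdyM (Dg)|_\bdyM \iota^*\omega$; these cancel by Stokes, up to further terms handled by the $D_M$-equivariance and Equation~\eqref{eq: dmdens}. Naturality is automatic.

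For the quasi-isomorphism, I would exploit the tautological short exact sequence
\[
0 \to \dmdrbdy[n-1] \to \dmdrrel \to \dmdr[n] \to 0
\]
coming from the cone, together with the short exact sequence of right $D_M$-modules
\[
0 \to F^0\delsmoothdistr \to \delsmoothdistr \to \Omega^n_{M,tw} \to 0,
\]
where the quotient inherits the Lie-derivative right $D_M$-action via Equation~\eqref{eq: dmdens}; indeed, every element of $\delsmoothdistr$ decomposes as a smooth density plus a distribution supported on $\bdyM$, and modulo $F^0$ the right $D_M$-action reduces to $T_\mu\cdot X\equiv -T_{L_X\mu}$. The map $\Phi$ respects both sequences: on the quotient it recovers the standard de Rham quasi-isomorphism $\dmdr[n]\to \Omega^n_{M,tw}$ recalled in Section~\ref{sec: localfcnlsnobdy}, and on the subcomplex it restricts to a map $\dmdrbdy[n-1]\to \dpush\Omega^{n-1}_{\bdyM,tw}$. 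The five lemma applied to the long exact sequences in cohomology reduces the proof to showing this last restriction is itself a quasi-isomorphism.

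The main obstacle is verifying this remaining boundary quasi-isomorphism. Locally, using the identification $D_{\bdyM\to M}\cong D_\bdyM[\partial_t]$ from the proof of Lemma~\ref{lem: bdydens}, $\dmdrbdy[n-1]$ becomes the shifted de Rham complex on $\bdyM$ with coefficients in the free left $D_\bdyM$-module $D_\bdyM[\partial_t]$. Meanwhile, by Lemma~\ref{lemma: pullpush}, $\dpush\Omega^{n-1}_{\bdyM,tw}$ is identified with $\iota_*(\Omega^{n-1}_{\bdyM,tw}\otimes_{D_\bdyM} D_{\bdyM\to M})$, which locally is $\iota_*(\Omega^{n-1}_{\bdyM,tw}[\partial_t])$. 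The claim then follows from two classical inputs: the twisted de Rham complex $\Omega^\bullet_{\bdyM,tw}\otimes_{\cinfty_\bdyM} D_\bdyM[n-1]\to \Omega^{n-1}_{\bdyM,tw}$ is a free resolution of right $D_\bdyM$-modules by the Koszul argument on $\partial_1,\ldots,\partial_{n-1}$ recalled in Section~\ref{sec: localfcnlsnobdy}, and $D_{\bdyM\to M}$ is flat as a left $D_\bdyM$-module, being locally free with basis $\{\partial_t^k\}_{k\geq 0}$. Tensoring on the right over $D_\bdyM$ by $D_{\bdyM\to M}$ therefore preserves the quasi-isomorphism, completing the reduction.
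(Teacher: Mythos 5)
Your proposal is correct and matches the paper's proof in all essentials: the same augmentation map $\Phi$, the same Stokes/integration-by-parts verification of the chain-map property, and the same reduction (your cone short exact sequence plus the five lemma is the paper's two-step filtration and associated-graded argument) to the interior augmentation $\dmdr[n]\to\Omega^n_{M,tw}$ and the boundary piece $\dmdrbdy\to F^0\delsmoothdistr$. The only, harmless, divergence is the final step: the paper filters by the order of the normal derivative and identifies the associated graded map as the $\RR[\partial_t]$-base change of the codimension-one augmentation, whereas you tensor the boundary Koszul resolution over $D_\bdyM$ with the locally free, hence flat, left module $D_{\bdyM\to M}\cong D_\bdyM[\partial_t]$ --- two phrasings of the same local computation.
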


\begin{proof}
Given $\alpha \in \Omega^n_{tw,M}$, $\beta\in \Omega^{n-1}_{tw,\bdyM}$, $D_1,D_2\in D_M$, we let
\begin{equation}
\Phi(\alpha\otimes D_1, \beta\otimes D_2) = \alpha\cdot D_1 + \beta \cdot D_2;
\end{equation}
on the right-hand side, we understand $\alpha$ and $\beta$ as elements of $\delsmoothdistr$ and $\cdot$ denotes the right $D_M$-module action in this space. This is manifestly a map of right $D_M$-modules, and $\Phi$ respects the tensor product relations defining $\dmdr$ and $\dmdrbdy$. 
Let us check that $\Phi$ respects the differentials. Namely, we are required to check that if $\mu\in \Omega^{n-1}_{M,tw}, \nu\in \Omega^{n-2}_{M,tw}$, then 
\begin{equation}
\Phi(d\mu \otimes D_1+\mu \otimes \nabla D_1,-\iota^*\mu \otimes D_1+d\nu\otimes D_2+ \nu \otimes \nabla D_2) = 0.
\end{equation}
Because the differential and $\Phi$ are right $D_M$-linear (the connection $\nabla$ on $D_M$ is defined in terms of \emph{left} multiplication in $D_M$), it suffices to check this for $D_1=D_2=1$. Then, one verifies that the required equality is equivalent to the integration by parts formulas for $M$ and $\bdyM$. 

It remains to check that $\Phi$ is a quasi-isomorphism. Note that, as before, we obtain a natural filtration on $\dmdrrel$ and $\Phi$ preserves this filtration. On the level of associated graded sheaves, $Gr(\Phi)$ is the sum of the map
\begin{equation} 
\dmdr [n]\xrightarrow{\varepsilon} \Omega^{n}_{M,tw}
\end{equation} 
(where $\Omega^{n}_{M,tw}$ is endowed with the $D_M$-module structure by Lie derivatives of vector fields) and the map
\begin{equation}
\dmdrbdy\xrightarrow{\varepsilon_\partial} F^0 \delsmoothdistr.
\end{equation}
The map $\varepsilon$ is precisely the quasi-isomorphism constructed in the paragraph preceding Lemma \ref{lem: derhamloc}.
For the story on $\bdyM$, let us recall that 
\begin{equation}
F^0 \delsmoothdistr:=\iota_*\left(\Omega^{n-1}_{\bdyM,tw}\otimes_{D_{\bdyM}}\left( \cinfty_{\bdyM}\otimes_{\iota^{-1}(\cinfty_M)}\iota^{-1}(D_M)\right)\right)
\end{equation}
and 
\begin{equation}
\dmdrbdy:= \iota_*\left( \Omega^\bullet_{\bdyM,tw}\otimes_{\iota^{-1}(\cinfty_M)} \iota^{-1}(D_M)\right).
\end{equation}
Since the pushforward functor $\iota_*$ is exact, it suffices to check that the map 
\begin{equation}
\Phi_\del:=\Omega^\bullet_{\bdyM,tw}\otimes_{\iota^{-1}(\cinfty_M)} \iota^{-1}(D_M)\to 
\Omega^{n-1}_{\bdyM,tw}\otimes_{D_{\bdyM}}\left( \cinfty_{\bdyM}\otimes_{\iota^{-1}(\cinfty_M)}\iota^{-1}(D_M)\right)
\end{equation}
induced from $\Phi$ is a quasi-isomorphism.
Upon choosing a tubular neighborhood of the boundary, one may filter both the domain and codomain of $\Phi_\del$ by the order of the differential operator in the normal direction (it is not difficult to show that this filtration does not depend on this choice).
Moreover, one has isomorphisms
\begin{equation}
Gr\left(\Omega^\bullet_{\bdyM,tw}\otimes_{\iota^{-1}(\cinfty_M)} \iota^{-1}(D_M)\right) \cong \Omega^\bullet_{\bdyM,tw}\otimes_{\cinfty_\bdyM}D_{\bdyM}[\del_t],
\end{equation}
\begin{equation}
Gr\left( \Omega^{n-1}_{\bdyM,tw}\otimes_{D_{\bdyM}}\left( \cinfty_{\bdyM}\otimes_{\iota^{-1}(\cinfty_M)}\iota^{-1}(D_M)\right)\right) \cong \Omega^{n-1}_{\bdyM_tw}[\partial_t],
\end{equation}
with the na\"ive right $\iota^{-1}(D_M)$-module structures (i.e. one decomposes a vector field $X$ as $X=X_\del+X_\nu \partial_t$ near the boundary, and lets the normal part of the vector field act on the $\RR[\partial]$ factors, while the tangential part acts in the canonical way on $D_\bdyM$ and $\Omega^{n-1}_{\bdyM,tw}$, respectively).
Using this characterization, it is clear that $Gr(\Phi_\del)$ is simply the base change over $\RR[\partial_t]$ of the codimension-one version of $\epsilon$. Hence, $Gr(\Phi_\del)$, and therefore $Gr(\Phi)$, and therefore $\Phi$, is a quasi-isomorphism.
\end{proof}

The following Lemma is an immediate consequence of Lemma \ref{lem: drres}.

\begin{lemma}
\label{lem: locfcnlresolution}
There is a cannonical quasi-isomorphism
\begin{equation}
\dmdrrel\otimes_{D_M}\sO_{red}(\sJ(\sE))\to \sO_{loc,red}(\sE)
\end{equation}
of sheaves on $M$.
\end{lemma}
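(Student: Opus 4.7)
The plan is to build the quasi-isomorphism as a composition of three pieces already assembled in the preceding discussion, namely (i) the resolution $\Phi:\dmdrrel\to\delsmoothdistr$ of Lemma \ref{lem: drres}, (ii) the underived identification $\delsmoothdistr\otimes_{D_M}\sO(\sJ(\sE))\xrightarrow{\sim}\sO_{loc}(\sE)$ established via the map $\digamma$, and (iii) the flatness of $\sO_{red}(\sJ(\sE))$ as a left $D_M$-module from Lemma \ref{lem: derivedisunderived}. The strategy is essentially a short diagram chase on the level of (derived) $D_M$-module tensor products.

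First I would tensor the quasi-isomorphism $\Phi$ with $\sO_{red}(\sJ(\sE))$ over $D_M$. Because $\sO_{red}(\sJ(\sE))$ is flat, the functor $(-)\otimes_{D_M}\sO_{red}(\sJ(\sE))$ preserves quasi-isomorphisms of right $D_M$-modules, and so the induced map
\begin{equation}
\Phi\otimes\mathrm{id}:\ \dmdrrel\otimes_{D_M}\sO_{red}(\sJ(\sE))\ \longrightarrow\ \delsmoothdistr\otimes_{D_M}\sO_{red}(\sJ(\sE))
\end{equation}
is a quasi-isomorphism of sheaves on $M$. Next I would pass to the reduced part of the preceding isomorphism of sheaves provided by $\digamma$; by its very construction $\digamma$ is compatible with the splittings coming from the constant summand $\cinfty_M\subset\sO(\sJ(\sE))$ and the constant functionals in $\sO_{loc}(\sE)$, so it descends to a (strict) isomorphism $\delsmoothdistr\otimes_{D_M}\sO_{red}(\sJ(\sE))\xrightarrow{\sim}\sO_{loc,red}(\sE)$. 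Composing yields the claimed quasi-isomorphism, with canonicity inherited from the canonicity of each of the three ingredients.

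The only step that requires any care beyond bookkeeping is checking that $\digamma$ restricts sensibly to the reduced pieces on both sides and that the composition is indeed computed by the $D_M$-module tensor product rather than the derived one. The first point follows by inspecting the explicit formula $\digamma(\omega\otimes D_1\cdot\ldots\cdot D_k)=\omega(D_1\phi\cdots D_k\phi)$: the tensor factors landing in $\cinfty_M\subset\sO(\sJ(\sE))$ produce constant functionals, so the splitting is respected. The second point is handled by Lemma \ref{lem: derivedisunderived}, which guarantees both that the underived tensor product on the left agrees with the derived one, and that the derived tensor of $\delsmoothdistr$ with $\sO(\sJ(\sE))$ recovers $\sO_{loc,red}(\sE)$ after descending to the reduced piece.

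The main obstacle I anticipate is not a conceptual one but rather verifying that $\Phi\otimes\mathrm{id}$ remains a quasi-isomorphism at the level of sheaves, since in principle one would want to check quasi-isomorphism stalk-wise or on a sufficiently fine cover. Here flatness does all the work: tensoring the local resolution $\Phi$ (which is already a quasi-isomorphism locally in $M$, by the filtration argument in Lemma \ref{lem: drres}) with a flat module preserves acyclicity of the mapping cone section-wise, and hence after sheafification. With that observation in hand, the proof is simply a composition of three canonical maps.
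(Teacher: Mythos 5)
Your argument is correct and is essentially the paper's own reasoning: the paper states the lemma as an immediate consequence of Lemma \ref{lem: drres}, i.e.\ one tensors the resolution $\Phi:\dmdrrel\to\delsmoothdistr$ with the flat module $\sO_{red}(\sJ(\sE))$ (Lemma \ref{lem: derivedisunderived}) and composes with the reduced form of the identification $\digamma$, exactly as you do. Your boundedness/flatness remark on why the tensored map stays a quasi-isomorphism is a fine way to make the "immediate" step explicit.
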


We describe one property of $\dmdrrel$ which will be useful in the sequel. 
\begin{lemma}
\label{lmm: dmdrrelflat}
The right $D_M$-module $\dmdrrel$ is a complex of flat $D_M$-modules.
\end{lemma}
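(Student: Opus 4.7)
The plan is to verify flatness componentwise. The complex $\dmdrrel=\cone(\iota^{*})[n-1]$ is built from two kinds of summands: interior terms of the form $\Omega^k_{M,tw}\otimes_{\cinfty_M} D_M$ (coming from $\dmdr[n]$, for $0\le k\le n$), and boundary terms from $\dmdrbdy$, which, as the proof of Lemma~\ref{lem: drres} makes explicit, one should model as $\iota_*\bigl(\Omega^k_{\bdyM,tw}\otimes_{\iota^{-1}(\cinfty_M)}\iota^{-1}(D_M)\bigr)$ for $0\le k\le n-1$. I would first dispatch the interior terms: since $\Omega^k_{M,tw}$ is a locally free $\cinfty_M$-module of rank $\binom{n}{k}$, the tensor product $\Omega^k_{M,tw}\otimes_{\cinfty_M}D_M$ is locally free as a right $D_M$-module of rank $\binom{n}{k}$, hence flat.

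The substantive work is verifying flatness of the boundary summands. Using the surjection $\iota^{-1}(\cinfty_M)\twoheadrightarrow \cinfty_{\bdyM}$, I would rewrite each boundary summand as $\iota_*\bigl(\Omega^k_{\bdyM,tw}\otimes_{\cinfty_{\bdyM}}D_{\bdyM\to M}\bigr)$, which is locally free of rank $\binom{n-1}{k}$ over (the pushforward of) $D_{\bdyM\to M}$. This reduces the whole question to showing that $\iota_*D_{\bdyM\to M}$ is flat as a right $D_M$-module: flatness is closed under taking locally free extensions of finite rank.

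For this reduction I would work locally, using a tubular neighborhood $\tubnhd\cong \bdyM\times[0,\epsilon)$ with normal coordinate $t$. A PBW-type decomposition gives a (non-canonical) splitting $D_M|_{\tubnhd}\cong \pi^{-1}D_{\bdyM}\otimes_{\RR}\cinfty_{\tubnhd}[\partial_t]$ (where $\pi$ is the projection to $\bdyM$), and correspondingly $D_{\bdyM\to M}\cong D_{\bdyM}\otimes_{\RR}\RR[\partial_t]$ as a left $\cinfty_{\bdyM}$-module. Using this, one can analyze the right $D_M$-action on $\iota_*D_{\bdyM\to M}$ explicitly and exhibit $\iota_*D_{\bdyM\to M}$ as a filtered colimit of locally free right $D_M$-modules (filtering by the order of the normal derivative $\partial_t$ on each piece). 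Filtered colimits of flat modules are flat, so this yields the desired conclusion, and one then checks that this description is independent of the choice of tubular trivialization.

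The main obstacle will be the last step, the local verification of flatness of $\iota_*D_{\bdyM\to M}$. The right $D_M$-action combines the right $\iota^{-1}D_M$-action on $D_{\bdyM\to M}$ with the restriction $D_M\to \iota_*\iota^{-1}D_M$, and the commutation relations between the defining function $t$ of the boundary and the normal derivative $\partial_t$ (notably $[\partial_t,t]=1$) mean that one must be careful in identifying the flat pieces of the filtration. The PBW-type decomposition is what makes the bookkeeping tractable; alternatively, one may write down a resolution of $\iota_*D_{\bdyM\to M}$ by manifestly free right $D_M$-modules using this local model and check acyclicity by a direct computation on the associated graded with respect to the normal order filtration.
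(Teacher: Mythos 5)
Your treatment of the interior summands coincides with the paper's: $\dmdr$ is locally a free right $D_M$-module, hence flat. For the boundary summands the paper takes a different and much shorter route: it asserts that, locally, these summands are finite sums of modules of the form $\iota_*\iota^{-1}(D_M)$, and then uses that $\iota_*\iota^{-1}(D_M)\otimes_{D_M}(-)\cong \iota_*\iota^{-1}(-)$ is an exact functor; no filtration or colimit argument appears. Your reduction—identifying the boundary summands locally with finite sums of $\iota_*D_{\bdyM\to M}$ and reducing to flatness of that single module—is the literal identification coming from the definitions (note that $D_{\bdyM\to M}=\cinfty_\bdyM\otimes_{\iota^{-1}(\cinfty_M)}\iota^{-1}(D_M)$ is a proper quotient of $\iota^{-1}(D_M)$, so it is not the module the paper's exactness argument is applied to), but the argument you then propose does not close.

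The concrete gap is the filtration step. The filtration of $D_{\bdyM\to M}\cong D_\bdyM[\partial_t]$ by order in the normal derivative is not a filtration by right $D_M$-submodules: right multiplication by $\partial_t$ raises the normal order, and the filtered pieces (e.g.\ the degree-zero piece $D_\bdyM$) carry no right $D_M$-module structure at all, let alone a locally free one. So the principle ``a filtered colimit of locally free right $D_M$-modules is flat'' is not available, and the proposed alternative of checking acyclicity of a free resolution ``on the associated graded'' runs into the same problem, since the right $D_M$-action does not preserve that filtration. In fact no presentation of the kind you seek can exist: right multiplication by a boundary defining function $t$ annihilates $1\otimes 1\in D_{\bdyM\to M}$ (because $1\otimes t=\iota^{*}(t)\otimes 1=0$), while $\cdot\, t\colon D_M\to D_M$ is injective with nonzero cokernel $D_M/D_M t$, so $\mathrm{Tor}^{D_M}_1\bigl(\iota_*D_{\bdyM\to M},\, D_M/D_M t\bigr)\cong \ker(\cdot\, t)\neq 0$; since a filtered colimit of flat modules is flat, $\iota_*D_{\bdyM\to M}$ admits no such presentation. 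This is precisely where your route diverges from the paper's (which replaces $\iota_*D_{\bdyM\to M}$ locally by $\iota_*\iota^{-1}(D_M)$), and it shows the boundary half of the claim is more delicate than a componentwise ``locally free, hence flat'' argument suggests. If you want a repairable statement, note that the later applications only need the vanishing of the relevant Tor groups against modules such as $\sO_{red}(\sJ(\sE))$, which are $t$-torsion-free over $\cinfty_M$; since $D_{\bdyM\to M}\cong \cinfty_\bdyM\otimes_{\cinfty_M}D_M$ and $D_M$ is locally free over $\cinfty_M$, these Tor groups reduce to $t$-torsion and do vanish, so an argument pitched at that level would avoid the problem entirely.
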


\begin{proof}
As a right $D_M$-module, $\dmdr$ is locally a free $D_M$-module, hence is flat.

As a right $D_M$-module, $\dmdrbdy$ looks locally like a sum of modules of the form $\iota_*\iota^{-1}(D_M)$. 
The endofunctor (acting on the category of left $D_M$-modules) 
\begin{equation}
\iota_*\iota^{-1}(D_M)\otimes_{D_M}\cdot
\end{equation}
is naturally isomorphic to the endofunctor $\iota_*\iota^{-1}$.
This functor is exact, since $\iota_*$ and $\iota^{-1}$ are. (Here, we use the fact that $\bdyM$ is a closed submanifold of $M$.)

The lemma follows.
\end{proof}

\subsection{Imposing the Boundary Condition for Functionals}
\label{sec: OlocEL}
Lemma \ref{lem: locfcnlresolution} helps us to understand the structure of $\sO_{loc}(\sE)$; 
however, we are more interested in $\Oloc$, 
i.e. we are not interested in distinguishing two functionals if they agree when restricted to $\condfieldscs$. 
To this end, note that there is a surjective map $\sO_{loc}(\sE)\to \Oloc$. 
Our aim now is to characterize the kernel of this map.
Proposition \ref{prop: dualsofcondfields} and its Corollary A tell us which functionals on $\sE$ vanish when restricted to $\condfields$, 
namely those which depend in one of their inputs only on the boundary information of the field's value in $\Eb/L$. 
Let us use this information to get a better grasp on $\Oloc$. 

Note the following: if a local functional depends only on the boundary information of one input, then it lies in the space $F^0 \Oloc$, i.e. is described by an integral over $\bdyM$.
This is because local functionals of order $k$ in the fields have their support on the small diagonal of $M^k$.
Within $\dmdrrel\otimes \sO_{red,loc}(\sJ(\sE))$ (which, by Lemma \ref{lem: locfcnlresolution}, resolves $\sO_{loc,red}(\sE)$), the space of functionals which are supported on $\bdyM$ is 
\begin{equation}
\iota_*( \Omega^\bullet_{\bdyM, tw})\otimes_{\cinfty_{M}}\sO_{red}(\sJ(\sE)).
\end{equation}
We have
\begin{equation}
\iota_*( \Omega^\bullet_{\bdyM, tw})\otimes_{\cinfty_{M}}\sO_{red}(\sJ(\sE))\cong \iota_*\left(\Omega^\bullet_{\bdyM, tw}\otimes_{\cinfty_{\bdyM}}\sO_{red}(\dpull (\sJ(\sE)))\right),
\end{equation}
where $\sO_{red}(\dpull(\sJ(\sE)))$ is defined similarly to $\sO_{red}(\sJ(\sE))$, but by taking continuous duals and symmetric powers of $\dpull \sJ(\sE)$ over $\cinfty_\bdyM$.

Let $L':=\Eb/L$. There is a canonical map of shifted $L_\infty$-algebras in the category of $D_{\bdyM}$-modules
\begin{equation}
\Upsilon: \dpull (\sJ(\sE)) \to \sJ(\sL')
\end{equation}
constructed as the composite of the following two maps.
First, we construct a map
\begin{equation}
\Upsilon_1: \dpull (\sJ(\sE))=\cinfty_\bdyM \otimes_{\iota^{-1}(\cinfty_M)}\iota^{-1}(\sJ(\sE)) \to \sJ(\sEb)
\end{equation}
given by
\begin{equation}
\Upsilon_1(f\otimes \sigma) = f\rho(\sigma);
\end{equation}
it is straightforward to check that $\Upsilon_1$ is well-defined (using the fact that $\rho$ arises from restriction to the boundary followed by a bundle operation) and moreover respects the tensor product defining $\dpull (\sJ(\sE))$.
Next, we consider the map
\begin{equation}
\Upsilon_2: \sJ(\sEb)\to \sJ(\sL')
\end{equation}
induced from the bundle map $\Eb\to L'$, and we set $\Upsilon =\Upsilon_2\circ \Upsilon_1$.
It follows that there is an inclusion
\begin{equation}
\Upsilon^\vee:\sJ(\sL')^\vee \to (\dpull \sJ(\sE))^\vee
\end{equation}
of left $D_\bdyM$ modules. Because $\Upsilon$ is a map of shifted $L_\infty$-algebras, it follows that 
\begin{equation}
\sJ(\sL')^\vee\otimes_{\cinfty_\bdyM} \sO(\dpull(\sJ(\sE)))\subset \sO_{red}(\dpull (\sJ(\sE)))
\end{equation}
is closed under the Chevalley-Eilenberg differential on $\sO_{red}(\dpull(\sJ(\sE)))$. 
The inclusion also respects the $D_\bdyM$ module structure, and it is straightforward to show that this sub $D_\bdyM$-module is flat. Hence, we can make the following definition:

\begin{definition}
The complex of \textbf{de Rham functionals vanishing on $L$} is the complex
\begin{equation}
\sV(L): = \iota_*\left( \dmdrbdy \otimes_{D_{\bdyM}} \left(\sJ(\sL')^\vee\otimes_{\cinfty_\bdyM} \sO_{red}(\dpull (\sJ(\sE)))\right)\right).
\end{equation}
The discussion preceding this definition guarantees that $\sV(L)$ is a subcomplex of $\dmdrrel\otimes_{D_M}\sO_{red}(\sJ(\sE))$. 
\end{definition}

\begin{notation}
We define the following complex of sheaves
\begin{equation}
\fullOloc:=\left(\dmdrrel\otimes_{D_M}\sO_{red}(\sJ(\sE))\right)/\sV(L)
\end{equation}
\end{notation}

\begin{lemma}
There is a quasi-isomorphism of complexes of sheaves
\begin{equation}
\fullOloc\to \Olocred.
\end{equation}
\end{lemma}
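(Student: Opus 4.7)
I would prove this via a comparison of short exact sequences, applying the five lemma to the resulting long exact sequences in cohomology. Let $\sK$ denote the kernel of the surjective restriction map $\sO_{loc,red}(\sE) \to \Olocred$ induced by restricting functionals from $\sE_c$ to $\condfieldscs$; this yields a short exact sequence
\begin{equation*}
0 \to \sK \to \sO_{loc,red}(\sE) \to \Olocred \to 0.
\end{equation*}
Similarly, the definition of $\fullOloc$ directly gives
\begin{equation*}
0 \to \sV(L) \to \dmdrrel \otimes_{D_M} \sO_{red}(\sJ(\sE)) \to \fullOloc \to 0.
\end{equation*}
By Lemma \ref{lem: locfcnlresolution}, the middle terms are quasi-isomorphic, so the task reduces to verifying that this quasi-isomorphism sends $\sV(L)$ into $\sK$ and induces a quasi-isomorphism $\sV(L) \to \sK$.

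The first point is essentially immediate from the construction of $\sV(L)$: an element of $\sV(L)$ corresponds, via the map $\Upsilon$, to a boundary-supported functional whose value on $\phi$ depends, in at least one input, only on the $L'$-component of the restriction $\rho(\phi)$. For $\phi \in \condfieldscs$ this component vanishes identically, so the functional lies in $\sK$. For the main identification, I would invoke the earlier Proposition \ref{prop: dualsofcondfields} to characterize $\sK$ as the span of those local functionals which depend, in at least one input, only on the $L'$-component of the field at $\bdyM$. As remarked in the text, locality forces such functionals to be supported on $\bdyM$, so $\sK$ lies entirely in the boundary filtration piece $F^0 \sO_{loc,red}(\sE)$. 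Using Lemma \ref{lemma: pullpush}, this boundary piece is identified with $\iota_*(\dmdrbdy \otimes_{D_\bdyM} \sO_{red}(\dpull \sJ(\sE)))$, and within this, the further requirement that one input factor through $\sJ(\sL')^\vee \subset (\dpull \sJ(\sE))^\vee$ exactly carves out $\sV(L)$.

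The main obstacle will be the bookkeeping required to verify that all differentials match under these identifications. In particular, one must check that the Chevalley-Eilenberg differential on $\sV(L)$, inherited from the strict $L_\infty$-map $\Upsilon$, corresponds under the above isomorphisms to the restriction of the CE differential on $\sO_{loc,red}(\sE)$, and that the de Rham differential on $\dmdrbdy$ is compatible with integration by parts of boundary-supported local functionals on $\bdyM$. Both compatibilities follow from the functoriality of the $D$-module constructions together with the fact that $\Upsilon$ is a map of sheaves of shifted $L_\infty$-algebras, but require careful tracking through the cone filtration on $\dmdrrel$ and the pushforward-pullback adjunction of Lemma \ref{lemma: pullpush}.
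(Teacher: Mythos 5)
Your overall strategy is the paper's own: compare the short exact sequence defining $\fullOloc$ with the sequence $0\to\sK\to\sO_{loc,red}(\sE)\to\Olocred\to 0$, use the resolution of $\sO_{loc,red}(\sE)$ on the middle terms, identify $\sK$ (via Corollary A / Proposition \ref{prop: dualsofcondfields}) with the local functionals that depend in some input only on the $L'$-component of the field at $\bdyM$, and conclude with the five lemma. The genuine gap is in the step where you say the boundary filtration piece is ``identified with'' $\iota_*\left(\dmdrbdy\otimes_{D_\bdyM}\sO_{red}(\dpull \sJ(\sE))\right)$ and that the condition of factoring through $\sJ(\sL')^\vee$ then ``exactly carves out $\sV(L)$'' inside $\sK$. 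It does not: the honest boundary-supported local functionals are $\iota_*\left(\Omega^{n-1}_{\bdyM,tw}\otimes_{D_\bdyM}\sO_{red}(\dpull\sJ(\sE))\right)$, built from the genuine density bundle, whereas $\sV(L)$ is built from the de Rham resolution $\dmdrbdy$. So $\sV(L)$ is not a subspace of $\sK$ at all; it is only a resolution of it, and what you actually need is that the natural map $\sV(L)\to\sK$ is a quasi-isomorphism. That cannot be deduced merely from the fact that the ambient complexes are quasi-isomorphic, since a quasi-isomorphism of complexes need not restrict to one on chosen subcomplexes.

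The missing ingredients are exactly the ones the paper assembled when defining $\sV(L)$: (i) the augmentation $\dmdrbdy\to\Omega^{n-1}_{\bdyM,tw}$ is a quasi-isomorphism of right $D_\bdyM$-modules; (ii) $\sJ(\sL')^\vee\otimes_{\cinfty_\bdyM}\sO_{red}(\dpull\sJ(\sE))$ is a \emph{flat} left $D_\bdyM$-module, so tensoring the map in (i) over $D_\bdyM$ with it preserves the quasi-isomorphism; and (iii) $\iota_*$ is exact. Together these give a quasi-isomorphism
\begin{equation}
\sV(L)\to \iota_*\left(\Omega^{n-1}_{\bdyM,tw}\otimes_{D_\bdyM}\sJ(\sL')^\vee\otimes_{\cinfty_\bdyM}\sO_{red}(\dpull\sJ(\sE))\right)\cong \sK,
\end{equation}
the last identification being the appeal to Corollary A you already make. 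Your proposal never invokes the flatness in (ii), and your closing appeal to Lemma \ref{lemma: pullpush} and to ``bookkeeping of differentials'' does not substitute for it. Once this step is supplied, the differentials are compatible for the reasons you indicate ($\Upsilon$ is a map of shifted $L_\infty$-algebras in $D_\bdyM$-modules), and your two-short-exact-sequences-plus-five-lemma conclusion goes through exactly as in the paper.
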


\begin{proof}
Since $\dmdrbdy\to \Omega^{n-1}_{\bdyM,tw}$ is a quasi-isomorphism of right $D_\bdyM$-modules and $\sJ(\sL')^\vee\otimes_{\cinfty_\bdyM} \sO(\dpull(\sJ(\sE)))$ is a flat left $D_\bdyM$-module, we have a quasi-isomorphism
\begin{equation}
\sV(L)\to \iota_*\left(\Omega^{n-1}_{\bdyM,tw}\otimes_{D_\bdyM} \sJ(\sL')^\vee\otimes_{\cinfty_\bdyM} \sO(\dpull(\sJ(\sE)))\right).
\end{equation}
Using Corollary A of the appendix, we identify this latter space of functionals as precisely the space of local functionals on $\sE_c$ which vanish when restricted to $\condfieldscs$.
Hence, we have a map of short exact sequences:

\begin{equation}
\xymatrix{
0 \ar[r] & \sV(L) \ar[r]\ar[d]& A_1\ar[r]\ar[d] & \fullOloc\ar[r]\ar[d]&0\\
0 \ar[r] &A_2  \ar[r]& \sO_{loc,red}(\sE)\ar[r]& \Olocred\ar[r]&0
},
\end{equation}
where 
\begin{equation}
A_1 = \dmdrrel\otimes_{D_M}\sO_{red}(\sJ(\sE))
\end{equation}
and 
\begin{equation}
A_2 = \iota_*\left(\Omega^{n-1}_{\bdyM,tw}\otimes_{D_\bdyM} \sJ(\sL')^\vee\otimes_{\cinfty_\bdyM}\sO(\dpull(\sJ(\sE)))\right).
\end{equation}
Since the first two vertical maps are quasi-isomorphisms, it follows by the snake lemma and the five lemma that the last vertical arrow is a quasi-isomorphism as well.
\end{proof}

\subsection{BF Theory, an Extended Example}
\label{subsec: bfthyloc}
The preceding discussion is a bit dense and difficult to follow. 
To help the reader to get a sense of these construction, we now make an extended study of BF theory on the upper half-space $\HH^n$.
We will study BF theory with the $A$ and $B$ boundary conditions (the terminology follows Example \ref{ex: bfbdycond}).

As in the preceding discussion, we will start by understanding $\sO_{loc,red}(\sE)$. Recall (Lemma \ref{lem: locfcnlresolution}), that there is a quasi-isomorphism

\begin{equation}
\dmdrrel\otimes_{D_M}\sO_{red}(\sJ(\sE))\to \sO_{loc,red}(\sE).
\end{equation}

One can use an argument from the Proposition 7.6 of \autocite{rabaxial} (although this argument is itself derivative from other sources) to construct a quasi-isomorphism of left $D_M$-modules
\begin{equation}
\sO_{red}(\sJ(\sE)) \overset{\sim}{\to} \cinfty_M \otimes C^\bullet_{red}(\fg, \Sym(\fg[n-2])).
\end{equation}
Hence, using Lemma \ref{lmm: dmdrrelflat}, one has a quasi-isomorphism
\begin{equation}
\dmdrrel\otimes_{D_M}\cinfty_M\otimes \left(C^\bullet_{red}(\fg, \Sym(\fg[n-2]))\right) \to \sO_{loc,red}(\sE).
\end{equation}
It is straightforward to show that
\begin{equation}
\dmdrrel\otimes_{D_M}\cinfty_M \cong \Omega^\bullet_{M}[n]\overset{\iota^*}{\to}\Omega^\bullet_{\bdyM}[n-1]=:\drrel,
\end{equation}
hence, we have constructed a quasi-isomorphism
\begin{equation}
\drrel \otimes C^\bullet_{red}(\fg, \Sym (\fg[n-2]))\to \sO_{loc,red}(\sE).
\end{equation}

Now, let us understand $\sV(L)$ better. Let us take first the $A$ boundary condition.
Here, $L'=\Lambda^\bullet T^* \bdyM\otimes \fg^\vee[n-2]$.
As before, we can show 
\begin{equation}
\sJ(\sL')\simeq \cinfty_\bdyM\otimes \fg^\vee[n-2]
\end{equation}
and 
\begin{equation}
\dpull(\sJ(\sE))\simeq \cinfty_\bdyM \otimes (\fg[1]\oplus \fg^\vee[n-2]).
\end{equation}
Hence, we have a quasi-isomorphism
\begin{equation}
\iota_*\left(\Omega^\bullet_{\bdyM}\right)\otimes \left( \fg[2-n]\otimes C^\bullet(\fg, \Sym(\fg[n-2])\right)\to \sV(L).
\end{equation}
Let 
\begin{equation}
\sO(\fg;A)
\end{equation}
denote the total complex
\begin{equation}
\Omega^\bullet_{M}[n]\otimes C^\bullet_{red}(\fg, \Sym (\fg[n-2]))\to \iota_*(\Omega^\bullet_{\bdyM})[n-1]\otimes C^\bullet_{red}(\fg),
\end{equation}
where the horizontal map is the tensor product of the maps
\begin{equation}
\iota^* :\Omega^\bullet_M \to \iota_*(\Omega^\bullet_{\bdyM})
\end{equation}
and the natural projection
\begin{equation}
C^\bullet_{red}(\fg, \Sym(\fg[n-2]))\to C^\bullet_{red}(\fg).
\end{equation}
We have thus constructed a quasi-isomorphism
\begin{equation}
\sO(\fg; A) \to \Olocred
\end{equation}
for BF theory with the $A$ boundary condition.
Similarly, define
\begin{equation}
\sO(\fg; B):=\Omega^\bullet_{M}[n]\otimes C^\bullet_{red}(\fg, \Sym (\fg[n-2]))\to \iota_*(\Omega^\bullet_{\bdyM})[n-1]\otimes \Sym^{\geq 1}(\fg[n-2]);
\end{equation}
we can construct a quasi-isomorphism
\begin{equation}
\sO(\fg; B) \to \Olocred
\end{equation}
for BF theory with the $B$ boundary condition.

Let us now specialize to the case $M=\HH^2$, $\bdyM = \RR$ and take global sections. In this case, we have equivalences
\begin{align}
\sO(\fg;B)(\HH^2) &\simeq \left( C^\bullet_{red}(\fg, \Sym(\fg))[2] \to \Sym^{\geq 1}(\fg)[1]\right)\\
&\simeq \left(H^{\geq 1}(\fg, \Sym(\fg))\right)[2] \oplus \left(\Sym^{\geq 1}(\fg)/\Sym^{\geq 1}(\fg)^{\fg}\right)[1].
\end{align}
When $\bdyM=\emptyset$, the zeroth and first cohomology of the complex of local functionals represent the space of deformations and obstructions, respectively, for quantizations of classical BF theory.
Adopting the same terminology here, we find that the space of obstructions for quantizations of 2D BF theory with $B$ boundary condition is $H^3(\fg, \Sym(\fg))$, and the space of deformations for 2D BF theory is $H^2(\fg, \Sym(\fg))$.
The cohomology acquires a grading from the symmetric degree in $\Sym(\fg)$; we will call this the \emph{B-weight}.
If we restrict to those functionals which have a B-weight of $+1$, and further assume that $\fg$ is semi-simple, we find that 
\begin{equation}
H^\bullet\left( \sO(\fg; B)(\HH^2)\right)_{1} \cong \fg[1],
\end{equation}
the subscript $1$ reminding us that we are computing the cohomology in $B$-weight 1.
To perform this computation, we use Whitehead's theorem and the fact that semi-simple Lie algebras have trivial centers.
Given $x\in \fg$, one may exhibit the isomorphism just described via the $B$-field functional
\begin{equation}
J_x(\beta) = \left(\int_\RR \iota^* \beta\right)(x).
\end{equation}
It is straightforward to check that this functional is closed in $\Olocred$.

\section{The Factorization Algebras of Observables}
\label{sec: FAs}
In this section, given a bulk-boundary system $(\sE,\sL)$, we construct a factorization algebra $\Obcl_{\sE,\sL}$ of classical observables for the bulk-boundary system on $M$. $\Obcl_{\sE,\sL}$ will be constructed as ``functions'' on $\condfields$. $\Obcl_{\sE,\sL}$ has the advantage of being easy to define; however, it does not manifestly carry the $P_0$ (shifted Poisson) structure that one expects to find on the space of functions on a (--1)-shifted symplectic space. Hence, we construct also a $P_0$ factorization algebra $\widetilde{\Obcl_{\sE,\sL}}$ and a quasi-isomorphism $\widetilde{\Obcl_{\sE,\sL}}\to \Obcl_{\sE,\sL}$. We closely follow \autocite{CG2}.

\begin{definition}
\label{def: classFA}
Let $(\sE,\sL)$ be a bulk-boundary system. Define 
\index[notation]{Obscl@$\Obcl_{\sE,\sL}$}
\begin{equation}
\Obcl_{\sE,\sL}(U):= (\sO(\condfields), d_{CE}) = \prod_{k\geq 0} \underline{CVS}\left(\condfields^{\hotimes_\beta k}, \RR\right)_{S_k}
\end{equation}
where the symmetric algebra is taken with respect to the completed projective tensor product of nuclear topological vector spaces. Here, $d_{CE}$ denotes the Chevalley-Eilenberg differential constructed from the structure of $L_\infty$ algebra on $\condfields(U)[-1]$. Given disjoint open subsets $U_1,\ldots, U_k\subset M$ all contained in an open subset $V
\subset M$, define the 
\begin{equation}
m_{U_1,\cdots, U_k}^V:\Obcl_{\sE,\sL}(U_1)\hotimes_\beta \cdots \hotimes_\beta \Obcl_{\sE,\sL}(U_k)\to \Obcl_{\sE,\sL}(V)
\end{equation}
to be the composite
\begin{align}
\Obcl_{\sE,\sL}(U_1)\hotimes_\beta \cdots \hotimes_\beta \Obcl_{\sE,\sL}(U_k)&\to \widehat{\Sym}(\oplus_i\condfields^\vee(U_i))\cong \Obcl_{\sE,\sL}(U_1\sqcup \cdots \sqcup U_k)\nonumber\\
&\to \Obcl_{\sE,\sL}(V),
\end{align}
where the first map is a version of the natural isomorphism $\Sym(A\oplus B) \cong \Sym(A)\otimes \Sym(B)$, and the last map is induced from the extension of compactly-supported distributions $\condfields^\vee(\sqcup_i U_i)\to \condfields^\vee(V)$.
\end{definition}

\begin{remark}
The construction here is almost identical to that of \autocite{CG2}. We simply impose boundary conditions on the fields of interest.
\end{remark}

\begin{theorem}
\label{thm: classobsformFA}
The differentiable vector spaces $\Obcl_{\sE,\sL}(U)$, together with the structure maps $m_{U_1,\ldots,U_k}^V$, form a factorization algebra.
\end{theorem}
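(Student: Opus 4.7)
The plan is to follow the proof of the analogous result in \autocite{CG2} (Chapter 6) \emph{mutatis mutandis}, adapting each step to accommodate the fact that $\condfields$ is not the sheaf of sections of a vector bundle on $M$ but rather the pullback sheaf $\sE\times_{\iota_*\sEb}(\iota_*\sL)$. The argument splits into three parts: checking the prefactorization algebra axioms, verifying the multiplicativity axiom, and verifying Weiss codescent.

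First, I would verify that $\Obcl_{\sE,\sL}$ is a prefactorization algebra. The functorial assignment on opens, the $S_k$-equivariance, and the associativity of the structure maps all follow formally from the construction. The key observation is that $\widehat{\Sym}$ converts direct sums into tensor products (giving the natural isomorphism $\widehat{\Sym}(A\oplus B)\cong \widehat{\Sym}(A)\hotimes_\beta\widehat{\Sym}(B)$ in $\CVS$), and extension by zero of compactly-supported distributional sections is manifestly functorial and symmetric-monoidal with respect to disjoint union. The Chevalley-Eilenberg differential is natural with respect to these constructions because $\condfields[-1]$ is a sheaf of $L_\infty$-algebras on $M$ by Lemma \ref{ref: tildefieldsBV}. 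The multiplicativity axiom---that $m_{U,V}^{U\sqcup V}$ is a quasi-isomorphism for disjoint $U,V$---is actually an isomorphism in this setting: since $\condfieldscs(U\sqcup V)\cong \condfieldscs(U)\oplus\condfieldscs(V)$, the compatibility of $\widehat{\Sym}$ with direct sums delivers the conclusion.

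The third and substantive step is Weiss codescent. For an open $U\subset M$ with a Weiss cover $\cU=\{U_\alpha\}$, one must show that the natural map $\check{C}(\cU,\Obcl_{\sE,\sL})\to \Obcl_{\sE,\sL}(U)$ is a quasi-isomorphism. The strategy, following Lemma 6.1.5 and Theorem 6.1.4 of \autocite{CG2}, is to first establish that the precosheaf $\condfieldscs$ is a Weiss cosheaf. This is reduced to showing that its symmetric (resp. tensor) powers $\condfieldscs^{\hotimes_\beta k}$ are cosheaves on $M^k$ for the cover $\{U_\alpha^k\}$; since Weiss covers are precisely those for which $\cU^k$ covers $U^k$, a partition-of-unity argument on $M^k$ (using the cosheaf property of compactly-supported sections of the sub-bundle defining $\condfields$) gives the claim. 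The key point is that although $\condfieldscs$ is not the cosheaf of compactly-supported sections of a bundle on $M$, the boundary condition is \emph{local} on $\bdyM$ in the sense of Definition \ref{def: bdycond}, so partitions of unity still yield gluings. Dualizing and passing to $\widehat{\Sym}$ then yields the Weiss codescent for $\Obcl_{\sE,\sL}$ itself, as $\widehat{\Sym}$ commutes with the relevant colimits.

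The main obstacle is ensuring that the functional-analytic manipulations that underlie the CG2 argument---in particular, the commutation of $\widehat{\Sym}$ with the colimits appearing in the \v{C}ech complex and the compatibility of $\hotimes_\beta$ with spaces of sections carrying boundary conditions---continue to hold in our setting. For this, I would invoke the results of the Appendix (Sections \ref{sec: tensorproduct} and \ref{sec: innerhom}), which provide the needed characterizations of tensor products and inner homs of $\sL$-conditioned spaces of sections. With these in place, the remainder of the argument proceeds essentially as in \autocite{CG2}.
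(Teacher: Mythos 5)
Your overall strategy (CG2 \emph{mutatis mutandis}, with the appendix supplying the functional analysis) matches the paper's, and your treatment of the prefactorization axioms and of the smooth compactly supported conditioned sections is fine. The gap is in the final step of your Weiss codescent argument. The observables of Definition \ref{def: classFA} are not $\widehat{\Sym}(\condfieldscs[1])$ but the power series $\prod_k\underline{\CVS}\bigl(\condfields(U)^{\hotimes_\beta k},\RR\bigr)_{S_k}$, so the building blocks are continuous duals. Proving that $\condfieldscs^{\hotimes_\beta k}$ is a cosheaf on $M^k$ and then ``dualizing'' does not yield what is needed: dualization converts the \v{C}ech colimit statement for smooth compactly supported conditioned sections into a limit statement about the dual spaces, whereas the theorem requires codescent for the precosheaf $U\mapsto\underline{\CVS}\bigl(\condfields(U)^{\hotimes_\beta k},\RR\bigr)$ itself, with extension-of-distributions structure maps; moreover $\underline{\CVS}\bigl(\condfields(U)^{\hotimes_\beta k},\RR\bigr)$ is not the dual of $\condfieldscs(U)^{\hotimes_\beta k}$, so even the identification of objects is off. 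You also omit the standard reduction to the free case (differential $\ell_1$ only, via the filtration by symmetric degree), which is needed before any contracting-homotopy argument, since the homotopy does not commute with the full Chevalley-Eilenberg differential.

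The paper closes exactly this gap by a different device: using the non-canonical splitting $\sE(U)\cong\condfields(U)\oplus\sL'(U\cap\bdyM)$ from Section \ref{sec: innerhom}, it identifies $\underline{\CVS}\bigl(\condfields(U)^{\hotimes_\beta m},\RR\bigr)$ as a quotient, in both $\CVS$ and $\DVS$, of $\underline{\CVS}\bigl(\sE(U)^{\hotimes_\beta m},\RR\bigr)$, the kernel being the functionals vanishing on inputs satisfying the boundary condition; it then observes that the explicit contracting homotopy of Lemma A.5.8 of CG1 for the unconditioned \v{C}ech complex (built from transposes of multiplication by a partition of unity on $U^m$) preserves this kernel and hence descends to the conditioned functionals. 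Your partition-of-unity intuition is correct, but to turn it into an argument on the functional side you need precisely this quotient description -- which is the actual content drawn from the appendix -- rather than a dualization of the cosheaf statement for smooth sections.
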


\begin{proof}
The proof is very similar to the analogous proof in \autocite{CG2}. The proof that the classical observables form a prefactorization algebra is identical to that in \autocite{CG2}. For the Weiss cosheaf condition, it suffices, as in \autocite{CG2}, to check the condition for free theories. A similar situation arises in \ref{chap: freequantum}, though there one uses $\Sym(\condfieldscs[1])$ for the classical observables. By the same arguments as in the corresponding proof in \autocite{CG2}, we need only to show that, given any Weiss cover $\fU=\{U_i\}_{i\in I}$ of an open subset $U\subset M$, the map 
\begin{equation}
\label{eq: cechmap}
\bigoplus_{n=0}^\infty \bigoplus_{i_1,\cdots, i_n}\underline{\CVS}\left( \condfields(U_{i_1}\cap\cdots\cap U_{i_n})^{\hotimes_\beta m},\RR\right)_{S_k} [n-1]\to \underline{\CVS} \left(\condfields(U)^{\hotimes_\beta m},\RR\right)_{S_k}
\end{equation}
is a quasi-isomorphism, where the left-hand side is endowed with the \v{C}ech differential and the internal differential induced by $\ell_1$ only.

We show in Section \ref{sec: innerhom} that there is a non-canonical splitting 
\begin{equation}
    \sE(U) \cong \condfields(U) \oplus \sL'(U\cap \bdyM).
\end{equation}
We may therefore understand the mapping spaces appearing in Equation \eqref{eq: cechmap} as quotients of corresponding spaces without boundary conditions imposed (in both categories of interest, $\CVS$ and $\DVS$; this was why it was important to actually produce this splitting).
More precisely, the kernel of the quotient map 
\begin{equation}
\label{eq: quotientoffunctionals}
\underline{\CVS} \left(\sE(U)^{\hotimes_\beta m},\RR\right)_{S_k}\to \underline{\CVS} \left(\condfields(U)^{\hotimes_\beta m},\RR\right)_{S_k}
\end{equation}
is, naturally, the space of functionals which vanish when all inputs satisfy the boundary condition.
The quoted results from the appendix show that one may actually understand the codomain of the above equation as a quotient in $\CVS$ and $\DVS$.
In Lemma A.5.8 of \autocite{CG1}, Costello and Gwilliam describe an explicit contracting homotopy for the total complex in Equation \eqref{eq: cechmap} where no boundary conditions are imposed.
This contracting homotopy involves only addition and (the transpose of) multiplication by compactly-supported smooth functions on $U^m$ constituting a partition of unity for the cover $\{U_i^m\}_{i\in I}$ of $U^m$.
It is obvious that such a contracting homotopy will preserve the kernel of the quotient map Equation \eqref{eq: quotientoffunctionals}; hence it will also furnish a contracting homotopy for the codomain in Equation \eqref{eq: quotientoffunctionals}.
\end{proof}

In \autocite{CG2}, Theorem 6.4.0.1, it is also shown that $\Obcl$ possesses a sub-factorization algebra $\widetilde{\Obcl}$ which has a $P_0$ structure. We have the same situation here. Let us first make a definition:

\begin{definition}
Let 
\[
I\in \underline{\CVS}\left( \condfields(U)^{\hotimes_\beta k}, \RR\right)_{S_k}
\]
with $k\geq 1$. 
$I$ induces a map
\begin{equation}
\underline{\CVS}\left( \condfields(U)^{\hotimes_\beta (k-1)}, \RR\right)_{S_{k-1}}\to \underline{\CVS}\left( \condfields(U), \RR\right)
\end{equation}
by the hom-tensor adjunction in $\CVS$.
We say that $I$ has \textbf{smooth first derivative} if this map has image in $\condfieldscs[1](U)$. 
We consider this condition to be vacuously satisfied when $k=0$. Given $J\in \sO(\condfields(U))$, we say that $J$ has smooth first derivative if each of its Taylor components does.
\end{definition} 

\begin{theorem}
\label{thm: P0classobsFA}
Let
\index[notation]{Obscltilde@$\widetilde{\Obcl_{\sE,\sL}}$}
$\widetilde{\Obcl_{\sE,\sL}}(U)$ denote the subspace of $\Obcl_{\sE,\sL}(U)$ consisting of functionals with smooth first derivative. 
\begin{enumerate}
\item $\widetilde{\Obcl_{\sE,\sL}}$ is a sub-factorization algebra of $\Obcl_{\sE,\sL}$. 
\item $\widetilde{\Obcl_{\sE,\sL}}$ posseses a Poisson bracket of degree $+1$.
\item The inclusion $\widetilde{\Obcl_{\sE,\sL}}\to \Obcl_{\sE,\sL}$ is a quasi-isomorphism.
\end{enumerate}
\end{theorem}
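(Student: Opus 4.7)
The plan is to adapt the proof of Theorem 6.4.0.1 of \autocite{CG2} to the boundary-conditioned setting, breaking the argument into the three stated parts.

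\textbf{Part (1).} I would verify that the smooth-first-derivative property is preserved by all of the factorization-algebra structure. The maps $m_{U_1,\ldots,U_k}^V$ are transposes of extension-by-zero on $\condfieldscs$; since extension by zero sends a smooth compactly-supported section to a smooth compactly-supported section, the first derivative of $m(I_1 \otimes \cdots \otimes I_k)$ stays inside $\condfieldscs[1]$. The Chevalley-Eilenberg differential is assembled from the $L_\infty$ brackets $\ell_k$ on $\condfields[-1]$ (well-defined by Lemma \ref{ref: tildefieldsBV}); each $\ell_k$ is a polydifferential operator, and such operators preserve smoothness, so $d_{CE}$ preserves the smooth-first-derivative subspace. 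For Weiss codescent, the contracting homotopy used in the proof of Theorem \ref{thm: classobsformFA} is built from multiplication by a compactly-supported smooth partition-of-unity function, which preserves smoothness; hence $\widetilde{\Obcl_{\sE,\sL}}$ inherits Weiss codescent from $\Obcl_{\sE,\sL}$.

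\textbf{Part (2).} Given $I \in \widetilde{\Obcl_{\sE,\sL}}(U)$, its first derivative assembles into a continuous map landing in $\condfieldscs[1](U)$. Using the pairing $\ip_{\sE}$, which is cyclic when restricted to $\condfields$ by Lemma \ref{ref: tildefieldsBV}, one obtains a $P_0$-type bracket $\{I, J\}$ for $I, J \in \widetilde{\Obcl_{\sE,\sL}}(U)$. This bracket is well-defined precisely because at least one of the two first derivatives is smooth compactly-supported, so that its pairing against a distributional section is unambiguous. I would then verify graded Leibniz and Jacobi identities by standard manipulations as in \autocite{CG2}, check that $\{I, J\}$ again has smooth first derivative (since the kernel produced by $dI$ is smooth and polydifferential operators preserve smoothness), and confirm compatibility with $d_{CE}$ and with the factorization structure maps.

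\textbf{Part (3).} The strategy is a polynomial-degree spectral sequence. I would filter both $\widetilde{\Obcl_{\sE,\sL}}(U)$ and $\Obcl_{\sE,\sL}(U)$ by $F^{\geq p}$, the functionals of symmetric degree $\geq p$. This filtration is decreasing and preserved by $d_{CE}$, since $\ell_k$ sends $\Sym^n$ to $\Sym^{n+k-1}$ with $k \geq 1$; on each associated graded piece the induced differential comes from $\ell_1$ alone. Using the boundedness of $E$ from Definition \ref{def: tnbft}, the spectral sequence converges, and it suffices to show that the inclusion is a quasi-isomorphism on each symmetric power. By standard combinatorics of symmetric tensor products, this reduces to the linear statement that the inclusion
\[
\condfieldscs[1](U) \hookrightarrow \condfields(U)^\vee
\]
is a quasi-isomorphism with respect to $\ell_1$. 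This is the $\sL$-conditioned generalization of the Atiyah-Bott lemma, which is proved in the appendix (cf. Section \ref{sec: DVS}, where such a generalization is promised).

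\textbf{Main obstacle.} The technical heart of the argument is part (3), and specifically the boundary-adapted Atiyah-Bott lemma. In the absence of a boundary condition, Atiyah-Bott is proved by convolving with a parametrix for an elliptic operator to construct a contracting homotopy between smooth compactly-supported and distributional sections. The difficulty in the bulk-boundary setting is that the parametrix must be adapted so that the homotopy preserves the boundary condition defining $\condfields$. Granting the appendix lemma, the rest of part (3) is a formal spectral-sequence argument and part (1)--(2) follow from routine preservation-of-structure checks.
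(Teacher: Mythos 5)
Your parts (1) and (2) are essentially the paper's argument (routine checks following \autocite{CG2}; the one point worth stating more sharply is that closure of the smooth-first-derivative functionals under $d_{CE}$ uses the cyclicity of $\ip$ on $\condfieldscs$, i.e.\ the vanishing of boundary terms when the transpose of $\ell_1$ is moved onto the smooth kernel $D$ --- ``polydifferential operators preserve smoothness'' by itself does not address the boundary term). The genuine gap is in part (3), at the step ``by standard combinatorics of symmetric tensor products, this reduces to the linear statement.'' The space of order-$k$ functionals with smooth first derivative is \emph{not} $\Sym^k(\condfieldscs[1](U))$, nor any symmetric power of the linear smooth observables: it is the intersection $\Gamma_k=\bigcap_{j=1}^k\Upsilon_j$ inside $\innerhom{\condfields(U)^{\hotimes_\beta k}}{\RR}$, where $\Upsilon_j$ is the image of $\innerhom{\condfields(U)^{\hotimes_\beta (k-1)}}{\condfieldscs[1](U)}$ --- smooth in the $j$-th slot but distributional in the remaining ones. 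So one cannot conclude by applying $\Sym^k$ (or a K\"unneth-type argument) to the linear quasi-isomorphism $\condfieldscs[1](U)\to\condfields(U)^\vee$; the multilinear statement about an intersection of images does not formally follow from the linear one.

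What the paper actually does, after reducing to the free theory as you do, is: (i) it uses the boundary-adapted Atiyah--Bott statement in a stronger form, namely a quasi-isomorphism admitting an explicit homotopy inverse (Proposition \ref{prop: atiyahbott}), and only for ``somewhat nice'' opens (opens in the interior, or collars $U'\times[0,\delta)$); (ii) it covers $U^k$ by products of somewhat nice opens and runs a \v{C}ech-type localization, checking that the contracting homotopy of Lemma A.5.7 of \autocite{CG1} (built from multiplication by partitions of unity) descends both to the boundary-conditioned duals --- using the quotient description of those duals from the appendix --- and to the intersections $\Gamma_{k,\{U_i\}}$; (iii) only then, for products of somewhat nice sets, does the intersection-of-images claim reduce slotwise to the linear statement with its homotopy inverse. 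None of this is a formal spectral-sequence consequence of the appendix lemma, so your closing claim that ``granting the appendix lemma, the rest of part (3) is a formal spectral-sequence argument'' does not hold; this localization-and-intersection argument is the technical bulk of the paper's proof of (3) and is missing from your proposal.
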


\begin{proof}
Again, the proof follows \autocite{CG2}.
The proof that $\widetilde{\Obcl_{\sE,\sL}}$ is a sub-prefactorization algebra of $\Obcl_{\sE,\sL}$ is identical to the one in \autocite{CG2}. One point requires comment, however: $\widetilde{\Obcl}$ is closed under the Chevalley-Eilenberg differential on $\Obcl_{\sE,\sL}$ because $(\condfieldscs[-1],\ell_l,\ip)$ is a precosheaf of cyclic $L_\infty$ algebras. Indeed, any functional with smooth first derivative is a sum of functionals of the form
\begin{equation}
I(e_1,\ldots,e_k) = \ip[D(e_1,\cdots, e_{k-1}), e_k],
\end{equation}  
where $D: \condfields^{\hotimes_\beta (k-1)} \to \condfieldscs[1]$ is a continuous map. One can check directly that applying the Chevalley-Eilenberg differential to such a functional gives another such functional.
It is important that $(\condfieldscs[-1],\ell_1,\ip)$ is a precosheaf of cyclic $L_\infty$ algebras because this allows one to ``integrate by parts,'' i.e. use the equality
\begin{equation}
\ip[D(e_1,\ldots,e_{k-1}),\ell_1 e_k]=\pm\ip[\ell_1 D(e_1,\ldots, e_{k-1}),e_k]
\end{equation}
and its analogues for the higher brackets $\ell_2,\ell_3, \ldots$.

The construction of the shifted Poisson bracket is also identical to the construction in \autocite{CG2}. The Weiss cosheaf condition will be satisfied once we prove that $\widetilde{\Obcl_{\sE,\sL}}\to \Obcl_{\sE,\sL}$ is a quasi-isomorphism. Hence, statement (3) of the theorem is the only one that remains to be proved.

The essential ingredient in the proof of statement (3) is the fact that the inclusion 
\begin{equation}
\condfieldscs(U)[1]\to \condfields^\vee(U)
\end{equation}
is a quasi-isomorphism with a homotopy inverse for certain open subsets $U$, cf. Proposition \ref{prop: atiyahbott}.

Just as in \autocite{CG2}, we may assume that the theory is free. We let 
\begin{equation}
\Sigma^k: \innerhom{\condfields(U)^{\hotimes_\beta k}}{\RR}\to \innerhomsym{\condfields(U)^{\hotimes_\beta k}}{\RR}{k}
\end{equation}
denote the symmetrization map. We let $\Gamma_n$ denote $(\Sigma^{k})^{-1}\widetilde{\Obcl_{\sE,\sL}}(U)$. 
For each $j=1,\ldots, k$ Let $\Upsilon_j$ denote the following map:
\begin{align}
    \Upsilon_j &: \innerhom{\condfields(U)^{\hotimes_\beta(k-1)}}{\condfieldscs(U)[1]}\to \innerhom{\condfields(U)^{\hotimes_\beta k}}{\RR}\\
    \Upsilon_j& (J) = \ip[J(e_1,\ldots, e_{j},e_{j+1},\ldots, e_k),e_j].
\end{align}
We will abuse notation and use $\Upsilon_j$ to denote also the image of the map described in the above equation.
Just as in \autocite{CG2}, we can identify
\begin{equation}
\Gamma_k = \bigcap_{j=1}^{k} \Upsilon_j.
\end{equation}
It suffices to show that the inclusion 
\begin{equation}
\label{eq: incl}
\Gamma_k \hookrightarrow \innerhom{\condfields(U)^{\hotimes_\beta k}}{\RR}
\end{equation}
is an equivalence, since symmetrization is an exact functor. More generally, let $\{U_i\}_{i=1}^k$ be open subsets of $M$, and define $\Gamma_{k,\{U_i\}}$ by a similar intersection.

We will show that the natural inclusion 
\begin{equation}
\label{eq: incltwo}
\Gamma_{k,\{U_i\}}\hookrightarrow \innerhom{\bigotimes_{i=1}^k\condfields(U_i)}{\RR}
\end{equation}
is a quasi-isomorphism possessing a homotopy inverse when each $U_i$ is either contained entirely in $M\backslash \bdyM$ or is of the form $U'_i\times [0,\delta_i)$ for $U'_i$ open in $\bdyM$ (and using our fixed tubular neighborhood of $\bdyM$). We will call such $U_i$ ``somewhat nice.'' Let us explain why this proves that the inclusion in Equation \eqref{eq: incl} is a quasi-isomorphism. It follows from  the results of Section \ref{sec: innerhom} that 
\begin{equation}
\innerhom{\bigotimes_{i=1,i\neq j}^{i=k} \condfields(U_i)} {\condfieldscs(U_j)}
\end{equation}
is a quotient of 
\begin{equation}
\innerhom{\bigotimes_{i=1,i\neq j}^{i=k} \sE(U_i)} {\condfieldscs(U_j)}
\end{equation}
Now, let $V=U^n$. We may cover $V$ by products of somewhat nice sets in $M$. Let $\fV=\{U_i\}_{i\in I}$ be such an open cover. By taking the dual statement to that of Lemma A.5.7 of \autocite{CG1}, we find a contracting homotopy for the mapping cone of the map
\begin{equation}
\check{C}(\fV, (\sE^\vee)^{\hotimes_\beta k})\to (\sE^\vee(V))^{\hotimes_\beta k}.
\end{equation} 
This contracting homotopy involves only multiplication by smooth, compactly-supported functions on $M^n=M\times \cdots \times M$. 
We have seen that this cochain homotopy descends to one for the map
\begin{equation}
\check{C}\left(\fV, \innerhom{\condfields^{\hotimes_\beta k}(\cdot)}{\RR} \right)\to \innerhom{\condfields(V)^{\hotimes_\beta k}}{\RR}.
\end{equation} 
Using the explicit characterization of the spaces in the intersection defining $\Gamma_{k,\{U_i\}}$, the cochain homotopy also descends to one for $\Gamma_{k,\{U_i\}}$.

 Hence, we have a commuting diagram
\begin{equation}
\begin{tikzcd}
\check{C}(\fV, \Gamma_{k,\cdot}) \ar[r,"\sim"]\ar[d]& \Gamma_{k}\ar[d]\\
\check{C}\left(\fV, \innerhom{\condfields^{\hotimes_\beta k}(\cdot)}{\RR}\right) \ar[r,"\sim"]& \innerhom{\condfields^{\hotimes_\beta k}(V)}{\RR}
\end{tikzcd}
\end{equation}
where the top and bottom maps are quasi-isomorphisms. 
Here, we abuse notation slightly and let $\Gamma_{n,V'}$ denote $\Gamma_{n,\{U_i\}}$ when $V'=U_1\times \cdots U_n$. 
We are interested in showing that the right-hand arrow in the diagram is a quasi-isomorphism. 
The finite intersection of any number of products of somewhat nice subsets is again a product of somewhat nice sets. 
Therefore, if the map of Equation \eqref{eq: incltwo} is a quasi-isomorphism for $U$ somewhat nice, the left-hand map in the above commuting diagram will be a quasi-isomorphism. 
It follows that the map of \eqref{eq: incl} will be a quasi-isomorphism, since that map is also the right-hand map in the commuting diagram.

Let us now proceed to show that the map of Equation \eqref{eq: incl} is a quasi-isomorphism. To prove this, it suffices---just as in the corresponding proof in \autocite{CG2}---to show that the map
\begin{equation}
\condfieldscs(U_j)[1]\to \innerhom{\condfields(U_j)}{\RR}
\end{equation}
is a quasi-isomorphism when $U\cap \bdyM=\emptyset$ or when $U\cong U' \times [0,\delta)$. For $U\cap \bdyM=\emptyset$, this is the Atiyah-Bott lemma (Appendix D of \autocite{CG1}). For $U\cong U'\times [0,\delta)$, this is shown in Proposition \ref{prop: atiyahbott}
\end{proof}

\section{Examples}
\label{sec: classexamples}
In this section, we study two examples, whose quantizations we study in detail in Chapters \ref{chap: freequantum} and \ref{chap: examples}.

\subsection{Topological Mechanics}
The goal of this section is to study the factorization algebra of topological mechanics. 
Recall from Examples \ref{ex: toplmech} and \ref{ex: toplmechbc} that a symplectic vector space $V$ and a Lagrangian $L\subset V$ define a free bulk-boundary system on $\R_{\geq 0}$ known as topological mechanics. 
The procedure of the previous section constructs a factorization algebra $\Obq_{V,L}$ on $\R_{\geq 0}$ for these choices. Our goal is to study this factorization algebra.

Given an associative algebra $A$ and a right $A$-module $M$, there is a factorization algebra $\cF_{A,M}$ on $\R_{\geq 0}$ which assigns $A$ to any open interval, and $M$ to any interval containing $0$ (see \S 3.3.1 of \autocite{CG1}) for details). 
The structure maps are given by the multiplication in $A$ and the right-module action of $A$ on $M$. 
We will see that the cohomology factorization algebra of topological mechanics is isomorphic to one of the form $\cF_{A,M}$, for appropriate $A$ and $M$. 

Let $\sO(V)=\Sym(V^\vee)$ denote the symmetric algebra of polynomial functions on $V$, and similarly for $\sO(L)$. The inclusion $L\to V$ induces a restriction of functions map $\sO(V)\to \sO(L)$ which defines a right $\sO(V)$-module structure on $\sO(L)$. 

We would like to say that $\Obcl_{V,L}$ is equivalent to $\cF_{\sO(V),\sO(L)}$; however, $\Obcl_{\sE,\sL}$ is defined in terms of a space of power series on $\condfields$, while $\sO(V)$ and $\sO(L)$ are polynomial algebras. 
To remedy this, one may also consider, for each $U$, the space $\Obcl_{V,L,poly}(U)$ consisting only of polynomial functions on $\condfields(U)$. It is easy to verify that $\Obcl_{V,L,poly}$ forms a sub-factorization algebra of $\Obcl_{V,L}$.

\begin{lemma}
The cohomology of the factorization algebra $\Obcl_{V,L,poly}$ is isomorphic to the factorization algebra $\cF_{\sO(V),\sO(L)}$. 
\end{lemma}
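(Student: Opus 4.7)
\medskip

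\noindent\textbf{Proof Plan.} The factorization algebra $\Obcl_{V,L,poly}$ is built from the free theory, so for each open $U\subset\RR_{\geq 0}$ we have $\Obcl_{V,L,poly}(U)=\Sym(\condfields(U)^\vee)$ (polynomial symmetric algebra) with differential induced by the de~Rham differential on $\condfields(U)$. The strategy is to compute the cohomology of $\condfields(U)$ for $U$ an interval, lift the resulting equivalence to polynomial symmetric algebras, and then match structure maps with those of $\cF_{\sO(V),\sO(L)}$.

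\smallskip

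First, I would establish explicit deformation retracts of $\condfields(U)$ for each open interval $U\subset\RR_{\geq 0}$ onto a vector space concentrated in degree $0$. Writing a field as $\alpha=f+g\,dt$, one picks an evaluation point $t_0\in U$ (taking $t_0=0$ whenever $0\in U$) and sets $ev_{t_0}(\alpha)=f(t_0)$; the homotopy $h(\alpha)(t)=-\int_{t_0}^t g(s)\,ds$ satisfies $dh+hd=\id-i\circ ev_{t_0}$, where $i$ includes constants. When $0\in U$ and $t_0=0$, the boundary condition $f(0)\in L$ forces the image to be $L$ and guarantees that $h(\alpha)(0)=0\in L$, so the homotopy preserves the boundary condition. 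Hence $\condfields(U)$ deformation retracts onto $W_U\coloneqq V$ (if $0\notin U$) or $W_U\coloneqq L$ (if $0\in U$), concentrated in degree~$0$.

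\smallskip

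Next, I would transport this equivalence through $\Sym(\,\cdot\,^\vee)$. In characteristic zero and for bounded chain complexes of differentiable vector spaces admitting such strong deformation retracts (so that the homotopy dualizes continuously), each $\Sym^n$ of a deformation retract is again a deformation retract: both $ev_{t_0}^\vee$ and $i^\vee$ are continuous linear, and one gets an explicit $S_n$-equivariant homotopy on $(\condfields(U)^\vee)^{\otimes n}$ from the polarization of $h^\vee$. Summing over $n$ (using the polynomial, not completed, symmetric algebra) yields a quasi-isomorphism
\[
\Obcl_{V,L,poly}(U)\;\simeq\;\sO(W_U),
\]
concentrated in degree $0$. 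Thus $H^\bullet\Obcl_{V,L,poly}$ assigns $\sO(V)$ to an interval $U$ with $0\notin U$ and $\sO(L)$ to an interval with $0\in U$.

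\smallskip

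Finally, I would verify that the structure maps of $H^\bullet\Obcl_{V,L,poly}$ agree with those of $\cF_{\sO(V),\sO(L)}$. For a nested inclusion $U\subset U'$ of intervals, the restriction $\condfields(U')\to\condfields(U)$ followed by $ev_{t_0^U}$ equals $ev_{t_0^U}$ on $\condfields(U')$; this identifies the induced cohomology map with the identity $\sO(V)\to\sO(V)$, the identity $\sO(L)\to\sO(L)$, or the restriction $\sO(V)\to\sO(L)$ induced by $L\hookrightarrow V$, according to whether the intervals contain $0$. For a disjoint inclusion $U_1\sqcup U_2\subset V$, the restriction $\condfields(V)\to\condfields(U_1)\oplus\condfields(U_2)$ becomes on cohomology the diagonal into $W_{U_1}\oplus W_{U_2}$ (noting that any two evaluation points of a $d$-closed field on $V$ give the same class), whose dual induces multiplication in $\sO(V)$ or the right $\sO(V)$-module action on $\sO(L)$, matching $\cF_{\sO(V),\sO(L)}$ exactly. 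Since every open in $\RR_{\geq 0}$ is a disjoint union of open intervals (at most one containing $0$) and both factorization algebras satisfy the factorization axiom, agreement on intervals and on these basic structure maps gives the desired isomorphism.

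\smallskip

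\noindent\textbf{Main obstacle.} The only nontrivial technical point is the lift from the deformation retract on $\condfields(U)$ to one on the polynomial symmetric algebra in the $\DVS$/$\CVS$ setting: one must check that the explicit homotopy $h$ dualizes to a continuous linear map and that the induced $S_n$-equivariant homotopies on tensor powers behave well with respect to the topological vector space structures, so that the symmetric-power computation of cohomology is valid. The finite polynomial symmetric algebra (as opposed to its completion) is essential here, as it allows a degree-by-degree argument that avoids convergence issues.
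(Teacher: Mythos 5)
Your proposal is correct, but it takes a genuinely different route from the paper. The paper does not build interval-wise deformation retracts: it introduces the sheaf $\cS$ obtained by sheafifying the presheaf that assigns the symplectic vector space $V$ to opens not containing $0$ and $L$ to opens containing $0$, constructs a single map of sheaves $\cS\to\condfields$ (inclusion of constants), observes it is a quasi-isomorphism, and then obtains one map of factorization algebras $\Obcl_{V,L,poly}\to\sO(\cS)$ together with the identification $\sO(\cS)=\cF_{\sO(V),\sO(L)}$. That route buys automatic compatibility with all structure maps (functoriality of $\sO(-)$ applied to a sheaf map), so no case-by-case verification of inclusions of intervals or of disjoint unions is needed, and it produces an actual morphism of factorization algebras rather than only an isomorphism after passing to cohomology. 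Your route, by contrast, makes explicit exactly the homological input the paper leaves implicit in its ``straightforward to show'': the evaluation-plus-integration contraction $(ev_{t_0},h)$, the fact that the boundary condition forces the retract onto $L$ when $0\in U$, the transport of the retraction through duals and symmetric powers in the $\CVS$/$\DVS$ setting (which is legitimate under the paper's stated conventions, since a deformation retract dualizes by precomposition and symmetrization is exact), and the identification of the induced maps on cohomology with multiplication in $\sO(V)$ and the $\sO(V)$-module structure on $\sO(L)$. The only places to tighten are cosmetic: fix the sign in $dh+hd=\id-i\circ ev_{t_0}$, and when globalizing from intervals to arbitrary opens, say explicitly that you are using the prefactorization axiom that $m_{U_1,U_2}^{U_1\sqcup U_2}$ is a quasi-isomorphism, so that agreement on intervals and on the interval-level structure maps determines the isomorphism of cohomology factorization algebras on all opens.
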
 

\begin{proof}
Let us choose a Lagrangian complement $L'$ to $L$ in $V$.
The sheaf of $\sL$-conditioned fields is
\begin{equation}
 \condfields(U) = \Omega^\bullet_M(U)\otimes L\oplus \Omega^\bullet_{M,D}(U)\otimes L'.
\end{equation}
Here, $\Omega^\bullet_{M,D}(U)$ is $\Omega^\bullet_M(U)$ is $U$ does not contain $t=0$ and otherwise is the space of de Rham forms on $U$ whose pullback to $t=0$ vanishes.
Let $\cS_{pre}$ the presheaf of vector spaces on $M$ which assigns $V$ to any open set not containing 0 and $L$ to any open set which does contain 0.
Let $\cS$ denote the sheafification of $\cS$.
There is a natural map of sheaves $\cS\to \condfields$ and it is straightforward to show that this map is a quasi-isomorphism.
Hence, there is also a quasi-isomorphism
\begin{equation}
    \Obcl \to \sO(\cS).
\end{equation}
One quickly verifies that $\sO(\cS)=\cF_{\sO(V),\sO(L)}$.
This completes the proof.
\end{proof}

\subsection{BF Theory in One Dimension}

In this section, we study the factorization algebra of observables of one-dimensional BF theory with $B$ boundary condition (cf. Examples \ref{ex: bf} and \ref{ex: bfbdycond}).

Namely, we show:

\begin{proposition}
\label{prop: 1dbfclass}
Let $A$ denote the algebra
\[
C^\bullet(\fg, \Sym(\fg[1]))
\]
and 
$M$ the right $A$-module
\[
\Sym(\fg[1]).
\]
There is a quasi-isomorphism of factorization algebras
\begin{equation}
    \Upsilon \Obcl_{\sE,\sL}\to : \cF_{A,M}
\end{equation}
on $\RR_{\geq 0}$, where $\Obcl_{\sE,\sL}$ is the factorization algebra of observables for BF theory with $B$ boundary condition.
\end{proposition}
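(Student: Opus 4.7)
The strategy is to identify how the classical observables compute cohomologically on intervals in $\RR_{\geq 0}$ and then assemble $\Upsilon$ from these identifications as a strict morphism of prefactorization algebras. On an open interval $U=(a,b)\subset\RR_{>0}$, $\condfields(U)=\Omega^\bullet(U)\otimes(\fg[1]\oplus\fg^\vee[-1])$; the Poincar\'e lemma gives $H^\bullet(\condfields(U))\cong\fg[1]\oplus\fg^\vee[-1]$. On $U=[0,b)$ the $B$ boundary condition sends the $\fg[1]$-summand into the subcomplex of forms vanishing at $t=0$; its cohomology is the relative de Rham cohomology of the contractible pair $([0,b),\{0\})$ and hence vanishes, so $H^\bullet(\condfields([0,b)))\cong \fg^\vee[-1]$.

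To construct $\Upsilon$, I use the inclusion of constant sections on each interval: $\iota_U\colon H^\bullet(\condfields(U))\hookrightarrow\condfields(U)$. This map is a quasi-isomorphism and strictly preserves the $L_\infty$ brackets, since constant sections are closed under the bracket induced from the BF interaction (which is purely algebraic). Restriction of functionals along $\iota_U$ gives a map $\Upsilon(U)\colon\Obcl_{\sE,\sL}(U)\to \widehat{\Sym}(H^\bullet(\condfields(U))^\vee)$ intertwining the Chevalley-Eilenberg differentials. The induced bracket on $\fg[1]\oplus\fg^\vee[-1]$ is the semidirect-product bracket for $\fg\ltimes\fg^\vee$, making the target $A=C^\bullet(\fg,\Sym(\fg[1]))$; on $\fg^\vee[-1]$ alone the bracket vanishes, making the target $M=\Sym(\fg[1])$ with zero differential. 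Restriction of functionals is strictly compatible with disjoint unions of opens and extension to larger ones, so the $\{\Upsilon(U)\}$ assemble into a morphism of prefactorization algebras $\Upsilon\colon\Obcl_{\sE,\sL}\to\cF_{A,M}$.

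To verify that $\Upsilon$ is a quasi-isomorphism, I filter both sides by symmetric degree. The filtrations are bounded below and exhaustive in each symmetric weight, so the associated spectral sequences converge. On the associated graded the differential reduces to the pure linear de Rham piece, and $\Upsilon(U)$ becomes a completed symmetric power of the dual of $\iota_U$. The cohomology computations above, together with the identifications of dual spaces via Proposition \ref{prop: atiyahbott} and the results of Section \ref{sec: innerhom}, show that this dual is a quasi-isomorphism of complexes of differentiable vector spaces, and taking completed symmetric powers preserves this property. Since intervals (open intervals in $\RR_{>0}$ together with half-open intervals containing $0$) form a basis for the Weiss topology on $\RR_{\geq 0}$ and both $\Obcl_{\sE,\sL}$ and $\cF_{A,M}$ are factorization algebras, $\Upsilon$ is a quasi-isomorphism of factorization algebras.

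The main obstacle I anticipate is controlling the spectral sequence inside the completed (product) symmetric algebra $\widehat{\Sym}(\condfields(U)^\vee)$, where naive convergence arguments are delicate. A convenient workaround is to observe that $H^\bullet(\Obcl_{\sE,\sL})$ is a locally constant factorization algebra on $\RR_{\geq 0}$, since both $1$D BF theory and its $B$ boundary condition are fully topological; one can then invoke the classification of such factorization algebras on $\RR_{\geq 0}$ as pointed right modules over associative algebras and match the data $(A,M)$ tautologically, sidestepping the most delicate analytic issues.
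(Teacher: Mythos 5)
Your proposal is correct and follows essentially the same route as the paper: there the inclusion of the ``constant'' graded Lie algebra (packaged as a sheaf $\cS$ that is $\fg\ltimes\fg^\vee[-2]$ away from $0$ and $\fg^\vee[-2]$ on opens containing $0$) into $\condfields[-1]$ is shown to be a quasi-isomorphism of sheaves of dg Lie algebras, and pulling back Chevalley--Eilenberg cochains gives $\Obcl_{\sE,\sL}\to C^\bullet(\cS)\cong\cF_{A,M}$, which is exactly your restriction-to-constants map assembled interval by interval. The only cosmetic slip is that single intervals are not themselves a Weiss basis --- one passes to finite disjoint unions of intervals and uses the multiplicativity axiom --- but this does not affect the argument.
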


\begin{proof}
Let $\cS$ denote the sheafification of the following presheaf on $\RR_{\geq 0}$:
\begin{equation}
    \cS_{pre}(U) = \left\{
    \begin{array}{lr}
    \fg\ltimes \fg^\vee[-2]     & U\cap \{0\}=\emptyset  \\
    \fg^\vee[-2] & U \cap \{0\} = \{0\}     
    \end{array}
    \right.
\end{equation}
As we have written it, $\cS$ is naturally a sheaf of graded Lie algebras on $\RR_{\geq 0}$.
The map 
\begin{equation}
\label{eq: 1dbfclassquasiisooffields}
    \cS \to \condfields[-1]
\end{equation}
which includes elements $\fg\ltimes \fg^\vee[-1]$ as constant functions is a map of sheaves of dg Lie algebras; in fact, it is straightforward to show that it is a quasi-isomorphism.
The inverse quasi-isomorphism
\begin{equation}
    \condfields(U)[-1]\to \cS(U),
\end{equation}
for $U$ a connected open subset, is obtained by pulling back along the inclusion of a point $t\in U$ into $U$.
This does not define a map of sheaves of dg Lie algebras, however.
Let $C^\bullet(\cS)$ be the factorization algebra on $\RR_{\geq 0}$ which, to an open $U\subset \RR_{\geq 0}$, assigns the Chevalley-Eilenberg cochain complex of $\cS(U)$ (cf. Definition 3.6.1 of \autocite{CG1}, though the construction here is slightly different).
The map of Equation \ref{eq: 1dbfclassquasiisooffields} induces a quasi-isomorphism
\begin{equation}
    \Obcl_{\sE,\sL}\to C^\bullet(\cS);
\end{equation}
it is straightforward to verify that~$C^\bullet(\cS)\cong \cF_{A,M}$.
This completes the proof.
\end{proof} 

\chapter{Free Quantum Bulk-Boundary Systems}
\label{chap: freequantum}

\section{Overview of the Chapter}
In the previous chapter, we have constructed a factorization algebra for a general classical bulk-boundary system. 
In this chapter, we carry out the quantization procedure for \emph{free} bulk-boundary systems, i.e. those for which $\ell_k$ is non-zero only for $k=1$.
As in the case $\bdyM=\emptyset$, such theories admit a factorization algebra of quantum observables which is much easier to describe than the corresponding interacting factorization algebras.

A bulk-boundary system on a manifold of the form $\bdyM \times \RR_{\geq 0}$ includes in particular a choice of a boundary condition $\sL$ for the bulk theory. 
As discussed in \autocite{butsonyoo}, the boundary condition $\sL$ admits a structure of (--1)-shifted \emph{Poisson} formal moduli space. 
(Recall that the usual Batalin-Vilkovisky formalism describes (--1)-shifted symplectic geometry.)
Conversely, a Poisson BV theory on a manifold $\bdyM$ gives rise to an ``honest'' BV theory on $\bdyM \times \RR_{\geq 0}$.
For more details on Poisson BV theories, we refer the reader to \autocite{butsonyoo} (wherein Poisson BV theories are referred to as ``degenerate field theories'').

As far as the author is aware, there is no general mechanism for the quantization of Poisson BV theories. 
However, for \emph{free} Poisson BV theories, one can describe an ansatz for such a quantization.
This ansatz gives rise to a factorization algebra $\Obq_\sL$ on $\bdyM$.

On the other hand, a free bulk-boundary system gives a free BV theory on $\mathring M$ and so a factorization algebra $\Obq_\sE$ on this space.

Finally, we introduce a factorization algebra $\Obq_{\sE,\sL}$ on $\bdyM \times \R_{\geq 0}$ of bulk-boundary quantum observables.

The main results of this chapter (cf. Theorems \ref{thm: maingenlcl} and \ref{thm: maingenlq}) relate the three different factorization algebras $\Obq_\sL$, $\Obq_\sE$, and $\Obq_{\sE,\sL}$. 
Note that they are all factorization algebras defined on different spaces.
Theorems \ref{thm: maingenlcl} and \ref{thm: maingenlq} will relate the three factorization algebras through the pushforward and restriction operations described in Section \ref{sec: review}. 
Recall that these operations allow one to compare factorization algebras on different spaces.

\subsection{A model case of the general result}
In this portion we describe a specific instance of the general Theorems, in order to orient the physically-minded reader.

Our focus is on the following geometric situation. 
Let $\Sigma$ be an oriented smooth 2-dimensional manifold,
which we equip with a complex structure.
Let $M$ denote the closed half-space $\RRge \times \Sigma $,
let $\mathring M$ denote the open half-space $\RRgt \times \Sigma $,
and let $\pi: M \to \Sigma$ denote the projection map.
We view $\RRge$ as providing a kind of ``time direction'' and use $t$ to denote its coordinate.
See Figure~\ref{fig:3mfldwbdry}.

\begin{figure}
\centering
\begin{tikzpicture}
\draw[semithick](-6,0) ellipse (0.8 and 2);
\draw[semithick](-6,0.3) arc (-30:30:1);
\draw[semithick](-6,1.2) arc (150:210:0.82);
\draw[semithick](-6,-1.3) arc (-30:30:1);
\draw[semithick](-6,-0.4) arc (150:210:0.82);
\draw[semithick](-2,0) ellipse (0.8 and 2);
\draw[semithick](-2,0.3) arc (-30:30:1);
\draw[semithick](-2,1.2) arc (150:210:0.82);
\draw[semithick](-2,-1.3) arc (-30:30:1);
\draw[semithick](-2,-0.4) arc (150:210:0.82);
\draw[dotted](2,0) ellipse (0.8 and 2);
\draw[dotted](2,0.3) arc (-30:30:1);
\draw[dotted](2,1.2) arc (150:210:0.82);
\draw[dotted](2,-1.3) arc (-30:30:1);
\draw[dotted](2,-0.4) arc (150:210:0.82);
\draw[semithick](-2,2) -- (3,2);
\draw[dotted](3,2) -- (4,2);
\draw[semithick](-2,-2) -- (3,-2);
\draw[dotted](3,-2) -- (4,-2);
\node at (-7.2,1){$\Sigma$};
\node at (0,-1){$\mathring{M}$};
\node at (-3.2,1){$M$};
\draw[->,semithick] (-3,0) -- (-5,0);
\node at (-4,0.25){$\pi$};
\draw[->, semithick](-1,-2.65) -- (2,-2.65);
\node at (0.5, -2.4){$t$};

\end{tikzpicture}
\caption{Projection of $M$ onto the boundary $\Sigma$}
\label{fig:3mfldwbdry}
\end{figure}
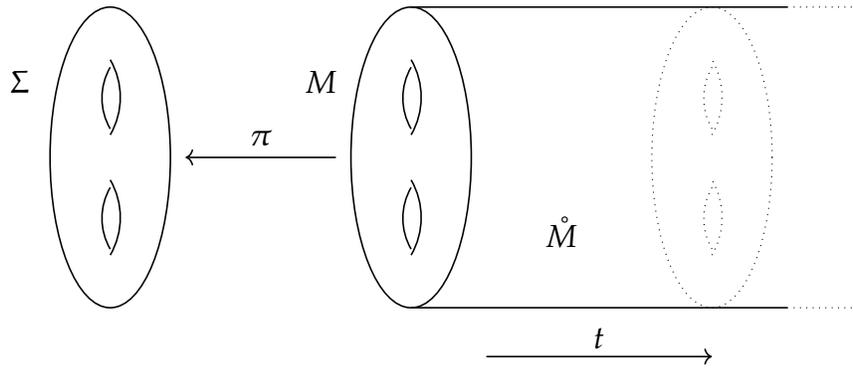

In the interior $\mathring M$, which is a manifold without boundary, 
we put (perturbative) Chern-Simons theory with gauge group~$U(1)$ with level $\kappa$.
It has a factorization algebra $\Obq_{CS}$ of quantum observables.
(See \S4.5 of~\autocite{CG1}.)

On the boundary $\Sigma = \partial M$, 
there is a factorization algebra $\Cur^q_{WZW}$ encoding the chiral $U(1)$ currents, with the Schwinger term determined by $\kappa$.
$\Cur^q_{WZW}$ can be described as the factorization algebra of quantum observables for the chiral WZW boundary condition.
(See \S5.4 of~\autocite{CG1} for its construction and verification that it recovers the standard vertex algebra and OPE.)

In this paper we will construct a factorization algebra $\Obq_{CS/WZW}$ for abelian Chern-Simons theory on $M$ with a particular boundary condition called the chiral WZW boundary condition. 
(As far as we are aware, 
this is the first construction of a factorization algebra of observables of a field theory arising on a manifold with boundary.)
It interpolates between the Chern-Simons observables and the chiral currents in the following precise sense.

\begin{theorem}
\label{thm: main}
The factorization algebra $\Obs^\q_{CS/WZW}$ is stratified in the sense that
\begin{itemize}
\item on the interior $\mathring M$, there is a natural \emph{isomorphism} 
\begin{equation}
\Obs^q_{CS} \simeq \left(\Obs^\q_{CS/WZW}\right)\Big|_{\mathring M}
\end{equation} 
of factorization algebras, and
\item on the boundary $\partial M = \Sigma$, there is a quasi-isomorphism 
\begin{equation}
\Cur^q_{WZW} \simeq \pi_*\left(\Obs^\q_{CS/WZW}\right)
\end{equation} 
of factorization algebras. 
\end{itemize}
\end{theorem}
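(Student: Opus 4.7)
The plan is to exhibit abelian Chern--Simons theory with the chiral WZW boundary condition as a free bulk-boundary system in the sense of this chapter, and then to deduce both assertions from the general comparison theorem (Theorem \ref{thm: maingenlq}). The input data are the bulk fields $\sE = \Omega^\bullet_M \otimes \fA[1]$ with $\fA = \mathfrak{u}(1)$, the de Rham differential, and the wedge-and-integrate pairing rescaled by the level $\kappa$; the induced boundary fields $\sEb = \Omega^\bullet_\Sigma \otimes \fA[1]$; and the Lagrangian subsheaf $\sL = \Omega^{1,\bullet}_\Sigma \otimes \fA \subset \sEb$, with Lagrangian complement $\sL' = \Omega^{0,\bullet}_\Sigma \otimes \fA$ determined by the complex structure on $\Sigma$. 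One checks directly that $(\sE,\sL)$ is a TNBFT with a local boundary condition in the sense of Definitions \ref{def: tnbft} and \ref{def: bdycond}, and that it is free because all higher brackets vanish (as $\fA$ is abelian); Theorem \ref{thm: maingenlq} therefore applies.

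The interior statement is then close to tautological. For an open $U \subset \mathring M$, the canonical restriction $\rho$ to $\iota_*\sEb$ is identically zero, so $\condfieldscs(U) = \sE_c(U)$. The free-theory formula of Theorem \ref{thm: freequantumFA} produces the same underlying graded vector space as the Costello--Gwilliam construction of $\Obq_{CS}$, equipped with the BV-deformed differential built from the same local pairing; the factorization structure maps, defined via extension by zero of compactly supported sections, coincide on the nose.

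For the pushforward statement, apply Theorem \ref{thm: maingenlq} to identify $\pi_*\Obq_{CS/WZW}$ with the factorization algebra $\Obq_\sL$ on $\Sigma$ attached by the free-theory ansatz to the boundary Poisson BV theory $\sL$. It then remains to identify $\Obq_\sL$ with $\Cur^q_{WZW}$. Classically this is a comparison of two completed symmetric algebras, and the key input is a Hodge-theoretic deformation retract of $\condfieldscs(V \times \RRge)$ onto a collapsed complex concentrated on $V \subset \Sigma$. Working in product coordinates on $V \times \RRge$ and decomposing a field as $\alpha_1 + dt \wedge \alpha_2$, one builds the retract from a $t$-integration against a cutoff, exactly in the style of the proof of Lemma \ref{lem: fieldslagrangian}: compact support handles the $t \to \infty$ end, while the WZW condition controls the $t = 0$ end so that the only surviving boundary data pair with $\sL'$. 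Applying $\Sym$ to the (continuous) dual of this retract then recovers the underlying graded vector space of $\Cur^q_{WZW}(V)$, modulo a standard shift by Serre duality on $\Sigma$.

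The main obstacle is the quantum-level identification: one must show that the BV differential obtained on the boundary by pushing forward the bulk propagator and contracting via the retract is precisely the Schwinger--Kac--Moody cocycle at level $\kappa$, and not some other cohomologous cocycle. Concretely, this requires extracting the singular part of the Chern--Simons propagator on $V \times \RRge$ with the WZW boundary condition, constructed via the doubling procedure of Chapter \ref{chap: interactingquantum}, and identifying it after pushforward with the standard Dolbeault kernel on $\Sigma$ weighted by $\kappa$. A secondary difficulty is to ensure that the chosen retract is natural enough in $V$ that the identification intertwines the factorization structure maps; this should be handled in the spirit of the partition-of-unity argument used in the proof of Theorem \ref{thm: classobsformFA}.
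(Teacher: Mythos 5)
Your overall strategy (recognize abelian CS with the chiral WZW condition as a free bulk-boundary system and specialize the general comparison theorems) is the paper's, and your treatment of the interior statement is fine. The problem is in the quantum boundary step, where you flag as the ``main obstacle'' the need to extract the singular part of a Chern--Simons propagator built by the doubling procedure and to transfer the BV differential through a Hodge-theoretic retract, and you leave that step unresolved. That obstacle does not exist in this framework, and the route you sketch to attack it is not available here: the doubling/parametrix machinery belongs to the \emph{interacting} quantization of Chapter \ref{chap: interactingquantum}, whereas for a free bulk-boundary system no propagator enters the definition of $\Obq_{\sE,\sL}$ or $\Obq_\sL$ at all --- both are symmetric algebras on smeared fields with a differential deformed by a BV Laplacian built directly from the local pairing. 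Since you invoke Theorem \ref{thm: maingenlq}, the quantum matching is already contained in its proof: the explicit linear quasi-isomorphism
\begin{equation}
\II^{\cl}(V)(\alpha)=\alpha\wedge\phi\,\d t-(-1)^{|\alpha|}(\Phi-1)\,\partial\alpha
\end{equation}
is checked to intertwine the boundary cocycle $\mu(\alpha_1,\alpha_2)=\int_\Sigma\kappa(\alpha_1,\partial\alpha_2)$ with the bulk pairing, i.e.\ $\mu(\alpha_1,\alpha_2)=\ip[\II^{\cl}\alpha_1,\II^{\cl}\alpha_2]$, and this reduces to the elementary identity $\int_{\RRge}\phi(1-\Phi)\,\d t=\tfrac12$. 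There is no ambiguity ``up to a cohomologous cocycle'' to worry about: the cocycles agree on the nose under this map.

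Relatedly, you misplace where the Kac--Moody identification happens. Once Theorem \ref{thm: maingenlq} is applied, the remaining step ``identify $\Obq_\sL$ with $\Cur^\q_{WZW}$'' is essentially definitional and lives entirely on $\Sigma$: unwinding Definition \ref{dfn: quantum currents} for $\sL=\Omega^{1,\bullet}_\Sigma\otimes\fA$, with $\sL^\perp=\Omega^{0,\bullet}_\Sigma\otimes\fA[1]$ and $Q_{rel}=\partial$, exhibits $\Obq_\sL$ as the twisted factorization envelope of $\Omega^{0,\bullet}_{\Sigma,c}\otimes\fA$ with exactly the level-$\kappa$ Schwinger cocycle, which is the construction of the chiral currents in Chapter 5 of \autocite{CG1}. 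Your retract of $\condfieldscs(V\times\RRge)$ therefore concerns the pushforward comparison (already covered by the general theorem), not this identification, and no Serre-duality shift or Dolbeault-kernel analysis is needed. As written, your proposal identifies an analytic problem as essential, does not solve it, and the method you propose for it would not produce the required verification; filling the gap amounts to the one-line cocycle computation above rather than any propagator analysis.
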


The factorization algebra $\Obq_{\sE,\sL}$ thus exhibits the desired phenomenon, 
as it is precisely the abelian Chern-Simons system in the ``bulk'' $\mathring M$ but becomes the chiral currents on the boundary~$\partial M$. 
The full factorization algebra $\Obq_{CS/WZW}$ contains more information still: it encodes an \emph{action} of the bulk observables $\Obq_{CS}$ on the boundary observables~$\Cur^q_{WZW}$.

A compelling phenomenon happens at the quantum level: 
the canonical BV quantization of abelian Chern-Simons theory in the bulk forces the appearance of the Kac-Moody cocycle $\int \alpha \wedge\partial \beta$ (i.e., Schwinger term) on the boundary.
We emphasize that these constructions are wholly rigorous, not requiring any leaps of physical intuition.
They also yield naturally a stratified factorization algebra,
and hence the theorem suggests that other bulk-boundary correspondences in the physics literature may also admit formulations in these terms.
We will describe a few such correspondences, notably a generalization of abelian CS/WZW to higher dimensions with a $4n+3$-dimensional bulk and a $4n+2$-dimensional boundary equipped with a complex structure.

One drawback of our work is that we only deal with perturbative and Lie algebraic statements here, 
not with nonperturbative and group-level versions, where many fascinating issues arise.
(As merely a jumping-off point and not a complete list of citations for this enormous subject,
we point to \autocite{FMS, FFS, HopSing, KSonCS, BD, WittenJones,EMSS,FreedDQ, BecBenSchSza} as places where such issues are addressed.)
We expect that a rigorous extension of the BV formalism to global derived geometry would fold those nonperturbative issues together with our perturbative efforts.

\subsection{Consequences and applications}

One payoff here is a new view on Chern-Simons states in bundles of conformal blocks for chiral WZW models.
Factorization algebras, like sheaves, are local-to-global objects,
and so the homology of these stratified factorization algebras encode nontrivial global information.
Here, in particular, they automatically produce maps from the space of boundary observables into the global observables of the theory. 
As an example, we obtain the Chern-Simons states of the chiral WZW theory from studying the map from the boundary observables on a Riemann surface to the observables of a compact 3-manifold bounding that surface.
The higher dimensional analogs of Chern-Simons states are sections of interesting vector bundles over the intermediate Jacobians of any complex $2n+1$-fold that admits an oriented null-cobordism.

\subsection{Outline of the Chapter}

Section \ref{sec: BBFTs} defines the class of bulk-boundary theories that we study in this paper and introduces many examples of such.
The definition is modeled on the definition of a free BV theory in \autocite{cost},
but we hope it is transparent to anyone already familiar with the BV formalism in some guise.

Section \ref{sec: FAconstruction} recalls the factorization algebras that appear purely in the bulk or on the boundary,
which were constructed in \autocite{CG1}, in various guises.
We then construct the natural factorization algebra for the bulk-boundary system,
modeled on those constructions.
Functional analytic subtleties are addressed in the appendix.

Section \ref{sec: main theorem} states and proves the main theorem,
both for classical and for quantum observables.
Section \ref{sec: apps} addresses specific examples of the theorems.

\section{Free bulk-boundary field theories}
\label{sec: BBFTs}
Definition \ref{def: clbv} introduced the notion of a classical TNBFT; Definition \ref{def: bdycond} introduced the notion of a suitable class of boundary conditions for classical TNBFTs.
We have also defined bulk-boundary systems, which are a choice of TNBFT together with a boundary condition.
In this chapter, we consider the quantization of \emph{free} bulk-boundary systems:

\begin{definition}
A \textbf{free} bulk-boundary system is a bulk-boundary system whose underlying TNBFT has $\ell_k=0$ for $k>1$.
\end{definition}

In the previous chapter, we took great pains to emphasize the homotopy-theoretic nature of our constructions; for example, we described the process of imposing boundary conditions via a homotopy pullback.
The notion of a free bulk-boundary system is not particularly meaningful at the homotopical level.
Nevertheless, it is a useful condition to impose from the standpoint of quantization.
We will see in this chapter that free bulk-boundary systems possess very elegantly-described factorization algebras of quantum observables.
The non-free (i.e. interacting) case is left for Chapter \ref{chap: interactingquantum}.

\subsection{Examples of free bulk-boundary systems}
\label{sec: examples}

In Chapter \ref{chap: classical}, we introduced a number of bulk-boundary systems which have free analogs. We revisit those bulk-boundary systems and a few others here.

\begin{example}
\label{ex: toplmech2}
Suppose $V$ is a symplectic vector space with symplectic form $\omega$. Let $M=[0,\epsilon)$ and $\sE=\Omega^\bullet_{[0,\epsilon)}\otimes V$, together with the pairing $\ip$ induced from the Poincar\'e duality pairing and $\omega$. Here, $\Eb=\sEb=V$, and $\ip_{loc, \del}=\omega$. This theory is \textbf{topological mechanics}. A Lagrangian subspace $L$ of $V$ gives a boundary condition for topological mechanics.
(Cf. Examples \ref{ex: toplmech} and \ref{ex: toplmechbc}.)
\end{example}

\begin{example}
\label{ex: psm2}
Let $\Sigma$ be any surface with boundary, and let $V$ be a vector space with a constant Poisson structure, i.e., $V$ is a vector space equipped with a skew-symmetric map $\Pi: V^\vee \to V$. 
Let 
\begin{equation}(\sE,\diff)=( \Omega^\bullet_{\Sigma}\otimes V\oplus \Omega^\bullet_{\Sigma}\otimes V^\vee[1],\d_{dR}\otimes 1+1\otimes \Pi).\end{equation}
The pairing $\ip_{loc}$ is defined using the wedge product and the natural pairing between $V^\vee$ and $V$. It is evident that one can write
\begin{equation}
(\sEb,\Qb)=(\Omega^\bullet_{\del \Sigma}\otimes V\oplus \Omega^\bullet_{\del \Sigma}\otimes V^\vee[1],\d_{dR}\otimes 1+1\otimes \Pi),
\end{equation}
and $\ip_{loc, \del}$ is again defined using the wedge product of forms and the canonical pairing between $V^\vee$ and $V$. This theory is a special case of the \textbf{Poisson sigma model} \autocite{MR1854134}. The subcomplex $\Omega^\bullet_{\del \Sigma} \otimes V\subset \sEb$ gives a boundary condition for this theory. 
\end{example}

\begin{example}
Suppose $\fA$ is a complex vector space together with a non-degenerate symmetric bilinear pairing $\kappa$. Let $M$ be an oriented 3-manifold with boundary. For $(\sE,\diff)$ we take $(\Omega^\bullet_{M}\otimes \fA[1],d_{dR})$. For the pairing $\ip_{loc}$ we take 
\begin{equation}
\ip[\mu,\nu]_{loc} = \kappa(\mu, \nu),
\end{equation}
where we are implicitly taking a wedge product of forms and only keeping the top-form component of the resulting wedge product. From these characterizations, it is evident that $(\sEb,\Qb) =(\Omega^\bullet_{\del M}\otimes \fA[1],d_{dR})$, and 
\begin{equation}
\ip[\mu,\nu]_{loc, \del}=\kappa(\mu, \nu).
\end{equation}
This theory is an \textbf{abelian Chern-Simons theory}. 
In the bulk $3$-manifold, $M \setminus \partial M$, the elliptic complex $(\sE, \diff)$ is simply abelian Chern-Simons theory where we view $\fA$ as an abelian Lie algebra.
The solutions to the bulk equations of motion are the $\fA$-valued closed one-forms. 

If $M=\Sigma\times \RRge$ (where $\Sigma$ is a Riemann surface), the space of fields is endowed with the decomposition
\begin{equation}
\sE=\Omega^{0,\bullet}_{\Sigma}\,\hotimes\, \Omega^\bullet_{\RRge}\otimes \fA[1]\oplus \Omega^{1,\bullet}_{\Sigma}\,\hotimes\, \Omega^\bullet_{\RRge}\otimes\fA,
\end{equation}
with differential $\diff=\del+\delbar+d_{dR}$; if we replace $\diff$ with $(\diff)_\chi :=\chi \del +\delbar+d_{dR}$, we obtain \textbf{Chern-Simons at level~$\chi$.}

The boundary condition we consider depends on the choice of a complex structure on the boundary $\del M$.
Henceforth, when we want to stress the dependence on the complex structure, we denote the boundary Riemann surface by~$\Sigma$.

Given a holomorphic vector bundle $V$ on $\Sigma$, there is a resolution for its sheaf of holomorphic sections $\sV^{hol}$ given by the Dolbeault complex $\left(\Omega^{0,\bullet}(\Sigma, V), \dbar\right)$. 
The differential is the Dolbeault operator $\dbar : \Omega^{0}(\Sigma, V) \to \Omega^{0,1}(\Sigma, V) = \Gamma(T^{*0,1} \otimes V)$ defining the complex structure on $V$. 
In the case that $V = T^{*1,0}$, we denote this Dolbeault complex by $\Omega^{1,\bullet}(\Sigma)$ with the $\dbar$-operator understood. 

The subcomplex $\Omega^{1,\bullet}_\Sigma\otimes \fA\subset \Omega^\bullet_\Sigma\otimes \fA[1]$ defines a boundary condition for abelian Chern-Simons theory (at any level if $M=\CC\times \RRge$). 
To see this, consider $\sL$ as the sections of a vector bundle $L$ on $\Sigma$.
It is clear that the rank of $L$ is half that of $\Eb$.
Also, $\ip_{loc, \del}$ is identically zero on $L \otimes L$ since only forms of type $(1,\bullet)$ appear in $\sL$. 
Finally, the cochain complex $ \Omega^{1,\bullet}_\Sigma\otimes \fA$ is a subcomplex of the full de Rham complex since $\dbar \alpha = \d \alpha$ for forms $\alpha$ of type $(1,\bullet)$. 
We call this boundary condition the \textbf{chiral WZW boundary condition}. Notice that although Chern-Simons theory is topological, we may choose a non-topological boundary condition for the theory. In this situation, the boundary condition has a chiral, or holomorphic, nature.
\end{example}

\begin{example}
\label{ex: highercs}
Let $M$ be an oriented manifold of dimension $4n+3$, and suppose $\del M$ has the structure of a complex $(2n+1)$-fold that we denote $X$.
Let the fields be the (shifted) de Rham forms
\begin{equation}
\sE= \Omega^\bullet_M[2n+1], 
\end{equation} 
with $\diff=d_{dR}$.
This complex can be understood geometrically as encoding deformations of the trivial flat $U(1)$ $n$-gerbe.
(Taking $n=0$, we note that a flat $U(1)$ 0-gerbe is a flat $U(1)$-bundle.)
We have $\sEb=\Omega^\bullet_{\del M}\otimes \fA[2n+1]$, and $\Qb=d_{dR}$. 
The pairings are defined exactly as in the previous example, by wedging and integration. 
This theory is \textbf{higher-dimensional abelian Chern-Simons theory}. As a boundary condition, we take 
\begin{equation}
\sL = \Omega^{> n,\bullet}[2n+1],
\end{equation} 
which we call the \textbf{intermediate Jacobian} boundary condition, due to it being a component of the Hodge filtration.
(As in the previous example, we could work with some $(\fA,\kappa)$,  a finite-dimensional vector space together with a non-degenerate symmetric bilinear pairing. 
This would amount to tensoring the above complexes with $\fA$.
This extension would correspond to working with higher gerbes for a higher-dimensional abelian Lie group.)
\end{example}

\begin{example}
\label{ex: Riemhighercs}
There is an alternative boundary condition for higher dimensional Chern--Simons that depends on a Riemannian metric  rather than a complex structure. 
As above, let $M$ be an oriented manifold of dimension $4n+3$, and suppose the boundary $N = \partial M$ is equipped with a Riemannian structure. 
In turn, we decompose the middle de Rham forms on $N$ into the $\pm \sqrt{-1}$-eigenspaces
\begin{equation}\label{eqn:decomp}
\Omega^{2n+1}(N) = \Omega^{2n+1}_+ (N) \oplus \Omega^{2n+1}_-(N) 
\end{equation}
of the Hodge star operator. 

Consider the subcomplex of~$\Omega^\bu_{\partial M} \otimes \fA [2n+1]$:
\begin{equation}
\sL = \bigg(\Omega^{2n+1}_+(N) \otimes \fA \xrightarrow{\d} \Omega^{2n+2} (N) \otimes \fA [-1] \xrightarrow{\d} \cdots \xrightarrow{\d} \Omega^{4n+2}(N) \otimes \fA [-2k-1]\bigg) .
\end{equation}
It defines a boundary condition for $(4n+3)$-dimensional abelian Chern-Simons theory. 

For a complement to $\sL$ in $\sEb$, we may take
\begin{equation}\label{eqn:dminus}
\sL^\perp = \bigg( \Omega^{0} (N) \otimes \fA [2n+1].\xto{\d} \Omega^1(N) \otimes \fA[2n] \to \cdots \to \Omega^{2n} (N) \otimes \fA [1] \xto{\d_-} \Omega^{2n+1}_- (N) \otimes \fA \bigg)
\end{equation}
where $\d_- : \Omega^{2n} (N) \to \Omega^{2n+1}_- (N)$ denotes the de Rham differential followed by the projection using the decomposition~(\ref{eqn:decomp}).
\end{example}

\section{The factorization algebras at play}
\label{sec: FAconstruction}

In this section we describe the three factorization algebras that appear in a bulk-boundary system:
\begin{itemize}
\item the observables $\Obs_\sE$ living purely in the bulk $\mathring M$, which depend only on the BV theory in the bulk,
\item the observables $\Obs_{\sL}$ of the boundary condition, which live only on the boundary $\partial M$, and
\item the observables $\Obs_{\sE,\sL}$ of the bulk-boundary system, which lives on the whole manifold $M$ with boundary.
\end{itemize}
There are classical and quantum versions of both factorization algebras.
Now aware of the these three algebras, 
the reader can skip to Section~\ref{sec: main theorem} and understand the statement of our main theorems.

The bulk observables $\Obs_\sE$ arising here were defined in \autocite{CG1}, and they are a straightforward interpretation of the observables in a free BV theory.
The observables of the boundary condition $\Obs_{\sL}$ are defined in a similar way.
At the classical level, they are simply functions on the space $\sL$, but  the quantization uses a Poisson structure arising from the map $\sL\to\sEb$ which identifies $\sL$ as a Lagrangian in $\sEb$.
In this sense, the boundary condition behaves like a Poisson field theory,
in contrast to the symplectic-type bulk theory.

The observables $\Obs_{\sE,\sL}$ are constructed in an analogous way to the other algebras.
The classical observables realize, in a homotopical sense, the algebra of functions on the space of solutions to the equations of motion that satisfy the boundary condition.
The quantization is in the spirit of the BV formalism; 
it amounts to changing the differential by adding an operator determined by the natural pairing on the fields, with boundary condition imposed.
Our main theorems show that $\Obs_{\sE,\sL}$ interpolates between $\Obs_\sE$ and~$\Obs_{\sE,\sL}$,
and in this way we see that there is a natural quantization of the bulk-boundary system that realizes a correspondence between the bulk and boundary systems themselves.

\subsection{Bulk observables}
\label{sec: bulk obs}

Chapter 4 of \autocite{CG1} is devoted to constructing and analyzing the observables, both classical and quantum, of a free BV theory on a smooth manifold.
Here we simply recall the definitions.

\begin{definition}
Let $\sE$ be a free TNBFT. 
The factorization algebra of \textbf{classical observables for $\sE$} assigns to an open subset $U\subset \mathring M$ the (differentiable) cochain complex
\begin{equation}
(\Sym(\sE_c[1](U)),\diff)=:\Obcl_{\sE}(U),
\end{equation}
where the symmetric powers are taken with respect to the completed bornological tensor product of convenient vector spaces (see, e.g. Definition B.4.9 and Section B.5.2 of~\autocite{CG1}).
\end{definition}

Note that for a smooth vector bundle $V\to M$, these completed tensor products can be understood concretely as
\begin{equation}
(\sV_{c}(U))^{\hotimes_\beta k}\cong C^\infty_{c}(U^{\times k}; V^{\boxtimes k}). 
\end{equation}
In other words, they are the compactly supported sections on the $k$-fold product $U^k$ with values in the natural vector bundle $V^{\boxtimes k} \to U^k$.

Something a bit subtle is happening in this definition. 
{\it A priori} the classical observables ought to consist of functions on the fields $\sE$;
in other words, they ought to be a symmetric algebra on the linear dual vector space or, better yet, the continuous linear dual.
Here, however, we took a symmetric algebra on $\sE_c[1]$,
which looks different.
Two facts combine to explain our choice.
First, the local pairing lets us identify the continuous linear dual of $\sE$ with the distributional and compactly supported sections of $E[1] \to M$:
every such section determines a linear functional on $\sE$ by plugging it into the pairing.
Second, the Atiyah-Bott lemma (see Appendix E of \autocite{CG1}) shows that the elliptic complex of distributional, compactly supported sections of $E[1] \to M$ is continuously quasi-isomorphic to the subcomplex of smooth, compactly supported sections of $E[1] \to M$.
Together, these facts show that our definition captures correctly --- up to quasi-isomorphism --- the most natural choice of classical observables.
Concretely, we are working with {\em smeared} observables.

With our definition, BV quantization is straightforward,
because the pairing determines a natural BV Laplacian $\Delta: \Sym(\sE_c[1](U) \to \Sym(\sE_c[1](U)$ as follows.
We set $\Delta = 0$ on the constant and linear terms (i.e., the subspace $\Sym^{\leq 1}(\sE_c[1](U)$), 
and we require
\begin{equation}
\Delta(a b) = \Delta(a) b + (-1)^{|a|} a \Delta(b) + \{a,b\}
\end{equation}
for arbitrary $a$ and $b$. 
Here, $\{\cdot,\cdot\}$ is the unique biderivation (with respect to the product in the symmetric algebra) on 
\begin{equation}
\Sym(\sE_c[1](U))\times \Sym(\sE_c[1](U))
\end{equation}
which coincides with $\ip[\cdot,\cdot]$ on 
\begin{equation}
\sE_c[1](U)\times \sE_c[1](U).
\end{equation}
This equation defines $\Delta$ inductively on the higher symmetric powers. 

For instance, if $a$ and $b$ are linear, then $ab \in \Sym^2(\sE_c[1](U)$, 
and we see that
\begin{equation}
\Delta(a b) = \ip[a,b]
\end{equation}
because we have set $\Delta(a) = 0 = \Delta(b)$. 

By construction, $\Delta$ is a second-order differential operator on the graded commutative algebra $\Sym(\sE_c[1](U)$.
It is straightforward to verify that $\Delta^2 = 0$ and that $\Delta$ commutes with $\diff$ (because $\diff$ is compatible with the pairing $\ip$).
Hence we posit the next definition, following the BV prescription for deformation quantization.

\begin{definition}
Let $\sE$ be a free TNBFT. 
The factorization algebra of \textbf{quantum observables for $\sE$} assigns to an open subset $U\subset \mathring M$, the (differentiable) cochain complex
\begin{equation}
(\Sym(\sE_c[1](U))[\hbar],\diff+\hbar \Delta)=:\Obq_{\sE}(U),
\end{equation}
where the symmetric powers are taken with respect to the completed bornological tensor product of convenient vector spaces.
\end{definition}

\subsection{Observables of the boundary condition}

A boundary condition $\sL$ leads to factorization algebras on the boundary in a parallel fashion.

At the classical level, the idea is that we want to use a commutative algebra of functions on $\sL$,
which we take to be a symmetric algebra on the continuous linear dual $\sL^*$.
It is convenient to work with a smeared (and hence smooth) version of $\sL^*$.
One approach is to note that $\sL$ is a subspace of $\sEb$, 
and so we could work with the quotient of $\Obcl_{\sEb}$ by the ideal of functions that vanish on the subspace $\sL$.
This approach is canonically determined by the map $\sL \to \sEb$, and hence manifestly meaningful.
On the other hand, it is convenient to have an explicit graded vector bundle to use, 
particularly when we quantize and need to transport the BV Laplacian for the bulk theory to an operator on the boundary observables.
Hence we now introduce a different approach that we will see, later, is equivalent.

\begin{constr}
Let $\sL$ be a boundary condition for a free TNBFT associated to the graded subbundle $L$ of $\Eb$. 
Let $L^\perp$ be a complementary subbundle so that $\Eb = L \oplus L^\perp$. 
Let $\sL^\perp$ denote the sheaf of smooth sections of $L^\perp$,
and let $\sL^\perp_c$ the cosheaf of compactly supported smooth sections of~$L^\perp$.
With respect to this splitting, the differential $\Qb$ decomposes as $Q_{L}+Q_{L^\perp}+Q_{rel}$, 
where $Q_{L}$ preserves $\sL$, $Q_{L^\perp}$ preserves $\sL^\perp$, and $Q_{rel}$ maps $\sL^\perp$ to $\sL$. 
(There is no operator from $\sL$ to $\sL^\perp$ because we have assumed that $\Qb$ preserves~$\sL$.)  
\end{constr}

Notice that every element of $\sL^\perp_c$ determines a continuous linear functional on $\sL$ via the local pairing $\ip_{loc, \del}$ on $\sE_\partial$.
In fact, these smeared observables encompass essentially all the linear functionals:
by the Atiyah-Bott lemma, the complex $(\sL^\perp_c, Q_{L^\perp})$ is continuously quasi-isomorphic to the complex of compactly supported distributional sections of $\Eb/L$ with the differential induced by $\Qb$.
Hence a symmetric algebra on $\sL^\perp_c$ deserves to be understood as an algebra of observables for $\sL$.

\begin{definition}
\label{dfn: classical currents}
Let $\sL$ be a boundary condition for a free TNBFT.
The factorization algebra of \textbf{classical boundary observables for $\sL$} assigns to an open subset $U\subset \partial M$, 
the (differentiable) cochain complex
\index[notation]{Obsclbdy@$\Obcl_{\sL}$}
\begin{equation}
(\Sym(\sL^\perp_c(U)),Q_{L^\perp})=:\Obcl_{\sL}(U),
\end{equation}
where the symmetric powers are taken with respect to the completed bornological tensor product of convenient vector spaces.
\end{definition}

At the quantum level, one obtains a Heisenberg-type deformation of $\Obcl_\sL$ as a factorization algebra.
The relevant deformation arises from a canonical bilinear form on $\sL^\perp_c$ determined by our construction.
Let $\mu$ be the following local degree $-1$ cocycle on $\sL^\perp_c$:
for any pair of compactly-supported sections $e_1$ and $e_2$ on an open $U\subset \del M$, define
\begin{equation}
\label{eq: twistcocycle}
\mu(e_1,e_2)=\int_{\del M}\ip[e_1,Q_{rel} e_2]_{loc,\del},
\end{equation}
We use this pairing to define a second-order differential operator $\hbar \Delta_\mu$ on $\Sym(\sL^\perp_c(U))[\hbar]$ of cohomological degree 1,
just as we constructed the BV Laplacian $\Delta$ on the bulk observables.

\begin{definition}
\label{dfn: quantum currents}
Let $\sL$ be a boundary condition for a free TNBFT.
The factorization algebra of \textbf{quantum boundary observables for $\sL$} assigns to an open subset $U\subset \partial M$, 
the (differentiable) cochain complex
\index[notation]{Obsqbdy@$\Obq_\sL$}
\begin{equation}
(\Sym(\sL^\perp_c(U))[\hbar],Q_{L^\perp} + \hbar \Delta_\mu)=:\Obq_{\sL}(U),
\end{equation}
where the symmetric powers are taken with respect to the completed bornological tensor product of convenient vector spaces.
\end{definition}

\begin{remark}
The quotient map $q_L: \Eb \to \Eb/L$ makes $L^\perp$ canonically isomorphic to the quotient bundle $\Eb/L$,
and hence we can identify $L^\perp$ with the image of a splitting of that quotient map.
Any two choices of splitting $L^\perp_0$ and $L^\perp_1$ are related by a bundle automorphism of $\Eb$.
We emphasize this isomorphism is at the point set level; 
it is an automorphism of graded vector bundles.
Using this automorphism one gets a natural equivalence between the associated pairings $\mu_0$ and~$\mu_1$.
Hence, any two versions of the construction above are isomorphic.
\end{remark}

\begin{remark}
The pairing $\mu$ determines a central extension $\widehat{\sL^\perp_c}(U)$ of $\sL^\perp_c(U)$ (as an abelian dg Lie algebra) by the vector space $\CC\hbar$ placed in degree~1.
That is, $\widehat{\sL^\perp_c}(U)$ is a kind of Heisenberg Lie algebra.
Since $\mu$ is defined for any open subset $U$ of the whole manifold $\del M$, 
we get a precosheaf of central extensions on $\bdyM$.
The quantum observables are then the Chevalley-Eilenberg chains of this dg Lie algebra $\widehat{\sL^\perp_c}$.
Thus our definition above is a case of taking a twisted enveloping factorization algebra. 
See Definition 3.6.4 of \autocite{CG1} for an extensive discussion,
and Chapter 4 for an explanation of why this construction encodes canonical quantization.
\end{remark}

\subsection{Observables of the bulk-boundary system}

There is a natural way to extend our methods above to obtain observables on $\sE_\sL$, 
which describes solutions to the equations of motion for fields in $\sE$ that must live in $\sL$ on the boundary.
We will begin by describing the corresponding functor
\begin{equation}
\mathrm{Opens}(M)\to \mathrm{Ch}
\end{equation}
and then turn to verifying it is a factorization algebra.

\begin{definition}
\label{def: freeclobs}
Let $(\sE,\sL)$ be a free bulk-boundary field theory. 
The prefactorization algebra of \textbf{bulk-boundary classical observables for $(\sE,\sL)$} assigns to each open subset $U\subset M$, 
the (differentiable) cochain complex
\index[notation]{Obscl@$\Obcl_{\sE,\sL}$}
\begin{equation}
(\Sym(\condfieldscs[1](U)),\diff)=:\Obcl_{\sE,\sL}(U),
\end{equation}
where $\condfieldscs$ denotes the cosheaf of compactly-supported fields for the bulk-boundary system
(i.e., elements of $\sE_{\sL}(U)$ whose support is compact).
The symmetric powers are taken with respect to the completed bornological tensor product of convenient vector spaces.
\end{definition}

To see that $\Obcl_{\sE,\sL}$ is a prefactorization algebra, 
one can borrow verbatim Section 3.6 of~\autocite{CG1}.

\begin{remark}
Definition \ref{def: freeclobs} differs from Definition \ref{def: classFA}.
However, we note here that in the appendices, 
we provide two useful results,
\begin{itemize}
\item a geometric interpretation of the tensor powers $\sE_{\sL}(U)^{\hotimes_\beta k}$ and
\item a version of the Atiyah-Bott lemma for the bulk-boundary fields (cf. Appendix D,~\autocite{CG1}),
\end{itemize}
that underpin our choice of smeared observables for the bulk-boundary system.
Analogs of these results played a key role in the case of free BV theories on manifolds without boundary.
The first allows us to recognize why the completed bornological tensor product is natural here,
and it also plays a role in the proof that we get a factorization algebra.
The second justifies that working with the continuous linear dual $\sE_{\sL}(U)^\vee$ adds no further information than $\sE_{\sL,c}(U)[1]$, 
up to continuous quasi-isomorphism.
\end{remark}

In fact, we can, without much difficulty, show that the classical observables form a factorization algebra, that is, they satisfy the local-to-global condition of Definition 6.1.4 in~\autocite{CG1}. 

\begin{theorem}
\label{thm: obcl}
For a free bulk-boundary theory $\sE$ with local Lagrangian boundary condition $\sL$, 
the classical observables $\Obcl_{\sE,\sL}$ form a factorization algebra.
\end{theorem}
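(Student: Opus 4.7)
The plan is to mirror the argument used in Theorem \ref{thm: classobsformFA}, simplified by the fact that for a free theory the differential on $\Obcl_{\sE,\sL}(U) = \Sym(\condfieldscs[1](U))$ arises only from the linearized differential $\ell_1$, and the underlying graded object is a direct sum of symmetric powers rather than a product. First I would verify that $\Obcl_{\sE,\sL}$ is a prefactorization algebra: the cosheaf structure of $\condfieldscs$ on $M$ together with the universal property of the symmetric algebra supplies the multiplication maps $m_{U_1,\ldots,U_k}^V$, and the associativity and $S_k$-equivariance axioms follow formally. This step requires no substantial work beyond what is already present in Section 3.6 of \autocite{CG1}.

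For Weiss codescent, since the symmetric algebra decomposes as a direct sum $\bigoplus_{k \geq 0}\Sym^k$ and the Čech differential respects this grading, it suffices to show that for each fixed $k \geq 0$ and every Weiss cover $\fU = \{U_\alpha\}_{\alpha \in A}$ of an open subset $U \subset M$, the natural map
\[
\check{C}(\fU, \condfieldscs^{\hotimes_\beta k}) \to \condfieldscs(U)^{\hotimes_\beta k}
\]
is a quasi-isomorphism; the shift and $S_k$-invariants are exact functors and may be ignored. The key input is the appendix characterization (Section \ref{sec: tensorproduct}) of $\condfieldscs(U)^{\hotimes_\beta k}$ as the space of compactly supported smooth sections of $E^{\boxtimes k} \to U^k$ satisfying the boundary condition $\sL$ pointwise along each of the $k$ boundary faces $U^{i-1} \times (U \cap \bdyM) \times U^{k-i}$. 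Because $\fU$ is Weiss, the collection $\{U_{\alpha_1}\times \cdots \times U_{\alpha_k}\}_{(\alpha_1,\ldots,\alpha_k)\in A^k}$ is an ordinary open cover of $U^k$.

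The heart of the argument is to transfer the explicit contracting homotopy constructed in Lemma A.5.7 of \autocite{CG1} for the Čech complex of the cosheaf of compactly supported sections on a manifold. That homotopy is assembled from a partition of unity subordinate to the product cover on $U^k$, and it acts by multiplication with compactly supported smooth functions on $U^k$. Since multiplication by such a scalar function acts fiberwise, it preserves the boundary condition $\sL$ pointwise along each boundary face. Consequently the homotopy restricts to a contracting homotopy for the mapping cone of the displayed Čech map applied to $\condfieldscs^{\hotimes_\beta k}$, yielding the required quasi-isomorphism.

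The main potential obstacle is the geometric identification of $\condfieldscs(U)^{\hotimes_\beta k}$ as sections with boundary conditions imposed along the boundary strata of $U^k$: this must be established in $\CVS$ where $\hotimes_\beta$ is the completed bornological tensor product, not an algebraic one, and the presence of $\sL$ requires verifying that the boundary constraint interacts well with the completion. This is exactly the content the appendix is designed to deliver, and once it is in hand the rest of the proof is essentially functorial, proceeding just as in the boundary-free case of \autocite{CG1}.
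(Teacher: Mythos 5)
Your proposal is correct and follows essentially the same route as the paper: reduce Weiss codescent to each symmetric power, use the appendix identification of $\condfieldscs(U)^{\hotimes_\beta k}$ as compactly supported sections of $E^{\boxtimes k}$ over $U^k$ with the boundary condition imposed along each boundary face, and observe that the contracting homotopy of Lemma A.5.7 of \autocite{CG1}, being built from multiplication by smooth functions and addition, preserves that condition. The only cosmetic difference is that you pass to tensor powers and invoke exactness of symmetrization, whereas the paper works with $\Sym^m$ directly.
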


\begin{proof}
The context here is nearly identical to that of Theorem 6.5.3(ii) of \autocite{CG1}. By the same arguments as in the proof of that theorem, we need only to show that, given any Weiss cover $\fU=\{U_i\}_{i\in I}$ of an open subset $U\subset M$, the map 
\begin{equation}
\label{eq: cechmap2}
\bigoplus_{n=0}^\infty \bigoplus_{i_1,\cdots, i_n}\Sym^m\left( \condfieldscs[1](U_{i_1}\cap\cdots\cap U_{i_n})\right) [n-1]\to \Sym^m (\condfieldscs[1](U))
\end{equation}
is a quasi-isomorphism, where the left-hand side is endowed with the \v{C}ech differential. According to the appendix, particularly the Corollary of \ref{thm: tensorofdirichlet}, 
\begin{equation}
\Sym^m\left( \condfieldscs[1](U_{i_1}\cap\cdots\cap U_{i_n})\right)
\end{equation}
is the subspace of 
\begin{equation}
\Sym^m\left( \sE_{c}[1](U_{i_1}\cap\cdots\cap U_{i_n})\right)\subset C^\infty_{c}((U_{i_1}\cap \cdots \cap U_{i_n})^m,(E[1])^{\boxtimes m})
\end{equation}
consisting of those sections that lie in $(L\oplus \Eb \d t)_{x_1}\otimes E_{x_2}\otimes\cdots \otimes E_{x_m}$ whenever the first of the points $x_1,\cdots, x_m\in (U_{i_1}\cap \cdots \cap U_{i_n})^m$ lies on $\del M$, and similarly for $x_2, \cdots, x_m$. The proof of Lemma A.5.7 of \autocite{CG1} constructs a contracting homotopy of the mapping cone of Equation \ref{eq: cechmap2} without any conditions imposed at the boundary of $M$. Because the contracting homotopy involves only multiplication by smooth functions and addition of sections, it preserves the lie-in condition for $(\condfieldscs)^{\hotimes_\beta m}$. Hence, the contracting homotopy from the proof of Lemma A.5.7 of \autocite{CG1} gives a contracting homotopy for the mapping cone of Equation~\ref{eq: cechmap2}, so that the map of that equation is a quasi-isomorphism.
\end{proof}

We also define a factorization algebra of quantum observables.

\begin{definition}
Let $(\sE,\sL)$ be a free bulk-boundary field theory. 
The prefactorization algebra of \textbf{bulk-boundary quantum observables for $(\sE,\sL)$} assigns to each open subset $U\subset M$, 
the (differentiable) cochain complex
\begin{equation}
(\Sym(\condfieldscs[1](U)[\hbar],\diff+\hbar \Delta)=:\Obq_{\sE,\sL}(U).
\end{equation}
Here $\Delta$ is the restriction of the BV Laplacian for $\Obq_\sE$ to this graded subspace. 
\end{definition}

\begin{remark}
The fact that $\diff+\hbar\Delta$ is a differential on $\Obq_{\sE,\sL}(U)$ requires some proof. 
In the case where $\del M$ is empty, it follows from the invariance of $\ip$ under $\diff$ (see equation \ref{eq: invpairing}). 
In the present case, equation \ref{eq: invpairing} is satisfied for $\condfields$, 
so that $\diff+\hbar \Delta$ squares to zero on $\Obq_{\sE,\sL}(U)$. 
This property motivates the use of local Lagrangian boundary conditions for TNBFTs.
\end{remark}

\begin{theorem}
\label{thm: freequantumFA}
The functor $\Obq_{\sE,\sL}$ is a factorization algebra. 
\end{theorem}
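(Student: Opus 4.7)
The plan is to reduce this claim to the classical case established in Theorem \ref{thm: obcl}. First, one checks that $\Obq_{\sE,\sL}$ is a prefactorization algebra. The structure maps
\[
m^V_{U_1,\ldots,U_k}: \Obq_{\sE,\sL}(U_1)\hotimes_\beta \cdots \hotimes_\beta \Obq_{\sE,\sL}(U_k)\to \Obq_{\sE,\sL}(V)
\]
are defined exactly as in Definition \ref{def: freeclobs}, by extension-by-zero on the underlying compactly-supported sections, extended $\RR[\hbar]$-linearly. These maps commute with $\diff$ as in the classical case, and they commute with $\hbar\Delta$ because the BV Laplacian $\Delta$ is constructed locally from the pairing $\ip$: on an input of the form $e_1\cdots e_k \in \Sym^k(\condfieldscs[1](U_i))$ pushed forward to $V$, the BV Laplacian pairs elements that have support in $U_i$, so it agrees with the $\Delta$ for $U_i$ before extension. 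The prefactorization axioms then follow as in Section 3.6 of \autocite{CG1}.

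The substantive point is Weiss codescent. Fix an open $U\subset M$ and a Weiss cover $\fU=\{U_\alpha\}$ of $U$. Filter $\Obq_{\sE,\sL}(W)$, for each open $W$, by powers of $\hbar$:
\[
F^p\Obq_{\sE,\sL}(W) = \hbar^p\,\RR[\hbar]\cdot \Sym(\condfieldscs[1](W)).
\]
This filtration is preserved by the structure maps and by the Čech differential, and the associated graded of the differential $\diff+\hbar\Delta$ is just $\diff$, so we have a natural isomorphism of filtered complexes
\[
\mathrm{Gr}\,\Obq_{\sE,\sL}(W)\cong \Obcl_{\sE,\sL}(W)[\hbar],
\]
with $\Delta$ contributing only to the differential on $F^p/F^{p+1}$ trivially. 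The Čech complex $\check C(\fU,\Obq_{\sE,\sL})$ inherits this filtration, and its associated graded is $\check C(\fU,\Obcl_{\sE,\sL})[\hbar]$. By Theorem \ref{thm: obcl}, the map
\[
\check C(\fU,\Obcl_{\sE,\sL})\to \Obcl_{\sE,\sL}(U)
\]
is a quasi-isomorphism, so the induced map on associated graded pieces is a quasi-isomorphism in each filtration degree.

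The remaining step is a standard spectral sequence comparison. The $\hbar$-filtration is bounded below (by $p=0$) and, in each fixed symmetric degree, concentrated in finitely many $\hbar$-powers when we work one degree of symmetric algebra at a time; hence the spectral sequences on both sides converge after restriction to each finite symmetric-degree piece. Since each $\Sym^n(\condfieldscs[1](\cdot))$ gives a summand preserved by $\diff$ and $\hbar\Delta$ (the latter lowers symmetric degree by two, which is fine since the filtration degree in $\hbar$ also shifts), a careful bookkeeping shows the relevant spectral sequence is regular, so the isomorphism on $E_1$ induced by Theorem \ref{thm: obcl} lifts to a quasi-isomorphism
\[
\check C(\fU,\Obq_{\sE,\sL})\xrightarrow{\ \sim\ } \Obq_{\sE,\sL}(U).
\]
The main obstacle is purely bookkeeping: one must check that the $\hbar$-filtration is complete and exhaustive in each fixed symmetric degree (so the classical spectral sequence argument applies without convergence pathologies), and that the contracting homotopy from the proof of Theorem \ref{thm: obcl}, which is $\RR[\hbar]$-linear, is compatible with the filtration. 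Both are routine, since the homotopy from Lemma A.5.7 of \autocite{CG1} is built only from multiplication by partition-of-unity functions on products, operations that are $\RR[\hbar]$-linear and preserve the $\hbar$-filtration.
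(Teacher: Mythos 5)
Your proposal is correct and follows essentially the same route as the paper: filter the quantum observables so that $\hbar\Delta$ drops out of the associated graded, identify the associated graded with $\Obcl_{\sE,\sL}[\hbar]$, and conclude via Theorem \ref{thm: obcl} and a map of spectral sequences (the paper uses the filtration $F^n=\bigoplus_{j+k\leq n}\hbar^j\Sym^k$, which makes boundedness immediate). One cosmetic point: your convergence bookkeeping should be phrased in terms of the combined weight $k+2j$ (symmetric degree plus twice the $\hbar$-degree), which is what $\diff$, $\hbar\Delta$, and the \v{C}ech differential actually preserve, rather than fixed symmetric degree alone.
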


\begin{proof}
That $\Obq_{\sE,\sL}$ is a prefactorization algebra is an immediate consequence of the fact that $\Obcl_{\sE,\sL}$ is, since the BV Laplacian is local. 
To see that the local-to-global condition is also satisfied, note that $\Obq_{\sE,\sL}(U)$ has a filtration given by 
\begin{equation}
F^n\Obq_{\sE,\sL}(U) = \bigoplus_{j+k \leq n} \hbar^j \Sym^k(\sE_{\sL,c}(U)[1])
\end{equation}
for every open subset $U$.
The differential on $\Obq_{\sE,\sL}(U)$ preserves this filtration. Moreover, for any Weiss cover $\fU$ of $U$, the \v{C}ech complex $\check{C}(\fU, \Obq_{\sE,\sL})$ for this cover also has a filtration and the map 
\begin{equation}
\label{eq: cech2}
\check{C}(\fU, \Obq_{\sE,\sL})\to \Obq_{\sE,\sL}(U)
\end{equation}
respects this filtration, hence induces a map of spectral sequences. The induced map on the associated graded spaces (the $E^1$ page) is the map
\begin{equation}
\check{C}(\fU,\Obcl_{\sE,\sL}[\hbar])\to \Obcl_{\sE,\sL}(U)[\hbar],
\end{equation}
which was shown to be a quasi-isomorphism in the proof of Theorem \ref{thm: obcl}. Hence the map in Equation \ref{eq: cech2} is a quasi-isomorphism.
\end{proof}

\section{The main theorems}
\label{sec: main theorem}

In this section, we state and prove a generalization of Theorem~\ref{thm: main} that applies to a general free bulk-boundary field theory $\sE$ with boundary condition $\sL$. Without loss of generality , we will assume that the underlying manifold is of the form $M=\Mbdy\times \RR_{\geq 0}$, so that $\partial M = M_\partial$.  Let $\pi: M\to \Mbdy$ denote projection onto the boundary. We will also assume that the space of fields is globally of the form $\sE\,\hotimes\, \Omega^\bullet_{\RRge}$, with the pairing $\ip$ of the form specified in Definition~\ref{def: tnbft}. 

\begin{remark}
The assumption that $M=\Mbdy\times \RR_{\geq 0}$ is purely for convenience. Our methods construct factorization algebras on an arbitrary manifold with boundary, so long as one can find a tubular neighborhood of the boundary on which the fields decompose to be ``topological normal to the boundary.''
\end{remark}

Here is our generalization of Theorem~\ref{thm: main} at the classical level.

\begin{theorem}
\label{thm: maingenlcl}
For a free bulk-boundary field theory $(\sE,\sL)$, we have the following identifications:
\begin{enumerate}
\item Let $\Obcl_\sE$ denote the factorization algebra on $\mathring M$ of classical observables for $\sE$, constructed using the techniques of Chapter 4 of \autocite{CG1}. Then, there is an isomorphism
\begin{equation}
\Obcl_{\sE,\sL}\big|_{\mathring M} \cong \Obcl_\sE.
\end{equation}
\item There is a quasi-isomorphism
\begin{align}
\label{eq: qicl}
\II^{\cl}: \Obcl_\sL&\to \pi_*\Obcl_{\sE,\sL}.
\end{align}
\end{enumerate}
\end{theorem}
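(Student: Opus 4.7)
My plan is to establish the two claims in turn. For Part 1, the key observation is that the boundary condition is vacuous on $\mathring M$: if $U \subset \mathring M$ is open, then $U \cap \bdyM = \emptyset$, so $\condfieldscs(U) = \sE_c(U)$, whence $\Obcl_{\sE,\sL}(U) = \Sym(\sE_c[1](U)) = \Obcl_\sE(U)$. Both sides have the same factorization structure maps---extension-by-zero of compactly supported sections of $E$---so the restriction of $\Obcl_{\sE,\sL}$ to $\mathring M$ is literally an isomorphism of factorization algebras with $\Obcl_\sE$.

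For Part 2, the plan is to construct a strict cochain map on the linear generators and then extend it multiplicatively via $\Sym$. Pick once and for all $g \in \cinfty_c(\RR_{\geq 0})$ with $\int_0^\infty g\,\d t = 1$, and set $F(t) := \int_0^t g(s)\,\d s - 1$, so that $F \in \cinfty_c(\RR_{\geq 0})$ and $F' = g$. Using the identification $\sE \cong \sEb\,\hotimes\,\Omega^\bullet_{\RR_{\geq 0}}$ near the boundary, for each open $V \subset \bdyM$ I would define
\begin{equation*}
\II^{\cl}_0 : \sL^\perp_c(V) \longrightarrow \condfieldscs[1](V \times \RR_{\geq 0}), \qquad x \longmapsto x \otimes g\,\d t + (-1)^{|x|}(Q_{rel}\,x) \otimes F.
\end{equation*}
The first summand has vanishing $\Omega^0$-component so satisfies the boundary condition trivially, and the second is $\sL$-valued, hence also satisfies it. Using $\Qb^2 = 0$---which forces $Q_L Q_{rel} + Q_{rel} Q_{L^\perp} = 0$---together with $F' = g$, a direct computation checks that $\diff\,\II^{\cl}_0(x) = \II^{\cl}_0(Q_{L^\perp}\,x)$, so $\II^{\cl}_0$ is a strict cochain map. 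I then set $\II^{\cl} := \Sym(\II^{\cl}_0)$.

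To verify the quasi-isomorphism property, I would exploit the short exact sequence of complexes of cosheaves on $\bdyM$
\begin{equation*}
0 \to \sL_c \otimes \Omega^\bullet_c(\RR_{\geq 0}) \to \pi_*\condfieldscs \to \sL^\perp_c \otimes \Omega^\bullet_{c,\mathrm{Dir}}(\RR_{\geq 0}) \to 0,
\end{equation*}
where $\Omega^\bullet_{c,\mathrm{Dir}}(\RR_{\geq 0})$ denotes compactly supported forms satisfying the Dirichlet condition $f(0) = 0$ in degree zero. A direct integration shows $H^\bullet_c(\RR_{\geq 0}) = 0$, so the kernel is acyclic, while $H^\bullet(\Omega^\bullet_{c,\mathrm{Dir}}(\RR_{\geq 0})) \cong \RR[-1]$ is generated by the class of $g\,\d t$. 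Composing $\II^{\cl}_0$ with the projection onto the quotient yields $x \mapsto x \otimes g\,\d t$, which by a K\"unneth argument is a quasi-isomorphism $\sL^\perp_c \xrightarrow{\sim} \sL^\perp_c \otimes \Omega^\bullet_{c,\mathrm{Dir}}(\RR_{\geq 0})[1]$. Combined with acyclicity of the kernel, this implies $\II^{\cl}_0$ itself is a quasi-isomorphism; since we work over a field of characteristic zero, $\Sym$ preserves quasi-isomorphisms, so $\II^{\cl}(V)$ is a quasi-isomorphism for each $V$. Naturality in $V$ and compatibility with the disjoint-union multiplication maps are manifest from the local form of $\II^{\cl}_0$, making $\II^{\cl}$ a morphism of factorization algebras. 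The main obstacle is producing the strict cochain-level map: the naive assignment $x \mapsto x \otimes g\,\d t$ fails to be a cochain map whenever $Q_{rel} \neq 0$ (for instance, $Q_{rel} = \partial$ in the CS/WZW case), and the correction $(-1)^{|x|}(Q_{rel}\,x) \otimes F$, dictated by a homological perturbation argument that exploits a contracting homotopy on the acyclic subcomplex $\sL_c \otimes \Omega^\bullet_c(\RR_{\geq 0})$, is precisely what is needed to rectify this discrepancy.
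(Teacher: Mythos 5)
Your Part 1 is exactly the paper's argument, and your linear-level map $\II^{\cl}_0$ is, up to Koszul-sign conventions, the very formula the paper uses (its $\phi\,\d t$ and $\Phi-1$ play the roles of your $g\,\d t$ and $F$), with the same checks of compact support, the boundary condition via $F(0)=-1$, and the cochain property via $Q_LQ_{rel}=-Q_{rel}Q_{L^\perp}$. The gap is in how you pass from the linear level to the observables. Your two closing invocations --- ``K\"unneth'' for $\sL^\perp_c\hotimes_\beta \Omega^\bullet_{c,\mathrm{Dir}}(\RRge)$, and ``$\Sym$ preserves quasi-isomorphisms in characteristic zero'' --- are purely algebraic facts that are not available off the shelf here: the observables are \emph{completed bornological} symmetric algebras of convenient vector spaces regarded in $\DVS$, where $\CVS$ is not abelian and cohomology does not commute naively with $\hotimes_\beta$ or with completed $\Sym$. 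In particular, knowing only that $\II^{\cl}_0$ is a quasi-isomorphism does not by itself give that $\Sym(\II^{\cl}_0)$ is one; this is precisely why the paper's stated conventions allow only explicit homotopy inverses or $\DVS$-level spectral-sequence/snake-lemma arguments, and why it does not stop at a quasi-isomorphism on linear generators.

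The paper closes this gap by upgrading the linear statement to a full deformation retraction: it exhibits $\PP^{\cl}(U)(e)=p_{L^\perp}\bigl(\int_{\RRge}e\bigr)$ with $\PP^{\cl}\II^{\cl}=\id$ and an explicit homotopy $\KK^{\cl}$ between $\II^{\cl}\PP^{\cl}$ and the identity, and then uses the standard extension of a deformation retraction to (completed) symmetric algebras. Your route is repairable along the same lines, because each of your ingredients admits explicit homotopy data: the subcomplex $\sL_c\hotimes_\beta\Omega^\bullet_{\RRge,c}(\RRge)$ is contracted by $1\otimes K$ with $K$ the integrate-from-$t$ homotopy, and the quotient $\sL^\perp_c\hotimes_\beta\dirforms(\RRge)$ deformation-retracts onto $\sL^\perp_c\cdot g\,\d t$ via
\begin{equation}
\kappa(h\,\d t)(t)=\int_0^t h(s)\,\d s-\Bigl(\int_0^\infty h(s)\,\d s\Bigr)\int_0^t g(s)\,\d s,
\end{equation}
which is Dirichlet and compactly supported; assembling these (by the homological perturbation you gesture at) produces homotopy data for $\II^{\cl}_0$ itself, after which the passage through $\Sym$ is legitimate. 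As written, however, ``K\"unneth'' and ``$\Sym$ preserves quasi-isomorphisms'' are the missing justifications, and they are exactly the steps the paper's explicit $\PP^{\cl}$, $\KK^{\cl}$ are designed to replace.
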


We will state now the quantum analogue of this theorem before turning to the proofs.

\begin{theorem}
\label{thm: maingenlq}
For a free bulk-boundary field theory $(\sE,\sL)$, we have the following identifications:
\begin{enumerate}
\item Let $\Obq_\sE$ denote the factorization algebra on $\mathring M$ of quantum observables for $\sE$, constructed using the techniques of Chapter 4 of \autocite{CG1}. Then, there is an isomorphism
\begin{equation}
\Obq_{\sE,\sL}\big|_{\mathring M} \cong \Obq_\sE.
\end{equation}
\item There is a quasi-isomorphism
\begin{align}
\label{eq: qiq}
\II^{q}: \Obq_\sL&\to \pi_* \Obq_{\sE,\sL}.
\end{align}
\end{enumerate}
\end{theorem}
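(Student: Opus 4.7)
The plan is to reduce both parts of the theorem to their classical analogues, with the main additional work lying in verifying that the BV Laplacian on the bulk-boundary side recovers the Heisenberg-type deformation on the boundary. For part (1), I would argue directly from the definitions: for any open $U \subset \mathring M$, since $U \cap \partial M = \emptyset$, the imposition of the boundary condition is vacuous, so $\condfieldscs(U) = \sE_c(U)$, and the BV Laplacian $\Delta$ on $\Obq_{\sE,\sL}(U)$ agrees with that on $\Obq_\sE(U)$ (both being determined by the same local pairing $\ip_{loc}$ restricted to the interior). The factorization structure maps match on the nose for the same reason.

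For part (2), I would define $\II^{q}$ as the $\RR[\hbar]$-linear extension of $\II^{\cl}$ on the underlying graded vector spaces. The core issue is verifying that $\II^{q}$ intertwines the differentials $Q_{L^\perp} + \hbar \Delta_\mu$ on the domain and $\diff + \hbar \Delta$ on the codomain. The order-zero equation is precisely the classical cochain map property established in Theorem~\ref{thm: maingenlcl}. The order-$\hbar$ equation requires checking that, for inputs $\ell_1, \ell_2 \in \sL^\perp_c(V)$, the BV bracket $\{\II^{\cl}(\ell_1), \II^{\cl}(\ell_2)\}$ computed in $\pi_* \Obq_{\sE,\sL}$ agrees with $\mu(\ell_1, \ell_2)$. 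The key point is that if $\II^{\cl}$ realizes a section of $\sL^\perp$ as a collar-supported field in $\condfieldscs[1]$ (as is natural), then computing the bulk pairing on two such fields and applying the failure-of-invariance identity \eqref{eq: boundaryterm} should produce precisely the boundary integral $\int_{\partial M} \ip[\ell_1, Q_{rel} \ell_2]_{loc, \partial}$ defining $\mu$ in \eqref{eq: twistcocycle}.

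Having established that $\II^{q}$ is a cochain map, the quasi-isomorphism statement follows from a filtration argument: both complexes carry the $\hbar$-adic filtration together with the symmetric-degree filtration, which is complete and exhaustive in each cohomological degree. The induced map on the associated graded is $\II^{\cl}$ extended $\RR[\hbar]$-linearly, which is a quasi-isomorphism by Theorem~\ref{thm: maingenlcl}. A standard spectral sequence comparison then gives the quasi-isomorphism for $\II^{q}$, and compatibility with factorization structure maps is inherited from $\II^{\cl}$ together with the locality of $\Delta$. The principal obstacle is the second-order computation identifying the bulk BV bracket on collar-supported fields with the boundary cocycle $\mu$; this depends on choosing the right lift in the definition of $\II^{\cl}$ so that the integration-by-parts manipulation based on \eqref{eq: boundaryterm} cleanly isolates the $Q_{rel}$-pairing on $\partial M$ and kills any extraneous bulk terms.
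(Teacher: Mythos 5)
Your proposal matches the paper's proof in essentially every respect: part (1) is the same immediate observation that $\condfields(U)=\sE(U)$ when $U$ misses the boundary, and for part (2) the paper likewise takes $\II^{q}$ to be the $\hbar$-linear extension of the classical map $\II^{\cl}$ from Theorem \ref{thm: maingenlcl}, reducing everything to the cocycle compatibility $\ip[\II^{\cl}\alpha_1,\II^{\cl}\alpha_2]=\mu(\alpha_1,\alpha_2)$ and then inheriting the quasi-isomorphism from the classical statement. The only difference is cosmetic: the paper verifies this identity by plugging the explicit formula for $\II^{\cl}$ directly into the pairing, where the cross terms give $2\mu(\alpha_1,\alpha_2)\int_{\RRge}\phi(1-\Phi)\,\d t=\mu(\alpha_1,\alpha_2)$, rather than through the integration-by-parts route via Equation \eqref{eq: boundaryterm} that you sketch, and no further choice of lift is needed since the $\II^{\cl}$ already fixed by the classical theorem does the job.
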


\begin{remark}
One consequence of this theorem is that the quantum boundary observables, for any choice of splitting $L^\perp$, are explicitly identified, up to quasi-isomorphism, with $\pi_* \Obq_{\sE,\sL}$.
Hence we see again that the choice of splitting is irrelevant.
\end{remark}

\begin{remark}
Theorems \ref{thm: maingenlcl} and \ref{thm: maingenlq} are characterizations of the ``boundary value'' and the ``bulk value'' of the factorization algebras $\Obcl_{\sE,\sL}$, $\Obq_{\sE,\sL}$. However, the bulk-boundary factorization algebras contain more information than their bulk and boundary values alone---they also encode an action of the bulk observables on the boundary observables. This is a rich structure. For example, in the Poisson sigma model we believe the structure to be related to the formality quasi-isomorphism of Kontsevich \autocite{KontPSM}. We study this action for topological mechanics and the Chern-Simons/chiral WZW system in Proposition \ref{prp: toplmech} and Lemma \ref{lem: cswzwonhalfline}, respectively.
\end{remark}

We now turn to proving these theorems. 

\begin{proof}[Proof of classical theorem]
The first statement of the theorem follows immediately from the fact that $\condfields(U)=\sE(U)$ when $U\cap \del M=\emptyset$. 

It remains, therefore, to prove the second statement. 
Throughout the proof, let $U$ be an open subset of $\Mbdy$. 
Let us first construct the cochain map
\begin{equation}
\II^{\cl}(U): \Obcl_\sL(U)\to \Obcl_{\sE,\sL}(U\times \RRge)
\end{equation}
for each open subset $U\subset M_\del$. 
To this end, let $\phi$ be a compactly-supported function on $\RRge$ whose integral over $\RRge$ is 1, and let $\Phi(t):=\int_0^t \phi(s)ds$. 
Both the boundary and bulk observables arise as symmetric algebras built on cochain complexes, 
so the map $\II^{\cl}$ will be induced from a cochain map on the linear observables. 

As a first step, we decompose the fields $\condfieldscs$ further. 
By hypothesis, we have the isomorphism
\begin{equation}
\sE_{c}(U\times \RRge)\cong (\sEb)_{c}(U) \,\hotimes_\beta\, \Omega^\bullet_{\RRge,c}(\RRge).
\end{equation}
Recall that in the construction of the boundary observables, we have a decomposition
\begin{equation}
\Qb=Q_L+Q_{L^\perp}+Q_{rel},
\end{equation} 
where $Q_L$ preserves $\sL$, $Q_{L^\perp}$ preserves $\sL^\perp$, and $Q_{rel}$ maps $\sL^\perp$ to $\sL$. We can therefore write
\begin{equation}
\condfieldscs(U\times \RRge)\cong \left( \sL_{c}(U)\,\hotimes\, \Omega^\bullet_{\RRge,c}(\RRge)\rtimes \sL^\perp_{c}(U)\,\hotimes\, \dirforms(\RRge)\right),
\end{equation}
where $\dirforms(\RRge)$ is the cochain complex (concentrated in degrees 0 and 1)
\begin{equation}
\begin{tikzcd}
\left\{f\in \Omega^0_{\RRge,c}(\RRge)\mid f(t=0)=0\right\}\ar[r,"d_{dR}"]&\Omega^1_{\RRge,c}(\RRge),
\end{tikzcd}
\end{equation}
and the symbol $\rtimes$ reminds us that $\sL^\perp$ is not a subcomplex of $\sEb$. 
Note that our boundary condition requires that only the $\sL^\perp$-valued fields vanish at the boundary. 

Define the map $\II^{\cl}(U): \sL^\perp_c(U)[-1]\to \condfieldscs(U\times \RR_{\geq 0})$ by 
\begin{equation}
\II^{\cl}(U)(\alpha)=\alpha \wedge \phi \, \d t-(-1)^{|\alpha|}(\Phi-1)Q_{rel}\alpha,
\end{equation}
where $|\alpha|$ denotes the cohomological degree of $\alpha$ in $\sL^\perp$ (not $\sL^\perp[-1]$). 
The map $\II^{\cl}(U)$ is of cohomological degree zero because of the terms $\wedge \phi \,\d t$ and $Q_{rel}$. 
Moreover, $\II^{\cl}(U)(\alpha)$ does indeed have compact support if $\alpha$ does, since $(\Phi-1)(t)=0$ for $t>>0$. 
By construction $\II^{\cl}$ is a map of precosheaves. 
We also see that $\II^{\cl}(U)(\alpha)$ satisfies the boundary condition because 
\begin{equation}
\rho\left( \II^{\cl}(U)(\alpha)\right)= (-1)^{|\alpha|}Q_{rel}\alpha
\end{equation}
and $Q_{rel}\alpha$ lives in $\sL_c$.
Finally, we check that $\II^{\cl}(U)$ is a cochain map:
on the one hand,
\begin{equation}
\II^{\cl}(U)(Q_{L^\perp}\alpha) = (Q_{L^\perp}\alpha)\wedge \phi \, \d t-(-1)^{|\alpha|+1}(\Phi-1)Q_{rel}Q_{L^\perp}\alpha,
\end{equation}
and on the other,
\begin{align}
\diff\II^{\cl}(U)(\alpha)& = Q_{L^\perp}\alpha \wedge \phi \, \d t+ Q_{rel}\alpha \wedge \phi \, \d t\\
&-Q_{rel}\alpha \wedge \phi \, \d t -(-1)^{|\alpha|}(\Phi-1)Q_{L}Q_{rel}\alpha.
\end{align}
Once one uses the relation $Q_L Q_{rel}=-Q_{rel}Q_{L^\perp}$, 
one sees that the two expressions are equal. 
Since $\II^{\cl}(U)$ respects the differentials on the complexes as well as the extension maps, 
it extends to a map of factorization algebras $\Obcl_\sL\to \pi_*\Obcl_{\sE,\sL}$.

It remains to show that $\II^{\cl}(U)$ is a quasi-isomorphism. 
We will exhibit, in fact, something much stronger: a deformation retraction.
Namely, we will produce a cochain map $\PP^{\cl}(U)$ such that $\PP^{\cl}(U)\II^{\cl}(U)=\id$ and 
a cochain homotopy $\KK^{\cl}(U)$ between $\II^{\cl}(U)\PP^{\cl}(U)$ and the identity~$\id$. 

To this end, consider the map
\begin{equation}
\PP^{\cl}(U): \condfieldscs(U\times \RRge) \to \sL^\perp_c(U)[-1]
\end{equation}
where
\begin{equation}
\PP^{\cl}(U)(e) =p_{L^\perp}\left( \int_{\RRge} e \right)
\end{equation}
and where $p_{L^\perp}$ is the canonical map $\sEb\to \sL^\perp$ induced by the quotient bundle map $\Eb \to L^\perp$.
Notice that 
\begin{align}
\PP^{\cl}(U)\left(Q_{L^\perp}e+Q_{L}e+Q_{rel}e+(-1)^{|e|}\frac{de}{dt}\wedge \d t \right)
&=Q_{L^\perp}\int_{\RRge}e+(-1)^{|e|}p_{\sL^\perp}\int_{\RRge}\frac{de}{dt}\wedge \dt
\\
&=Q_{L^\perp}\int_{\RRge}e\\
&=Q_{L^\perp}\PP^{\cl}(U)(e),
\end{align}
where the second equality holds because $e$ is compactly supported and $p_{L^\perp} e(0)=0$. 
Hence $\PP^{cl}(U)$ is a cochain map.
Direct computation verifies that $\PP^{\cl}(U)\II^{\cl}(U)=\id$.

Consider now the degree --1 map
\begin{equation}
\KK^{\cl}(U): \condfieldscs(U\times \RRge)\to \condfieldscs(U\times \RRge)
\end{equation}
where
\begin{equation}
\big(\KK^{\cl}(U)(e)\big)(t)= (-1)^{|e|-1}(\Phi(t)-1)(\PP^{\cl}(U)(e))-(-1)^{|e|}\int_t^\infty e(s).
\end{equation}
The field $\KK^{\cl}(U)(e)$ satisfies the required boundary condition because 
\begin{equation}
\KK^{\cl}(U)(e)(0)=(-1)^{|e|}p_{L^\perp}\int_{\RRge}e-(-1)^{|e|}\int_{\RRge}e
\end{equation}
and hence $\KK^{\cl}(U)(e)(0)$ is an element of $\sL$.
Direct computation shows that $\KK^{\cl}(U)$ is a cochain homotopy between $\II^{\cl}(U)\PP^{\cl}(U)$ and the identity. 

Just as $\II^{\cl}(U)$ extends to a map of symmetric algebras, extend $\KK^{\cl}(U)$ and $\PP^{\cl}(U)$ to maps
\begin{align}
\KK^{\cl}(U)&:\Sym(\condfieldscs[1](U\times \RRge))=\left(\pi_*\Obq_{\sE,\sL}\right)(U)\to \left(\pi_*\Obq_{\sE,\sL}\right)(U)\\
\PP^{\cl}(U)&: \Obq_{\sE,\sL}(U)\to \Sym(\sL^\perp_c(U)[-1]) = \Obcl_\sL(U)
\end{align}
by the usual procedure extending a deformation retraction at the linear level to symmetric powers. 
(One treatment with the necessary formulas is Section 2.5 of~\autocite{othesis}.)
\end{proof}

Proving the quantum theorem is a modest modification of the classical argument.

\begin{proof}[Proof of quantum theorem]
The first statement of the theorem again follows immediately from the fact that $\condfields(U)=\sE(U)$ when $U\cap \del M=\emptyset$. 

It remains, therefore, to prove the second statement,
using the constructions from the proof of the classical theorem.
Throughout the proof, let $U$ be an open subset of $\Mbdy$. 
Recall that the cocycle $\mu$ determines $\Obq_\sL$ and the cocycle $\ip$ determines $\Obq_{\sE,\sL}$. 
We will show that $\II^{\cl}(U)$ respects the cocycles and hence determines the desired map $\II^q$ between the quantized factorization algebra.
In particular, we must show that
\begin{equation}
\mu(\alpha_1,\alpha_2)=\ip[\II^{\cl}\alpha_1,\II^{\cl}\alpha_2].
\end{equation}
To see this, compute 
\begin{align}
\ip[\II^{\cl}\alpha_1,\II^{\cl}\alpha_2]&=\int_{\Mbdy\times \RRge} \ip[\alpha_1,Q_{rel}
\alpha_2]_{loc, \del}\phi (1-\Phi) \, \d t +\int_{\Mbdy\times \RRge} \ip[Q_{rel}\alpha_1,
\alpha_2]_{loc, \del}\phi (\Phi-1) \, \d t\\
&=2\mu(\alpha_1,\alpha_2)\int_{\RRge}\phi(1-\Phi)\, \d t.
\end{align}
Since $\frac{d}{dt}(1-\Phi)=-\phi$, we find
\begin{equation}
\int_{\RRge}\phi(\Phi-1)=-\int_{-1}^0 u \, \d u=\frac{1}{2},
\end{equation}
which verifies that $\II^{\cl}$ preserves the cocycles defining the quantum observables, as needed.
\end{proof}

\section{Applications}
\label{sec: apps}

In this section, we apply our main theorems to several bulk-boundary systems,
namely the examples already mentioned in Section~\ref{sec: examples}.
In the low-dimensional examples, we can relate the factorization algebras to more familiar objects, 
such as associative algebras and vertex algebras.
For instance, in the example of topological mechanics, we find that our procedure is equivalent to the canonical quantization of the algebra $\sO(V)$ (on the ``bulk'' line) and its Fock space (on the boundary point).

In higher dimensions, our factorization algebras recover familiar phenomena when using simple product spaces and performing compactifications ({\em aka} pushforwards). For example, on a slab of the form $N\times [0,1]$ with $N$ an oriented 2-manifold, the CS/WZW system is shown to be equivalent to the free massless scalar boson on $N$ (see Lemmas \ref{lem: fullwzwscalarcl} and~\ref{lem: fullwzwscalarq}).

For the sake of space, we omit some examples which appear in the pre-print \autocite{GRW}; we direct the interested reader there.

\subsection{Topological mechanics}
\label{subsec: toplmech}

In this subsection, we study the factorization algebras for topological mechanics with values in $V$ and with boundary condition $L$. We will see that the factorization algebra of classical bulk-boundary observables encodes the commutative algebra $\Sym(V)$ together with the module $\Sym(V/L)$. For the quantum observables, we obtain the Weyl algebra $W(V)$ and the Fock module $F(L)$ built on $L$. (We define these objects in the sequel.)

Recall that a  symplectic vector space $(V,\omega)$ together with a Lagrangian subspace $L\subset V$ define a free bulk-boundary system on $[0,\epsilon)$, which we call topological mechanics (cf. Example~\ref{ex: toplmech}). 
(We can take $V$ to be $\ZZ$-graded, if we like, but of bounded total dimension.)
The main theorem \ref{thm: maingenlcl} identifies $\left.\Obcl_{\sE,\sL}\right|_{(0,\epsilon)}$ with the factorization envelope on $(0,\epsilon)$ of the abelian Lie algebra $V$. 
Proposition 3.4.1 of \autocite{CG1} shows that this factorization algebra is equivalent to the locally constant factorization algebra on $(0,\epsilon)$ corresponding to the associative algebra $\sO(V):=\Sym(V)$. 
Similarly, $\left.\Obq_{\sE,\sL}\right|_{(0,\epsilon)}$ is equivalent to the factorization algebra on $(0,\epsilon)$ corresponding to the Weyl algebra $W(V)$.
(Recall that the Weyl algebra is the algebra generated by $V$ and $\hbar$ and subject to the relation $v_1v_2-v_2v_1=\omega(v_1,v_2)\hbar$.)
We freely use the isomorphism $V\cong V^\vee$ induced by $\omega$, so in particular, we may identify $\sO(V)\cong \Sym(V^\vee)$.

For any $0<\delta\leq 0$, the main theorems also identify the bulk-boundary observables $\Obcl_{\sE,\sL}([0,\delta))$ and $\Obq_{\sE,\sL}([0,\delta))$ with the boundary observables
\begin{equation}
\Obcl_{\sL}\cong \Sym(V/L)
\end{equation}
and
\begin{equation}
\Obq_{\sL}\cong \Sym(V/L)[\hbar],
\end{equation}
respectively, for any $\delta\leq\epsilon$. The second isomorphism arises from the fact that $Q_{rel}=0$. 

These identifications are purely identifications of factorization algebras on $\{0\}$; they do not take into account the actions of $\Obcl_\sE$, $\Obq_\sE$ on the boundary observables. In this subsection, we show how the bulk and boundary observables interact through the bulk-boundary factorization algebras $\Obcl_{\sE,\sL}$ and $\Obq_{\sE,\sL}$. 
Namely, we will examine the structure maps involving one or more intervals including the boundary point.
These structure maps will give the boundary observables the structure of a right module over the corresponding algebras in the bulk.

More precisely, given an algebra $A$ and a pointed right module $M$ of $A$, there is a stratified locally constant factorization algebra $\cF_{A,M}$ on $[0,\epsilon)$ which assigns $A$ to any open interval, and $M$ to any half-closed interval (cf. \S 3.3.1 of \autocite{CG1}). We will show that the cohomology factorization algebras $H^\bullet\Obcl_{\sE,\sL}$ and $H^\bullet \Obq_{\sE,\sL}$ will be of this form for particular choices of $A$ and $M$. We have already discussed that the corresponding algebras are $\sO(V)$ and $W(V)$ for the classical and quantum observables, respectively. It remains only to identify the relevant modules.

The Lagrangian $L\subset V$ determines a (right) module for the commutative algebra $\sO(V)$, namely $\sO(V/L):=\Sym(V/L)\cong \Sym(L^\vee)$ with the module structure induced from the quotient map $V\to V/L$. 
Similarly, $L$ determines a right module $F(L)$ for the Weyl algebra, namely the quotient of $W(V)$ by the right-submodule generated by $L$. The underlying vector space for $F(L)$ is $\Sym(V/L\oplus \hbar)$. Having established all the relevant notation, we can now state the main proposition.

\begin{proposition}
\label{prp: toplmech}
For $(\sE,\sL)$ corresponding to topological mechanics of Example~\ref{ex: toplmech}, 
there is a quasi-isomorphism of factorization algebras on~$\RR_{\geq 0}$
\begin{equation}
\Obcl_{\sE,\sL} \xto{\simeq} \cF_{\sO(V),\sO(V/L)}
\end{equation}
from classical observables to the stratified locally constant factorization algebra associated to the algebra $\sO(V)$ and the module $\sO(V/L)$.
Likewise, there is a quasi-isomorphism of factorization algebras
\begin{equation}
\Obq_{\sE,\sL} \xto{\simeq} \cF_{W(V),F(L)}
\end{equation}
from the quantum observables to the stratified locally constant factroziation algebra associated to the Weyl algebra $W(V)$ and the Fock module~$F(L)$.
\end{proposition}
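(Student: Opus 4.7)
The plan is to exploit the fact that $\RR_{\geq 0}$ carries a natural stratification by $\{0\}$ and $(0,\infty)$, and that stratified locally constant factorization algebras on this space are classified by pairs $(A, M)$ of an associative algebra with a pointed right module, realized as $\cF_{A, M}$ in the notation of Section~\ref{sec: review}. I will show that both $\Obcl_{\sE, \sL}$ and $\Obq_{\sE, \sL}$ are locally constant with respect to this stratification, and then compute the corresponding algebra and module.

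For the classical statement I follow the outline of Proposition~\ref{prop: 1dbfclass}. Let $\cS$ be the sheafification of the presheaf assigning $V$ to opens disjoint from $0$ and $L$ to opens containing $0$. The inclusion of constant sections $\cS \hookrightarrow \condfields$ is a quasi-isomorphism: on bulk intervals this is the Poincar\'e lemma, and on intervals of the form $[0, a)$ the Dirichlet condition on $L^\perp$-valued fields kills the $L^\perp$ cohomology, leaving only the constant $L$-valued fields. Passing to observables then produces a quasi-isomorphism relating $\Obcl_{\sE, \sL}$ to $\sO(\cS)$, and an unwinding of the structure maps shows that $\sO(\cS)$ is precisely $\cF_{\sO(V), \sO(L)}$, with module structure coming from the restriction-of-functions map $\sO(V) \to \sO(L)$. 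The identification $\sO(L) \cong \sO(V/L)$ as $\sO(V)$-modules then follows from the fact that $L$ is Lagrangian: the symplectic form gives $L \cong (V/L)^\vee$, whence $\sO(L) = \Sym(L^\vee) \cong \Sym(V/L) = \sO(V/L)$.

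For the quantum statement the values on each stratum are provided by Theorem~\ref{thm: maingenlq}. On the open stratum we combine it with Proposition~3.4.1 of \autocite{CG1}: $\Obq_\sE$ for topological mechanics is the twisted factorization envelope of the abelian Lie algebra $V$ with central extension determined by $\omega$, whose cohomology factorization algebra is locally constant and isomorphic to $\cF_{W(V)}$. For any interval $[0, \delta)$ containing $0$, Theorem~\ref{thm: maingenlq} yields a quasi-isomorphism $\Obq_\sL(\{0\}) \to \Obq_{\sE, \sL}([0, \delta))$; since $\Qb = 0$ in topological mechanics, the cocycle $\mu$ of Equation~\eqref{eq: twistcocycle} vanishes, so $\Obq_\sL(\{0\})$ is $\Sym(L^\perp)[\hbar]$ with zero differential, which agrees with $F(L)$ as a graded vector space under the isomorphism $L^\perp \cong V/L$. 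These identifications exhibit $\Obq_{\sE, \sL}$ as a locally constant stratified factorization algebra on $\RR_{\geq 0}$, which is therefore classified by a pair consisting of an associative algebra and a pointed right module whose underlying vector spaces are $W(V)$ and $F(L)$ respectively.

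The main obstacle is to show that the $W(V)$-module structure on $F(L)$ induced by the factorization algebra structure maps agrees with the standard Weyl action, rather than with some other $W(V)$-module supported on the same graded vector space. By local constancy it suffices to check the action of a single linear observable $v \in V$ supported on a small bulk interval $(b, c)$ on the unit $1 \in F(L)$ placed on $[0, a)$ with $a < b$. I will compute this using the deformation retraction $(\II^{\q}, \PP^{\q}, \KK^{\q})$ constructed in the proof of Theorem~\ref{thm: maingenlq}: lifting $1$ along $\II^{\q}$, multiplying in the symmetric algebra by the image of $v$ extended by zero into a common interval $[0, e)$, and then projecting via $\PP^{\q}$, one obtains (after accounting for the $\hbar$-contribution of the BV Laplacian $\Delta$ on the cross-term between the bulk and boundary insertions) the class of $v$ modulo $L$ in $\Sym(L^\perp)[\hbar] = F(L)$, together with the commutator term $\hbar\,\omega(v, \cdot)$ acting on the boundary state. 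This reproduces the standard right action of $W(V)$ on its Fock module $F(L) = W(V)/W(V) \cdot L$, completing the identification with~$\cF_{W(V), F(L)}$.
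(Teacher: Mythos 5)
Your overall strategy coincides with the paper's: both observables are stratified locally constant for $\{0\}\subset \RR_{\geq 0}$, the bulk algebras are $\sO(V)$ and $W(V)$ via the factorization-envelope identification, and the boundary module is pinned down by tracing linear observables through the explicit maps $\II$ and $\PP$ coming from the proofs of Theorems \ref{thm: maingenlcl} and \ref{thm: maingenlq}. (Your classical computation via the sheaf $\cS$ of locally constant fields is essentially the route of Proposition \ref{prop: 1dbfclass}; note only that in this chapter $\Obcl_{\sE,\sL}(U)=\Sym(\condfieldscs[1](U))$ is the smeared model, so a quasi-isomorphism of sheaves $\cS\to\condfields$ does not act on it directly --- you need either a deformation retraction at the level of compactly supported fields, as in the paper's proof, or the Atiyah--Bott-type statement of Proposition \ref{prop: atiyahbott} to pass between the smeared model and $\sO(\condfields)$.)

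The genuine gap is in the last step of the quantum argument. You assert that ``it suffices to check the action of a single linear observable $v$ on the unit $1\in F(L)$,'' and your computation of that action correctly yields the class $[v]\in V/L$ --- note that when the boundary insertion is the unit there is no cross-term, so the advertised ``commutator term $\hbar\,\omega(v,\cdot)$'' does not arise; it only appears when acting on a nontrivial boundary state. But knowing how $V$ acts on the vacuum does not by itself determine the right $W(V)$-module structure on $\Sym(V/L)[\hbar]$: a priori the vacuum could generate only a proper quotient of $F(L)$ inside $M$, with the remainder of $M$ carrying an unrelated action. Two further inputs are needed, and the paper supplies both: (i) the structure map $T\colon W(V)\to M$ for $(0,\epsilon)\subset[0,\epsilon)$ is a \emph{surjective} map of right $W(V)$-modules --- surjectivity follows because, for the $\hbar$-filtration, the associated graded of $T$ is the classical projection $\Sym(V)\to\Sym(V/L)$ tensored with $\CC[\hbar]$ --- so the vacuum is a cyclic vector and $M\cong W(V)/\ker T$; and (ii) since your computation shows $L\subset\ker T$, the right submodule generated by $L$ lies in $\ker T$, and a dimension count (the underlying space of $M$ is $\Sym(V/L)[\hbar]$, the same as that of $F(L)$) forces $\ker T$ to be exactly that submodule, i.e.\ $M\cong F(L)$. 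Alternatively, you could close the gap by computing the action of linear observables on \emph{arbitrary} boundary states --- which is what your commutator remark gestures at --- and then using that $W(V)$ is generated by $V$ and $\hbar$; but as written, the unit-only check is not sufficient.
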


\begin{proof}
It is straightforward to verify that the factorization algebras of both the classical and quantum bulk-boundary observables are stratified locally constant with respect to the stratification $\{0\}\subset [0,\epsilon)$. 
Hence, each factorization algebra corresponds to some pair $(A,M)$. 
We need only to determine the modules living on the boundary. 
To this end, let $I_1=(0,\epsilon)$ and $I_2=[0,\epsilon)$. 
Consider the structure maps for the inclusion $I_1\subset I_2$. Let $A$ stand momentarily for either of $\sO(V)$, $W(V)$, and similarly let $M$ stand for either of the two modules on the boundary. The structure map $m_{I_1}^{I_2}$ induces a map $A\to M$. The associativity axiom of a prefactorization algebra guarantees that this is a map of $A$ modules.

Recall from the proof of Theorem~\ref{thm: maingenlcl} that the map $\II^{\cl}$ is induced from a choice $\phi$ of compactly-supported function on $I_2$ whose total integral is 1. Let us suppose that $\phi$ is supported on $I_1$. Then, we have a quasi-isomorphism
\begin{equation}
\II^{\cl}_{int}: V\to \condfieldscs(I_1)[1]
\end{equation}
where
\begin{equation}
\II^{\cl}_{int}(v) = \phi \, \d t\otimes v.
\end{equation}
The symmetrization of this map, which we also denote by $\II^{\cl}_{int}$, induces a quasi-isomorphism 
\begin{equation}
\II^{\cl}_{int}:\Sym(V) \to \Obcl_{\sE,\sL}(I_1).
\end{equation}
Consider the composite map
\begin{equation}
\xymatrix{
\Sym(V) \ar[r]|-{\II^{\cl}_{int}}& \Obcl_{\sE,\sL}(I_1)\ar[r]|-{m_{I_1}^{I_2}}&\Obcl_{\sE,\sL}(I_2)\ar[r]|-{\PP^{\cl}(I_2)}&\Sym(V/L),
}
\end{equation}
where $\PP^{cl}(I_2)$ is introduced in the proof of Theorem \ref{thm: maingenlcl}. It follows directly from the definitions that the composite is the map $\Sym(V)\to \Sym(V/L)$ induced from the projection $V\to V/L$. The statement of the proposition for the classical observables follows.

We now ``perturb'' the classical information. 
We would like to understand the structure map 
\begin{equation}
m_{I_1}^{I_2}:\Obq_{\sE,\sL}(I_1)\to \Obq_{\sE,\sL}(I_2)
\end{equation} 
at the level of cohomology. 
We know that the cohomology of $\Obq_{\sE,\sL}(I_1)$ is the underlying vector space of $W(V)$, and the cohomology of $\Obq_{\sE,\sL}(I_2)$ is $\Sym(V/L)[\hbar]$, which is  the underlying vector space of a module $M$ for $W(V)$. 
The structure map $m_{I_1}^{I_2}$ induces a map $T: W(V)\to M$ which intertwines the right $W(V)$ actions. 
Because $\Obq_{\sE,\sL}$ is filtered by powers of $\hbar$, and because the associated graded factorization algebra is $\Obcl_{\sE,\sL}\otimes_{\CC}\CC[\hbar]$, $T$ is surjective. 
Hence, to understand $M$, we simply need to identify the kernel of $T$. In the proof of Theorem \ref{thm: maingenlcl}, we constructed maps $\II^{cl}(I_2),\PP^{cl}(I_2),\KK^{cl}(I_2)$ which fit into a deformation retraction. 
Hence, the homological perturbation lemma (see, e.g., \autocite{crainic}) gives a formula for a quasi-isomorphism 
\begin{equation}
\PP^{\q}: \Obq_{\sE,\sL}(I_2)\to \Sym(V/L)[\hbar].
\end{equation} 
On the sub-complex $\condfields(I_2)[1]\subset \Obq_{\sE,\sL}(I_2)$, $\PP^q$ agrees with $\PP^{\cl}$. 
Moreover, as demonstrated in \autocite{CG1}, the map 
\begin{equation}
\begin{tikzcd}
V\arrow[r,"\II^{\cl}_{int}"] &\condfields(I_1)[1]\arrow[r,hookrightarrow] &\Obq_{\sE,\sL}(I_1)
\end{tikzcd}
\end{equation}
induces the canonical map $V\to W(V)$ on cohomology. 
Finally, tracing through the definitions, the composite 
\begin{equation}
\begin{tikzcd}
V\ar[r,"{\II^{\cl}_{int}}"] &\condfields(I_1)[1]\ar[r,hookrightarrow] &\Obq_{\sE,\sL}(I_1)\ar[r,"{m_{I_1}^{I_2}}"]&\Obq_{\sE,\sL}(I_2)\ar[r,"{\PP^{\q}}"]&M
\end{tikzcd}
\end{equation} 
is seen to be the quotient map $V\to V/L$ followed by the inclusion $V/L\to \Sym(V/L)[\hbar]$. Thus, $L$ is in the kernel of $T$, and hence so too is the whole submodule of $W(V)$ generated by $L$. For dimension reasons, this implies that $M\cong F(L)$.  
\end{proof}

\subsection{Free Poisson sigma model}

In this section, we examine the Poisson sigma model with linear target (Example \ref{ex: psm2}).
Our discussion is brief; see \autocite{GRW} for a much more extensive discussion of this theory. 

Recall from Example~\ref{ex: psm2} that a vector space $V$ and a skew-symmetric linear map $\Pi: V^\vee\to V$ determine a field theory on any oriented surface $\Sigma$ with boundary. The (underlying graded vector) space of fields of this theory is 
\begin{equation}
\left(\Omega_\Sigma^\bullet \otimes V^\vee [1]\right)\oplus \left(\Omega_\Sigma^\bullet\otimes V\right).
\end{equation} 
The differential is of the form $(\d_{dR}\otimes \id)\oplus (\id \otimes \Pi)$ (the first term preserves each summand in the decomposition above, and the second term maps the first summand to the second). In particular, one obtains a field theory on the upper half-plane $\Sigma = \HH$. For the choice of Lagrangian $\sL_0= \Omega^\bullet_{\RR}\otimes V$, we have $\sL_0^\perp= \Omega^\bullet_\RR\otimes V^\vee[1]$. 

Recall our notational convention: given an associative algebra $A$, let $\cF_A$ denote the locally constant factorization algebra on $\RR$ constructed from $A$ (cf. the first example in Section 3.1.1 of~\autocite{CG1}).
The following lemma is a straightforward application of Proposition 3.4.1 of~\autocite{CG1} and Theorems \ref{thm: maingenlcl} and \ref{thm: maingenlq}.

\begin{lemma}
\label{lmm:bdryL0}
For $\sL_0$ as defined just above, 
there is a quasi-isomorphism of factorization algebras on~$\RR$
\begin{equation}
\Obcl_{\sL_0} \xto{\simeq} \cF_{\Sym(V^\vee)}
\end{equation}
for classical observables and a quasi-isomorphism of factorization algebras
\begin{equation}
\Obq_{\sL_0} \xto{\simeq} \cF_{(\Sym(V^\vee)[\hbar],\ast)},
\end{equation}
where $\ast$ refers to the Kontsevich star product on $\Sym(V^\vee)[\hbar]$. 
The product is characterized by the relation
\begin{equation}
\nu_1\ast\nu_2-\nu_2\ast\nu_1 = \hbar \nu_1(\Pi\nu_2),
\end{equation}
where $\nu_1,\nu_2\in V^\vee$. 
\end{lemma}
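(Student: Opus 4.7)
The plan is to compute $\Obcl_{\sL_0}$ and $\Obq_{\sL_0}$ directly from Definitions \ref{dfn: classical currents} and \ref{dfn: quantum currents} and identify them with the (twisted) factorization envelopes of the abelian Lie algebra $V^\vee$ on $\RR$, invoking Proposition 3.4.1 of \autocite{CG1} together with a twisted version for the quantum case.

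First I would choose $\sL_0^\perp = \Omega^\bullet_\RR \otimes V^\vee[1]$ as a graded complement to $\sL_0$ inside $\sEb$. The boundary differential $\Qb = d_{dR} + \Pi$, with $\Pi$ acting pointwise via $V^\vee \to V$, decomposes as $Q_L = d_{dR}$ on $\sL_0$, $Q_{L^\perp} = d_{dR}$ on $\sL_0^\perp$, and $Q_{rel} = \Pi$, since $\Pi$ sends the $V^\vee$-factor into the $V$-factor and annihilates the $V$-factor. Hence by Definition \ref{dfn: classical currents},
\begin{equation*}
\Obcl_{\sL_0}(U) = \bigl(\Sym(\Omega^\bullet_{\RR,c}(U) \otimes V^\vee[1]),\, d_{dR}\bigr),
\end{equation*}
which is literally the Chevalley-Eilenberg chain complex $C_\bullet(\Omega^\bullet_{\RR,c}(U) \otimes V^\vee)$ of the abelian local Lie algebra $V^\vee$, i.e.\ the $1$-factorization envelope $\cF_{V^\vee, 1}$ of Example \ref{ex: factenvelope}. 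Proposition 3.4.1 of \autocite{CG1} then gives $H^\bullet(\cF_{V^\vee,1}) \cong \cF_{U(V^\vee)} = \cF_{\Sym(V^\vee)}$, which is the classical statement.

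For the quantum statement I would compute the cocycle \eqref{eq: twistcocycle}: for $e_i = \alpha_i \otimes \nu_i$ with $\alpha_i \in \Omega^\bullet_{\RR,c}(U)$ and $\nu_i \in V^\vee$,
\begin{equation*}
\mu(e_1, e_2) = \int_\RR \ip[e_1, Q_{rel} e_2]_{loc,\del} = \Bigl(\int_\RR \alpha_1 \wedge \alpha_2\Bigr)\,\nu_1(\Pi\nu_2).
\end{equation*}
By the remark following Definition \ref{dfn: quantum currents}, $\Obq_{\sL_0}$ is the enveloping factorization algebra of the central extension $\widehat{\sL_{0,c}^\perp}$ of the abelian dg Lie algebra $\sL_{0,c}^\perp[-1] = \Omega^\bullet_{\RR,c} \otimes V^\vee$ by $\CC\hbar$ with cocycle $\mu$. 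Restricted to constant generators $\nu_1,\nu_2 \in V^\vee$, the antisymmetrization of $\mu$ produces the Heisenberg bracket $[\nu_1,\nu_2] = \hbar\,\nu_1(\Pi\nu_2)$, using the skew-symmetry of $\Pi$. A twisted version of the proof of Proposition 3.4.1 of \autocite{CG1} --- filter $\Obq_{\sL_0}(U)$ by symmetric-power degree, identify the associated graded with $\Obcl_{\sL_0}(U)[\hbar]$, and exploit the spectral sequence together with the structure maps of the factorization algebra --- identifies $H^\bullet \Obq_{\sL_0}$ with the locally constant factorization algebra $\cF_A$, where $A$ is the quotient of the enveloping algebra of the Heisenberg extension by the relation equating its central element with $\hbar$. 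By PBW, $A$ has underlying vector space $\Sym(V^\vee)[\hbar]$ with product $\ast$ determined by $\nu_1 \ast \nu_2 - \nu_2 \ast \nu_1 = \hbar\,\nu_1(\Pi\nu_2)$; this is precisely the Kontsevich star product for the linear Poisson bivector $\Pi$.

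The main obstacle will be the twisted analogue of Proposition 3.4.1 of \autocite{CG1}. While the untwisted abelian case is essentially PBW plus a spectral sequence, the twisted version requires carefully tracking signs arising from the degree conventions on $\mu$, confirming that the filtration by symmetric degree is compatible with the BV-type differential $\hbar\Delta_\mu$, and verifying that the structure maps of the factorization algebra correspond to multiplication in the Heisenberg enveloping algebra rather than to some twisted variant. Once this bookkeeping is dispatched, the identification of the resulting product with the Kontsevich star product is immediate from the generators-and-relations description of the Heisenberg enveloping algebra.
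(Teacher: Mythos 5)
Your proposal is correct and follows essentially the same route as the paper, which simply cites Proposition 3.4.1 of \autocite{CG1} (together with the main theorems of the chapter) and treats the lemma as a straightforward application: the boundary observables are the (twisted) factorization envelope of the abelian Lie algebra $V^\vee$ on $\RR$, and the Heisenberg cocycle computed from $Q_{rel}=\Pi$ yields the commutation relation defining the Kontsevich star product. Your write-up just makes explicit the cocycle computation and the filtration/PBW bookkeeping that the paper leaves implicit.
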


In our situation the dual space $V^\vee$ can be decomposed into a direct sum of a vector space $V_t^\vee$ with a trivial pairing and a vector space $V_s^\vee$ with a nondegenerate (i.e., symplectic) pairing.
Thus the quantum observables corresponds to a tensor product of a commutative algebra $\Sym(V_t^\vee)$ and a Weyl algebra~$W(V_s)$.
It is thus natural to analyze just these two cases since the general answer can be assembled from them.

\begin{remark}
One can define the boundary observables $\Obq_{\sL_0}$ without making any reference to the theory on $\HH$ for which $\sL_0$ is a boundary condition. 
Theorem~\ref{thm: maingenlq} tells us that these observables are equivalent to the pushforward of the coupled bulk-boundary observables. 
When $V$ ceases to be a linear Poisson manifold, there is no direct definition of $\Obq_{\sL_0}$, and in fact the quantization of $\Sym(V^\vee)$ produced in \autocite{KontPSM} requires in an essential way the study of the theory on $\HH$. 
We expect that once one constructs the factorization algebra of quantum observables for the (interacting) Poisson sigma model on $\HH$, its pushforward to $\RR$ recovers the algebra $\Sym(V^\vee)[\hbar]$ with the Kontsevich star product. 
In fact, we expect that one can apply a similar procedure to any theory with a shifted Poisson structure, using the so-called ``universal bulk theory''~\autocite{butsonyoo}.
\end{remark}

As a special case, when $\Pi$ is zero, the quantum observables correspond to the commutative algebra generated by $V^\vee$ and $\hbar$; 
moreover, a new boundary condition becomes available. 
Namely, we take $\sL_1= \Omega^\bullet_\RR \otimes V^\vee[1]$, so that $\sL^\perp_1 = \Omega^\bullet_\RR\otimes V$. We have a similar lemma.

\begin{lemma}
There are quasi-isomorphisms 
\begin{align}
\Obcl_{\sL_1}&\xto{\simeq} \cF_{\Lambda^\bullet V}\\
\Obq_{\sL_1}& \xto{\simeq} \cF_{(\Lambda^\bullet V)[\hbar]};
\end{align}
where we use $\Lambda^\bu V$ to denote the free graded algebra generated by $V$ in degree~$1$.
\end{lemma}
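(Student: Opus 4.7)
The plan is to recognize $\Obcl_{\sL_1}$ as a factorization envelope in the sense of Example~\ref{ex: factenvelope} and then apply a (straightforward) graded refinement of Proposition 3.4.1 of \autocite{CG1}.

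First, I would unpack the definitions in the vanishing-$\Pi$ setting. Since $\Pi=0$, the total differential $\d_{dR}\otimes \id + \id\otimes \Pi$ preserves each summand of the decomposition $\sEb = (\Omega^\bullet_{\RR}\otimes V^\vee[1])\oplus (\Omega^\bullet_{\RR}\otimes V)$, so with respect to the splitting $\sL_1 = \Omega^\bullet_\RR\otimes V^\vee[1]$ and $\sL_1^\perp = \Omega^\bullet_\RR\otimes V$ we have $Q_{rel}=0$ and $Q_{L^\perp}=\d_{dR}$. Hence by Definition~\ref{dfn: classical currents},
\begin{equation}
\Obcl_{\sL_1}(U)=\bigl(\Sym(\Omega^\bullet_{\RR,c}(U)\otimes V),\ \d_{dR}\bigr),
\end{equation}
which, after rewriting $\Sym(W) = \Sym((W[1])[-1])$, is identified with $C_\bullet(\Omega^\bullet_{\RR,c}(U)\otimes V[-1])$, the Chevalley-Eilenberg chains of the (abelian) local graded Lie algebra $\Omega^\bullet_\RR\otimes V[-1]$ on $\RR$. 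In other words, $\Obcl_{\sL_1}$ \emph{is} the one-dimensional factorization envelope of the abelian graded Lie algebra $V[-1]$.

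Second, I would invoke (the graded version of) Proposition 3.4.1 of \autocite{CG1}, which identifies the one-dimensional factorization envelope of an abelian Lie algebra $\fh$ with the locally constant factorization algebra $\cF_{U(\fh)}$ on $\RR$. Applied to $\fh = V[-1]$, this yields a quasi-isomorphism of factorization algebras
\begin{equation}
\Obcl_{\sL_1}\xto{\simeq}\cF_{U(V[-1])}=\cF_{\Sym_{gr}(V[-1])}=\cF_{\Lambda^\bullet V},
\end{equation}
where the last equality is because $V[-1]$ sits in cohomological degree $+1$ (hence is odd), so its graded-symmetric algebra is the exterior algebra on $V$ in degree~$1$. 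For the quantum statement, note that the twisting cocycle $\mu$ of Equation~\eqref{eq: twistcocycle} involves $Q_{rel}$, which vanishes here, so $\Delta_\mu = 0$; thus $\Obq_{\sL_1}$ is just the classical factorization algebra tensored with $\RR[\hbar]$, and the second quasi-isomorphism follows by base change.

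The main (minor) obstacle is the extension of the cited proposition to the graded setting, since it is stated in \autocite{CG1} for abelian Lie algebras concentrated in degree $0$. However, that proof proceeds by a homological computation of $H^\bullet \bigl(\Sym(\Omega^\bullet_{\RR,c}(I)\otimes V[-1][1])\bigr)=\Sym(V[-1])$ on each interval $I$ followed by an identification of the factorization product on disjoint intervals with the multiplication in the enveloping algebra; neither step depends on the degree of $V$ beyond the usual Koszul signs, so the adaptation is mechanical. Everything is in any case consistent as a $\Pi\to 0$ degeneration of Lemma~\ref{lmm:bdryL0}, in which the Kontsevich star product collapses to the ordinary product.
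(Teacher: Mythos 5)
Your proposal is correct and follows essentially the route the paper intends: identify $\Obcl_{\sL_1}$ (since $Q_{rel}=0$ when $\Pi=0$) with the factorization envelope of the abelian graded Lie algebra $V[-1]$, apply the graded form of Proposition 3.4.1 of \autocite{CG1} to get $\cF_{\Lambda^\bullet V}$, and observe that $\mu=0$ makes the quantum case a trivial $\hbar$-base change. The only nit is the closing remark: the $\Pi\to 0$ degeneration of the preceding lemma concerns the other boundary condition $\sL_0$ (yielding $\Sym(V^\vee)$), so it is a consistency check on the framework rather than on this statement, but nothing in your argument depends on it.
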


\subsection{Abelian CS/WZW}
\label{sec: cswzw}

As we have seen, a finite-dimensional complex vector space $\fA$ endowed with a non-degenerate symmetric pairing $\kappa$ defines abelian Chern-Simons theory on an oriented 3-manifold $M$, with space of fields $\sE = \Omega^\bullet_{M}\otimes \fA[1]$.
We focus here on the boundary condition $\sL = \Omega^{1,\bullet}_{\Sigma}\otimes \fA$,
which encodes chiral currents, and we examine this system in two different cases of interest. 

First, we study the system on a manifold of the form $N\times [0,1]$, where $N$ is an oriented 2-manifold endowed with a complex structure at $t=0$ and the conjugate complex structure at $t=1$. 
Let $p: N\times [0,1]\to N$ denote the projection onto the first factor. 
We study the pushforwards $p_* \Obcl_{\sE,\sL}$ and $p_* \Obq_{\sE,\sL}$,
which is a kind of ``slab compactification.''
In this case, we find that the pushforwards are equivalent to the factorization algebras of observables of the massless free scalar on $N$ (Lemmas \ref{lem: fullwzwscalarcl} and~\ref{lem: fullwzwscalarq}).
At the level of factorization algebras, we are recovering a ``full'' CFT by intertwining a chiral and antichiral CFT.

Next, we study the system on a 3-manifold of the form $\Sigma \times \RR_{\geq 0}$, where $\Sigma$ is a Riemann surface.
Here, we push forward via the projection $p'$ onto $\RR_{\geq 0}$, and find that the systems are equivalent to topological mechanics on $\RR_{\geq 0}$ with values in $H^\bullet(\Sigma)[1]$ and with boundary condition $H^{1,\bullet}(\Sigma)$ (see Lemma~\ref{lem: cswzwonhalfline}).

\subsubsection*{Fixing the complement $L^\perp$}

The elliptic complex on $\Sigma$ complementary to the boundary condition $\sL$ is 
\begin{equation}
\sL^\perp = \Omega^{0,\bu} (\Sigma) \otimes \fA [1]
\end{equation}
equipped with the $\dbar$ differential (this is not a subcomplex of $\sEb$).
Using the obvious splitting of $\Omega^\bu(\partial M)$ into the components $\Omega^{0,\bu}(\Sigma)$ and $\Omega^{1,\bu}(\Sigma)$, we see that the differential $\Qb = \d_{dR}$ decomposes as
\begin{equation}
\Qb = Q_L + Q_{L^\perp} + Q_{rel} = \dbar_{\Omega^{1,\bu}} + \dbar_{\Omega^{0,\bu}} + \partial
\end{equation}
where we view $Q_{rel} = \partial$ as the map of elliptic complexes $\partial : \Omega^{0,\bu}(\Sigma) \otimes \fA [1] \to \Omega^{1,\bu}(\Sigma) \otimes \fA$. 

\subsubsection*{The classical observables}

The sheaf $\sE_{\sL}$ of $\sL$-conditioned fields has the following explicit description.
For $U \subset M$:
\begin{align}
\sE_{\sL} (U) & = \{\alpha \in \sE(U) \; | \; \pi(\alpha) \in \iota_* \sL(U) \} \\ & = \{\alpha \in \Omega^{\bu}(U) \otimes \fA[1] \; | \; \iota^* \alpha \in \Omega^{1, \bu} (\partial U) \otimes \fA \} .
\end{align}
That is, the $\sL$-conditioned fields supported on $U \subset M$ consist of differential forms on $U$ whose pullback to the boundary are forms of type $(1,\bu)$. 
Likewise, we have the cosheaf $U \mapsto \sE_{\sL,c} (U)$ on $M$ which consists of compactly supported differential forms on $U$ whose pullback to the boundary are compactly supported forms of type $(1,\bu)$. 

The factorization algebra of classical boundary observables $\Obs^{\cl}_\sL$ on  $\Sigma$ assigns the cochain complex
\begin{equation}
\Sym(\sL_c^\perp(U)) = \left(\Sym\left(\Omega_c^{0,\bu} (U) \otimes \fA [1]\right), \dbar \right) .
\end{equation}
to an open set $U \subset \Sigma$. 
Note that this is the (untwisted) enveloping factorization algebra of the cosheaf of abelian dg Lie algebras $\Omega_c^{0,\bu} \otimes \fA$ on $\Sigma$. See \S 3.6.2 of \autocite{CG1}.

\subsubsection*{The quantum observables}
The factorization algebra of bulk-boundary quantum observables $\Obs^{\q}_{\sE,\sL}$ assigns to the open set $U \subset M$ the cochain complex $(\Sym(\condfieldscs[1](U))[\hbar],\diff+\hbar \Delta)$.

From the general prescription in Section \ref{sec: FAconstruction} the factorization algebra of quantum boundary observables $\Obs_{\sL}^\q$ is the enveloping factorization algebra of $\sL_c^\perp[-1] = \Omega_c^{0,\bu} (U) \otimes \fA$ twisted by a local cocycle $\mu$ whose formula appears in Equation \eqref{eq: twistcocycle}.  

Since $Q_{rel}=\del$ we have the explicit formula for $\mu$: 
\begin{equation}
\mu(\alpha_1,\alpha_2) = \int_{\CC}\kappa(\alpha_1, \del \alpha_2).
\end{equation}
Explicitly, this local cocycle defines the factorization algebra on $\Sigma$ which assigns the cochain complex
\begin{equation}
\left(\Sym\left(\Omega_c^{0,\bu} (U) \otimes \fA [1]\right) [\hbar] , \dbar + \hbar \mu \right) .
\end{equation}
to an open set $U \subset \Sigma$.
In other words, this is the twisted factoriazation enveloping algebra 
of the cosheaf of abelian dg Lie algebras $\Omega_c^{0,\bu}  \otimes \fA$ on $\Sigma$ associated to the local cocycle $\mu$.  
(See \S 3.6.3 of~\autocite{CG1}.)

In Chapter 5 of \autocite{CG1} it is shown that locally on $\Sigma = \CC$, this factorization algebra is a model for the abelian Kac-Moody vertex algebra associated to the level~$\kappa$. 

\subsubsection{Slab compactification}

Let $N$ be an oriented $2$-manifold.
We consider a three-manifold of the form $M= N\times [0,1]$.
Moreover, we equip $N \times \{0\}$ with a complex structure and denote the resulting Riemann surface by $\Sigma$; 
we equip $N \times \{1\}$ complex conjugate complex structure and denote the resulting Riemann surface by $\overline{\Sigma}$. 
Let $\iota_0$ and $\iota_1$ denote the inclusions of $N$ at $t=0$ and $t=1$, respectively. 
Let $\pi: M\to N$ be the projection onto the ``space'' slice of $M$. 
See Figure~\ref{fig: slab}.

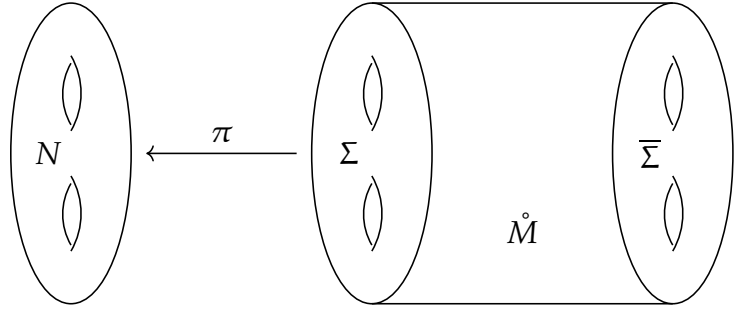
\begin{figure}
\centering
\begin{tikzpicture}
\draw[semithick](-6,0) ellipse (0.8 and 2);
\draw[semithick](-6,0.3) arc (-30:30:1);
\draw[semithick](-6,1.2) arc (150:210:0.82);
\draw[semithick](-6,-1.3) arc (-30:30:1);
\draw[semithick](-6,-0.4) arc (150:210:0.82);
\draw[semithick](-2,0) ellipse (0.8 and 2);
\draw[semithick](-2,0.3) arc (-30:30:1);
\draw[semithick](-2,1.2) arc (150:210:0.82);
\draw[semithick](-2,-1.3) arc (-30:30:1);
\draw[semithick](-2,-0.4) arc (150:210:0.82);
\node at (-2.3,0){$\Sigma$};
\draw[semithick](2,0) ellipse (0.8 and 2);
\draw[semithick](2,0.3) arc (-30:30:1);
\draw[semithick](2,1.2) arc (150:210:0.82);
\draw[semithick](2,-1.3) arc (-30:30:1);
\draw[semithick](2,-0.4) arc (150:210:0.82);
\node at (1.7,0){$\overline{\Sigma}$};
\draw[semithick](-2,2) -- (2,2);
\draw[semithick](-2,-2) -- (2,-2);
\node at (-6.3,0){$N$};
\node at (0,-1){$\mathring{M}$};
\draw[->,semithick] (-3,0) -- (-5,0);
\node at (-4,0.25){$\pi$};
\end{tikzpicture}
\caption{Projection of $N \times [0,1]$ onto $N$}
\label{fig: slab}
\end{figure}

For these choices 
\begin{equation}
\condfields =\left\{ \mu \in \Omega^\bullet_{N\times[0,1]}\otimes \fA[1]\,|\, \iota_0^*\mu \in \Omega^{1,\bullet}_\Sigma\otimes \fA, \iota_1^* \mu \in \Omega^{\bullet, 1}_\Sigma\otimes \fA\right\}. 
\end{equation}
is the space of $\sL$-conditioned fields for the Chern-Simons/chiral WZW bulk-boundary system.

We study now the ``slab compactification" of the factorization algebra of bulk-boundary observables.
This is the factorization algebra on $N$ obtained by pushing forward $\Obs_{\sE, \sL}$ along $\pi$. 
To decongest the notation, we assume that $\fA=\CC$, since all proofs proceed with little change for general~$\fA$.

Let $\sE_{\rm scalar}$ denote the cochain complex underlying the BV theory of the scalar field on $\Sigma$. Namely, it is the two-term chain complex
\begin{equation}
\begin{tikzcd}
\Omega^0_\Sigma \ar[r,"{\del\delbar}"]& \Omega^2_\Sigma
\end{tikzcd}
\end{equation}
concentrated in cohomological degrees 0 and 1, together with the natural degree --1 pairing between top forms and functions. 
Let $\Obcl_{\sE_{\rm scalar}}$ and $\Obq_{\sE_{\rm scalar}}$ denote the factorization algebras of classical and quantum obervables, respectively, for the massless free scalar.

We now show that there is a quasi-isomorphism of factorization algebras between the observables of the free scalar and the slab compactification of the bulk-boundary observables of Chern-Simons theory.
In words, this quasi-isomorphism says that the free massless 2-dimensional scalar field emerges as the theory describing a ``thin'' slab with chiral currents on one side coupled to antichiral currents on the other via a Chern-Simons theory between them.

\begin{lemma}
\label{lem: fullwzwscalarcl}
There is a quasi-isomorphism 
\begin{equation}
\Obcl_{\sE_{\rm scalar}}\to \pi_*\Obcl_{\sE,\sL}
\end{equation}
of factorization algebras on $N$.
\end{lemma}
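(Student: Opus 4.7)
The strategy closely mirrors the proof of Theorem~\ref{thm: maingenlcl}: I will construct an explicit cochain map on linear observables, verify it is a quasi-isomorphism via a spectral sequence, then apply the symmetric algebra functor to promote it to a map of factorization algebras. Fix a compactly supported bump function $\phi: [0,1] \to \RR$ with $\int_0^1 \phi(t)\, dt = 1$, and for each open $U \subset N$ define
\begin{equation*}
\Phi_0(U) : (\sE_{\rm scalar})_c(U)[1] \longrightarrow \condfieldscs(U \times [0,1])[1]
\end{equation*}
degree by degree by
\begin{align*}
\Phi_0(h) &= h\,\phi(t)\, dt - (\partial h)\int_t^1 \phi(s)\, ds + (\bar\partial h)\int_0^t \phi(s)\, ds, \quad h \in \Omega^0_c(U),\\
\Phi_0(g) &= g \quad \text{(extended constantly in } t\text{)}, \quad g \in \Omega^2_c(U).
\end{align*}
The $(1,0)$-component of $\Phi_0(h)$ vanishes at $t=1$ and its $(0,1)$-component vanishes at $t=0$, so both the chiral and antichiral boundary conditions are satisfied; $\Phi_0(g)$, being of type $(1,1)$, automatically lies in the required subspaces at both boundaries. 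A direct calculation using $\bar\partial\partial = -\partial\bar\partial$ and $\int_t^1\phi + \int_0^t\phi = 1$ shows $d_{dR}\Phi_0(h) = \partial\bar\partial h = \Phi_0(\partial\bar\partial h)$, so $\Phi_0$ is a cochain map intertwining the scalar differential with~$d_{dR}$.

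To verify that $\Phi_0(U)$ is a quasi-isomorphism, I will filter $\condfieldscs(U \times [0,1])[1]$ by $N$-form degree $p$ and analyze the associated spectral sequence, whose $E_0$-differential is $d_t = dt \wedge \partial_t$. A row-by-row computation of $H^\bullet(d_t)$, keeping careful track of the boundary conditions and compact support in $U \times [0,1]$, shows that the $E_1$-page is concentrated in two places: $\Omega^0_c(U)$ at $(p,q)=(0,1)$, detected by $\beta_{0,0} \mapsto \int_0^1 \beta_{0,0}\, dt$, and $\Omega^2_c(U)$ at $(p,q)=(2,0)$, consisting of $t$-independent $(1,1)$-forms. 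The $E_1$-differential $d_N$ vanishes because every potential target is zero, and a standard lift-and-project argument identifies the $E_2$-differential $d_2: \Omega^0_c(U) \to \Omega^2_c(U)$ with $\partial\bar\partial$ (up to sign). Since no further differentials are possible, the total cohomology is canonically that of $(\sE_{\rm scalar})_c(U)[1]$, and one checks directly that $\Phi_0(U)$ realizes this identification on representatives.

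Applying the completed bornological symmetric algebra functor yields the desired map $\Phi = \Sym(\Phi_0) : \Obcl_{\sE_{\rm scalar}} \to \pi_*\Obcl_{\sE,\sL}$. Because $\Phi_0$ is built pointwise in $U$ from the fixed $\phi$ together with extension-by-zero, it commutes with the cosheaf extension maps, so $\Phi$ is a morphism of (pre)factorization algebras. Filtering both sides by symmetric (polynomial) degree then reduces the quasi-isomorphism claim to showing that each $\Phi_0^{\hotimes_\beta k}$ is a quasi-isomorphism, which follows from the K\"unneth-type compatibility of $\hotimes_\beta$ with quasi-isomorphisms of sections of elliptic complexes on product manifolds, combined with the appendix's boundary-adapted version of the Atiyah--Bott lemma. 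The main technical hurdle is isolating the correction term $-(\partial h)\int_t^1\phi + (\bar\partial h)\int_0^t\phi$, which must simultaneously cancel the unwanted $dt$-part of $d_{dR}(h\phi\, dt)$ and respect the \emph{mismatched} chiral-versus-antichiral boundary conditions at the two ends of the interval; once $\Phi_0$ is in hand, the factorization-algebraic and quasi-isomorphism statements follow from the standard machinery already employed in Theorem~\ref{thm: maingenlcl}.
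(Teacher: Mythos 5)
Your cochain map is, up to the inessential replacement of the constant function by a general bump function $\phi$ of total integral one, exactly the map the paper uses: taking $\phi\equiv 1$ your formula becomes $I(f)=f\,\d t+\delbar f\otimes t-\del f\otimes(1-t)$, $I(\omega)=\omega\otimes 1$, and your checks of the two (mismatched chiral/antichiral) boundary conditions and of the cochain property agree with the paper's. The divergence is in how the quasi-isomorphism is established and, more importantly, in how it is transported to symmetric powers, and this is where there is a genuine gap.

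You deduce that $\Sym(\Phi_0)$ is a quasi-isomorphism by claiming that each $\Phi_0^{\hotimes_\beta k}$ is one, citing a ``K\"unneth-type compatibility of $\hotimes_\beta$ with quasi-isomorphisms'' together with the boundary-adapted Atiyah--Bott lemma. No such compatibility is available here: the completed bornological tensor product is not exact and does not preserve quasi-isomorphisms of complexes of convenient vector spaces in general, and the Atiyah--Bott lemma is irrelevant to this point --- it compares smooth compactly supported sections with distributional duals, whereas the observables in this chapter are already modeled as $\Sym(\condfieldscs[1])$ with no duals in sight. Indeed the paper's own conventions on differentiable vector spaces only permit two tools for such verifications: explicit homotopy inverses, or standard homological algebra carried out in $\DVS$. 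This is exactly why the paper's proof does more than establish a linear-level quasi-isomorphism: it writes down an explicit inverse $P$ (evaluate the $(1,1)$-component at $t=0$, integrate the $\d t$-components over $[0,1]$, with a $-\delbar\beta\int\nu_3$ correction) and an explicit homotopy $K$ built from $t$-antiderivatives, so that $(I,P,K)$ is a deformation retraction; homotopy equivalences are preserved by any additive functor, in particular by $\hotimes_\beta$ and by symmetrization, so the retraction extends to the symmetric algebras and the factorization-algebra statement follows. Your spectral-sequence argument (whose row computations can be made kosher via explicit integration homotopies, so that part is repairable) yields at best the statement for the linear observables; to close the argument you must upgrade it to a homotopy inverse plus homotopy --- which exist, and which your own formula for $\Phi_0$ essentially dictates --- rather than appeal to a K\"unneth theorem that fails in this category.
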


\begin{proof}
Define a map $I: \sE_{\rm scalar}\to \pi_*\sE_\sL$ by the formulas:
\begin{align}
I(f) &= f\otimes \d t +\delbar f\otimes t -\del f\otimes (1-t)\\
I(\omega) &= \omega\otimes 1.
\end{align}
The sheaf $\pi_*\sE_\sL$ is a subsheaf of $\Omega^\bullet_\Sigma\hotimes_\beta \Omega^\bullet_{[0,1]}([0,1])$, so we ``factorize'' forms into their tangential and normal components and write elements of $\pi_*\sE_\sL$ as tensor products.
Strictly speaking, not all forms can be written as tensor products, or even as finite sums of such. 
However, all formulas we write down have a canonical extension to the full (completed bornological) tensor product $\Omega^\bullet_\Sigma\hotimes \Omega^\bullet_{[0,1]}([0,1])$.
The map $I$ is manifestly a sheaf map, and it is relatively straightforward to check that it is a cochain map,
Furthermore, $I$ induces the desired quasi-isomorphism, as we proceed to show.

We construct an inverse quasi-isomorphism $P$ to $I$.
Let $f\in\Omega^0_\Sigma$, $\alpha\in \Omega^{0,1}_\Sigma$, $\beta\in \Omega^{1,0}_\Sigma$, $\omega\in \Omega^2_\Sigma$, and $\nu_1,\nu_2,\nu_3,\nu_4\in \Omega^\bullet_{[0,1]}$. 
Define the map $P: \pi_*\condfields \to \sE_{\rm scalar}$ by the formulas
\begin{equation}
P(f\otimes \nu_1+\alpha \otimes \nu_2+\beta\otimes \nu_3+\omega\otimes \nu_4) =(\iota_0^* \nu_4)\omega  +f\int_{[0,1]} \nu_1-\delbar \beta \int_{[0,1]}\nu_3.
\end{equation}
Let us check that $P$ is a cochain map. Let $f\otimes \nu_1$ be a zero form on $M$ which lies in $\condfields$, i.e. it vanishes at $t=0$ and $t=1$. Then,
\begin{equation}
P( (\del+\delbar)f\otimes \nu_1 +f\otimes d\nu_1) = f\otimes \nu_1(1)-f\otimes \nu_1(0)=0=Q_{\rm scalar}P(f\otimes \nu_1).
\end{equation}
Let $\alpha\otimes \nu_1+\beta\otimes \nu_2+f\otimes \nu_3$ be a one-form on $M$ which satisfies the boundary conditions to lie in $\condfields$ (here, $\nu_1,\nu_2\in \Omega^0_{[0,1]}$, and $\nu_3\in \Omega^1_{[0,1]}$; the boundary conditions are $\iota_0^*\nu_1=0$ and $\iota_1^*\nu_2=0$). Then,
\begin{align}
P(\del\alpha \otimes \nu_1 &-\alpha\otimes d\nu_1+\delbar \beta\otimes \nu_2 -\beta\otimes d\nu_2+ (\del+\delbar)f \otimes \nu_3)\\
&= \nu_1(0)\del\alpha  + \nu_2(0)\delbar \beta + \delbar \beta (\nu_2(1)-\nu_2(0))-\delbar\del f\int_{[0,1]}\nu_3\\
&= \del\delbar f\int_{[0,1]}\nu_3 = \diff\left( P(\alpha\otimes \nu_1+\beta\otimes \nu_2+f\otimes \nu_3)\right).
\end{align}
This exhausts all the non-trivial checks that $P$ intertwines differentials.

It is immediate that $PI=\id$. 
We now construct a homotopy between $IP$ and $\id$. 
Let $\eta_0$ denote the degree --1 endomorphism of the de Rham forms $\Omega^\bullet_{[0,1]}$ which takes a one-form $\nu$ to the unique anti-derivative of $\nu$ which vanishes at $t=0$. 
Similarly, define $\eta_1$ to be the anti-derivative which vanishes at $t=1$. Now, define
\begin{align}
K: \pi_*\condfields &\to \pi_*\condfields [-1]\\
K(f\otimes \nu)& = f\otimes \eta_0(\nu) -\left(\int_{[0,1]}\nu\right) f\otimes t\\
K(\alpha\otimes \nu) & = -\alpha \otimes \eta_0(\nu)\\
K(\beta\otimes \nu) & = -\beta\otimes \eta_1(\nu)\\
K(\omega\otimes \nu) & = \omega \otimes \eta_0(\nu),
\end{align}
One can verify by straightforward computation that $\diff K+K\diff=IP-\id$,
which proves that $P$ and $I$ are inverse quasi-isomorphisms.

All maps involved are manifestly sheaf-theoretic over $\Sigma$, and moreover they preserve compact support. 
Hence, we also have  quasi-isomorphisms
\begin{equation}
\sE_{\rm scalar,c}(U)[1]\to \condfieldscs(U\times[0,1])[1]
\end{equation}
for each $U\subset N$, and the quasi-isomorphisms respect the extension by zero maps. The lemma follows, using the usual extension of a deformation retraction between cochain complexes to a deformation retraction between the corresponding symmetric algebras.
\end{proof}

A similar lemma holds for the quantum observables.

\begin{lemma}
\label{lem: fullwzwscalarq}
There is a quasi-isomorphism
\begin{equation}
\Obq_{\sE_{\rm scalar}}\to \pi_* \Obq_{\sE,\sL}
\end{equation}
of factorization algebras on~$N$.
\end{lemma}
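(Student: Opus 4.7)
The plan is to upgrade the deformation retraction $(I, P, K)$ of the previous lemma to the quantum setting by showing that the linear map $I$ preserves the local pairings which induce the BV Laplacians, and then to conclude via a spectral sequence argument. First I would extend $I$, $P$, $K$ to maps of symmetric algebras in the usual way (cf. Section 2.5 of \autocite{othesis}) to obtain a deformation retraction between $\Obcl_{\sE_{\rm scalar}}[\hbar]$ and $\pi_* \Obcl_{\sE,\sL}[\hbar]$. The key verification is then that the symmetrized $I$ intertwines the quantum differentials $\diff_{\rm scalar} + \hbar \Delta_{\rm scalar}$ and $\diff + \hbar \Delta$, not just the classical pieces.

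To verify this intertwining, it suffices to check that $I$ preserves the pairings on (compactly supported) fields, since the BV Laplacian is the unique second-order operator on $\Sym(\sE_c[1])$ satisfying $\Delta(ab) = \Delta(a)b + (-1)^{|a|}a\Delta(b) + \{a,b\}$ and vanishing on $\Sym^{\leq 1}$. Concretely, for $f \in \Omega^0_c(U)$ and $\omega \in \Omega^2_c(U)$ over an open $U \subset N$, I would compute
\begin{equation}
\langle I(f), I(\omega) \rangle_{\sE,\sL} = \int_{U \times [0,1]} \bigl( f\otimes \dt + \bar\partial f \otimes t - \partial f \otimes (1-t)\bigr) \wedge (\omega \otimes 1) = \int_U f\omega = \langle f, \omega\rangle_{\rm scalar},
\end{equation}
since only the $f \otimes \dt$ summand contributes a three-form on $M$. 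The remaining pairings $\langle I(f), I(g)\rangle$ and $\langle I(\omega_1), I(\omega_2)\rangle$ vanish trivially because $I(f)$ and $I(g)$ are both one-forms (wedge is a two-form, not top-degree on $M$), while $I(\omega_i)$ are two-forms (wedge exceeds top degree). Hence $I$ carries $\Delta_{\rm scalar}$ to $\Delta$, so the symmetrized $I$ is a cochain map of quantum observables.

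Having established that $I$ is a cochain map at the quantum level, I would prove it is a quasi-isomorphism by filtering both sides by powers of $\hbar$. Both $\Obq_{\sE_{\rm scalar}}$ and $\pi_* \Obq_{\sE,\sL}$ are complete with respect to this filtration, and the associated graded factorization algebras are $\Obcl_{\sE_{\rm scalar}}[\hbar]$ and $\pi_*\Obcl_{\sE,\sL}[\hbar]$ respectively. The induced map on associated graded is precisely $I[\hbar]$, which is a quasi-isomorphism of factorization algebras on $N$ by Lemma \ref{lem: fullwzwscalarcl}. A standard spectral-sequence comparison, using that the $\hbar$-adic filtration is bounded below and exhaustive in each fixed polynomial degree, then shows that the quantum map is itself a quasi-isomorphism on each open $U \subset N$.

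The main technical obstacle I anticipate is the spectral sequence argument: strictly speaking, because we work in differentiable vector spaces and the observables involve completed symmetric algebras, one must be careful about convergence of the spectral sequence associated to the $\hbar$-filtration. I would handle this exactly as in the proof of Theorem \ref{thm: freequantumFA}, by filtering further by symmetric degree so that each filtration level has only finitely many nonzero graded pieces, reducing the problem to the classical quasi-isomorphism of Lemma \ref{lem: fullwzwscalarcl} together with the fact that $I$ respects compact supports and sheaf-theoretic structure over $N$.
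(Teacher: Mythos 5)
Your proposal is correct and follows essentially the same route as the paper: the paper's proof simply observes that the map $I$ from the classical lemma respects the $(-1)$-shifted pairings and therefore induces a quasi-isomorphism on quantum observables, which is exactly your argument with the pairing check and the $\hbar$-filtration comparison spelled out explicitly.
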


\begin{proof}
By direct inspection, the map $I$ defined in the proof of Lemma \ref{lem: fullwzwscalarcl} respects the (--1)-shifted pairings on $\sE_{\rm scalar}$ and $\condfields$. Hence it induces also a quasi-isomorphism on the quantum observables.
\end{proof}

\begin{ucorollary}
\label{crl: fullcswzw}
Let $\Obcl_{\chi}$ denote the boundary observables for the chiral WZW boundary condition on $\Sigma$, and similarly let $\Obcl_{\bar\chi}$ denote the boundary observables for the anti-chiral WZW boundary condition on $\overline{\Sigma}$. There is a map of factorization algebras on $N$:
\begin{equation}
\Obcl_{\chi}\hotimes_\beta \Obcl_{\bar \chi}\to \Obcl_{\sE_{\rm scalar}}.
\end{equation} 
There is an analogous map for the quantum factorization algebras.
\end{ucorollary}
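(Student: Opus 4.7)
The plan is to assemble the desired map from three ingredients: the boundary-to-bulk quasi-isomorphisms of Theorem \ref{thm: maingenlcl} applied to each boundary component of $N\times[0,1]$ separately, the prefactorization structure of $\Obcl_{\sE,\sL}$ applied to a disjoint-collars inclusion, and the explicit homotopy inverse $P$ from the proof of Lemma \ref{lem: fullwzwscalarcl}. The only genuinely new input is a mild adaptation of Theorem \ref{thm: maingenlcl} to the boundary at $t=1$, where the complex structure is conjugated.

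More precisely, for each open $U\subset N$, I would first construct, following the proof of Theorem \ref{thm: maingenlcl} verbatim with a bump function $\phi_0$ supported on $[0,1/3)$ with integral $1$, a cochain map
\[
\II^{\cl}_0(U):\Obcl_\chi(U)\to \Obcl_{\sE,\sL}(U\times[0,1/3)).
\]
The same construction, applied near $t=1$ after the coordinate change $t\mapsto 1-t$ (which intertwines the chiral boundary condition at $t=0$ with the antichiral boundary condition at $t=1$, since the decomposition $\sEb=\sL\oplus\sL^\perp$ is swapped), yields
\[
\II^{\cl}_1(U):\Obcl_{\bar\chi}(U)\to \Obcl_{\sE,\sL}(U\times(2/3,1]).
\]
Here one checks that $\Omega^{\bullet,1}_{\bar\Sigma}\otimes\fA$ is indeed a local Lagrangian boundary condition in the sense of Definition \ref{def: bdycond} (the required complement is $\Omega^{\bullet,0}_{\bar\Sigma}\otimes\fA[1]$, and $\ip_\partial$ still vanishes on pairs of $(\bullet,1)$-forms), so that the hypotheses of Theorem \ref{thm: maingenlcl} are met.

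Next, since $U\times[0,1/3)$ and $U\times(2/3,1]$ are disjoint open subsets of $U\times[0,1]$, the prefactorization structure map
\[
m:\Obcl_{\sE,\sL}(U\times[0,1/3))\hotimes_\beta\Obcl_{\sE,\sL}(U\times(2/3,1])\to \Obcl_{\sE,\sL}(U\times[0,1])=\pi_*\Obcl_{\sE,\sL}(U)
\]
is defined. Composing with $P:\pi_*\Obcl_{\sE,\sL}(U)\to \Obcl_{\sE_{\rm scalar}}(U)$ from the proof of Lemma \ref{lem: fullwzwscalarcl} produces the candidate map. Its compatibility with the factorization-algebraic structure on $N$ in the $U$-variable is immediate: each of $\II^{\cl}_0$, $\II^{\cl}_1$, $m$, and $P$ is a map of prefactorization algebras on $N$, and the tensor product of prefactorization algebras on $N$ is again a prefactorization algebra on $N$.

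For the quantum statement, the same construction works mutatis mutandis. The maps $\II^{\cl}_i$ were shown in the proof of Theorem \ref{thm: maingenlq} to respect the cocycles defining the Heisenberg-type quantizations of the boundary observables (this is where the normalization $\int \phi(1-\Phi)=1/2$ enters), so they promote to maps of quantum observables; the structure map $m$ and the retraction $P$ are cochain maps for the BV-quantized differentials by construction. The main potential obstacle is confirming that reversing orientation in the normal direction (passing from $t=0$ to $t=1$) interchanges the cocycle $\mu$ with $-\mu$ in a way that matches the antichiral WZW level, which is a direct calculation using $Q_{rel}\to -Q_{rel}$ under $t\mapsto 1-t$ and the fact that on $\overline{\Sigma}$ the role of $\partial$ is played by $\bar\partial$.
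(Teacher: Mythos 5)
Your proposal is correct and follows essentially the same route as the paper: identify the chiral and antichiral observables with the pushed-forward bulk-boundary observables on collars near $t=0$ and $t=1$ via Theorem \ref{thm: maingenlcl}, feed them through the structure map for the disjoint-collar inclusion into $U\times[0,1]$, and identify the result with the scalar observables via Lemma \ref{lem: fullwzwscalarcl} (the paper phrases this through the stated equivalences rather than explicitly composing with the homotopy inverse $P$, but the content is the same). Your extra check that the antichiral condition at $t=1$ satisfies Definition \ref{def: bdycond} and your remark on the cocycle matching in the quantum case are consistent with the paper's setup of the slab compactification.
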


This map encodes the chiral and antichiral ``sectors'' of the full CFT.
When evaluated on a disk, it determines a map from a vertex algebra tensored with its conjugate into the OPE-algebra of the massless scalar field.
On a closed Riemann surface, 
the global sections of $\Obcl_\chi$ and $\Obcl_{\bar\chi}$  are (dual to) the conformal blocks of the holomorphic and anti-holomorphic Kac-Moody vertex algebras, respectively. 
This pairing of the factorization algebras gives a local-to-global description of the ``holomorphic factorization" of the conformal blocks of the full WZW theory \autocite{WittenFactorization} in the case of an abelian group.

\begin{proof}
By Theorem \ref{thm: maingenlcl}, 
\begin{equation}
\Obcl_\chi \simeq \pi_*\left.\Obcl_{\sE,\sL}\right |_{N\times [0,1/2)},
\end{equation}
and
\begin{equation}
\Obcl_{\bar\chi} \simeq \pi_*\left.\Obcl_{\sE,\sL}\right |_{N\times (1/2,1]}.
\end{equation}
By Lemma \ref{lem: fullwzwscalarcl}, 
\begin{equation}
\Obcl_{\sE_{\rm scalar}}\simeq\pi_*\Obcl_{\sE,\sL} 
\end{equation}
The map of the present corollary is then induced from the structure maps of $\Obcl_{\sE,\sL}$ for inclusions of the form $U\times[0,1/2)\sqcup U\times (1/2,1] \subset U\times [0,1]$.
\end{proof}

\subsubsection{The other projection}

Let $M=\Sigma \times \RR_{\geq 0}$, and consider the projection $p:M\to \RR_{\geq 0}$. 
In this section, we study the pushforward factorization algebras $p_* \Obcl_{\sE,\sL}$ and~$p_* \Obq_{\sE, \sL}$,
which can be seen as studying canonical quantization of Chern-Simons theory (cf. \S4.4 of~\autocite{CG1}).

Let $V=H^\bullet(\Sigma)[1]$
and endow it with the symplectic structure induced from the Poincar\'{e} duality pairing.
This graded vector space models the tangent complex at the basepoint of the $U(1)$-character stack for $\Sigma$;
its symplectic structure is also known as the Atiyah-Bott form. 
Let $L$ denote the cohomology of the holomorphic 1-forms on $\Sigma$;
it is the Lagrangian in $V$ given by the $(1,\bullet)$-part of the Dolbeault cohomology of $\Sigma$.
(A choice of K\"{a}hler metric on $\Sigma$ gives such an embedding $L\to V$.)
Conceptually, picking this Lagrangian corresponds to choosing a polarization of the character stack.

By pushing forward from $M$ to $\RR_{\geq 0}$, 
we reduce the study of abelian Chern-Simons theory to the study of topological mechanics for the pair $(V,L)$,
which we treated in Section~\ref{subsec: toplmech}.

\begin{lemma}
\label{lem: cswzwonhalfline}
As factorization algebras on $\RR_{\geq 0}$, we find
\begin{itemize}
\item the classical observables $p_* \Obcl_{\sE,\sL}$ are quasi-isomorphic to $\cF_{\sO(V),\sO(L)}$, 
and
\item  the cohomology of the quantum observables $H^\bullet(p_* \Obq_{\sE,\sL})$ is isomorphic to $\cF_{W(V),F(L)}$,
which encodes the Weyl algebra associated to that tangent complex as well as the Fock space determined by the Lagrangian. 
\end{itemize}
\end{lemma}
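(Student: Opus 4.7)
The plan is to reduce this lemma to Proposition \ref{prp: toplmech} by exhibiting a quasi-isomorphism of factorization algebras between $p_* \Obcl_{\sE,\sL}$ (respectively $p_* \Obq_{\sE,\sL}$) and the classical (respectively quantum) bulk-boundary observables of topological mechanics on $\RR_{\geq 0}$ with values in the symplectic vector space $V = H^\bullet(\Sigma)[1]$ and Lagrangian $L = H^{1,\bullet}(\Sigma)$. Assuming $\Sigma$ is compact, choose a K\"ahler metric and let $\cH^\bullet(\Sigma) \hookrightarrow \Omega^\bullet_\Sigma$ denote the inclusion of harmonic forms; Hodge theory gives a deformation retraction of $\Omega^\bullet_\Sigma$ onto $\cH^\bullet(\Sigma) \cong H^\bullet(\Sigma)$ that is compatible with the Hodge decomposition, so that $\cH^{1,\bullet}(\Sigma)$ corresponds to $L$.

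First I would unpack the pushforward. For $U \subset \RR_{\geq 0}$, one has
\begin{equation}
(p_*\sE_\sL)(U) = \{\mu \in \Omega^\bullet_\Sigma \,\hotimes_\beta\, \Omega^\bullet_{\RR_{\geq 0}}(U) \otimes \fA[1] \mid \iota_0^* \mu \in \Omega^{1,\bullet}_\Sigma \,\hotimes_\beta\, \Omega^\bullet_{\RR_{\geq 0}}(U) \otimes \fA \text{ if } 0 \in U\},
\end{equation}
and the analogous formula for compact supports. Using the harmonic inclusion and its homotopy inverse (harmonic projection), I obtain a sheaf map
\begin{equation}
\Upsilon : \Omega^\bullet_{\RR_{\geq 0}} \otimes V[-1] \to p_* \sE_\sL
\end{equation}
on $\RR_{\geq 0}$ defined by tensoring with the harmonic inclusion on $\Sigma$, and this lands in fields satisfying the $L$-boundary condition precisely because harmonic lifts of $L$ live in $\Omega^{1,\bullet}_\Sigma \otimes \fA$. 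The homotopy inverse $\Pi$ is induced by harmonic projection in the $\Sigma$ direction. Both maps are cochain maps intertwining the respective boundary conditions, the differentials $d_{dR,t}+\dbar+\del$ and $d_{dR,t}$, and the pairings (because Poincar\'e duality on $\Sigma$ together with $\kappa$ is precisely what defines the symplectic form on $V$). A standard Hodge-theoretic contracting homotopy (integration against the Green's operator in the $\Sigma$ direction) provides the chain homotopy between $\Upsilon\Pi$ and the identity, giving a deformation retraction at the level of linear fields.

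Next, I would verify that the same deformation retraction respects compact supports in the $\RR_{\geq 0}$ direction and is natural in $U$, so it extends to a deformation retraction at the level of $\sE_{\sL,c}$ as cosheaves on $\RR_{\geq 0}$. Symmetrizing gives a deformation retraction between $p_*\Obcl_{\sE,\sL}$ and the factorization algebra of classical bulk-boundary observables of topological mechanics with data $(V,L)$. Because $\Upsilon$ preserves the local pairings up to the Hodge identification, the associated BV Laplacians match; hence the same argument upgraded with $\hbar$ gives a quasi-isomorphism of the quantum observables (as in the proof of Theorem~\ref{thm: maingenlq}).

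Finally, invoking Proposition \ref{prp: toplmech} gives the desired identifications with $\cF_{\sO(V),\sO(L)}$ classically and $\cF_{W(V),F(L)}$ (on cohomology) quantumly. The main obstacle I anticipate is the careful bookkeeping to ensure that the harmonic projection, which is a global operation on the compact surface $\Sigma$, interacts correctly with the boundary condition stratification over $\RR_{\geq 0}$: one must check that $\cH^{1,\bullet}(\Sigma)$ really does match $L$ inside $V = H^\bullet(\Sigma)[1]$ (this is precisely the Hodge decomposition statement $H^1(\Sigma) = H^{1,0} \oplus H^{0,1}$ together with Serre duality, which pairs the two summands) and that the chosen Hodge homotopy preserves the Lagrangian boundary condition strictly, not just up to homotopy, so that the structure maps of the factorization algebra translate without spurious corrections.
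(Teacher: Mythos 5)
Your proposal is correct and follows essentially the same route as the paper: choose a K\"ahler metric, use Hodge theory to produce a quasi-isomorphism between the $\sL$-conditioned CS/WZW fields on $\Sigma\times U$ and the fields of topological mechanics valued in $(H^\bullet(\Sigma)[1], H^{1,\bullet}(\Sigma))$, check that it preserves the pairings/cocycles and extension maps, and then invoke Proposition~\ref{prp: toplmech}. You merely spell out the harmonic inclusion, Green's-operator homotopy, and type-preservation details that the paper's proof leaves implicit.
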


In other words, at the classical level, our factorization algebra encodes the symplectic geometry of the $U(1$)-character stack near its base point, including the natural polarization associated with choosing a complex structure on the surface.
Our quantization recovers the canonical quantization of that data.
In short, our process recovers a shadow of the geometric quantization of abelian Chern-Simons theory.

\begin{proof}
Choose a K\"{a}hler metric on $\Sigma$. 
Let $(\sE,\sL)$ denote the Chern-Simons/chiral WZW bulk-boundary system on $\Sigma\times \RR_{\geq 0}$. 
Let $(\sF,\sK)$ denote topological mechanics on $\RR_{\geq 0}$ with values in $H^\bullet(\Sigma)[1]$ and with boundary condition $H^{1,\bullet}(\Sigma)$. 
Hodge theory using the K\"{a}hler metric allows one to construct a quasi-isomorphism
\begin{equation}
\sF_{\sK,c}(U)[1]\to \condfieldscs(\Sigma\times U)[1]
\end{equation}
for any open subset $U\subset \RR_{\geq 0}$.
This quasi-isomorphism manifestly preserves the cocycles used to define the quantum observables and the extension-by-zero maps for inclusions $U\subset V$. 
It follows that  $p_* \Obcl_{\sE,\sL}$ and $p_*\Obq_{\sE,\sL}$ are equivalent to the corresponding factorization algebras for topological mechanics. 
The lemma follows via Proposition~\ref{prp: toplmech}.
\end{proof}

\chapter{Interacting Quantum Bulk-Boundary Systems}
\label{chap: interactingquantum}
\section{Introduction}

In this chapter, we articulate what we mean by a quantum bulk-boundary system, and we show that quantum bulk-boundary systems give rise to factorization algebras of observables. 
Our approach follows the work of Costello and Gwilliam (\autocite{cost, CG1, CG2}) very closely.

Recall from the Introduction that Costello defines a quantum field theory on a manifold $M$ (without boundary) to be a collection $\{I[L]\}$ of interaction functionals, one for each $L>0$, satisfying
\begin{enumerate}
    \item The (homotopical) renormalization group flow equation, which specifies $I[L]$ in terms of $I[L']$ for any $L$ and $L'$,
    \item The (renormalized) quantum master equation (QME),
    \item An asymptotic locality condition as $L\to 0$.
\end{enumerate}
 
Given such a quantum field theory, Costello and Gwilliam \autocite{CG2} show that one may assign to it a factorization algebra on $M$.

We will establish a similar definition for quantum bulk-boundary systems (cf. Definition \ref{def: QTNBFT}).
Having made this definition, we will then discuss the obstruction theory for the quantization of bulk-boundary systems.
Finally, we will construct factorization algebras of observables associated to quantum bulk-boundary systems.
Many of the constructions of Costello and Gwilliam carry over \emph{mutatis mutandis}.

\emph{Inter mutandes}, however, is where the main technical details of this dissertation lie.
Namely, in making sense of the RG equation and the QME on manifolds without boundary, one needs to introduce a renormalized propagator and BV Laplacian.
It takes some care to define the appropriate objects in the case of bulk-boundary systems.
To this end, we use the so-called ``method of image charges.''
In mathematical terms, we replace a manifold with boundary $M$ with its double $\Mdbl$, i.e. the union of two copies of $M$ along $\bdyM$.
(Recall that $\bdyM$ has a number of preferred tubular neighborhoods in $M$, namely those for which the axioms of a classical bulk-boundary system are satisfied. The gluing happens using one of these choices. 
The double $\Mdbl$ possesses an involution $\involution$, and we relate the study of the theory on $M$ to that of a $\involution$-invariant theory on $\Mdbl$.
We will call this doubling procedure ``the doubling trick.'' For our purposes, ``the doubling trick'' and ``the method of image charges'' represent the same techniques.

We emphatically do \emph{not} make the claim that a bulk-boundary system on $M$ is in general the same as a $\involution$-invariant theory on $\Mdbl$.
Nevertheless, $\involution$-invariant objects on $\Mdbl$ will furnish the desired heat kernels and propagators for the bulk-boundary system on $M$.

To make use of the doulbing trick, we will need to impose additional restrictions on our bulk-boundary systems (in addition to the restrictions we have already made in Chapter \ref{chap: classical}).
The universal bulk theories, as well as most examples we consider, will survive this additional culling.

To the reader familiar with the formalism of Costello and Gwilliam, we note that the formalism for bulk-boundary systems exhibits essentially all of the same features. In particular, we show a bijection between the space of local functionals and the so-called quantum ``pre-theories;'' show that the obstruction theory of quantum bulk-boundary systems is governed by the complex of local functionals introduced and studied in Section \ref{sec: localfcnls}; and construct a factorization algebra of observables associated to a quantum bulk-boundary system.

\section{Strong Boundary Conditions and the Doubling Trick}
\label{sec: firstdbling}
In this section, we describe a procedure which introduces additional boundary conditions on the space of fields. The role of the additional boundary conditions is to replace the spacetime $M$ with the manifold (without boundary) $M_{DBL}$ obtained by gluing two copies of $M$ along $\bdyM$, at least for the study of the underlying free theory to $\sE$. However, to make the doubling procedure work, we need to restrict the class of field theories we consider still further.
\begin{definition}
Let $\sL$ be a boundary condition for a TNBFT. A \textbf{complement to $\sL$} is a vector bundle complement $L'$ to $L$ in $\Eb$. We write \index[notation]{Lprime@$\sL'$} $\sL'$ for the sheaf of sections of $L'$. 
\end{definition}

\begin{definition}
\label{def: amenable}
Let $(\sE,\sL)$ be a bulk-boundary system. ($\sE,\sL$) is \textbf{amenable to the doubling trick} if one can find a complement $\sL'$ to $\sL$ such that there is a decomposition 
\begin{equation}
E=A\oplus B
\end{equation}
where $A$ and $B$ are graded vector bundles on $M$. We require the following:
\begin{enumerate}
\item The pairing $\ip_{loc}$ is zero when restricted to $A\otimes A$,
\item In the same tubular neighborhood $\tubnhd$ over which $E\mid_{\tubnhd}\,\cong \Eb\boxtimes \Lambda^\bullet(T^*[0,\epsilon))$, we have $A\mid_{\tubnhd}\,\cong \Ab\boxtimes \Lambda^\bullet(T^*[0,\epsilon))$ and similarly for $B$. Here $\Ab, \Bb$ are subbundles of $\Eb$, and $\Eb=\Ab\oplus \Bb$. 
\item Let $\Gamma$ be the bundle map $\Eb\to \Eb$ which acts by 1 on $L$ and $-1$ on $L'$. We require $\Gamma$ to preserve $\Ab$ and $\Bb$.
\item Let \index[notation]{A@$\sA$}$\sA$, \index[notation]{B@$\sB$} $\sB$ denote the sheaves of sections of $A,B$, and similarly for $\sAb $.
We require that one can write
\index[notation]{Q@$\diffonnocrossdiff$}\index[notation]{ell1@$\pertdiff$}
$\diff=\diffonnocrossdiff+\pertdiff$, where $\pertdiff: \sA\to \sB$ and $\diffonnocrossdiff$ is of the form $\diffonnocrossdiffbdy\otimes 1+1\otimes d_{dR}$ when acting on sections in $\cinfty(\tubnhd; E|_T)$. Here, $\diffonnocrossdiffbdy$ is a differential on $\sEb$ making $\sEb$ into an elliptic complex and $\diffonnocrossdiffbdy$ preserves $\sL$, $\sL'$, $\sAb$, and $\sBb$.
\item $\diffonnocrossdiff$ preserves $\sA$ and $\sB$. (Over $\tubnhd$, this statement follows from the previous condition.)
\item $\diffonnocrossdiff$ and $\pertdiff$ are separately (almost) skew-symmetric for the pairing $\ip$, i.e.
\begin{equation}
\ip[\diffonnocrossdiff e_1,e_2]+(-1)^{|e_1|}\ip[e_1,\diffonnocrossdiff e_2]=0
\end{equation}
(and similarly for $\pertdiff$) for~$e_1,e_2\in \cinfty_{c.s.}(\mathring M; E)$.
\end{enumerate}
\end{definition}

\begin{remark}
We admit that some of these conditions may be redundant. 
We do not investigate this issue here.
\end{remark}

\begin{remark}
The purpose of Definition \ref{def: amenable} is to introduce a general sub-class of bulk-boundary systems containing most examples of interest to us while keeping the formalism as general as we can make it.

For the reader familiar with the Alexandrov-Kontsevich-Schwarz-Zaboronsky formalism, we the reader may think of Definition \ref{def: amenable} to be a notion of AKSZ-type theories whose target is a (possibly twisted) shifted cotangent bundle.
(This condition is imprecise, but we hope it can help to orient some readers.)
\end{remark}

When we say that a field theory $(\sE,\sL)$ is amenable to doubling, we will usually have in mind that $(\sE,\sL)$ is already equipped with a particular choice of amenability data $L',A,B,\cdots$. 
In \autocite{cost}, Costello gives a careful accounting of the dependence of his constructions on choices of this nature.
We believe that a similar accounting can be done in the context at hand; however, for the sake of space, we do not undertake this here.

\begin{example}
\label{ex: nocrossdifffromstart}
Suppose $(\sE,\sL)$ is a TNBFT with boundary condition and one can find an $\sL'$ so that $\Qb$ preserves $\sL$ and $\sL'$. Then we can take $A=0$, $B=E$, $\diffonnocrossdiff=\diff$. Hence, any field theory with $\Qb$ of this form is amenable to the doubling trick (e.g. BF theory). 
\end{example}

\begin{example}
In \autocite{butsonyoo}, Butson and Yoo describe a classical bulk-boundary system on a manifold of the form $\bdyM\times \RR_{\geq 0}$ associated to any Poisson BV theory.
In the interest of space, we have not defined these objects here. 
Nevertheless, we briefly describe some of the features of this ``universal bulk theory,'' so that we can show that it is amenable to doubling.
The sheaf of fields $\sF$ for a degenerate theory is the sheaf of sections on $\bdyM$ of a vector bundle.
The space of fields for the universal bulk theory for $\sF$ is 
\[
\sF\hotimes_\beta \Omega^{\bullet}_{\RR_{\geq 0}}\oplus \sF^!\hotimes \Omega^{\bullet}_{\RR_{\geq 0}};
\]
the differential $\diff$ on this complex is $Q_{\sF\oplus \sF^!}\otimes 1+ 1_{\sF\oplus \sF^!}\otimes d_{dR} + \Pi\otimes 1$,
where 
\[
\Pi: \sF \to \sF^!
\]
is the degree +1 operator induced from the constant term in the Poisson structure on $\sF$ and $Q_{\sF\oplus \sF^!}$ preserves both $\sF$ and $\sF^!$.
It follows from this description that the boundary space of fields is $\sF\oplus \sF^!$, together with the differential $Q_{\sF\oplus \sF^1}+\Pi$.
To show that such a theory is amenable to doubling, take 
\begin{equation}
L=F \quad L'=F^!,
\end{equation}
\begin{equation}
A=L\boxtimes (\Lambda^\bullet(T^*\R_{\geq 0})) \quad B=L'\boxtimes (\Lambda^\bullet(T^*\R_{\geq 0})),
\end{equation}
\begin{equation}
\diffonnocrossdiffbdy = Q_{\sF}\oplus Q_{\sF}^!,
\end{equation}
and $\pertdiff$ induced from the constant term in the $P_0$ structure on $\sF$.
\end{example}

\begin{example}
In this example, we describe a bulk-boundary system which is an example and a non-example, depending on the manifold on which it is formulated.
Chern-Simons theory with chiral WZW boundary condition may be formulated on a general 3-manifold with a transversely holomorphic foliation, where the foliation is transverse also to the boundary.
In such a general situation, it is not possible to show that this bulk-boundary condition is amenable to doubling.
However, for a 3-manifold of the form $\Sigma\times \RR_{\geq 0}$, this bulk-boundary system is the universal bulk theory associated to a Poisson BV theory on $\Sigma$; hence, on such manifolds, the CS/WZW system is amenable to doubling.
\end{example}

\begin{remark}
The purpose of Definition \ref{def: amenable} is to split the differential $\diff$ on $\sE$ into the sum of a term $\diffonnocrossdiff$ which has ``nice'' behavior near the boundary and a term $\pertdiff$ whose corresponding contribution to the action 
\begin{equation}
S_\pertdiff(e) = \frac{1}{2}\ip[e,\pertdiff e]
\end{equation}
can be treated as an interaction. Using the skew-symmetry of $\pertdiff$, one finds that $S_\pertdiff(e)$ is a functional of $\sA$ fields only.
This will be a crucial property in guaranteeing that one can in fact treat $S_\pertdiff(e)$ as an interaction term (cf. Definition-Lemma \ref{deflem: rgflow}).
\end{remark}

We now note a number of nice properties of bulk-boundary systems which are amenable to doubling.
Once we do this, we will be in a better position to see what makes such bulk-boundary systems ``amenable to doubling.''

We will let 
\index[notation]{Eo@$\nocrossdiff$}
$\nocrossdiff$ denote the complex of sheaves
\begin{equation}
(\sE,\diffonnocrossdiff);
\end{equation}
we will often write $\sE$ for the complex $(\sE, \diff)$.
Hence, $\nocrossdiff$ and $\sE$ have the same underlying graded spaces;
they differ only in their differentials.
Similarly, we will denote by 
\index[notation]{ELo@$\condnocrossdiff$}
$\condnocrossdiff$ the complex of sheaves consisting of fields in $\nocrossdiff$ satisfying the boundary condition $\sL$. This is a subcomplex of $\nocrossdiff$ by Condition 4 in Definition \ref{def: amenable}.

\begin{definition}
\label{def: ultra}
Let $(\sE,\sL)$ be amenable to doubling. The \textbf{sheaf of \ultrafieldsterm fields} is the following graded sheaf; for an open $U$ with $U\cap \bdyM= \emptyset$, $\ultrafields(U)=\nocrossdiff$. If $U\cap \bdyM\neq \emptyset,$ for $e\in \nocrossdiff(U)$, write $e= \lambda_\sL+\mu_\sL \d t+\lambda_{\sL'}+\mu_{\sL'}\d t$ near the boundary, and set 
\index[notation]{ELultra@$\ultrafields$}
\begin{equation}
\ultrafields(U):= \left\{ e\in \nocrossdiff(U)\left| \frac{d^n\lambda_\sL}{dt^n}(0)\right.=\frac{d^m\mu_\sL}{dt^m}(0)=\frac{d^m\lambda_{\sL'}}{dt^m}(0)=\frac{d^n\mu_{\sL'}}{dt^n}(0)=0, n \text{ odd}, m \text{ even}\right\}.
\end{equation}
$\ultrafields$ is a subsheaf of $\condnocrossdiff$.
\end{definition}

\begin{lemma}
$\ultrafields$ is a subcomplex of $\condnocrossdiff$. 
\end{lemma}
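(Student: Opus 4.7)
The plan is to verify that the differential $\diffonnocrossdiff$ restricts to an endomorphism of $\ultrafields$; combined with Definition \ref{def: ultra}, which already asserts $\ultrafields$ is a graded subsheaf of $\condnocrossdiff$, this establishes the claim. Since $\ultrafields$ coincides with $\nocrossdiff$ away from $\bdyM$, it suffices to work locally over the tubular neighborhood $\tubnhd \cong \bdyM \times [0,\epsilon)$. By condition 4 of Definition \ref{def: amenable}, on $\tubnhd$ we may write
\[
\diffonnocrossdiff = \diffonnocrossdiffbdy \otimes 1 + 1 \otimes d_{dR},
\]
where $\diffonnocrossdiffbdy$ preserves both $\sL$ and $\sL'$.

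First, for $e \in \ultrafields(U)$ with $U \subset \tubnhd$, I would decompose $e = \lambda_\sL + \mu_\sL \, \d t + \lambda_{\sL'} + \mu_{\sL'} \, \d t$ according to $\sEb = \sL \oplus \sL'$ and the form degree in the $t$-direction. Using $\d t \wedge \d t = 0$, the $d_{dR}$ contribution to $\diffonnocrossdiff e$ is $\pm \partial_t \lambda_\sL \, \d t \pm \partial_t \lambda_{\sL'} \, \d t$, while $\diffonnocrossdiffbdy$ acts on each of the four summands separately because it preserves the $\sL/\sL'$ splitting. The four components of $\diffonnocrossdiff e$ are therefore
\[
\diffonnocrossdiffbdy \lambda_\sL, \quad \bigl(\diffonnocrossdiffbdy \mu_\sL \pm \partial_t \lambda_\sL\bigr)\, \d t, \quad \diffonnocrossdiffbdy \lambda_{\sL'}, \quad \bigl(\diffonnocrossdiffbdy \mu_{\sL'} \pm \partial_t \lambda_{\sL'}\bigr)\, \d t,
\]
with the signs coming from the Koszul convention and immaterial for what follows.

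Next, I would exploit two elementary observations: (i) $\diffonnocrossdiffbdy$ is a differential operator on $\sEb$ and so acts only in the boundary directions, whence it commutes with $\partial_t$; and (ii) applying $\partial_t$ reverses the parity of the order of derivative. Combined, these force the four parity conditions defining $\ultrafields$ to be preserved. As a representative check, for $m$ even one has
\[
\partial_t^m \bigl(\diffonnocrossdiffbdy \mu_\sL \pm \partial_t \lambda_\sL\bigr)\big|_{t=0} = \diffonnocrossdiffbdy\bigl(\partial_t^m \mu_\sL\bigr)\big|_{t=0} \pm \partial_t^{m+1}\lambda_\sL\big|_{t=0} = 0,
\]
since $\partial_t^m \mu_\sL|_{t=0} = 0$ by the $m$-even condition on $\mu_\sL$ and $\partial_t^{m+1}\lambda_\sL|_{t=0} = 0$ by the $n$-odd condition on $\lambda_\sL$ (taking $n = m+1$). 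The three remaining checks (odd derivatives of $\diffonnocrossdiffbdy \lambda_\sL$; even derivatives of $\diffonnocrossdiffbdy \lambda_{\sL'}$; odd derivatives of $\diffonnocrossdiffbdy \mu_{\sL'} \pm \partial_t \lambda_{\sL'}$) are entirely analogous.

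There is no real obstacle here; the result is a direct verification. The conceptual content is that the parity conditions are precisely those characterizing fields whose four components extend to smooth sections on the double $\Mdbl$ with prescribed $\pm 1$-eigenvalues under the involution $\involution$ (the parity being twisted by the oddness of $\d t$ for the $\d t$-components), and $\diffonnocrossdiff$ preserves these eigenspaces: $\diffonnocrossdiffbdy$ involves no $\partial_t$ and hence does not alter $\involution$-parity, while each application of $\partial_t$ flips $\involution$-parity in the $t$-direction but is always accompanied by a factor of $\d t$, which itself flips sign under $\involution$. This makes the compatibility manifest and is precisely the feature that will enable the doubling trick in the sequel.
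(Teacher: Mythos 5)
Your proof is correct and is exactly the ``straightforward computation, using the fact that $\Qb$ preserves both $\sL$ and $\sL'$'' that the paper's one-line proof alludes to: you decompose the field over the tubular neighborhood, use the product form of $\diffonnocrossdiff$, and check that $\diffonnocrossdiffbdy$ (which commutes with $\partial_t$ and respects the $\sL/\sL'$ splitting) together with $\partial_t$ (which shifts derivative parity but is accompanied by $\d t$) preserves the four vanishing conditions. Nothing is missing; the closing remark about $\involution$-eigenspaces on the double is a nice conceptual gloss consistent with Lemma \ref{lem: ultraharmdbl}.
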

\begin{proof}
A straightforward computation, using the fact that $\diffonnocrossdiffbdy$ preserves \emph{both} $\sL$ and $\sL'$.
\end{proof}

The following proposition is a slight generalization of Section A.1 of \autocite{CMRquantumgaugetheories}. In the language of that reference, $\ultrafields$ is the space of fields whose $\sE_1$ component is ultra-Neumann and whose $\sE_2$ component is ultra-Dirichlet.

\begin{proposition}
\label{prop: ultraharm}
The inclusion $I:\ultrafields\to \condnocrossdiff$ is a quasi-isomorphism of complexes of sheaves. 
\end{proposition}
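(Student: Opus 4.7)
The approach is to verify the quasi-isomorphism of sheaves locally, on a convenient basis of opens. Since $I$ is the identity on opens disjoint from $\bdyM$, the only substantive case is an open of the form $U = V \times [0, \delta) \subset \tubnhd$ with $V \subset \bdyM$ open; such opens, together with those disjoint from $\bdyM$, form a basis of the topology of $M$. On such a $U$, condition (4) of Definition \ref{def: amenable} ensures that $\diffonnocrossdiff = \diffonnocrossdiffbdy \otimes 1 + 1 \otimes d_{dR,t}$, so I would filter both $\condnocrossdiff(U)$ and $\ultrafields(U)$ by the de Rham form degree in the $t$-direction. This is a two-step filtration respected by $I$, and the associated spectral sequences collapse at $E_2$, so it suffices to show that $I$ induces an isomorphism on the $E_1$-pages.

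The $E_1$-page computation asks for $d_{dR,t}$-cohomology with the $\sEb$-part held fixed. Using the vector bundle decomposition $\sEb = \sL \oplus \sL'$ (which is respected by $d_{dR,t}$, since $d_{dR,t}$ acts only in the $t$-direction), the computation splits into two one-dimensional problems. For the $\sL$-summand, the $\condnocrossdiff$-complex imposes no boundary condition, while the $\ultrafields$-subcomplex requires $\lambda^{(2k+1)}(0)=0$ and $\mu^{(2k)}(0)=0$; both cohomologies reduce to $\sL(V)$ in degree $0$ (constants, which automatically satisfy the ultra-condition) and vanish in degree $1$, because $\mu \mapsto \int_0^t \mu(s)\,ds$ produces a primitive and preserves the parity conditions (an ``even-at-$0$'' $\mu$ integrates to an ``odd-at-$0$'' $\lambda$ and vice versa). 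For the $\sL'$-summand, the $\condnocrossdiff$-complex demands $\lambda(0)=0$ while the $\ultrafields$-subcomplex imposes the full parity tower; both cohomologies vanish by the analogous primitive-integral arguments. Hence $I$ induces an isomorphism at the $E_1$-page; the $d_1$-differential induced by $\diffonnocrossdiffbdy$ is the same on both sides, and the quasi-isomorphism follows.

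The main obstacle I expect is the functional-analytic bookkeeping: one must check that the one-dimensional primitives above, viewed as continuous linear endomorphisms of $C^\infty([0,\delta))$, tensor correctly with $\sEb(V)$ under the completed bornological tensor product to give honest deformation retractions of the full complexes $\condnocrossdiff(U)$ and $\ultrafields(U)$. This parallels the treatment in the proof of Lemma \ref{lem: fieldslagrangian}, where an explicit contracting homotopy involving $\int_t^\delta$ was constructed and shown to respect the relevant tensor-product topology. The essential point is that the ``even/odd Taylor coefficient vanishing at $t=0$'' conditions defining $\ultrafields$ are closed under the pertinent integration operations, so the same cochain homotopies that establish acyclicity or isomorphisms on cohomology at the one-dimensional level automatically descend to the $V$-parametrized versions.
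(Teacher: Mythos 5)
Your argument is correct in substance and, at bottom, runs on the same engine as the paper's proof: after the identical reduction to basis opens $U\cong V\times[0,\delta)$, everything comes down to the $t$-direction primitive $\mu\mapsto\int_0^t\mu(s)\,\d s$ together with the observation that the parity conditions defining $\ultrafields$ (and the Dirichlet condition in $\condnocrossdiff$) are preserved by it. Where you package this as a filtration argument, the paper simply assembles the primitives into a single degree $-1$ operator $h(U)(e)(t)=(-1)^{|\mu_\sL|}\int_0^t\mu_\sL(s)\,\d s+(-1)^{|\mu_{\sL'}|}\int_0^t\mu_{\sL'}(s)\,\d s$ on the whole complex and checks $\diffonnocrossdiff h(U)+h(U)\diffonnocrossdiff=\id-P(U)$, where $P(U)$ is constant extension in $t$ of the boundary value $\rho(e)$; since $h(U)$ preserves $\ultrafields(U)$ and $P(U)$ lands in it, this exhibits $I(U)$ and $P(U)$ as mutually inverse homotopy equivalences in one stroke. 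That route avoids both the spectral sequence and any direct cohomology computation in $\DVS$, which matters given the paper's stated convention of only using homotopy inverses or standard homological algebra there; your own closing paragraph concedes that to make the $E_1$ computation kosher you would exhibit exactly these retractions, at which point the spectral sequence is removable overhead.

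One bookkeeping correction to your write-up: the two-step filtration by $t$-form degree does \emph{not} have $d_{dR,t}$-cohomology as its first page — with that filtration the associated-graded differential is $\diffonnocrossdiffbdy$, and $d_{dR,t}$ only enters as $d_1$. To take $t$-direction cohomology first, filter instead by the internal degree of $\sEb$ (the other canonical filtration of this two-column double complex); it is bounded because Definition \ref{def: tnbft} requires $E^k=0$ for $|k|\gg 0$, and since $\diffonnocrossdiffbdy$ preserves $\sL$ and $\sL'$ by Definition \ref{def: amenable}, your summand-wise computation of the first page and of the induced $d_1$ then goes through verbatim. With that fix, and the retractions spelled out, your proof is complete — though at that point you have in effect rederived the paper's explicit contracting homotopy.
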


\begin{remark}
Recall that we have $\diff = \diffonnocrossdiff+\pertdiff$, and we are intending to treat the term $\pertdiff$ as an interaction term.
Hence $\condnocrossdiff$ should be thought of as the space of fields of the underlying free theory of our bulk-boundary system.
On the other hand, the complex $\ultrafields$ represents fields satisfying a much stronger boundary condition than the one included in $\condnocrossdiff$.
Proposition \ref{prop: ultraharm} shows that, at least for the underlying free theory, imposing the extra boundary conditions on our fields does not change the space of classical solutions.
\end{remark}

\begin{proof}
Let $U$ be an open subset of $M$; we may assume $U\cap \bdyM\neq \emptyset$, for otherwise $\ultrafields(U)=\condnocrossdiff(U)$. We assume further that $U$ is of the form $U'\times [0,\epsilon)$, where $U'$ is open in $\bdyM$. Consider the degree --1 map $h(U): \condnocrossdiff(U)\to \condnocrossdiff(U)$ given by
\begin{equation}
h(U)(e)(t) = (-1)^{|\mu_\sL|}\int_0^t \mu_\sL(s)\d s+(-1)^{|\mu_{\sL'}|}\int_0^t \mu_{\sL'}(s)\d s,
\end{equation}
where we are using the notation of Definition \ref{def: ultra}. Note that $h(U)$ preserves $\ultrafields(U)$. Let us write $\diffonnocrossdiffbdy=\QL+\QLprime$ (where $\QL$ preserves $\sL$ and $\QLprime$ preserves $\sL'$), and note that
\begin{align}
(\diffonnocrossdiff h(U)+h(U)\diffonnocrossdiff)(e)(t) &= \mu_\sL(t)+\mu_{\sL'}(t)\\
&+ (-1)^{|\mu_\sL|}\int_0^t \QL \mu_\sL(s)ds+(-1)^{|\mu_{\sL'}|}\int_0^t \QLprime\mu_{\sL'}(s)ds\\
&+(-1)^{|\mu_\sL|+1}\int_0^t \QL\mu_\sL(s)ds+(-1)^{|\mu_{\sL'}|+1}\int_0^t \QLprime\mu_{\sL'}(s)ds
\\&+\lambda_{\sL}(t)-\lambda_{\sL}(0)+\lambda_{\sL'}(t)-\lambda_{\sL'}(0)\\
&= e(t)- \rho(e);
\end{align}
therefore, if one defines $P(U): \condnocrossdiff\to\ultrafields\,$ to be $P(U)(e)(t)=\rho(e)$, one finds that $P(U)$ and $I(U)$ are mutually inverse homotopy equivalences, with $h(U)$ as the witnessing homotopy between both $I(U)P(U)$ and $P(U)I(U)$ and their respective identity maps. We have therefore shown that, for each point $x\in M$ and each open $V\subset M$ containing $x$, we can find an open subset $U\subset V\subset M$ containing $x$ on which $I(U)$ is a quasi-isomorphism. Hence, $I$ is a quasi-isomorphism of sheaves.
\end{proof}

We reiterate that one should only understand Proposition \ref{prop: ultraharm} as a statement about the underlying free theories encoded in $\ultrafields$ and $\condnocrossdiff$, and not as a statement about the full interacting theory whose free part is $\condnocrossdiff$. 
The following example shows that the brackets on $\condnocrossdiff$ do not restrict along the inclusion $\ultrafields\subset \condnocrossdiff$.

\begin{example}
Consider one-dimensional BF theory on $M=[0,1)$, with space of fields $\Omega^\bullet_M\otimes \fg[1]\oplus \Omega^\bullet_M\otimes \fg^\vee[-1]$ and boundary space of fields $\sEb= \fg[1]\oplus \fg^\vee[-1]$. 
Choose $\sL=\fg^\vee[-1]$ and $\sL'=\fg[1]$.
This theory is amenable to doubling, and $\condnocrossdiff=\condfields$. 
Given $f_1,f_2\in \cinfty_M\otimes \fg$ satisfying the conditions to lie in $\ultrafields$ (i.e. all even derivatives of $f_1$ and $f_2$ are zero at $t=0$), then $[f_1,f_2]$ does not necessarily satisfy these conditions, since the second derivative of $[f_1,f_2]$ at $t=0$ has a term which is the bracket of the first derivatives of $f_1$ and $f_2$ at $t=0$. 
\end{example}

In light of the previous example, we note that, in general, $\ultrafields$ cannot be readily made to contain information about the interacting bulk-boundary system $(\sE,\sL)$.
However, since the propagators and heat kernels of Costello are derived from the underlying free theory of an interacting theory,
we will be able to use $\ultrafields\,$ to extract the necessary ingredients in the definition of a quantum bulk-boundary system.

Now, we are in a position to explain the doubling procedure a bit better.
Let $\tubnhd$ denote a tubular neighborhood of the boundary $\bdyM$, and fix a diffeomorphism $\tubnhd\cong \bdyM\times [0,\epsilon)$.
We let, as above, 
\index[notation]{Mdbl@$\Mdbl$}
$\Mdbl$ denote the manifold without boundary obtained by gluing two copies of $M$ along $\bdyM$. 
More precisely, $\Mdbl$ is covered by three open sets: two copies of $\mathring M$ and one copy of $\bdyM\times (-\epsilon, \epsilon)$.
The three open sets are glued together via the following pair of maps:

\begin{align}
f_1,f_2:\tubnhd\backslash\bdyM\cong \bdyM\times(0,\epsilon)&\to \bdyM \times(-\epsilon,\epsilon)\\
f_1(x,t)&= (x,t)\\
f_2(x,t)&=(x,-t).
\end{align}
We will call the copy of $\mathring M$ glued to $\bdyM\times (-\epsilon,\epsilon)$ via $f_1$ the \textbf{positive half-manifold}; similarly, we call the copy glued via $f_2$ the \textbf{negative half-manifold}. Similarly, we can define the \textbf{non-negative and non-positive half-manifolds.}
There is a natural involution of $\Mdbl$ which exchanges the positive and negative half-manifolds and fixes $\Mbdy\subset \Mdbl$.

Now, we proceed to double $E$ to form a bundle $\Edbl\to \Mdbl$.
Over both the positive and negative half-manifolds, we set 
\index[notation]{Edbl@$\Edbl$}
$\Edbl= E\mid_{M\backslash\bdyM}$. 
Over $\bdyM\times(-\epsilon,\epsilon)$, we set 
\begin{equation}
\Edbl = \Eb\boxtimes \Lambda^\bullet(T^*(-\epsilon,\epsilon)).
\end{equation}
Let $\Gamma$ be the involution on $\Eb$ which is the identity on $L$ and minus the identity on $L'$. 
The transition function covering $f_1$ is the isomorphism $\phi:E
\mid_\tubnhd\,\cong \Eb\boxtimes (\Lambda^\bullet T^*(0,\epsilon))$. The transition function for $\Edbl$ covering $f_2$ is the map $(\Gamma\otimes \tau^*)\circ\phi$, where $\tau$ is the map $(-\epsilon,0)\to (0,\epsilon)$ given by $\tau(t)=-t$. 

Further, there is a natural bundle isomorphism
\index[notation]{sigma@$\involution_E$}
\begin{equation}
    \involution_E : \Edbl \to \involution^*\Edbl
\end{equation}
covering the natural involution $\involution$ on $\Mdbl$.
Over each copy of $M\backslash \bdyM$, the involution on $\Mdbl$ simply exchanges the two copies and the bundle map covering this map is the identity on $E\mid_{M\backslash\bdyM}$. 
Over $\bdyM\times (-\epsilon,\epsilon)$, the involution on $\Mdbl$ takes $t\mapsto -t$, and $\involution_E\mid_{\bdyM \times(-\epsilon,\epsilon)}$ is simply $\Gamma\otimes \tau^*$, where we extend $\tau$ to an automorphism of $(-\epsilon,\epsilon)$.

\begin{example}
\label{ex: BFdbl}
Consider BF theory on an oriented manifold $M$. Let $\sL= \Omega^\bullet_{\bdyM}\otimes \fg[1]$ and $\sL' = \Omega^\bullet_{\bdyM}\otimes\fg^\vee[n-2]$.
One readily sees that $\Mdbl$ coincides with the oriented smooth manifold obtained by the usual doubling procedure, and $\involution$ corresponds to the natural orientation-reversing diffeomorphism.
One may also identify 
\begin{equation}
\sEdbl \cong \Omega^\bullet_{\Mdbl}\otimes \fg[1]\oplus \Omega^\bullet_{\Mdbl}\otimes \fg^\vee[n-2];
\end{equation}
however $\involution$ extends to $\sEdbl$ as $\involution^*$ on the first summand and $-\involution^*$ on the second summand.
\end{example}

\begin{lemma}
\label{lem: qandipdouble}
The differential $\diffonnocrossdiff$ on $\nocrossdiff$ extends naturally to a $\involution_E$-equivariant one on $\sEdbl$. Likewise, the pairing $\ip_{loc}$ extends to a $\involution_E$-equivariant pairing $\ip_{loc,DBL}$ on $\sEdbl$, and $\diffonnocrossdiff$ is skew-self-adjoint for the pairing $\ip_{DBL}$ induced from $\ip_{loc,DBL}$.
\end{lemma}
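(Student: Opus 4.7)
The plan is to establish each of the three claims---extension of $\diffonnocrossdiff$, extension of $\ip_{loc}$, and skew-self-adjointness of $\diffonnocrossdiff$---by separate calculations, each organized around the collar $\bdyM\times(-\epsilon,\epsilon)$ where we must verify compatibility with the transition functions $\phi$ and $(\Gamma\otimes\tau^*)\circ\phi$.

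First I would extend $\diffonnocrossdiff$ by putting its original definition on each copy of $\mathring M$ inside $\Mdbl$, and on the collar defining $\diffonnocrossdiff:=\diffonnocrossdiffbdy\otimes 1+1\otimes d_{dR}$ via the trivialization $\Edbl|_{\text{collar}}\cong \Eb\boxtimes\Lambda^\bullet T^*(-\epsilon,\epsilon)$. Compatibility with $\phi$ on the positive side is part of Definition~\ref{def: amenable}(4). Compatibility with the negative-side transition $(\Gamma\otimes\tau^*)\circ\phi$ reduces to the claim that $\diffonnocrossdiffbdy\otimes 1+1\otimes d_{dR}$ commutes with $\Gamma\otimes\tau^*$; this holds because $\diffonnocrossdiffbdy$ preserves both $\sL$ and $\sL'$ (hence commutes with $\Gamma$, which is $\pm 1$ on these summands) and because $d_{dR}$ is natural, in particular commuting with $\tau^*$. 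The same computation yields $\involution_E$-equivariance, since on each half $\involution_E$ merely swaps copies and on the collar it is precisely $\Gamma\otimes\tau^*$.

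Next I would extend $\ip_{loc}$ by the original definition on each half and by $\ip_{loc,DBL}:=\ip_{loc,\partial}\otimes\wedge$ on the collar, in accordance with the compatibility built into Definition~\ref{def: tnbft}. The only nontrivial check is the negative-side transition. Writing $\alpha_i=f_i+g_i\,dt$, the coefficient of $dt$ in $\alpha_1\wedge\alpha_2$ is $f_1g_2+g_1f_2$, while the analogous coefficient after applying $\tau^*$ to each factor is $-(f_1g_2+g_1f_2)(-t)$. Thus $\tau^*\otimes\tau^*$ introduces an overall sign. Simultaneously $\Gamma\otimes\Gamma$ reverses the sign of $\ip_{loc,\partial}$: by Definition~\ref{def: bdycond} the pairing vanishes on $L\otimes L$ and on $L'\otimes L'$, so only the mixed term survives, and there $\Gamma\otimes\Gamma$ acts as $(+1)(-1)=-1$. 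These two signs cancel, giving precisely the equivariance relation $\ip_{loc,DBL}((\Gamma\otimes\tau^*)e_1,(\Gamma\otimes\tau^*)e_2)=\tau^*\ip_{loc,DBL}(e_1,e_2)$, which simultaneously verifies consistency of the gluing \emph{and} $\involution_E$-equivariance of~$\ip_{loc,DBL}$.

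Finally, for skew-self-adjointness of $\diffonnocrossdiff$ with respect to $\ip_{DBL}=\int_{\Mdbl}\ip_{loc,DBL}$: given compactly supported $e_1,e_2\in\sEdbl$, decompose each via a partition of unity into pieces supported in the positive half, the negative half, and the collar. On the half pieces, Definition~\ref{def: amenable}(6) gives the vanishing of the relevant sum since sections there are compactly supported in~$\mathring M$. On the collar, the local form $\diffonnocrossdiffbdy\otimes 1+1\otimes d_{dR}$ integrates by parts separately in the $\bdyM$ and $(-\epsilon,\epsilon)$ directions: the first integration produces no boundary term because $\bdyM$ is closed and $\diffonnocrossdiffbdy$ is skew for $\ip_{loc,\partial}$, while the second produces no boundary term because we now integrate over the entire open interval $(-\epsilon,\epsilon)$ rather than $[0,\epsilon)$.

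The main obstacle is the sign analysis in the second step. The entire doubling construction works only because $L'$ has been required to be Lagrangian (not merely isotropic or complementary): it is exactly this condition which forces $\Gamma\otimes\Gamma$ to anti-commute with $\ip_{loc,\partial}$, providing the sign needed to cancel the one contributed by $\tau^*(dt)=-dt$. Keeping track of signs here---and verifying the cancellation is exact rather than up to a sign---is the point where all of the hypotheses of Definitions~\ref{def: bdycond} and~\ref{def: amenable} are jointly exploited.
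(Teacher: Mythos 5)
Your argument is correct, and it follows the same chart-by-chart strategy as the paper: extend $\diffonnocrossdiff$ and the pairing over the two interior copies and over the collar, and check compatibility with the transitions $\phi$ and $(\Gamma\otimes\tau^*)\circ\phi$. The paper's own proof is far terser---it justifies the extension of $\diffonnocrossdiff$ in one line (diffeomorphism invariance of $d_{dR}$ in the normal direction plus the fact that $\diffonnocrossdiffbdy$ preserves $\sL$ and $\sL'$), declares $\ip_{loc,DBL}$ to be $\ip_{loc}$ on the positive half, $-\ip_{loc}$ on the negative half, and the tensor-product pairing on the collar, and calls the remaining checks, including skew-self-adjointness, routine---so the content of your write-up is essentially the bookkeeping the paper omits. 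Two points of comparison are worth recording. First, your sign convention on the negative half differs from the paper's stated one: with the $\Gamma$-twisted transition $(\Gamma\otimes\tau^*)\circ\phi$ and with $\involution_E$ acting as the identity on the two interior charts (as the paper defines it), equivariance forces the pairing to be the \emph{same} $\ip_{loc}$ in both interior charts, which is exactly what your cancellation of the $\tau^*$-sign against the $\Gamma\otimes\Gamma$-sign yields; the paper's $-\ip_{loc}$ corresponds instead to the untwisted identification of $\Edbl$ with $E$ over the negative half (compare Example \ref{ex: BFdbl}, where the identification with forms on $\Mdbl$ differs from the chart identification by $\Gamma$ and the orientation flips). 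So the discrepancy is one of trivialization rather than substance, and your bookkeeping is the one consistent with the transition functions as written. Second, you are right that the cancellation uses $\ip_{loc,\del}$ vanishing on $L'\otimes L'$ as well as on $L\otimes L$: strictly speaking the paper's notion of ``complement'' in Definition \ref{def: amenable} does not impose isotropy of $L'$, although Definition \ref{def: bdycond} guarantees an isotropic complement exists and the paper implicitly chooses one (the identity $(\Gamma\otimes\Gamma)\omega^{-1}=-\omega^{-1}$ in Example \ref{ex: toplmechprop} is precisely this fact). Making that hypothesis explicit is a genuine improvement over the paper's presentation.

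Two small things to tighten. In the sign analysis, say explicitly that the pairing takes values in densities, which pull back under $\tau$ with no sign; the minus sign therefore comes only from the $ds$-components of the transformed sections, which is what your ``coefficient of $dt$'' computation silently uses (if one pulled the output back as a top form instead, the conclusion would flip). In the skew-adjointness step, the partition-of-unity decomposition also produces cross terms whose two factors are supported in different charts; since each pairwise chart overlap lies inside the collar (or inside an interior copy), either of your two local arguments applies to them after a cutoff, but the sentence as written only addresses the diagonal terms.
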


\begin{proof}
It is enough to check these facts locally.
For $\diffonnocrossdiff$ this fact follows from the fact that along the normal direction, $d_{dR}$ is diffeomorphism invariant and the fact that $\diffonnocrossdiffbdy$ preserves $\sL$ and $\sL'$.
We extend $\ip_{loc}$ to $\Edbl$ as follows: over the positive half-manifold, $\Edbl = E$ and $\Dens_{\Mdbl}=\Dens_M$.
So, we extend $\ip_{loc}$ in the obvious way, i.e. set $\ip_{loc,DBL}=\ip_{loc}$.
Over the negative half-manifold, we do the opposite: we set $\ip_{loc, DBL}=-\ip_{loc}$.
This negative sign accounts for the fact that we identify the copy of $\tubnhd$ in the negative half-manifold with $\bdyM\times(-\epsilon,0]\subset \bdyM\times(-\epsilon,\epsilon)$.
If the reader remains befuddled by this minus sign, note that in the context of Example \ref{ex: BFdbl}, the negative half-manifold is given the opposite orientation to $M$, so we must make the opposite identification of top forms and densities over the negative half-manifold.
Finally, over $\bdyM\times(-\epsilon,\epsilon)$, we set $\ip_{loc,DBL}$ to be of the form described in the very last axiom listed in Definition \ref{def: clbv}.
The remaining checks, for example that $\ip_{loc,DBL}$ indeed gives a bundle map, are routine.
\end{proof}

The next Lemma relates the ultra-boundary conditions discussed in Definition \ref{def: ultra} to $\involution_E$-invariance in $\Edbl$.

\begin{lemma}
\label{lem: ultraharmdbl}
For any open subset $U\subset M$, let $\Udbl$ denote the open subset in $\Mdbl$ obtained by doubling $U$.
Then, the cochain complex $\ultrafields(U)$ is isomorphic (as a complex of differentiable vector spaces) to the cochain complex of invariants $(\sEdbl(\Udbl),\diffonnocrossdiff)^{\involution_E}$.
\end{lemma}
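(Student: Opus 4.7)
The plan is to exhibit a pair of mutually inverse maps
\[
R: (\sEdbl(\Udbl),\diffonnocrossdiff)^{\involution_E} \longrightarrow \ultrafields(U), \qquad E: \ultrafields(U) \longrightarrow (\sEdbl(\Udbl),\diffonnocrossdiff)^{\involution_E},
\]
where $R$ is restriction to the positive half-manifold copy of $U \subset \Mdbl$ and $E$ is the extension defined, on the negative half near the boundary, by reflection: $E(s)(x,t) := \involution_E(s(x,-t))$ for $t \leq 0$, and by $s$ on the positive side. Since $\involution$ fixes $U \cap \bdyM$, the two prescriptions agree on the overlap, and by construction $E(s)$ is $\involution_E$-invariant and $R \circ E = \mathrm{id}$. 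The content of the lemma is therefore entirely in (i) verifying that $R$ lands in $\ultrafields(U)$, (ii) showing that $E(s)$ is actually smooth across $\bdyM$, and (iii) checking compatibility with $\diffonnocrossdiff$.

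First I would unpack the $\involution_E$-invariance condition in the tubular chart $\bdyM \times (-\epsilon,\epsilon)$. Writing a section as $s = \lambda_\sL + \lambda_{\sL'} + (\mu_\sL + \mu_{\sL'}) \d t$ and using that $\involution_E = \Gamma \otimes \tau^*$ with $\tau^*(\d t) = -\d t$ and $\Gamma = +1$ on $L$, $\Gamma = -1$ on $L'$, the invariance $\involution_E \circ s = s \circ \involution$ translates to: $\lambda_\sL$ and $\mu_{\sL'}$ are \emph{even} in $t$, while $\lambda_{\sL'}$ and $\mu_\sL$ are \emph{odd} in $t$. At $t = 0$ this is precisely the vanishing of odd derivatives of $\lambda_\sL, \mu_{\sL'}$ and of even derivatives of $\lambda_{\sL'}, \mu_\sL$, which matches Definition \ref{def: ultra} verbatim. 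Hence $R$ is well-defined.

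The main obstacle is the smoothness of $E(s)$ across $t = 0$. For each component, one has a smooth section on $[0,\epsilon)$ whose Taylor coefficients at $0$ of one parity (either even or odd, depending on the component) vanish; the prescribed reflection by $\tau^*$ and $\Gamma$ then produces either the even or the odd extension. The standard fact I would invoke is that a smooth function on $[0,\epsilon)$ with all odd (resp. even) Taylor coefficients vanishing at $0$ admits a smooth even (resp. odd) extension to $(-\epsilon,\epsilon)$: one checks inductively that at each order the left-hand and right-hand derivatives agree at the origin. Applying this componentwise to $\lambda_\sL, \lambda_{\sL'}, \mu_\sL, \mu_{\sL'}$ shows $E(s) \in \sEdbl(\Udbl)$, and by construction $E(s)$ is $\involution_E$-invariant and restricts to $s$, so $R$ and $E$ are mutually inverse bijections.

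It remains to check that both maps intertwine the differentials $\diffonnocrossdiff$ and define an isomorphism in $\DVS$. Compatibility with $\diffonnocrossdiff$ is immediate from Lemma \ref{lem: qandipdouble}, which asserts that $\diffonnocrossdiff$ on $\sEdbl$ is the $\involution_E$-equivariant extension of $\diffonnocrossdiff$ on $\sE$; hence $R$ obviously commutes with it, and since $E$ is inverse to $R$ it does too. Finally, both $R$ and $E$ are manifestly natural in $U$ and are given by smooth linear operations (restriction; reflection composed with a bundle automorphism), so they promote to maps of differentiable vector spaces in the sense of Appendix B of \autocite{CG1}, completing the proof.
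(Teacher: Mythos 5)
Your proposal is correct and follows essentially the same route as the paper: restriction to the positive half together with the $\involution_E$-twisted (even/odd) extension, with the key point being that vanishing of odd (resp.\ even) Taylor coefficients at $t=0$ guarantees smoothness of the even (resp.\ odd) extension, and compatibility with $\diffonnocrossdiff$ following directly from the construction. Your explicit parity bookkeeping for $\lambda_\sL,\lambda_{\sL'},\mu_\sL,\mu_{\sL'}$ matches Definition \ref{def: ultra} exactly, so nothing is missing.
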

\begin{proof}
First, we show that there is an isomorphism of $\ZZ$-graded topological vector spaces.
To this end, let us state an elementary fact that differs from the case at hand only cosmetically.
Define $\widehat\cinfty(\RR_{\geq 0})$ to be the space of functions on $\RR_{\geq 0}$ whose odd derivatives vanish at $t=0$.
This space acquires a topology as a closed subset of $\cinfty(\RR_{\geq 0})$.
Let $\cinfty_{ev}(\RR)$ denote the closed subspace of $\cinfty(\RR)$ consisting of functions which are even under the interchange $x\mapsto -x$.
Then, there is a natural isomorphism of topological vector spaces
\begin{equation}
\widehat \cinfty(\RR_{\geq 0}) \cong \cinfty_{ev}(\RR).
\end{equation}
Given an even function on $\RR$, one simply restricts to $\RR_{\geq 0}$ and finds that the resulting function has the required vanishing of derivatives.
On the other hand, one may check that the natural even extension of a function in $\widehat \cinfty(\RR_{\geq 0})$ to $\RR$ is smooth at $t=0$.
There is a similar statement involving even derivatives at $t=0$ and odd functions on $\RR$.
As we have stated, this example differs from the desired statement only cosmetically.
Hence, we have constructed an isomorphism of $\ZZ$-graded topological vector spaces as in the statement of the lemma.

We need further to show that this isomorphism respects the differentials on both complexes, but this follows directly from the description of the isomorphism.
\end{proof}

\begin{remark}
In fact, it easy to show that $U\mapsto (\sEdbl(\Udbl), \diffonnocrossdiff)^{\involution_E}$ is a complex of sheaves on $M$, and the above Lemma establishes an isomorphism of complexes of sheaves on $M$.
\end{remark}

Lemma \ref{lem: ultraharmdbl}, together with Lemma \ref{prop: ultraharm}, is the main justification for studying the \ultrafieldsterm fields: they allow us to replace computations on $M$--a manifold with boundary--with $\involution$-invariant computations on $\Mdbl$. We will use this to produce the parametrices  we will use for renormalization in Section \ref{sec: renorm}.

Before we move on, we note that the doubling construction applies equally well to $A$ and $B$ (cf. \ref{def: amenable}), producing bundles $\Adbl$, $\Bdbl$ satisfying $\Edbl\cong \Adbl\oplus \Bdbl$.

\section{Regularized Heat Kernels and Propagators}
\label{sec: renorm}
In this section, we use the doubling trick to obtain regularized BV Laplacians and propagators suitable for studying bulk-boundary systems.
As we have remarked, the doubling trick allows one to replace $M$ with $\Mdbl$, where we will be able to leverage the full theory of elliptic complexes and elliptic operators.
We will then symmetrize these constructions with respect to $\involution$ and restrict to the non-negative half-manifold to obtain heat kernels and propagators relevant to our boundary value problem.

In this section, let us reiterate that we have fixed a tubular neighborhood $\tubnhd$ of $\bdyM$ in $M$ and a diffeomorphism $\tubnhd\cong \bdyM\times[0,\epsilon)$, and we require $\sE\cong \sEb\hotimes_\beta \Omega^\bullet_{[0,\epsilon)}$ over $\tubnhd$.

\subsection{Gauge fixings and the doubling trick}
\begin{definition}
Let $\sE$ be a TNBFT. A \textbf{gauge-fixing} for $\sE$ is a degree --1 operator 
\index[notation]{QGF@$Q^{\GF}$}
$Q^{\GF}$ such that 
\begin{enumerate}
\item $[\diffonnocrossdiff,Q^{\GF}]$ is a generalized Laplacian in the sense of Definition 2.2 \autocite{bgv}.
\item $(Q^{\GF})^2=0$.
\item $Q^{\GF}$ is self-adjoint for the pairing $\ip$ when acting on fields which are zero over $\bdyM$. 
\end{enumerate}
\end{definition}

\begin{definition}
\label{def: amenablegf}
A gauge-fixing is \textbf{amenable to the doubling trick} if, over $\tubnhd$, $Q^{\GF}$ is of the form $\Qbgf+\iota_{\partial/\partial t}\frac{\partial}{\partial t}$, where $[\Qbgf,\diffonnocrossdiffbdy]$ is a generalized Laplacian on $\bdyM$. We require that $\Qbgf$ preserve $\sL$ and $\sL'$, and that $Q^{\GF}$ preserve $\sA,\sB$. 
\end{definition}

Note that it follows from the definition that the metric on $M$ determined by the generalized Laplacian $[Q,Q^{GF}]$ is cylindrical near the boundary, i.e. is of the form $g= g_\del +\d t^2$, when $Q^{GF}$ is amenable to doubling. 

The following lemma is proved by the same straightforward computations as Lemma \ref{lem: qandipdouble}.
\begin{lemma}
\label{lem: qgfextends}
If $Q^{\GF}$ is amenable to the doubling trick, then $Q^{\GF}$ extends to an operator on $\sEdbl$ and $Q^{\GF}$ commutes with $\involution_E$. 
\end{lemma}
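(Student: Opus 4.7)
The proof will closely parallel that of Lemma \ref{lem: qandipdouble}, the point being to check that the local ingredients of $Q^{\GF}$ transform correctly under the transition functions used to build $\Edbl$. The plan has two steps: first extend $Q^{\GF}$ to an operator on $\sEdbl$ by giving it on each chart of the cover of $\Mdbl$ and checking compatibility with the transition bundle maps; second, check $\involution_E$-equivariance, which is essentially the same computation.

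For the extension, I would work with the three-chart cover of $\Mdbl$ recalled in Section~\ref{sec: firstdbling}: the positive half-manifold, the negative half-manifold, and the doubled tube $\bdyM \times (-\epsilon,\epsilon)$. On the two half-manifolds, each of which is canonically identified with $\mathring M$, define the extension to be $Q^{\GF}$ itself. On the doubled tube, use the formula $\Qbgf + \iota_{\partial/\partial t}\frac{\partial}{\partial t}$ from Definition~\ref{def: amenablegf}, which now makes sense because $t$ varies over an interval symmetric about $\bdyM$. Both $\Qbgf$ and the normal piece are well-defined operators on sections of $\Edbl$ over this chart because, by hypothesis, $\Qbgf$ preserves $\sL$ and $\sL'$ (so it preserves $\Eb$-valued sections globally, not just those on one side), and $\partial_t$, $\iota_{\partial/\partial t}$ are intrinsic.

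Compatibility across the transition covering $f_1$ is immediate from the definitions. The substantive check is compatibility across $f_2$, whose bundle map is $\Gamma \otimes \tau^*$ with $\tau(t) = -t$. Thus I need
\begin{equation}
(\Gamma\otimes \tau^*)\circ Q^{\GF} \;=\; Q^{\GF}\circ (\Gamma\otimes\tau^*)
\end{equation}
on the overlap. The equation splits into two pieces. First, $\Qbgf$ commutes with $\Gamma$: since $\Qbgf$ preserves both $\sL$ and $\sL'$, and $\Gamma$ is $+1$ on $L$ and $-1$ on $L'$, the commutation is immediate on each eigenspace. Second, $\iota_{\partial/\partial t}\frac{\partial}{\partial t}$ is $\tau^*$-invariant: under $t\mapsto -t$, both $\partial_t$ and $\iota_{\partial_t}$ change sign (the latter because $\tau^*\,dt = -\,dt$), so the two signs cancel in the composition. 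Hence the two local formulae glue to a well-defined operator $Q^{\GF}$ on $\sEdbl$.

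For $\involution_E$-equivariance, essentially the same calculation applies. Away from the boundary, $\involution_E$ is the bundle identity interchanging the two copies of $E\mid_{\mathring M}$, and the extended $Q^{\GF}$ was defined to agree on both copies, so equivariance is automatic there. Over the doubled tube, $\involution_E = \Gamma\otimes \tau^*$, and the commutation with $Q^{\GF} = \Qbgf + \iota_{\partial_t}\partial_t$ is exactly the computation performed above. The only real pitfall, if it can be called one, is getting the sign conventions right for the normal term under $\tau^*$; once one notes that both $\partial_t$ and $\iota_{\partial_t}$ flip sign, the conclusion is formal and the lemma follows.
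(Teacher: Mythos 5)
Your proof is correct and follows essentially the same route as the paper, which simply invokes "the same straightforward computations" as for the extension of $\diffonnocrossdiff$ and $\ip_{loc}$: a local check over the doubled tube using that $\Qbgf$ preserves $L$ and $L'$ (hence commutes with $\Gamma$) and that the two sign flips of $\partial_t$ and $\iota_{\partial_t}$ under $\tau^*$ cancel. You have merely spelled out the gluing and equivariance checks that the paper leaves implicit, and the sign bookkeeping is right.
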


Let us recall a major motivation for the introduction of the operator $Q^{GF}$ in the case that $M$ is closed.
In this case, if $(\sE,Q)$ is an elliptic complex, one may choose metrics on $E$ and $M$, and form the formal adjoint $Q^*$ to $Q$.
Then, the commutator $[Q,Q^*]$ is an elliptic operator on $\sE$, and one has an identification $H^\bullet(\sE,Q)\cong \ker [Q,Q^*]$.
This method for studying the cohomology of elliptic complexes on closed manifolds is known as formal Hodge theory \autocite{wellshodge}.
In other words, we have reduced the problem of computing the cohomology $H^\bullet(\sE,Q)$ from that of the computation of a subquotient to that of identifying a particular subspace of $\sE$.
In any case, the operator $[Q,Q^*]$ contains all the information of the cohomology of $\sE$, and further, one may apply the well-developed machinery of elliptic operators to it.

Now, let us return to the situation in which $\bdyM\neq \emptyset$, and let $(\sE,\sL,Q^{\GF})$ be amenable to the doubling trick.
We are interested in the cohomology of the complex $\condnocrossdiff$.
We have seen that, to this end, we may replace $\condnocrossdiff$ with $\ultrafields\,$.
In turn, $(\ultrafields\, ,\diffonnocrossdiff)$ may be identified with the complex $(\sEdbl, \diffonnocrossdiff)^{\involution_E}$.
When $M$ is compact, and under certain additional assumptions, we saw in the previous paragraph that the kernel of the operator $[Q,Q^{GF}]$ (acting on $\sEdbl$) computes the cohomology of $(\sEdbl, \diffonnocrossdiff)$;
the next lemma shows that an analogous statement is true for $\involution$ invariants as well.

\begin{lemma}
\label{lem: justifyingheatkernel}
Let $(\sE,\sL,Q^{\GF})$ be a theory and a gauge-fixing which are amenable to the doubling trick, and suppose that $M$ is compact. Assume further that $Q^{\GF}$ is the formal adjoint of $\diffonnocrossdiff$ with respect to some metric on $\sEdbl$ (for which $\involution$ is an isometry) and the metric on $\Mdbl$ induced from $[\diffonnocrossdiff,Q^{\GF}]$. Then, 
\begin{equation}
H^\bullet \condnocrossdiff(M)\cong (H^\bullet \sEdbl(\Mdbl))^{\involution_E}\cong (\ker [\diffonnocrossdiff,Q^{\GF}])^{\involution_E},
\end{equation}
so that the $\involution_E$-invariant elements of the kernel of $[\diffonnocrossdiff,Q^{\GF}]$ compute the cohomology of $\condnocrossdiff(M)$.
\end{lemma}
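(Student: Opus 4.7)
The plan is to chain two isomorphisms. First I would establish
\begin{equation*}
H^\bullet \condnocrossdiff(M) \cong \bigl(H^\bullet (\sEdbl(\Mdbl),\diffonnocrossdiff)\bigr)^{\involution_E},
\end{equation*}
and then apply formal Hodge theory on the closed manifold $\Mdbl$ to identify the right-hand side with $(\ker[\diffonnocrossdiff,Q^{\GF}])^{\involution_E}$.

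For the first isomorphism, I would stack Proposition \ref{prop: ultraharm} and Lemma \ref{lem: ultraharmdbl}. The former provides a quasi-isomorphism $\ultrafields \to \condnocrossdiff$, so it suffices to compute $H^\bullet \ultrafields(M)$. The latter identifies $\ultrafields(M)$ with the subcomplex $(\sEdbl(\Mdbl),\diffonnocrossdiff)^{\involution_E}$ of $\involution_E$-invariants. Since $\involution_E$ is an honest involution and we are working over a field of characteristic zero, the averaging projector $\tfrac{1}{2}(\id + \involution_E)$ is a chain map splitting $\sEdbl(\Mdbl) = \sEdbl(\Mdbl)^{\involution_E} \oplus \sEdbl(\Mdbl)^{-\involution_E}$ into a direct sum of subcomplexes. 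Cohomology commutes with finite direct sums, so passing to $\involution_E$-invariants commutes with $H^\bullet$, which gives the desired isomorphism.

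For the second isomorphism, I would use that $M$ is compact, so $\Mdbl$ is a closed manifold. By hypothesis $Q^{\GF}$ is the formal adjoint of $\diffonnocrossdiff$ on $\sEdbl$ and $[\diffonnocrossdiff,Q^{\GF}]$ is a generalized Laplacian, so $(\sEdbl,\diffonnocrossdiff)$ is an elliptic complex on a closed manifold with a chosen adjoint. Standard formal Hodge theory (e.g.\ as in \autocite{wellshodge}) then supplies an orthogonal Hodge decomposition whose harmonic part is $\ker[\diffonnocrossdiff,Q^{\GF}]$ and provides a canonical isomorphism $H^\bullet(\sEdbl(\Mdbl),\diffonnocrossdiff) \cong \ker[\diffonnocrossdiff,Q^{\GF}]$. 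Because $\involution$ is an isometry of $\Mdbl$ and $\involution_E$ is an isometry of $\Edbl$ covering it, the adjoint $Q^{\GF}$ is $\involution_E$-equivariant, whence the Hodge decomposition is $\involution_E$-equivariant; taking $\involution_E$-invariants then yields the second isomorphism and completes the proof.

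The main obstacle I anticipate is verifying cleanly that $\involution_E$ commutes with $Q^{\GF}$ and with the Hodge decomposition: this requires that the chosen metric on $\sEdbl$ (and the induced metric on $\Mdbl$ coming from $[\diffonnocrossdiff,Q^{\GF}]$) be genuinely $\involution_E$-invariant, which is built into our hypotheses but should be stated explicitly. Everything else in the argument is a routine combination of the averaging trick with standard elliptic theory on a closed manifold.
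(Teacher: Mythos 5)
Your proposal is correct and follows essentially the same route as the paper: Proposition \ref{prop: ultraharm} plus Lemma \ref{lem: ultraharmdbl} together with exactness of taking $\involution_E$-invariants (your averaging-projector argument is exactly why invariants are exact in characteristic zero) for the first isomorphism, and Hodge theory on the closed double for the second. The only cosmetic difference is that the paper verifies $\involution_E$-equivariance of the Hodge isomorphism by checking directly that the harmonic representative of an invariant class is invariant (using that $\involution_E$ commutes with $\diffonnocrossdiff$ and $Q^{\GF}$, cf.\ Lemma \ref{lem: qgfextends}), whereas you invoke equivariance of the Hodge decomposition wholesale — the same content, packaged slightly more abstractly.
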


\begin{proof}
The first isomorphism arises from Proposition \ref{prop: ultraharm}, Lemma \ref{lem: ultraharmdbl}, and the fact that taking $\involution$-invariants is an exact functor. For the second isomorphism, note that Hodge theory gives an isomorphism
\begin{equation}
H^\bullet \sEdbl(\Mdbl)\cong \ker [\diffonnocrossdiff,Q^{\GF}].
\end{equation}
Let $[e] \in H^\bullet (\sEdbl(\Mdbl)^{\involution_E})\cong H^\bullet(\sEdbl(\Mdbl)^{\involution_E})$, i.e. $\diffonnocrossdiff e=0$ and $\involution_E e = e$. Let $e_{harm}$ be a harmonic representative of $[e]$, i.e. $e=e_{harm}+\diffonnocrossdiff e'$. 
Here, $e_{harm}$ and $\diffonnocrossdiff e'$ are unique.
Now,
\begin{align}
\involution_E e &= \involution_E e_{harm}+\involution_E \diffonnocrossdiff e'\\
&= \involution_E e_{harm}+ \diffonnocrossdiff\involution_E e'\\
& = e_{harm}+\diffonnocrossdiff e',
\end{align}
where in the second equality we have used the fact that $\involution$ and $\diffonnocrossdiff$ commute and in the third equality we have used $\involution e = e$.
The section $\involution e_{harm}$ is still harmonic because $\involution$ commutes with $\diffonnocrossdiff$ and $Q^{\GF}$. 
It follows that $e_{harm}=\involution e_{harm}$; 
hence, the map $[e]\mapsto e_{harm}$ takes $\involution$-invariants to $\involution$-invariants. 
The inverse map $\ker [\diffonnocrossdiff,Q^{\GF}]\to H^\bullet (\sEdbl(\Mdbl))$ manifestly sends invariants to invariants, so that we have established the second isomorphism of the Lemma.
\end{proof}

\subsection{Topological Mechanics: an Extended Example}
\label{subsec: topmechdbl}
Consider topological mechanics (Example \ref{ex: toplmech}) on $M=[0,1]$. For this theory, $\sE=\Omega^\bullet_{[0,1]}\otimes V$, where $V$ is a symplectic vector space. 
Choose two Lagrangians $L_1,L_2\subset V$ and vector space complements $L'_1,L'_2$. 
Since $\sEb=V\oplus (-V)$, $L:=L_1\oplus L_2$ is a boundary condition, and $L'_1\oplus L'_2$ is a complement for $L_1\oplus L_2$.

Because $\Qb = 0$ manifestly preserves $L$ and $L'$, we may verify that $(\sE,\sL)$ is amenable to doubling by taking $A=0$, $B=E$ (cf. Example \ref{ex: nocrossdifffromstart}).
It follows that $(\sE,\diffonnocrossdiff)=(\condfields,\diff)$.
Moreover, it is straightforward to compute that $H^0 \condfields(M)\cong L_1\cap L_2$ and $H^1 \condfields(M) \cong V/(L_1+L_2)$.
(Note that $H^\bullet\condfields(M)$ has a canonical $(-1)$-shifted symplectic pairing.)

Now, use the standard Euclidean metric to define the degree --1 operator $\delta = i_{\partial /\partial t} \frac{d}{dt}$. 
This operator is manifestly amenable to doubling.
The operator $H:=d_{dR}\delta+\delta d_{dR}$ is identically the operator $d^2/dt^2$ on~$\sE$.

Let us turn to the study of $\Mdbl$ and $\Edbl$.
$\Mdbl$ is easily seen to be a circle of circumference 2. We write $\Mdbl = \RR/2\ZZ$.
Then $\involution$ can be identified with the map induced from the orientation-reversing automorphism $t\mapsto -t$ of $\RR$.
Let $\mathrm{M\ddot o b}\to S^1$ denote the M\"obius bundle on $S^1$.
One may check that
\begin{align}
    \label{eq: edbltopmech}
\Edbl&\cong (\Lambda^\bullet(T^*S^1))\otimes \mathrm{M\ddot{o}b}\otimes (L'_1\cap L_2\oplus L_1\cap L'_2) \\&\oplus (\Lambda^\bullet(T^*S^1))\otimes (L_1\cap L_2\oplus L'_1\cap L'_2). \nonumber
\end{align}
The differential $\diffonnocrossdiff$ on the first summand of $\sEdbl$ is the natural one induced on $\Omega^\bullet_{S^1}(\Mob)$ from the flat connection on the M\"obius bundle; on the second summand, $\diffonnocrossdiff$ is simply the de Rham differential.
The formula $i_{\del/\del t}\frac{d}{dt}$ is well-defined on $\Omega^\bullet_{S^1}(\Mob)$ (as well as on $\Omega^\bullet_{S^1}$), and coincides with the extension of $Q^{GF}$ from $E$ to $\Edbl$.
The operator $[\diffonnocrossdiff,Q^{GF}]$ can be written simply $\frac{d^2}{dt^2}$,
i.e. there is no ambiguity in writing this operator even on sections of the M\"obius bundle.

On sections in $\sEdbl$, $\involution_E$ acts as follows.
Let us identify the space of global sections of $\Mob$ with the space of smooth functions $f$ on $\RR$ satisfying
\begin{equation}
f(t)=-f(t+2)
\end{equation}
for all $t$.
Consider the involution $\sigma_0$ of $\cinfty(S^1, \Mob)$ given by $f(t)\mapsto f(2-t)$.
The involution $\sigma_0$ preserves the flat connection on $\Mob$; hence it extends to an involution of $\Omega^\bullet(S^1;\Mob)$.
Let $\involution^*$ denote the involution of $\Omega^\bullet(S^1)$ induced by pullback along $\involution:S^1 \to S^1$.
Then, using the ordering of the summands in $\Edbl$ as in Equation \eqref{eq: edbltopmech}, we find that $\involution_E$ acts on $\sEdbl$ as
\begin{equation}
-\sigma_0 \oplus \sigma_0 \oplus \involution^* \oplus -\involution^*
\end{equation}

It follows directly from our discussions that the kernel of $[\diffonnocrossdiff,Q^{GF}]$ is 
\begin{equation}
1\otimes \RR[dt] \otimes (L_1\cap L_2\oplus L'_1\cap L'_2).
\end{equation}
Finally, the $\involution_E$-invariants in this space are
\begin{equation}
1\otimes L_1\cap L_2 \oplus dt\otimes L'_1\cap L'_2.
\end{equation}
Using the identification $L'_1\cap L'_2\cong V/(L_1+L_2)$, we find that our answer for 
\begin{equation}
(\ker [\diffonnocrossdiff,Q^{GF}])^{\involution_E}
\end{equation}
agrees with the computation of $H^\bullet(\condfields(M),\diff)$ we performed earlier.
\subsection{Finding a parametrix for the Laplacian}
\label{sec: parametrices}
Having established the utility of the class of gauge fixings which are amenable to the doubling trick, let us turn to a discussion of parametrices, which will be an essential tool for the construction of quantum bulk-boundary systems.
Throughout this subsection, we will assume $(\sE,\sL)$ is amenable to doubling, and $Q^{\GF}$ is a gauge fixing which is amenable to doubling.
We will also liberally use the identification $\Edbl\cong \Edbl^![-1]$ afforded to us by the existence of $\ip_{loc,DBL}$.
Finally, we will use simply the symbol $\involution$ in place of $\involution_E$, since there will be no need to use the involution of $\Mdbl$ in this section.

\begin{definition}
\label{def: param}
A \textbf{$\involution$-invariant parametrix} for the operator $H:=[\diffonnocrossdiff,Q^{\GF}]$ is a distributional section 
\index[notation]{Phi@$\Phi$}
\begin{equation}
\Phi \in \overline{\sEdbl}\widehat{\otimes}_\pi \overline{\sEdbl}
\end{equation}
of the bundle $\Edbl\boxtimes\Edbl$ over $\Mdbl\times \Mdbl$ with the following properties:
\begin{enumerate}
\item $\Phi$ is symmetric under the natural $\Z/2$ action exchanging the two factors of the tensor product.
\item $\Phi$ is of cohomological degree $+1$.
\item $\Phi$ has proper support, i.e. the projection maps $\supp(\Phi)\subset M^2\to M$ are both proper.
\item Let $T_\Phi$ denote the induced operator $\sEdblcs\to \overline{\sEdbl}$. We require that the operator
\begin{equation}
HT_{\Phi} - (\involution+1)
\end{equation}
have a representation as convolution with a smooth section of $E\boxtimes E$.
\item The relation
\begin{equation}
(1\otimes \involution) \Phi = \Phi
\end{equation}
holds, where $\involution$ is the involution on distributional sections induced in the usual way from the action of $\involution$ on smooth sections. (Together with condition (1), this condition also implies $(\involution\otimes 1)\Phi=\Phi$.)
\item With respect to the decomposition $\Edbl\cong \Adbl\oplus \Bdbl$, we require that $\Phi$ have no component in $\Adbl\boxtimes \Adbl$. 
\end{enumerate}
A \textbf{non-invariant parametrix} (see Definition 8.2.4.2 of \autocite{CG2}) satisfies conditions (1)-(3) of the above definition, along with condition (4) with the identity operator in place of $(\involution+1)$.
\end{definition}

\begin{remark}
In the sequel, we will often use the terms ``parametrix'' and ``$\involution$-invariant parametrix'' interchangeably. If we mean specifically a non-invariant parametrix, we will always use that term.
\end{remark}

\begin{remark}
Condition (6) in Definition \ref{def: param} is a technical condition that will allow us to treat the term in the action induced from $\pertdiff$ as an interaction term. 
\end{remark}
\begin{lemma}
\label{lem: noninvttoinvtparam}
If $\Psi$ is a non-invariant parametrix which satisfies condition (6) in Definition \ref{def: param}, then 
\begin{equation}
\Psi^\involution:=\frac{1}{2}(1\otimes 1+1\otimes \involution + \involution\otimes 1+\involution\otimes \involution)\Psi
\end{equation}
is a $\involution$-invariant parametrix.
\end{lemma}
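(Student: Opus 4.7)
The plan is to verify conditions (1)--(6) of Definition \ref{def: param} for $\Psi^\involution$ in turn. Conditions (1)--(3), (5), and (6) are essentially formal. For (1), note that the tensor-swap $\tau$ satisfies $\tau\Psi = \Psi$ (because $\Psi$ is a non-invariant parametrix) and $\tau(\involution\otimes 1) = (1\otimes\involution)\tau$, so the four summands defining $\Psi^\involution$ are permuted among themselves by $\tau$. Condition (2) is immediate since each of $1\otimes\involution$, $\involution\otimes 1$, and $\involution\otimes\involution$ has cohomological degree zero. For (3), the support of $(\involution\otimes 1)\Psi$ is the image of $\supp\Psi$ under the diffeomorphism $\involution\times\id$ of $\Mdbl\times\Mdbl$, hence remains properly supported; a finite sum of properly supported distributions is properly supported. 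Condition (5) follows from the computation that $(1\otimes\involution)$ permutes the four summands cyclically via $\involution^2 = 1$. For (6), Definition \ref{def: amenable}(3) ensures $\involution_E$ preserves the splitting $\Edbl = \Adbl\oplus \Bdbl$, so none of the four summands acquires an $\Adbl\boxtimes\Adbl$ component that $\Psi$ itself lacks.

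The substantive step is condition (4), for which the key input is that $H = [\diffonnocrossdiff, Q^{\GF}]$ commutes with $\involution_E$ by Lemmas \ref{lem: qandipdouble} and \ref{lem: qgfextends}. The plan is to translate kernel-level operations into operator-level ones. Writing $T_\Psi\colon \sEdblcs \to \overline{\sEdbl}$ for the operator associated to $\Psi$ via the pairing $\ip_{loc,DBL}$, and using the $\involution_E$-equivariance of that pairing (Lemma \ref{lem: qandipdouble}), one obtains the dictionary $T_{(1\otimes\involution)\Psi} = \involution\circ T_\Psi$ and $T_{(\involution\otimes 1)\Psi} = T_\Psi\circ \involution$. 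Consequently
\[
T_{\Psi^\involution} \;=\; \tfrac{1}{2}(1+\involution) \circ T_\Psi \circ (1+\involution).
\]

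Applying $H$ on the left, using that $H$ is $\involution$-equivariant, the non-invariant parametrix condition $HT_\Psi = 1 + S$ (for some smoothing $S$), and the identity $(1+\involution)^2 = 2(1+\involution)$ from $\involution^2 = 1$, one computes
\[
HT_{\Psi^\involution} \;=\; \tfrac{1}{2}(1+\involution)\,HT_\Psi\,(1+\involution) \;=\; \tfrac{1}{2}(1+\involution)^2 + \tfrac{1}{2}(1+\involution)S(1+\involution) \;=\; (1+\involution) + \tfrac{1}{2}(1+\involution)S(1+\involution).
\]
Since the class of smoothing operators is closed under pre- and post-composition with the bounded operator $\involution_E$, the residual term is smoothing, which is exactly condition (4). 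I expect the main obstacle to be purely notational: setting up unambiguously the dictionary between tensor-factor actions on the kernel $\Psi$ and operator composition with $\involution$ on $T_\Psi$, and in particular verifying the equivariance of the pairing on which this dictionary rests. Once that bookkeeping is fixed, the proof reduces to the elementary algebraic identity $(1+\involution)^2 = 2(1+\involution)$ together with the commutativity $[H,\involution] = 0$.
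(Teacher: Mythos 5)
Your proposal is correct and follows essentially the same route as the paper: conditions (1)--(3), (5), (6) are handled by the same immediate observations (with (6) resting on $\involution$ preserving $\Adbl\oplus\Bdbl$), and for (4) the paper makes exactly your computation, writing $HT_{\Psi^\involution} = \tfrac{1}{2}\left(HT_\Psi + HT_\Psi\involution + \involution HT_\Psi + \involution HT_\Psi\involution\right)$ via $[H,\involution]=0$ and then using $HT_\Psi \sim \mathrm{id}$ up to smoothing operators to conclude $HT_{\Psi^\involution}\sim 1+\involution$. The only cosmetic difference is that you phrase it as the operator identity $\tfrac{1}{2}(1+\involution)T_\Psi(1+\involution)$ together with $(1+\involution)^2=2(1+\involution)$, while the paper evaluates the same expression on a test section $e$.
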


\begin{proof}
Conditions (1)-(3) and (5) are immediate. For condition (4), we note that $H$ commutes with $\involution$, and $\ultrafields(M)\cong \sEdbl(\Mdbl)^\involution$. Let us write $T_\Psi$ for the operator induced from $\Psi$. Then, given $e\in \sEdbl(\Mdbl)$,
\begin{align}
HT_{\Psi^\involution} e-(e+\involution e)&=\frac{1}{2}\left(HT_\Psi e+HT_\Psi\involution e+\involution HT_\Psi e+\involution H T_\Psi \involution e\right) - e+\involution e\\
&\sim \frac{1}{2}\left( e+\involution e+\involution e+e\right)-(e+\involution e)=0.
\end{align}
where the symbol ``$\sim$'' means ``differs only by smoothing operators from''. Finally, we note that since $\involution$ preserves the decomposition $\Edbl\cong \Adbl\oplus \Bdbl$, $\Psi^\involution$ satisfies condition (6) if $\Psi$ does. 
\end{proof}

\begin{ucorollary}
$\involution$-invariant parametrices exist.
\end{ucorollary}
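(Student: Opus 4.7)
The plan is to reduce to Lemma \ref{lem: noninvttoinvtparam}: if I can produce a non-invariant parametrix $\Psi$ satisfying condition (6) of Definition \ref{def: param}, then the symmetrization $\Psi^\involution$ is automatically a $\involution$-invariant parametrix. So the task reduces to constructing a suitable $\Psi$.

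First I would build $\Psi$ from the heat kernel of the generalized Laplacian $H = [\diffonnocrossdiff, Q^{\GF}]$ on $\sEdbl$. Because $\Mdbl$ is a manifold \emph{without} boundary, the entire standard theory of Chapter~8 of \autocite{CG2} applies to the elliptic operator $H$: the heat kernel $K_t \in \overline{\sEdbl}\widehat\otimes_\pi \overline{\sEdbl}$ exists, has the correct small-$t$ asymptotics, and the convolution
\begin{equation}
\Psi := \int_0^1 (Q^{\GF}\otimes 1)K_t\, dt
\end{equation}
is a non-invariant parametrix in the sense of Definition \ref{def: param}. This step is entirely parallel to the Costello--Gwilliam story and I would simply quote it.

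The second step, and the only genuinely new input, is to arrange that $\Psi$ has no $\Adbl\boxtimes\Adbl$ component. Here I would use that $Q^{\GF}$ preserves $\sA$ and $\sB$ (Definition \ref{def: amenablegf}) and that $\diffonnocrossdiff$ preserves $\sA$ and $\sB$ (condition~(5) of Definition \ref{def: amenable}); consequently $H$ and the whole semigroup $e^{-tH}$ preserve the decomposition $\sEdbl \cong \sAdbl \oplus \sBdbl$. Translated into the language of Schwartz kernels via the identification $\overline{\sEdbl}\widehat\otimes_\pi \overline{\sEdbl} \cong \overline{\sEdbl}\widehat\otimes_\pi \overline{\sEdbl^!}[-1]$ coming from $\ip_{loc, DBL}$, together with the fact that $\ip_{loc}$ vanishes on $A\otimes A$ (condition~(1) of Definition \ref{def: amenable}), this preservation forces $K_t$ to have no $\Adbl\boxtimes\Adbl$ component. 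Applying the $\sA$- and $\sB$-preserving operator $Q^{\GF}\otimes 1$ and integrating preserves this vanishing, so $\Psi$ satisfies condition~(6).

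Finally, Lemma \ref{lem: noninvttoinvtparam} produces the desired $\involution$-invariant parametrix $\Phi := \Psi^\involution$. The main subtlety I anticipate is the bookkeeping in the second step: one must carefully pair a Schwartz kernel with the non-degenerate but not diagonal pairing $\ip_{loc, DBL}$ to correctly relate ``the associated operator preserves the $A/B$ decomposition'' to ``the kernel has no $A\boxtimes A$ piece.'' All other assertions---properness of support, degree, symmetry, and the smoothing property of $HT_\Phi - (\involution + 1)$---are routine consequences of the standard heat-kernel estimates together with $\involution$-invariance of $H$.
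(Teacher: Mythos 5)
Your overall strategy is the same as the paper's: produce a non-invariant parametrix with no $\Adbl\boxtimes\Adbl$ component and then symmetrize via Lemma \ref{lem: noninvttoinvtparam}. The genuine problem is the formula you offer for $\Psi$. The expression $\int_0^1 (Q^{\GF}\otimes 1)K_t\,\d t$ is the \emph{propagator} attached to the parametrix $\int_0^1 K_t\,\d t$, not a parametrix (compare Equation \eqref{eq: propagatordef} and the formula for $\widetilde P(\epsilon,L)$ in Section \ref{sec: 1dbfkernelprop}). Concretely, it fails Definition \ref{def: param} on two counts: the heat kernel has cohomological degree $+1$, so $(Q^{\GF}\otimes 1)K_t$ has degree $0$ rather than the required $+1$ (condition (2)); and since $Q^{\GF}$ commutes with $H$, applying $H$ to the associated operator gives $Q^{\GF}\bigl(1-e^{-H}\bigr)$, i.e.\ $Q^{\GF}$ modulo smoothing operators rather than the identity modulo smoothing operators, so condition (4) fails. (It is also not symmetric without the $\tfrac12(Q^{\GF}\otimes 1+1\otimes Q^{\GF})$ symmetrization, but that is moot.) The fix is to take $\Psi=\chi\cdot\int_0^1 K_t\,\d t$ with $\chi$ a properly supported cutoff equal to $1$ near the diagonal; the cutoff is not optional, because on a noncompact $\Mdbl$ the heat kernel never has proper support, so condition (3) would otherwise fail as well. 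This corrected construction is exactly the paper's later ``fake heat kernel'' construction (Definition \ref{def: fakeheat}); the paper's own proof of the corollary sidesteps it entirely by quoting standard elliptic theory (Wells, Theorem IV.4.4) for the existence of non-invariant parametrices.

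On the condition-(6) step your argument is essentially the paper's in different clothing. The paper uses that $H$ respects $\sEdbl\cong\sAdbl\oplus\sBdbl$ to build parametrices $\Phi_A$ and $\Phi_B$ for the two blocks separately, and then observes that their kernels lie in $A\boxtimes A^!$ and $B\boxtimes B^!$, with $A^!$ landing inside $B$ under the identification by $\ip_{loc,DBL}$ because $A$ pairs trivially with itself. Your statement that the semigroup $e^{-tH}$ is block diagonal and hence the kernel has no $\Adbl\boxtimes\Adbl$ piece is the same observation, and the ``bookkeeping subtlety'' you flag is precisely the identification the paper invokes. So apart from the propagator/parametrix slip and the missing support cutoff, the proposal follows the paper's route, with the heat-kernel existence argument substituting for the citation of Wells.
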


\begin{proof}
It is a standard fact of elliptic theory (cf. Theorem IV.4.4 of \autocite{wellshodge}) that non-invariant parametrices exist. 
We need to show that a non-invariant parametrix satisfying condition (6) in Definition \ref{def: param} exists. 
To see this, note that  $H=[\diffonnocrossdiff,Q^{\GF}]$ respects the decomposition $\sEdbl\cong \sAdbl\oplus \sBdbl$. 
We can therefore find non-invariant parametrices $\Phi_A$, $\Phi_B$ for $H$ on $\sAdbl$ and $\sBdbl$ separately. 
The standard theory presents $\Phi_A$ as a distributional section of $A\boxtimes A^!$, and similarly for $\Phi_B$.  
We would like, however, to use the pairing $\ip_{loc}$ to identify $E$ with $E^!$, and therefore $\Phi_A$ and $\Phi_B$ with distributional sections of $E\boxtimes E$. 
Under this identification, $A^!$ is identified with a subbundle of $B$ (since $A$ pairs trivially with itself). 
Hence, none of $\Phi_A,\Phi_B$ has a non-zero component in $A\boxtimes A$.
\end{proof}

\begin{example}
\label{ex: toplmechprop}
Let us consider topological mechanics again, but with $M=[0,\infty)$. We let $V$ be a symplectic vector space, $L$ a Lagrangian in $V$, and $L'$ a Lagrangian complement to $L$. In this example, $A=0$, and $B=\Lambda^\bullet(T^*M)\otimes V$. Then, $\Mdbl=\R$, $\sEdbl=\Omega^\bullet_\R\otimes V$, $\involution= \tau^*\otimes \Gamma$. If we set $Q^{\GF}=i_{\partial/\partial t}\frac{\partial}{\partial t}$, then $H=\frac{d^2}{dt^2}$. A non-invariant parametrix for $H$ is given by
\begin{equation}
\Phi_0:=\frac{1}{2}F(t,s)|t-s|(\d t-\d s) \otimes \omega^{-1},
\end{equation}
where $F$ is a symmetric function of $s$ and $t$ which has proper support and is 1 in a neighborhood of the diagonal in $\Mdbl\times\Mdbl\cong \R^2$. We also require that $F(t,s)=F(-t,-s)$. 
For example, we may choose a compactly-supported, even function $f_0$ on $\RR$ which is identically 1 in a neighborhood of 0, and let $F(t,s)=f_0(t-s)$

We compute
\begin{align}
(\involution\otimes 1)\Phi_0 &= \frac{-1}{2}F(-t,s)|t+s|\left( (\d s+\d t)\otimes ((\Gamma\otimes 1)\omega^{-1})\right)\\
(1\otimes \involution)\Phi_0 &= \frac{1}{2}F(t,-s)|t+s|\left( (\d t+ \d s )\otimes (( 1\otimes \Gamma)\omega^{-1})\right)\\
(\involution\otimes \involution)\Phi_0 &= -\frac{1}{2}F(t,s)|t-s|\left((\d t - \d s)\otimes ((\Gamma\otimes \Gamma)\omega^{-1})\right).\\
\end{align}
Note that 
\begin{equation}
\label{eq: omegadecomp}
\omega^{-1}\in L\otimes L' \oplus L'\otimes L,
\end{equation} so that $(\Gamma\otimes \Gamma)\omega^{-1}=-\omega^{-1}$, and $(\involution\otimes \involution)\Phi_0=\Phi_0$. Similarly, one finds that $(\involution\otimes 1)\Phi_0=(1\otimes \involution)\Phi_0$. If we write $\omega^{-1}=\omega^{-1}_{LL'}+\omega^{-1}_{L'L}$ to denote the decomposition of $\omega^{-1}$ given by Equation \ref{eq: omegadecomp}, then we find that 
\begin{align}
\label{eq: full1dprop}
\Phi_0^\involution=& \frac{1}{2}F(t,s)|t-s|\left(	(\d t-\d s)\otimes \omega^{-1} \right)\nonumber\\
&-\frac{1}{2}F(-t,s)|t+s|\left ((\d t+\d s)\otimes (\omega^{-1}_{LL'}-\omega^{-1}_{L'L})\right)
\end{align}
is a $\involution$-invariant parametrix.
\end{example}

The following is  a slight modification of Lemma 8.2.4.3 of \autocite{CG2}:
\begin{lemma}
\label{lem: propsofparametrix}
$\involution$-invariant parametrices possess the following properties:
\begin{enumerate}
\item The difference of two $\involution$-invariant parametrices $\Phi,\Psi$ is a smooth section of $\Edbl\boxtimes\Edbl$ over $\Mdbl\times \Mdbl$. 
\item A $\involution$-invariant parametrix $\Phi$ is smooth away from the diagonal and the anti-diagonal in $\Mdbl\times \Mdbl$, i.e. $\Phi$ is smooth away from the locus of points of the form $(x,x)$ and $(x,\involution x)$. 
\item Given a $\involution$-invariant parametrix $\Phi$, $(\diffonnocrossdiff\otimes 1+1\otimes \diffonnocrossdiff)\Phi$ is a smooth section of $\Edbl\boxtimes\Edbl$ over $\Mdbl\times \Mdbl$.
\end{enumerate}
\end{lemma}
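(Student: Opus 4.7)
My plan is to reduce all three claims to elliptic regularity on $\Mdbl \times \Mdbl$ applied to the operator $H \otimes 1 + 1 \otimes H$, where $H := [\diffonnocrossdiff, Q^{\GF}]$. Three preliminary observations underlie the whole argument: $H$ is a generalized Laplacian on $\Mdbl$ and hence elliptic; $H$ commutes with $\diffonnocrossdiff$, since $\diffonnocrossdiff^2 = 0$ forces $[H, \diffonnocrossdiff] = \diffonnocrossdiff Q^{\GF} \diffonnocrossdiff - \diffonnocrossdiff Q^{\GF} \diffonnocrossdiff = 0$; and $H \otimes 1 + 1 \otimes H$ is itself elliptic on $\Mdbl \times \Mdbl$ because its principal symbol at $(x,y,\xi,\eta)$ is the sum $\sigma_H(x,\xi) + \sigma_H(y,\eta)$ of two nondegenerate expressions in the nonzero covectors. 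I will also repeatedly invoke the $\ZZ/2$-symmetry of parametrices under exchange of the two factors of $\Edbl \boxtimes \Edbl$.

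For part (1), the defining identities for parametrices give that $H T_{\Phi - \Psi} = (H T_\Phi - (\involution + 1)) - (H T_\Psi - (\involution + 1))$ is a smoothing operator, so $(H \otimes 1)(\Phi - \Psi)$ is a smooth section of $\Edbl \boxtimes \Edbl$. By the exchange symmetry, $(1 \otimes H)(\Phi - \Psi)$ is smooth as well, so $(H \otimes 1 + 1 \otimes H)(\Phi - \Psi)$ is smooth, and elliptic regularity then forces $\Phi - \Psi$ to be smooth. For part (2), the same strategy applies with a nontrivial right-hand side: $(H \otimes 1)\Phi = K_{\involution + 1} + K_{\mathrm{sm}}$, where $K_{\mathrm{sm}}$ is smooth and $K_{\involution + 1}$ is the distributional kernel of the operator $\involution + 1$, whose singular support is exactly $\Delta \cup \Delta_\involution$, the union of the diagonal with the anti-diagonal $\{(x, \involution x)\}$ in $\Mdbl \times \Mdbl$. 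By symmetry and ellipticity of $H \otimes 1 + 1 \otimes H$, the singular support of $\Phi$ is then contained in $\Delta \cup \Delta_\involution$.

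For part (3), I will exploit the commutativity of $H$ with $\diffonnocrossdiff$. Since $H \otimes 1$ commutes with each of $\diffonnocrossdiff \otimes 1$ and $1 \otimes \diffonnocrossdiff$, we obtain
\begin{equation*}
(H \otimes 1)\bigl[(\diffonnocrossdiff \otimes 1 + 1 \otimes \diffonnocrossdiff)\Phi\bigr] = (\diffonnocrossdiff \otimes 1 + 1 \otimes \diffonnocrossdiff)\bigl[K_{\involution + 1} + K_{\mathrm{sm}}\bigr].
\end{equation*}
The crux of part (3) is the identity $(\diffonnocrossdiff \otimes 1 + 1 \otimes \diffonnocrossdiff) K_{\involution + 1} = 0$. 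This follows from the general principle that, under the passage from operators to integral kernels using the (skew) self-adjoint pairing $\ip_{DBL}$, the differential operator $\diffonnocrossdiff \otimes 1 + 1 \otimes \diffonnocrossdiff$ acting on kernels corresponds, up to Koszul signs, to the graded commutator $[\diffonnocrossdiff,\,\cdot\,]$ on the associated operators. Since $\involution + 1$ commutes with $\diffonnocrossdiff$ (by Lemma \ref{lem: qandipdouble}, $\diffonnocrossdiff$ extends $\involution$-equivariantly to $\sEdbl$), its kernel is annihilated. The right-hand side thus reduces to $(\diffonnocrossdiff \otimes 1 + 1 \otimes \diffonnocrossdiff) K_{\mathrm{sm}}$, which is smooth, and the now-familiar symmetry/ellipticity argument concludes.

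The only genuinely fiddly step is the kernel identity used in part (3): checking that $(\diffonnocrossdiff \otimes 1 + 1 \otimes \diffonnocrossdiff)$ annihilates kernels of operators commuting with $\diffonnocrossdiff$ requires carefully combining the (almost) skew-self-adjointness of $\diffonnocrossdiff$ with respect to $\ip_{DBL}$ (Lemma \ref{lem: qandipdouble}) with the Koszul signs in the symmetric monoidal structure on graded vector spaces. Everything else is a direct application of elliptic regularity.
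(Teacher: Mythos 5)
Your proof is correct and follows essentially the same route as the paper: parts (1) and (2) are the same elliptic-regularity argument for $H\otimes 1 + 1\otimes H$, and part (3) is exactly the argument the paper cites from \autocite{CG2}, with the kernel identity $(\diffonnocrossdiff\otimes 1 + 1\otimes\diffonnocrossdiff)K_{\involution+1}=0$ resting on the $\involution$-equivariance of $\diffonnocrossdiff$ (Lemma \ref{lem: qandipdouble}), which is precisely the extra input the paper flags. You have merely written out the details the paper defers to the reference, so there is no substantive difference.
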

\begin{proof}
\begin{enumerate}
\item $(H\otimes 1+1\otimes H)(\Phi-\Psi)$ is smooth, and $H\otimes 1+1\otimes H$ is a generalized Laplacian on $\sE\otimes \sE$, so is elliptic. $\Phi-\Psi$ is therefore smooth by elliptic regularity.
\item $(H\otimes 1+1\otimes H)\Phi$ is smooth away from this locus, so the result follows by elliptic regularity.
\item The proof of this statement is nearly identical to the corresponding proof in \autocite{CG2}, except we use also that $\diffonnocrossdiff$ commutes with $\involution$. 
\end{enumerate}
\end{proof}
Now, we define 
\index[notation]{PPhi@$P(\Phi)$}
\begin{equation}
\label{eq: propagatordef}
P(\Phi):=\frac{1}{2}(Q^{\GF}\otimes 1+1\otimes Q^{\GF})\Phi \in \overline{\sEdbl}(\Mdbl)\hotimes_\pi\overline{\sEdbl}(\Mdbl),
\end{equation}

and
\index[notation]{KPhi@$K_\Phi$}
\begin{equation}
\label{eq: heatkerneldef}
K_\Phi:=K_{(\involution+1)}-(\diffonnocrossdiff\otimes 1+1\otimes \diffonnocrossdiff)P(\Phi)\in \overline{\sEdbl}(\Mdbl)\hotimes_\pi \overline{\sEdbl}(\Mdbl).
\end{equation}

\begin{definition}
The distribution $P(\Phi)$ defined in Equation \eqref{eq: propagatordef} is the \textbf{propagator for $\Phi$}.
The distribution $K_\Phi$ defined in Equation \eqref{eq: heatkerneldef} is the \textbf{BV heat kernel for $\Phi$}.
\end{definition}

Now, we show that the BV heat kernel for $\Phi$ is actually a smooth section of $\Edbl\boxtimes\Edbl$ over $\Mdbl\times \Mdbl$.
Note that  
\begin{align}
(\diffonnocrossdiff\otimes 1+1\otimes \diffonnocrossdiff)P(\Phi)&=\frac{1}{2}(\diffonnocrossdiff\otimes 1+1\otimes\diffonnocrossdiff)(Q^{\GF}\otimes 1+1\otimes Q^{\GF})\Phi\\
&=\frac{1}{2}(H\otimes 1+1\otimes H)\Phi\\
&-\frac{1}{2}\left( Q^{\GF}\diffonnocrossdiff\otimes 1+1\otimes Q^{\GF}\diffonnocrossdiff-\diffonnocrossdiff\otimes Q^{\GF}+Q^{\GF}\otimes \diffonnocrossdiff\right)\Phi\\
&=\frac{1}{2}(H\otimes 1+1\otimes H)\Phi \\
&-\frac{1}{2}\left( (Q^{\GF}\otimes1)(\diffonnocrossdiff\otimes 1+1\otimes \diffonnocrossdiff)+(1\otimes Q^{\GF})(\diffonnocrossdiff\otimes 1+1\otimes \diffonnocrossdiff)\right)\Phi\nonumber\\
&= K_{(\involution+1)}+\text{smooth kernels},
\end{align}
where the last equality comes from Item 3 in Lemma \ref{lem: propsofparametrix}.
As a consequence, we have the first statement of the following lemma:

\begin{lemma}
\label{lem: propsofkernels}
The following facts about the BV heat kernel and the propagator hold:
\begin{enumerate}
    \item The kernel $K_\Phi$ is a smooth section of $\Edbl\boxtimes \Edbl$ over $\Mdbl\times \Mdbl$.
    \item One has the equation
    \begin{equation}K_\Phi-K_\Psi=(\diffonnocrossdiff\otimes 1+1\otimes \diffonnocrossdiff)(P(\Psi)-P(\Phi)),\end{equation}
    relating the BV heat kernel to the propagator.
    \item One has the identity
    \begin{equation}
        \label{eq: bvkernelclosed}
        (\diffonnocrossdiff\otimes 1+1\otimes\diffonnocrossdiff )K_\Phi =0
    \end{equation}
    concerning the $\diffonnocrossdiff$-closedness of the BV heat kernel.
    \item As smooth sections of $\Edbl\times \Edbl$ over $\Mdbl\times \Mdbl$, both $K_\Phi$ and $P(\Phi)-P(\Psi)$ are separately $\involution$-invariant in both arguments.
    In particular, upon restriction to the non-negative half-manifold, the kernels $(P(\Phi)-P(\Psi)), K_\Phi$ can be seen as elements of the tensor square of the (underlying graded) space of fields $\condfields$.
\end{enumerate}
\end{lemma}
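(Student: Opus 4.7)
The plan is to prove the four assertions in order, exploiting the fact that each $K_\Phi$ and $P(\Phi)$ is assembled from $\Phi$ and the operators $\diffonnocrossdiff$, $Q^{\GF}$, $\involution$, all of which enjoy nice mutual compatibility.

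For part (1), there is essentially nothing new to do: the computation displayed immediately before the statement of the lemma shows that $(\diffonnocrossdiff\otimes 1 + 1\otimes \diffonnocrossdiff)P(\Phi)$ differs from $K_{(\involution+1)}$ by a smooth section. So I would simply cite this computation. For part (2), subtract the defining expressions for $K_\Phi$ and $K_\Psi$; the $K_{(\involution+1)}$ terms cancel and one is left with exactly the displayed identity. No further ingredient is needed.

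For part (3), I would argue $(\diffonnocrossdiff\otimes 1 + 1\otimes \diffonnocrossdiff)K_\Phi = (\diffonnocrossdiff\otimes 1 + 1\otimes \diffonnocrossdiff)K_{(\involution+1)} - (\diffonnocrossdiff\otimes 1 + 1\otimes \diffonnocrossdiff)^2 P(\Phi)$. The second summand vanishes because $\diffonnocrossdiff\otimes 1$ and $1\otimes \diffonnocrossdiff$ are odd operators whose graded commutator is zero, so their sum squares to $\diffonnocrossdiff^2\otimes 1 + 1\otimes \diffonnocrossdiff^2 = 0$. The first summand vanishes because both $\id$ and $\involution$ commute with $\diffonnocrossdiff$ (Lemma \ref{lem: qandipdouble}), so the corresponding operators $\id$ and $\involution$ on $\sEdbl(\Mdbl)$ commute graded-strictly with $\diffonnocrossdiff$; dualizing via the pairing, this commutation translates precisely to $(\diffonnocrossdiff\otimes 1 + 1\otimes \diffonnocrossdiff)$-closedness of the integral kernel. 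I would spell out the dualization carefully, using skew-self-adjointness of $\diffonnocrossdiff$ for $\ip_{DBL}$ (which follows by extending the corresponding property on $\sE$ via Lemma \ref{lem: qandipdouble}).

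For part (4), I would first verify $\involution$-invariance in each argument. The parametrix $\Phi$ is invariant in each argument by conditions (1) and (5) of Definition \ref{def: param}. Since $Q^{\GF}$ commutes with $\involution$ (Lemma \ref{lem: qgfextends}), the propagator $P(\Phi) = \tfrac{1}{2}(Q^{\GF}\otimes 1 + 1\otimes Q^{\GF})\Phi$ inherits invariance in each argument, and likewise $P(\Phi)-P(\Psi)$ does. For $K_\Phi$, the kernel $K_{(\involution+1)}$ of $\id+\involution$ is invariant in either argument since, as operators on $\sEdbl(\Mdbl)$, both $\involution(\id+\involution) = \id+\involution$ and $(\id+\involution)\involution = \id+\involution$; and the second summand $(\diffonnocrossdiff\otimes 1+1\otimes\diffonnocrossdiff)P(\Phi)$ is invariant since $\diffonnocrossdiff$ commutes with $\involution$. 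Having established bi-invariance, I would then use Lemma \ref{lem: ultraharmdbl} to identify the space of $\involution$-invariant smooth sections of $\Edbl\boxtimes\Edbl$ over $\Mdbl\times\Mdbl$ with $\ultrafields\otimes\ultrafields$ on $M\times M$ (the argument being a direct bi-variable version of that Lemma), then include into $\condfields\otimes\condfields$.

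The main obstacle, I expect, will be part (4): the translation from ``$\involution$-invariant smooth section on $\Mdbl\times\Mdbl$'' to ``element of $\condfields\otimes\condfields$ on $M\times M$'' is not formal, since one must track through how the gluing of $\Mdbl$ from two copies of $M$ identifies fields on the product $\Mdbl\times\Mdbl$ with bi-ultra-boundary data on $M\times M$. Everything else reduces to algebraic manipulations with the defining equations and the known commutation relations.
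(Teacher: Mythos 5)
Your proposal is correct and follows essentially the same route as the paper: parts (1) and (2) by the computation preceding the lemma and the definition of $K_\Phi$, part (3) from $(\diffonnocrossdiff\otimes 1+1\otimes\diffonnocrossdiff)^2=0$ together with closedness of $K_{(\involution+1)}$ (which rests on $[\diffonnocrossdiff,\involution]=0$ and skew-adjointness, exactly as in the paper), and part (4) by the same invariance computations using that $\involution$ commutes with $Q^{\GF}$ and $\diffonnocrossdiff$ and that $\involution^2=1$. Your extra care on the ``in particular'' clause of (4) is welcome but not a different argument: the paper's intended mechanism is the same one you describe, namely that bi-invariance forces the $\sL$ boundary condition in each variable upon restriction to $M\times M$, after which Theorem \ref{thm: tensorofdirichlet} identifies such sections with the completed tensor square of $\condfields$.
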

\begin{proof}
The first statement has already been proved. The second statement is immediate from the definition of $K_\Phi$, and the third statement follows from the fact that $(\diffonnocrossdiff\otimes 1+1\otimes \diffonnocrossdiff)K_{(\involution+1)/2}=0$, which we have already used. For the final statement, note that
\begin{align}
(\involution\otimes 1)(P(\Phi)-P(\Psi))=(Q^{\GF}\otimes 1+1\otimes Q^{\GF})(\involution\otimes 1)(\Phi-\Psi)&=P(\Phi)-P(\Psi)\\
(\involution\otimes 1)K_\Phi = K_{(\involution^2+\involution)}-(\involution\otimes 1)(\diffonnocrossdiff\otimes 1+1\otimes \diffonnocrossdiff)P(\Phi)&=K_\Phi,
\end{align}
where we have made ample use of the fact that $\involution$ commutes with $\diffonnocrossdiff$ and $Q^{\GF}$; the corresponding statements for $(1\otimes \involution)$ are similar.
\end{proof}

Lemma \ref{lem: propsofkernels} specifies a number of very important properties of the propagators $P(\Phi)$ and heat kernels $K_\Phi$, properties which we will use repeatedly in the sequel.

\subsection{Parametrices in Topological Mechanics}
In this subsection, we demonstrate the above-described properties of parametrices, propagators, and heat kernels for the case of topological mechanics.

Let us start with the parametrix $\Phi^\involution_0$ described in Equation \eqref{eq: full1dprop}.
We find that
\begin{align}
\label{eq: propfortopmech3}
P(\Phi^\involution_0) &= \frac{1}{2}\left(\left(\frac{\partial F}{\partial t}(t,s)-\frac{\partial F}{\partial s}(t-s)\right)|t-s| + F(t,s)\sgn(t-s)\right) \otimes \omega^{-1}\nonumber\\
-&\frac{1}{2} \left( \left( -\frac{\partial F}{\partial t}(-t,s)+\frac{\partial F}{\partial s}(-t,s)\right)|t+s|+F(-t,s)\sgn(t+s)\right)\otimes(\omega^{-1}_{LL'}-\omega^{-1}_{L'L}).
\end{align}
Let
\begin{equation}
G(t,s) = |t-s|\left(\frac{\partial F}{\partial t}-\frac{\partial F}{\partial s}\right) (t,s).
\end{equation}
Note that, because $F$ is identically one in a neighborhood of the diagonal in $\RR^2$, all its derivatives are zero near the diagonal. Hence, $G$ is zero in a neighborhood of the diagonal, and therefore smooth on $\RR^2$.
Equation \eqref{eq: propfortopmech3} then can be written 
\begin{align}
P(\Phi^\involution_0)   &= \frac{1}{2}\left( G(t,s)+F(t,s)\sgn(t-s)\right)\otimes \omega^{-1}\nonumber\\
-&\frac{1}{2}\left(-G(-t,s)+F(-t,s) \sgn(t+s) \right) \otimes(\omega^{-1}_{LL'}-\omega^{-1}_{L'L}).
\end{align}
Let 
\begin{equation}
\widetilde G(t,s) = -G(-t,s), \quad \widetilde F(t,s) = F(-t,s)
\end{equation}
It follows that 
\begin{align}
 (\diffonnocrossdiff\otimes 1+ 1\otimes \diffonnocrossdiff) P(\Phi^\involution_0) &=\frac{1}{2}\left(\d G+\d F \sgn(t-s)+ 2FK_{id}\right)\otimes \omega^{-1}\nonumber\\
 &-\frac{1}{2}(\d\widetilde G + \d\widetilde F \sgn(t+s)- 2 \widetilde F K_{\tau^*})\otimes (\omega^{-1}_{LL'}-\omega^{-1}_{L'L}).
\end{align}
Here, $\tau$ is, as above, the involution of $\RR$ taking $t$ to $-t$.
As before, we find that $\d F\sgn(t-s)$ and $\d \widetilde F \sgn(t+s)$ are smooth.
Moreover, because $F$ is one on a neighborhood of the diagonal and $K_{id}$ has support on the diagonal $FK_{id}=K_{id}$, and similarly, $\widetilde F K_{\tau^*}=K_{\tau^*}$.
Hence, we find that
\begin{align}
K_{\Phi^\involution_\sigma}&=\frac{1}{2}(\d G+\d F\sgn(t-s))\otimes \omega^{-1}\nonumber\\
&-\frac{1}{2}(\d\widetilde G + \d\widetilde F \sgn(t+s))\otimes(\omega^{-1}_{LL'}-\omega^{-1}_{L'L})
\end{align}
is indeed smooth.
It is straightforward to verify, using the fact that $\d F$ is zero near the diagonal and $\d \widetilde F$ is zero near the anti-diagonal, that $K_{\Phi^\involution_0}$ is closed for $\diffonnocrossdiff$.

Finally, let us verify that $K_{\Phi^\involution_0}$ is $\involution$-invariant in both arguments.
Note that 
\begin{equation}
\widetilde G=-(\tau^*\otimes 1) G=(1\otimes \tau^*)G
\end{equation}
and
\begin{equation}
\widetilde F = (\tau^*\otimes 1) F =(1\otimes \tau^*)F.
\end{equation}
We will also use the fact that $d$--the de Rham differential on $\RR^2$--commutes individually with $\tau^*$ acting in the $t$ and $s$ variables.
Hence,
\begin{align}
(\involution\otimes 1) K_{\Phi^\involution_0} &= \frac{1}{2}( - \d \widetilde G - \d \widetilde F \sgn(t+s))\otimes (\omega^{-1}_{LL'}-\omega^{-1}_{L'L})\\
&+\frac{1}{2}( \d G + \d F \sgn(t-s))\otimes\omega^{-1}=K_{\Phi^\involution_0}. 
\end{align}
The check for $1\otimes \involution$ is similar.

\section{Definition of Quantum Bulk-Boundary Systems}
\label{sec: qbbsdef}
In this section, we define quantum bulk-boundary systems. As mentioned in the Introduction of this Chapter and in the Introduction of the dissertation, we need to address three aspects: the RG equation (Section \ref{subsec: RG}, the QME (Section \ref{subsec: QME}), and asymptotic locality (Section \ref{subsec: def}).
First, however, we discuss more precisely the sorts of interaction functionals we will use in our definition.

\subsection{Spaces of Functionals}
In this section, we introduce the various classes of functionals we will consider in our definition of a bulk-boundary system.
Since we have primarily used the language of local $L_\infty$ algebras to describe classical field theories, let us comment briefly on the switch from $L_\infty$ algebras to local functionals.
Recall that a classical TNBFT comes equipped with the information of a collection $\{\diff, \ell_2,\cdots\}$ of brackets, and with a pairing 
\begin{equation}
\ip: (\condfieldscs(M))^{\hotimes_\beta 2} \to \RR.
\end{equation}
For a classical bulk-boundary system, we may therefore form the action functional
\begin{align}
    S(\varphi) &= \frac{1}{2}\ip[\varphi, \diff \varphi] +\sum_{k=2}^\infty \frac{1}{(k+1)!}\ip[ \varphi, \ell_k \varphi]\nonumber\\
    &=\frac{1}{2}\ip[\varphi, \diffonnocrossdiff \varphi]+\frac{1}{2}\ip[\varphi,\pertdiff\varphi] +\sum_{k=2}^\infty \frac{1}{(k+1)!}\ip[ \varphi, \ell_k \varphi],
\end{align}
which in mathematical terms is a continuous (non-linear) function 
\begin{equation}
\condfieldscs \to \RR.
\end{equation}

We make the definitions
\begin{align}
    S_{\mathrm{free}}(\varphi) &:= \frac{1}{2}\ip[\varphi,\diffonnocrossdiff\varphi]\\
    \label{eq: interactiondef}
    I(\varphi) &:= \frac{1}{2}\ip[\varphi,\pertdiff\varphi] +\sum_{k=2}^\infty \frac{1}{(k+1)!}\ip[ \varphi, \ell_k \varphi].
\end{align}
We will call $S_{\mathrm{free}}$ the \textbf{free part of the action} and $I$ \textbf{the interaction}.
We will define a quantum bulk-boundary system (with underlying free part $S_{\mathrm{free}}$) to be a collection of interaction functionals $I[\Phi]$, one for each parametrix $\Phi$, satisfying some properties we have already foreshadowed.
The $I[\Phi]$ are to be understood as the (ill-defined) result of the integration of $e^{I/\hbar}$ over all fields of ``energy'' higher than $\Phi$.
In this section, we will describe a space of functionals which we would expect to contain the $I[\Phi]$ if the procedure of integrating out the high-energy modes were well-defined.

In the sequel, it will be useful to us to use the language of bornological and convenient vector spaces.
We refer the reader to Appendix B of \autocite{CG1} for detailed definitions and properties of the categories $\mathrm{BVS}$ and $\mathrm{CVS}$.
We only note here that both categories are closed symmetric monoidal, with symmetric monoidal products $\otimes_\beta$ and $\hotimes_\beta$, respectively.
\begin{definition}
\label{def: functionaldefs}
We make the following definitions regarding functionals on the space $\condfields$.
\begin{itemize}
    \item By a \textbf{functional} on the \condfieldsterm fields, we mean an element $I$ belonging to the convenient vector space
\begin{equation}
    \label{eq: functionaldfn}
\sO(\condfieldscs):=\prod_{k=0}^\infty \underline{\Hom}_{CVS}((\condfieldscs(M))^{\hotimes_\beta k},\RR)_{S_k}/\RR.
\end{equation}

(Note that we quotient out by the space corresponding to $k=0$ in the product.)

\item Given a functional $I\in \sO(\condfieldscs)$, we let $I_k$ denote its component in the $k$-th factor of the product in Equation \ref{eq: functionaldfn}. 
We call $I_k$ the \textbf{order $k$ component of $I$}.
Similarly, if a functional $I$ has a non-zero component only in the $k$-th factor of the product, we say that $I$ has \textbf{order $k$}.
\item Recall that the $k$-fold completed bornological tensor product $(\condfieldscs(M))^{\hotimes_\beta k}$ can be identified with the space of compactly-supported sections of $E^{\boxtimes k}$ over $M^{k}$ (a manifold with corners) satisfying a certain boundary condition, cf. the Corollary to Theorem \ref{thm: tensorofdirichlet}.
By the \textbf{support} of an order $k$ functional $I_k$, we mean the complement of the union of all open sets $U\subset M^k$ such that $I(e)=0$ whenever $e$ has support on a compact subset of $U$. 
\item  We say that $I$ has \textbf{proper support} if, for each $k$, the maps $\supp(I_k)\subset M^k\to M$ induced from the projections $M^k\to M$ are all proper. We denote by 
\index[notation]{Oprop@$\ps$}
$\ps$ the space of all functionals with proper support.
\end{itemize}

\end{definition}

\begin{remark}
The methods of the appendix---Theorem \ref{prop: dualsofcondfields}, in particular---provide a splitting $\sO(\condfields)\hookrightarrow \sO(\sE)$ of the natural restriction map $\sO(\sE)\to \sO(\condfields)$. 
One can show that the support of a functional $I\in \sO(\condfields)$ coincides with the support of its image under this splitting.
\end{remark}

Given any functional $I$, we can view (via the closed symmetric monoidal structure of $\mathrm{CVS}$) each Taylor component $I_k$ as a multilinear map
\begin{equation}
\label{eq: firstderiv}
\condfieldscs^{\hotimes_\beta (k-1)}\to \underline{\Hom}_{CVS}(\condfieldscs, \RR).
\end{equation}
Functionals with proper support are precisely those functionals which arise as the composition of the natural map
\begin{equation}
\condfieldscs^{\hotimes_\beta (k-1)} \to \condfields^{\hotimes_\beta (k-1)}
\end{equation}
with a map
\begin{equation}
\condfields^{\hotimes_\beta (k-1)}\to \underline{\Hom}_{CVS}(\condfieldscs,\RR).
\end{equation}

In \autocite{cost}, Costello considers additionally functionals with \emph{smooth first derivative}, which is to say functionals whose corresponding map as in Equation \ref{eq: firstderiv} has image in $\sE^!$ (i.e. in the space of smooth distributions.) 
He then defines a field theory to be a collection of functionals \emph{with smooth first derivative} $\{I[\Phi]\}$, one for each parametrix $\Phi$.
The functionals are required to satisfy, among other things, the requirement (P1) that their support can be made to ``approach'' the diagonal by choosing $\Phi$ ``small enough.''
The functionals with smooth first derivative and support on the diagonal are precisely the local action functionals; hence, the requirement (P1), together with the requirement that the $I[\Phi]$ have smooth first derivative, constitutes an asymptotic locality condition on the~$I[\Phi]$.

The case at hand differs from the situation in \autocite{cost} in several respects. 
First, local functionals include also integrals over $\bdyM$ of Lagrangian densities.
Such functionals do not have smooth first derivative.

To understand how to address this difficulty, let us define the sheaves
\begin{align}
\delsmoothdistrE&:= \delsmoothdistr\otimes_{D_M} \sJ(\sE)^\vee\\
\conddelsmoothdistr &:= \left(\delsmoothdistr \otimes_{D_M}\sJ(\sE)^\vee\right)/\left( \iota_* \Omega^{n-1}_{\bdyM, tw}\otimes_{D_\bdyM} \sJ(\sL')^\vee\right) \label{eq: delsmoothdistr}
\end{align}
(all notation here is coincident with that of Section \ref{sec: localfcnls}).
We may identify the space of global sections of $\conddelsmoothdistr$ with a subspace of $\sO(\condfields)$.
Following the discussion of Section \ref{sec: localfcnls}, the sheaf $\conddelsmoothdistr$ can be identified with the space of local functionals on $\condfields$ which are linear in their input.
\begin{definition}
A global section $\digamma\in \conddelsmoothdistr(M)$ is called a \textbf{$\partial$-smooth distribution}. 
A functional $I$ of order $k$ has \textbf{$\partial$-smooth first derivative} if the corresponding map 
\begin{equation}
\label{eq: delsmoothrequirement}
(\condfieldscs)^{\hotimes_\beta (k-1)}\to \innerhom{\condfieldscs}{\RR}
\end{equation}
has image in $\conddelsmoothdistr$, the space of $\partial$-smooth distributions. 
A functional has \textbf{smooth first derivative} if the map has image in $\condfields^!\subset \innerhom{\condfieldscs}{\RR}$. 
We will denote by $\sO_{sm}(\condfieldscs)$
\index[notation]{Osm@$\sO_{sm}(\condfieldscs)$}
the space of functionals with $\partial$-smooth first derivative. 
We will denote by
\index[notation]{Opsm@$\sO_{P,sm}(\condfieldscs)$}
$\sO_{P,sm}(\condfieldscs)$ the space of functionals with $\partial$-smooth first derivative and proper support.
\end{definition}

Let us now show that the functionals with $\del$-smooth first derivative and support on the small diagonal $M\subset M^k$ are precisely the local action functionals.

\begin{lemma}
\label{lem: localfcnlssupportdiag}
The set of order $k$ functionals with $\del$-smooth first derivative and support on the small diagonal $M\subset M^k$ is precisely the set of order $k$ local action functionals.
\end{lemma}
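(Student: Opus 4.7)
The plan is to prove the two inclusions separately. For the forward direction, suppose $I$ is an order $k$ local action functional, so that $I(\varphi) = \int_M D_1\varphi \cdots D_k\varphi\, d\mu$ (possibly plus an analogous integral over $\bdyM$). The support condition is immediate: each $D_i\varphi$ is evaluated at a single point of $M$, so the Schwartz kernel of $I$ sits on the small diagonal $\Delta M \subset M^k$. For the $\partial$-smooth first-derivative condition, I would invoke the identifications established in Section \ref{sec: OlocEL}, namely that $\sO_{loc}(\sE) \cong \delsmoothdistr \otimes_{D_M} \sO(\sJ(\sE))$ and that $\Olocred$ is its quotient by $\sV(L)$. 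Differentiating in one input amounts to contracting with a jet and leaves an expression that manifestly lies in $\conddelsmoothdistr$, the space of $\del$-smooth distributions, as defined in \eqref{eq: delsmoothdistr}.

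For the converse, suppose $I$ is an order $k$ functional with $\del$-smooth first derivative and support on $\Delta M$. By the hom-tensor adjunction in $\CVS$ and the symmetry of $I$, one may view $I$ as a continuous symmetric multilinear functional whose Schwartz kernel is a compactly-supported distributional section of $E^{\boxtimes k}$ on $M^k$ with support in $\Delta M$. Working first in the interior $\mathring M^k$, the classical structure theorem for distributions supported on a closed submanifold gives, locally, a finite expansion of the form $\sum_\alpha \partial_\nu^\alpha(f_\alpha\,\delta_{\Delta M})$, where the $\partial_\nu^\alpha$ are transverse derivatives to $\Delta M$ and the $f_\alpha$ are smooth. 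After pairing with $(k-1)$ test sections, this gives precisely an integral $\int_M D_1\varphi \cdots D_k\varphi\, d\mu$ of the bulk type.

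The analysis near the boundary is the content of the argument. In a collar of $\bdyM \subset M$, the submanifold $\Delta M \subset M^k$ intersects the corner locus $\bdyM^k$ along $\bdyM$, and the $\del$-smooth condition allows the kernel of $I$ to contain, in addition to bulk terms, distributions of the form exhibited in \eqref{eq: codim1dens}, i.e., normal derivatives along $\bdyM$ integrated against densities on $\bdyM$. By Lemma \ref{lem: bdydens}, such distributions are exactly what generate the $D_M$-module $\delsmoothdistr$ modulo the bulk piece $\Omega^{n}_{M,tw}$. Applying the structure theorem for distributions supported on $\Delta M$ while tracking allowed transverse-to-$\bdyM$ behavior in each of the $k$ input slots, one sees that the Schwartz kernel of $I$ must be a finite sum of such terms, which upon evaluation produces a local functional in the sense of Definition \ref{def: localfcnls}. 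Imposing the quotient corresponding to the boundary condition $\sL$ then matches, under the identification of Section \ref{sec: OlocEL}, the quotient by functionals vanishing on $\condfieldscs$.

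The main obstacle I anticipate is the careful bookkeeping near $\bdyM$, since $M^k$ is a manifold with corners and the small diagonal meets the boundary stratification non-transversally. The na\"ive application of the structure theorem for distributions on smooth manifolds must be refined to track which normal-derivative orders (to $\bdyM$ in $M$, versus to $\Delta M$ in $M^k$) are compatible with the $\del$-smooth first-derivative condition in each input separately. I expect this bookkeeping to be exactly parallel to the filtration used in the proofs of Lemma \ref{lem: bdydens} and its successors, so that the symmetric structure of $I$ in its $k$ arguments, combined with the per-input $\del$-smoothness, pins the kernel down to polydifferential operator expressions of precisely the required form.
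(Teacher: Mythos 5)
Your easy direction is fine, but the converse has two genuine gaps, and the second one is exactly the content of the lemma in this paper. First, in the interior you assert that the structure theorem for distributions supported on the small diagonal gives an expansion $\sum_\alpha \partial_\nu^\alpha(f_\alpha\,\delta_{\Delta M})$ with \emph{smooth} $f_\alpha$. It does not: it gives distributional coefficients on the diagonal, and smoothness has to be extracted from the $\del$-smooth first derivative hypothesis. Moreover, even once the coefficients are known to be smooth where defined, one must show that the order of the expansion is \emph{locally bounded} along the diagonal (otherwise the result is not a polydifferential expression, i.e.\ not a local functional). The paper handles both points by a different mechanism: after a fine-sheaf localization to $M=\HH^n$ with trivial bundle and a Hahn--Banach extension of $I$ to the $\sE$ fields, it uses the $\del$-smooth first derivative to produce, via the evaluation maps coming from $\delsmoothdistr\cong \Dens_{\HH^n}\oplus \Dens_{\RR^{n-1}}[\del_t]$, distributions $I_x$ and $I_{\dot x,m}$ in the remaining $k-1$ variables which are supported at a single point; the elementary structure theorem for point-supported distributions (Treves) then applies, smoothness of the coefficients $c^I$, $d^{J,m}$ follows from smooth dependence on the evaluation point, and local finiteness is proved via polynomiality in $\del_t$ for the boundary terms together with the regularity lemma of Slov\'ak for the bulk coefficients. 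None of these steps appears in your proposal.

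Second, the boundary bookkeeping that you defer with ``I expect this bookkeeping to be exactly parallel to the filtration used in the proofs of Lemma \ref{lem: bdydens}'' is precisely the part that distinguishes this lemma from the standard boundaryless statement and cannot be left as an expectation. Your plan to apply the submanifold structure theorem directly to the kernel on $M^k$ runs into the fact that $M^k$ is a manifold with corners and $\Delta M$ meets the corner strata, so the classical theorem does not apply as stated there; and it is exactly the $\del$-smoothness in each slot (with its two components, a smooth density on $M$ and a finite-order $\del_t$-polynomial of densities on $\bdyM$) that determines which transverse-to-$\bdyM$ derivative orders can occur and why only finitely many do near each boundary point. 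Without an argument replacing the paper's evaluation-map reduction and the two local-finiteness arguments, your converse does not go through.
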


\begin{proof}
It is straightforward to show that the local functionals have support on the small diagonal and have $\del$-smooth first derivative, using integration by parts (which is encoded in the $D_M$-module tensor product relations in Equation \eqref{eq: delsmoothdistr}).

Let us show the converse, namely that any order $k$ functional $J$ which has support on the small diagonal and $\del$-smooth first derivative is local.

Recall that the space of local functionals is actually a \emph{sheaf} on $M$;
similarly, the space of $\del$-smooth functionals is also a sheaf on $M$.
The space of $\del$-smooth functionals with support on the small diagonal is a subsheaf of this space.
There is an inclusion of sheaves
\begin{equation}
\label{eq: locintosmooth}
    \Oloc\to \sO_{sm}(\condfieldscs)
\end{equation}
The sheaf of local functionals also possesses a natural $\cinfty_M$-module structure induced from the action of $\cinfty_M$ on the left on $\delsmoothdistr$ and on $\iota_* \Omega^{n-1}_{\bdyM,tw}$.
The sheaf $\sO_{sm}(\condfieldscs)$ also possesses a $\cinfty_M$ structure, induced from the action of $\cinfty_M$ on the target in Equation \eqref{eq: delsmoothrequirement},
and the sheaf of $\del$-smooth functionals with support on the small diagonal is closed under this $\cinfty_M$ action.
Hence, all three sheaves under consideration are fine.
The map in Equation \eqref{eq: locintosmooth} is manifestly a map of $\cinfty_M$-modules.
It follows from this discussion that it suffices to check the statement of the Lemma locally on $M$.

We may therefore assume that $E$ is the trivial bundle and $M=\HH^n$.

Suppose that $I$ is a functional with support on the diagonal and $\del$-smooth first derivative.
Let us first note that $\condfieldscs^{\hotimes_\beta k}$ is a closed subspace of $\sE^{\hotimes_\beta k}$.
Hence, by the Hahn-Banach theorem, we may extend $I$ to a functional on the $\sE$ fields.
We will use the letter $I$ to denote this extension in the sequel.
Note that, given $x\in \HH^n$, $\dot x \in \RR^{n-1}$, and $m\in \NN$ there are continuous evalution maps
\begin{align}
    \mathrm{ev}_{x} &: \delsmoothdistr \to \RR\\
    \mathrm{ev}_{\dot x,m} &: \delsmoothdistr \to \RR;
\end{align}
they arise from the isomorphism $\delsmoothdistr\cong \Dens_{\HH^n}\oplus \Dens_{\RR^{n-1}}[\del_t]$ (cf. Equation \eqref{eq: noncanonicaliso1}).
We therefore obtain distributions
\begin{equation}
    I_{x},I_{\dot x,m}: \cinfty_c(\HH^{n(k-1)}) \to \RR.
\end{equation}
Because $I$ is supported on the diagonal, the distribution $I_x$ (respectively $I_{y,m}$) is supported at the point $(x,\ldots, x) \in \HH^{n(k-1)}$ (respectively $(y,\ldots, y)\in \HH^{n(k-1)}$).
Hence (cf. Theorem 24.6 of \autocite{treves}; to apply this theorem, we used the fact that $I$ was extended to the $\sE$ fields), $I_x$ (respectively, $I_{y,m}$) is a finite linear combination of derivatives at $(x,\ldots, x)$ (respectively, $(y,\ldots, y)$).
For fixed $\phi \in \cinfty_c(\HH^{n(k-1)})$, $I_x(\phi)$ must be smooth in $x$.
Hence, we conclude that, for $\psi\in \cinfty_c(\HH^n)$,
\begin{align}
    I(\psi\otimes\phi) &= \sum_{I\in \N^n}\int_{\HH^n}\psi(x) c^I(x) \del_I \phi(x,\ldots, x) \d^n x\nonumber\\
    &+\sum_{m\in \N, J\in \N^n}\int_{\RR^{n-1}}  d^{J,m}(y) \del_J \phi( y, \ldots,  y)\left.\frac{\del ^m }{(\del  x_n)^m}\right|_{x_n=0}(\psi(y,x_n)) \d^{n-1}y,
\end{align}
where each coefficient $c^I$ (respectively $d^{J,m}$) is smooth on $\HH^n$ (respectively, $\RR^{n-1}$).
Each summand in the above sums is a local functional, and we know that at each $x\in \HH^n$ (respectively, $y\in \RR^{n-1}$), only finitely many of the $c^I$ (respectively, $d^{J,m}$) are non-zero.
We know, also that, for every point $y\in \RR^{n-1}$ there exists a neighborhood of $y$ throughout which the $d^{J,m}$ are zero for all but finitely many $m$.
This is because $\delsmoothdistr$ contains the term $\Omega^{n-1}_{M,tw}[\del_{x_n}]$ which allows only for polynomials in $\del/\del x_n$.
The only thing that remains to be shown is that the same statement is true throughout a neighborhood of each point $x$ (respectively, $y$).
The morphism of sheaves:
\begin{align}
    \Upsilon&: \cinfty_{\HH^n} \to \cinfty_{\HH^n}\\
    \Upsilon(f)(x) &= I_x(f^{\otimes (k-1)})=\sum_{I}c^I \del_I f^{\otimes(k-1)}
\end{align}
is manifestly regular in the sense of Definition 2.2 of \autocite{slovak2}, i.e. it sends a smooth family of functions on $\HH^n$ parametrized by an auxiliary space $T$ to a smooth family of functions on $\HH^n$ parametrized by $T$.
It follows by Lemma 2.6 therein that, for every point $x\in \HH^n$, there exists a neighborhood of $x$ throughout which all but finitely many of the $c^I$ are zero.
By extending the $d^{J,m}$ into $\HH^n$, a similar argument can be used to show the same fact for the $d^{J,m}$ at a fixed $m\in \NN$.
Since, as we have already discussed, there exists a neighborhood of every point $y\in \RR^{n-1}$ for which $d^{J,m}=0$ for all but finitely many $m$, the Lemma follows.
\end{proof}

We define yet another type of functional, whose purpose is to codify which sorts of quadratic interactions we may allow in our theories.

\begin{definition}
We will call a functional $I \in \sO(\condfieldscs)$ a \textbf{quadratic perturbation functional} if it depends only on $A$ fields and is quadratic in the $A$ fields.
\end{definition}

The quadratic interaction
\begin{equation}
S_1(\alpha_1,\alpha_2) = \ip[\alpha_1,\pertdiff\alpha_2]
\end{equation}
is a quadratic perturbation functional. Indeed, $\alpha_2$ must lie in $\sA$ for the above expression to be non-zero. Moreover, we have assumed that $\ip$ is invariant with respect to $\diffonnocrossdiff$; furthermore, the same is true for the full differential $\diff$. As a consequence, it follows that the same is true also for $\pertdiff$. Hence
\begin{equation}
S_1(\alpha_1,\alpha_2)=\pm \ip[\pertdiff\alpha_1,\alpha_2],
\end{equation}
which is zero unless $\alpha_1\in \sA$.

We will denote by
\index[notation]{OPplus@$\pertfunctionals$}
$\pertfunctionals$ the space of all functionals in $\ps[\![\hbar]\!]$ which are at least cubic modulo $\hbar$ and a quadratic perturbation functional. In other words, an element of $\pertfunctionals$ is the sum of a quadratic perturbation functional, an element of $\ps$ which is at least cubic, and an unconstrained element of $\hbar \ps[\![\hbar]\!]$. 
We establish a similar notation $\pertfunctionalspsmcs$ to denote perturbation functionals with proper support and $\del$-smooth first derivative.

\subsection{Renormalization Group Flow}
\label{subsec: RG}
In this section, we define---for the case at hand---one of the central concepts of the theory of perturbative renormalization as developed in \autocite{cost}, namely the notion of renormalization group flow. Having defined $\involution$-invariant parametrices in Section \ref{sec: parametrices}, and shown that they satisfy (almost) all of the same properties as non-invariant parametrices, we can essentially proceed exactly as in \autocite{cost, CG2} to define renormalization group flow for TNBFTs which are amenable to the doubling trick and which have a gauge-fixing amenable to the doubling trick.

\begin{deflem}
\label{deflem: rgflow}
The \textbf{renormalization group flow} from ($\involution$-invariant) parametrix $\Phi$ to ($\involution$-invariant) parametrix $\Psi$ is the operator
\begin{equation}
\rgflow{\Phi}{\Psi}{\cdot} : \pertfunctionals \to \pertfunctionals,
\end{equation}
defined by the equation 
\index[notation]{WPhiPsi@$\rgflow{\Phi}{\Psi}{I}$}
\begin{equation}
\rgflow{\Phi}{\Psi}{I} = \hbar \log \left( e^{\hbar \partial_{P(\Phi,\Psi)}}e^{I/\hbar}\right),
\end{equation}
where $P(\Phi,\Psi): = P(\Phi)-P(\Psi)$ ($P(\Phi)$ is defined by Equation \ref{eq: propagatordef}).
We consider $P(\Phi,\Psi)$ as a section of $E\boxtimes E\to M\times M$ which satisfies the boundary condition at both boundaries $\bdyM\times M$ and $M\times \bdyM$.
By Theorem \ref{thm: tensorofdirichlet}, this implies that $\prop\in \condfields^{\hotimes_\beta 2}$.
Furthermore, $\partial_{\prop}$ is the unique order-two differential operator on $\sO_P(\condfieldscs)$ which is zero on functionals of order less than 2 and which is contraction with $\prop$ on functionals of order exactly 2.
The infinite series which defines this operator is well-defined.
\end{deflem}

\begin{proof}
\emph{A priori}, our functionals may only take as input compactly-supported sections of $E$.
The propagator $\prop$ is not compactly-supported.
However, our functionals are properly-supported, so as long as we ignore functionals of order 0---as we have been doing---this issue of support does not appear.
(See Lemma 14.5.1 of Chapter 2 of \autocite{cost} for a discussion of this issue.)

The algebraic formula defining the renormalization group flow operator $\rgflow{\Phi}{\Psi}{I}$ has a combinatorial interpretation as a sum over connected graphs.
The graphs may possess ``loose'' half-edges, which we call ``tails'' or ``external'' edges.
Each vertex (sometimes we will also call the vertices ``internal vertices'' to match terminology from physics)of the diagram carries also an integer label called the \emph{genus} of the vertex. Let the \emph{order} of a diagram be the number of tails, and the \emph{genus} be the sum of the first Betti number of the diagram and the genera of its internal vertices. A vertex of valence $k$ and genus $g$ corresponds to a term in $I$ which is of order $k$ and is accompanied by a power $\hbar^g$---a diagram of a fixed genus and order can therefore contain, in principle, vertices of arbitrarily high valence. 

Figure \ref{fig: halfedges} depicts the building blocks of the diagrams we are considering, and Figure \ref{fig: typicaldiag} shows a typical diagram one may build out of the building blocks.

We will divide the vertices into two classes. The $\AA$ class of vertices corresponds to quadratic perturbation functionals and the $\BB$ class of vertices corresponds to vertices that are either at least trivalent or have genus at least 1.  A standard argument shows that both the number of $\BB$ vertices and the number of half-edges incident on $\BB$ vertices in a diagram of a given order and genus is bounded above by a number depending only on the order and genus of the diagram.

\begin{figure}
     \centering
     \begin{subfigure}[b]{0.2\textwidth}
         \centering
         \begin{tikzpicture}
         \draw[Ahalfedge] (-1,0)--(0,0);
         \end{tikzpicture}
         \caption{An $A$ half-edge}
         \label{fig: ahalfedge}
     \end{subfigure}
     \hfill
     \begin{subfigure}[b]{0.2\textwidth}
         \centering
         \begin{tikzpicture}
         \draw[Bhalfedge] (-1,0)--(0,0);
         \end{tikzpicture}
         \caption{A $B$ half-edge}
         \label{fig: bhalfedge}
     \end{subfigure}
     \hfill
    \begin{subfigure}[b]{0.2\textwidth}
         \centering
         \begin{tikzpicture}
         \draw[ABedge] (-2,0)--(0,0);
         \end{tikzpicture}
         \caption{An $AB$ edge}
         \label{fig: abedge}
     \end{subfigure}
     \hfill
    \begin{subfigure}[b]{0.2\textwidth}
         \centering
         \begin{tikzpicture}
         \draw[BBedge] (-2,0)--(0,0);
         \end{tikzpicture}
         \caption{A $BB$ edge}
         \label{fig: bbedge}
     \end{subfigure}
     \hfill
    \begin{subfigure}[b]{0.4\textwidth}
         \centering
         \begin{tikzpicture}
         \node at (0,-1.55){};
         \draw[Ahalfedge] (-2,-1)--(-1,-1);
         \draw[Ahalfedge] (0,-1)--(-1,-1);
         \end{tikzpicture}
         \caption{The only admissible bivalent vertex at $\hbar=0$.}
         \label{fig: quadratic vertex}
     \end{subfigure}
     \hfill
         \begin{subfigure}[b]{0.4\textwidth}
         \centering
         \begin{tikzpicture}
         \node at (0,1){};
         \draw[Ahalfedge] (-.75,.75)--(0,0);
         \draw[Bhalfedge] (0,0)--(.75,.75);
         \draw[Bhalfedge] (0,0)--(-.75,-.75);
         \draw[Bhalfedge] (0,0)--(.75,-.75);
         \end{tikzpicture}
         \caption{A vertex of higher valency.}
         \label{fig: higher vertex}
     \end{subfigure}
        \caption{The various building blocks of our diagrams.}
        \label{fig: halfedges}
\end{figure}
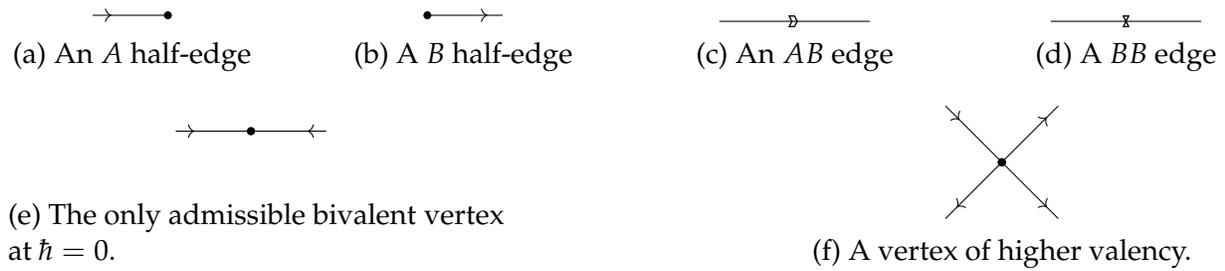

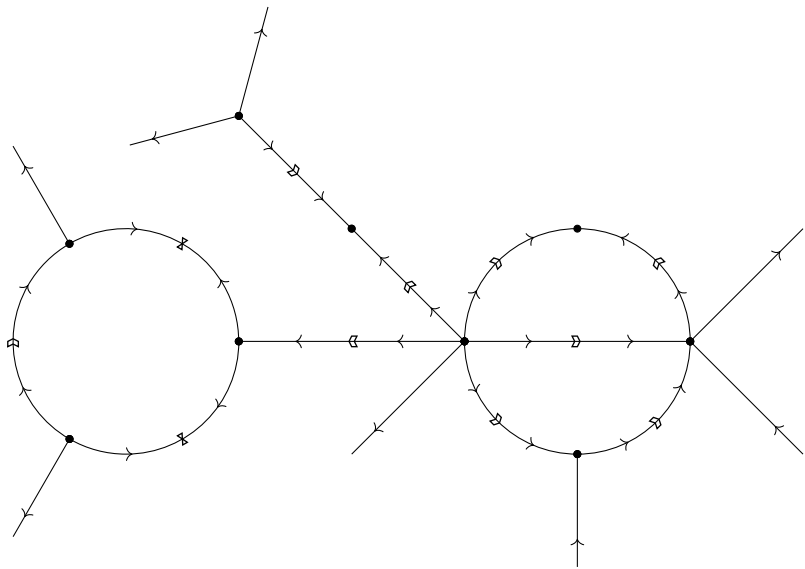
\begin{figure}
    \centering
\begin{tikzpicture}[scale=1.5]
         \node at (0,1){};
         \draw[ABfulledge] (1cm, 0) arc (0:90:1cm) --+(0,0) node(a){};
         \path (a) arc (90:180:1cm) --+(0,0) node (b) {};
         \draw[ABfulledge] (b.center) arc (180:90:1cm);
         \draw[ABfulledge] (b.center) arc (180:270:1cm) --+(0,0) node (c) {};
         \draw[ABfulledge] (c.center) arc (270:360:1cm) --+(0,0) node (d){};
         \draw[ABfulledge] (b.center)+(0,0)-- ++(-2,0) node (f){};
         \draw[ABfulledge] (b.center) -- +(-1,1) node (h){};
         \draw[ABfulledge] (h.center)+(-1,1) node (j){} -- (h.center);
         \draw[Bhalfedge] (j.center) -- +(75:1);
         \draw[Bhalfedge] (j.center) -- +(195:1);
         \draw[Bhalfedge] (b.center) -- +(-1,-1);
         \draw[ABfulledge] (b.center) -- (d.center);
         \draw[Bhalfedge] (d.center) -- +(1,1);
         \draw[Ahalfedge] (d.center)+(1,-1) -- (d.center);
         \path (c.center) -- +(0,-1) node (cdist){};
         \draw[Ahalfedge] (cdist.center) -- (c.center);
         \draw[BBfulledge] (f.center) arc (0:-120:1cm) -- +(0,0) node (g){};
         \draw[ABfulledge] (g.center) arc (-120:-240:1cm) -- +(0,0) node (i){};
         \draw[Bhalfedge] (g.center) -- +(-120:1);
         \draw[BBfulledge] (i.center) arc (120:0:1cm);
         \draw[Bhalfedge] (i.center) -- +(120:1);
         \end{tikzpicture}
         \caption{A typical diagram. All vertices have genus 0.}
         \label{fig: typicaldiag}
\end{figure}

An $\AA$ vertex can only accept $\sA$ inputs, so no two $\AA$ vertices can be connected by an internal edge (since the parametrices have no components in $A\boxtimes A$). Thus, for a given $\AA$ vertex, the half-edges incident on the vertex either lead to an external vertex (aka a tail) or to a $\BB$ vertex. In a connected diagram $G$, it is not possible (because of the assumption of connectivity) for both half-edges of an $\AA$ vertex to end on a tail unless $G$ consists only of an $\AA$ vertex with its tails. Thus, except in this case, each $\AA$ vertex is connected by an internal edge to at least one $\BB$ vertex. There can therefore be at most as many $\AA$ vertices as half-edges incident on $\BB$ vertices. Thus, the number of vertices which appear in $G$ is constrained by the genus and order of $G$. The valence of the $\BB$ vertices appearing in $G$ is bounded above because, as we have mentioned at the end of the previous paragraph, the number of half-edges incident on $\BB$ vertices has been bounded. Hence, there are only finitely many $G$ of a given order and genus. 
\end{proof}

\begin{lemma}
Renormalization group flow restricts to an operator
\begin{equation}
\rgflow{\Phi}{\Psi}{\cdot} : \pertfunctionalspsmcs\to \pertfunctionalspsmcs
\end{equation}
on the space of functionals with proper support and smooth first derivative which are cubic modulo $\hbar$ and a quadratic perturbation functional. 
\end{lemma}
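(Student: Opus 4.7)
My plan is to follow the structure of the preceding Definition-Lemma on renormalization group flow. The graph expansion has already been shown to be well-defined and term-by-term finite in each fixed (order, genus); the task now is to verify that the three additional structural properties of $\pertfunctionalspsmcs$---proper support, $\del$-smooth first derivative, and being cubic modulo $\hbar$ plus a quadratic perturbation functional---are preserved graph by graph. I would carry out the three verifications separately, in order of increasing difficulty.

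The first two are routine. For proper support, I would use that the propagator $P(\Phi,\Psi) = P(\Phi) - P(\Psi)$ has proper support (inherited from the parametrices $\Phi$ and $\Psi$), together with the hypothesis that every vertex factor of $I$ has proper support. Since the value of a graph is obtained by contracting finitely many proper sections along internal edges, the resulting functional has proper support. For the ``cubic modulo $\hbar$ plus quadratic perturbation'' structure, I would verify by an Euler-characteristic count that the only tree with exactly two tails and valences $\geq 2$ at every internal vertex is a single bivalent vertex; moreover, since $P(\Phi,\Psi)$ has no $\sAdbl\boxtimes\sAdbl$ component by condition (6) in Definition \ref{def: param} and a quadratic perturbation vertex has only $\sA$ half-edges, two quadratic perturbation vertices can never be joined by a propagator. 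Hence the only $\hbar^{0}$ contribution to the order-2 component of $\rgflow{\Phi}{\Psi}{I}$ is the original quadratic perturbation piece of $I$ itself, and all other new terms are either at least cubic or carry a positive power of $\hbar$.

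The main obstacle is verifying that the $\del$-smooth first derivative property survives the RG flow, and this is where the analog with Costello's argument for smooth first derivatives requires care. The crucial input is Lemma \ref{lem: propsofparametrix}(3), which says that $(\diffonnocrossdiff\otimes 1+1\otimes\diffonnocrossdiff)\Phi$ is smooth and, applied to $P(\Phi,\Psi)$, implies that the propagator itself is a smooth section of $\Edbl\boxtimes\Edbl$. Fix a graph $G$ appearing in the expansion and fix an external tail $t$ incident to a vertex $v_t$. One computes the first derivative of the graph value at the $t$-input by holding the other external tails fixed at smooth compactly-supported sections $e_i \in \condfieldscs$; the contribution is a distribution in the tail variable that we wish to show lies in $\conddelsmoothdistr$. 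At every vertex $v \neq v_t$, the vertex factor $I_v$ is evaluated on a combination of smooth external fields $e_i$ and smooth propagator sections supplied by incident internal edges, so these contributions integrate (using the $\del$-smooth first derivative of $I_v$) to smooth sections that can be fed into neighboring propagators without regularity loss. At the distinguished vertex $v_t$, the $\del$-smooth first derivative hypothesis on $I_{v_t}$ says precisely that its first-derivative map, evaluated on smooth inputs at the other $k_t - 1$ legs, takes values in $\conddelsmoothdistr$.

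Combining these observations vertex-by-vertex, the whole graph $G$ contributes a distribution in the tail variable that is a smooth-kernel integral transform of a $\del$-smooth distribution, hence itself $\del$-smooth. Summing over the finitely many graphs of each fixed order and genus preserves $\del$-smoothness, so each Taylor coefficient of $\rgflow{\Phi}{\Psi}{I}$ has $\del$-smooth first derivative. Together with the proper support and quadratic-plus-cubic-modulo-$\hbar$ checks, this shows $\rgflow{\Phi}{\Psi}{I} \in \pertfunctionalspsmcs$, as required.
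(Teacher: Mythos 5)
Your proposal is essentially correct, and it fills in a verification that the paper actually omits: the lemma is stated without proof, in the spirit of Costello's analogous statement that RG flow preserves proper support and smooth first derivative, so there is no in-paper argument to compare against. Your three-part check (proper support; the ``quadratic perturbation plus at least cubic mod $\hbar$'' bookkeeping via the absence of an $\sAdbl\boxtimes\sAdbl$ component in the propagator; and $\del$-smoothness of the first derivative, localized at the vertex carrying the distinguished tail) is exactly the expected route, and the key structural point---that the distinguished tail is attached directly to a vertex whose remaining legs receive smooth inputs, so the derivative of the graph weight is literally a value of the $\del$-smooth first-derivative map of that vertex factor---is the right one.

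Three small repairs. First, the citation: the smoothness of $P(\Phi)-P(\Psi)$ does not come from Lemma \ref{lem: propsofparametrix}(3) (that item concerns $(\diffonnocrossdiff\otimes 1+1\otimes\diffonnocrossdiff)\Phi$ and is used for the heat kernel); it follows from Lemma \ref{lem: propsofparametrix}(1) (the difference of two parametrices is smooth) together with the fact that $Q^{\GF}\otimes 1+1\otimes Q^{\GF}$ is a differential operator, and the statement that the restriction of $P(\Phi)-P(\Psi)$ to $M\times M$ lies in $\condfields^{\hotimes_\beta 2}$ is Lemma \ref{lem: propsofkernels}(4). Note also that an individual $P(\Phi)$ is \emph{not} smooth, so your phrase ``the propagator itself is a smooth section'' must refer to the difference $P(\Phi,\Psi)$. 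Second, the Euler-characteristic clause as literally stated is false: a chain of bivalent vertices is also a tree with two tails and all valences $\geq 2$; it is precisely your observation that no two quadratic-perturbation vertices can be joined by a propagator (condition (6) of Definition \ref{def: param} together with $Q^{\GF}$ preserving $\sA$ and $\sB$) that reduces such chains to a single vertex, so the argument should be phrased as the combination of the two facts rather than the count alone. Third, the closing ``smooth-kernel integral transform of a $\del$-smooth distribution'' framing is unnecessary and slightly misleading; what you need (and what you already state in the preceding sentence) is that the inputs fed into the other legs of $v_t$---external fields and propagator legs---lie in $\condfields$, the latter by Lemma \ref{lem: propsofkernels}(4), so the derivative lands in $\conddelsmoothdistr$ directly by the hypothesis on $I_{v_t}$. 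None of these affects the validity of the argument.
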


\begin{definition}
A collection of functionals $I[\Phi]\in \pertfunctionalspsmcs$, one for each $\involution$-invariant parametrix $\Phi$, satisfies the \textbf{homotopical renormalization group (HRG) equation} if 
\begin{equation}
\rgflow{\Phi}{\Psi}{ I[\Phi]}=I[\Psi]
\end{equation}
for all parametrices $\Phi$, $\Psi$. In other words, the functional for parametrix $\Psi$ is obtained from the functional for parametrix $\Phi$ by the application of the RG flow operator $\rgdefault$.
\end{definition}

A quantum bulk-boundary system will be a collection of action functionals $\{I[\Phi]\}$, one for each $\involution$-invariant parametrix $\Phi$, satisfying the HRG equation, along with two other major criteria, which we specify in the next subsubsections.

\subsection{The Quantum Master Equation}
\label{subsec: QME}

In the case of classical TNBFTs, the classical master equation was an expression of the gauge-invariance properties of the TNBFT. We would like to introduce a notion of the analogous equation in the case of the quantum theories. Our approach is identical to that taken in \autocite{cost}, so we will summarize the story very briefly.

\begin{definition}
Let $\Phi$ be a parametrix. The \textbf{scale $\Phi$ BV Laplacian} is the cohomological degree $+1$ operator 
\index[notation]{DeltaPhi@$\Delta_\Phi$}
\begin{equation}
\Delta_\Phi:=\partial_{K_\Phi};
\end{equation}
here, $\partial_{K_\Phi}$ is defined analogously to $\partial_{P_\Phi^\Psi}$, which was defined in Definition-Lemma \ref{deflem: rgflow}.
\end{definition}

The following are standard facts concerning the operator $\Delta_\Phi$.
\begin{lemma}
\begin{enumerate}
\item $\Delta_\Phi^2=0$;
\item Let $\diffonnocrossdiff$ denote the operator induced on $\sO(\condfieldscs)$ from the operator with the same name on $\condfields$. $\diffonnocrossdiff\Delta_\Phi+\Delta_\Phi \diffonnocrossdiff=0$.
\end{enumerate}
\end{lemma}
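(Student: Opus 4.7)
The plan is to reduce both statements to properties of the kernel $K_\Phi \in \condfields^{\hotimes_\beta 2}$ established in Lemma \ref{lem: propsofkernels} and general facts about the order-two differential operator $\partial_K$ attached to a symmetric degree $+1$ kernel $K$. The central observation is that $\Delta_\Phi = \partial_{K_\Phi}$ is, by construction, the unique degree $+1$ second-order differential operator on $\sO(\condfieldscs)$ that vanishes on $\Sym^{\leq 1}$ and whose action on $\Sym^2$ is contraction with $K_\Phi$. Since $\diffonnocrossdiff$ acts as a derivation (first-order) of degree $+1$ on $\sO(\condfieldscs)$, both parts of the lemma become statements about the kernels of second-order operators on a symmetric algebra.

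For (1), the strategy is to verify $\Delta_\Phi^2 = 0$ directly: this commutator of two second-order operators is \emph{a priori} a fourth-order operator, and the standard computation shows its symbol is precisely $K_\Phi \boxtimes K_\Phi$ antisymmetrized by the Koszul sign rule. Because $K_\Phi$ has odd cohomological degree and is graded-symmetric under the swap of its two tensor factors, the Koszul swap $\tau: \condfields^{\hotimes_\beta 2} \to \condfields^{\hotimes_\beta 2}$ satisfies $\tau(K_\Phi \otimes K_\Phi) = -K_\Phi \otimes K_\Phi$, so $\partial_{K_\Phi}^2$ annihilates the generators of $\Sym^{\leq 3}$ and its value on higher symmetric powers is determined by the Leibniz-like identity for second-order operators. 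Since the resulting fourth-order symbol vanishes identically, $\Delta_\Phi^2 = 0$; this is the standard argument for BV Laplacians attached to odd symmetric kernels and proceeds exactly as in Chapter 2 of~\autocite{cost}.

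For (2), the commutator $[\diffonnocrossdiff, \partial_{K_\Phi}]$ is, by a direct computation (one checks that a derivation commuted with an order-two operator yields an order-two operator whose symbol is the derivation applied to the symbol), given by
\begin{equation}
[\diffonnocrossdiff, \partial_{K_\Phi}] = \partial_{(\diffonnocrossdiff \otimes 1 + 1 \otimes \diffonnocrossdiff) K_\Phi}.
\end{equation}
By item (3) of Lemma~\ref{lem: propsofkernels}, namely Equation~\eqref{eq: bvkernelclosed}, the kernel $K_\Phi$ is $\diffonnocrossdiff$-closed, so the right-hand side vanishes and $\diffonnocrossdiff \Delta_\Phi + \Delta_\Phi \diffonnocrossdiff = 0$.

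No step of the argument looks genuinely hard; the only piece requiring care is the bookkeeping in (1), where one must track the Koszul signs precisely to see the fourth-order symbol is zero. The computations in (1) can be localized to $\Sym^{\leq 4}$ since once $\Delta_\Phi^2$ vanishes there, the derivation property of a fourth-order operator's symbol propagates it to all symmetric powers.
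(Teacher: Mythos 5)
Your proposal is correct and follows essentially the same route the paper intends: the paper states these as ``standard facts'' in the style of \autocite{cost}, and your argument is exactly that standard one---part (1) from the vanishing of the graded-symmetric square of the odd kernel $K_\Phi$ (equivalently, $\partial_{K_\Phi}\partial_{K_\Phi}=\partial_{K_\Phi\cdot K_\Phi}=0$), and part (2) from $[\diffonnocrossdiff,\partial_{K_\Phi}]=\partial_{(\diffonnocrossdiff\otimes 1+1\otimes\diffonnocrossdiff)K_\Phi}$ together with item (3) of Lemma \ref{lem: propsofkernels}, which says $K_\Phi$ is strictly $\diffonnocrossdiff$-closed.
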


\begin{definition}
The \textbf{scale $\Phi$ BV bracket} is the map
\index[notation]{Zbracket@$\{\cdot,\cdot\}_\Phi$}
\begin{equation}
\sO_P(\condfieldscs)\times \sO_P(\condfieldscs)\to \sO_P(\condfieldscs)
\end{equation} 
denoted $\{\cdot, \cdot\}_\Phi$, and defined by 
\begin{equation}
\{ I, J\}_\Phi = \Delta_\Phi (I\cdot J)-\Delta_\Phi(I)\cdot J-(-1)^{|I|}I\cdot \Delta_\Phi J.
\end{equation}
We will denote by the same symbol the binary operation on $\sO(\condfieldscs)[\![\hbar]\!]$ defined by $\hbar$-linear extension.
\end{definition}

\begin{remark}
The scale $\Phi$ bracket descends to a binary operation on $\pertfunctionalspsmcs$.
\end{remark}

The following is a standard fact concerning the BV Laplacian $\Delta_\Phi$.

\begin{lemma}
The BV Laplacian for scale $\Phi$ is a derivation for the BV bracket for scale $\Phi$. 
\end{lemma}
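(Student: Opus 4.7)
The plan is to prove this by pure algebraic manipulation from the definitions, leveraging the fact (already proved above) that $\Delta_\Phi^2 = 0$. The result is a completely general fact about second-order differential operators of square zero on a graded commutative algebra: any such operator induces a BV bracket of which it is automatically a graded derivation. No analytic input about the kernel $K_\Phi$ beyond its symmetry is required here, because the symmetry of $K_\Phi$ was already used to ensure that $\Delta_\Phi$ is a well-defined second-order operator on $\sO_P(\condfieldscs)[\![\hbar]\!]$.

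Concretely, given $I, J \in \sO_P(\condfieldscs)[\![\hbar]\!]$, I would start from the definition
\begin{equation*}
\{I,J\}_\Phi = \Delta_\Phi(I\cdot J) - \Delta_\Phi(I)\cdot J - (-1)^{|I|} I \cdot \Delta_\Phi(J),
\end{equation*}
apply $\Delta_\Phi$ to both sides, and use $\Delta_\Phi^2 = 0$ to kill the first term, obtaining
\begin{equation*}
\Delta_\Phi\{I,J\}_\Phi = -\Delta_\Phi(\Delta_\Phi(I)\cdot J) - (-1)^{|I|}\Delta_\Phi(I\cdot\Delta_\Phi(J)).
\end{equation*}
Expanding each of these two terms via the definition of $\{\cdot,\cdot\}_\Phi$ applied to the pair $(\Delta_\Phi I, J)$ and $(I, \Delta_\Phi J)$ respectively, and again invoking $\Delta_\Phi^2 = 0$, the two mixed terms of the form $\Delta_\Phi(I)\cdot \Delta_\Phi(J)$ cancel after accounting for the Koszul sign $(-1)^{|\Delta_\Phi(I)|} = (-1)^{|I|+1}$. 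What remains is exactly the graded Leibniz identity
\begin{equation*}
\Delta_\Phi\{I,J\}_\Phi = \pm \{\Delta_\Phi I, J\}_\Phi \pm (-1)^{|I|}\{I, \Delta_\Phi J\}_\Phi,
\end{equation*}
with the appropriate signs dictated by the degree $+1$ nature of both $\Delta_\Phi$ and the bracket.

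The only obstacle worth mentioning is bookkeeping of Koszul signs; there is no real mathematical content beyond $\Delta_\Phi^2 = 0$ and the definition of the bracket as the failure of $\Delta_\Phi$ to be a derivation of the commutative product. In particular, one does not need to revisit the analytic definition of $\partial_{K_\Phi}$ or any properties of $K_\Phi$ beyond what was already used to establish that $\Delta_\Phi$ is a well-defined square-zero operator. This makes the lemma a formal consequence of the preceding one and places no additional demands on the chosen parametrix.
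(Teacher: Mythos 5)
Your argument is correct: applying $\Delta_\Phi$ to the defining relation $\{I,J\}_\Phi = \Delta_\Phi(I\cdot J)-\Delta_\Phi(I)\cdot J-(-1)^{|I|}I\cdot\Delta_\Phi(J)$, using $\Delta_\Phi^2=0$, and re-expanding the two remaining terms as brackets of $(\Delta_\Phi I, J)$ and $(I,\Delta_\Phi J)$ does make the two $\Delta_\Phi(I)\cdot\Delta_\Phi(J)$ terms cancel and yields the graded Leibniz identity, with only Koszul-sign bookkeeping required. The paper states this lemma without proof as a standard fact, and your purely formal derivation from $\Delta_\Phi^2=0$ and the definition of the bracket is exactly the standard argument it is invoking.
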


\begin{definition}
The \textbf{quantum master equation for scale $\Phi$} (scale-$\Phi$ QME) is the equation
\begin{equation}
\label{eq: qme}
\diffonnocrossdiff I+\frac{1}{2}\{I,I\}_\Phi +\hbar \Delta_\Phi I=0,
\end{equation}
where $I\in \pertfunctionals$.
\end{definition}

The following Lemma is due, in this form, to Costello \autocite{cost}, and the proof given there carries over unchanged to the present context.

\begin{lemma}
\label{lem: rgqme}
The homotopical RG equation from scale $\Phi$ to scale $\Psi$ takes a solution of the scale-$\Phi$ QME to a solution of the scale-$\Psi$ QME.
\end{lemma}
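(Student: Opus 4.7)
The plan is to follow the standard Costello-style argument, expressing both the QME and the RG flow in exponentiated form and then checking that a single operator intertwines the two QME operators.

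First I would recast the scale-$\Phi$ QME in its "integrated" form. A direct computation, using that $\Delta_\Phi$ is a second-order operator with associated bracket $\{\cdot,\cdot\}_\Phi$, shows
\begin{equation*}
(\diffonnocrossdiff + \hbar \Delta_\Phi) e^{I/\hbar} = \frac{1}{\hbar}\left( \diffonnocrossdiff I + \tfrac{1}{2}\{I,I\}_\Phi + \hbar \Delta_\Phi I\right) e^{I/\hbar},
\end{equation*}
so that $I$ satisfies the scale-$\Phi$ QME if and only if $(\diffonnocrossdiff + \hbar \Delta_\Phi) e^{I/\hbar} = 0$. Likewise, by the very definition of $\rgflow{\Phi}{\Psi}{\cdot}$, the RG equation $I[\Psi] = \rgflow{\Phi}{\Psi}{I[\Phi]}$ is equivalent to $e^{I[\Psi]/\hbar} = e^{\hbar \partial_{P(\Phi,\Psi)}} e^{I[\Phi]/\hbar}$.

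The heart of the proof is then the intertwining identity
\begin{equation*}
(\diffonnocrossdiff + \hbar \Delta_\Psi)\, e^{\hbar \partial_{P(\Phi,\Psi)}} \;=\; e^{\hbar \partial_{P(\Phi,\Psi)}}\,(\diffonnocrossdiff + \hbar \Delta_\Phi),
\end{equation*}
which I would derive from three observations. First, $\Delta_\Phi = \partial_{K_\Phi}$ and $\Delta_\Psi = \partial_{K_\Psi}$ are second-order "constant-coefficient" operators, as is $\partial_{P(\Phi,\Psi)}$, so all three pairwise commute; in particular $\hbar \Delta_\Psi$ commutes with $e^{\hbar \partial_{P(\Phi,\Psi)}}$. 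Second, since $\diffonnocrossdiff$ is a derivation and $\partial_{P(\Phi,\Psi)}$ has symbol $P(\Phi,\Psi) \in (\condfields)^{\hotimes_\beta 2}$, a standard computation gives
\begin{equation*}
[\diffonnocrossdiff, \partial_{P(\Phi,\Psi)}] \;=\; -\partial_{(\diffonnocrossdiff\otimes 1 + 1\otimes \diffonnocrossdiff) P(\Phi,\Psi)},
\end{equation*}
and Lemma \ref{lem: propsofkernels}(2) identifies the right-hand side with $\hbar(\Delta_\Phi - \Delta_\Psi)/\hbar = \Delta_\Phi - \Delta_\Psi$. Third, since $[\diffonnocrossdiff, \partial_{P(\Phi,\Psi)}]$ commutes with $\partial_{P(\Phi,\Psi)}$ itself, the commutator expands cleanly:
\begin{equation*}
[\diffonnocrossdiff, e^{\hbar \partial_{P(\Phi,\Psi)}}] \;=\; \hbar\,(\Delta_\Phi - \Delta_\Psi)\, e^{\hbar \partial_{P(\Phi,\Psi)}}.
\end{equation*}
Combining these yields the intertwining identity.

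Applying the identity to $e^{I[\Phi]/\hbar}$ and using $e^{I[\Psi]/\hbar} = e^{\hbar \partial_{P(\Phi,\Psi)}} e^{I[\Phi]/\hbar}$, we see that
\begin{equation*}
(\diffonnocrossdiff + \hbar \Delta_\Psi) e^{I[\Psi]/\hbar} \;=\; e^{\hbar \partial_{P(\Phi,\Psi)}}\,(\diffonnocrossdiff + \hbar \Delta_\Phi)\, e^{I[\Phi]/\hbar},
\end{equation*}
and the right-hand side vanishes by the scale-$\Phi$ QME. Re-extracting the component form of the QME as in the first step then shows $I[\Psi]$ satisfies the scale-$\Psi$ QME.

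The only delicate point — and the one I expect to occupy the most space in a complete write-up — is justifying the formal manipulations of $e^{\hbar \partial_{P(\Phi,\Psi)}}$ and of $e^{I/\hbar}$ within $\pertfunctionalspsmcs$. Concretely, one must check that for each fixed total order and genus of Feynman graph, only finitely many diagrams contribute, and that the operators $\diffonnocrossdiff$ and $\Delta_\Phi$ respect this filtration; this is exactly the combinatorial argument already used in Definition-Lemma \ref{deflem: rgflow} (using the $A$/$B$ decomposition and the assumption that $P(\Phi,\Psi)$ has no $A\boxtimes A$ component), and it carries over here with essentially no change.
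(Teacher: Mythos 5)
Your proposal is correct and is essentially the proof the paper relies on: the paper simply cites Costello's argument as carrying over unchanged, and that argument is exactly your intertwining identity $(\diffonnocrossdiff+\hbar\Delta_\Psi)\,e^{\hbar\partial_{P(\Phi,\Psi)}}=e^{\hbar\partial_{P(\Phi,\Psi)}}\,(\diffonnocrossdiff+\hbar\Delta_\Phi)$, obtained from the kernel relation of Lemma \ref{lem: propsofkernels} together with the exponentiated forms of the QME and of RG flow. Your closing remark is also well placed: the only point where the bulk-boundary setting could differ from Costello is the finiteness of the diagrammatic expansions, and that is exactly the combinatorial argument of Definition-Lemma \ref{deflem: rgflow} using the $A$/$B$ decomposition.
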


Lemma \ref{lem: rgqme} tells us that, given a collection of functionals $\{I[\Phi]\}_{\Phi}$ related by the homotopical RG equation, $I[\Phi]$ satisfies the scale-$\Phi$ QME if and only if $I[\Psi]$ solves the scale-$\Psi$ QME. We will demand that a quantum TNBFT be given by a collection of functionals $\{I[\Phi]\}_\Phi$ related by the homotopical RG equation such that $I[\Phi]$ satisfies the scale-$\Phi$ QME for some (and therefore all) $\Phi$. To capture the notion that the quantum TNBFT be specified by a local action functional, we require one more condition, which we detail in the next subsubsection.

\subsection{A First Definition for Quantum Bulk-Boundary Systems}
\label{subsec: def}

Let us note that the parametrices possess a partial order defined as follows: we say that $\Phi\leq \Psi$ if $\supp(\Phi)\subset \supp(\Psi)$. With this notion in mind, we can finally define what we mean by a quantum bulk-boundary system:

\begin{definition}
\label{def: QTNBFT}
A \textbf{perturbative quantum bulk-boundary system} is a collection of functionals $\{I[\Phi]\}$ ($I[\Phi]\in \pertfunctionalspsmcs$) of cohomological degree 0, one for each parametrix $\Phi$, satisfying:
\begin{enumerate}
\item The $I[\Phi]$ are related by the homotopical renormalization group flow (Definition \ref{deflem: rgflow}), i.e. 
\begin{equation}
I[\Phi]=\rgflow{\Psi}{\Phi}{I[\Psi]}
\end{equation}
for any two parametrices $\Phi$, $\Psi$.
\item The functional $I[\Phi]$ solves the scale $\Phi$ quantum master equation \eqref{eq: qme} for some (hence all) $\Phi$.
\item Let $I_{i,k}[\Phi]$ denote the part of $I[\Phi]$ which is of degree $i$ in $\hbar$ and degree $k$ in the fields $\condfieldscs$. Given any neighborhood $U$ of the diagonal $M\subset M^k$, we can find a parametrix $\Phi_U$ such that whenever $\Phi\leq \Phi_U$, $I[\Phi]$ has support contained in $U$. More precisely, let $\supp(\Phi)^n$ denote the set of points $(x,y)\in M^2$ such that there exists a collection of points $x_0 = x, x_1,\cdots, x_n = y$ with $(x_i,x_{i+1})\in \supp(\Phi) $. Let $\eta(i,k)$ denote the maximum number of edges which can appear in a Feynman diagram of genus $i$ and order $k$; similarly, we let $\upsilon(i,k)$ denote the maximum number of vertices which appear. We require that
\begin{equation}
I_{i,k}(e_1,\cdots, e_k) = 0
\end{equation}
unless 
\begin{equation}
\supp(e_l)\times \supp(e_m) \subset \supp(\Phi)^{\eta(i,k)(1+\upsilon(i,k))}
\end{equation}
for all $1\leq l,m\leq k$.
\item The mod $\hbar$ part of $I[\Phi]$, which we denote $I_0[\Phi]$, is obtained by HRG flow from scale $0$ of a classical bulk-boundary system (see Lemma \ref{lem: classRG}). 
\end{enumerate}
\end{definition} 
  
  \begin{remark}
  The particular exponent $\eta(i,k)(1+\nu(i,k))$ appearing in Item 3 above may be somewhat mysterious.
  The value of the exponent will become clearer once we introduce the machinery of heat-kernel renormalization.
  This discussion culminates in \ref{prop: equivtheories}, wherein the necessity of the particular value of the exponent is derived.
  \end{remark}
 \subsection{An alternative definition of quantum TNBFTs}
 
 In the previous subsection, we outlined a definition of perturbative quantum bulk-boundary systems, based on the the method of effective actions of \autocite{cost}. That definition was based on the notion of a parametrix, and lends itself readily to the construction of a factorization algebra of quantum observables, which we undertake in Section \ref{sec: quantumFA}. However, for the purposes of renormalization and the construction of counter-terms, we need an alternative definition of a quantum bulk-boundary system, which we study in this section. The definition of this section will be based on the notion of a \emph{fake heat kernel:}
 
In this section, we assume that $(\sE,\diffonnocrossdiff, Q^{\GF})$ are amenable to doubling. The following is based on Definition 14.3.1 of Chapter 2 of \autocite{cost}
\begin{definition}
\label{def: fakeheat}
A \textbf{fake heat kernel} for the theory $\sE$ is a smooth, cohomological degree 1 section  
\begin{equation}
\tilde K \in \sEdbl \hotimes_\beta \sEdbl \hotimes_\beta \cinfty(\R_{>0}).
\end{equation}
We denote by $\tilde K_L$ the element of $\sEdbl\hotimes_\beta \sEdbl$ obtained by evaluating at $L$. We require $\tilde K$ to satisfy
\begin{enumerate}
\item $\tilde K$ extends, at $L=0$, to a distribution 
\begin{equation}
\tilde K\in \overline{\sEdbl}\hotimes_\beta \overline{\sEdbl}\hotimes_\beta \cinfty(\R_{\geq 0})
\end{equation}
such that $\tilde K_0$ is the kernel (using the pairing $\ip_{loc}$ to identify $\sEdbl\to \sEdbl^![-1]$) for the identity map $\sEdbl\to \sEdbl$.
\item The support of $\tilde K$, as a subset of $M\times M\times \R_{>0}$, is proper, i.e. the projection maps $\supp \tilde K\to M\times \R_{>0}$ are proper.
\item The quantity 
\begin{equation}
\frac{\d}{\d L}\tilde K + ([\diffonnocrossdiff,Q^{\GF}]\otimes 1) \tilde K
\end{equation}
lies in the space
\begin{equation}
\cinfty(\Mdbl\times \Mdbl\times \RR_{\geq 0}, \Edbl\boxtimes \Edbl)
\end{equation}
and vanishes at $L=0$, with all derivatives in $L$ and on $\Mdbl$, faster than any power of $L$.
\item The kernel $\tilde K$ admits a small $L$ asymptotic expansion which can be written, in normal coordinates $x,y$ near the diagonal in $\Mdbl^2$, in the form 
\begin{equation}
\tilde K_L\simeq L^{-\frac{\dim M}{2}}e^{-||x-y||^2/(4L)}\sum_{i\geq 0}L^i \Omega_i(x,y),
\end{equation}
where the $\Omega_i$ are smooth sections of $\Edbl\boxtimes \Edbl$.
\item The kernel $\tilde K$ is symmetric under the $\Z/2$ action which interchanges the order of the tensor factors $\sEdbl\hotimes_\beta \sEdbl$ (with appropriate Koszul signs).
\item The kernel $\tilde K$ has no component in $\overline{\sAdbl}\hotimes_\beta \overline{\sAdbl}\hotimes_\beta \cinfty(\R_{\geq 0})$.
\end{enumerate}
\end{definition}

\begin{lemma}
A fake heat kernel exists.
\end{lemma}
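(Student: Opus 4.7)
The plan is to construct the fake heat kernel by starting from the genuine heat kernel of the generalized Laplacian $H = [\diffonnocrossdiff, Q^{\GF}]$ acting on $\sEdbl$ over the doubled manifold $\Mdbl$, and then modifying it minimally to satisfy the properness and vanishing requirements. First, I would invoke the standard theory of generalized Laplacians on vector bundles (e.g.\ Chapter 2 of Berline--Getzler--Vergne): since $H$ is a generalized Laplacian and $\diffonnocrossdiff$, $Q^{\GF}$ both commute with $\involution_E$, there exists a smooth heat kernel $K^{gen}_L \in \sEdbl \hotimes_\beta \sEdbl \hotimes_\beta \cinfty(\RR_{>0})$ satisfying $(\tfrac{d}{dL} + H\otimes 1) K^{gen}_L = 0$, with $\lim_{L\to 0^+} K^{gen}_L$ equal to the identity kernel (under the identification $\sEdbl \cong \sEdbl^![-1]$ by $\ip_{loc,DBL}$), symmetric under the $\ZZ/2$-swap, and admitting the Minakshisundaram--Pleijel Gaussian expansion near the diagonal. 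When $M$ is non-compact, one patches such local constructions together using the parametrix method (cf.\ the construction of a ``local heat kernel'' in Section~9 of Chapter 2 of \autocite{cost}); this produces a kernel with all the same properties except the heat equation holds only up to a smoothing error that vanishes faster than any power of $L$ as $L\to 0$.

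Next, to achieve proper support, I would choose a smooth symmetric cutoff function $\chi \in \cinfty(\Mdbl \times \Mdbl)$ that equals $1$ on some neighborhood of the diagonal and whose support is proper over each factor, and set $\tilde K_L := \chi \cdot K^{gen}_L$. Properness (condition 2) is then automatic, and since $\chi \equiv 1$ near the diagonal, the Gaussian asymptotic expansion (condition 4) and the $L\to 0$ limit to the identity kernel (condition 1) are unchanged. Symmetry (condition 5) is preserved by choosing $\chi$ symmetric. For condition 3, a direct computation gives
\begin{equation}
\tfrac{d}{dL}\tilde K_L + (H\otimes 1)\tilde K_L \;=\; [(H\otimes 1),\,\chi]\, K^{gen}_L \;+\; \chi\cdot R_L,
\end{equation}
where $R_L$ is the Schwartz-small error from the parametrix construction. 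The commutator $[(H\otimes 1),\chi]$ is a first-order differential operator whose coefficients are supported where $\chi$ is non-constant, hence away from the diagonal; by the Gaussian off-diagonal decay of $K^{gen}_L$, this contribution is smooth and vanishes to infinite order at $L=0$.

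Finally, for condition 6, I would use the fact (Definition~\ref{def: amenable}) that $\diffonnocrossdiff$ and $Q^{\GF}$ preserve the decomposition $\sEdbl = \sAdbl \oplus \sBdbl$, so that $H$ does as well and the heat semigroup $e^{-LH}$ is block-diagonal. Converting the operator kernel to a section of $\Edbl\boxtimes\Edbl$ via $\ip_{loc,DBL}$, the vanishing of $\ip_{loc}$ on $A\otimes A$ forces the resulting kernel to lie in $(A\boxtimes B)\oplus(B\boxtimes A)\oplus(B\boxtimes B)$, with no $A\boxtimes A$ component. If any residual $A\boxtimes A$ piece survives the parametrix/cutoff procedure (e.g.\ because patching breaks the strict block structure), we project it out using the natural projector; the projected error is itself smooth and rapidly decaying as $L\to 0$, so it contributes only to the Schwartz-small error in condition 3.

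The main obstacle is the simultaneous maintenance of all the analytic properties under the cutoff and the $\involution_E$/decomposition symmetries---in particular verifying that the commutator-type errors in condition 3 remain Schwartz-small after both the properness cutoff and the $A\boxtimes A$ projection. This is handled by the fact that all errors introduced are supported away from the diagonal, where $K^{gen}_L$ has Gaussian decay; a standard $L^\infty$ estimate on the remainder of the Minakshisundaram--Pleijel expansion then yields the desired infinite-order vanishing.
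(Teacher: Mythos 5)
Your construction is correct and follows essentially the same route as the paper: the paper simply cites Costello's Lemma 14.3.3 (Chapter 2 of \autocite{cost}) for conditions (1)--(4) -- whose proof is exactly your ``genuine heat kernel plus proper symmetric cutoff plus Gaussian off-diagonal error estimate'' argument -- then gets (5) from the (anti)self-adjointness of $\diffonnocrossdiff$ and $Q^{\GF}$ with respect to $\ip$, and (6) from the block-diagonality of $H$ on $\sAdbl\oplus\sBdbl$ together with the vanishing of $\ip_{loc}$ on $A\otimes A$, just as you do. The only superfluous element is your final projection fallback for (6); the block-diagonal argument already suffices, exactly as in the paper's treatment of parametrices.
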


\begin{proof}
An object satisfying the first four properties of our definition are shown to hold in Lemma 14.3.3 of Chapter 2 of \autocite{cost}. The fact that fake heat kernels satisfying (5) exist follows from the symmetry properties of $[\diffonnocrossdiff,Q^{\GF}]$ with respect to the pairing $\ip$. The fact that fake heat kernels satisfying property (6) follows by the same argument that showed that parametrices satisfying the analogous condition exist (cf. Lemma \ref{lem: noninvttoinvtparam} and its corollary).
\end{proof}

Notice that if $\tilde K$ is a fake heat kernel, then 
\begin{equation}
\tilde \Phi_L : = \int_0^L \tilde K_T\d T
\end{equation}
is a non-invariant parametrix in the sense of Definition \ref{def: param}. We will denote by $\Phi_L$ the $\involution$-invariant parametrix constructed in Lemma \ref{lem: noninvttoinvtparam} from $\tilde \Phi_L$. In other words,
\index[notation]{PhiL@$\Phi_L$}
\begin{equation}
\Phi_L = \frac{1}{2}\left( 1\otimes 1+\involution \otimes 1 +1\otimes \involution + \involution\otimes \involution\right) \tilde \Phi_L
\end{equation}
 By the propagator from scale $\epsilon$ to scale $L$ we mean the propagator between the parametrices $\Phi_\epsilon$ and $\Phi_L$, namely
 \begin{equation} 
 \frac{1}{2}\left( Q^{\GF}\otimes 1+1\otimes Q^{\GF}\right) \left(\Phi_L-\Phi_\epsilon\right).
 \end{equation}
Similarly, by HRG flow from scale $\epsilon$ to scale $L$ (which we denote $\rgflow{\epsilon}{L}{\cdot}$), we mean the HRG flow from parametrix $\Phi_\epsilon$ to parametrix $\Phi_L$. We can also define the scale $L$ QME in a similar way.

\begin{definition}
A \textbf{theory in the heat kernel sense}  is a collection of functionals $\{I[L]\mid L\in \R_{>0}\}$ of elements of $\sO^+_{P,sm}(\condfieldscs)$ of cohomological degree 0 such that
\begin{enumerate}
\item The $I[L]$ satisfy the HRG equation 
\begin{equation}
I[L] = \rgflow{t}{L}{I[t]}.
\end{equation}
\item The functional $I[L]$ solves the scale $L$ quantum master equation for some (hence all) $L$.
\item There is a small $L$ asymptotic expansion 
\begin{equation}
I[L] \simeq \sum f_i(L)\Psi_i
\end{equation}
in terms of local action functionals $\Psi_i\in \sO_{loc}^+(\condfieldscs)$. Further, the $f_i(L)$ have at most a finite-order pole at $L=0$. 
\end{enumerate}
A collection $\{I[L]\}$ satisfying conditions (1) and (3) is called a \textbf{pre-theory.}
\end{definition}

Given a heat kernel theory $\{I[L]\}$, one can define a collection of functionals (one for each parametrix $\Phi$) via
\begin{equation}
I[\Phi]=\rgflow{\Phi_L}{\Phi}{I[\Phi_L]},
\end{equation}
and it is straightforward to verify that the collection satisfies the HRG equation and the QME. We will show in Section \ref{subsec: equivtheories} that $\{I[\Phi]\}$ also satisfies the locality axiom, and that any parametrix theory arises in this way. 

The heat kernel parametrices give us much more control over the small scale behavior of our parametrices; in fact, we can prove a lemma which will be useful to us in the sequel. 
Let us, for the rest of this section, use the symbols $\sE, \sEdbl,\sE_c,\ldots$ to denote the spaces of global sections of the (co)sheaves which are normally represented by the same letters.
Recall that, when $\bdyM=\emptyset$, the parametrices $\Phi$ give well-defined operators $\sE_c\to \sE_c$. In the present situation, z non-invariant parametrix $\tilde \Phi$ gives rise to an operator $(\sEdbl)_c\to (\sEdbl)_c$; 
elements of $\condfieldscs$ can be understood as compactly-supported sections of $\Edbl$ with step-function singularities (by extending the sections into the negative half-manifold to be $\involution$-invariant), and so as elements of $\innerhom{\sEdbl}{\RR}$, for which we will use the symbol $(\sEdbl)^\vee$, since this inner-hom space is the strong topological dual to $\sEdbl$. 
The non-invariant parametrices $\tilde \Phi_L$ act on $(\sEdbl)^\vee$ by transpose, so each $\tilde \Phi_L$ gives a map $\tilde \Phi_L:\condfieldscs\to (\sEdbl)^\vee$. 
Each invariant parametrix gives rise to a similar map, which we will denote also by the symbol~$\Phi_L$.
Let us note the following: given an element $e'\in (\ultrafields)_c$, the $\involution$-invariant extension is smooth on the double, so that $\Phi_L e'\in (\sEdbl)_c^\involution$, and $\Phi_L$ may be viewed as an operator on~$(\ultrafields)_c$.
However, we would like to extend this operator to the full space of $\sL$-conditioned fields.
The following lemma shows that $\Phi_L$ factors through the inclusion $\condfieldscs\to (\sEdbl)^\vee$.

\begin{lemma}
\label{lem: propdefined}
The parametrix $\Phi_L$ lifts to a well-defined operator
\begin{equation}
T_L : \condfieldscs \to \condfieldscs.
\end{equation}

A similar statement holds to construct an operator $P_L$ whose kernel on $\Edbl$ is~$P(\Phi_L)$.
\end{lemma}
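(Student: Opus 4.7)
The plan is to construct $T_L$ by the following three-step procedure: extend an element $e \in \condfieldscs(M)$ to a $\involution$-invariant (compactly supported) distribution $\til e$ on $\Mdbl$; apply the $\involution$-invariant parametrix $\Phi_L$, obtaining a smooth, compactly supported, $\involution$-invariant section of $\Edbl$; and restrict back to $M$. The key observation driving the argument is that the boundary condition defining $\condfieldscs$ matches exactly the parity constraint forced by $\involution$-invariance on smooth sections of $\Edbl$.

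For the extension, use the product structure $E|_\tubnhd \cong \Eb \boxtimes \Lambda^\bullet T^*[0,\epsilon)$ and the splitting $\Eb = L \oplus L'$ to write $e$ near $\bdyM$ as $\lambda_L + \mu_L\,dt + \lambda_{L'} + \mu_{L'}\,dt$. A direct computation using $\involution_E = \Gamma \otimes \tau^*$ with $\tau(t) = -t$ shows that a smooth $\involution$-invariant section of $\Edbl$ must have $\lambda_L$, $\mu_{L'}$ even in $t$ and $\mu_L$, $\lambda_{L'}$ odd in $t$; in particular, the odd components vanish on $\bdyM$. The boundary condition $\rho(e) \in \sL$ is exactly $\lambda_{L'}|_{t=0} = 0$, which makes the odd extension of $\lambda_{L'}$ continuous; the odd extension of $\mu_L$ can have a jump of size $2\mu_L(0)$ across $\bdyM$, but it is still a bounded, compactly supported $\involution$-invariant distribution $\til e$ on $\Mdbl$.

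To apply $\Phi_L$, use the heat-kernel representation $\til\Phi_L = \int_0^L \til K_T\,dT$ together with Lemma~\ref{lem: noninvttoinvtparam}; because $\til e$ is already $\involution$-invariant, the action of $T_{\Phi_L}$ reduces to a $\involution$-symmetrization of $T_{\til\Phi_L}\til e = \int_0^L T_{\til K_T}\til e\,dT$. For each $T > 0$ the kernel $\til K_T$ is smooth with proper support (Definition~\ref{def: fakeheat}(1)), so $T_{\til K_T}\til e$ is a smooth section of $\Edbl$ on $\Mdbl$; the small-$T$ Gaussian expansion of Definition~\ref{def: fakeheat}(4) ensures the $T$-integral converges and has compact support. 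The crucial point is that the restriction to $M$ is smooth up to $\bdyM$: component by component, the $\involution$-symmetrization of $\til K_T$ produces a Dirichlet- or Neumann-type boundary heat kernel on $M$ (matched to the parity of each component), whose action on the smooth field $e$ is smooth up to $\bdyM$ by standard half-space heat kernel theory, despite the jump discontinuity of $\til e$ across $\bdyM$ in the $\mu_L$ component.

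The resulting smooth $\involution$-invariant section $\Phi_L\til e$ on $\Mdbl$ has its $\lambda_{L'}$ component odd, hence vanishing on $\bdyM$, so its restriction $T_L e := (\Phi_L\til e)|_M$ lies in $\condfieldscs(M)$ as required. The construction of $P_L$ is essentially identical: $P(\Phi_L) = \tfrac{1}{2}(Q^{\GF} \otimes 1 + 1 \otimes Q^{\GF})\Phi_L$ is $\involution$-invariant in each factor since $Q^{\GF}$ commutes with $\involution_E$ (Lemma~\ref{lem: qgfextends}), and amenability of $Q^{\GF}$ guarantees it preserves the $L/L'$ decomposition, so the same parity-and-smoothness analysis applies. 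The main technical obstacle will be the smoothness-up-to-boundary claim for $\Phi_L\til e$; it reduces via the product structure on $\tubnhd$ to classical estimates on half-space heat kernels with Dirichlet or Neumann boundary conditions, applied componentwise.
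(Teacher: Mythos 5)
Your construction is correct and shares the paper's skeleton (invariant extension to $\Mdbl$, then a smoothness-up-to-the-boundary argument), but it resolves the crux by a genuinely different mechanism. The paper splits the extension as (section supported on the non-negative half-manifold) plus (smooth section) and then invokes the transmission condition from the pseudodifferential boundary calculus: $\til\Phi_L$ is a parametrix of the elliptic \emph{differential} operator $[\diffonnocrossdiff,Q^{\GF}]$, and such parametrices satisfy the transmission condition, hence preserve smoothness up to $\bdyM$ of sections extended by zero. You instead keep the parity decomposition forced by $\involution_E=\Gamma\otimes\tau^*$, identify the symmetrized kernel with Dirichlet/Neumann half-space heat kernels componentwise via the method of images, and appeal to classical boundary regularity of those kernels. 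What the paper's route buys is that the hard analytic input is a precisely citable black box covering the whole operator at once, with no need to exploit the collar product structure; what your route buys is a hands-on, self-contained argument that stays inside the heat-kernel formalism already used for renormalization, and it makes explicit (more so than the paper does) why the output again satisfies the boundary condition, namely that the $\sL'$-valued $0$-form component of a smooth invariant section is odd and so vanishes at $t=0$.

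Two cautions if you want to make your route airtight. First, the fake heat kernel of Definition \ref{def: fakeheat} is only required to have the Gaussian \emph{asymptotic} expansion; it need not literally factor in the collar as (boundary kernel)$\,\times\,$(one-dimensional kernel). The clean fix is to build one fake heat kernel in product form on $\tubnhd$ (possible because amenability forces $[\diffonnocrossdiff,Q^{\GF}]$ to split there) and then use Lemma \ref{lem: propsofparametrix}(1): any two parametrices differ by a smooth properly supported kernel, which maps the bounded, compactly supported distribution $\til e$ to a smooth section, so the conclusion is independent of the choice. Second, the phrase ``standard half-space heat kernel theory'' is carrying the entire weight of the lemma; you should either supply the explicit error-function estimates showing that $\int_0^L$ of the Dirichlet/Neumann kernel applied to a smooth compactly supported function (with no boundary condition imposed) has one-sided derivatives of all orders at $t=0$, or cite a precise statement, as the paper does with the transmission condition.
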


\begin{proof}

One may show the Lemma by direct computation or by using a number of well-known facts about the properties of pseudo-differential operators, as we do now.

First, given a section $e\in \condfields$, we may extend it to a $\involution$-invariant section $e'$ of $\Edbl$ in the same way that one may extend a function on $\RR_{\geq 0}$ to an even or odd function on $\RR$.
In general, $e'$ is not smooth, since it and/or its derivatives possess a step-function singularity at $\bdyM\subset \Mdbl$.
It is nevertheless smooth away from $\bdyM$.
The operators $\tilde \Phi_L$, $\tilde P(\Phi_L)$, when considered as operators on the space of distributional sections of $\Edbl$, are pseudodifferential operators, and hence they preserve the singular support of distributions.
Therefore, $\tilde \Phi_L e'$ and $\tilde P(\Phi_L)e'$ are smooth away from $\bdyM$. 
We define $T_L e$ (respectively, $P_L e$) to be one-half of the unique smooth extension of $\tilde \Phi_L e'$ (respectively, $\tilde P(\Phi_L)e'$) from the interior of $M$ to all of $M$, if it exists.
Hence, we need to show that such an extension exists.
First we note that $\tilde P(\Phi_L)e'$, as a section of $\Edbl$, is simply the gauge-fixing operator $Q^{GF}$ applied to $\tilde \Phi_L e'$.
Hence, if $\tilde \Phi_L e'$ is smooth up to $\bdyM$, so too will be $\tilde P(\Phi_L)e'$.
It therefore suffices to show the lemma for the operator $T_L$.
To this end, we may write $e'=e_1+e_2$, where $e_1$ is zero on the negative half-manifold in $\Mdbl$, and $e_2$ is smooth on $\Mdbl$.
It therefore suffices to show that, given a section $e_1$ of $\Edbl$ which is smooth away from $\bdyM$ and which is zero on the negative half-manifold, $\tilde \Phi_L e_1$ is smooth on $M$ up to the boundary.

A sufficient condition for this to be the case is that $\tilde \Phi_L$ satisfy the transmission condition (see Definition 2.3 and Theorem 2.13 of \autocite{Schrohe}).
The operator $\tilde \Phi_L$ is the (non-invariant) parametrix for an elliptic differential operator $[Q,Q^{GF}]$.
Elliptic differential operators, having polynomial symbols, are readily seen to satisfy the transmission condition, and by Proposition 2.7(c) of \autocite{Schrohe}, any (non-invariant) parametrix for an elliptic differential operator satisfies the transmission condition.
(Strictly speaking, Schrohe studies the transmission condition in the upper half-space, but since parametrices have their singularities on the diagonal, we may readily assume without loss of generality that $M$ is the upper half-space.)
The lemma follows.
\end{proof}

\begin{lemma}
\label{lem: qprop}
The equality
\begin{equation}
[Q,P_L]e = e - K_{\Phi_L}\star e 
\end{equation}
holds, where
\begin{equation}
K_{\Phi_L}\star e = (-1)^{|e|}\id\otimes\ip (K_{\Phi_L}\mid_{M\times M} \otimes e)
\end{equation}
is convolution (over $M$) of $K_{\Phi_L}$ with $e$ using the pairing $\ip$.
\end{lemma}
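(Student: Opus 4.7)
My plan is to lift everything to the double $\Mdbl$, where $Q=\diffonnocrossdiff$ extends $\involution$-equivariantly (Lemma \ref{lem: qandipdouble}) and is strictly skew-self-adjoint for $\ip_{DBL}$ without any boundary contribution, and then translate the defining kernel identity for $K_{\Phi_L}$ into the desired operator identity on $M$. Concretely, given $e\in\condfieldscs$, I would first form the $\involution$-invariant distributional extension $e'$ of $e$ to a section of $\Edbl$ (even or odd across $\bdyM$ according to the $\sL$/$\sL'$ decomposition prescribed by $\involution_E$). Let $P_L^{\Mdbl}$ and $T_{K_{\Phi_L}}$ denote the operators on sections of $\Edbl$ whose integral kernels are $P(\Phi_L)$ and $K_{\Phi_L}$, respectively; by Lemma \ref{lem: propdefined}, $P_L$ on $\condfieldscs$ is obtained by applying $\tfrac{1}{2}P_L^{\Mdbl}$ to the $\involution$-invariant extension and then restricting to $M$.

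Next, I would compute the commutator $[Q,P_L^{\Mdbl}]$ at the level of kernels. Because $Q$ commutes with $\involution_E$ and is skew-adjoint for $\ip_{DBL}$ on the boundaryless manifold $\Mdbl$, integration by parts yields no boundary terms and one gets, as an identity of operators on sections of $\Edbl$,
\begin{equation}
[Q,P_L^{\Mdbl}] \;=\; T_{(Q\otimes 1+1\otimes Q)P(\Phi_L)}.
\end{equation}
Rearranging the defining Equation \eqref{eq: heatkerneldef} gives $(Q\otimes 1+1\otimes Q)P(\Phi_L)=K_{(\involution+1)}-K_{\Phi_L}$, so that
\begin{equation}
[Q,P_L^{\Mdbl}] \;=\; (1+\involution) \;-\; T_{K_{\Phi_L}}.
\end{equation}
Applied to the $\involution$-invariant section $e'$, the first term gives $2e'$. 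Since $Q$ commutes with $\involution_E$, one has $Q(e')=(Qe)'$ as distributional sections on $\Mdbl$, so restricting to $M$ and multiplying by $\tfrac{1}{2}$ one obtains
\begin{equation}
[Q,P_L]e \;=\; \tfrac12\bigl([Q,P_L^{\Mdbl}]e'\bigr)\big|_{M} \;=\; e \;-\; \tfrac12 \bigl(T_{K_{\Phi_L}}e'\bigr)\big|_M.
\end{equation}

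The remaining step is the identification $\tfrac12(T_{K_{\Phi_L}}e')|_M = K_{\Phi_L}\star e$, which I would prove as follows. The operator $T_{K_{\Phi_L}}$ is an integral over $\Mdbl$ using the pairing $\ip_{DBL}$; by Lemma \ref{lem: qandipdouble} this pairing equals $\ip$ on the positive half-manifold $M$ and $-\ip$ on the negative one. Since both $K_{\Phi_L}$ (in its second slot, by Lemma \ref{lem: propsofkernels}) and $e'$ are $\involution$-invariant, the change of variables $y\mapsto\involution(y)$ identifies the integral over the negative half with the integral over the positive half, and the minus sign coming from the pairing cancels the minus sign introduced by this change of variables. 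This produces exactly a factor of two, which absorbs the $\tfrac12$ and gives $\tfrac12(T_{K_{\Phi_L}}e')|_M=K_{\Phi_L}\star e$, completing the proof.

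The main obstacles I anticipate are purely bookkeeping. The chief one is the careful sign-and-density tracking in the last paragraph: verifying that under $\involution$ the density and the pairing each flip sign in compatible ways so that the two half-manifold contributions add rather than cancel (up to the Koszul sign $(-1)^{|e|}$ built into the definition of $\star$). A secondary nuisance is that $e'$ is in general only a distributional section (because the boundary condition $\rho(e)\in \sL$ controls values at $\bdyM$ but not derivatives), so one must check that all the kernel/operator manipulations make sense on this larger space; this is automatic because $P(\Phi_L)$ and $K_{\Phi_L}$ are properly supported distributions with the required regularity (Lemma \ref{lem: propsofparametrix}), and because of the explicit pseudodifferential-operator analysis already invoked in the proof of Lemma \ref{lem: propdefined}.
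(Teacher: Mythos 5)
Your overall strategy is sound and it is a genuinely different route from the paper's. The paper never applies the doubled kernels to the (generally non-smooth) invariant extension of an arbitrary $\sL$-conditioned field: instead it dualizes, pairing $[\diffonnocrossdiff,P_L]e$ against the dense (in the $L^2$ sense) subspace $(\ultrafields)_c$, using that $P_L$ and $K_{\Phi_L}\star$ are graded symmetric and that $\diffonnocrossdiff$ is graded skew-symmetric on $\sL$-conditioned fields to move the commutator onto the ultrafield $e'$, whose doubling $\cD(e')$ is honestly smooth; only then does it invoke the kernel identity $(\diffonnocrossdiff\otimes 1+1\otimes\diffonnocrossdiff)P(\Phi_L)=K_{\involution+1}-K_{\Phi_L}$ and the $\involution$-invariance argument converting $\ip_{DBL}$ over $\Mdbl$ into twice $\ip$ over $M$. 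Your version trades the density-and-symmetry reduction for a direct distributional computation; the final invariance/factor-of-two step and the use of the defining equation for $K_{\Phi_L}$ are the same as in the paper.

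The one step you should repair is the assertion $Q(e')=(Qe)'$, which does \emph{not} follow from $Q$ commuting with $\involution_E$ (equivariance only tells you $Qe'$ is again $\involution$-invariant). Since $e$ need not be an ultrafield, its invariant extension $e'$ has components that extend oddly across $\bdyM$ (the $\sL'$-valued $0$-form part and the $\sL$-valued $dt$-part), and the normal-derivative term $dt\,\partial_t$ in $\diffonnocrossdiff$ applied to an odd extension produces a delta term on $\bdyM$ proportional to the boundary value of that component. The computation only goes through because the boundary condition $\rho(e)\in\sL$ forces exactly that boundary value ($\lambda_{\sL'}(0)$) to vanish, while the even-extended components contribute no deltas at all; this is where the boundary condition enters your argument (in the paper it enters instead through the skew-symmetry of $\diffonnocrossdiff$, i.e.\ the absence of boundary terms in integration by parts). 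Without this check, $P_L^{\Mdbl}$ would be applied to a distribution differing from $(Qe)'$ by a boundary-supported term, and the restriction to $M$ would not give the claimed identity. A second, milder point: Lemma \ref{lem: propdefined} defines $P_L e$ via the \emph{non-invariant} kernel applied to $e'$, so your identification $P_L e=\tfrac12\bigl(P_L^{\Mdbl}e'\bigr)\big|_M$ with the $\involution$-invariant kernel requires either choosing the non-invariant parametrix $(\involution\otimes\involution)$-invariant or a short computation showing the invariantization acts on $\involution$-invariant inputs as claimed; state one of these explicitly.
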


\begin{proof}
It suffices to check this equation by pairing with $e'\in \ultrafields\,$, i.e. it suffices to show that
\begin{equation}
    \ip[{e',[\diffonnocrossdiff,P_L]e}] = \ip[e',e-K_{\Phi_L}\star e]
\end{equation}
for all $e'\in (\ultrafields)_c$.
This is because these fields are dense in $\condfieldscs$ in the $L^2$ sense.
Now, note that, by construction, $P_L$ and $K_L\star$ are graded symmetric operators on $\condfieldscs$ with respect to the pairing $\ip$.
Because $e$ and $e'$ satisfy the boundary condition, $\diffonnocrossdiff$ is also graded skew-symmetric for the pairing.
Hence, we find that
\begin{equation}
    \ip[{e', [\diffonnocrossdiff,P_L]e}]=\ip[{[\diffonnocrossdiff,P_L]e',e}]
\end{equation}
Let
\begin{equation}
    \cD: \ultrafields \to (\sEdbl)^\involution
\end{equation}
denote the natural doubling map. 
Further, let $\mathrm{res}_M$ denote the restriction map
\begin{equation}
    \mathrm{r}_M: (\sEdbl)^\involution\to \ultrafields.
\end{equation}
The maps $\mathrm{r}_M$ and $\cD$ are mutually inverse isomorphisms.
For $e'\in \ultrafields$, we have, by construction,
\begin{equation}
    P_L e' = \frac{1}{2}\mathrm{r}_M P(\Phi_L)\cD(e').
\end{equation}
We also have the identities
\begin{align}
    \mathrm{r}_M \diffonnocrossdiff \cD(e') &= \diffonnocrossdiff e',\qquad \mathrm{r}_M Q^{GF}\cD(e')=Q^{GF}e'\\
    \cD \diffonnocrossdiff \mathrm{r}_M f &= \diffonnocrossdiff f, \qquad \cD Q^{GF}\mathrm{r}_M f = Q^{GF} f
\end{align}
for $e'\in \ultrafields$ and $f\in (\sEdbl)^\involution$.
In other words, the above identities show that the maps $\cD$ and $\mathrm{r}_M$ intertwine both $\diffonnocrossdiff$ and $Q^{GF}$
Hence,
\begin{equation}
    [\diffonnocrossdiff,P_L ]e' = [\diffonnocrossdiff,\mathrm{r}_{M}P(\Phi_L)\cD e']=\mathrm{r}_M [\diffonnocrossdiff,P(\Phi_L)]\cD e';
\end{equation}
now, the kernel for $P(\Phi_L)$ on $\sEdbl$ is precisely $(Q^{GF}\otimes 1+1\otimes Q^{GF})\Phi_L$, and the kernel for $[\diffonnocrossdiff, P_L]$ is
\begin{align}
    K_{\involution +1}-K_{\Phi_L};
\end{align}
$K_{\involution +1}$ acts on $\cD(e')$ by multiplication by 2.
Hence, we find that 
\begin{equation}
    [Q,P_L]e'= \frac{1}{2}\mathrm{r}_M P_{DBL,L}\cD(e') = \frac{1}{2}\mathrm{r}_M (2e'- (-1)^{|e|}\id \otimes \ip_{\Mdbl}(K_{\Phi_L}\otimes e')) = e' - K_{\Phi_L}\star e',
\end{equation}
where we have used the fact that because $e'$ and $K_{\Phi_L}$ are $\involution$-invariant,
\[
id \otimes \ip_{\Mdbl}(K_{\Phi_L}\otimes e')=2id \otimes \ip(K_{\Phi_L}|_{M\times M}\otimes e').
\]
This establishes the Lemma.
\end{proof}

The preceding results combine to show the following lemma:

\begin{lemma}
\label{lem: classRG}
The HRG flow from scale zero to scale $\Phi$ is well-defined modulo $\hbar$ for functionals with smooth first derivative. More precisely, in the \emph{tree} diagrams appearing in the definition of $\rgflow{\Psi}{\Phi}{\cdot}$, one may replace $P(\Phi)-P(\Psi)$ with $P(\Phi)$ and obtain a well-defined answer. Let 
\index[notation]{W0@$\clrgflow{0}{\Phi}{\cdot}$}
$\clrgflow{0}{\Phi}{\cdot}$ denote the corresponding map at the level of functionals (with smooth first derivative).
$W_0(P(0,\Phi),\cdot)$ takes interaction functionals corresponding to classical bulk-boundary systems to solutions of the scale$-\Phi$ quantum master equation mod $\hbar$.
The same statement holds true for functionals of the form $I+\delta \cO$, where $I$ has smooth first derivative and proper support, $\delta^2=0$, and $\cO$ is an arbitrary functional. Moreover, the equation
\begin{equation}
\frac{d}{d\delta}\clrgflow{0}{\Phi}{I+\delta (\diffonnocrossdiff\cO+\{I,\cO\}_\Phi})=d_{CE}\frac{d}{d\delta}\clrgflow{0}{\Phi}{I+\delta\cO}
\end{equation} 
holds.
\end{lemma}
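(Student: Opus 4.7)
The plan is to follow closely the strategy of the analogous result in Costello's book (Lemma 11.1.1 of Chapter 5 of \autocite{cost}), with Lemma \ref{lem: qprop} serving as the bulk-boundary replacement for the standard heat-kernel identity $[\diffonnocrossdiff, P_L] = K_0 - K_L$ that drives Costello's proof.

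First, I would establish well-definedness of $\clrgflow{0}{\Phi}{I}$. The operator is computed as a sum over connected trees: each internal edge carries $P(\Phi)$, each internal vertex carries a Taylor component of $I$, and each tail carries an input in $\condfieldscs$. Because $I$ arises from a classical bulk-boundary system, each Taylor component of $I$ has smooth first derivative (it is built out of local functionals, cf. Lemma \ref{lem: localfcnlssupportdiag}). By Lemma \ref{lem: propdefined}, $P(\Phi)$ lifts to an operator $\condfieldscs \to \condfieldscs$. Hence, inductively collapsing one edge at a time, each tree contributes a well-defined functional with smooth first derivative; the proper-support assumption on $I$ plus the proper support of $P(\Phi)$ ensures each resulting functional is properly supported. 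The same argument applies when one replaces a vertex by $\delta \cO$ (working to first order in $\delta$, since $\delta^2 = 0$).

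Second, I would verify that $W := \clrgflow{0}{\Phi}{I}$ satisfies the scale-$\Phi$ QME modulo $\hbar$, i.e. $\diffonnocrossdiff W + \tfrac{1}{2}\{W, W\}_\Phi = 0$. Apply $\diffonnocrossdiff$ to each tree diagram and distribute via the Leibniz rule over internal vertices and internal edges. The vertex terms replace an insertion of $I$ by $\diffonnocrossdiff I$, which by the classical master equation satisfied by a classical bulk-boundary interaction (Lemma \ref{ref: tildefieldsBV}) equals $-\tfrac{1}{2}\{I, I\}$ at the level of the underlying $L_\infty$-algebra. The edge terms use Lemma \ref{lem: qprop}: applying $\diffonnocrossdiff$ to a propagator gives $\id - K_{\Phi}\star(\cdot)$. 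The identity part collapses the internal edge, gluing the two endpoint vertices into a single vertex decorated by $\{I, I\}$; these contributions precisely cancel the vertex contributions. The $K_\Phi$ part of each edge, after summing over all ways of cutting a tree at a single edge into two subtrees, reassembles into $\tfrac{1}{2}\{W, W\}_\Phi$ by the standard combinatorial identification of such edge-cuts with the product structure defining the BV bracket at scale $\Phi$.

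Third, for the statement about $I + \delta \cO$, working to first order in $\delta$ reduces the claim to a statement about $\frac{d}{d\delta}\clrgflow{0}{\Phi}{I + \delta \cO}$, which is a sum over trees with a single distinguished $\cO$-vertex and the remaining vertices decorated by $I$. Applying $d_{CE} = \diffonnocrossdiff + \{I, \cdot\}_\Phi$ to this expression and comparing with $\clrgflow{0}{\Phi}{I + \delta(\diffonnocrossdiff \cO + \{I, \cO\}_\Phi)}$ term-by-term in the graph expansion, the same combinatorial analysis as in Step 2 identifies both sides. The main obstacle I anticipate is the careful bookkeeping in Step 2: tracking how the $K_\Phi$ contributions from different edges, with the symmetry factors of $|\Aut(\Gamma)|^{-1}$, collectively reorganize into $\tfrac{1}{2}\{W, W\}_\Phi$, and checking that the signs produced by moving $\diffonnocrossdiff$ past the Koszul-signed propagator insertions agree with those in the definition of $\{\cdot, \cdot\}_\Phi$. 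The presence of the boundary does not introduce any fundamentally new analytic difficulty, thanks to Lemma \ref{lem: qprop}, which packages the needed identity in a form structurally identical to the boundaryless case.
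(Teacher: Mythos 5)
Your proposal is correct and follows essentially the same route as the paper: well-definedness of the tree-level flow from scale zero via Lemma \ref{lem: propdefined} together with the smooth-first-derivative hypothesis, and the QME/$d_{CE}$-compatibility statements via the mod-$\hbar$ (tree-level) part of the standard RG/QME compatibility argument of Lemma \ref{lem: rgqme} with one parametrix set to zero, using Lemma \ref{lem: qprop} for the edge terms — you simply write out the diagrammatic cancellation that the paper cites. The only slight imprecision is your appeal to Lemma \ref{lem: localfcnlssupportdiag} to claim smooth first derivative: general local functionals here have only $\del$-smooth first derivative (this distinction is exactly why the lemma's hypothesis restricts to smooth first derivative, as the paper's proof emphasizes), but the bulk interaction of a classical bulk-boundary system does satisfy it, so nothing breaks.
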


\begin{proof}
In light of Lemmas \ref{lem: qprop} and \ref{lem: propdefined}, the proof of the first statement is by an identical argument to the one used when $\bdyM=\emptyset$.
Here, it is important that we consider only functionals with smooth first derivative (as opposed to $\del$-smooth first derivative) because the trees arising in this way can be seen as compositions of poly-differential operators and pseudo-differential operators.
The second statement of the lemma follows from the mod $\hbar$ part of the argument used to prove Lemma \ref{lem: rgqme}; the argument now applies with zero for one of the parametrices. The fact that this argument also applies to functionals of the form $I+\delta \cO$ uses that the diagrams involved only have one vertex corresponding to $\cO$.
\end{proof}

\section{Counterterms and Renormalization}
So far, we have managed to define a quantum bulk-boundary system.
However, we have said nothing about how to construct a quantum bulk-boundary system.
In this section, we show that one may---given any local functional $I$---construct a quantum pre-theory corresponding to $I$ by the mechanisms of renormalization.
In other words, we will obtain from $I$ a collection $I[L]$ of functionals, one for every $L>0$, which satisfy all of the axioms of a bulk-boundary system in the heat kernel sense except that they may not satisfy the QME.
In Section \ref{subsec: obstruction}, we address the possible failure of a pre-theory to satisfy the QME.
The results in this section are essentially a summary of \autocite{albertrenorm} and an explanation of the consequences of that paper in the present context.
\subsection{Counterterms}

In this and the next subsection, we restrict our attention to the study of \emph{pre-theories} in the heat kernel formalism, that is, collections of interaction functionals $\{I[L]\}$ satisfying the HRG equation and asymptotic locality, but not necessarily the QME. Our goal is to show that such theories are in bijection with $\hbar$ power series of local action functionals $\sum_{i,k}\hbar^i I_{i,k}$, where $I_{i,k}$ is homogeneous of degree $k$ in the fields. To this end, suppose that $\sum_{i,k}\hbar^i I_{i,k}$ is an action functional, where $\sum_{k}I_{0,k}$ is the action functional of a classical bulk-boundary system (we fix also the boundary condition $\sL$ and a gauge-fixing and assume that both the theory and the gauge-fixing are amenable to the doubling trick). We would like to set
\begin{equation}
I[L]=\rgflow{0}{L}{ \sum_{i,k}\hbar^i I_{i,k}};
\end{equation}
however, the HRG flow is not defined from scale 0, since the propagator $P(0,L)$ does not have a smooth kernel. Instead, one must study the singularities for small $\epsilon$ of the quantity
\begin{equation}
\rgflow{\epsilon}{L}{ \sum_{i,k}\hbar^i I_{i,k}}.
\end{equation}
One then shows that these singularities can be canceled by the introduction of local counter-terms---a local, $\epsilon$-dependent functional $I^{CT}(\epsilon)$---so that the limit
\begin{equation}
\lim_{\epsilon\to 0}\rgflow{\epsilon}{L}{\sum_{i,k}\hbar^i I_{i,k}-I^{CT}(\epsilon)}
\end{equation}
exists.

Let $\gamma$ be a Feynman diagram appearing in the sum defining $\rgflow{\epsilon}{L}{\sum \hbar^i I_{i,k}}$. Let $k$ be the number of external vertices, or tails, of $\gamma$. Let $E(\gamma)$ denote the set of edges of $\gamma$, and let $\alpha$ be a compactly-supported section in $\condfieldscs^{\hotimes_\beta k}$. For each such diagram, the associated weight can be written 
\begin{equation}
w_{\gamma,I}(\epsilon,L,\alpha)=\int_{(\epsilon,L)^{|E(\gamma)|}} f_{\gamma,I}(\mathbf{ t},\alpha)\prod_{e\in E(\gamma)}dt_e,
\end{equation}
where $\bf t$ represents the collection of variables $t_e$ as $e$ ranges over $E(\gamma)$ and $f_{\gamma, I}(\mathbf t,\alpha)$ is determined by the Feynman rules. The following result concerning the small $\epsilon$ behavior of the $f_{\gamma,I}(\mathbf t,\alpha)$ is proved for scalar field theory in \autocite{albertrenorm}, but its proof only uses the asymptotic expansion of the heat kernel of the form given in Definition \ref{def: fakeheat} and so applies to the present case.

\begin{theorem}
\label{thm: smallt}
There exists a cover $C_1,\cdots C_\ell$ of $(0,L)^{|E(\gamma)|}$ by a finite number of closed sets such that there exist:
\begin{enumerate}
\item a finite set of inequalities of the form $t_{e_1}^{R_1}-t_{e_2}^{R_2}\geq 0$ defining each $C_i$,
\item $g_{ij}(\mathbf t)\in \cinfty((0,L)^{|E(\gamma)|})$, $i\in \{1,\cdots, \ell\}$, $j\in \NN$,
\item local action functionals $\Psi_{ij}(\alpha)$,
\item $d_{ir},m_{ir}\in \NN$, with $d_{ir}\to \infty$ as $r\to \infty$, $i\in \{1,\cdots, \ell\}$, so that
\begin{equation}
\left | f_{\gamma,I}(\mathbf{t}, \alpha) -\sum_{j=1}^r g_{ij}(\bf t)\Psi_{ij}(\alpha) \right |\leq ||\alpha||_{m_{ir}}^{|T(\gamma)|} \left(\max_{e\in E(\gamma)}t_{e}\right)^{d_{ir}}
\end{equation}
for all $\mathbf t \in C_i$ with $\max t_{e}$ sufficiently small. Here, $||\alpha||_{m_{ir}}$ is the $C^{m_{ir}}$ norm of $\alpha$.
\end{enumerate}
\end{theorem}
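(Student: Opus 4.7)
My approach is to adapt Costello's inductive argument from the scalar case on a closed manifold (which is the subject of \autocite{albertrenorm}) to the bulk-boundary setting, exploiting that the small-$L$ asymptotic expansion of the fake heat kernel $\tilde K_L$ given in Definition \ref{def: fakeheat} takes the same Gaussian-times-power-series form on $\Mdbl$ as in the closed manifold case. Since the statement only uses that form of expansion, and all analytic ingredients (proper support, smoothness of $K_{\Phi_L}$, the convolution identities of Lemma \ref{lem: qprop}) transfer through the doubling trick, the argument of \autocite{albertrenorm} carries over with essentially cosmetic changes. The first step is to write $f_{\gamma,I}(\mathbf{t},\alpha)$ explicitly as an integral over the vertex positions $x_v \in M$ of a product of heat kernels $K_{t_e}(x_{s(e)},x_{t(e)})$ on edges, polydifferential operators from the vertex labels $I_{i,k}$ at internal vertices, and the input sections $\alpha$ at tails; via the doubling isomorphism from Lemma \ref{lem: ultraharmdbl}, we may perform this integral on $\Mdbl$ subject to $\involution$-invariance.

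Next I construct the covering $\{C_i\}$ inductively. At first pass, for each linear ordering $\sigma$ of $E(\gamma)$, the closed region $C_\sigma = \{\mathbf t : t_{\sigma(1)} \leq \cdots \leq t_{\sigma(|E(\gamma)|)}\}$ gives a finite cover defined by inequalities of the required form. The more general inequalities $t_{e_1}^{R_1} - t_{e_2}^{R_2} \geq 0$ appear at deeper stages of the induction, where rescaling by $\sqrt{t_{e^*}}$ (with $e^*$ the shortest edge in the current region) produces comparisons between different powers of the remaining times. Thus the $C_i$ arise naturally as one iterates the key reduction step.

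That reduction step is the heart of the proof. On $C_\sigma$, identify the shortest edge $e^*$; plug the heat kernel asymptotic expansion $\tilde K_{t_{e^*}} \simeq t_{e^*}^{-n/2} e^{-\|x-y\|^2/(4t_{e^*})} \sum_j t_{e^*}^j \Omega_j(x,y)$ into $f_{\gamma,I}$ and change variables $u = (x_{s(e^*)} - x_{t(e^*)})/\sqrt{t_{e^*}}$. Gaussian integration in $u$ against the Taylor expansion of the remaining integrand in the $x_{s(e^*)}$ variable around $x_{t(e^*)}$ yields, to any desired order, a finite sum of terms of the form $h(t_{e^*}) \cdot (\text{polydifferential operator in } x_{t(e^*)})$ applied to the integrand of the graph $\gamma/e^*$ obtained by contracting $e^*$. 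This realises, to that order of approximation, a local action functional $\Psi_{ij}$ (locality coming from the contraction of edges and Lemma \ref{lem: localfcnlssupportdiag}) multiplied by a smooth function $g_{ij}(\mathbf t)$ of the remaining time variables. Iterating on $\gamma/e^*$ inside the subregions of $C_\sigma$ obtained by comparing $t_{e^*}$ with the rescaled remaining times produces the full asymptotic expansion, and standard Gaussian-tail estimates bound the remainder by $(\max_e t_e)^{d_{ir}} \|\alpha\|_{m_{ir}}^{|T(\gamma)|}$.

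The main obstacle I expect is purely bookkeeping: tracking how the exponents $R_1,R_2$ in the comparison inequalities defining the refined subregions accumulate through the induction, and verifying that after all contractions the resulting functionals $\Psi_{ij}$ are genuinely local in the sense of Definition \ref{def: localfcnls} (i.e.\ respect the boundary terms allowed by the complex $\fullOloc$). The first difficulty is handled exactly as in \autocite{albertrenorm} via a careful choice of rescaling at each stage. The second is where the doubling trick pays off: working $\involution$-equivariantly on $\Mdbl$, the contracted integrand is a local functional on $\sEdbl$; restricting to $M$ and using Lemma \ref{lem: ultraharmdbl} converts contributions supported on $\bdyM \subset \Mdbl$ into the codimension-one local functionals of $\Oloc$ via the mechanism of $\delsmoothdistr$ described in Section \ref{sec: OlocE}.
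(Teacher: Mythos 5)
Your proposal is correct and follows essentially the same route as the paper: the paper's own proof consists of deferring to \autocite{albertrenorm}, observing that the argument there uses only the Gaussian-type small-$L$ asymptotic expansion of the (fake) heat kernel from Definition \ref{def: fakeheat} and hence transfers to the doubled, $\involution$-invariant setting. What you have written is a faithful expanded sketch of that same Costello--Albert induction (ordering of edge lengths, rescaling of the shortest edge, Gaussian integration against a Taylor expansion, edge contraction, with the boundary/image contributions producing the codimension-one local terms), so no new argument is needed.
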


Theorem \ref{thm: smallt} shows that we can approximate $w_{\gamma,I}(\epsilon,L,\alpha)$ by  a finite number of terms in the sum
\begin{equation}
\sum_{i,j} \left(\int_{C_i\cap (\epsilon,L)^{|E(\gamma)|}}g_{ij}(\bf t)d\bf t\right)\Psi_{ij}(\alpha),
\end{equation}
and the error in doing so (if sufficiently many terms are kept in the above sum) is bounded by a function of the coordinates $\bf t$ whose integral over $(0,L)^{|E(\gamma)|}$ exists. We can also use Theorem \ref{thm: smallt} to derive a small-$\epsilon$ asymptotic expansion for the weights $w_{\gamma,I}(\epsilon,1,\alpha)$:

\begin{proposition}
\label{prop: smallepsilon}
The Feynman weights $w_{\gamma,I}(\epsilon,1,\alpha)$ admit small-$\epsilon$ asymptotic expansions of the form:
\begin{equation}
\sum_{i=0}^\infty g_j(\epsilon)\Psi_j(\alpha),
\end{equation}
where the $\Psi_j(\alpha)$ are local action functionals. More precisely, for every $N>0$, there exist a $j_0>0$ and an $m\in \ZZ_{\geq 0}$ so that 
\begin{equation}
\left | w_{\gamma,I}(\epsilon,1,\alpha)- \sum_{j=0}^{j_0} g_j(\epsilon)\Upsilon_j(\alpha) \right| < \epsilon^{N+1}||\alpha||_{m}^{|T(\gamma)|}
\end{equation}
for sufficiently small $\epsilon$. 
\end{proposition}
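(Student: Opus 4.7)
The plan is to combine Theorem \ref{thm: smallt} with a sector-decomposition analysis of the resulting purely $\mathbf{t}$-dependent integrals. Specializing $L = 1$ and working on each closed set $C_i$ of the cover of $(0,1)^{|E(\gamma)|}$, Theorem \ref{thm: smallt} gives, for each truncation $r$, an approximation
\begin{equation}
f_{\gamma,I}(\mathbf{t}, \alpha) = \sum_{j=1}^r g_{ij}(\mathbf{t}) \Psi_{ij}(\alpha) + R_{i,r}(\mathbf{t},\alpha),
\end{equation}
with the remainder controlled as in the statement of that theorem. Integrating both sides over $C_i \cap (\epsilon,1)^{|E(\gamma)|}$ and summing over $i$ presents $w_{\gamma,I}(\epsilon,1,\alpha)$ as a sum of terms of the form $G_{ij}(\epsilon) \Psi_{ij}(\alpha)$ plus an integrated remainder, where $G_{ij}(\epsilon) := \int_{C_i \cap (\epsilon,1)^{|E(\gamma)|}} g_{ij}(\mathbf{t})\,d\mathbf{t}$.

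The heart of the argument is to show each $G_{ij}(\epsilon)$ admits a small-$\epsilon$ asymptotic expansion whose terms can be grouped together with the $\Psi_{ij}$ to produce the claimed $g_j(\epsilon) \Upsilon_j(\alpha)$ form. I would exploit the defining inequalities $t_{e_1}^{R_1} - t_{e_2}^{R_2} \geq 0$ for the $C_i$ to introduce Hepp-sector coordinates, iteratively factoring out the largest parameter and changing variables to ratios. In these coordinates the integration region decouples into a product of a full unit cube in the ratio variables and a one-dimensional $\epsilon$-dependent radial interval, while $g_{ij}$ expands in a Taylor series in the ratio variables with coefficients that are polynomial (possibly with logarithms after integration) in the radial variable. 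Integrating radially produces an expansion $G_{ij}(\epsilon) = \sum_{a,b} c_{ijab}\, \epsilon^{a}(\log \epsilon)^{b} + O(\epsilon^{N+1})$ for any $N$, where $a$ runs over a discrete set bounded below and $b$ is a bounded nonnegative integer. Gathering terms with the same $(a,b)$ across sectors and indices $j$ furnishes the $g_j(\epsilon)$, and regrouping the attached $\Psi_{ij}$ yields the local functionals $\Upsilon_j$.

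Finally, to obtain the stated error bound one truncates the asymptotic expansion of each $G_{ij}$ at order $N$, and chooses $r$ large enough in Theorem \ref{thm: smallt} so that the integrated remainder
\begin{equation}
\left| \int_{C_i \cap (\epsilon,1)^{|E(\gamma)|}} R_{i,r}(\mathbf{t}, \alpha)\,d\mathbf{t} \right| \leq \|\alpha\|_{m_{ir}}^{|T(\gamma)|} \int_{(0,1)^{|E(\gamma)|}} \Bigl(\max_e t_e\Bigr)^{d_{ir}} d\mathbf{t}
\end{equation}
is itself $O(\epsilon^{N+1})$ in an appropriate sense; since $d_{ir} \to \infty$ as $r \to \infty$, this is merely a matter of choosing $r$ large, at the cost of absorbing some differentiability of $\alpha$ into the exponent $m$. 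Combining these pieces yields the proposition.

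The main obstacle I anticipate is the sector decomposition step: the inequalities defining the $C_i$ involve powers $R_1, R_2$, so the change of variables is not the textbook one and the logarithmic structure of the radial integrals must be tracked carefully to confirm that the resulting $g_j(\epsilon)$ are indeed functions with at worst finite-order poles and logarithmic factors (as required to match the pre-theory locality axiom when this proposition is later applied to build counterterms). Verifying that the same $\Upsilon_j$ arise across distinct sectors so that the regrouping is well-defined — rather than producing an unmanageable proliferation of local functionals — will require bookkeeping but no new ideas beyond those already in \autocite{albertrenorm}.
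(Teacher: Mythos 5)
Your overall strategy---integrate the sector approximations of Theorem \ref{thm: smallt} over $C_i\cap(\epsilon,1)^{|E(\gamma)|}$ and control the remainder using $d_{ir}\to\infty$---is the natural starting point, and it is the skeleton of the argument the paper has in mind (the paper's own proof is only a citation: it asserts the proposition follows from Theorem \ref{thm: smallt} in the same way that Lemma 15.7.7 of \autocite{costrenormBV} follows from Lemma 15.7.3 there). However, your error-control step, which is exactly where the substance of that cited argument lives, does not work as written. The estimate of Theorem \ref{thm: smallt} is only valid for $\mathbf{t}\in C_i$ with $\max_e t_e$ sufficiently small, so you may not integrate the bound $|R_{i,r}(\mathbf{t},\alpha)|\leq ||\alpha||_{m_{ir}}^{|T(\gamma)|}(\max_e t_e)^{d_{ir}}$ over all of $C_i\cap(\epsilon,1)^{|E(\gamma)|}$: the part of the domain where some $t_e$ is of order one is not controlled by that theorem at all. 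Worse, even where the bound does apply, the quantity $\int_{(0,1)^{|E(\gamma)|}}(\max_e t_e)^{d_{ir}}\,d\mathbf{t}$ is a positive constant independent of $\epsilon$; choosing $r$ large makes this constant small, but it is never $O(\epsilon^{N+1})$, so the stated inequality does not follow from your final display.

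The missing mechanism is to use the sector approximation only on a region that shrinks with $\epsilon$, e.g.\ $\{\max_e t_e\leq \epsilon^{\theta}\}$ for a small $\theta>0$, so that the integrated remainder is bounded by a constant times $\epsilon^{\theta(d_{ir}+|E(\gamma)|)}$ and hence beats $\epsilon^{N+1}$ once $r$ is large, and then to handle the complementary region---where at least one edge parameter is bounded below by a power of $\epsilon$---by a separate argument: the propagator factors attached to those ``large'' edges are smooth kernels with uniform control, one integrates them out to form new effective (generally non-local) vertices, and one inducts on the number of remaining edges. This induction is the actual content of the passage from Lemma 15.7.3 to Lemma 15.7.7 in \autocite{costrenormBV}, and it is also the source of the caveat ``(not necessarily local)'' for the coefficient functionals at fixed positive scale in Theorem \ref{thm: smallepsilon}; your write-up omits it entirely. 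By contrast, the Hepp-sector analysis of the functions $G_{ij}(\epsilon)$ that you make the heart of your argument is not needed for this statement: one may simply take the $g_j(\epsilon)$ to be the functions $\epsilon\mapsto\int_{C_i\cap(\epsilon,1)^{|E(\gamma)|}}g_{ij}(\mathbf{t})\,d\mathbf{t}$ themselves, since the proposition does not require a power--log form for them (the remark following Theorem \ref{thm: smallepsilon} explicitly defers that refinement), so that effort is spent on the wrong place while the genuinely delicate step is left unproved.
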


\begin{proof}
The proof follows from Theorem \ref{thm: smallt} in the same way that Lemma 15.7.7 in \autocite{costrenormBV} follows from Lemma 15.7.3 therein.
\end{proof}

\begin{theorem}
\label{thm: smallepsilon}
The weights $w_{\gamma,I}(\epsilon, L, \alpha)$ have the following properties:
\begin{enumerate}
\item They admit a small-$\epsilon$ asymptotic expansion
\begin{equation}
w_{\gamma,I}(\epsilon,L,\alpha) \simeq \sum_{i=0}^\infty h_i (\epsilon)\Upsilon_i(L,\alpha),
\end{equation}
where the $\Upsilon_i$ are continuously $L$-dependent functionals of the field $\alpha$ (not necessarily local) and the $h_i$ are continuous functions of $\epsilon\in (0,1)$.
\item The $\Upsilon_i(L,\alpha)$ admit a small-$L$ asymptotic expansion 
\begin{equation}
\Upsilon_i(L,\alpha) \simeq \sum_{i=0}^\infty f_{i,j}(L)\Phi_{i,j},
\end{equation}
where the $\Phi_{i,j}$ are local.
\end{enumerate}
\end{theorem}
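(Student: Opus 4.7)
The plan is to combine the pointwise expansion of Theorem \ref{thm: smallt} with the small-$\epsilon$ analysis of Proposition \ref{prop: smallepsilon} via a semigroup decomposition of the propagator. Throughout, fix an intermediate scale $T_0$ with $0<\epsilon<T_0$ (for definiteness, $T_0=\min(1,L/2)$). Since the scale-$\epsilon$-to-$L$ propagator is an integral of the BV heat kernel $K_T$ over $T\in(\epsilon,L)$, the identity $\int_\epsilon^L = \int_\epsilon^{T_0}+\int_{T_0}^L$ at each edge, combined with the multilinearity of the Feynman integrand $f_{\gamma,I}(\mathbf{t},\alpha)$ in the propagators, yields
\[
w_{\gamma,I}(\epsilon,L,\alpha)=\sum_{S\subseteq E(\gamma)} w_{\gamma,I}^S(\epsilon,L,\alpha),
\]
where in the $S$-summand the edges in $S$ are integrated over $(\epsilon,T_0)$ (where the $\epsilon$-singularities live) while those in $E(\gamma)\setminus S$ are integrated over $(T_0,L)$ (yielding smooth, $L$-dependent kernels).

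For part (1), I would reinterpret each summand $w_{\gamma,I}^S$ as a Feynman weight, with integration range $(\epsilon,T_0)$, for the contracted graph $\gamma/S^c$ obtained by collapsing the edges in $S^c$ into effective vertex factors built by pairing $I$ with $\int_{T_0}^L K_T\,dT$. Because this integrated heat kernel is smooth and depends continuously on $L$, the effective interaction $I_S(L)$ is a properly supported functional with $\partial$-smooth first derivative depending continuously on $L$. Applying Proposition \ref{prop: smallepsilon} (with $T_0$ in place of $1$) to $w_{\gamma/S^c,\,I_S(L)}(\epsilon,T_0,\alpha)$ then furnishes a small-$\epsilon$ asymptotic expansion $\sum_j g_{S,j}(\epsilon)\,\Upsilon_{S,j}(L,\alpha)$; summing over $S$ and relabeling gives the expansion claimed in part (1).

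For part (2), I would apply Theorem \ref{thm: smallt} to the integrand $f_{\gamma,I}(\mathbf{t},\alpha)$ directly, now with $\epsilon$ fixed and $L\to 0$, so that $\max_e t_e\to 0$ throughout the integration region $(\epsilon,L)^{|E(\gamma)|}$. On each closed set $C_i$ of the cover supplied by Theorem \ref{thm: smallt}, replace $f_{\gamma,I}(\mathbf{t},\alpha)$ by a finite partial sum $\sum_{j<r} g_{ij}(\mathbf{t})\,\Psi_{ij}(\alpha)$ with $\Psi_{ij}$ local and remainder bounded by $L^{d_{ir}}\|\alpha\|_{m_{ir}}^{|T(\gamma)|}$. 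Integrating over $C_i\cap(\epsilon,L)^{|E(\gamma)|}$, summing over $i$, and—by part (1)—extracting the finite part as $\epsilon\to 0$, produces the small-$L$ asymptotic expansion of each $\Upsilon_i(L,\alpha)$ in local functionals $\Phi_{i,j}$.

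The main obstacle will be the bookkeeping needed to ensure that the cover $\{C_i\}$—whose pieces are cut out by polynomial inequalities of the form $t_{e_1}^{R_1}\geq t_{e_2}^{R_2}$ rather than by rectangular conditions—is compatible both with the product integration domain $(\epsilon,L)^{|E(\gamma)|}$ and with the edge-by-edge decomposition at the intermediate scale $T_0$. I expect this to be tractable because the defining inequalities are polynomial, so the sub-integrals $\int_{C_i\cap\,\cdot}\, g_{ij}(\mathbf{t})\prod_e dt_e$ admit closed-form evaluation in which the $\epsilon$- and $L$-dependencies separate and produce only continuous coefficients. This largely parallels the scalar-field arguments of \autocite{albertrenorm}, from which Theorem \ref{thm: smallt} is inherited.
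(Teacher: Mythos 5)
Your intermediate-scale decomposition is the right idea---it is essentially a graph-level version of what the paper does---but two steps do not go through as written. In part (1), you apply Proposition \ref{prop: smallepsilon} to $w_{\gamma/S^c,\,I_S(L)}(\epsilon,T_0,\alpha)$, where the vertices of the contracted graph carry the effective interaction $I_S(L)$. That proposition (and Theorem \ref{thm: smallt} behind it) is stated for weights whose vertices are labelled by the \emph{local} interaction $I$; by contrast $I_S(L)$ is non-local (its kernels contain the smooth propagator $\int_{T_0}^L K_T\,\d T$, which is only properly, not compactly, supported) and depends on $L$, and your upper scale $T_0=\min(1,L/2)$ also depends on $L$. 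Consequently the coefficient functions you extract are a priori functions of $(\epsilon,L)$ jointly, and the form you need---an expansion $\sum_i h_i(\epsilon)\Upsilon_i(L,\alpha)$ with $h_i$ depending on $\epsilon$ alone---does not follow from the cited results without an extension of Theorem \ref{thm: smallt} to smooth, parameter-dependent vertex insertions together with an argument that the $\epsilon$- and $L$-dependence separates; asserting that the integrals ``evaluate in closed form'' is not such an argument. The paper avoids this entirely by reversing the order of operations: first apply Proposition \ref{prop: smallepsilon} to $w_{\gamma,I}(\epsilon,1,\alpha)$ (local $I$, fixed upper scale $1$), obtaining $\epsilon$-functions and \emph{local} functionals $\Psi_i$ independent of $L$, and only then subject these to RG flow from scale $1$ to scale $L$, so that all $L$-dependence is pushed into the functionals $\Upsilon_i(L,\alpha)$.

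For part (2), there are two further problems. The regime ``$\epsilon$ fixed and $L\to 0$'' is inconsistent with the integration region $(\epsilon,L)^{|E(\gamma)|}$, which requires $\epsilon<L$; and ``extracting the finite part as $\epsilon\to 0$'' is not a well-defined operation at this stage, since an asymptotic expansion does not canonically determine its coefficients and no renormalization scheme has yet been chosen (that choice enters only later, in the construction of counterterms). The paper's route sidesteps both issues: by construction each $\Upsilon_i(L,\alpha)$ is itself a Feynman weight $w_{\gamma,J}(1,L,\alpha)$ for a local functional $J$ built from the $\Psi_i$, so its small-$L$ behaviour is exactly the situation of Proposition \ref{prop: smallepsilon} with $L$ playing the role of the small scale, and the expansion in local functionals $\Phi_{i,j}$ follows directly. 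If you wish to keep your decomposition, the repair is the same in spirit: take the intermediate scale to be a fixed constant, keep the local interaction at the vertices of the small-scale portion, and treat the large-scale propagators as smooth, $L$-dependent data applied \emph{after} the small-$\epsilon$ expansion has been performed.
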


\begin{proof}
Item (1) follows from using the small-$\epsilon$ asymptotic expansion 
\begin{equation}
w_{\gamma,I}(\epsilon,1,\alpha) \simeq \sum_{j=0}^\infty g_{i}(\epsilon )\Psi_{i}(\alpha)
\end{equation}
and then subjecting these weights to RG flow from $1$ to $L$. 
Item (2) follows from the fact that the $\Upsilon_i$ can be described as 
\begin{equation}
w_{\gamma,J}(1,L,\alpha), 
\end{equation}
where $J$ is a local functional obtained from the $\Psi_{i}$, so that we can apply Proposition \ref{thm: smallepsilon} to study the small-$L$ behavior of the $\Upsilon_i$.
\end{proof}

\begin{remark}
We note that the $g_i$ of Proposition \ref{prop: smallepsilon}, as well as the $h_i$  and the $f_{i,j}$ of Theorem \ref{thm: smallepsilon}, can be shown to have finite-order poles in their variables. We do not prove this here, but it follows from a direct examination of the structure of the Feynman weights and the functions $g_{ij}$ appearing in Theorem \ref{thm: smallt}. 
\end{remark}

With Theorem \ref{thm: smallepsilon} in hand, we can construct a quantum pre-theory by constructing counter-terms along the lines of \autocite{cost}.
To this end, let $\sA_{\geq 0}\subset  C((0,1))$ denote the subspace of continuous functions of $\epsilon\in (0,1)$ which admit an $\epsilon\to 0^+$ limit. 

\begin{definition}
A \textbf{renormalization scheme} is a choice of complement $\sA_{<0}$ to $\sA_{\geq 0}$ in $C((0,1))$. 
\end{definition}

For $f\in C((0,1))$, we write $\Sing f$ and $\Reg f$ for the projections of $f$ onto $\sA_{<0}$ and $\sA_{\geq 0}$, respectively.

\begin{theorem}
\label{thm: constructionofcounterterms}
Given a local action functional $I=\sum_{i,k}\hbar^i I_{i,k}\in \pertfunctionalspsmcs$ and a renormalization scheme $\sA_{<0}$, there exists a unique series of counter-terms $I^{CT}_{i,k}(\epsilon)$, where each $I^{CT}_{i,k}(\epsilon)$ belongs to the space
\begin{equation}
\Oloc \otimes_{alg} \sA_{<0}
\end{equation}
and is homogeneous of degree $k$. The $I^{CT}_{i,k}(\epsilon)$ are such that the limit
\begin{equation}
\lim_{\epsilon\to 0^+} \rgflow{\epsilon}{L}{I-\sum_i \hbar^i I^{CT}_{i,k}(\epsilon)}
\end{equation}
exists.
\end{theorem}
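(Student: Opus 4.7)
The plan is to follow Costello's inductive construction of counterterms from Chapter~2 of \autocite{cost}, adapted to the present setting where all small-$\epsilon$ and small-$L$ control over Feynman weights is supplied by Theorem~\ref{thm: smallepsilon}. Order the pairs $(i,k)$ lexicographically (first by $i$, then by $k$), and proceed by induction on $(i,k)$. The inductive hypothesis is that counterterms $I^{CT}_{i',k'}(\epsilon)\in \Oloc\otimes_{alg}\sA_{<0}$ have been constructed for all $(i',k')<(i,k)$ so that, letting $J(\epsilon) := I - \sum_{(i',k')<(i,k)} \hbar^{i'} I^{CT}_{i',k'}(\epsilon)$, every Feynman weight contributing to $\rgflow{\epsilon}{L}{J(\epsilon)}$ of bi-order strictly less than $(i,k)$ admits a finite $\epsilon\to 0^+$ limit.

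At the inductive step, consider the component of $\rgflow{\epsilon}{L}{J(\epsilon)}$ of bi-order exactly $(i,k)$. This is a finite sum of Feynman weights $w_{\gamma,J(\epsilon)}(\epsilon,L,\alpha)$ over connected graphs $\gamma$ of genus $i$ with $k$ tails, where the internal vertices are labeled by the Taylor components of $J(\epsilon)$. The subgraphs contributing divergences at lower bi-order have already been cured by construction, so the only remaining source of $\epsilon\to 0$ singular behaviour is the top-order sum itself. Applying Theorem~\ref{thm: smallepsilon}(1) to each such $w_{\gamma,J(\epsilon)}$ and passing to the $L$-independent formulation via property (2), one obtains an asymptotic expansion
\begin{equation}
w_{\gamma,J(\epsilon)}(\epsilon,L,\alpha) \simeq \sum_{j} h_j(\epsilon)\Upsilon_j(L,\alpha),
\end{equation}
and one then defines
\begin{equation}
I^{CT}_{i,k}(\epsilon) := \operatorname{Sing}\!\Bigl(\sum_\gamma w_{\gamma,J(\epsilon)}(\epsilon,L_0,\alpha)\Bigr)
\end{equation}
for any fixed choice of $L_0\in(0,1)$, using the chosen renormalization scheme to project onto $\sA_{<0}$. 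Uniqueness of $I^{CT}_{i,k}(\epsilon)$ within $\Oloc\otimes_{alg}\sA_{<0}$ follows from the uniqueness of the singular-part projection once locality is established, and independence of the choice of $L_0$ follows from the fact that $L$-dependence of $\Upsilon_j(L,\alpha)$ is smooth in $L$, so altering $L_0$ changes the weight by a function of $\epsilon$ lying in $\sA_{\geq 0}$.

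The crux of the argument, and the step I expect to be the main obstacle, is showing that $I^{CT}_{i,k}(\epsilon)$ is actually \emph{local}, i.e.\ lies in $\Oloc\otimes_{alg}\sA_{<0}$ rather than merely in the ambient space of functionals. The strategy is the one used in the bulk case: by Theorem~\ref{thm: smallepsilon}(2), each coefficient functional $\Upsilon_j(L,\alpha)$ admits a small-$L$ expansion $\sum_{j'} f_{j,j'}(L)\Phi_{j,j'}$ with $\Phi_{j,j'}\in \Oloc$. Combined with the fact that the $L$-dependence of $\operatorname{Sing}$-applied-weights is continuous in $L\to 0$ (by the inductive hypothesis, all non-local parts are $\epsilon$-finite and therefore live in $\sA_{\geq 0}$), one concludes that the singular coefficients $h_j(\epsilon)$ pair only with local functionals. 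In the bulk-boundary setting, there is a new subtlety relative to Costello: local functionals now include both bulk- and boundary-supported terms (as developed in Section~\ref{sec: localfcnls}), and one must verify that the singular contributions from Feynman weights supported near $\bdyM$ project into the boundary-local part of $\Oloc$. This is handled by the fact that the doubling construction and the $\involution$-symmetric fake heat kernel of Definition~\ref{def: fakeheat} produce an asymptotic expansion whose coefficients $\Omega_i(x,y)$ are smooth sections on $\Mdbl\times \Mdbl$, so upon restriction to $M$ any divergence concentrated near $\bdyM$ is captured by a $\del$-smooth distribution in the sense of Section~\ref{sec: OlocE}.

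Finally, once $I^{CT}_{i,k}(\epsilon)$ has been constructed and shown local, the $(i,k)$-component of $\rgflow{\epsilon}{L}{J(\epsilon)-\hbar^i I^{CT}_{i,k}(\epsilon)}$ is by construction a sum of functionals of $\alpha$ whose $\epsilon$-dependence lies in $\sA_{\geq 0}$, closing the induction. Taking the limit $\epsilon\to 0^+$ termwise over the (eventually convergent, order-by-order) sum yields the desired pre-theory $\{I[L]\}_{L>0}$, and the HRG compatibility of renormalized limits, which is automatic from the semigroup property of $\rgflow{\cdot}{\cdot}{\cdot}$, ensures that the result is indeed a collection satisfying conditions (1) and (3) of the heat-kernel definition of a theory.
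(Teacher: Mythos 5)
Your overall template (lexicographic induction, define the counterterm as the singular part of the top-order weight, invoke Theorem \ref{thm: smallepsilon} for the asymptotic expansions and for locality) is the right skeleton, but there is a genuine gap: induction on the pair $(i,k)$ alone does not close in this setting. The reason is the quadratic perturbation functional $\tfrac{1}{2}\ip[\varphi,\pertdiff\varphi]$, which is treated as an interaction and therefore produces genus-zero \emph{bivalent} vertices in the Feynman expansion. Attaching such a bivalent vertex to a $k$-valent, genus-$i$ vertex by a single internal edge leaves both the genus and the number of tails unchanged, so $W_{i,k}\bigl(P(\epsilon,L),J\bigr)$ does \emph{not} depend on $J_{i,k}$ only through the single-vertex term: the newly introduced counterterm $I^{CT}_{i,k}(\epsilon)$ re-enters $W_{i,k}$ dressed by bivalent vertices and propagators, and since $I^{CT}_{i,k}(\epsilon)$ is purely singular these dressed contributions are again divergent. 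Consequently your claim that the $(i,k)$-component of $\rgflow{\epsilon}{L}{J(\epsilon)-\hbar^i I^{CT}_{i,k}(\epsilon)}$ has $\epsilon$-dependence in $\sA_{\geq 0}$ ``by construction'' fails: the analogue of Costello's identity $W_{i,k}(P(\epsilon,L),J-\hbar^iI^{CT}_{i,k})=W_{i,k}(P(\epsilon,L),J)-I^{CT}_{i,k}$ is false at fixed $(i,k)$ here, and the subtraction does not remove the singular part at that level.

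The paper repairs exactly this point by refining the induction: it introduces the $\sA$-grading $l$ (the number of $\sA$-inputs of a Taylor component) and inducts lexicographically on triples $(i,k,l)$, with counterterms $I^{CT}_{i,k,l}(\epsilon):=\Sing_\epsilon W_{i,k,l}\bigl(P(\epsilon,L),I-\sum_{(i',k',l')<(i,k,l)}I^{CT}_{i',k',l'}(\epsilon)\bigr)$. The structural hypotheses that the parametrices have no $A\boxtimes A$ component and that bivalent vertices accept only $\sA$-inputs guarantee that dressing a vertex with a bivalent vertex strictly \emph{increases} $l$; hence contributions to $W_{i,k,l}$ come only from components with $(i',k',l')\leq(i,k,l)$, and the dependence on the $(i,k,l)$-component is purely additive. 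This restores the two identities (the paper's Equations \eqref{eq: lessthan} and \eqref{eq: equal}) your argument implicitly needs, after which the $L$-independence, locality, and existence of the $\epsilon\to 0^+$ limit follow as in Costello, with Theorem \ref{thm: smallepsilon} supplying the role of his Theorem 9.3.1. To fix your proof you would need to incorporate this extra grading (or otherwise solve the self-consistency problem at fixed $(i,k)$ created by the bivalent vertices); your remarks on boundary-supported local terms and on the fixed scale $L_0$ are fine as written.
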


\begin{proof}
Theorem \ref{thm: smallepsilon} implies that, given a connected diagram $\gamma$, the Feynman weight $w_{\gamma, I}(\epsilon,L)$ admits a small-$\epsilon$ asymptotic expansion of the form 
\begin{equation}
w_{\gamma,I}(\epsilon,L)\simeq \sum_{i=0}^\infty g_i(\epsilon) \Upsilon_i,
\end{equation}
where the $g_i(\epsilon)$ are continuous functions of $\epsilon\in (0,1)$ and the $\Upsilon_i$ are smooth families of functionals parametrized by $L$. The existence of an asymptotic expansion implies there exists an $N>0$ such that $g_i(\epsilon)$ admits an $\epsilon\to 0^+$ limit for $i>N$. We define 
\begin{equation}
\Sing w_{\gamma,I}(\epsilon,L) = \sum_{i=0}^\infty\Sing_\epsilon g_i(\epsilon) \Upsilon_i=\sum_{i=0}^N \Sing_\epsilon g_i(\epsilon).
\end{equation}
Let $\Gamma_{i,k}$ denote the set of connected Feynman diagrams of genus $i$ and order $k$. Denote
\begin{equation}
W_{i,k}(P(\epsilon,L),I) = \sum_{\gamma\in \Gamma_{i,k}}w_{\gamma,I}(\epsilon,L);
\end{equation}
we have 
\begin{equation}
\rgflow{\epsilon}{L}{I} = \sum \hbar^i W_{i,k}(P(\epsilon,L),I).
\end{equation}
In \autocite{cost}, Costello constructs the counter-terms by induction on $(i,k)\in \Z_{\geq 0}\times \Z_{\geq 0 }$. We perform a similar induction; however, since in the present work we have allowed Feynman diagrams which have bivalent vertices at order $\hbar^0$, we need to perform an additional induction. To this end, recall that we have postulated a decomposition $\sE\cong \sA\oplus \sB$ (Definition \ref{def: amenable}) and let $W_{i,k,l}$ denote the component of $W_{i,k}$ which accepts $l$ inputs from $\sA$ (and therefore $k-l$ inputs from $\sB$). Similarly, we write $I_{i,k,l}$ for the corresponding component of $I_{i,k}$. We will call $l$ the \emph{$\sA$-grading} of $I_{i,k,l}$, $W_{i,k,l}$. We will prove the theorem by an induction on the $(i,k,l)$, endowed with the lexicographic ordering. As we have seen, the $W_{0,k,l}(P(\epsilon,L),I)$ are non-singular as $\epsilon\to 0$, since they are obtained by classical RG flow. Now, we define 
\begin{equation}
I^{CT}_{1,0,0}(\epsilon) : = \Sing_{\epsilon} W_{1,0,0}(P(\epsilon,L), I)=0,
\end{equation}
since the only diagrams which could contribute to $W_{1,0,0}$ are the contraction of $I_{0,2}$ with $P(\epsilon, L)$ and $I_{1,0}$. The former is zero, because $P$ has an $\sA$ and $\sB$ factor, while $I_{0,2}$ accepts only $\sA$ inputs. The contribution of $I_{1,0}$ is independent of $\epsilon$ and so non-singular. This establishes the base case for the induction. Let us now denote by $W_{<(i,k,l)}(P(\epsilon,L),J)$ the sum over all diagrams contributing to $W(P(\epsilon,L),J)$ with order $k'$, genus $i'$, and $\sA$-grading $l'$, where $(i',k',l')< (i,k,l)$. We will also write $W_{\leq (i,k,l)}$ for a similar sum. Suppose that, for a given $(i,k,l)$, we have constructed counter-terms $I^{CT}_{i',k',l'}$ for all $(i',k',l')< (i,k,l)$ such that 
\begin{equation}
W_{<(i,k,l)}\left(P(\epsilon,L),I-\sum_{(i',k',l')< (i,k,l)}I^{CT}_{i',k',l'}(\epsilon)\right)
\end{equation}
admits an $\epsilon\to 0$ limit. Define
\begin{equation}
I^{CT}_{i,k,l}(\epsilon) = \Sing_\epsilon W_{i,k,l}\left( P(\epsilon,L), I-\sum_{(i',k',l')<(i,k,l)}I^{CT}_{i',k',l'}(\epsilon)\right). 
\end{equation} 
(In principle, $I^{CT}(\epsilon)$ depends on $L$ as well as $\epsilon$; we will see below that this is not the case). We have the following equation whenever $(i',k',l')<(i,k,l)$:
\begin{equation}
\label{eq: lessthan}
W_{i',k',l'}\left( P(\epsilon,L), I-\sum_{(i'',k'',l'')\leq (i,k,l)}I^{CT}_{i'',k'',l''}(\epsilon)\right)=W_{i',k',l'}\left( P(\epsilon,L),  I-\sum_{(i'',k'',l'')< (i,k,l)}I^{CT}_{i'',k'',l''}(\epsilon)\right).
\end{equation}
Similarly, we have
\begin{align}
\label{eq: equal}
W_{i,k,l}&\left(  P(\epsilon,L), I-\sum_{(i',k',l')\leq (i,k,l)}I^{CT}_{i'',k'',l''}(\epsilon)\right)\\
\nonumber
&=W_{i,k,l}\left( P(\epsilon,L),  I-\sum_{(i',k',l')< (i,k,l)}I^{CT}_{i',k',l'}(\epsilon)\right)-I^{CT}_{i,k,l}(\epsilon).
\end{align}

To prove  Equations \eqref{eq: lessthan} and \eqref{eq: equal}, let $\gamma\in \Gamma_{i',k',l'}$ be a connected graph appearing in the left-hand-side of Equation \eqref{eq: lessthan}. Let $V(\gamma)$ denote the set of vertices of $\gamma$, $E(\gamma)$ the set of (internal) edges of $\gamma$, and for each $v\in V(\gamma)$, let $val(v)$ denote the valence of $v$ and $g(v)$ the genus of $v$. For $\gamma$ to have genus $i'$, we must have the equality
\begin{equation}
|E(\gamma)|-|V(\gamma)|+\sum_{v\in V} g(v) +1 =i'.
\end{equation}
Moreover, $|E(\gamma)|-|V(\gamma)|+1\geq 0$. Hence, $\gamma$ cannot have any vertices $v$ of genus exceeding $i'$. If $\gamma$ has a single vertex $v_0$ of genus $i'$, then all remaining vertices have genus zero and the diagram is a tree. 
Then, because all of the genus 0 vertices are at least bivalent, $\gamma$ has at least as many tails as the valence of $v_0$, so the valence of $v_0$ cannot exceed $k'$. 
The same can be said about the valence of any vertex in $\gamma$.
Finally, if in addition the valence of $v_0$ is $k'$, $\gamma$ must consist of some number of genus 0 bivalent vertices connected to $v_0$. 
Note that, by our assumptions on the structure of $P(\epsilon,L)$ and of the bivalent vertices (given in Definitions \ref{def: amenable} and \ref{def: param}), attaching a bivalent vertex to another vertex increases the $\sA$-order of the functional by 1. Hence, for any functional $J=\sum_{i,k}\hbar^i J_{i,k}$, the contributions to 
\begin{equation}
W_{i,k,l}(P(\epsilon,L),J)
\end{equation}
come from terms $J_{i',k',l'}$ with $(i',k',l')\leq (i,k,l)$, and the contribution of $J_{i,k,l}$ to $W_{i,k,l}(P(\epsilon,L),J)$ is simply $J_{i,k,l}$. This establishes Equations \eqref{eq: lessthan} and \eqref{eq: equal}. The remainder of the proof proceeds as in \autocite{cost}: one shows that $I^{CT}_{i,k,l}(\epsilon)$ is independent of $L$, local, and that 
\begin{equation}
W_{\leq(i,k,l)}\left(P(\epsilon,L), I-\sum_{(i',k',l')\leq (i,k,l)}I^{CT}_{i',k',l'}(\epsilon)\right)
\end{equation}
admits an $\epsilon\to 0$ limit. The main facts used in the proofs of those statements in \autocite{cost} are stated in Theorem 9.3.1 of Chapter 2 there. The analogous statement in the present work is Theorem \ref{thm: smallepsilon}.
\end{proof}

Theorem \ref{thm: constructionofcounterterms} implies the following:

\begin{theorem}
Given an action functional $I\in \pertfunctionals$ and a renormalization scheme, let $I^{CT}(\epsilon)$ denote the sum $\sum_{i,k,l}\hbar^i I^{CT}_{i,k,l}(\epsilon)$ of all the counter-terms constructed in Theorem \ref{thm: constructionofcounterterms}. Then, the equation 
\begin{equation}
I[L]:= \lim_{\epsilon\to 0^+}W\left( P(\epsilon,L),I-I^{CT}(\epsilon)\right)
\end{equation}
defines a quantum pre-theory in the heat kernel sense.
\end{theorem}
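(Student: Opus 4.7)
The statement requires verifying three properties of the collection $\{I[L]\}_{L>0}$: existence of each $I[L]$ as an element of $\pertfunctionalspsmcs$; the HRG equation $I[L]=\rgflow{t}{L}{I[t]}$ for $0<t<L$; and the small-$L$ asymptotic expansion in local action functionals with coefficients having finite-order poles at $L=0$. No QME is required, since the claim concerns pre-theories. Existence is immediate from Theorem \ref{thm: constructionofcounterterms}: by construction, the counter-terms $I^{CT}(\epsilon)$ make the $\epsilon\to 0^+$ limit exist component by component in genus $i$, tail number $k$, and $\sA$-grading $l$. Smooth (not merely $\partial$-smooth) first derivative and proper support are inherited because, for fixed $L>0$, the propagator $P(0,L)$ is smeared by the smooth kernel $P(\epsilon,L)$ and the properness of the supports is preserved under the diagram-counting bounds already used in Definition-Lemma \ref{deflem: rgflow}.

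For the HRG equation, the plan is to fix $0<t<L$ and use the decomposition $P(\epsilon,L)=P(\epsilon,t)+P(t,L)$ together with the semigroup identity $W(\Phi,W(\Psi,J))=W(\Phi+\Psi,J)$ from equation $(\dagger)$ of the introduction, which is valid because all three propagators have smooth kernels once $\epsilon,t>0$. This yields
\[
W\!\left(P(\epsilon,L),\,I-I^{CT}(\epsilon)\right)\;=\;W\!\left(P(t,L),\,W(P(\epsilon,t),\,I-I^{CT}(\epsilon))\right).
\]
Passing to $\epsilon\to 0^+$ on the left yields $I[L]$ and on the right yields $\rgflow{t}{L}{I[t]}$, provided $W(P(t,L),\cdot)$ commutes with the limit. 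On each homogeneous $(i,k,l)$ piece, $W(P(t,L),\cdot)$ is a finite sum of applications of the smooth bidifferential operator $\partial_{P(t,L)}$ of uniformly bounded order, so the interchange is justified once one notes that the $\epsilon\to 0^+$ convergence in Theorem \ref{thm: constructionofcounterterms} holds in a seminorm topology strong enough for finitely many such applications.

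For asymptotic locality, the plan is to combine Theorem \ref{thm: smallepsilon} with the inductive structure of the counter-term construction. Each connected Feynman weight contributing to $W_{i,k,l}(P(\epsilon,L),I-I^{CT}(\epsilon))$ has, by Theorem \ref{thm: smallepsilon}, a joint small-$\epsilon$/small-$L$ asymptotic expansion of the form $\sum_{p,q}h_p(\epsilon)\,f_{p,q}(L)\,\Phi_{p,q}$ with each $\Phi_{p,q}$ a local action functional. The counter-term recursion of Theorem \ref{thm: constructionofcounterterms} precisely removes every summand with $h_p\in \sA_{<0}$, so passing to $\epsilon\to 0^+$ yields a small-$L$ expansion of $I[L]$ in the same local $\Phi_{p,q}$, with coefficients assembled from the regular parts of the $h_p$ (evaluated at $0$) together with the $f_{p,q}(L)$. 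The finite-order pole structure at $L=0$ of the resulting coefficients is inherited from part (2) of Theorem \ref{thm: smallepsilon} and, ultimately, from the explicit Gaussian-times-power-series form of the heat-kernel asymptotics in Definition \ref{def: fakeheat}.

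The main obstacle I anticipate is the interchange of limit and $W(P(t,L),\cdot)$ in the HRG step; the remainder of the argument is essentially bookkeeping on top of Theorems \ref{thm: smallepsilon} and \ref{thm: constructionofcounterterms}. In practice the obstacle is handled diagram by diagram: expand $W(P(\epsilon,L),\cdot)$ according to a choice of labeling of each internal edge by $P(\epsilon,t)$ or $P(t,L)$, so the semigroup identity becomes a combinatorial tautology at each finite order, and the $\epsilon\to 0$ limit then commutes term-wise with the fixed finite collection of smooth contractions against $P(t,L)$.
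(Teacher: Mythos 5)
Your reconstruction is essentially the argument the paper is (implicitly) relying on: the paper offers no separate proof, asserting the theorem follows from Theorem \ref{thm: constructionofcounterterms}, and the three steps you supply --- existence of the $\epsilon\to 0^+$ limit from the counter-term construction, the HRG equation via $P(\epsilon,L)=P(\epsilon,t)+P(t,L)$ together with the semigroup identity and continuity of contraction against the fixed smooth kernel $P(t,L)$ on each $(i,k,l)$ component (where only finitely many diagrams contribute), and the small-$L$ expansion in local functionals from Theorem \ref{thm: smallepsilon} --- are exactly the standard Costello-style argument being invoked. One correction: your claim that $I[L]$ has \emph{smooth} (not merely $\partial$-smooth) first derivative is too strong, since a vertex of $I$ or of a counter-term given by a boundary integral contributes, through the external leg attached to it, a distribution supported on $\partial M$ that is $\partial$-smooth but not smooth; this does not harm the proof, because a heat-kernel pre-theory only requires $I[L]\in \sO^+_{P,sm}(\condfieldscs)$, i.e.\ proper support and $\partial$-smooth first derivative, and these are preserved by RG flow with smooth propagators.
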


The following Theorem is proved in our context in an identical fashion to the proof in Section 11 of Chapter 2 of \autocite{cost}:

\begin{theorem}
\label{thm: mainbijection}
Let $I_0$ be the interaction functional of a classical bulk-boundary system. Suppose the classical bulk-boundary system is amenable to doubling and has a gauge-fixing amenable to doubling. A choice of renormalization scheme leads to a bijection between the set of local action functionals $\hbar \Oloc[\![\hbar]\!]$ and the set of quantum bulk-boundary pre-theories $\{I[L]\}_{L>0}$ (in the heat kernel sense) whose $\hbar^0$ component is obtained from $I_0$ by classical RG flow from scale 0. 
\end{theorem}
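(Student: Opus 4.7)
The plan is to construct the inverse bijection by induction on the trigraded lexicographic ordering $(i,k,l)$ used in the proof of Theorem \ref{thm: constructionofcounterterms} (genus $i$, arity $k$, and $\sA$-grading $l$), using Theorem \ref{thm: constructionofcounterterms} itself as the forward direction. The forward map sends a local functional $J = I_0 + \sum_{i\geq 1, k, l}\hbar^i J_{i,k,l} \in \hbar \Oloc[\![\hbar]\!] + I_0$ to the pre-theory
\[
I_J[L] := \lim_{\epsilon \to 0^+} W\!\left(P(\epsilon, L),\, J - J^{CT}(\epsilon)\right),
\]
which by Theorem \ref{thm: smallepsilon} and the construction of counterterms produces a well-defined collection of functionals satisfying the HRG equation and asymptotic locality; the $\hbar^0$ part is $W_0(P(0,L), I_0)$ by Lemma \ref{lem: classRG}, as required.

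For injectivity and surjectivity simultaneously, I would argue as follows. Suppose $\{I[L]\}$ is an arbitrary pre-theory whose classical limit is $W_0(P(0,L), I_0)$. Set $J_0 = I_0$ and assume inductively that we have produced a unique sequence of local functionals $\{J_{i',k',l'}\}_{(i',k',l') < (i,k,l)}$ such that, writing $J_{<(i,k,l)} = I_0 + \sum_{(i',k',l')<(i,k,l)} \hbar^{i'} J_{i',k',l'}$, the forward map applied to $J_{<(i,k,l)}$ agrees with $\{I[L]\}$ in all components of tri-degree strictly less than $(i,k,l)$. Consider the difference
\[
D[L] := I_{i,k,l}[L] - \Big(\text{$(i,k,l)$-component of } W(P(\epsilon, L), J_{<(i,k,l)} - J_{<(i,k,l)}^{CT}(\epsilon))\Big)\bigg|_{\epsilon \to 0}.
\]
As in the argument following equations \eqref{eq: lessthan} and \eqref{eq: equal}, the contribution of any not-yet-determined $J_{i,k,l}$ to the tri-degree $(i,k,l)$ piece of the forward map is $J_{i,k,l}$ itself, so $D[L]$ is precisely the obstruction we must cancel by choosing $J_{i,k,l}$. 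It then remains to show that $D[L]$ admits a small-$L$ limit which is a local functional; this $L\to 0$ limit becomes our $J_{i,k,l}$, after which one verifies by construction that the new forward-map pre-theory agrees with $\{I[L]\}$ to tri-degree $(i,k,l)$.

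The key and most delicate step is proving that $\lim_{L\to 0} D[L]$ exists and is local. This uses the asymptotic locality axiom (3) in the definition of a pre-theory in the heat kernel sense, which supplies a small-$L$ asymptotic expansion $I[L] \simeq \sum f_j(L) \Phi_j$ with $\Phi_j$ local and $f_j$ having at most finite-order poles; combined with part (2) of Theorem \ref{thm: smallepsilon}, which produces an analogous small-$L$ expansion of the Feynman weights on the subtracted side, the singular parts cancel by the inductive hypothesis, leaving a local limit functional. This is formally parallel to the corresponding argument of Costello (Chapter 2, §11 of \autocite{cost}), but one must verify at each step that the three ingredients special to our setting---namely (a) the replacement of Costello's heat kernel asymptotics by Theorem \ref{thm: smallepsilon}, (b) the presence of the bivalent $\sA$-vertices handled via the $\sA$-grading induction, and (c) the boundary-aware notion of local functional on $\condfieldscs$ from Section \ref{sec: localfcnls}---are compatible. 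The only nontrivial compatibility check is that the ``purely singular in $\epsilon$'' parts of Feynman weights built from $\del$-smooth propagators remain local action functionals in the sense of Definition \ref{def: localfcnls}; this follows from Lemma \ref{lem: localfcnlssupportdiag} together with the fact that, in each diagram appearing in the inductive step, the residual $\epsilon\to 0$ divergence is supported on the small diagonal of $M^k$ and has $\del$-smooth first derivative by construction of $\rgflow{\epsilon}{L}{\cdot}$.

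Finally, uniqueness at each step is automatic: once $D[L]$ is identified with a local functional, the inductive choice $J_{i,k,l} = \lim_{L\to 0} D[L]$ is forced, so the map $\{I[L]\} \mapsto J$ is well-defined, and by construction it is a two-sided inverse to the forward map of Theorem \ref{thm: constructionofcounterterms}. The main obstacle, as noted, is verifying that $D[L]$ has a local small-$L$ limit; everything else is bookkeeping organized by the lexicographic induction on $(i,k,l)$ already established in the proof of Theorem \ref{thm: constructionofcounterterms}.
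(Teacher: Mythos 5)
Your overall skeleton---the forward map supplied by Theorem \ref{thm: constructionofcounterterms} together with an induction on the tri-degree $(i,k,l)$ to build the inverse---is exactly the approach the paper intends (its proof is a citation of Section 11 of Chapter 2 of \autocite{cost}, which your induction mirrors, with the extra $\sA$-grading). But there is a genuine gap at the central step. You set $D[L] = I_{i,k,l}[L] - I'_{i,k,l}[L]$, where $\{I'[L]\}$ is the forward image of the partially built local functional, propose $J_{i,k,l} := \lim_{L\to 0} D[L]$, and then assert that adding this correction makes the forward map agree with $\{I[L]\}$ in tri-degree $(i,k,l)$. By the linearity observation behind Equations \eqref{eq: lessthan} and \eqref{eq: equal}, adding $J_{i,k,l}$ shifts the $(i,k,l)$-component of the forward map by exactly $J_{i,k,l}$, \emph{for every} $L$; so your correction yields agreement for all $L$ only if $D[L]$ is constant in $L$. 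You never establish this, and without it the induction does not close and surjectivity is not proved (a local, $L$-independent correction cannot cancel a genuinely $L$-dependent discrepancy). The missing ingredient is the standard one: both collections satisfy the HRG equation, and the $(i,k,l)$-component of $\rgflow{\epsilon}{L}{\cdot}$ depends on the $(i,k,l)$-component of its argument only through the identity contribution, all other contributions involving strictly lower tri-degrees where the two collections agree by induction; hence $D[L]$ is independent of $L$. This is precisely the mechanism used in the final paragraph of the proof of Proposition \ref{prop: equivtheories}.

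Once $L$-independence is in hand, the step you flag as most delicate dissolves: there is no small-$L$ limit to take and no cancellation of singular parts to arrange. One only needs that an $L$-independent functional admitting a small-$L$ asymptotic expansion in local functionals---which $D$ inherits from axiom (3) for $\{I[L]\}$ and from Theorem \ref{thm: smallepsilon} for $\{I'[L]\}$---is itself local; equivalently, in the parametrix picture, that it is supported on the small diagonal and has $\del$-smooth first derivative, whence locality by Lemma \ref{lem: localfcnlssupportdiag}. With that substitution, your forward map, the uniqueness argument, and your compatibility remarks about the bivalent $\sA$-vertices and the boundary-aware notion of local functional all go through.
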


\subsection{Equivalence of the two notions of quantum field theory}
\label{subsec: equivtheories}
In this section, we show the following Proposition:

\begin{proposition}
\label{prop: equivtheories}
Given $\{I[L]\}$, a quantum pre-theory in the heat kernel sense, the collection 
\begin{equation}
I[\Phi]:= \rgflow{\Phi_L}{\Phi}{I[L]}
\end{equation}
gives a quantum pre-theory in the parametrix sense. Conversely, any quantum pre-theory $\{I[\Phi]\}$ in the parametrix sense gives a quantum pre-theory in the heat kernel sense:
\begin{equation}
I[L]:= I[\Phi_L].
\end{equation}
These two correspondences are mutually inverse.
\end{proposition}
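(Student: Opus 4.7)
My plan is to establish the two directions of the bijection and then verify mutual invertibility, with the crux lying in the asymptotic locality conditions.

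For the forward direction, given a heat-kernel pre-theory $\{I[L]\}$, I would first need to check that $I[\Phi] := \rgflow{\Phi_L}{\Phi}{I[L]}$ is independent of the choice of $L$. This reduces to the semigroup property of HRG flow, namely $\rgflow{\Phi}{\Psi}{\rgflow{\Phi'}{\Phi}{I}} = \rgflow{\Phi'}{\Psi}{I}$, which follows from the additivity $P(\Phi', \Phi) + P(\Phi, \Psi) = P(\Phi', \Psi)$ of propagator differences and the fact that $e^{\hbar \partial_{P+P'}} = e^{\hbar \partial_P} e^{\hbar \partial_{P'}}$ when the derivations commute. Combined with the heat-kernel HRG equation $I[L'] = \rgflow{\Phi_L}{\Phi_{L'}}{I[L]}$, this gives independence of $L$. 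Verifying the parametrix HRG equation $\rgflow{\Phi}{\Psi}{I[\Phi]} = I[\Psi]$ is then another direct application of the semigroup property. The scale-$\Phi$ QME is inherited from the scale-$L$ QME via Lemma \ref{lem: rgqme}. The mod-$\hbar$ condition follows because the mod-$\hbar$ parts transport through classical HRG flow (Lemma \ref{lem: classRG}), and $I[L]$ is assumed to come from a classical bulk-boundary system.

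For the backward direction, given a parametrix pre-theory $\{I[\Phi]\}$, setting $I[L] := I[\Phi_L]$ automatically yields the heat-kernel HRG equation (it is the restriction of the parametrix HRG to the one-parameter family $\{\Phi_L\}$) and the scale-$L$ QME. The small-$L$ asymptotic expansion in terms of local functionals requires invoking the asymptotic-locality axiom (Condition 3 of Definition \ref{def: QTNBFT}) together with the small-$L$ structure of the heat-kernel propagator from Theorem \ref{thm: smallepsilon}. Finally, the two correspondences are mutually inverse essentially by tautology: given $\{I[L]\}$, the backward composition yields $I[\Phi_L] = \rgflow{\Phi_L}{\Phi_L}{I[L]} = I[L]$, since $P(\Phi_L) - P(\Phi_L) = 0$ makes the RG flow operator the identity; and given $\{I[\Phi]\}$, the forward composition applied at $\Phi$ with any fixed $L$ gives $\rgflow{\Phi_L}{\Phi}{I[\Phi_L]} = I[\Phi]$ by the parametrix HRG equation.

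The main obstacle will be verifying the asymptotic locality condition (Condition 3 of Definition \ref{def: QTNBFT}) in the forward direction, as this has a specific quantitative form involving the exponent $\eta(i,k)(1+\upsilon(i,k))$. The key insight is that for a prescribed neighborhood $U$ of the diagonal in $M^k$, one chooses $L$ so small that the heat-kernel pre-theory's small-$L$ expansion presents $I_{i,k}[L]$ as a functional supported within $U$ up to terms of arbitrarily high order in $L$, and simultaneously so small that $\supp(\Phi_L) \subseteq \supp(\Phi)$ whenever $\Phi \leq \Phi_U$. Then each Feynman diagram contributing to $I[\Phi] = \rgflow{\Phi_L}{\Phi}{I[L]}$ has edges decorated by $P(\Phi_L) - P(\Phi)$, whose support lies in $\supp(\Phi)$; tracking the propagation of support through the combinatorial structure of such a diagram (which can have at most $\eta(i,k)$ internal edges and $\upsilon(i,k)$ vertices) gives exactly the bound $\supp(\Phi)^{\eta(i,k)(1+\upsilon(i,k))}$ demanded by the definition. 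This bookkeeping is the main content of the argument and justifies the precise form of the locality axiom.
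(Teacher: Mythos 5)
There is a genuine gap in your verification of the asymptotic-locality axiom in the forward direction. Your key step is to choose $L$ so small that $\supp(\Phi_L)\subseteq \supp(\Phi)$ whenever $\Phi\leq \Phi_U$, but this is not available: the supports of the heat-kernel parametrices $\Phi_L$ do not shrink toward the diagonal as $L\to 0$ (for the genuine heat kernel the support is all of $M\times M$ for every $L>0$, and for a fake heat kernel as in Definition \ref{def: fakeheat} it is the fixed properly-supported neighborhood of the diagonal cut out by the cutoff function, independent of $L$). Likewise, the small-$L$ asymptotic expansion of $I[L]$ in local functionals is a statement about approximation of values, not about the support of $I_{i,k}[L]$; ``supported within $U$ up to terms of high order in $L$'' is not the exact vanishing demanded by Condition 3 of Definition \ref{def: QTNBFT}. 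The paper circumvents exactly this issue by introducing auxiliary cutoff parametrices $\Psi\Phi_L$, with $\Psi$ a properly supported function on $M^2$ equal to $1$ near the diagonal, whose supports \emph{can} be made arbitrarily close to the diagonal; using the counterterm presentation $I[\Psi\Phi_L]=\lim_{\epsilon\to 0}W\left(P(\Psi(\Phi_L-\Phi_\epsilon)),\, I-I^{CT}(\epsilon)\right)$ (which rests on Theorem \ref{thm: mainbijection}, i.e.\ on the heat-kernel pre-theory being presented by a local functional plus counterterms) it first bounds the support of $I_{i',k'}[\Psi\Phi_L]$ by $(\supp(\Psi\Phi_L))^{\eta(i,k)}$, and only then flows from $\Psi\Phi_L$ to $\Phi$, which is how the exponent $\eta(i,k)(1+\upsilon(i,k))$ actually arises. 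This same presentation is what delivers the $\del$-smooth first derivative of $I[\Phi]$, which you do not address.

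The backward direction also has an unargued step: you assert that the small-$L$ expansion of $I[L]:=I[\Phi_L]$ in local functionals follows from the parametrix locality axiom together with Theorem \ref{thm: smallepsilon}, but the locality axiom is only a support condition and you give no derivation of the expansion. The paper does not verify this directly; instead it argues by induction on $(i,k,l)$ that two parametrix pre-theories agreeing below $(i,k,l)$ differ at that order by a $\Phi$-independent functional which, by asymptotic locality, is supported on the small diagonal and has $\del$-smooth first derivative, hence is local by Lemma \ref{lem: localfcnlssupportdiag}; since lifts of heat-kernel pre-theories at each order are classified by the same space of local functionals, every parametrix pre-theory is obtained from the forward construction. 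Your ``mutually inverse'' tautology is fine in itself, but it only closes the argument once both of these points are settled.
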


\begin{proof}
For the first statement, fix a renormalization scheme and a local action functional $I$. Let $\{I[L]\}$ be the quantum pre-theory in the heat kernel sense associated to these choices.
Let $\{I[\Phi]\}$ the corresponding collection of functionals defining the putative pre-theory in the parametrix sense. $\{I[\Phi]\}$ manifestly satisfies the HRG flow equation, so we only need to show that the collection $\{I[\Phi]\}$ has $\partial$-smooth first derivative and that $\{I_{i,k}[\Phi]\}$ can be chosen to be ``close to local'' if $\Phi$ has support close to the diagonal $M\subset M^k$. The precise sense in which me mean the latter statement is specified as Item (3) in Definition \ref{def: QTNBFT}. 
The analogous statement in \autocite{cost} is Lemma 12.0.2 of Chapter 2. 
Our proof is almost identical to the one there. 
Let $\Psi$ be a smooth function on $M^2$ with proper support and suppose that $\Psi$ is identically 1 in a neighborhood of the diagonal. 
Then, as discussed in \autocite{cost}, $\Psi \Phi_L$ is a parametrix and 
\begin{equation}
\label{eq: parambehavior}
I[\Psi \Phi_L] = \lim_{\epsilon\to 0} W(P(\Psi(\Phi_L-\Phi_\epsilon)), I-I^{CT}(\epsilon)).
\end{equation}
The fact that $I[\Psi \Phi_L]$ has $\partial$-smooth first derivative follows from the fact that the functionals $I-I^{CT}(\epsilon)$ have $\partial$-smooth first derivative (because they are local) and the fact that HRG flow preserves the space of functionals with smooth first derivative. The fact that the $I[\Phi]$ satisfy HRG flow is immediate. To show that the $I[\Phi]$ give a pre-theory, it remains to check that the $I[\Phi]$ have the desired properties as $\Phi\to 0$. We have
\begin{equation}
\label{eq: parambehavior2}
I[\Phi] = W(P(\Phi-\Psi\Phi_L) , I[\Psi \Phi_L]).
\end{equation}
Let us first note that by choosing $\Psi$ supported arbitrarily close to the diagonal in $M^2$, we can make $I_{i',k'}[\Psi \Phi_L]$ ``close'' to local for $(i',k')$ smaller than or equal to (in the lexicographic ordering) some $(i,k)$. More precisely, there exists some bilinear function $\eta(i,k)$ so that 
\begin{equation}
I_{i',k'}[\Psi\Phi_L](e_1,\cdots, e_{k'}) = 0
\end{equation}
unless 
\begin{equation}
\supp e_m \times \supp e_n \subset (\supp( \Psi\Phi_L))^{\eta(i,k)}.
\end{equation}
This follows from Equation \eqref{eq: parambehavior}, where $\eta$ is, as in Definition \ref{def: QTNBFT}, the maximum number of edges that can appear in a Feynman diagram of genus $i$ and order $g$.

Now, we may choose $\Psi$ supported close enough to the diagonal so that $\Psi\Phi_L \leq \Phi$. Fix a Feynman diagram $\gamma$. Let $V$ be the number of internal vertices of $\gamma$, $E$ the number of internal edges, and $w_\gamma$ the contribution of $\gamma$ to the sum $\rgflow{\Psi\Phi_L}{\Phi}{I[\Psi\Phi_L]}$. Then, 
\begin{equation}
w_\gamma(e_1,\cdots, e_k) = 0
\end{equation}
unless 
\begin{equation}
\supp e_m \times \supp e_n \subset (\supp( \Psi\Phi_L))^{E+V\eta(i,k)} \subset (\supp( \Psi\Phi_L))^{(\eta(i,k)+1)\upsilon(i,k)}.
\end{equation}
This shows that the collection $I[\Phi]$ satisfies the locality condition in Definition \ref{def: QTNBFT}. This completes the proof that the $I[\Phi]$ give a pre-theory in the parametrix sense.

To show that all parametrix theories arise in this way, suppose that two parametrix theories $\{I'[\Phi]\}$ and $\{I[\Phi]\}$ satisfy
\begin{equation}
I'_{i',k',l'}[\Phi]=I_{i',k',l'}[\Phi]
\end{equation}
for all $(i',k',l')< (i,k,l)$. Then, let 
\begin{equation}
J_{i,k,l}[\Phi] = I'_{i,k,l}[\Phi]-I_{i,k,l}[\Phi];
\end{equation}
the combinatorics of the HRG flow equation imply that $J_{i,k,l}[\Phi]$ is independent of $\Phi$; hence, because $I$ and $I'$ satisfy asymptotic locality, $J$ must be supported on the small diagonal $M\subset M^k$. Finally, $J$ has $\partial$-smooth first derivative because $I$ and $I'$ do. 
The functionals with support on the small diagonal and $\partial$-smooth first derivative are precisely the local action functionals (cf. Proposition \ref{lem: localfcnlssupportdiag} ; hence, $J$ is a local functional.
In other words, suppose that $I[\Phi]$ is a parametrix pre-theory defined for all $(i',k',l')< (i,k,l)$; we have shown that the set of all lifts of such a theory to one defined for all $(i',k',l')\leq (i,k,l)$ is in bijection with the space of local functionals. But this is precisely the corresponding set of lifts for a heat kernel pre-theory.
\end{proof}

\section{Obstruction Theory }
\label{subsec: obstruction}
Suppose that $\{I[\Phi]\}$ is a quantum bulk-boundary system defined modulo $\hbar^{n+1}$; in other words, we have $I[\Phi]\in \sO(\sE)[\![\hbar]\!]/(\hbar^{n+1})$,
\begin{equation}
\rgflow{\Psi}{\Phi}{I[\Psi]}=I[\Phi]\quad \mod \hbar^{n+1},
\end{equation} 
\begin{equation}
QI[\Phi]+\frac{1}{2}\{I[\Phi],I[\Phi]\}_{\Phi} +\hbar \Delta_\Phi I[\Phi]=0\quad \mod \hbar^{n+1},
\end{equation}
and the $I[\Phi]$ satisfy an asymptotic locality condition as $\Phi\to 0$. 
We have seen (cf. the end of the proof of Proposition \ref{prop: equivtheories}) that, by choosing a local functional, we may extend $I[\Phi]$ to a collection $\{\widetilde I[\Phi]\}$ of functionals which are defined and satisfy the HRG equation modulo $\hbar^{n+2}$. Define 
\begin{equation}
O_{n+1}[\Phi]=\hbar^{-n-1}\left( \diff\widetilde I[\Phi]+\frac{1}{2}\{\widetilde I[\Phi],\widetilde I[\Phi]\}_\Phi+\hbar \Delta_\Phi \widetilde I[\Phi]\right)\quad \mod \hbar ;
\end{equation} 
The functional $O_{n+1}[\Phi]$ belongs to the space $\sO_{P}(\condfieldscs)$ of functionals with proper support, and has cohomological degree +1. 
This is because $\widetilde I[\Phi]$ has proper support and the operations $\diff, \{\cdot, \cdot\}_\Phi$, and $\Delta_\Phi$ preserve functionals with this property.

The following is Lemma 11.1.1 of Chapter 5 of \autocite{cost}; the proof given there works without changes in the present context.
\begin{lemma}
 Let $\epsilon$ be a parameter of cohomological degree $-1$ which squares to zero. Let $I_0[\Phi]$ be the classical part of $I[\Phi]$. Then, 
 \begin{equation}
I_0[\Phi]+\epsilon O_{n+1}[\Phi]
 \end{equation}
 satisfies the classical HRG equation and the (scale $\Phi$) classical master equation. The space of lifts of $\{I[\Phi]\}$ to a theory defined modulo $\hbar^{n+2}$ is the space of collections of functionals $\{J[\Phi]\}$ of cohomological degree 0 such that $I_0[\Phi]+\delta J[\Phi]=O_{n+1}[\Phi]$, such that $J[\Phi]$ satisfies the equation
 \begin{equation}
 QJ[\Phi]+\{I_0[\Phi],J[\Phi]\}_\Phi = O_{n+1}[\Phi],
 \end{equation}
and such that $J[\Phi]$ can be made to have support arbitrarily close to the diagonal for small $\Phi$; here, $\delta$ is a parameter of cohomological degree 0 which squares to 0.
\end{lemma}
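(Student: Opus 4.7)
The plan has two independent components: first, verifying that $I_0[\Phi] + \epsilon O_{n+1}[\Phi]$ is a deformation of the classical theory (HRG compatibility and CME), and second, parametrizing lifts of $\{I[\Phi]\}$ by such classical deformations cohomologous to $O_{n+1}[\Phi]$. The main tool for both is the basic intertwining between the RG-flow operator $e^{\hbar \partial_{P(\Psi,\Phi)}}$ and the BV operator $Q + \hbar \Delta$, which is what drives Lemma~\ref{lem: rgqme}.

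For the first statement, I would exploit the fact that $\widetilde{I}[\Phi]$ already satisfies the HRG equation modulo $\hbar^{n+2}$. Writing the failure of the scale-$\Phi$ QME as $\hbar^{n+1}O_{n+1}[\Phi] \bmod \hbar^{n+2}$ and applying Lemma~\ref{lem: rgqme} (but now tracked at one higher order in $\hbar$), the $\hbar^{n+1}$ coefficient is forced to transform under RG flow by the linearization of the classical RG flow at the classical interaction $I_0[\Phi]$. This linearization is precisely what the $\epsilon^2 = 0$ formalism computes, so $I_0[\Phi] + \epsilon O_{n+1}[\Phi]$ is a fixed point of classical RG flow. For the classical master equation, the essential input is the algebraic identity $(Q + \hbar\Delta_\Phi)^2 = 0$ acting on $e^{\widetilde{I}[\Phi]/\hbar}$: expanding this, the $\hbar^{n+1}$ coefficient gives $QO_{n+1}[\Phi] + \{I_0[\Phi], O_{n+1}[\Phi]\}_\Phi = 0$ (the higher-order correction terms vanish because lower-order QME obstructions vanish by hypothesis), which is exactly the linearization of the CME for $I_0 + \epsilon O_{n+1}$.

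For the second statement, I would parametrize any lift as $\widetilde{I}[\Phi] + \hbar^{n+1} J[\Phi]$ with $J[\Phi]$ of cohomological degree 0. The HRG equation for the lift modulo $\hbar^{n+2}$, combined with HRG for $\widetilde{I}[\Phi]$, collapses (again because $\epsilon^2$-type terms only linearize RG flow at the classical level) to the statement that $I_0[\Phi] + \delta J[\Phi]$ satisfies classical HRG. The QME for the lift, expanded at order $\hbar^{n+1}$, gives the cohomological equation $QJ[\Phi] + \{I_0[\Phi], J[\Phi]\}_\Phi = O_{n+1}[\Phi]$ (the sign coming from combining the new $\Delta_\Phi$ contribution with the absorbed obstruction). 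Conversely, any $J[\Phi]$ satisfying these two equations provides a lift by the same calculation in reverse.

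The main obstacle — and the one that distinguishes this from purely algebraic obstruction theory — is controlling asymptotic locality of $J[\Phi]$ as $\Phi \to 0$. The condition that $J[\Phi]$ can be chosen with support arbitrarily close to the diagonal for small $\Phi$ is what rules out absorbing arbitrary non-local counter-terms. I would handle this by using the parametrix-theory/heat-kernel-theory equivalence (Proposition~\ref{prop: equivtheories}) to recast the problem in the heat-kernel picture, where the lifts correspond bijectively under a choice of renormalization scheme to local functionals in $\hbar^{n+1}\Oloc[\![\hbar]\!]/\hbar^{n+2}$ by Theorem~\ref{thm: mainbijection}; asymptotic locality is then automatic. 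Combining this with the $\partial$-smoothness of first derivatives for functionals produced by HRG flow from local counter-terms completes the identification.
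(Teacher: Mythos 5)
Your proposal is correct and is essentially the paper's approach: the dissertation does not write out a proof at all, but cites Lemma 11.1.1 of Chapter 5 of Costello's book, whose argument is exactly the order-by-order expansion you sketch --- extract the $\hbar^{n+1}$ coefficient of $(Q+\hbar\Delta_\Phi)^2 e^{\widetilde I[\Phi]/\hbar}=0$ and of the RG compatibility to get the linearized (square-zero parameter) CME and classical HRG for $O_{n+1}[\Phi]$, then parametrize lifts by $\hbar^{n+1}J[\Phi]$ and read off the cochain equation, with the locality condition on $J[\Phi]$ mirroring the asymptotic locality axiom (as in Proposition \ref{prop: equivtheories}). The only small slip is attributing the sign in $QJ[\Phi]+\{I_0[\Phi],J[\Phi]\}_\Phi=O_{n+1}[\Phi]$ to a $\Delta_\Phi$ contribution --- that term enters only at order $\hbar^{n+2}$, and the sign is just the convention for how $J$ is added to $\widetilde I[\Phi]$ --- which does not affect the argument.
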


\begin{remark}
To be more precise about the asymptotic locality condition specified on $J[\Phi]$ in the lemma, let $K_t$ be a fake heat kernel. We require that $J[\Phi_t]$ have a small-$t$ asymptotic expansion in terms of local functionals of the same sort that the functionals $I[\Phi_t]$ are required to satisfy in the heat kernel definition of quantum TNBFT.
\end{remark}

Combining the preceding lemma with Lemma \ref{lem: classRG}, we have

\begin{proposition}
Given a theory $\{I[\Phi]\}$ defined modulo $\hbar^{n+1}$, the obstruction to lifting $I[\Phi]$ to a theory defined modulo $\hbar^{n+2}$ is a local functional $O_{n+1}\in \Oloc$ of cohomological degree 1 which is closed for the Chevalley-Eilenberg differential. The set of lifts of $I[\Phi]$ to a theory defined modulo $\hbar^{n+2}$ is the set of elements $J\in \Oloc$ of degree zero which witness the exactness of $O_{n+1}$ as an element of $\Oloc$.  
\end{proposition}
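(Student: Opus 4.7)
The plan is to leverage the preceding lemma, which has already established that the pair $(I_0[\Phi],\, O_{n+1}[\Phi])$, viewed as a collection of functionals depending on an auxiliary odd parameter $\epsilon$ of degree $-1$, satisfies the classical homotopical RG equation and the classical master equation at each parametrix scale $\Phi$. The strategy is therefore to use the classical version of the bijection in Theorem \ref{thm: mainbijection} to identify this family with a local functional, and then to translate the master equation at the parametrix level into the $d_{CE}$-closedness of the local representative.

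First, I would treat $I_0[\Phi] + \epsilon\, O_{n+1}[\Phi]$ as the interaction of a classical bulk-boundary pre-theory in the parametrix sense, over the base ring $\RR[\epsilon]/\epsilon^2$. By Proposition \ref{prop: equivtheories} applied at the classical level, and then by Theorem \ref{thm: mainbijection} applied to such theories (equivalently, by invoking Lemma \ref{lem: classRG} together with the classical version of the locality axiom), this collection arises via the classical RG flow $\clrgflow{0}{\Phi}{\cdot}$ from a unique local functional of the form $I_0 + \epsilon\, O_{n+1}$, where $I_0$ is the underlying classical interaction and $O_{n+1}\in \Oloc$ has cohomological degree $1$. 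The scale-$\Phi$ CME
\[
\diffonnocrossdiff(I_0[\Phi]+\epsilon O_{n+1}[\Phi]) + \tfrac{1}{2}\{I_0[\Phi]+\epsilon O_{n+1}[\Phi],\, I_0[\Phi]+\epsilon O_{n+1}[\Phi]\}_\Phi = 0
\]
splits into an $\epsilon^0$ piece (the CME for $I_0[\Phi]$, already known) and an $\epsilon^1$ piece
\[
\diffonnocrossdiff O_{n+1}[\Phi] + \{I_0[\Phi],\, O_{n+1}[\Phi]\}_\Phi = 0.
\]
Applying the second statement of Lemma \ref{lem: classRG} to the one-parameter family $I_0 + \delta\,O_{n+1}$ transports this equation back to scale zero and yields precisely $d_{CE}\, O_{n+1} = 0$ in $\Oloc$, which establishes the first claim.

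For the second claim, the preceding lemma identified lifts $\{I[\Phi]\}\bmod \hbar^{n+2}$ with collections $\{J[\Phi]\}$ of degree $0$, satisfying an asymptotic locality condition, and obeying
\[
\diffonnocrossdiff J[\Phi] + \{I_0[\Phi], J[\Phi]\}_\Phi = O_{n+1}[\Phi].
\]
Exactly the same bijection as in the previous paragraph—using that $J[\Phi]$ has $\partial$-smooth first derivative and the prescribed small-$\Phi$ asymptotic locality—identifies such a $\{J[\Phi]\}$ with a local functional $J\in \Oloc$ of degree $0$ satisfying $\clrgflow{0}{\Phi}{J} = J[\Phi]$; Lemma \ref{lem: classRG} then translates the displayed equation at the scale-$\Phi$ level into the local equation $d_{CE} J = O_{n+1}$, exhibiting $J$ as a witness of the exactness of $O_{n+1}$.

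The main obstacle I anticipate is the verification that the obstruction $O_{n+1}[\Phi]$ genuinely fits into the framework where Lemma \ref{lem: classRG} and Theorem \ref{thm: mainbijection} apply, in particular that it possesses $\partial$-smooth first derivative and the required asymptotic locality as $\Phi\to 0$. Smoothness of the first derivative follows because $\diffonnocrossdiff$, the BV bracket $\{\cdot,\cdot\}_\Phi$, and the BV Laplacian $\Delta_\Phi$ all preserve the subspace $\pertfunctionalspsmcs$. Asymptotic locality must be extracted from the asymptotic locality of $\widetilde I[\Phi]$ by analyzing the graph expansion contributing to $O_{n+1}[\Phi]$: each diagram has a bounded number of vertices and edges in terms of $(i,k)$, so the support-propagation estimates in Item (3) of Definition \ref{def: QTNBFT} close up under the operations producing $O_{n+1}[\Phi]$, with a controlled (larger) exponent $\eta(i,k)(1+\upsilon(i,k))$ that is consumed by the combinatorics already built into the locality axiom. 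Once these two technical checks are carried out, the rest of the argument is a formal consequence of the classical bijection.
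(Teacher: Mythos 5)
Your proposal is correct and follows essentially the same route as the paper: both arguments feed the pair $(I_0[\Phi],\,O_{n+1}[\Phi])$ (resp.\ the lift data $J[\Phi]$) through Lemma \ref{lem: classRG} to obtain a scale-zero limit, use the asymptotic locality of $\widetilde I[\Phi]$ (via the support behavior of $\diffonnocrossdiff$, $\{\cdot,\cdot\}_\Phi$, and $\Delta_\Phi$) to conclude that this limit is a local functional, and then use the intertwining statement of Lemma \ref{lem: classRG} to translate the scale-$\Phi$ equations into $d_{CE}O_{n+1}=0$ and $d_{CE}J=O_{n+1}$. The only cosmetic difference is that you package the passage to scale zero as a ``classical version'' of Theorem \ref{thm: mainbijection}/Proposition \ref{prop: equivtheories}, whereas the paper simply takes $\lim_{\Phi\to 0}$ directly, which is the equivalent route you yourself note.
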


\begin{proof}
By Lemma \ref{lem: classRG}, the limit $\lim_{\Phi\to 0} O_{n+1}[\Phi]$ exists and is closed for the Chevalley-Eilenberg differential on the complex of local functionals. 
The asymptotic locality of $\widetilde I[\Phi]$ implies the asymptotic locality of $O_{n+1}[\Phi]$, by an argument similar to the one in the proof of Lemma 9.4.0.2 of \autocite{CG2}. Namely, $\diff$ does not change the support of a functional, while $\supp \Delta_\Phi \widetilde I[\Phi]$ is a subset of the image of $\supp I[\Phi]$ under the projection $M^k\to M^{k-2}$, and similarly, the operation $\{\cdot, \cdot\}_\Phi$ ``widens'' the support of functionals by an amount which can be made to be small if $\Phi$ is small.

Let $\{J[\Phi]\}$ be a collection of functionals specifying a lift of $I[\Phi]$ to a theory mod $\hbar^{n+2}$. By Lemma \ref{lem: classRG}, $J:=\lim_{t\to 0}J[\Phi_t]$ exists. Moreover, since $J[\Phi_t]$ is assumed to have a small-$t$ asymptotic expansion in terms of local action functionals, it follows that $J$ is local. Finally, since 
\begin{equation}
QJ[\Phi]+\{I[\Phi],J[\Phi]\}_\Phi = O_{n+1}[\Phi],
\end{equation}
and since the classical HRG equation for $I_0+\delta J$ intertwines the Chevalley-Eilenberg differential acting on $J$ with 
\begin{equation}
QJ+\{I[\Phi],J\}_{\Phi},
\end{equation}
it follows that $J$ witnesses the exactness of $O_{n+1}$ in $\Oloc$.
\end{proof}

\section{Factorization Algebra of Quantum Observables}
\label{sec: quantumFA}

In this subsection, we construct a factorization algebra on $M$ which can be associated to any quantum bulk-boundary system $I[\Phi]$ on $M$, thus completing the main task of this dissertation.

We proceed as in \autocite{CG2}, introducing first a complex of global (on $M$) observables for each parametrix $\Phi$.
The complexes corresponding to different parametrices will be equivalent to each other; in this way, we obtain a single complex of global observables.
The observables associated to an open subset $U\subset M$ will be described as a subcomplex of the global observables.
Finally, we will describe the factorization product for observables.
The discussion follows \autocite{CG2} closely.
In fact, the proofs given there for the case $\bdyM=\emptyset$ carry over with little change to our context.
All constructions will be implicitly carried out in the category of differentiable vector spaces.
For the sake of space, however, we only describe the underlying vector spaces.
We refer the reader to the corresponding locations in \autocite{CG2} for the description of the differentiable structure on the vector spaces we define here.

\begin{definition}
The \textbf{cochain complex of global scale-$\Phi$ observables}
\index[notation]{ObsqPhi@$\Obq_{\sE,\sL}(M)[\Phi]$}
$\Obq_{\sE,\sL}(M)[\Phi]$ is the cochain complex
\begin{equation}
(\sO(\condfields)[\![\hbar]\!], Q+\{I[\Phi],\cdot\}_\Phi+\hbar \Delta_\Phi);
\end{equation}
the operator $Q+\{I[\Phi],\cdot\}_\Phi+\hbar \Delta_\Phi$ squares to zero as a consequence of the scale $\Phi$ QME.
\end{definition}

\begin{definition}
The \textbf{homotopical renormalization group flow for observables} is the operation
\index[notation]{Wobs@$\rgobs{\Phi}{\Psi}$}
\begin{equation}
\rgobs{\Phi}{\Psi}: \sO(\condfields)[\![\hbar]\!]\to \sO(\condfields)[\![\hbar]\!]
\end{equation}
defined by 
\begin{equation}
\rgobs{\Phi}{\Psi}(\cO) = \hbar \frac{d}{d\delta}\log\left(\exp(\hbar \partial_{P_\Psi-P_\Phi}) e^{I[\Phi]+\delta \cO/\hbar}\right).
\end{equation}
Write $\cO=\sum_{i,k}\hbar^i \cO_{i,k}$, where $\cO_{i,k}$ has order $k$ in the fields. The HRG flow for observables has a graphical representation as a sum over graphs with one special vertex labeled by $\cO_{i,k}$, where $i$ is the genus of the vertex and $k$ is its valence.
\end{definition}

The following lemma is proved by the same means as in \autocite{CG2}:
\begin{lemma}
\label{lem: obsrgflow}
The following equality
\begin{equation}
(Q+\{I[\Psi],\cdot\}_{\Psi}+\hbar \Delta_\Psi)(\rgobs{\Phi}{\Psi}(\cO))=\rgobs{\Phi}{\Psi}\left( (Q+\{I[\Phi],\cdot\}_\Phi+\hbar \Delta_\Phi)\cO\right)
\end{equation}
holds. In other words, the observable HRG flow is a map of complexes
\begin{equation}
\Obq_{\sE,\sL}(M)[\Phi]\to \Obq_{\sE,\sL}(M)[\Psi]
\end{equation}
from the scale-$\Phi$ observables to the scale-$\Psi$ observables.
\end{lemma}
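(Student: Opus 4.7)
The plan is to reduce the claim to the scale compatibility of the quantum master equation (Lemma \ref{lem: rgqme}) by promoting the observable $\cO$ to a first-order deformation of the action $I[\Phi]$. Introduce a formal parameter $\delta$ of cohomological degree $|\cO|-1$ with $\delta^2=0$, and consider the $\delta$-deformed action
\begin{equation}
\tilde I[\Phi] := I[\Phi]+\delta\,\cO \in \sO_P(\condfieldscs)[\![\hbar]\!][\delta]/(\delta^2).
\end{equation}
The essential step is to observe that
\begin{equation}
\rgflow{\Phi}{\Psi}{\tilde I[\Phi]} = I[\Psi]+\delta\,\rgobs{\Phi}{\Psi}(\cO) \pmod{\delta^2},
\end{equation}
which is immediate from the definition of $\rgobs{\Phi}{\Psi}$ as $\hbar\,\frac{d}{d\delta}\log\bigl(e^{\hbar\partial_{P_\Psi-P_\Phi}}e^{\tilde I[\Phi]/\hbar}\bigr)\big|_{\delta=0}$ together with the HRG equation $\rgflow{\Phi}{\Psi}{I[\Phi]}=I[\Psi]$.

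Next I would apply Lemma \ref{lem: rgqme}, which is stated for honest functionals but, since its proof only uses the graphical identity relating $\Delta_\Psi$, $\Delta_\Phi$, and contraction against the propagator difference, goes through verbatim when scalars are extended by the square-zero element $\delta$. Concretely, define the scale-$\Upsilon$ quantum master operator on $\sO_P(\condfieldscs)[\![\hbar]\!][\delta]/(\delta^2)$ by
\begin{equation}
\mathrm{QME}_\Upsilon(J) := Q J+\tfrac{1}{2}\{J,J\}_\Upsilon+\hbar\Delta_\Upsilon J.
\end{equation}
Lemma \ref{lem: rgqme} then gives the identity $\mathrm{QME}_\Psi\bigl(\rgflow{\Phi}{\Psi}{\tilde I[\Phi]}\bigr)=\rgflow{\Phi}{\Psi}{\mathrm{QME}_\Phi(\tilde I[\Phi])}$ in the appropriate equivariant sense; the right-hand side here must be interpreted using the standard trick that $\mathrm{QME}$ is compatible with exact RG flow at the level of the formal expression $e^{\tilde I/\hbar}$.

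Finally I would extract the coefficient of $\delta$ on both sides. Expanding $\mathrm{QME}_\Phi(I[\Phi]+\delta\cO)$ and using $\mathrm{QME}_\Phi(I[\Phi])=0$, the $\delta$-coefficient on the scale-$\Phi$ side is precisely $(Q+\{I[\Phi],\cdot\}_\Phi+\hbar\Delta_\Phi)\cO$. Similarly, expanding $\mathrm{QME}_\Psi(I[\Psi]+\delta\,\rgobs{\Phi}{\Psi}(\cO))$ and using $\mathrm{QME}_\Psi(I[\Psi])=0$ yields $(Q+\{I[\Psi],\cdot\}_\Psi+\hbar\Delta_\Psi)\rgobs{\Phi}{\Psi}(\cO)$. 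The compatibility provided by Lemma \ref{lem: rgqme} now identifies the two, which is precisely the claim.

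The main obstacle I anticipate is a careful, sign-correct handling of the $\delta$-extension: one must check that the definition of the BV bracket and BV Laplacian extend to the $\delta$-algebra in a way compatible with the graphical expansion of $W$, and that the derivation $\partial_{P_\Psi-P_\Phi}$ commutes with passing to the coefficient of $\delta$. Once this bookkeeping is in place, the result follows formally from the already-established Lemma \ref{lem: rgqme}; no new analytic input is required because the functionals involved lie in $\pertfunctionalspsmcs$ and the propagator difference $P_\Psi-P_\Phi$ is smooth by Lemma \ref{lem: propsofkernels}.
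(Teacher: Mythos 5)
Your proposal is correct and is essentially the argument the paper intends: the paper simply defers to \autocite{CG2}, and the standard proof there is exactly your square-zero trick — deform the action to $I[\Phi]+\delta\cO$, note that HRG flow sends it to $I[\Psi]+\delta\,\rgobs{\Phi}{\Psi}(\cO)$ modulo $\delta^2$, and extract the $\delta$-coefficient of the scale-compatibility of the quantum master equation (in the precise form $\bigl(Q+\hbar\Delta_\Psi\bigr)e^{\hbar\partial_P}=e^{\hbar\partial_P}\bigl(Q+\hbar\Delta_\Phi\bigr)$ applied to $e^{\tilde I/\hbar}$, as in the paper's own later use of this identity in Lemma \ref{lem: diff1dbfobs}). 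One trivial bookkeeping slip: $\delta$ should be given cohomological degree $-|\cO|$ (so that $\delta\cO$ has degree $0$), not $|\cO|-1$, though this does not affect the argument.
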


\begin{definition}
Let $\cP$ be the poset of parametrices, considered as a category. By Lemma \ref{lem: obsrgflow}, the assignment $\Phi \to \Obq_{\sE,\sL}(M)[\Phi]$ is a functor $F:\cP \to \text{Ch}$. The \textbf{global observables} $\Obq_{\sE,\sL}(M)$ are the limit cochain complex $\lim F$. 
\end{definition}

In more pedestrian terms, an element of $\Obq_{\sE,\sL}(M)$ is an observable $\cO[\Phi]$, one for every $\Phi \in \cP$, such that the collection $\cO[\Phi]$ and $\cO[\Psi]$ are related by HRG flow from scale $\Phi$ to scale $\Psi$ for all $\Psi$ and $\Phi$.

$\Obq_{\sE,\sL}(M)$ will be the space of global sections of the factorization algebra we are constructing. We must also define local sections of this factorization algebra.

\begin{definition}
The space of \textbf{observables supported on an open $U\subset M$} is the subcomplex of $\Obq_{\sE,\sL}(M)$ consisting of functionals $\cO[\Phi]=\sum_{i,k}\hbar^i \cO_{i,k}[\Phi]$ such that for each $i,k$, there exists a compact subset $K\subset U^k$ and a parametrix $\Phi$ such that the support (Definition \ref{def: functionaldefs}) of $\cO_{i,k}[\Psi]\subset K$ for all $\Psi\leq \Phi$. We denote the space of observables supported on $U$ by 
\index[notation]{ObsqEL@$\Obq_{\sE,\sL}$}
$\Obq_{\sE,\sL}(U)$.
\end{definition}

The proof that $\Obq_{\sE,\sL}(U)$ is a subcomplex of $\Obq_{\sE,\sL}(M)$ is identical to the one given in \autocite{CG2}. 

We have given a cochain complex $\Obq_{\sE,\sL}(U)$ for every open subset of $M$; it is immediate from the definitions that whenever $U\subset V$, there is an induced map $\Obq_{\sE,\sL}(U)\to \Obq(V)$. To define the remaining factorization products, it suffices by associativity to define a map 
\begin{equation}
\Obq (U)\hotimes_\beta \Obq(V)\to \Obq(U\sqcup V)
\end{equation}
whenever $U$ and $V$ are disjoint. Let us denote by $\ast$ the commutative product in $\sO(\condfields)[\![\hbar]\!]$. Let $\cO\in \Obq_{\sE,\sL}(U)$, $\cO'\in \Obq(V)$. We define the two-point factorization product:

\begin{equation}
m(\cO,\cO')[\Phi]=\lim_{\Psi \to 0} \rgobs{\Psi}{\Phi}(\cO[\Psi]\ast \cO'[\Psi]).
\end{equation}

The proof that this limit exists in Section 9.5.1 of \autocite{CG2} relies only on the combinatorics of the relevant Feynman diagrams and the support properties of $I[\Phi]$, $\cO$, and $\cO'$. 
In both of these respects, the situation at hand is identical to the one in \autocite{CG2}; hence, this limit exists also in our case. 
The arguments in Section 9.5.1 of \autocite{CG2} then give:

\begin{theorem}
\label{thm: quantumFA}
The assignment $U\mapsto \Obq_{\sE,\sL}(U)$ gives a prefactorization algebra.
\end{theorem}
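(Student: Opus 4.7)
The plan is to verify the four axioms of a prefactorization algebra in turn, following closely the arguments of Section 9.5 of \autocite{CG2} and exploiting the fact that the support and convergence arguments there depend only on the combinatorics of Feynman diagrams and the support control of $I[\Phi]$, which (by the locality axiom in Definition \ref{def: QTNBFT}) behaves in our setting exactly as in the case $\bdyM = \emptyset$.

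First I would verify that the prescription extends naturally to a multi-point factorization product. Given disjoint opens $U_1, \ldots, U_k \subset V$ and observables $\cO_i \in \Obq_{\sE,\sL}(U_i)$, define
\begin{equation}
m_{U_1,\ldots,U_k}^V(\cO_1, \ldots, \cO_k)[\Phi] = \lim_{\Psi \to 0} \rgobs{\Psi}{\Phi}\left(\cO_1[\Psi] \ast \cdots \ast \cO_k[\Psi]\right).
\end{equation}
Convergence of the limit follows from the same combinatorial argument as in the two-point case, iterated: each $\cO_i[\Psi]$ has support arbitrarily close to some compact $K_i \subset U_i^{k_i}$ for $\Psi$ small enough, so the edges in the Feynman expansion connecting different $\cO_i$ must use propagators $P(\Psi) - P(\Phi)$ of support bounded by $\Phi$, forcing only finitely many diagrams of each $(i,k)$-bidegree to contribute. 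Next I would check that the output lies in $\Obq_{\sE,\sL}(V)$: the HRG compatibility (i.e. the output defines an element of $\Obq_{\sE,\sL}(M)$) is immediate from the identity $\rgobs{\Phi}{\Phi'} \circ \rgobs{\Psi}{\Phi} = \rgobs{\Psi}{\Phi'}$, and the support-control condition for the output---that the supports of its Taylor components can be forced into any neighborhood of $K_1 \sqcup \cdots \sqcup K_k \subset V^{\sum k_i}$---follows again by tracking propagator supports in the graphical expansion, exactly as in \autocite{CG2}.

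Second, I would verify that each $m_{U_1,\ldots,U_k}^V$ is a cochain map. By Lemma \ref{lem: obsrgflow}, $\rgobs{\Psi}{\Phi}$ already intertwines the differentials on the scale-$\Psi$ and scale-$\Phi$ observables, so it suffices to check that at a fixed scale $\Psi$, the commutative product $\ast$ on $\sO(\condfields)[\![\hbar]\!]$ is compatible with the differential $Q + \{I[\Psi], \cdot\}_\Psi + \hbar \Delta_\Psi$. Here $Q$ and $\{I[\Psi], \cdot\}_\Psi$ are derivations for $\ast$, while $\Delta_\Psi$ is a second-order operator whose defect is exactly the scale-$\Psi$ BV bracket; the resulting extra terms are absorbed because observables on disjoint opens have scale-$\Psi$ bracket tending to zero in the limit $\Psi \to 0$ by the support control (the kernel $K_\Psi$ shrinks to the diagonal). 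This is the usual BV-factorization computation from \autocite{CG2}, and the presence of a boundary does not alter it since all operations $Q$, $\{\cdot,\cdot\}_\Phi$, $\Delta_\Phi$ preserve the boundary condition by construction.

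Third, the $S_k$-equivariance is manifest because $\ast$ is graded commutative and $\rgobs{\Psi}{\Phi}$ is a symmetric operation in its inputs. For associativity (axiom (4) of a prefactorization algebra), the key identity
\begin{equation}
m_{V_1,\ldots,V_r}^W \circ \bigotimes_{i} m_{U_{i,1},\ldots,U_{i,k_i}}^{V_i} = m_{U_{1,1},\ldots,U_{r,k_r}}^W
\end{equation}
reduces, after unwinding, to the associativity of $\ast$ together with the identity $\rgobs{\Psi'}{\Phi} \circ \rgobs{\Psi}{\Psi'} = \rgobs{\Psi}{\Phi}$, provided the intermediate limits can be interchanged---which is permitted because, at each $(i,k)$-bidegree, only finitely many Feynman diagrams contribute for sufficiently small parametrix. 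Finally, functoriality of the assignment $U \mapsto \Obq_{\sE,\sL}(U)$ under inclusions $U \subset V$ follows directly from the definition of supported observables as a subcomplex, and compatibility with the unary structure maps is trivial.

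The main obstacle---though already handled in the cited results---is the interplay between the two limit operations: the $\Psi \to 0$ limit defining the factorization product, and the support condition characterizing membership in $\Obq_{\sE,\sL}(V)$. One must show that the support of each Taylor component of $m(\cO_1,\ldots,\cO_k)[\Phi]$ can be pushed arbitrarily close to $K_1 \sqcup \cdots \sqcup K_k$ by choosing $\Phi$ small, uniformly in the convergent limit in $\Psi$; this is where the particular exponent $\eta(i,k)(1+\upsilon(i,k))$ appearing in Definition \ref{def: QTNBFT}(3) is used, exactly as in the proof in \autocite{CG2}.
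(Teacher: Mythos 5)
Your proposal is correct and follows essentially the same route as the paper: the paper defines the factorization product by $m(\cO,\cO')[\Phi]=\lim_{\Psi\to 0}\rgobs{\Psi}{\Phi}(\cO[\Psi]\ast\cO'[\Psi])$ and then simply observes that the convergence, support-control, cochain-map, and associativity arguments of Section 9.5.1 of \autocite{CG2} depend only on Feynman-diagram combinatorics and the support properties of $I[\Phi]$, $\cO$, $\cO'$, which are unchanged in the bulk-boundary setting. Your write-up just spells out in more detail the steps the paper delegates to that citation (multi-point products, the vanishing of $\{\cdot,\cdot\}_\Psi$ for disjointly supported observables as $\Psi\to 0$, and the interchange of limits), so it matches the paper's proof in substance.
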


To show that this assignment is also a factorization algebra, we will prove the following proposition (cf. Proposition 9.6.1.1 of \autocite{CG2}):

\begin{proposition}
\label{prop: assocgradedFA}
Filter $\Obq$ by defining $F^k\Obq_{\sE,\sL}(U)$ to be the subspace of observables which vanish to order $k$ in $\hbar$. There is an isomorphism of prefactorization algebras
\begin{equation}
\text{Gr} \Obq \cong \Obcl \otimes_\CC \CC[\![\hbar]\!];
\end{equation}
hence, $\text{Gr}\Obq$ is a factorization algebra.
\end{proposition}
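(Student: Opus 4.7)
The plan is to adapt the argument of Proposition 9.6.1.1 of \autocite{CG2} to the bulk-boundary setting. The strategy has three main steps: verify that the filtration $F^\bullet\Obq$ is compatible with the prefactorization structure, identify the differential on the associated graded, and match the graded pieces with $\Obcl \otimes \CC[\![\hbar]\!]$.

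First I would verify that $F^k\Obq$ defines a subprefactorization algebra of $\Obq$ for each $k$. This amounts to checking that the structure maps preserve the $\hbar$-filtration. The observable HRG flow $\rgobs{\Phi}{\Psi}$ is built from $\exp(\hbar\partial_{P_\Psi - P_\Phi})$, where each application of $\partial_{P_\Psi - P_\Phi}$ is accompanied by a factor of $\hbar$, so this operator is filtration-preserving. The underlying commutative product $\ast$ on $\sO(\condfields)[\![\hbar]\!]$ is $\hbar$-linear, hence the factorization product $m(\cO,\cO')[\Phi] = \lim_{\Psi\to 0}\rgobs{\Psi}{\Phi}(\cO[\Psi]\ast \cO'[\Psi])$ also preserves the filtration. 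The support condition defining $\Obq(U) \subset \Obq(M)$ is manifestly filtration-respecting.

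Second I would compute the induced differential on $\text{Gr}^k\Obq$. At parametrix $\Phi$, the scale-$\Phi$ quantum differential is $Q + \{I[\Phi],\cdot\}_\Phi + \hbar\Delta_\Phi$, where $I[\Phi] = \sum_{i\geq 0}\hbar^i I_i[\Phi]$. The pieces $\hbar\Delta_\Phi$ and $\{\hbar^i I_i[\Phi],\cdot\}_\Phi$ for $i\geq 1$ strictly raise the $\hbar$-degree, whereas $Q + \{I_0[\Phi],\cdot\}_\Phi$ preserves it. Thus on $\text{Gr}^k\Obq$ the induced differential at parametrix $\Phi$ is exactly $Q + \{I_0[\Phi],\cdot\}_\Phi$, the scale-$\Phi$ classical differential, and the induced factorization product is the $\hbar\to 0$ limit of $\rgobs{\Phi}{\Psi}$ applied to $\ast$, which is a sum over tree diagrams only.

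Third I would construct the isomorphism of prefactorization algebras $\Obcl \otimes \CC[\![\hbar]\!] \xrightarrow{\sim} \text{Gr}\,\Obq$. Since the $\hbar = 0$ part of $\rgobs{\Psi}{\Phi}$ is a sum over tree diagrams, classical HRG flow $\clrgflow{\Psi}{\Phi}{\cdot}$ is well-defined on all of $\sO(\condfields)$, not merely on functionals with smooth first derivative, by the same graph-combinatorics argument used in Definition-Lemma~\ref{deflem: rgflow}. The map sends $\cO \in \Obcl(U)$ to the collection $\bigl(\clrgflow{0}{\Phi}{\cO}\bigr)_\Phi$, viewed in $\hbar^k$-degree. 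The classical HRG flow intertwines the Chevalley-Eilenberg differential $d_{CE}$ on $\Obcl(U)$ (built directly from the $L_\infty$ structure on $\condfields[-1]$) with $Q + \{I_0[\Phi],\cdot\}_\Phi$; this is the $\hbar\to 0$ analogue of Lemma~\ref{lem: obsrgflow}, and follows from the classical master equation for $I_0[\Phi]$ together with the standard compatibility of HRG flow with BV brackets. The inverse is given by the corresponding flow in the opposite direction, which exists because classical (tree-level) HRG is invertible on $\sO(\condfields)$.

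The main obstacle will be verifying the support and asymptotic-locality compatibility in the third step: one must confirm that classical HRG from scale $0$ to scale $\Phi$ sends observables in $\Obcl(U) \subset \Obcl(M)$ (characterized by having support in $U^k$) to families $(\clrgflow{0}{\Phi}{\cO})_\Phi$ whose support can be made arbitrarily close to $U^k$ as $\Phi \to 0$, matching the asymptotic-locality condition built into the definition of $\Obq(U)$. This requires arguments analogous to the combinatorial support bounds used in the proof of Proposition \ref{prop: equivtheories} for bounding $\supp(e_m)\times \supp(e_n)$ by $(\supp\Phi)^{(\eta+1)\upsilon}$, now applied tree-by-tree to observables.
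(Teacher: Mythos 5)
Your overall route is essentially the paper's: the heart of the matter is the construction, for each parametrix $\Phi$, of a classical observable RG flow from scale $0$ --- a sum over trees with one special vertex labelled by the observable and edges labelled by $P(\Phi)$ --- which identifies $\Obcl$ with the $\hbar^0$ part of $\Obq$; the remaining points (filtration compatibility, identification of the induced differential, compatibility with factorization products and supports) are imported from \autocite{CG2} exactly as you indicate.

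The one genuine problem is the justification in your third step. You assert that the tree-level flow $\clrgflow{0}{\Phi}{\cdot}$ is well-defined on all of $\sO(\condfields)$ ``by the same graph-combinatorics argument used in Definition-Lemma \ref{deflem: rgflow}.'' That argument only controls finiteness of the sum over diagrams, and it does so in a situation where the edge label $P(\Phi)-P(\Psi)$ is a smooth kernel; it says nothing about the analytic issue at scale $0$, where $P(\Phi)$ is distributional (singular on the diagonal and anti-diagonal) and cannot in general be contracted against distributional functionals --- this is precisely why Lemma \ref{lem: classRG} restricts the scale-$0$ flow of interactions to functionals with \emph{smooth} first derivative. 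What rescues the observable map is not combinatorics but the tree structure together with the smoothing property of the parametrix: a tree has no self-edges, so every propagator edge is incident on at least one interaction vertex $I_0$, which is local with smooth first derivative, and by Lemma \ref{lem: propdefined} the parametrix lifts to an operator $P_L$ on $\condfieldscs$; hence every propagator leg arriving at the special vertex delivers a smooth field satisfying the boundary condition, which the arbitrary (distributional) observable can then accept. This is the observable analogue of Lemma \ref{lem: classRG} that the paper invokes, and without it your step 3 does not get off the ground; with it, your construction and the paper's coincide. The same remark applies to your proposed inverse: its existence should be argued either via the filtration by polynomial degree (the flow is the identity plus filtration-lowering corrections) or via the opposite-direction tree flow, again using the smoothing property rather than graph combinatorics.
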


\begin{ucorollary}
The quantum observables $\Obq$ form a factorization algebra.
\end{ucorollary}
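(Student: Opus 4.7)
The plan is to use the filtration by powers of $\hbar$ to reduce the Weiss codescent statement for $\Obq$ to the corresponding statement for $\Obcl$, which is already known by Theorem~\ref{thm: classobsformFA}. Fix an open $U\subset M$ and a Weiss cover $\mathfrak{U}$ of $U$. We must show that the \v{C}ech map
\begin{equation}
\check{C}(\mathfrak{U},\Obq_{\sE,\sL})\to \Obq_{\sE,\sL}(U)
\end{equation}
is a quasi-isomorphism. Both sides inherit a descending filtration from the $\hbar$-adic filtration $F^\bu\Obq$ introduced in Proposition~\ref{prop: assocgradedFA}, and by inspection of the definitions the extension maps, internal differentials, and \v{C}ech differentials all preserve this filtration. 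Consequently we obtain a morphism of filtered complexes and hence an induced map of spectral sequences.

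First I would observe that on the associated graded level, Proposition~\ref{prop: assocgradedFA} gives an isomorphism $\mathrm{Gr}\,\Obq_{\sE,\sL}\cong \Obcl_{\sE,\sL}\otimes_{\CC}\CC[\![\hbar]\!]$ of prefactorization algebras, where the right-hand side is to be understood as the direct product $\prod_{k\ge 0}\hbar^k\Obcl_{\sE,\sL}$. Since $\Obcl_{\sE,\sL}$ is a factorization algebra (Theorem~\ref{thm: classobsformFA}) and each summand $\hbar^k\Obcl_{\sE,\sL}$ is just a shifted copy of it, the \v{C}ech map for $\mathrm{Gr}\,\Obq_{\sE,\sL}$ is a quasi-isomorphism termwise in $\hbar$. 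Hence the map of $E_0$ (equivalently $E_1$) pages of the spectral sequences is a quasi-isomorphism.

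Next I would address convergence. The $\hbar$-adic filtration on $\Obq_{\sE,\sL}(V)$ is complete and Hausdorff for every open $V$, since $\Obq_{\sE,\sL}(V)$ is by construction a power series in $\hbar$ with coefficients in the differentiable vector space of functionals on $\condfields(V)$. The \v{C}ech complex $\check{C}(\mathfrak{U},\Obq_{\sE,\sL})$ is likewise a complete filtered complex, being built out of products and direct sums of such power series. The standard convergence theorem for spectral sequences of complete, Hausdorff, exhaustively filtered complexes (e.g.\ the complete convergence theorem in Weibel's homological algebra text) then upgrades the $E_1$-quasi-isomorphism to a quasi-isomorphism on the abutments.

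The main obstacle I anticipate is precisely this convergence step: one has to be slightly careful because the filtration is not bounded, and in the setting of differentiable vector spaces one should check that the standard spectral sequence machinery still applies. The relevant input is that $\Obq_{\sE,\sL}(V)$ is genuinely a product over $k$ of $\hbar^k$-summands (not a colimit), so taking inverse limits along the filtration gives back $\Obq_{\sE,\sL}(V)$ itself, and the formation of \v{C}ech complexes in $\mathrm{DVS}$ commutes with the relevant limits. Once this bookkeeping is in place, the spectral sequence argument goes through verbatim and the corollary follows.
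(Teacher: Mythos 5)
Your proposal is essentially the paper's own argument: the corollary is deduced from Proposition \ref{prop: assocgradedFA} by exactly this filtration-by-powers-of-$\hbar$ reduction to the classical codescent statement (Theorem \ref{thm: classobsformFA}), mirroring the free case handled in Theorem \ref{thm: freequantumFA}. The completeness/convergence bookkeeping you flag is the same implicit step the paper relies on (following the treatment in \autocite{CG2}), so there is no substantive difference in approach.
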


The proof of Proposition \ref{prop: assocgradedFA} depends in turn on the following proposition, whose proof in the case at hand is the only point of our construction of factorization algebras which differs from the corresponding proof in \autocite{CG2}, and depends on Lemma \ref{lem: classRG}.

\begin{proposition}
Let $\Obq_{(0)}$ denote the truncation of $\Obq$ to order $\hbar^0$, i.e. $\Obq_{(0)}(U)$ is the space of observables $\{\cO[\Phi]\}$ related to each other by the $\hbar^0$ part of the HRG flow equation; the differential on $\Obq_{(0)}(U)$ is induced from the differentials $Q+\{I_0[\Phi],\cdot\}_\Phi$, where $I_0[\Phi]$ is the $\hbar^0$ component of the interaction; etc. 
Then there is an isomorphism
\begin{equation}
 \Obcl \to \Obq_{(0)}
\end{equation}
of prefactorization algebras on $M$. 
\end{proposition}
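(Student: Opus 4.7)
I will construct an explicit morphism $\phi : \Obcl \to \Obq_{(0)}$ of presheaves of complexes and then verify that it intertwines factorization products and is an isomorphism on each open set. For $\cO \in \Obcl(U)$, set
\begin{equation}
\phi(\cO)[\Phi] := \frac{d}{d\delta}\bigg|_{\delta=0} \clrgflow{0}{\Phi}{I_0 + \delta\cO},
\end{equation}
which is well-defined by Lemma~\ref{lem: classRG} applied to $I_0$ perturbed by the arbitrary functional $\delta\cO$ (with $\delta^2=0$). The semigroup property of classical RG flow guarantees that $\{\phi(\cO)[\Phi]\}$ satisfies the $\hbar^0$ HRG equation. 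That $\phi(\cO)$ meets the support condition required of $\Obq_{(0)}(U)$ follows from a Feynman-diagram analysis: the order-$k$ piece of $\phi(\cO)[\Phi]$ is built from trees carrying one distinguished $\cO$-vertex and remaining vertices labeled by components of $I_0$, and each such diagram contributes a functional whose support is contained in a thickening of $\mathrm{supp}(\cO) \subset U^k$ by a bounded number of copies of $\mathrm{supp}(\Phi)$---this is the same argument used in the proof of Proposition~\ref{prop: equivtheories}.

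The second statement of Lemma~\ref{lem: classRG} immediately shows $\phi$ is a cochain map: differentiating $\clrgflow{0}{\Phi}{I_0 + \delta\, d_{CE}\cO} = d_{CE}\,\clrgflow{0}{\Phi}{I_0 + \delta\cO}$ at $\delta=0$ yields $\phi(d_{CE}\cO)[\Phi] = (Q + \{I_0[\Phi], \cdot\}_\Phi)\phi(\cO)[\Phi]$, matching the differential on $\Obq_{(0)}$. Compatibility with the factorization structure is standard: extensions along open inclusions $U \subset V$ are preserved tautologically, while for disjoint $U_1, U_2 \subset V$ the product $\cO_1 \cdot \cO_2 \in \Obcl(U_1 \sqcup U_2)$ flowed classically to scale $\Phi$ decomposes as a sum over trees with exactly two distinguished vertices (one for each $\cO_i$), which matches the Feynman sum defining $m(\phi(\cO_1), \phi(\cO_2))[\Phi] = \lim_{\Psi \to 0} \rgobs{\Psi}{\Phi}(\phi(\cO_1)[\Psi] \ast \phi(\cO_2)[\Psi])$.

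Injectivity of $\phi$ is transparent, since the unique zero-edge tree contributes $\cO$ itself to $\phi(\cO)[\Phi]$. For surjectivity, observe that for each fixed $\Phi$ the linear endomorphism $\Psi_\Phi : \cO \mapsto \phi(\cO)[\Phi]$ of $\sO(\condfields)$ decomposes as a sum over diagrams indexed by number of internal edges; the zero-edge term is the identity, and on the order-$k$ subspace only finitely many diagram topologies contribute to any given output order. Hence $\Psi_\Phi$ is formally invertible by the usual geometric-series prescription. Given $\{\cO[\Phi]\} \in \Obq_{(0)}(U)$, I define $\cO := \Psi_\Phi^{-1}(\cO[\Phi])$; the classical HRG compatibility of the family forces this definition to be independent of $\Phi$. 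The main obstacle is then verifying that the asymptotic locality condition of $\Obq_{(0)}(U)$---which only forces $\cO[\Phi]$ to have support in a controlled thickening of the small diagonal---translates through the inverse edge-removal procedure into $\cO$ having support in $U^k$ for each symmetric degree; this reduces, as in Proposition~\ref{prop: equivtheories}, to tracking supports under the propagator $P(0,\Phi)$ and shrinking $\Phi$ to concentrate support appropriately.
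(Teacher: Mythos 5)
Your construction is exactly the paper's: it defines the same map (classical observable HRG flow from scale $0$ to scale $\Phi$, i.e.\ the sum over trees with one special $\cO$-vertex and edges labeled by $P(\Phi)$), invokes Lemma \ref{lem: classRG} for well-definedness and the cochain property, and defers compatibility with factorization products to the argument of \autocite{CG2}. The extra detail you give on injectivity, surjectivity via inverting the tree expansion, and support-tracking merely fills in what the paper asserts without elaboration, so the proposal is correct and essentially the same approach.
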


\begin{proof}
For each $\Phi$, we construct a map 
\begin{equation}
cl\rgobs{0}{\Phi}: \Obcl(U) \to \Obq_{(0)}(U),
\end{equation}
which we call ``classical observable HRG flow from scale 0 to scale $\Phi$.''  Given $\cO \in \Obcl(U)$, $cl\rgobs{0}{\Phi}(\cO)$ has a graphical representation as a sum over connected trees with one special vertex (corresponding to $\cO$). The edges are decorated by $P(\Phi)$. The classical observable HRG flow from scale 0 to scale $\Phi$ bears the same relation to observable HRG flow that the map of Lemma \ref{lem: classRG} bears to HRG flow of action functionals, and a similar argument shows that it is well-defined and a cochain map. Given $\cO\in \Obcl(U)$, the collection $\{cl\rgobs{0}{\Phi}\cO\}_\Phi$ satisfies the observable HRG flow modulo $\hbar$. Hence, we have a cochain isomorphism 
\begin{equation}
\Obcl(U)\to \Obq_{(0)}(U)
\end{equation}
for all $U$. The same argument as in \autocite{CG2} shows that the map respects factorization products. 
\end{proof} 

\chapter{BF Theory on $\RR_{\geq 0}$: An Example}
\label{chap: examples}

In this chapter, we consider the quantization of one-dimensional BF theory on the manifold $M=\RR_{\geq 0}$.
We will impose the $B$ boundary condition at $t=0$.
(See Examples \ref{ex: bf} and \ref{ex: bfbdycond} for the relevant definitions.)
One-dimensional BF theory is topological mechanics valued in the symplectic formal moduli problem $T^* B\fg$.
However, because BF theory involves the bracket on $\fg$, we find that we will need the full interacting formalism of Chapter \ref{chap: interactingquantum} to construct factorization algebras of observables.
Hence, this example will be a perfect one on which to cut our teeth, and on which to test the predictions of the general theory.

The main theorems of this chapter 1) show that a quantization of this bulk-boundary system exists and 2) describe the observables of the bulk-boundary system in terms of familiar representation-theoretic objects associated to $\fg$.

We note that this bulk-boundary system has been studied by Alekseev and Mn\"ev in \autocite{alekseevmnev}, so we make no claims of being the first to study this system.
What \emph{is} novel, however, are the heat kernel and factorization-algebraic methods used to study it.

\begin{theorem}
\label{thm: 1dbfquantization}
Let $\fg$ be a uni-modular Lie algebra (i.e. the trace of $x\in \fg$ in the adjoint representation vanishes for all $x$).
Let $I$ be the classical interaction of one-dimensional $\fg$-BF theory with $B$ boundary condition at $t=0$. 
Choose the standard Riemannian metric on $\RR_{\geq 0}$ and choose $Q^{GF}=d^*$ for this choice of metric.
Then, the collection of action functionals
\begin{equation}
I[L]=\lim_{\epsilon\to 0}\rgflow{\epsilon}{L}{I}
\end{equation}
defines a quantum bulk-boundary system.
\end{theorem}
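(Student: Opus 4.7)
The plan is to deploy the general machinery of Chapter \ref{chap: interactingquantum}. First, the theory is amenable to doubling in the sense of Definition \ref{def: amenable}: as in Example \ref{ex: nocrossdifffromstart} we set $A=0$, $B=E$, $\diffonnocrossdiff=\diff$, since $\Qb=0$ manifestly preserves both $\sL=\fg^\vee[-1]$ and the complement $\sL'=\fg[1]$. The gauge fixing $Q^{GF}=d^*$ is amenable to doubling in the sense of Definition \ref{def: amenablegf} with boundary gauge fixing $Q^{GF}_\partial=0$. Thus all renormalization computations can be carried out on $\Mdbl=\RR$. Writing the interaction explicitly, $I(\alpha,\beta)=\tfrac{1}{2}\int_M \langle\beta,[\alpha,\alpha]\rangle$ is the unique cubic vertex, with two $\alpha$-legs and one $\beta$-leg, and the propagator $P(\Phi_L)$, computed as in Example \ref{ex: toplmechprop}, always joins one $\alpha$-leg to one $\beta$-leg.

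The first step is to prove existence of $I[L]=\lim_{\epsilon\to 0}W(P(\epsilon,L),I)$ without counterterms. In one dimension the parametrix-dependent propagator $P(\epsilon,L)(t,s)$ is uniformly bounded on $M\times M$ for $\epsilon\in[0,1]$: this is immediate from the explicit integral formula
\begin{equation}
P(\epsilon,L)(t,s)\propto \int_\epsilon^L \frac{t-s}{T\sqrt{T}}\,e^{-(t-s)^2/(4T)}\,\d T,
\end{equation}
whose integrand decays rapidly in $T$ near $0$ for $t\ne s$ and vanishes at $t=s$, together with the analogous bound for the $\involution$-mirror contribution from Example \ref{ex: toplmechprop}. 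Each Feynman weight $w_{\gamma,I}(\epsilon,L,\alpha)$ is therefore a finite-dimensional integral over $M^{V(\gamma)}$ of a product of uniformly bounded propagator kernels times compactly supported smooth inputs, so dominated convergence gives a finite limit as $\epsilon\to 0$. By Proposition \ref{prop: smallepsilon} this is equivalent to the vanishing of the singular parts in the small-$\epsilon$ asymptotic expansion, so in Theorem \ref{thm: constructionofcounterterms} one may take $I^{CT}(\epsilon)\equiv 0$. Combined with Theorem \ref{thm: mainbijection} and Proposition \ref{prop: equivtheories}, this shows that $\{I[L]\}$ is a well-defined quantum pre-theory.

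The main content of the theorem is the quantum master equation. The classical master equation $\diff I+\tfrac{1}{2}\{I,I\}=0$ holds by the usual BF argument ($d^2=0$ and the Jacobi identity), so by Lemma \ref{lem: rgqme} it suffices to verify vanishing of the quantum obstruction order-by-order in $\hbar$, following Section \ref{subsec: obstruction}. At first order in $\hbar$, the obstruction is computed by connected one-loop diagrams contributing to $\hbar\Delta_L I[L]$, and the $\alpha$--$\beta$ edge structure forces each such diagram to be a tadpole on a single trivalent vertex: the self-loop joins one $\alpha$-leg to the $\beta$-leg through $K_L$, leaving a single external $\alpha$-leg. Its weight, after the standard trace computation, is a scalar multiple of the local functional $\alpha\mapsto\int_M \mathrm{tr}_\fg(\mathrm{ad}_\alpha)$, which vanishes identically when $\fg$ is unimodular. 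Hence the first-order obstruction is zero.

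The main obstacle I anticipate is the extension of this unimodularity argument to higher loop orders. For $n\geq 2$, the connected $n$-loop diagrams built from the cubic vertex produce local functionals whose Lie-algebraic content is a sum of traces of products of $\mathrm{ad}$-operators around each loop. Using cyclic invariance of the trace together with the Chevalley--Eilenberg differential on the local-functional complex---which, along the lines of Section \ref{subsec: bfthyloc}, is modeled on $\RR_{\geq 0}$ by a complex of the form $\Omega^\bullet_M[1]\otimes C^\bullet_{red}(\fg,\Sym(\fg[1]))\to \Sym^{\geq 1}(\fg[1])$---one must show that each such cocycle either is exact or reduces to the $\mathrm{tr}(\mathrm{ad})$-cocycle already killed at order $\hbar$. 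I expect this to follow from an explicit cohomological analysis of the reduced local-functional complex akin to the semi-simple computation at the end of Section \ref{subsec: bfthyloc}, with unimodularity providing the single algebraic input needed to trivialize the relevant obstruction classes.
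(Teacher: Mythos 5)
Your finiteness argument is essentially the paper's: in one dimension the regularized propagator (including its $\involution$-image term) is uniformly bounded with proper support, only trees and one-loop wheels occur for the directed cubic vertex, and no counterterms are needed. The paper is slightly more careful on one point you elide --- convergence must be shown \emph{uniformly} as the input varies over a bounded subset of $\condfieldscs^{\hotimes_\beta k}$, since the limit is taken in the bornological topology on functionals --- but that is a refinement of your estimate, not a different idea.

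The genuine gap is in the quantum master equation. You identify the order-$\hbar$ obstruction with the tadpole $\Delta_L I$ alone and kill it by unimodularity, but that is not the whole obstruction. Writing the anomaly as in the paper, after using $QI=0$ and unimodularity (which kills only $\Delta_\epsilon I$), one is left with
\begin{equation}
\cO=\lim_{L\to 0}\lim_{\epsilon\to 0}\frac{d}{d\delta}\,\rgflow{\epsilon}{L}{I+\tfrac{\delta}{2}\{I,I\}_\epsilon},
\end{equation}
and $\{I,I\}_\epsilon$ does \emph{not} vanish by the classical master equation because the bracket is taken at scale $\epsilon$, not scale $0$. Its RG flow produces wheel diagrams with $k\geq 2$ vertices in which one edge carries $K_{\Phi_\epsilon}$ and the rest carry propagators; none of these is removed by unimodularity. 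The substance of the paper's proof is the analytic verification that each such weight tends to $0$ as $\epsilon\to 0$, $L\to 0$: wheels with $k>2$ vanish by Gaussian estimates, while the two-vertex wheel is delicate and is handled by an integration by parts that uses the boundary condition in an essential way (the input vanishes at $t_1=0$), together with separate estimates for the ``image'' terms $t_1+t_2$ coming from the doubled heat kernel. None of this appears in your proposal. Moreover, your fallback strategy cannot replace it: there are no diagrams beyond one loop for this theory (the directed cubic vertex forces $I[L]$ to be polynomial in $\hbar$, with only $\hbar^0$ and $\hbar^1$ terms), so the ``higher-loop'' cohomological analysis addresses a non-issue; and a cohomological argument cannot establish the QME here in any case, since the obstruction group for this bulk-boundary system is $H^2(\fg)$, which is nonzero for many unimodular Lie algebras (already for abelian $\fg$ of dimension $\geq 2$). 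The theorem requires showing that the specific obstruction cocycle produced by these wheel diagrams vanishes, and that is an explicit analytic computation, not a consequence of unimodularity plus Lie-algebra cohomology.
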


It is a standard argument to show that $I[L]$ is only polynomial in $\hbar$.
(In fact, it contains only $\hbar^0$ and $\hbar^1$ terms.)
Moreover, observable RG flow and the differential on the quantum observables have the following properties:
\begin{enumerate}
    \item They either decrease or keep the same the degree of polynomial dependence on the $B$-fields of observables.
    \item If they increase the degree of dependence of an observable on $\hbar$, they decrease the degree of dependence on the $B$-field observables.
\end{enumerate}
Hence, we may form a factorization algebra $\Obq_{\sE,\sL,poly}$ of quantum observables which have a polynomial dependence on $\hbar$ and on the $B$-fields.

\begin{theorem}
\label{thm: 1dbfobs}
Let $\fg$ be a unimodular Lie algebra (i.e. the trace of $x\in \fg$ over the adjoint representation is 0 for all $x$).
Then, for the one-dimensional BF bulk-boundary system, we have quasi-isomorphisms (of pointed chain complexes)
\begin{equation}
C_\bullet(\fg) \to \Obq_{\sE,\sL,poly}(\RR_{\geq 0})\otimes_{\RR[\hbar]}\RR_{\hbar=1}
\end{equation}
and 
\begin{equation}
C^\bullet(\fg,C_\bullet(\fg)) \to \Obq_{\sE,\sL,poly}(\RR_{>0})\otimes \RR_{\hbar=1}.
\end{equation}
\end{theorem}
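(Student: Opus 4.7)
The plan is to lift the classical quasi-isomorphism of Proposition \ref{prop: 1dbfclass} to the quantum level. Recall that Proposition \ref{prop: 1dbfclass} constructs a quasi-isomorphism $\Obcl_{\sE,\sL} \simeq C^\bullet(\cS)$, where $\cS$ is the sheaf of graded Lie algebras on $\RR_{\geq 0}$ with $\cS(U) = \fg^\vee[-2]$ when $0 \in U$ and $\cS(U) = \fg \ltimes \fg^\vee[-2]$ when $0 \notin U$; in particular $\Obcl_{\sE,\sL}(\RR_{\geq 0}) \simeq \Sym(\fg[1])$ and $\Obcl_{\sE,\sL}(\RR_{>0}) \simeq C^\bullet(\fg, \Sym(\fg[1]))$. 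My plan is to construct explicit cochain maps
\begin{align*}
\iota_0 &: C_\bullet(\fg) \to \Obq_{\sE,\sL,poly}(\RR_{\geq 0}) \otimes_{\RR[\hbar]} \RR_{\hbar=1}, \\
\iota_1 &: C^\bullet(\fg, C_\bullet(\fg)) \to \Obq_{\sE,\sL,poly}(\RR_{>0}) \otimes_{\RR[\hbar]} \RR_{\hbar=1}
\end{align*}
extending these classical quasi-isomorphisms, and then to verify that they are quasi-isomorphisms via a spectral-sequence comparison.

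I will construct $\iota_0$ as follows. For $x \in \fg$, viewed as a degree $-1$ generator of $C_\bullet(\fg) = \Sym(\fg[1])$, let $F_x$ denote the local linear functional $\phi \mapsto \langle x, \phi_B|_{t=0}\rangle$, where $\phi_B$ is the $B$-component of $\phi$. Set $\iota_0(x)[\Phi] := \clrgflow{0}{\Phi}{F_x}$; this is well-defined by Lemma \ref{lem: classRG}. Extend $\iota_0$ multiplicatively to monomials in $\Sym(\fg[1])$, using the graded-commutative product on $\Obq_{\sE,\sL,poly}$. Define $\iota_1$ analogously, with the additional $\fg^\vee[-1]$ generators of the outer $C^\bullet(\fg, -)$ factor realized by $A$-field observables evaluated at a fixed interior point of $\RR_{>0}$.

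The heart of the argument, and its main obstacle, is to check that $\iota_0$ and $\iota_1$ intertwine the differentials: at $\hbar = 1$, the quantum differential $Q + \{I[\Phi], \cdot\}_\Phi + \Delta_\Phi$ on the image of $\iota_0$ should agree with the Chevalley--Eilenberg chain differential on $C_\bullet(\fg)$, and analogously for $\iota_1$. This reduces to a Feynman-diagram calculation using the propagator built from the heat kernel of Theorem \ref{thm: 1dbfquantization} via the doubling trick of Section \ref{sec: firstdbling} applied to the standard metric on $\RR_{\geq 0}$; the crucial contribution comes from the cubic BF interaction $\tfrac{1}{2}\int \langle B, [A, A]\rangle$ via the BV bracket, which should produce the Lie bracket observable $\iota_0([x,y])$ in the $\Phi \to 0$ limit. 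The unimodularity of $\fg$ is essential here: the only one-loop diagram that could otherwise produce an anomaly is a tadpole proportional to $\operatorname{tr}_\mathrm{ad}$, which vanishes exactly when $\fg$ is unimodular --- and this vanishing is already required for the quantization of Theorem \ref{thm: 1dbfquantization} to exist.

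Finally, to show that $\iota_0$ and $\iota_1$ are quasi-isomorphisms, I would filter both source and target by polynomial degree in the $\fg[1]$-generators (equivalently, by boundary $B$-field degree on the target side). On both sides, the associated graded retains the cochain structure of Proposition \ref{prop: 1dbfclass} but loses the chain differential, which strictly decreases the filtration degree. The map induced on associated gradeds is then precisely the classical quasi-isomorphism of Proposition \ref{prop: 1dbfclass}. Since the filtrations are bounded below in each cohomological degree, a standard spectral-sequence convergence argument yields the result.
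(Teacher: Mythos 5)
Your overall architecture matches the paper's: observables concentrated at the boundary point (resp.\ at an interior point $t>0$), a Feynman-diagram verification that the quantum differential induces the Chevalley--Eilenberg differential with unimodularity killing the tadpole, and a final filtration-by-$B$-field-degree spectral sequence whose associated graded map is the classical quasi-isomorphism of Proposition \ref{prop: 1dbfclass}. But two steps as you set them up have genuine gaps. First, the multiplicative extension is not legitimate: $\Obq_{\sE,\sL,poly}$ is not a commutative dg algebra, since the differential $Q+\{I[\Phi],\cdot\}_\Phi+\hbar\Delta_\Phi$ contains the second-order operator $\Delta_\Phi$ and so fails the Leibniz rule, and observable RG flow is not an algebra map (the factorization product is only defined for \emph{disjoint} opens, via a $\Psi\to 0$ limit). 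So declaring $\iota_0$ on a monomial $x_1\cdots x_k$ to be the product of the flowed linear observables neither produces a collection satisfying the observable RG equation nor reduces the differential check to generators; the cross-contractions between factors are exactly where the quadratic term $\tfrac12 f^{ab}_c$ of the chain differential comes from. The paper instead defines, for each monomial $\omega\in\Sym^k(\fg[1])$ (resp.\ $\alpha\in\Sym^k(\fg[1]\oplus\fg^\vee[-1])$), a single order-$k$ functional $\cO_\omega[0]$ (resp.\ $\cO_{\alpha,t}[0]$) at scale zero and flows it, with a separate analytic lemma (Lemma \ref{lem: 1dbffiniteobservables}) establishing that the $\epsilon\to 0$ limit exists uniformly; your $\clrgflow{0}{\Phi}{\cdot}$ only covers the tree-level part of that flow, and the claim that the flowed quantum differential of these observables equals the CE differential is asserted (``should produce'') rather than proved --- in the paper this is the bulk of the work (Lemmas \ref{lem: diff1dbfobs} and \ref{lem: diffon1dbfobsblk}), requiring careful estimates to show which diagrams vanish as $\epsilon,L\to 0$ and which survive with the correct coefficients.

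Second, for the bulk statement your intertwining claim is false as stated: the quantum differential on the image of the interior-point observables is not the standard differential of $C^\bullet(\fg,C_\bullet(\fg))$ but its conjugate $d^q=e^{\del_{tr}}\,d_{CE}\,e^{-\del_{tr}}$, which carries the extra cubic term $\tfrac12 f^{ab}_c\,\del_a\del_b\del^c$ produced by the wheel diagram with two propagators and one heat kernel joining the special vertex to a single interaction vertex (the paper computes its weight to be exactly $1$ after summing the three markings). Your plan does not anticipate this term; one must either conjugate by $e^{\del_{tr}}$ before comparing, or show that your normal-ordered-style (multiplicative) definition of composite observables absorbs precisely these self-contractions --- a nontrivial computation you have not carried out. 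Unless these points are repaired, the map $\iota_1$ is not a cochain map and the spectral-sequence endgame, which is otherwise the same as the paper's, does not get off the ground.
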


\begin{remark}
Let us comment on how we expect Theorems \ref{thm: 1dbfquantization} and \ref{thm: 1dbfobs} to change if the unimodularity hypothesis on $\fg$ is lifted.
First of all, we believe the results of Theorem \ref{thm: 1dbfquantization} to stand unchanged.
As for the statement concerning the observables, we string together the following observations.
First, note that that similar comptuations to those performed in Section \ref{subsec: bfthyloc} show that the complex of local functionals for this bulk-boundary system is equivalent to $C^\bullet(\fg)[1]$.
Note that the zeroth cohomology of the complex of functionals gives the space of deformations of a given quantum theory, and the first cohomology of the complex gives the space of possible obstructions to quantization.
(In particular, for $\fg$ semi-simple, there exists a unique quantization of the bulk-boundary system.)
For non-unimodular $\fg$, the trace in the adjoint representation gives a one-parameter family of deformations of any quantization of our bulk-boundary system at hand.
These quantizations correspond to a modification of the local interaction functional at order $\hbar$ by an interaction term
\[
I_\omega(\alpha) = \int_{\RR_{\geq 0}} \omega(\alpha).
\]
On the purely algebraic side of the question, given $\omega\in H^1(\fg)$, we can define $C_{\bullet, \omega}(\fg)$ by ``twisting'' the Chevalley-Eilenberg complex by $\omega$.
Then, we expect that the quantization of the bulk-boundary system with interaction $I+\hbar I_\omega$ to give rise to the complexes $C_{\bullet, \omega+\tr_\fg}(\fg)$ and $C^\bullet(\fg, C_{\bullet, \omega+\tr_\fg}(\fg))$ on the boundary and in the bulk, respectively.
\end{remark}

\begin{remark}
\label{rmk: algstructure}
Before setting $\hbar=1$, both $\Obq_{\sE,\sL,poly}(\RR_{\geq 0})$ and $\Obq_{\sE,\sL,poly}(\RR_{>0})$ possess $\mathrm{BD}_0$ structures.
We believe, though do not show, that Theorem \ref{thm: 1dbfobs} may be refined to a statement about $\mathrm{BD}_0$ algebras.

From a different perspective, the factorization algebra
\begin{equation}
\Obq_{\sE,\sL,poly}|_{\RR_{>0}}
\end{equation}
will be manifestly locally constant, hence will define an $E_1$ algebra, or equivalently a $dg$ associative algebra.
We believe that the resulting algebra structure on $C^\bullet(\fg,C_\bullet(\fg))$ is the following one, which appears in \autocite{KSBRS}.
Let $\{t^a\}_{a=1}^\ell$ form a basis for $\fg$ and $\{t_a\}$ the dual basis.
Consider the graded algebra $\textrm{Cl}(\fg)$ whose generators are 
\begin{equation}
\{e^a,e_b\}_{a,b=1}^\ell.
\end{equation}
The $e^a$ reside in cohomological degree --1 and the $e_a$ reside in cohomological degree +1.
The relations in $\textrm{Cl}(\fg)$ are simply the Clifford relations
\begin{equation}
[e^a,e_b] = 2 \delta^a_b.
\end{equation}
The set $\{e^a,e_b\}$ forms an ordered basis for $\fg\oplus \fg^\vee$; hence, we have a PBW isomorphism
\begin{equation}
\textrm{Cl}(\fg)\cong \Sym(\fg[1]\oplus \fg^\vee[-1]);
\end{equation}
the latter object is the underlying graded vector space for $C^\bullet(\fg, C_\bullet(\fg))$.
Define the structure constants $f^{ab}_c$ by
\begin{equation}
[t^a,t^b]= f^{ab}_c t^c
\end{equation}
(we use the Einstein summation convention here).
One may check by direct computation that---under the assumption of unimodularity of $\fg$---the differential 
\begin{equation}
\left [f^{ab}_c\,e_a\,e_b\,e^c,\cdot \right]
\end{equation}
on $\mathrm{Cl}(\fg)$
corresponds to the Chevalley-Eilenberg differential on $C^\bullet(\fg, C_\bullet(\fg))$ under the PBW isomorphism.

In this way, the complex $C^\bullet(\fg, C_\bullet(\fg))$ becomes a dg associative algebra.
Let $A$ be this dg associative algebra, and let $\cF_A$ be the locally constant factorization algebra on $\RR_{>0}$ which assigns $A$ to any connected open subset of $\RR_{>0}$.

\begin{conjecture}
 There is an equivalence of factorization algebras (or a zigzag of equivalences)
\begin{equation}
\cF_A \to \left.\Obq_{\sE,\sL,poly}\right|_{\RR_{>0}}
\end{equation}
which refines the result of Theorem \ref{thm: 1dbfobs}.
\end{conjecture}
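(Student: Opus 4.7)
The plan is to extract a dg associative algebra from the factorization algebra $\Obq_{\sE,\sL,poly}|_{\RR_{>0}}$ via the locally constant/$E_1$ correspondence, and then identify it, up to quasi-isomorphism, with the dg algebra $A$ built from $\mathrm{Cl}(\fg)$. The first task is to verify that $\Obq_{\sE,\sL,poly}|_{\RR_{>0}}$ is indeed locally constant on $\RR_{>0}$. Away from the boundary our bulk-boundary system is just $\fg$-BF theory on the open manifold $\RR_{>0}$, which is fully topological; the same arguments that show bulk observables of a topological BV theory form a locally constant factorization algebra (cf.\ Chapter 4 of \autocite{CG1}, applied to the interacting setting) carry over, since all heat kernels, propagators, and local interactions used in the construction of $\Obq_{\sE,\sL}$ are constructed from the translation-invariant data on the half-line and restrict to translation-invariant data on~$\RR_{>0}$. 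Concretely, one shows that for any inclusion of intervals $I \hookrightarrow J$ inside $\RR_{>0}$ the structure map is a quasi-isomorphism, by deforming through a family of diffeomorphisms and invoking translation invariance of the perturbative data.

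Once local constancy is established, the Lurie--Costello--Gwilliam equivalence (Theorem 5.5.4.10 of \autocite{higheralgebra} in its dg form, as deployed in \S 3.4 of \autocite{CG1}) produces an $E_1$-algebra, equivalently a dg associative algebra $B$, from $\Obq_{\sE,\sL,poly}|_{\RR_{>0}}$. The underlying cochain complex of $B$ is the value on a single interval, and Theorem~\ref{thm: 1dbfobs} already identifies this (after setting $\hbar=1$) with $C^\bu(\fg, C_\bu(\fg))$. The remaining task is to identify the multiplication on $B$ with the Clifford-style product of Kostant--Sternberg \autocite{KSBRS} under the PBW identification $\mathrm{Cl}(\fg)\cong \Sym(\fg[1]\oplus \fg^\vee[-1])$.

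To compute the multiplication, I would proceed in two steps. First, at the classical ($\hbar=0$) level, the factorization product on $\Obcl_{\sE,\sL,poly}|_{\RR_{>0}}$ recovers, via the Koszul-type resolution used in the proof of Proposition~\ref{prop: 1dbfclass}, the commutative multiplication on $C^\bu(\fg,\Sym(\fg[1]))\cong \Sym(\fg[1]\oplus\fg^\vee[-1])$. Second, I would compute the $\hbar$-linear deformation of this product induced by the quantum factorization product $m_{I_1,I_2}^{I_3}$ for a pair of disjoint intervals. Using the explicit heat-kernel propagator on $\RR_{>0}$ arising from the doubling-trick prescription of Chapter~\ref{chap: interactingquantum}, the lowest-order correction to the commutative product is computed by a single Wick contraction of a $\fg$-linear observable on $I_1$ with a $\fg^\vee$-linear observable on $I_2$. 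This contraction produces exactly $\sgn(t_1-t_2)$ times the canonical pairing $\fg\otimes\fg^\vee\to\RR$, whose antisymmetrization under $I_1\leftrightarrow I_2$ yields the Clifford relation $[e^a,e_b]=2\delta^a_b$. Because higher-order contractions vanish on the polynomial subcomplex (there are no self-loops available: the propagator is off-diagonal in the $\fg,\fg^\vee$ splitting), this order-one correction is exact. The induced product on $B$ therefore matches the Clifford multiplication on $\mathrm{Cl}(\fg)$; combined with the identification of the differential with the Chevalley--Eilenberg differential (already implicit in Theorem~\ref{thm: 1dbfobs} and spelled out via the bracket with $f^{ab}_c\, e_a e_b e^c$ as in Remark~\ref{rmk: algstructure}), this gives a quasi-isomorphism of dg associative algebras $A\xrightarrow{\simeq} B$, and hence the desired zigzag of equivalences of locally constant factorization algebras $\cF_A\to \Obq_{\sE,\sL,poly}|_{\RR_{>0}}$.

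The main obstacle I anticipate is the passage through the $E_1$-algebra equivalence together with the control of signs and combinatorial factors in the Wick contraction: one must verify that, after symmetrization/antisymmetrization and the asymptotic $\epsilon\to 0$ limits of the regularized propagator, the surviving bilinear pairing really is the canonical pairing (and not a scalar multiple of it) and that the resulting associative product is strictly, rather than merely up to homotopy, of Clifford type on representatives. Resolving this likely requires promoting the equivalence to a genuine $A_\infty$-quasi-isomorphism, computing the induced $A_\infty$-structure on cohomology via homological perturbation, and then showing it is formal; the unimodularity hypothesis on $\fg$, which was already crucial in ensuring that $f^{ab}_c e_a e_b e^c$ squares to zero in $\mathrm{Cl}(\fg)$, should again be exactly what makes the higher $A_\infty$-operations vanish.
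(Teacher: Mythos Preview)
The statement you are attempting to prove is labeled in the paper as a \emph{conjecture}, not a theorem, and the paper explicitly does not prove it: immediately after stating it the author writes ``To prove such a conjecture would require a detailed study of the factorization product in the bulk, which we do not do here.'' So there is no paper proof to compare your proposal against.

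That said, your outline is a sensible strategy and is in the spirit of what the author has in mind: local constancy on $\RR_{>0}$, extraction of an $E_1$-algebra via Lurie's theorem, and identification of the product by computing Wick contractions with the explicit propagator. The paper already establishes the cochain-level identification with $C^\bullet(\fg,C_\bullet(\fg))$ (Theorem~\ref{thm: 1dbfobs}) and records the expected associative structure on the target (Remark~\ref{rmk: algstructure}), so the missing ingredient is exactly the factorization product computation you sketch. Your own caveats about controlling constants, signs, and possible higher $A_\infty$-operations via homological perturbation are well placed: these are precisely the details the paper declines to carry out. Your claim that ``higher-order contractions vanish on the polynomial subcomplex'' because ``the propagator is off-diagonal in the $\fg,\fg^\vee$ splitting'' is too quick---the propagator for BF theory pairs $\fg$ with $\fg^\vee$, so multiple contractions between a monomial in $\fg$-observables and a monomial in $\fg^\vee$-observables are certainly possible; what you need instead is that the factorization product on this particular model is given by a finite sum of contractions (since the observables are polynomial) and that the resulting associative product agrees with the full Clifford multiplication, not just at first order. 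Making this precise, and checking compatibility with the quantum differential (which already involves an $\hbar$-correction, the trivector term $\tfrac{1}{2}f^{ab}_c\,\partial_a\partial_b\partial^c$ computed in Lemma~\ref{lem: diffon1dbfobsblk}), is the substantive work left open by the paper.
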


To prove such a conjecture would require a detailed study of the factorization product in the bulk, which we do not do here.
\end{remark}

The following higher-dimensional generalizations of Theorems \ref{thm: 1dbfquantization} and \ref{thm: 1dbfobs} seem natural from this perspective, and we present them as a conjecture.

BF theory with $B$ boundary condition is a well-defined bulk-boundary system on $\HH^n$ for any $n$.
Moreover, let $\pi: \HH^n\to \RR^{n-1}$ denote the projection to the boundary.

\begin{conjecture}
BF theory with the $B$ boundary condition possesses a quantization on $\HH^n$ for any $n$.
Further, there are quasi-isomorphisms:
\begin{align*}
    \cF_{\fg,n-1}\to \pi_*\Obq_{\sE,\sL,poly}\otimes_{\RR[\hbar]}\RR_{\hbar=1}
\end{align*}

\[
 HH^\bullet(\cF_{\fg, n-1})\to \pi_*\left(\left.\Obq_{\sE,\sL,poly}\right|_{\RR^{n-1}\times \RR_{> 0}}\otimes_{\RR[\hbar]}\RR_{\hbar=1}\right)
\]
of factorization algebras on $\RR^{n-1}$, where $\cF_{\fg,n-1}$ is the $(n-1)$-factorization envelope of $\fg$, introduced in Example \ref{ex: factenvelope} and $HH^\bullet(\cF_{\fg, n-1})$ is a factorization-algebraic model for the $E_{n-1}$ center of this locally constant factorization algebra.
\end{conjecture}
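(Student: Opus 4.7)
The plan is to mirror the structure of the proofs of Theorems \ref{thm: 1dbfquantization} and \ref{thm: 1dbfobs}, broken into four moves: (i) verify the setup, (ii) prove existence of a quantization, (iii) identify boundary observables with $\cF_{\fg,n-1}$, and (iv) identify bulk observables with its $E_{n-1}$ center. For (i), I would check that $\fg$-BF theory on $\HH^n$ with $B$ boundary condition is amenable to doubling by taking $A=\Omega^\bu_M\otimes\fg[1]$, $B=\Omega^\bu_{M,tw}\otimes\fg^\vee[n-2]$, $\diffonnocrossdiff=d_{dR}$, and $\pertdiff$ equal to the term built from the Lie bracket. The standard Euclidean gauge-fixing $Q^{GF}=d^*$ is amenable to doubling. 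The $\sigma$-invariant heat kernel needed in Section \ref{sec: parametrices} is then the natural symmetrization of the Euclidean heat kernel on $\RR^n$, reflected at the hyperplane.

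For (ii), I would apply the obstruction theory of Section \ref{subsec: obstruction}. By Lemma \ref{lem: locfcnlresolution} and the computation of Section \ref{subsec: bfthyloc} (which generalizes verbatim from $\HH^2$ to $\HH^n$), the complex of local functionals modulo constants is quasi-isomorphic to the total complex $\sO(\fg;B)$ built from the $D_M$-de Rham resolution of $\dmdrrel$ and a Chevalley-Eilenberg-type complex. Using Whitehead's theorem in the semisimple case, and more generally the vanishing of $H^3(\fg,\Sym(\fg))$-style classes implied by unimodularity, the scale-$L$ obstruction cocycle $O_{n+1}$ is cohomologically trivial at each order of $\hbar$. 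The recursive choice of counterterms in Theorem \ref{thm: constructionofcounterterms} then produces $I[L]$ order by order in $\hbar$, giving a quantum bulk-boundary system.

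For (iii), I would construct an explicit cochain-level map $\II: \cF_{\fg,n-1}\to \pi_*\Obq_{\sE,\sL,poly}\otimes_{\RR[\hbar]}\RR_{\hbar=1}$ generalizing the map of Proposition \ref{prop: 1dbfclass} and the map $\II^{\cl}$ of Theorem \ref{thm: maingenlcl}. Concretely, given a compactly-supported $\fg$-valued form $\alpha\in \Omega^\bu_{c}(U)\otimes \fg$ on an open $U\subset \RR^{n-1}$, I would send it to the observable on $U\times \RR_{\geq 0}$ given by wedge with $\phi(t)\,\dt$ for a bump $\phi$, suitably corrected by boundary terms arising from $\pertdiff$. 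Extending to Chevalley-Eilenberg chains is formal on the classical side because $B$ vanishes at the boundary, so the $P_0$ bracket becomes strict; on the quantum side, compatibility with the factorization product follows from the arguments of Theorem \ref{thm: maingenlq}. That $\II$ is a quasi-isomorphism is then a spectral sequence argument: filter both sides by powers of $\hbar$ and polynomial degree in $B$; the $E_1$-page is the classical statement, which reduces by fiber integration along $\RR_{\geq 0}$ to the fact that $\condfields$ deformation-retracts onto its boundary value at $t=0$.

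For (iv), the bulk pushforward on $\RR^{n-1}\times\RR_{>0}$ is, by restriction, just the factorization algebra of quantum observables of $\fg$-BF theory on an open half-space of $\RR^n$ with no boundary visible. I would leverage the known identification of this factorization algebra with the $E_{n-1}$-Hochschild (i.e.\ higher center) object associated to $\cF_{\fg,n-1}$, following the pattern established for topological theories in Costello-Gwilliam-style analyses of BF/BV models. The identification would proceed by showing that the boundary map $\II$ exhibits $\pi_*\Obq_{\sE,\sL,poly}|_{\RR_{>0}}$ as an $E_{n-1}$-central extension of $\cF_{\fg,n-1}$ and then invoking the universal property of $HH^\bu$. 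The hard part, and the step most likely to require genuinely new work beyond straightforward generalization, is precisely this last identification: verifying that the factorization product makes the bulk-to-boundary action on $\cF_{\fg,n-1}$ into the universal $E_{n-1}$-action. This requires either a hands-on computation of the relevant Feynman diagrams contributing to that pairing, or a categorical argument, in the spirit of the Butson-Yoo ``universal bulk theory,'' that 1D BF theory in Theorem \ref{thm: 1dbfobs} extends to a functorial correspondence between Poisson BV boundary conditions and their $E_n$ centers.
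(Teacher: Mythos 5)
First, be aware that the statement you are proving is presented in the paper as a \emph{conjecture}: the text offers no proof and says only that the author hopes ``similar techniques to the ones used here can be used to prove these claims.'' So there is no paper argument to compare against, and your proposal must stand on its own. As it stands, it has genuine gaps. The most serious one is in your step (ii): you cannot get existence of the quantization from vanishing of the obstruction \emph{group}. Even for semisimple $\fg$ the relevant cohomology does not vanish --- for instance $H^3(\fg,\RR)\subset H^3(\fg,\Sym(\fg))$ is nonzero for any simple $\fg$ (the Cartan $3$-form), and unimodularity does not kill these classes either; note also that the conjecture is not restricted to semisimple $\fg$. In the one-dimensional case the paper does \emph{not} argue via obstruction groups: Theorem \ref{thm: 1dbfquantization} proves the obstruction \emph{cocycle} vanishes identically by explicit image-charge heat-kernel estimates on wheel diagrams, with unimodularity used only to kill $\Delta_\epsilon I$. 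The higher-dimensional analogue of those estimates is exactly what is missing, and it is why the statement is a conjecture. (A smaller slip in step (i): $\pertdiff$ is part of the \emph{linear} differential $\ell_1$, mapping $\sA\to\sB$; for BF theory $\ell_1=d_{dR}$ preserves both summands, so $\pertdiff=0$ as in Example \ref{ex: nocrossdifffromstart} --- the Lie bracket is $\ell_2$ and cannot play the role you assign it.)

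Step (iii) also leans on a theorem that does not apply: Theorem \ref{thm: maingenlq} is proved only for \emph{free} bulk-boundary systems, whereas BF theory is interacting, and in one dimension the identification of the quantum differential with the Chevalley--Eilenberg differential required the bespoke diagrammatic Lemmas \ref{lem: 1dbffiniteobservables}, \ref{lem: diff1dbfobs}, and \ref{lem: diffon1dbfobsblk}; you would need their $n$-dimensional counterparts, including convergence of the observable RG flow and the explicit evaluation of the quantum wheel contributions, none of which is supplied by a spectral-sequence reduction to the classical statement alone. Finally, you yourself flag step (iv) --- exhibiting the bulk pushforward as the universal $E_{n-1}$-center $HH^\bullet(\cF_{\fg,n-1})$ acting on the boundary factorization algebra --- as requiring genuinely new work; that admission is accurate, and together with the obstruction-cocycle issue it means the proposal is a reasonable program outline but not a proof.
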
 

We hope that similar techniques to the ones used here can be used to prove these claims.

We devote the remainder of the chapter to the proof of Theorems \ref{thm: 1dbfquantization} and \ref{thm: 1dbfobs}.
We start in Section \ref{sec: 1dbfkernelprop} by writing down explicit formulae for the heat kernels and propagators for the bulk-boundary system.
In Section \ref{sec: 1dbfquant}, we prove Theorem \ref{thm: 1dbfquantization}.
In Section \ref{sec: 1dbfobs}, we prove Theorem \ref{thm: 1dbfobs}.

\section{Extracting the Heat Kernel and Propagators}
\label{sec: 1dbfkernelprop}
In this section, we write down explicit formulae for the heat kernel and propagators for 1D BF theory on $\RR_{\geq 0}$ with $B$ boundary condition.

The underlying free theory of one-dimensional BF theory is topological mechanics with values in the symplectic vector space $\fg[1]\oplus \fg^\vee[-1]$.
Hence, we have that~$\Mdbl\cong \RR$ and 
\begin{equation}
\Edbl = \Omega^\bullet_\RR\otimes(\fg[1]\oplus \fg^\vee[-1]).
\end{equation}
The involution $\involution$ simply sends $t\mapsto -t$ in $\Mdbl$.
At the level of $\Edbl,$ $\involution$ acts by $-\involution^*$ on the $A$ fields and by $\involution^*$ on the $B$ fields.

Choosing the standard Euclidean metric on $\RR_{\geq 0}$ (and hence, on $\RR$), we define $Q^{GF}=\delta$, where $\delta$ is the formal adjoint to the de Rham differential.

Hence, $[Q,Q^{GF}]$ is the standard Hodge Laplacian for the Euclidean metric on $\RR_{\geq 0}$ and $\RR$.

Let us choose a basis $\{t^a\}$ for $\fg$ and choose $\{t_a\}$ its dual basis. We write 
\begin{equation}
\tdownup:= \sum_a t_a\otimes t^a
\end{equation}
and 
\begin{equation}
\tupdown:= \sum_a t^a\otimes t_a.
\end{equation}
Under the identifications $\fg^\vee \otimes \fg\cong \Hom(\fg, \fg)$ and $\fg\otimes \fg^\vee \cong \Hom(\fg^\vee,\fg^\vee)$, $\tdownup$ and $\tupdown$ correspond to the identity operators on the respective spaces.
However, given that one may also identify $\Hom(\fg,\fg)\cong \Hom(\fg^\vee,\fg^\vee)$ (via the transpose map), we have introduced the symbols $\tupdown$ and $\tdownup$ to make clear to which spaces the symbols most naturally belong (namely $\fg\otimes \fg^\vee$ and $\fg^\vee\otimes \fg$, respectively).

Having made these identifications, and letting $f_0$ be a compactly-supported even function on $\RR$ which is identically 1 in a neighborhood of 0, we can construct the following smooth section of $\Edbl\boxtimes\Edbl$ on $\RR^2$:
\begin{equation}
\widetilde K_L(t_1,t_2):= \frac{f_0(t_1-t_2)}{\sqrt{4\pi L}}e^{-\frac{(t_1-t_2)^2}{4L}}(dt_1-dt_2)(\tupdown+ \tdownup).
\end{equation}

It is straightforward to verify that $K_L$ is a fake heat kernel for the operator $[ \diffonnocrossdiff ,Q^{GF}]$ on $\RR$. 
We may therefore form the parametrix
\begin{equation}
\widetilde \Phi_L(t_1,t_2) =\int_0^L \widetilde K_{L'}\d L'.
\end{equation}
and the propagator
\begin{align}
\widetilde P(\epsilon, L) &= \int_{\epsilon}^L (Q^{GF}\otimes 1) K_{L'}\d L'\\
&= \frac{f_0(t_1-t_2)(t_1-t_2)}{\sqrt{4\pi}}(\tupdown+\tdownup) \int_\epsilon^L \frac{e^{-\frac{(t_1-t_2)^2}{4L'}}}{(L')^{3/2}}\d L'.
\end{align}

Upon taking $\epsilon\to0$ and $L\to\infty$, we find
\begin{equation}
\widetilde P(0,\infty) = f_0(t_1-t_2)\sgn(t_1-t_2)(\tupdown+\tdownup),
\end{equation}
which coincides with what we found in Example \ref{ex: toplmechprop}.

The following formula explicitly describes the $\involution$-invariant propagator corresponding to $\widetilde P(\epsilon,L)$:
\begin{align}
\label{eq: 1dbfprop}
P(\Phi_L-\Phi_\epsilon)&= \frac{f_0(t_1-t_2)(t_1-t_2)}{4\sqrt{\pi}}(\tupdown+\tdownup) \int_\epsilon^L \frac{e^{-\frac{(t_1-t_2)^2}{4L'}}}{(L')^{3/2}}\d L' \nonumber\\
&+\frac{f_0(t_1+t_2)(t_1+t_2)}{4\sqrt{\pi}}(\tupdown-\tdownup) \int_\epsilon^L \frac{e^{-\frac{(t_1+t_2)^2}{4L'}}}{(L')^{3/2}}\d L'.
\end{align}

Finally, we write the BV heat kernel associated to these propagators:
\begin{align}
\label{eq: 1dbfkernel}
    K_{\Phi_L}(t_1,t_2)&= \frac{f_0(t_1-t_2)}{\sqrt{4\pi L}}e^{-\frac{(t_1-t_2)^2}{4L}}(dt_1-dt_2)(\tupdown+ \tdownup)\nonumber\\
    &+ \frac{f_0(t_1+t_2)}{\sqrt{4\pi L}}e^{-\frac{(t_1+t_2)^2}{4L}}(dt_1+dt_2)(\tupdown- \tdownup)+ R(t_1,t_2,L),
\end{align}
where $R$ is a smooth, properly-supported function on $(\RR_{\geq 0})^3$ which vanishes to all orders in $L$ as $L\to 0$.
More precisely, for all $d\in \ZZ$,
\begin{equation}
    \label{eq: remaindervanishing}
    \lim_{L\to 0}\frac{1}{L^d}R(\cdot,\cdot,L) = 0
\end{equation}
in the locally convex topological vector space of functions on $(\RR_{\geq 0})^2$.

Equations \eqref{eq: 1dbfprop} and \eqref{eq: 1dbfkernel} will be used extensively to prove Theorems \ref{thm: 1dbfquantization} and \ref{thm: 1dbfobs}.

\section{Quantization of the Bulk-Boundary System: Proof of Theorem \ref{thm: 1dbfquantization}}
\label{sec: 1dbfquant}
The proof of Theorem \ref{thm: 1dbfquantization} can be broken into two propositions:

\begin{proposition}
\label{prop: 1dbffinite}
The limit
\begin{equation}
I[L]=\lim_{\epsilon\to 0}\rgflow{\epsilon}{L}{I}
\end{equation}
exists in the topology underlying the convenient vector space of functionals.
\end{proposition}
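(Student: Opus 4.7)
The strategy is to expand $\rgflow{\epsilon}{L}{I}$ as a formal sum of Feynman diagram weights $w_\gamma(\epsilon, L)$ and to show that each weight has a well-defined limit as $\epsilon \to 0$. Because the classical interaction $I(\alpha,\beta) = \tfrac{1}{2}\int_{\RR_{\geq 0}}\langle\beta,[\alpha,\alpha]\rangle$ is cubic with one vertex type (two $A$-legs and one $B$-leg), every diagram $\gamma$ appearing has only trivalent internal vertices of this single flavor, and its internal edges are decorated by the propagator $P(\Phi_L - \Phi_\epsilon)$ whose explicit expression is Equation \eqref{eq: 1dbfprop}. The first observation is that this propagator is \emph{uniformly bounded} in $\epsilon$ on compact subsets of $\RR_{\geq 0}^{\,2}$: the substitution $u = (t_1 - t_2)^2/(4L')$ transforms $\int_\epsilon^L (L')^{-3/2} e^{-(t_1-t_2)^2/(4L')}\,dL'$ into a partial integral of $u^{-1/2}e^{-u}$, which is dominated by $\int_0^\infty u^{-1/2}e^{-u}\,du = \sqrt{\pi}$; the second summand in Equation \eqref{eq: 1dbfprop} is treated identically. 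Away from the loci $\{t_1 = t_2\}$ and $\{t_1 = -t_2\}$, the propagator also converges pointwise as $\epsilon \to 0$ to a bounded limit kernel.

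Tree diagrams are handled directly by Lemma \ref{lem: classRG}: the classical HRG flow from scale $0$ is well-defined on functionals with smooth first derivative, and the BF interaction satisfies this hypothesis, so the tree part of $\lim_{\epsilon\to 0}\rgflow{\epsilon}{L}{I}$ exists. What remains is to analyze loop diagrams, which I split into two classes according to whether or not they contain a \emph{tadpole} — an internal edge whose two endpoints attach to the same vertex.

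For a non-tadpole diagram, the plan is to invoke dominated convergence: the cutoff function $f_0$ renders the propagator properly supported, so that the integration over internal vertex positions in $w_\gamma(\epsilon, L)$, against compactly supported external inputs, reduces to an integral over a compact subset of $(\RR_{\geq 0})^{V(\gamma)}$. On this compact set the coincidence and anti-coincidence loci $\{t_i = \pm t_j\}$ form a measure-zero subset, so the integrand converges pointwise almost everywhere; combined with the uniform bound above, dominated convergence yields a finite limit $\lim_{\epsilon\to 0}w_\gamma(\epsilon,L)$.

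The main obstacle, and the step where the unimodularity hypothesis enters essentially, is the treatment of tadpole diagrams. At a single vertex $f^{ab}_c\,\beta^c\alpha_a\alpha_b$, a self-loop must contract one of the two $A$-legs with the $B$-leg through the propagator, since the propagator has no component in $A \boxtimes A$. The resulting Lie-algebraic coefficient is $f^{ab}_{\,\,\,b}$, which up to sign is $\tr_\fg(\operatorname{ad} t^a)$; under the unimodularity assumption this trace vanishes identically in $a$. Consequently every diagram containing a tadpole contributes zero \emph{for every} $\epsilon > 0$, bypassing the naively singular evaluation $P(\Phi_L-\Phi_\epsilon)(t,t)$. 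Combining these three cases — tree, non-tadpole loop, tadpole — one concludes that each $w_\gamma$ has a finite $\epsilon\to 0$ limit, and assembling these defines the desired functional $I[L] \in \pertfunctionalspsmcs$.
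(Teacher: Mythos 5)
Your diagram-by-diagram strategy, the explicit bound on the propagator of Equation \eqref{eq: 1dbfprop}, and the use of Lemma \ref{lem: classRG} for trees all match the paper's argument. The genuine gap is in the mode of convergence for the loop weights. The proposition asserts that the limit exists \emph{in the topology underlying the convenient vector space of functionals}, and for a functional of order $k$ this means the weights $w_\gamma(\epsilon,L)$ must converge uniformly as the input varies over an arbitrary bounded subset of $\condfieldscs^{\hotimes_\beta k}$ (uniform convergence on bounded sets is what the mapping-space structure demands). Your dominated-convergence step only produces, for each \emph{fixed} compactly supported input $f$, a finite limit of $w_\gamma(\epsilon,L)(f)$; that is pointwise (weak) convergence of the functionals, not convergence in the relevant topology. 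The paper flags exactly this distinction and then does the extra work by hand: after reducing to wheels it splits the integration region into the set where all $|u_i|>\delta$, on which the error-function tails $\int_{u_i/2\sqrt{\epsilon}}^{\infty}e^{-v^2}\,\d v$ are uniformly small, and the sets where some $|u_i|\leq\delta$, whose measure inside the (uniform) support is $O(\delta)$; both estimates depend only on the sup bound and support of the bounded family, giving uniformity. Your ingredients (uniform boundedness of the propagator, uniform compact support and sup bounds on a bounded family) suffice to run this uniformization, but as written the proof stops at the fixed-$f$ statement and so does not prove the proposition as stated.

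Two smaller points of comparison. First, unimodularity is not actually needed in this proposition: a tadpole's coincident-point propagator is not singular, since the $(t_1-t_2)$-proportional term of Equation \eqref{eq: 1dbfprop} vanishes on the diagonal and the image-charge term is bounded, so tadpole weights converge by the same estimate as the other loop diagrams; the paper reserves unimodularity for killing $\Delta_\epsilon I$ in the quantum master equation (Proposition \ref{prop: 1dBFQME}). Invoking it here is harmless under the hypotheses of Theorem \ref{thm: 1dbfquantization}, but it misplaces where the hypothesis does real work. Second, the paper first observes that only trees and one-loop wheels can contribute (directedness of the BF vertex plus the fact that the propagator is a $0$-form, so any vertex with three internal edges vanishes), which collapses the combinatorics to a one-parameter family of wheel integrals; your more general bounded-kernel argument covers arbitrary diagrams and is fine analytically, but the wheel reduction is what makes the uniform estimate above short and explicit.
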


It follows directly from their definition that the interaction functionals $I[L]$ satisfy the RG flow equation and the asymptotic locality condition of Definition \ref{def: QTNBFT}.
Hence, Theorem \ref{thm: 1dbfquantization} follows directly from Proposition \ref{prop: 1dbffinite} and the following Proposition:
\begin{proposition}
\label{prop: 1dBFQME}
The family of interactions $\{I[L]\}$ satisfies the QME.
\end{proposition}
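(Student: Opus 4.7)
By the RG-flow invariance of the QME (Lemma \ref{lem: rgqme}), it suffices to verify that
$$\Theta[L] := \diffonnocrossdiff I[L] + \tfrac{1}{2}\{I[L], I[L]\}_L + \hbar \Delta_L I[L]$$
vanishes for any single $L > 0$. I proceed by decomposing $\Theta[L]$ in powers of $\hbar$ and handling each order separately.

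A loop-counting argument specific to 1D BF shows $I[L]$ is polynomial of degree at most one in $\hbar$: every connected Feynman graph built from the cubic vertex $\tfrac{1}{2}\int f^{ab}_c \beta^c \alpha^a \alpha^b$, with internal edges contracting $\alpha$ to $\beta$, has loop number $\ell = 1 - T_\beta$, where $T_\beta$ is the number of external $\beta$-legs. Hence no $\ell \geq 2$ diagrams arise, and every one-loop contribution to $I_1[L]$ has only external $\alpha$-legs. The order-$\hbar^2$ component of $\Theta[L]$ thus reduces to $\Delta_L I_1[L] + \tfrac{1}{2}\{I_1[L], I_1[L]\}_L$; both terms require a $\beta$-derivative of $I_1[L]$ and hence vanish identically.

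At order $\hbar^0$, $\Theta_0[L]$ is the scale-$L$ classical master equation. Its scale-zero version $\diffonnocrossdiff I + \tfrac{1}{2}\{I, I\}_0 = 0$ holds because (i) the bulk contribution vanishes by the Jacobi identity on $\fg$, and (ii) the potential boundary term $\int_{\{0\}}\langle \beta, [\alpha, \alpha]\rangle$ arising from integration by parts is killed by the $B$-boundary condition $\alpha|_{t=0} = 0$. Classical HRG flow (Lemma \ref{lem: classRG}) propagates this to $\Theta_0[L] = 0$ for all $L > 0$.

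At order $\hbar^1$, the obstruction theory of Section \ref{subsec: obstruction} applies: in the $L \to 0$ limit, $\Theta_1[L]$ is a $\diffonnocrossdiff$-closed local functional of cohomological degree $+1$, whose class in $\Olocred$ is a theory invariant. Following the 2D analog in Section \ref{subsec: bfthyloc}, a direct computation shows the relevant cohomology (in the appropriate $B$-weight) is generated by the modular character $\tr_\fg(\ad_\cdot) \in H^1(\fg)$. The class of $\Theta_1[L]$ is identified by evaluating $\Delta_L I$ on a single cubic vertex: contracting its $\alpha$- and $\beta$-legs against the $\fg \otimes \fg^\vee$ part of the BV heat kernel \eqref{eq: 1dbfkernel} yields exactly the factor $f^{ab}_a = \tr_\fg(\ad_{t_b})$, which vanishes by unimodularity. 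Both the bulk piece (proportional to $\tupdown + \tdownup$) and the image piece (proportional to $\tupdown - \tdownup$) of $K_{\Phi_L}$ contribute the same trace, and the smooth remainder $R(\cdot, \cdot, L)$ contributes only terms vanishing faster than any power of $L$.

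The main obstacle is twofold: (a) the explicit cohomology calculation in $\Olocred$ specific to 1D BF with the $B$-boundary condition, analogous to but not identical to the 2D computation of Section \ref{subsec: bfthyloc}; and (b) the careful tracking of the image-charge sign reversal $(\tupdown - \tdownup)$ in \eqref{eq: 1dbfkernel}, which could in principle contribute a boundary-localized obstruction class not visible in the bulk analysis. The Lie-algebraic structure of the diagrams, however, forces both bulk and image anomalies to factor through $\tr_\fg(\ad_\cdot)$, which vanishes under the unimodularity hypothesis.
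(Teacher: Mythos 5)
Your framework (reduction to a single scale via Lemma \ref{lem: rgqme}, the loop-counting bound showing $I[L]$ is linear in $\hbar$, and the dispatch of the $\hbar^0$ and $\hbar^2$ components) is sound and consistent with the paper. The gap is at order $\hbar^1$, which is where essentially all of the paper's work lies. First, even if the cohomology class of the order-$\hbar$ obstruction vanished, that would only show that \emph{some} quantization exists after possibly adding an order-$\hbar$ local correction to $I$; the proposition asserts that the specific uncorrected family $I[L]=\lim_{\epsilon\to 0}\rgflow{\epsilon}{L}{I}$ satisfies the QME, and for that you must show the obstruction \emph{cocycle} itself vanishes. Second, the cohomological input you invoke is misidentified: by the remark following Theorem \ref{thm: 1dbfobs}, the complex of local functionals of this bulk-boundary system is equivalent to $C^\bullet(\fg)[1]$, so the degree $+1$ (obstruction) group is $H^2(\fg)$, while the modular character $\tr_\fg(\ad)\in H^1(\fg)$ lives in degree $0$ and parametrizes \emph{deformations}, not obstructions; moreover $H^2(\fg)$ need not vanish for unimodular $\fg$, so no purely cohomological argument based on unimodularity can close this step. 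Third, identifying the obstruction class with the single-vertex tadpole $\Delta_L I$ ignores the terms $Q I_1[L]+\{I_0[L],I_1[L]\}_L$, which produce wheel diagrams with two or more vertices and one heat-kernel edge; their Lie-algebraic factors are traces of \emph{products} of adjoint operators (for the two-vertex wheel, the Killing form), and these do not ``factor through $\tr_\fg(\ad)$'' nor vanish by unimodularity.

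In the paper's proof, unimodularity is used only to kill the tadpole term $\Delta_\epsilon I$ (together with $QI=0$). The remaining obstruction is rewritten, via the Li--Li-type identity, as the $\epsilon\to 0$, $L\to 0$ limit of weights of wheels with one edge labelled by the heat kernel $K_{\Phi_\epsilon}$ and the others by $P(\epsilon,L)$, and these weights are shown to vanish by explicit analysis: crude Gaussian bounds for wheels with more than two vertices, and for the two-vertex wheel a case-by-case treatment of the bulk and image-charge terms that uses the boundary condition (the input vanishes at $t=0$), an integration by parts against the operator $\frac{\epsilon}{\epsilon+L_1}\frac{\partial}{\partial t_1}$, and elementary estimates of the resulting integrals. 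This analytic vanishing is exactly the step you defer to ``the main obstacle,'' so as written the proposal does not constitute a proof of the proposition.
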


\begin{proof}[Proof of Proposition \ref{prop: 1dbffinite}]
As is conventional for BF theory, we view the diagrams appearing in 
\begin{equation}
\rgflow{\epsilon}{L}{I}
\end{equation}
as being directed; all vertices have two ``in'' half-edges and one ``out'' half-edge.
It follows from this description of the diagrams appearing in $\rgflow{\epsilon}{L}{I}$ that only tree and one-loop diagrams appear therein.
Lemma \ref{lem: classRG} shows that the the tree diagrams appearing in $\rgflow{\epsilon}{L}{I}$ admit an $\epsilon\to 0$ limit.
It remains to show that the one-loop diagrams appearing are finite.
Because we are in one dimension, we obtain a further simplification: 
since the propagator is a zero-form on $M\times M$, and since each trivalent vertex requires at least one one-form input, the only one-loop diagrams appearing in $\rgflow{\epsilon}{L}{I}$ are wheels, i.e. one-loop diagrams which 
remain connected when one edge is severed.
Such diagrams are specified by their number of internal vertices, for which we use the symbol $k$.
Figure \ref{fig: bfwheel} shows one such diagram, with $k=3$.

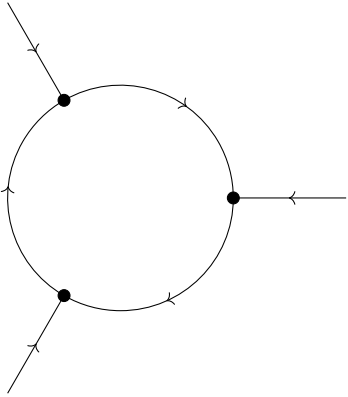
\begin{figure}[h]
    \centering
\begin{tikzpicture}[scale=1.5]
         \node at (0,1){};
         \draw[fill=black] (0,0) circle (1.5pt);
         \draw[fermion] (0,0) arc (0:-120:1cm) -- +(0,0) node (g){};
         \draw[fill=black] (g.center) circle (1.5pt);
         \draw[fermion] (g.center) arc (-120:-240:1cm) -- +(0,0) node (i){};
         \draw[fermionbar] (g.center) -- +(-120:1);
         \draw[fermion] (i.center) arc (120:0:1cm);
         \draw[fermionbar] (i.center) -- +(120:1);
         \draw[fermionbar] (0,0) -- +(0:1);
         \draw[fill=black] (i.center) circle (1.5pt);
         \end{tikzpicture}
    \caption{A typical wheel appearing in one-dimensional BF theory. Here, $k=3$.}
    \label{fig: bfwheel}
\end{figure}

Such diagrams require the placement of a compactly-supported, $\fg^{\otimes k}$-valued $k$-form $(f \d t_1\cdots \d t_k) \otimes \kappa$ as an external input.
(Here $\kappa\in \fg^{\otimes k}$.)
Further, each propagator is a sum of two-terms, one a function of $t_1-t_2$, and the other a function of $t_1+t_2$.

The weight $w_{k}(\epsilon, L, \{(f \d t_1 \cdots \d t_k)\otimes \kappa \}_{i=1}^k)$ can be written as a product
\begin{equation}
w_{\gamma,k}^{an}(\epsilon, L, f) w_{\gamma,k}^{alg}(\kappa),
\end{equation}
where $w_{\gamma,k}^{alg}(\kappa)$ depends only on Lie algebraic information in $\fg$.

Let 
\begin{equation}
\beta \in \mathrm{Set}( \{1,\ldots, k\} , \{-1,1\})
\end{equation}
be any function. 
For each $i\in \{1,\ldots, k\}$, let $u_i = t_i - \beta(i) t_{i+1}$, with the convention that $t_{k+1}=t_1$.
The analytic weight associated to a diagram with $k$ vertices is therefore a sum--as $\beta$ varies over all possible functions 
\begin{equation}
\{1,\ldots, k\}\to \{-1,1\}
\end{equation}
associating a sign to each internal edge of the wheel diagram--of weights of the form:
\begin{align}
&w^{an}_{k,\beta}(\epsilon,L, f):=\nonumber\\
&\int_{[\epsilon,L]^k}\int_{(\RR_{\geq 0})^k} f(t_1,\ldots, t_k) \left(\prod_{i=1}^k \frac{u_i f_0(u_i)}{\sqrt{4\pi}}\times \frac{\exp{\left( -\frac{\left(u_i\right)^2}{4L_i}\right)}}{(L_i)^{3/2}} \right)\,  \d t_1 \, \ldots \, \d t_k \, \d L_1\, \ldots \, \d L_k.
\end{align}

Let us perform the integral over the $L_i$ variables:
\begin{equation}
\label{eq: 1dbfpropexplicitly}
\int_\epsilon^L \frac{\exp{\left( -\frac{\left(u_i\right)^2}{4L_i}\right)}}{(L_i)^{3/2}} \d L_i= \frac{2}{u_i}\int_{u_i/(2\sqrt{L})}^{u_i/(2\sqrt{\epsilon})} e^{-v^2} \d v.
\end{equation}

We therefore find
\begin{equation}
\label{eq: weightbound}
 w^{an}_{k,\beta}(\epsilon,L, f) = \frac{1}{\pi^{k/2}}\int_{(\RR_{\geq 0})^k}f(t_1,\ldots, t_k)  \left(\prod_{i=1}^k f_0(u_i) \int_{u_i/(2\sqrt{L})}^{u_i/(2\sqrt{\epsilon})} e^{-v^2} dv \right)\, \d t_1 \cdots \d t_k.
\end{equation}
We claim that 
\begin{equation}
\left(\lim_{\epsilon \to 0}w^{an}_{k,\beta}(\epsilon,L)\right)(f)=\frac{1}{\pi^{k/2}}\int_{(\RR_{\geq 0})^k}\left(f(t_1,\ldots, t_k)  \prod_{i=1}^k f_0(u_i) \int_{u_i/(2\sqrt{L})}^{\infty} e^{-v^2} dv \right)\, \d t_1 \cdots \d t_k.
\end{equation}
Let us denote by $w^{an}_{k,\beta}(0,L)$ the distribution appearing on the right-hand side of the above equation.
Of course, it is intuitively clear that $w^{an}_{k,\beta}(0,L)$ is the $\epsilon \to 0$ limit of $w^{an}_{k,\beta}(\epsilon, L)$, and it is certainly true that 
\begin{equation}
\left(\lim_{\epsilon \to 0}w^{an}_{k,\beta}(\epsilon,L,f)\right)=w^{an}_{k,\beta}(0,L)(f)
\end{equation}
for each $f$.
However, we need to show that this convergence is uniform as $f$ varies over a bounded subset of $\condfieldscs^{\hotimes_\beta k}$.
On a bounded subset $B$ of $\condfieldscs^{\hotimes_\beta k}$, there exist  $C>0$ and a compact $K\subset M^k$ so that
\begin{equation}
\supp(g) \subset K, \quad |g|< C
\end{equation}
for all $g\in B$.
Let us choose $\delta>0$ and divide the integral in Equation \eqref{eq: weightbound} into one over $k+1$ regions
\begin{enumerate}
    \item The region $\cR$ consisting of points for which $|u_i|>\delta$ for all $i$,
    \item The $k$ regions $\cS_1,\ldots, \cS_k$, where $\cS_i$ consists of those points for which $|u_i|\leq \delta$.
\end{enumerate}
Note that $\cR$ is disjoint from each of the $\cS_i$, but the $\cS_i$ are not pairwise disjoint.
We may form the bound:
\begin{align}
\pi^{k/2}\left| w^{an}_{k,\beta}(\epsilon,L)(f)- w^{an}_{k,\beta}(0,L)(f)\right|&\leq\nonumber\\
\int_{\cR}|f(t_1,\ldots, t_k)| \prod_{i=1}^k |f_0(u_1)| \int_{u_i/(2\sqrt \epsilon)}^\infty e^{-v^2}&dv+\sum_i\int_{\cS_i}|f(t_1,\ldots, t_k)| \prod_{i=1}^k |f_0(u_1)| \int_{u_i/(2\sqrt \epsilon)}^\infty e^{-v^2}dv \nonumber\\
 \leq C\mathrm{vol}(K)&\prod_{i=1}^k\left(\int_{\delta/(2\sqrt{\epsilon})}^\infty e^{-v^2}dv\right)\,+\,\pi^{k/2}\sum_i \int_{\cS_i\cap K} C .
\end{align}
We may assume $K$ is a hypercube of side-length $z$.
Then, we have the bound
\begin{equation}
\mathrm{vol}(\cS_i)\cap K \leq 2\sqrt{2}\delta z^{k-1}.
\end{equation}
Thus, by choosing $\delta$ sufficiently small, we can make the integral over the $\cS_i$ as small as we like.
Having fixed such a $\delta$, we may also make a choice of sufficiently small $\epsilon$ to make the integral over $\cR$ sufficiently small.
These choices required only knowledge of $K$ and $C$, which are uniform constants for all $g\in B$, i.e. these are uniform choices for the given bounded subset of $\condfieldscs^{\hotimes_\beta k}$ which we are considering.
This completes the proof of the Proposition.
\end{proof}

\begin{proof}[Proof of Proposition \ref{prop: 1dBFQME}]
At each finite $L$, we may form the scale-$L$ obstruction
\begin{equation}
\cO[L]:= \frac{1}{\hbar}\left( QI[L] + \frac{1}{2}\{I[L],I[L]\}_{L}+\hbar \Delta_L I[L]\right).
\end{equation}
As we have seen, the obstruction $\cO[L]$ satisfies a version of classical RG flow.
Hence, it admits a limit 
\begin{equation}
\cO:= \lim_{L\to 0}\cO[L]
\end{equation}
which is local.
The argument which gives the equation immediately preceding Equation (C.3) of \autocite{LiLi} shows that 
\begin{align}
\cO &= \lim_{L\to 0}\lim_{\epsilon\to 0} \frac{d}{d\delta} \rgflow{\epsilon}{L}{I+\delta\left( QI +\frac{1}{2}\{I,I\}_\epsilon+\hbar \Delta_\epsilon I\right)}\nonumber\\
&=\lim_{L\to 0}\lim_{\epsilon\to 0} \frac{d}{d\delta} \rgflow{\epsilon}{L}{I+\delta\left( \frac{1}{2}\{I,I\}_\epsilon+\hbar \Delta_\epsilon I\right)}\nonumber\\
&= \lim_{L\to 0}\lim_{\epsilon\to 0} \frac{d}{d\delta} \rgflow{\epsilon}{L}{I+\delta\frac{1}{2} \{I,I\}_\epsilon} \label{eq: uvobstr}
\end{align}
where for the second equality we have used that $QI=0$ for BF theory (and in fact, for any classical theory which can be described by a dgla as opposed to an $L_\infty$-algebra).
In the third equality, we have used the uni-modularity of $\fg$ to show that $\Delta_\epsilon I=0$.
From this description of the obstruction, it follows that we need to consider--as in the proof of Proposition \ref{prop: 1dbffinite}--weights associated to wheel diagrams with at least 2 vertices, and one edge marked with $K_\epsilon$ rather than $P(\epsilon,L)$.
In the case at hand, it is now possible to introduce ``one-particle reducible'' diagrams, i.e. a wheel with a tree attached to it.
In other words, the diagrams appearing in the obstruction are wheels with a tree attached to at most one internal vertex.
However, it suffices to show that the obstruction vanishes by considering wheels, since we know trees to give finite contributions.
As in the proof of Proposition \ref{prop: 1dbffinite}, we are left to consider weights of the form
\begin{align}
O^{an}_{k,\beta}(\epsilon,L, f)&:=\int_{[\epsilon,L]^{k-1}}\int_{(\RR_{\geq 0})^k} f_1(t_1,
\ldots, t_k) \left(\frac{f_0(u_k)\exp{\left(-\frac{(u_k)^2}{4\epsilon}\right)}}{\sqrt{4\pi \epsilon}}- R(t_k,t_1,\epsilon)\right)\nonumber
\\\times\prod_{i=1}^{k-1}& \left(u_i\frac{f_0(u_i)}{\sqrt{4\pi}}\times \frac{\exp{\left( -\frac{\left(u_i\right)^2}{4L_i}\right)}}{(L_i)^{3/2}} \right) \d t_1 \, \ldots \, \d t_k\,\d L_1\, \ldots \, \d L_{k-1}
\end{align}
Because $K_\epsilon$ is a one-form, the external half-edge incident on either the $k$-the vertex or the first vertex corresponds to a zero-form $A$-field input.
Let us assume without loss of generality that this is the case for the $k$-th input.
It follows that $f(t_1,\ldots, t_k) = t_k g(t_1,\ldots, t_k)$ for some continuous function $g$.
Let us first assume that $k>2$, and let us perform the integrals over the $L_i$.
We find that
\begin{align}
O^{an}_{k,\beta}(\epsilon,L, f)=&\nonumber\\
\int_{(\RR_{\geq 0})^k}\, &f(t_1,\ldots, t_k)\left(\prod_{i=1}^{k-1} \frac{f_0(u_i)}{\sqrt{\pi}}\int_{u_i/\sqrt{4L}}^{u_i/\sqrt{4\epsilon}}e^{-v^2}dv\right)\nonumber\\ 
&\Bigg(\frac{f_0(u_k)\exp{\left(-\frac{(u_k)^2}{4\epsilon}\right)}}{\sqrt{4\pi \epsilon}}- R(t_k,t_1,\epsilon)\Bigg)\d t_1 \, \ldots \, \d t_k, 
\end{align}
where $u_i = t_{i}-\beta(i)t_{i+1}$.
The remainder $R(t_k,t_1,\epsilon)$ is smooth in its variables and has proper support. 
It and all its derivatives vanish to all orders in $\epsilon$ uniformly in $t_1,t_k$ as $\epsilon\to 0$, cf. Equation \eqref{eq: remaindervanishing}.
Since we have the bound
\begin{equation}
\label{eq: crudebound}
\frac{f_0(u_i)}{\sqrt{\pi}}\int_{u_{i}/2\sqrt{L}}^{u_{i}/2\sqrt{\epsilon}}e^{-v^2}dv \leq 1,
\end{equation}
the part of $O^{an}_{k,\beta}(\epsilon, L, \{f_i\})$ involving $R(t_k,t_1,\epsilon)$ vanishes to all orders in $\epsilon$ as $\epsilon\to 0$.

Next, let us consider
\begin{align}
&\left|\int_0^\infty  f(t_1,\ldots, t_k) f_0(u_{k-1})f_0(u_k)\frac{\exp{\left(-\frac{(u_k)^2}{4\epsilon}\right)}}{\sqrt{4\pi \epsilon}}\d t_k\,\right|\nonumber\\
&\leq \frac{1}{\sqrt{4\pi\epsilon}}\int_0^\infty |f(t_1,\ldots t_k) f_0(u_{k-1})f_0(u_k)|\, e^{-\frac{u_k^2}{4\epsilon}}\d t_k \,\leq \frac{C}{\sqrt{4\pi \epsilon}} \int_{-\infty}^\infty  e^{-\frac{u_k^2}{4\epsilon}}\d u_k\,\leq \frac{C}{2},
\label{eq: boundoftkint}
\end{align}
where $C$ is the maximum value of $|f f_0 f_0|$ on $(\RR_{\geq 0})^k$.
Let $K$ be the support of $f$; we use the crude bound of Equation \eqref{eq: crudebound} for $i=k-1$ and the bound of Equation \eqref{eq: boundoftkint} to obtain
\begin{align}
|O^{an}_{k,\beta}(\epsilon,L,f)|\leq \frac{C'}{2}\int_{K} \left( \prod_{i=1}^{k-2}\frac{1}{\sqrt{\pi}} \int_{u_i/\sqrt{4L}}^{u_i/\sqrt{4\epsilon}}e^{-v^2}dv\right)\d t_1\cdots \d t_{k-1} \,.  
\end{align}
Proceeding very similarly to the way we did in the proof of Proposition \ref{prop: 1dbffinite}, we find that the $\epsilon\to 0$, $L\to 0$ limit of the above is 0.

It remains to consider the integral
\begin{equation}
\label{eq: 2legbfint}
\int_{[\epsilon,L]}  \int_{(\RR_{\geq 0})^2} f(t_1, t_2)\frac{f_0(u_1)f_0(u_2) u_1}{4\pi \sqrt{\epsilon (L_1)^3}}\exp{\left(-\frac{(u_1)^2}{4L_1}-\frac{(u_2)^2}{4\epsilon}\right)}\d t_1\, \d t_2 \, \d L_1 \,.
\end{equation}
There are four cases to consider here, corresponding to the four signs in $u_1=t_1\pm t_2$, $u_2 =t_2\pm t_1$.
We divide these cases into three classes:
\begin{align*}
    \beta(1) &= \beta(2) \tag{C1}\\
    \beta(1) &= 1 = -\beta(2) \tag{C2}\\
    \beta(1)&= -1 = -\beta(2) \tag{C3}
\end{align*}
Let us suppose that we are in case (C1).
Then
\begin{equation}
\label{eq: sitrick}
\frac{u_1}{L_1}\exp\left( -\frac{(u_1)^2}{4L_1}-\frac{(u_2)^2}{4\epsilon}\right)=-\frac{\epsilon}{\epsilon+L_1}\frac{\del}{\del t_1} \exp\left( -\frac{(u_1)^2}{4L_1}-\frac{(u_2)^2}{4\epsilon}\right).
\end{equation}
The same identity holds (up to, possibly, a minus sign, depending on $\beta(1)$) if one replaces the $t_1$-derivative with a $t_2$ derivative.
It is at this point that we remember that $f(t_1,t_2)$ vanishes either at $t_1=0$ or $t_2=0$.
Let us assume without loss of generality that it vanishes at $t_1=0$; the opposite case is handled by using the $t_2$-case of Equation \eqref{eq: sitrick}.
Let 
\begin{equation}
    \tau : = \frac{\epsilon}{\epsilon+L_1}\frac{\del}{\del t_1}.
\end{equation}
The quantity appearing in Expression \eqref{eq: 2legbfint} may therefore be written
\begin{equation}
\label{eq: 2legbfint2}
\int_{[\epsilon,L]}  \int_{(\RR_{\geq 0})^2} \tau \left(f(t_1, t_2)\frac{f_0(u_1)f_0(u_2)}{4\pi }\right)\frac{1}{\sqrt{\epsilon L_1}}\exp{\left(-\frac{(u_1)^2}{4L_1}-\frac{(u_1)^2}{4\epsilon}\right)}\d t_1\, \d t_2 \,\d L_1 \,,
\end{equation}
using an integration by parts; the boundary condition on $f$ prevents the existence of a boundary term.
The operator $\tau$ is  bounded, and therefore continuous on the LF-space $\condfieldscs^{\hotimes_\beta 2}$.
Hence, the term $\tau(\cdots)$ appearing in Equation \eqref{eq: 2legbfint2} may be bounded by a constant $C$ which is a uniform choice as $f$ varies over a bounded subset of $\condfieldscs^{\hotimes_\beta 2}$.
Let $K\subset \RR_{\geq 0}$ denote the volume of the closure of the set
\begin{equation}
\{c\in \RR\mid c= t_1+\beta(1) t_2, f(t_1,t_2) \neq 0\};
\end{equation}
the compact support of $f$ guarantees that $K<\infty$.
We therefore have the bound
\begin{equation}
|O^{an}_{k,\beta}(\epsilon, L, f)|\leq C K\int_{[\epsilon,L]}\int_{\RR} \frac{1}{\sqrt{\epsilon L_1}} \exp\left( -\left( \frac{1}{4L_1}+\frac{1}{4\epsilon}\right) (u_1)^2\right)\d u_1 \, \d L_1= 2KC \left( \sqrt{L+\epsilon} - \sqrt{2\epsilon}\right).
\end{equation}
It follows that, for $\beta$ belonging to class (C1), 
\begin{equation}
    \lim_{L\to 0}\lim_{\epsilon \to 0}|O^{an}_{k,\beta}(f)| =0.
\end{equation}
Let us now proceed to consider the cases (C2) and (C3).
Let us define, for both cases, $u= t_1+t_2$ and $v=t_1-t_2$.
(Hence, for (C2), we have $u_1=v$, $u_2=u$, and vice versa for (C3).)
Then, we are left to consider the integrals
\begin{align}
\int_{[\epsilon,L]}  \int_{(\RR_{\geq 0})^2} & f(t_1, t_2)\frac{f_0(v)f_0(u) v}{4\pi \sqrt{\epsilon (L_1)^3}}\exp{\left(-\frac{v^2}{4L_1}-\frac{u^2}{4\epsilon}\right)}\d t_1 \,\d t_2\,\d L_1 \, \tag{C2}\\
\int_{[\epsilon,L]}  \int_{(\RR_{\geq 0})^2} & f(t_1, t_2)\frac{f_0(u)f_0(v) u}{4\pi \sqrt{\epsilon (L_1)^3}}\exp{\left(-\frac{u^2}{4L_1}-\frac{v^2}{4\epsilon}\right)}\d t_1 \,\d t_2\,\d L_1 \,
\tag{C3}.\\\label{eq: 2legbfint3}\
\end{align}
We may bound the norm of the factors $f(t_1,t_2)f_0(u_1)f_0(u_2)/(4\pi)$ in the above integrals, and change to the $(u,v)$ coordinate system on $(\RR_{\geq 0}$, to find that the integrals in Equation \eqref{eq: 2legbfint3} are bounded by 
\begin{align}
&C\int_{[\epsilon, L]}\int_0^\infty \int_{-u}^u  \frac{|v|}{\sqrt{\epsilon(L_1)^3}}\exp{\left(-\frac{v^2}{4L_1}-\frac{u^2}{4\epsilon}\right)}\d v\, \d u\, \d L_1 \tag{C2}\\
&C\int_{[\epsilon, L]}\int_0^\infty \int_{-u}^u  \frac{u}{\sqrt{\epsilon(L_1)^3}}\exp{\left(-\frac{u^2}{4L_1}-\frac{v^2}{4\epsilon}\right)} \d v\, \d u \, \d L_1
\tag{C3}\\\label{eq: 2legbfint4}.
\end{align}
We find the following explicit value for the integral (C2) in Equation \eqref{eq: 2legbfint4}:
\begin{equation}
C' \int_{[\epsilon,L} \d L_1 \left( \frac{1}{\sqrt{L_1}}- \frac{1}{\sqrt{\epsilon+L_1}}\right),
\end{equation}
where $C'$ is some constant differing from $C$ by factors independent of $\epsilon, L,$ and $f$.
This integral manifestly vanishes once we take $\epsilon \to 0$ and then $L\to 0$.
For the case (C3), we may bound the integral by integrating over $v\in (-\infty, \infty)$ instead of $v\in (-u,u)$, and we find
\begin{equation}
C\int_{[\epsilon, L]}\int_0^\infty  \int_{-\infty}^\infty  \frac{u}{\sqrt{\epsilon(L_1)^3}}\exp{\left(-\frac{u^2}{4L_1}-\frac{v^2}{4\epsilon}\right)} = C''\int_{[\epsilon,L]}\frac{1}{\sqrt{L_1}}\d v \d u \d L_1= 2C'' \left( \sqrt{L}-\sqrt{\epsilon}\right),
\end{equation}
which manifestly vanishes under the $\epsilon \to 0, L\to 0$ limit.
This completes the proof.
\end{proof}

\section{The Observables of One-Dimensional BF Theory}
\label{sec: 1dbfobs}
In this section, we prove Theorem \ref{thm: 1dbfobs}. We break the Theorem up into two Propositions.

\begin{proposition}
\label{prop: 1dbfbdyobs}
For the one-dimensional BF theory with $B$ boundary condition, we have a quasi-isomorphism
\begin{equation}
C_\bullet(\fg) \to \Obq_{\sE,\sL,poly}(\RR_{\geq 0})\otimes_{\RR[\hbar]}\RR_{\hbar=1}
\end{equation}
of pointed cochain complexes.
\end{proposition}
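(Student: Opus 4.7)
The plan is to construct an explicit map $\phi\colon C_\bullet(\fg)\to\Obq_{\sE,\sL,poly}(\RR_{\geq 0})\otimes_{\RR[\hbar]}\RR_{\hbar=1}$ and verify it is a quasi-isomorphism by a spectral sequence argument. For $x_1\cdots x_n\in\Sym^n(\fg[1])=C_n(\fg)$, the bare boundary observable
\[
O^{(0)}_{x_1\cdots x_n}(\alpha,\beta):=\frac{1}{n!}\prod_{i=1}^n\langle x_i,\beta(0)\rangle
\]
is a local functional of cohomological degree $-n$ supported at $\{0\}$, depending only on the $\beta$-component of the field. I will define $\phi(x_1\cdots x_n)[\Phi]$ at each parametrix $\Phi$ to be the classical observable RG flow of $O^{(0)}_{\vec x}$ from scale $0$ to $\Phi$, which is well-defined by Lemma~\ref{lem: classRG}; the resulting family will satisfy observable HRG and concentrate near $\{0\}$ as $\Phi\to 0$, defining an element of $\Obq_{\sE,\sL,poly}(\RR_{\geq 0})$. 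The map will be pointed: $1\in\Sym^0(\fg[1])$ is sent to the constant observable $1$.

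Two observations will simplify verification that $\phi$ intertwines $d_{CE}$ with the quantum BV differential $\diff+\{I[\Phi],\cdot\}_\Phi+\hbar\Delta_\Phi|_{\hbar=1}$. First, $\diff\phi(\vec x)[\Phi]=0$, since the de Rham derivative of $\beta$ produces a $1$-form with no $0$-form component at $t=0$. Second, $\Delta_\Phi\phi(\vec x)[\Phi]=0$: inspection of Equation~\eqref{eq: 1dbfkernel} shows the BV heat kernel lives entirely in the $(\fg\otimes\fg^\vee)\oplus(\fg^\vee\otimes\fg)$ summand of $\sE\otimes\sE$---reflecting the bipartite structure of $\ip$---so contracting with a purely $\beta$-dependent functional yields zero. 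The remaining task will be to show $\{I[\Phi],\phi(\vec x)[\Phi]\}_\Phi=\phi(d_{CE}\vec x)[\Phi]$; by HRG compatibility, this can be verified at a single scale $\Phi_L$. The expected mechanism is that the bracket contracts one $\alpha$-slot of the cubic BF vertex $\int\langle\beta,[\alpha,\alpha]\rangle$ with a $\beta(0)$-slot of $\phi(\vec x)$ via the propagator of Equation~\eqref{eq: 1dbfprop}, producing the Lie bracket $[x_i,x_j]$ paired with $\beta(0)$ times the remaining boundary factors. Unimodularity of $\fg$ will rule out the one-loop tadpole correction to $I$ (as in the proof of Proposition~\ref{prop: 1dBFQME}), so no additional terms intervene.

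With the cochain property in hand, a filtration argument will complete the proof. Filter $C_\bullet(\fg)$ by symmetric degree $F_pC_\bullet(\fg):=\bigoplus_{n\leq p}\Sym^n(\fg[1])$; the CE differential strictly decreases symmetric degree, so the associated graded is $\Sym(\fg[1])$ with zero differential. Filter $\Obq_{\sE,\sL,poly}(\RR_{\geq 0})|_{\hbar=1}$ by a compatible polynomial-degree filtration (working at $\hbar$ formal and specializing at the end). Its associated graded is identified with the classical observables $\Obcl_{\sE,\sL}(\RR_{\geq 0})$, which by Proposition~\ref{prop: 1dbfclass} is quasi-isomorphic to $\cF_{A,M}(\RR_{\geq 0})=\Sym(\fg[1])$ with zero differential. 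The map $\phi$ will reduce to the identity on associated gradeds, so the induced map on $E_1$ pages is an isomorphism; both spectral sequences converge, and the quasi-isomorphism follows.

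The hard part will be the chain-map verification. A priori $I[\Phi_L]$ is not polynomial in the fields---classical RG flow of the BF cubic produces tree-diagram corrections at every polynomial degree---so $\{I[\Phi_L],\phi(\vec x)[\Phi_L]\}_{\Phi_L}$ is an infinite sum of diagrams. The plan, by analogy with the tadpole cancellation in the proof of Proposition~\ref{prop: 1dBFQME} and with the computation of the differential on the Kac--Moody factorization envelope in Chapter~5 of \autocite{CG1}, is to show that all but the leading single-propagator diagram assemble into HRG-exact terms, leaving only the contribution that produces the CE differential. Making this combinatorial cancellation rigorous, while tracking the explicit $t$-dependence of the propagator \eqref{eq: 1dbfprop} near the boundary, is the most technical step.
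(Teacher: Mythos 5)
Your overall architecture matches the paper's: define $\cO_\omega[\Phi]$ by flowing the bare boundary observable in from scale zero (finiteness of this limit is the paper's Lemma \ref{lem: 1dbffiniteobservables}), check compatibility with observable HRG flow, verify the chain-map property diagrammatically, and conclude with the filtration by polynomial degree in the $B$-fields, whose associated graded comparison is exactly Proposition \ref{prop: 1dbfclass}. That final spectral-sequence step is essentially identical to the paper's.

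The gap is in the chain-map verification, and it is a substantive one. The mechanism you propose for producing $d_{CE}$ --- a single contraction of one $\alpha$-slot of the cubic vertex with one $\beta(0)$-slot of the observable --- cannot work. Arity alone rules it out: if the observable has order $n$ in the $B$-fields, a single contraction leaves $n-1$ boundary slots plus one $\alpha$-slot and one $\beta$-slot from the vertex, i.e.\ a functional of order $n+1$ with mixed $A$/$B$ dependence, which cannot equal $\cO_{d_{CE}\omega}$ (order $n-1$, purely $B$-dependent). Moreover, in the paper's Lemma \ref{lem: diff1dbfobs} this single-edge (``classical'') diagram is shown to contribute \emph{zero} in the $\epsilon, L\to 0$ limit, precisely because the $A$-field input satisfies the boundary condition at $t=0$, so the integrand acquires a factor of $t$ that kills the Gaussian weight. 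The Chevalley--Eilenberg chain differential is instead produced by the one-loop diagrams of Figure \ref{fig: quantumdiagram}, in which the interaction vertex is attached to the special vertex by \emph{two} edges --- one heat kernel and one propagator --- consuming two $\beta(0)$-slots and leaving the vertex's $\beta$-leg external; the explicit Gaussian computation gives analytic factor $1$ and Lie-algebraic factor $d_{CE}\omega$. This is forced by your own filtration argument: since the classical boundary observables carry zero differential (Proposition \ref{prop: 1dbfclass}), $d_{CE}$ on $C_\bullet(\fg)$ must be an order-$\hbar$ (loop) effect, not a tree-level one. Relatedly, your claim that $\Delta_\Phi\phi(\vec x)[\Phi]=0$ is only justified for the bare observable at scale $0$: RG flow against the cubic vertex introduces $\alpha$-dependence into $\phi(\vec x)[\Phi]$, so ``purely $\beta$-dependent, hence $\Delta$-closed'' fails at positive scale. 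The paper avoids this by first establishing the identity $\widehat{Q}_\Phi\cO_\omega[\Phi]=\lim_{\epsilon\to0}\cR_{\Phi_\epsilon,\Phi,I}\left(\{I,\cO_\omega[0]\}_{\Phi_\epsilon}\right)$, which uses $Q$- and $\Delta_\epsilon$-closedness only of the scale-zero observable together with the vanishing of the QME obstruction, and then evaluating the surviving diagrams explicitly; your plan to argue that ``all but the leading single-propagator diagram assemble into HRG-exact terms'' both lacks a proof and identifies the wrong leading term.
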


\begin{proposition}
\label{prop: 1dbfblkobs}
For the one-dimensional BF theory with $B$ boundary condition, we have a quasi-isomorphism
\begin{equation}
C^\bullet(\fg,C_\bullet\fg) \to \Obq_{\sE,\sL,poly}(\RR_{>0})\otimes \RR_{\hbar=1}.
\end{equation}
of pointed cochain complexes.
\end{proposition}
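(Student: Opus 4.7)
The plan is to adapt the proof of Proposition \ref{prop: 1dbfclass} to the quantum setting, tracking how the quantum differential assembles the full Chevalley-Eilenberg differential on $C^\bullet(\fg, C_\bullet(\fg))$ out of its classical part together with the BV Laplacian contribution. First I would construct a cochain map
\[
\Upsilon \colon C^\bullet(\fg, C_\bullet(\fg)) \to \Obq_{\sE,\sL,poly}(\RR_{>0}) \otimes_{\RR[\hbar]} \RR_{\hbar=1}
\]
by sending generators of $\Sym(\fg^\vee[-1]) \otimes \Sym(\fg[1])$ to smeared constant observables---integrate fields against a bump function $\phi\,dt$ of total integral $1$ supported in a small bulk interval, in the appropriate form degree for each factor---and then extending to higher symmetric powers via the commutative product on observables. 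This is the direct quantum analog of the map built from the sheaf $\cS$ in Proposition \ref{prop: 1dbfclass}.

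Second, I would verify that $\Upsilon$ strictly intertwines the differentials. On the source, the Chevalley-Eilenberg differential decomposes into the cochain differential on $\Sym(\fg^\vee[-1])$, the Chevalley-Eilenberg chain differential on $C_\bullet(\fg) = \Sym(\fg[1])$, and a cross term coming from the adjoint $\fg$-action on $C_\bullet(\fg)$. The classical piece $Q + \{I[L],\cdot\}_L$ of the quantum differential reproduces the cochain and cross-term pieces by the same computation as in Proposition \ref{prop: 1dbfclass}. The new ingredient is that $\hbar \Delta_L$ applied to products of smeared constants in $\fg[1]$ must reproduce the Chevalley-Eilenberg chain differential on $C_\bullet(\fg)$: using the explicit form of $K_{\Phi_L}$ in Equation \eqref{eq: 1dbfkernel}, the ``mirror'' component (supported near the anti-diagonal $t_1 + t_2 = 0$) contributes nothing in the bulk because the smearing functions are supported away from $t = 0$, and only the ``bulk'' component survives, yielding the canonical pairing $\fg \otimes \fg^\vee \to \RR$. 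Contracting this pairing against the Lie-bracket vertex in $I[L]$ produces precisely $[x,y]$ on $\Sym^2(\fg[1])$, with higher symmetric powers following from the Leibniz rule for $\Delta_L$. The unimodularity hypothesis on $\fg$ ensures, as in the proof of Proposition \ref{prop: 1dBFQME}, that no anomalous contribution to $\Delta_L$ spoils the identification.

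Third, I would show $\Upsilon$ is a quasi-isomorphism via a spectral sequence. Filter both sides by the symmetric degree in $\fg[1]$ (the ``$B$-weight''). On $C^\bullet(\fg, C_\bullet(\fg))$, the chain differential on $C_\bullet(\fg)$ and the cross term both strictly lower $B$-weight, so the associated graded retains only the cochain differential and equals $C^\bullet(\fg, \Sym(\fg[1]))$. On the target, $\hbar \Delta_L$ and the interaction bracket likewise strictly lower $B$-weight, so the associated graded equals $\Obcl_{\sE,\sL,poly}(\RR_{>0}) \otimes_{\RR[\hbar]} \RR_{\hbar=1}$. The induced map on associated graded objects is precisely the classical quasi-isomorphism of Proposition \ref{prop: 1dbfclass} restricted to $\RR_{>0}$. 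Since both filtrations are exhaustive and bounded below in each $B$-weight, the $E_1$-page isomorphism propagates to a quasi-isomorphism of the original complexes.

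The main obstacle will be the explicit computation in the second step: pinning down the combinatorics by which the BV Laplacian, together with the interaction $I[L]$, reproduces the Chevalley-Eilenberg chain differential on $C_\bullet(\fg)$ with the correct signs and structure constants, and verifying that this identification is compatible with the homotopical renormalization group flow so that the final map descends to $\hbar = 1$. This requires careful tracking of the smearing functions as $L$ varies, the Leibniz rule for $\Delta_L$ applied to products of observables, and the sign conventions relating $\fg[1]$ with $\fg^\vee[-1]$; the description of the differential via the Clifford-algebra model in Remark \ref{rmk: algstructure} should serve as a guide to the expected algebraic structure.
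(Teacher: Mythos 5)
Your overall architecture (build observables from a classical model, check the induced differential, then run a spectral sequence on the $B$-weight filtration against Proposition \ref{prop: 1dbfclass}) matches the paper's strategy, and the choice of smeared rather than point-supported observables is a legitimate variation. But step two contains a genuine error that breaks the cochain map. You claim that $\hbar\Delta_L$, fed through the Lie-bracket vertex of $I[L]$, reproduces the Chevalley--Eilenberg chain differential on $C_\bullet(\fg)$, i.e.\ sends a product of two $\fg[1]$-type observables to the observable for $[x,y]$. This cannot happen at any fixed scale: the BV heat kernel $K_{\Phi_L}$ of Equation \eqref{eq: 1dbfkernel} has Lie-algebraic factor in $\fg\otimes\fg^\vee\oplus\fg^\vee\otimes\fg$ only, so $\Delta_L$ and $\{\cdot,\cdot\}_L$ pair an $A$-field derivative against a $B$-field derivative and annihilate any pair of observables depending only on the $B$-field; moreover the scale-$L$ differential $\diffonnocrossdiff+\{I[L],\cdot\}_L+\hbar\Delta_L$ involves at most one kernel contraction with the interaction, so there is no diagram that turns $\Sym^2(\fg[1])$-observables into $\Sym^1(\fg[1])$-observables. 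The chain-differential term is simply absent at the cochain level.

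This is exactly the point the paper's proof has to confront: the differential actually induced on $\Sym(\fg[1]\oplus\fg^\vee[-1])$ (Lemma \ref{lem: diffon1dbfobsblk}, the heavy Feynman-diagram computation, including the integral evaluating to $1/3$ per quantum wheel-with-tails diagram) is not $d_{CE}$ but the conjugate $d^q=e^{\del_{tr}}d_{CE}e^{-\del_{tr}}$, in which the chain-differential term is traded for the third-order ``trivector'' term $\tfrac{1}{2}f^{ab}_c\,\del_a\del_b\del^c$; the identification with $C^\bullet(\fg,C_\bullet(\fg))$ is then made through the algebra automorphism $e^{\del_{tr}}$ before running the filtration argument. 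To repair your proof you would have to either precompose your map with this automorphism, or redo the diagrammatic computation for your smeared observables and show the induced differential is (a twist of) $d^q$ rather than $d_{CE}$ --- and in either case supply a finiteness statement analogous to Lemma \ref{lem: 1dbffiniteobservables} for the RG flow of your higher-order smeared observables. A smaller slip: in your filtration step the cross term $f^{ab}_c t_bt^c\del_a$ preserves, rather than lowers, the $B$-weight, which is why the associated graded comparison is against $C^\bullet(\fg,\Sym(\fg[1]))$ with its module differential, as in Proposition \ref{prop: 1dbfclass}, not against the complex with only the $C^\bullet(\fg)$ piece.
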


\begin{proof}[Proof of Proposition \ref{prop: 1dbfbdyobs}]
Let us begin by defining a linear map
\begin{equation}
    \cO_{\ast}: \Sym(\fg[1]) \to \Obq_{\sE,\sL,poly}(\RR_{\geq 0})\otimes_{\RR[\hbar]}\RR_{\hbar=1}.
\end{equation}
Let, for $\omega\in \Sym^k(\fg[1])$, $\cO_{\omega}[0]$ denote the following function of order $k$ on the space of $B$ fields
\begin{equation}
\cO_{\omega}[0](\beta\otimes \left(\chi_1\wedge \cdots \wedge \chi_k\right)) = (-1)^{|\beta|k}(\iota_{(0,\ldots, 0)}^* \beta) (\chi_1\wedge \cdots \wedge \chi_k)(\omega),
\end{equation}
where $\beta$ is a form on $(\RR_{\geq 0})^k$, the $\chi_i\in \fg^\vee$, and $\iota_{(0,\ldots, 0)}$ is the inclusion of the origin into~$(\RR_{\geq 0})^k$.
We define
\begin{equation}
\cO_\omega[\Phi]= \lim_{\epsilon\to 0}\frac{\d}{\d \delta} \rgflow{\Phi_\epsilon}{\Phi}{I+\delta \cO_\omega[0]}.
\end{equation}
Lemma \ref{lem: 1dbffiniteobservables} shows that this limit exists.

Moreover, 
\begin{align}
 &\rgobs{\Phi}{\Psi}(\cO_\omega[\Phi]) \nonumber\\
&= \frac{\d}{\d\delta}\left( \rgflow{\Phi}{\Psi}{I[\Phi]+\delta \lim_{\epsilon\to 0}\frac{\d}{\d \delta'} \rgflow{\Phi_\epsilon}{\Phi}{I+\delta' \cO_\omega[0]}}\right)\nonumber\\
&= \frac{\d}{\d\delta}\left( \rgflow{\Phi}{\Psi}{\lim_{\epsilon\to 0}\rgflow{\Phi_\epsilon}{\Phi}{I}+\delta \lim_{\epsilon\to 0}\frac{\d}{\d \delta'} \rgflow{\Phi_\epsilon}{\Phi}{I+\delta' \cO_\omega[0]}}\right)\nonumber\\
& = \frac{\d}{\d\delta}\left( \rgflow{\Phi}{\Psi}{\lim_{\epsilon\to 0}\rgflow{\Phi_\epsilon}{\Phi}{I+\delta \cO_\omega[0]}}\right)\nonumber\\
& =\lim_{\epsilon \to 0}\frac{\d}{\d \delta}\left(\rgflow{\Phi_\epsilon}{\Psi}{I+\delta\cO_\omega[0]}\right)\nonumber\\
& = \cO_{\omega}[\Psi],
\end{align}
which shows that $\cO_\omega[\Phi]$ satisfies the RG flow equation for observables.

Now, let $\widehat Q_{\Phi}$ denote the total differential
\begin{equation}
\diffonnocrossdiff+\{I[\Phi],\cdot \}_{\Phi}+\Delta_\Phi
\end{equation}
on the scale $\Phi$ observables.

We would like to show that
\begin{equation}
\label{eq: CEdiffgoal}
\widehat{Q}_\Phi \cO_\omega [\Phi]= \cO_{d_{CE}\omega}[\Phi].    
\end{equation}
That is the content of Lemma \ref{lem: diff1dbfobs}, below.
This completes the construction of a cochain map
\begin{equation}
   \Upsilon: C_\bullet(\fg) \to \Obq_{\sE,\sL,poly}(\RR_{\geq 0})\otimes_{\RR[\hbar]}\RR_{\hbar=1}.
\end{equation}
It is manifest that $\Upsilon$ sends the point $1\in C_\bullet(\fg)$ to the point
\[
1\in\Obq_{\sE,\sL,poly}(\RR_{\geq 0})\otimes_{\RR[\hbar]}\RR_{\hbar=1}.
\]
To show that $\Upsilon$ is a quasi-isomorphism, we note that both sides are filtered---the left-hand side by degree in the symmetric algebra, and the right-hand side by degree of dependence on $B$ fields---and the map $\Upsilon$ preserves this filtration.
The associated graded object on the left-hand side is $\Lambda^{-\bullet}\fg$, while the associated graded on the right-hand side is the algebra of global observables for the bulk-boundary system.
The associated graded map $\Upsilon$ is the inverse quasi-isomorphism of 
Proposition \ref{prop: 1dbfclass}.
Hence $\Upsilon$ is also a quasi-isomorphism; this completes the proof.
\end{proof}

\begin{proof}[Proof of Proposition \ref{prop: 1dbfblkobs}]
The proof is similar to the proof of \ref{prop: 1dbfbdyobs}, and combines Lemmas \ref{lem: 1dbffiniteobservables} and \ref{lem: diffon1dbfobsblk}, and Proposition \ref{prop: 1dbfclass}

First, for $\alpha\in \Sym^k(\fg[1]\oplus \fg^\vee[-1])$ and a fixed $t>0$, we will define a function $\cO_{\alpha,t}[0]$ on the space of fields as follows.
Let $\phi\in \Omega^\bullet(\RR_{\geq 0}^k)\otimes (\fg[1]\oplus \fg^\vee[-1])^{\otimes k}$.
 
Let $\iota_t: \{\ast\} \to (\RR_{\geq 0})^k$ denote the inclusion of the point $(t,\ldots, t)$ into $(\RR_{\geq 0})^k$.
We define
\begin{equation}
    \cO_{\alpha,t}[0](\phi) = \alpha( \iota_t^* \phi),
\end{equation}
where we have used the natural pairing on $\Sym^k(\fg[1]\oplus \fg^\vee[-1])$ to pair $\alpha$ and $\iota^*_t\phi$.
Lemma \ref{lem: 1dbffiniteobservables} shows that $\cO_{\alpha,t}[0]$ gives rise to a \emph{global} observable for the one-dimensional BF theory.
This observable is manifestly supported on $\RR_{>0}\subset \RR_{\geq 0}$; in fact, it is supported on any open neighborhood of $\{t\}$.

We must now show that the differential on the quantum observables induces the desired one on $\Sym(\fg[1]\oplus \fg^\vee[-1])$.
To this end, we will study $C^\bullet(\fg, C_\bullet(\fg))$ not in its standard incarnation, but in the following form.
The underlying graded vector space of $C^\bullet(\fg,C_\bullet(\fg))$ is $\Sym(\fg[1]\oplus \fg^\vee[-1])$, and the symbols $t^a,t_a$ generate this symmetric algebra.
Let $\del_a=\frac{\del}{\del t^a}$ and $\del^a= \frac{\del}{\del t_a}$.
Define (using the Einstein summation convention throughout)
\begin{equation}
    \del_{tr}:= \del_a\del^a, \quad d^q := e^{\del_{tr}}d_{C^\bullet(\fg,C_\bullet(\fg))}e^{-\del_{tr}};
\end{equation}
One may check that $d^q$ has the following expression:
\begin{equation}
    \frac{1}{2}f^{ab}_ct_a t_b \del^c+ f^{ab}_c t_bt^c \del_a +\frac{1}{2}f^{ab}_c \del_{a}\del_b\del^c.
\end{equation}

The first two terms in the above expression constitute the differential on $C^\bullet(\fg, \Sym(\fg[1]))$; the third has a geometric interpretation described in Remark \ref{rmk: shiftedPoissonBF}.

Let us denote by $\sO^q_{P_0}(T^*B\fg)$ the graded vector space $\Sym(\fg[1]\oplus \fg^\vee[-1])$ together with the differential $d^q$.
Lemmas \ref{lem: 1dbffiniteobservables} and \ref{lem: diffon1dbfobsblk} establish a cochain map from $\sO^q_{P_0}(T^*B\fg)$ into the quantum observables.
The map preserves the natural filtrations on both sides---the filtration by symmetric powers of $\fg[1]$ in $\sO^q_{P_0}(T^*B\fg)$ and the filtration by symmetric powers of dependence on the $B$-fields in the quantum observables, and is manifestly the inverse to the map constructed in Proposition \ref{prop: 1dbfclass} at the level of associated graded spaces.
The Proposition follows.
\end{proof}

\begin{remark}
\label{rmk: shiftedPoissonBF}
Let us comment briefly on the notation $\sO^q_{P_0}(T^*B\fg)$ for the cochain complex introduced above.
The cochain complex $C^\bullet(\fg, \Sym(\fg[1]))$ can be understood as the algebra of functions on the 0-shifted symplectic formal moduli problem $T^*B\fg$.
An avatar of this fact is that the above algebra is manifestly a dg Poisson, or $P_1$, algebra.
By Poisson/Rozenblyum-Safronov additivity \autocite{safronovadditivity}, one may view a $P_1$ algebra as an algebra in the category of $P_0$-algebras.
In particular, one may forget a $P_1$ structure on a space down to a (homotopy) $P_0$ structure.
Proposition 2.16 of \autocite{safronovpoissonlie} can be adapted \autocite{safronovpersonalcomm} to show that the homotopy $P_0$ structure on $C^\bullet(\fg, \Sym(\fg[1]))$ is described by a vanishing bivector and the degree +1 trivector
\begin{equation}
    f^{ab}_c \del_a\wedge \del_b\wedge \del^c;
\end{equation}
the differential $d^q$ described above is precisely the deformation of the Chevalley-Eilenberg differential on $C^\bullet(\fg,\Sym(\fg[1]))$ by the BV Laplacian associated to this $P_0$ structure.
\end{remark}

In the remainder of this section, we prove the technical lemmas which were necessary in the proofs of Propositions \ref{prop: 1dbfbdyobs} and \ref{prop: 1dbfblkobs}.
First, we construct observables $\cO_\omega$ (respectively $\cO_{\alpha,t}$) with support in $\RR_{\geq 0}$ (respectively, $\RR_{>0})$ for each $\omega\in \Sym(\fg[1])$ (respectively,~$\alpha\in \Sym(\fg[1]\oplus \fg^\vee[-1])$):

\begin{lemma}
\label{lem: 1dbffiniteobservables}
Let $\cO_{\omega}[0]$, $\cO_{\alpha,t}[0]$ be as in the proofs of Propositions \ref{prop: 1dbfbdyobs} and \ref{prop: 1dbfblkobs}, respectively.
Then the limits
\begin{align}
\cO_\omega[\Phi]&= \lim_{\epsilon\to 0} \rgobs{\Phi_\epsilon}{\Phi}(\cO_\omega[0])\nonumber\\
\cO_{\alpha,t}[\Phi]&= \lim_{\epsilon\to 0} \rgobs{\Phi_\epsilon}{\Phi}(\cO_{\alpha,t}[0])
\end{align}
exist in the natural topology on $\sO(\condfields(\RR_{\geq 0}))$.
\end{lemma}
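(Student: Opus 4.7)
The proof proposal is to argue via a diagrammatic analysis entirely parallel to that of Proposition \ref{prop: 1dbffinite}, treating the observable as inserting a single extra ``special'' vertex whose valence and form-degree demands are dictated by $\omega$ or $(\alpha,t)$. The first step is to enumerate the connected Feynman diagrams contributing to $\rgobs{\Phi_\epsilon}{\Phi}(\cO_\omega[0])$ and to $\rgobs{\Phi_\epsilon}{\Phi}(\cO_{\alpha,t}[0])$. As in Proposition \ref{prop: 1dbffinite}, the directed trivalent structure of BF theory together with the fact that the propagators in Equation \eqref{eq: 1dbfprop} are $0$-forms on $M\times M$ forces every loop to be a wheel, each of whose internal vertices must receive at least one $1$-form input. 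Hence the diagrams that appear split into: (i) trees with the special vertex among them, and (ii) wheels with a tree ``tail'' attached, one vertex of which is the special vertex. By a standard combinatorial bound (identical to the one used to justify \ref{deflem: rgflow}), only finitely many diagrams of each order and genus occur.

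Next I would dispose of the tree diagrams. These contribute the classical observable RG flow $\mathrm{cl}W_{\Phi_\epsilon}^{\Phi}$, and Lemma \ref{lem: classRG} (applied to the functional $I + \delta \cO$ as in the computation following \eqref{eq: uvobstr}) shows that the $\epsilon\to 0$ limit exists in the topology of $\sO(\condfields)$ and produces a smooth-first-derivative functional. For $\cO_{\alpha,t}[0]$ this is immediate since the evaluation at an interior point $t>0$ keeps the integrand away from the boundary singularities of the propagator; for $\cO_{\omega}[0]$ one uses that the boundary value of the propagator is controlled via Lemma \ref{lem: propdefined}, so that the pullback to $t_1=0$ of any tree amplitude remains well-defined.

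The remaining and most involved step is to handle the wheel contributions; here the analysis is a direct transcription of the estimates in the proof of Proposition \ref{prop: 1dbffinite}. Let $\gamma$ be such a wheel with $k$ internal edges, of which one edge (or a prescribed set of edges, at most one per wheel) terminates on the special vertex. Using the explicit expression \eqref{eq: 1dbfpropexplicitly} for $\int_\epsilon^L e^{-u^2/4L'}/(L')^{3/2}\,\d L'$ and decomposing each edge according to whether it gives the ``direct'' ($t_i - t_{i+1}$) or ``image'' ($t_i + t_{i+1}$) summand of \eqref{eq: 1dbfprop}, one reduces $w_\gamma^{an}(\epsilon,L,f)$ to an iterated integral of Gaussian profiles against the test section $f$ determined by the inputs and by $\omega$ (resp.\ $\alpha$). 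Exactly as in the estimates around \eqref{eq: weightbound}, bounding $|f|\le C$ and $\supp(f)\subset K$ uniformly in a bounded subset of $\condfieldscs^{\hotimes_\beta m}$ and partitioning the integration region according to whether each $u_i$ is larger or smaller than a cutoff $\delta$, one obtains the uniform convergence of $w_\gamma^{an}(\epsilon,L,f)\to w_\gamma^{an}(0,L,f)$ as $\epsilon\downarrow 0$.

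The main obstacle is the boundary observable case $\cO_\omega[0]$, where the special vertex carries a delta-function insertion at $t=0$: the pulled-back input $\iota_{(0,\ldots,0)}^*\beta$ only sees the value at the corner of $(\RR_{\geq 0})^k$, and naively this could interact badly with the boundary behavior of the propagator, since several edges of the wheel may terminate on the origin. The plan to overcome this is to use that, by the construction in Section \ref{sec: 1dbfkernelprop}, the propagator is the $\involution$-invariant part of a smooth pseudodifferential kernel on the double $\Mdbl$, so both its ``direct'' and ``image'' summands have smooth boundary values in each argument; consequently the weight $w_\gamma^{an}(\epsilon,L,\cdot)$ evaluated on inputs whose last argument is a delta at $t=0$ can be rewritten as an ordinary Gaussian integral on $\RR_{\geq 0}^{k-1}$ whose integrand is continuous up to the boundary. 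The uniform bound of Proposition \ref{prop: 1dbffinite} then applies verbatim, showing the desired limit exists and depends continuously on the inputs; passing to symmetric powers and $\hbar$-linear extension yields well-defined elements $\cO_\omega[\Phi], \cO_{\alpha,t}[\Phi]$ of the scale-$\Phi$ observables.
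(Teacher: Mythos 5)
Your overall strategy (diagram-by-diagram uniform estimates of the kind used for Proposition \ref{prop: 1dbffinite}) is the right one, but your enumeration of the contributing diagrams is off, and this is exactly where the paper's proof does its real work. The paper severs all edges incident on the special vertex and observes that the resulting ``type I'' components must be \emph{trees}: a wheel built from interaction vertices can never attach to the rest of the diagram, because each trivalent BF vertex needs exactly one $1$-form input, its two wheel edges already carry $0$-form propagators, and a third propagator (the one connecting toward the special vertex) leaves no slot for a $1$-form, so the weight vanishes identically by form-degree counting. Consequently there is no ``wheel estimate'' to transcribe at all; the only genus contributions that survive are (a) loose edges joining two half-edges of the special vertex and (b) loops that pass \emph{through} the special vertex and a chain of interaction vertices, and after severing both reduce to explicit error-function factors times tree weights, which converge by the same bounds as in Proposition \ref{prop: 1dbffinite}. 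Your case (ii), ``wheels with a tree tail attached,'' is therefore either identically zero or, when read as ``the special vertex lies on the wheel,'' is only possible for $\cO_{\alpha,t}$; you never record the vanishing, and you never explicitly treat the loose edges.

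Relatedly, the scenario you single out as the main obstacle --- the boundary observable $\cO_\omega[0]$ sitting on a wheel, with several wheel edges terminating at the corner $t=0$ --- cannot occur: the special vertex for $\cO_\omega[0]$ has only outgoing half-edges (it pairs only with $B$-field inputs), so no loop can close through it, and the delta insertion at the origin only ever meets \emph{tree} weights, where the propagator factors $f_0(u)\int_{u/\sqrt{4L}}^{u/\sqrt{4\epsilon}}e^{-v^2}\,\d v$ are uniformly bounded and the convergence argument of Proposition \ref{prop: 1dbffinite} applies directly. So the step you flag as most involved addresses diagrams that do not contribute, while the diagrams that actually carry the quantum corrections are only implicitly covered. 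Once you correct the taxonomy --- sever at the special vertex, discard type I wheels by the form-degree argument, note that loose edges contribute the manifestly convergent factor $\tfrac{4f_0(2t)}{\sqrt{4\pi}}\int_{t/\sqrt{L}}^{t/\sqrt{\epsilon}}e^{-v^2}\,\d v$, and run the product-of-error-function bounds on the remaining tree components --- the estimates you cite do suffice, and your use of Lemma \ref{lem: classRG} for the purely tree-level part is a legitimate (if unnecessary) alternative to the paper's direct estimate.
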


\begin{proof}
It suffices to check this for $\Phi=\Phi_L$, since the continuity of RG flow will guarantee that this limit exists for any other parametrix $\Phi$.
A diagram $\gamma$ representing a term in~$\rgobs{\Phi_\epsilon}{\Phi_L}(\cO_{\omega}[0])$ (respectively, $\rgobs{\Phi_\epsilon}{\Phi_L}(\cO_{\alpha,t}[0])$) can be decomposed as follows.
First note that---supposing $\omega\in \Sym^k(\fg[1])$ (respectively,m $\alpha\in \Sym^k(\fg[1]\oplus \fg^\vee[-1])$)---$\gamma$ consists of one special vertex with $k$ ``out'' half-edges and any number of vertices of valence 3 with two ``in'' half-edges and one ``out'' half-edge.
Let us sever any edges incident on the special vertex.
The result is a disconnected graph consisting of an isolated $k$-valent vertex; some number of ``loose'' edges that, in $\gamma$, connect pairs of half-edges of the special vertex; and a disjoint union of graphs appearing in $\rgflow{\epsilon}{L}{I}$.
Let us call the connected graphs of this latter form ``type I graphs.''
The edges in the original graph $\gamma$ which connect the special vertex to the type I graphs simply tell us where to input the smooth function $P(\Phi_L-\Phi_\epsilon)(t_1=t,t_2)$ as inputs into the type I graphs.
Let us observe a number of further simplifications which can be made on the graphs appearing in $\rgobs{\Phi_\epsilon}{\Phi_L}$.

First, note that the ``loose'' edges can only appear for the observable $\cO_{\alpha, t}[0]$, since $\cO_\omega[0]$ has only ``out'' half-edges, which cannot join to form edges.
The ``loose'' edges contribute factors of
\begin{equation}
    \frac{4f_0(2t)}{\sqrt{4\pi}}\int_{t/\sqrt{L}}^{t/\sqrt{\epsilon}} e^{-v^2}dv,
\end{equation}
which manifestly possesses an $\epsilon\to 0$ limit.
Hence, we may assume without loss of generality that no such ``loose'' edges appear in $\gamma$, as they contribute a convergent factor.

Now, we assess which type I graphs may occur in a diagram $\gamma$.
Note that type I wheel graphs cannot appear, since such a diagram would involve three propagators incident on a single vertex; because the propagator is a zero-form, and a vertex requires exactly one one-form input, such an incidence of propagators yields zero.
We may further assume that exactly one type I tree appears, since the analytic weight associated to multiple trees will simply be a product of weights associated to the individual components.
To describe the analytic part of the weights of diagrams with one type I tree, let us denote by $\gamma'$ this single tree.
Let $E(\gamma')$ denote the set of (internal) edges of $\gamma'$.
Let $T(\gamma')$ denote the set of external edges or tails of $\gamma'$.
Let $IT(\gamma')\subset T(\gamma')$ denote the set of tails which are connected to the special vertex in $\gamma$.
Let $V(\gamma')$ denote the set of (internal) vertices of $\gamma'$, and choose some bijection $\{1,\ldots, m\} \to V(\gamma')$.
For any $e\in E(\gamma')$, let $v_b(e)$ denote the vertex at the beginning of $e$ and $v_e(e)$ denote the vertex at the end of $e$ (our graphs are directed; hence, this is well-defined).
Similarly, for any $p\in IT(\gamma')$, let $v_i(p)$ denote the vertex in $\gamma'$ to which it is attached.
Let $\beta: E(\gamma')\to \{-1,1\}$ be a labeling of the edges of $\gamma'$ by elements of $\ZZ/2\ZZ$.
Finally, let $t_1,\ldots, t_m$ be the natural coordinates on $(\RR_{\geq 0})^{V(\gamma')}$ induced from the fixed bijection $\{1,\ldots, m\}\to V(\gamma')$, 
and
\begin{align}
    u_e &= t_{v_b(e)}-\beta(e)t_{v_e(e)}, \quad e\in E(\gamma')\\
    u_p &= t_{v_i(p)}, \quad p\in IT(\gamma')
\end{align}
Then, the analytic weight associated to $\gamma$ is a sum over possible $\beta$ of the following weight:
\begin{align}
    w^{an}_{\beta}(&\epsilon, L,f; \cO):=\nonumber\\
    C&\int_{(\RR_{\geq 0})^m} f(t_1,\ldots, t_m)\nonumber\\
    &\prod_{f\in IT(\gamma')}\left( f_0(u_f)\int_{u_f/\sqrt{4L}}^{u_f/\sqrt{4\epsilon}} e^{-v^2}dv\right)\prod_{e\in E(\gamma')} \left( f_0(u_e) \int_{u_e/\sqrt{4L}}^{u_e/\sqrt{4\epsilon}} e^{-v^2}dv\right)\d t_1 \cdots \d t_m.
\end{align}
It follows by the same sort of reasoning as in the proof of Proposition \ref{prop: 1dbffinite} that this integral admits a finite $\epsilon \to 0$ limit uniformly in $f$.
\end{proof}

The remaining lemmas in this section show that the differential on $\Obq_{\sE,\sL,poly}$ induces the desired differentials on $\Sym(\fg[1])$ and~$\Sym(\fg[1]\oplus \fg^\vee[-1])$.

\begin{lemma}
\label{lem: diff1dbfobs}
Let $\widehat Q$ denote the total differential on $\Obq_{\sE,\sL}(\RR_{\geq 0})$.
The equality
\begin{equation}
    \widehat Q \cO_{\omega}= \cO_{d_{CE}\omega}
\end{equation}
holds.
\end{lemma}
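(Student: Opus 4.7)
The plan is to exploit the compatibility between the total differential $\widehat Q_\Phi$ and the observable HRG flow in order to reduce the problem to a ``scale $\epsilon$'' computation where the bracket and BV Laplacian are manifestly well-defined. By Proposition \ref{prop: 1dBFQME}, $I[\Phi]$ satisfies the scale $\Phi$ QME, so the proof of Lemma \ref{lem: obsrgflow} adapts to show that $\rgobs{\Phi_\epsilon}{\Phi}$ intertwines $\widehat Q_{\Phi_\epsilon}$ with $\widehat Q_\Phi$. Combined with the continuity of $\rgobs{\Phi_\epsilon}{\Phi}$ established in Lemma \ref{lem: 1dbffiniteobservables}, this yields
\begin{equation}
\widehat Q_\Phi \cO_\omega[\Phi] \;=\; \lim_{\epsilon\to 0} \rgobs{\Phi_\epsilon}{\Phi}\bigl( \widehat Q_{\Phi_\epsilon} \cO_\omega[0] \bigr),
\end{equation}
so that the lemma reduces to showing $\lim_{\epsilon\to 0} \widehat Q_{\Phi_\epsilon} \cO_\omega[0] = \cO_{d_{CE}\omega}[0]$ as equivalence classes of observables.

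Next I would dispatch the easy summands of $\widehat Q_{\Phi_\epsilon} \cO_\omega[0] = \diffonnocrossdiff\cO_\omega[0] + \{I, \cO_\omega[0]\}_{\Phi_\epsilon} + \hbar\Delta_{\Phi_\epsilon}\cO_\omega[0]$. The BV Laplacian term vanishes identically because $\cO_\omega[0]$ depends only on the $B$-field while $K_{\Phi_\epsilon}$ is (up to the remainder $R$) a sum of terms each of which pairs an $A$-leg with a $B$-leg, so second variation in $B$ alone is zero; the remainder $R$ contributes a term vanishing faster than any power of $\epsilon$ by Equation \eqref{eq: remaindervanishing}. The de Rham term $\diffonnocrossdiff\cO_\omega[0]$ is easily seen to be zero on $\sL$-conditioned fields: $\cO_\omega[0]$ only involves the pullback of $\beta$ to $t=0$, and under the $B$-boundary condition $\beta|_{t=0}$ takes values in $\fg^\vee$ independently of derivatives of $\beta$.

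The main step is to compute $\{I, \cO_\omega[0]\}_{\Phi_\epsilon}$. Writing $\omega = x_1\cdots x_k \in \Sym^k(\fg[1])$, this bracket requires contracting one of the two $\alpha$-legs of the cubic vertex $I$ with one of the $k$ $\beta$-inputs of $\cO_\omega[0]$ through the propagator $P(\Phi_\epsilon)$ with one argument fixed at $t=0$. Using Equation \eqref{eq: 1dbfprop}, the evaluation $P(\Phi_\epsilon)(0,t_2)$ has the two pieces (the diagonal and the reflected one) that combine so that the $\tdownup$ parts cancel while the $\tupdown$ parts add, giving a kernel proportional to $\tupdown$ alone, times an integral in $L'$ that reduces to a smooth function of $t_2$ (the singular parts on the diagonal and anti-diagonal coincide at $t_1=0$). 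After contracting indices with the structure constants coming from $[\alpha,\alpha]$ in $I$, one reads off the Chevalley--Eilenberg coproduct $d_{CE}(\omega) = \sum_{i<j} \pm [x_i,x_j]\wedge x_1\wedge\cdots \widehat{x_i}\cdots\widehat{x_j}\cdots \wedge x_k$, with the correct sign by a Koszul-sign bookkeeping.

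The main obstacle will be reconciling the combinatorial and numerical prefactors: the $\tfrac{1}{(k+1)!}$ in $I$ (Equation \eqref{eq: interactiondef}), the factor $k$ from choosing which $\beta$-leg of $\cO_\omega[0]$ to contract, the factor $2$ from the image-charge doubling of the propagator at the boundary (which cancels the $\tfrac12$ already present in the definition of $P(\Phi)$, Equation \eqref{eq: propagatordef}), and the antisymmetrization implicit in $\omega \in \Sym(\fg[1])$ (which is $\Lambda^\bullet\fg$ after the shift). I expect these combinatorial factors to combine in precisely the right way to give $d_{CE}\omega$ on the nose, but this bookkeeping, together with verifying that the $\epsilon\to 0$ limit of the resulting functional genuinely represents $\cO_{d_{CE}\omega}[0]$ in the quotient by RG-flow equivalence, is the main technical point of the proof.
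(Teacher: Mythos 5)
There is a genuine gap, and it sits exactly in the step you identify as ``the main step.'' You claim that the Chevalley--Eilenberg differential arises from the tree-level bracket $\{I,\cO_\omega[0]\}_{\Phi_\epsilon}$, obtained by contracting one $\alpha$-leg of the cubic vertex with one $\beta$-leg of $\cO_\omega[0]$. This cannot work for order reasons: $\cO_\omega[0]$ has $k$ legs (all accepting $B$-fields), the interaction vertex has three, and a single contraction leaves a functional of order $k+1$, whereas $\cO_{d_{CE}\omega}[0]$ has order $k-1$. So no sign/prefactor bookkeeping can turn this term into $\cO_{d_{CE}\omega}$. In fact, in the paper's analysis this very term is the ``classical diagram,'' and its weight tends to zero as $\epsilon,L\to 0$, precisely because the external $A$-type input is forced by the $B$ boundary condition to vanish at $t=0$ (one writes $f(t,t)=t\,g(t)$ and the Gaussian concentration at the boundary then kills the integral). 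The actual source of $d_{CE}\omega$ is a \emph{one-loop} diagram that your argument never produces: two legs of the special vertex are contracted with the two $A$-legs of a single interaction vertex, one contraction through $K_{\Phi_\epsilon}$ and the other through the propagator $P(\Phi_\epsilon,\Phi)$ supplied by the RG flow, leaving the interaction vertex's single $B$-leg external; this reduces the order from $k$ to $k-1$, its Lie-algebraic factor is exactly $d_{CE}\omega$, and its analytic factor is a boundary Gaussian integral equal to $1/2$ per diagram (times two diagrams, from the choice of which edge carries $K$ versus $P$). By discarding the Laplacian's role on the grounds that $\Delta_{\Phi_\epsilon}\cO_\omega[0]=0$ (true, but beside the point) and never letting the flow generate loop corrections to the bracket term, your computation would output $0$ rather than $\cO_{d_{CE}\omega}$.

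A secondary, fixable issue is your reduction $\widehat Q_\Phi\cO_\omega[\Phi]=\lim_{\epsilon\to 0}\rgobs{\Phi_\epsilon}{\Phi}(\widehat Q_{\Phi_\epsilon}\cO_\omega[0])$: the scale-$\epsilon$ differential involves $\{I[\Phi_\epsilon],\cdot\}_{\Phi_\epsilon}$, and $I[\Phi_\epsilon]$ does not converge as $\epsilon\to 0$, so one cannot simply commute the limit past the differential. The paper instead differentiates Costello's compatibility identity between RG flow and the QME applied to $I+\delta\cO_\omega[0]$ with the \emph{bare} interaction, uses Proposition \ref{prop: 1dBFQME} to discard $D_\Phi I^\Phi_\epsilon$, and arrives at
\begin{equation}
\widehat Q_\Phi\,\cO_\omega[\Phi]\;=\;\lim_{\epsilon\to 0}\,\cR_{\Phi_\epsilon,\Phi,I}\Big(\tfrac{1}{2}\cO_\omega[0]\{I,I\}_{\Phi_\epsilon}+\{I,\cO_\omega[0]\}_{\Phi_\epsilon}\Big),
\end{equation}
after which the diagrammatics (including the loop diagrams generated by $\cR$) and the explicit boundary Gaussian estimates do the real work, finishing with the $L\to 0$ limit to identify the result with $\cO_{d_{CE}\omega}[0]$. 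You would need to redo your main step along these lines; as written, the proposal proves the wrong identity.
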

\begin{proof}
Let $\delta$ be a parameter of square-zero, and for $J \in \sO(\condfieldscs)$, $\cO\in \sO(\condfields)$, define
\begin{align}
    D_\Phi J &= QJ + \frac{1}{2}\{J,J\}_\Phi+\Delta_\Phi J\\
    D_{\Phi,J}\cO &= Q\cO + \{J,\cO\}_\Phi + \Delta_\Phi \cO\\
    \cR_{\Phi_1,\Phi_2, J}(\cO) & = \frac{d}{d \delta}\rgflow{\Phi_1}{\Phi_2}{J+\delta \cO}.
\end{align}
Let us note a few identities which follow from these definitions:
\begin{align}
    D_\Phi(J+\delta \cO) &= D_\Phi J + D_{\Phi,J}\cO\\
    \rgobs{\Phi}{\Psi}(\cO) &= \cR_{\Phi,\Psi, I[\Phi]}(\cO)\\
    (D_{\Phi}J)e^J&= (Q+\Delta_\Phi)e^J\\
    \widehat Q_\Phi \cO &= D_{\Phi, I[\Phi]}\cO
\end{align}

Now, we apply the equation at the end of the proof of Lemma 9.2.2 of Chapter 5 of \autocite{cost} (this equation encodes the compatibility of RG flow with the QME):

\begin{align}
    D_\Phi\Big( \rgflow{\Phi_\epsilon}{\Phi}{I+\delta \cO}\Big) &\exp\Big( \rgflow{\Phi_\epsilon}{\Phi}{I+\delta \cO}\Big)\nonumber\\
    &=\exp\left( \partial_{P(\Phi-\Phi_\epsilon)}\right) D_{\Phi_\epsilon}(I+\delta \cO) \exp(I+\delta \cO).
\end{align}
Let us analyze the parts of this equation which are linear in $\delta$. 
With the aim of compactness of notation, let us set
\begin{equation}
I_\epsilon^\Phi:= \rgflow{\Phi_\epsilon}{\Phi}{I}
\end{equation}
We find
\begin{align}
    \left(D_{\Phi, I_\epsilon^\Phi}(\cR_{\Phi_\epsilon,\Phi,I}(\cO))+\cR_{\Phi_\epsilon,\Phi,I}(\cO) D_{\Phi}I_{\epsilon}^\Phi\right)&\exp\left(I_\epsilon^\Phi\right)\nonumber\\
    &= \exp\left( \partial_{P(\Phi-\Phi_\epsilon)}\right) \left(\cO D_{\Phi_\epsilon}I+ D_{\Phi_\epsilon,I}\cO\right) \exp(I)\nonumber\\
    &=\cR_{\Phi_\epsilon,\Phi,I}\left( \cO D_{\Phi_\epsilon}I+D_{\Phi_\epsilon,I}\cO\right)\exp\left( I^\Phi_\epsilon\right)
    \label{eq: epsto0limitobsrg}
\end{align}
Let us suppose that $\cR_{\Phi_\epsilon,\Phi,I}(\cO)$ admits an $\epsilon\to 0$ limit $\cO[\Phi]$
(which happens, for example, if $\cO=\cO_{\omega}[0]$).
Then, upon taking the $\epsilon\to 0$ limit on the left-hand side of Equation \eqref{eq: epsto0limitobsrg}, the term $D_\Phi I^\Phi_\epsilon$ goes to zero by the arguments of Proposition \ref{prop: 1dBFQME}.
Meanwhile, $D_{\Phi,I^\Phi_\epsilon}\cO\to \widehat {Q}_\Phi \cO[\Phi]$ as $\epsilon\to 0$.
We therefore find
\begin{equation}
\label{eq: obsdiff}
    \widehat {Q}_{\Phi}\cO[\Phi] = \lim_{\epsilon \to 0}\cR_{\Phi_\epsilon,\Phi,I}\left(\cO D_{\Phi_\epsilon}I +D_{\Phi_\epsilon, I}\cO\right).
\end{equation}
Let us simplify the right-hand side of Equation \eqref{eq: obsdiff} in the case that $\cO= \cO_\omega [0]$.
We note that $\cO_\omega[0]$ is $\diffonnocrossdiff$- and $\Delta_\Phi$-closed for all $\Phi$; hence 
\begin{equation}
D_{\Phi_\epsilon,I}\cO_\omega[0]=\{I,\cO_\omega[0]\}_{\Phi_\epsilon}.
\end{equation}
In the proof of Proposition \ref{prop: 1dBFQME}, we saw that 
\begin{equation}
    D_{\Phi_\epsilon}I = \frac{1}{2}\{I,I\}_{\Phi_\epsilon}.
\end{equation}
We therefore find that $\widehat{Q}_\Phi \cO[\Phi]$ may be represented by the $\epsilon\to 0$ limit of a sum of directed graphs.
The graphs appearing in this sum have the following properties:
\begin{itemize}
    \item They have one special vertex of valence $k$, where~$\omega\in \Sym^k(\fg[1])$, with only outgoing half-edges.
    \item They have any number of trivalent vertices with two incoming half-edges and one outgoing half-edge.
    \item There is a special edge, labelled by $K_{\Phi_\epsilon}$.
    \item The remaining edges are labelled by $P(\Phi_\epsilon,\Phi)$.
\end{itemize}
To complete the proof, we will show
\begin{equation}
    \label{eq: findingCEdiff}
    \lim_{L\to 0}\widehat{Q}_{\Phi_L} \cO_\omega[\Phi_L]= \cO_{d_{CE}\omega}[0].
\end{equation}
A simple manipulation with the observable RG flow equation shows that Equation \eqref{eq: findingCEdiff} implies \eqref{eq: CEdiffgoal}.

Let us first note that we will ignore the term $R(t_1,t_2,\epsilon)$ in $K_{\Phi_\epsilon}$, since this term vanishes uniformly in $t_1,t_2$ to all orders in $\epsilon$ as $\epsilon\to 0$, so will cause the Feynman weights under consideration to vanish.

We make an additional observation which reduces the number of Feynman graphs $\gamma$ under consideration.
First, note that---as in the proof of Lemma \ref{lem: 1dbffiniteobservables}---upon severing the edges incident on the special vertex in $\gamma$, we obtain a disjoint union of connected graphs appearing in $\rgflow{\Phi_\epsilon}{\Phi_L}{I}$.
Precisely one of these connected graphs contains the special edge, or is attached to the special vertex via the special edge.
The remainder of the connected graphs are labelled on their edges by the propagator, and are attached to the special vertex by a propagator.
By arguments similar to those used in the proof of Proposition \ref{prop: 1dbffinite}, these will contribute 0 in the $\epsilon \to 0$, $L\to 0$ limit.
As long as the part of the diagram containing the special vertex admits an $\epsilon\to 0, L\to 0$ limit, we may ignore these contributions.
Hence, we may assume that $\gamma$ consists of a single connected graph appearing in the graphical expansion for $I[\Phi_\epsilon]$ attached to the special vertex.
Precisely one edge of $\gamma$ is labeled by a heat kernel.
We may assume that this edge is incident on the special vertex of $\gamma$, for otherwise, because of the boundedness of the propagators and because of Proposition \ref{prop: 1dBFQME}, the resulting diagrams give a trivial $\epsilon \to 0$, $L\to 0$ contribution.

To describe the analytic part of the weights of such diagrams, let us consider the graph $\gamma'$ obtained by severing the edges incident on the special vertex in $\gamma$.

Let $E(\gamma')$ denote the set of (internal) edges of $\gamma'$.

Let $T(\gamma')$ denote the set of external edges or tails of $\gamma'$.

Let $IT(\gamma')\subset T(\gamma')$ denote the set of tails which are connected to the special vertex in $\gamma$ via an edge labeled by a propagator.
Let $p_0$ denote the tail of $\gamma'$ which attaches to the special vertex via the special edge.

Let $V(\gamma')$ denote the set of (internal) vertices of $\gamma'$, and choose some bijection 
\[
\{1,\ldots, m\} \to V(\gamma').
\]
of sets.

For any $e\in E(\gamma')$, let $v_b(e)$ denote the vertex at the beginning of $e$ and $v_e(e)$ denote the vertex at the end of $e$ (our graphs are directed; hence, this is unambiguous).
Similarly, for any $p\in IT(\gamma')\sqcup \{p_0\}$, let $v_i(p)$ denote the vertex to which it is attached.
Let $\beta: E(\gamma')\to \{-1,1\}$ be a labeling of the edges of $\gamma'$ by elements of $\ZZ/2\ZZ$.

Finally, let $t_1,\ldots, t_m$ be the natural coordinates on $(\RR_{\geq 0})^{V(\gamma')}$ induced from the fixed bijection $\{1,\ldots, m\}\to V(\gamma')$, 
and
\begin{align}
    u_e &= t_{v_b(e)}-\beta(e)t_{v_e(e)}, \quad e\in E(\gamma')\\
    u_p &= t_{v_i(p)}, \quad p\in IT(\gamma')\sqcup\{p_0\}
\end{align}
Then, the analytic weight associated to $\gamma$ is a sum, as $\beta$ varies over all possible set maps $E(\gamma')\to \ZZ/2\ZZ$, of the weights:
\begin{align}
    O^{an}_{\gamma,\beta}(&\epsilon, L,f; \cO_\omega[0]):=\nonumber\\
    C&\int_{(\RR_{\geq 0})^m}f(t_1,\ldots, t_m)\prod_{p\in IT(\gamma')}\left( f_0(u_p)\int_{u_p/\sqrt{4L}}^{u_p/\sqrt{4\epsilon}} e^{-v^2}dv\right)\nonumber\\
    &\times\frac{f_0(u_{p_0})}{\sqrt{\epsilon}}\exp\left( -\frac{u_{p_0}^2}{4\epsilon}\right)\prod_{e\in E(\gamma')} \left( f_0(u_e) \int_{u_e/\sqrt{4L}}^{u_e/\sqrt{4\epsilon}}e^{-v^2}\d v\right)\d t_1 \cdots \d t_m.
\end{align}

As in the proof of Proposition \ref{prop: 1dBFQME}, let us first suppose that there is an internal edge $e_0$ in $\gamma'$.

Let us suppose with some loss of generality that $v_i(p_0)=v_1$ and $v_b(e_0)=v_2$, $v_b(e_0)=v_3$
(there remains the possibility that $e_0$ is incident on $v_i(p_0)$).
For all $e\neq e_0$ and $p\neq p_0$, we may bound the integral over $v$ by $\sqrt{\pi}$; hence, we obtain the bound
\begin{align}
    |O^{an}_{\gamma, \beta}(\epsilon,L,f;&\cO_\omega[0])|\leq C'\int_{(\RR_{\geq 0})} \frac{e^{-\frac{(u_{p_0})^2}{4\epsilon}}}{\sqrt{\epsilon}}\d t_{v_i(p_0)}\, \nonumber\\
    &\times \int_{K\subset (\RR_{\geq 0})^{m-1}} \left(\int_{u_{e_0}/\sqrt{4L}}^{{u_{e_0}}/\sqrt{4\epsilon}}e^{ -\frac{u_{e_0}^2}{4\epsilon}} e^{-v^2} \right)\d v \d t_1 \cdots \widehat {\d t_{v_i(p_0)}} \cdots \d t_m,
\end{align}
where $K$ is the (compact) intersection of the support of $f$ with the $(\RR_{\geq 0})^{m-1}$ spanned by all the $t_i$ excluding $t_{v_i(p_0)}$.
The integral over $t_{v_i(p_0)}$ is independent of $\epsilon$; the remaining integral goes to zero as $\epsilon \to 0$ and $L\to 0$.

Now, if $e_0$ is incident on $v_i(p_0)$, so we may assume $v_b(e_0)=v_1$, $v_e(e_0)=v_2$, then we may change change coordinates to $s_1 = t_1$, $s_2 = t_1 -\beta(e_0)t_2$, $s_3=t_3,\ldots, s_m=t_m$.
The above bound then suffices once again.

It therefore remains to consider diagrams for which there is no such edge $e_0$.
Figure \ref{fig: diagrams1dbf} shows the only two possible classes of such diagrams.
We may assume without loss of generality that $\omega\in \fg[1]$ for the first diagram, and for the second diagram that $\omega\in \Sym^2(\fg[1])$.

\begin{figure}
     \centering
    \begin{subfigure}[b]{0.4\textwidth}
         \centering
         \begin{tikzpicture}
         \draw[fermion, draw = red] (0,0) node (a) {$\star$}--+(45:2.6) node (c){};
         \draw[fermion] (a.center)--+(65:2.6);
         \draw[fermion] (a.center)--+(135:2.6);
         \draw[fermion] (a.center)--+(115:2.6);
         \path (a.center) --+(0,2) node (b){$\cdots$};
         \draw[Bhalfedge] (c.center) -- +(-15: 2);
         \draw[fermionbar] (c.center) -- +(105 : 2);
         \end{tikzpicture}
         \caption{The classical diagram}
         \label{fig: classicaldiagram}
     \end{subfigure}
     \hfill
            \begin{subfigure}[b]{0.4\textwidth}
         \centering
         \begin{tikzpicture}
         \path (0,0) -- +(55:2.6) node (c){};
         \draw[fermion, draw =red] (0,0) node (a) {$\star$} to[bend right] (c.center);
         \draw[fermion] (a.center) to[bend left] (c.center);
         \draw[fermion] (a.center)--+(135:2.6);
         \draw[fermion] (a.center)--+(115:2.6);
         \path (a.center) --+(0,2) node (b){$\cdots$};
         \draw[Bhalfedge] (c.center) -- +(55: 2);
         \end{tikzpicture}
         \caption{The quantum diagram. There are two such diagrams, one for each choice of red edge.}
         \label{fig: quantumdiagram}
     \end{subfigure}
        \caption{The diagrams contributing to the computation of $\widehat Q \cO_\omega$. The red edges are the special ones.}.
        \label{fig: diagrams1dbf}
\end{figure}
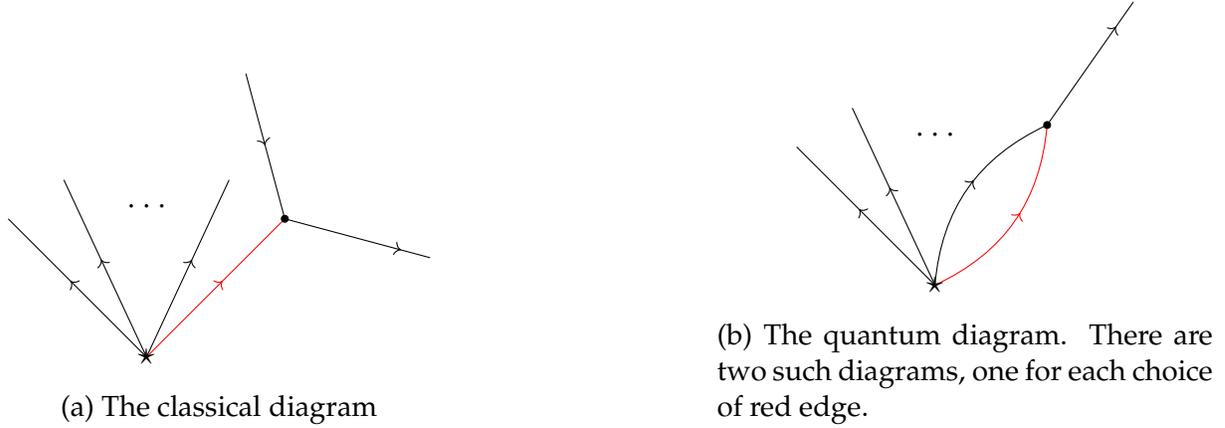

The analytic weight associated to the first diagram is the following functional.
Let $f\in \cinfty((\RR_{\geq 0})^2)$ be such that $f(0,t_2)=0$ for all $t_2$.
(This is an avatar of the enforcement of the boundary condition on the fields.)
The classical diagram gives the weight:
\begin{align}
    O(f)=\lim_{\epsilon \to 0}\int_0^\infty-2\frac{f_0(t)f(t,t)}{\sqrt{4\pi \epsilon}}e^{-\frac{t^2}{4\epsilon}}\d t.
\end{align}
Let us use that $f(t,t)=tg(t)$ for some continuous, compactly-supported $g$. Then, we find that
\begin{equation}
|O(f)|\leq C\lim_{\epsilon \to 0}\int_0^\infty \frac{t}{\sqrt{4\pi \epsilon}}e^{-\frac{t^2}{4\epsilon}}\d t=0.
\end{equation}
It therefore remains to consider the quantum diagram.
As we have already noted, it suffices to consider the case that $\omega \in \Lambda^2\fg$.
Moreover, the Lie-algebraic factor from the quantum diagram is precisely the term $d_{CE}\omega$.
It remains only to compute the analytic factor, for $f\in \cinfty(\RR_{\geq 0})$:
\begin{align}
O^q(f):= \lim_{L\to 0}\lim_{\epsilon \to 0}\int_{[\epsilon,L]}\int_0^\infty \frac{t\,f_0(t)^2\,f(t)}{2\pi\sqrt{\epsilon}(L')^{3/2}}\exp \left( -\frac{t^2}{4}\left(\frac{1}{\epsilon}+\frac{1}{L'}\right)\right) \d t\, \d L'.
\end{align}
Now, we Taylor expand $t\,f_0(t)^2\, f(t)= t f(0)+ t^2 g(t)$, where $g$ is some function with compact support on $\RR_{\geq 0}$.
We therefore have
\begin{align}
O^q(f)&= \lim_{L\to 0}\lim_{\epsilon \to 0}f(0)\int_{[\epsilon,L]}\int_0^\infty \frac{t}{2\pi\sqrt{\epsilon}(L')^{3/2}}\exp \left( -\frac{t^2}{4}\left(\frac{1}{\epsilon}+\frac{1}{L'}\right)\right) \d t\, \d L'\nonumber\\
&+\lim_{L\to 0}\lim_{\epsilon \to 0}\int_{[\epsilon,L]}\int_0^\infty \frac{t^2\, g(t)}{2\pi\sqrt{\epsilon}(L')^{3/2}}\exp \left( -\frac{t^2}{4}\left(\frac{1}{\epsilon}+\frac{1}{L'}\right)\right) \d t\, \d L'.
\end{align}
Using simple bounds derived from the AM-GM inequality, one may show that the second summand above is zero, while the first summand gives $f(0)/2$.
Once we remember that there are \emph{two} quantum diagrams (the second one obtained by switching the roles of $P$ and $K$ in the first diagram),
we find that we obtain Equation \eqref{eq: findingCEdiff}.
\end{proof}

\begin{lemma}
\label{lem: diffon1dbfobsblk}
Let $\widehat Q$ denote the differential on the bulk observables of 1d BF theory.
The equality 
\begin{equation}
    \widehat Q \cO_{\alpha,t} =\cO_{d^q\alpha,t}
\end{equation}
holds, where $d^q$ is the differential on $\Sym(\fg[1]\oplus \fg^\vee[-1])$ defined in the proof of Proposition \ref{prop: 1dbfblkobs}.
\end{lemma}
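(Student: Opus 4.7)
The proof will closely parallel Lemma \ref{lem: diff1dbfobs}, reusing the compatibility of the QME with RG flow that was derived there. Namely, starting from the identity
\begin{equation}
\widehat{Q}_{\Phi} \cO_{\alpha,t}[\Phi] = \lim_{\epsilon \to 0} \cR_{\Phi_\epsilon, \Phi, I}\bigl( \cO_{\alpha,t}[0] \cdot D_{\Phi_\epsilon} I + D_{\Phi_\epsilon, I} \cO_{\alpha,t}[0] \bigr),
\end{equation}
the first term vanishes in the $\epsilon \to 0$ limit by Proposition \ref{prop: 1dBFQME}, and one is reduced to analyzing the asymptotic behavior of Feynman diagrams arising from $D_{\Phi_\epsilon, I} \cO_{\alpha, t}[0] = Q \cO_{\alpha,t}[0] + \{I, \cO_{\alpha,t}[0]\}_{\Phi_\epsilon} + \Delta_{\Phi_\epsilon}\cO_{\alpha,t}[0]$. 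To compute $\widehat{Q} \cO_{\alpha,t}$ it suffices by the RG-flow equation to show
\begin{equation}
\lim_{L \to 0} \widehat{Q}_{\Phi_L} \cO_{\alpha,t}[\Phi_L] = \cO_{d^q \alpha, t}[0].
\end{equation}

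The first step will be to classify the surviving Feynman diagrams. As in Lemma \ref{lem: diff1dbfobs}, severing the edges incident on the special vertex produces a disjoint union of connected subgraphs, of which exactly one contains the distinguished edge (labeled by $K_{\Phi_\epsilon}$); the remaining tree subgraphs attached only via propagators are harmless by essentially the same bounds used earlier. Because $t > 0$ now, the parametrix splits into a ``bulk'' piece supported near the diagonal (function of $t_1 - t_2$) and a ``mirror'' piece supported near the antidiagonal (function of $t_1 + t_2$). Since we may take $L$ arbitrarily small compared to $t$, the mirror piece contributes nothing in the $L \to 0$ limit, and the analysis reduces to the one on $\mathbb{R}$ familiar from the $\partial M = \emptyset$ case.

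Next, I would enumerate the surviving graph topologies. The tree diagrams with one trivalent interaction vertex attached to the special vertex via the distinguished edge produce, in the $\epsilon, L \to 0$ limit, precisely the two classical pieces $\tfrac{1}{2} f^{ab}_c t_a t_b \partial^c + f^{ab}_c t_b t^c \partial_a$ of the Chevalley–Eilenberg differential on $C^\bullet(\fg, \Sym(\fg[1]))$, by a direct Gaussian computation analogous to the one computing $\cO_{d_{CE}\omega}$ in Lemma \ref{lem: diff1dbfobs}. The quantum loop piece $\tfrac{1}{2} f^{ab}_c \partial_a \partial_b \partial^c$ will arise from two sources that must be combined: the direct action of $\Delta_{\Phi_\epsilon}$ on $\cO_{\alpha,t}[0]$ (which is nonzero here because $\alpha$ has both $\fg[1]$ and $\fg^\vee[-1]$ legs, unlike $\omega \in \Sym(\fg[1])$ in the boundary case), and the wheel diagram where an interaction vertex is joined to the special vertex by two propagators and one heat-kernel edge.

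The main obstacle will be verifying that the coefficients conspire correctly to give the trivector $\tfrac{1}{2} f^{ab}_c$ and in particular that the terms coming from $\Delta_{\Phi_\epsilon} \cO_{\alpha,t}[0]$ combine with the wheel-diagram contributions without extraneous residuals. The essential analytic input is the identity
\begin{equation}
\lim_{L \to 0} \lim_{\epsilon \to 0} \int_{[\epsilon, L]} \int_{\mathbb{R}} \frac{u}{2\pi \sqrt{\epsilon} (L')^{3/2}} \exp\Bigl(-\tfrac{u^2}{4}\bigl(\tfrac{1}{\epsilon} + \tfrac{1}{L'}\bigr)\Bigr) \, du \, dL' = \tfrac{1}{2},
\end{equation}
together with the Taylor-expansion trick from Lemma \ref{lem: diff1dbfobs}, applied now with no boundary-vanishing simplification. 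Once the tree-level combinatorics and the single nontrivial loop integral are in hand, the identification with $d^q$ --- which is Remark~\ref{rmk: shiftedPoissonBF}'s deformation of $d_{CE}$ by the BV Laplacian associated to the trivector $f^{ab}_c \partial_a \wedge \partial_b \wedge \partial^c$ --- follows by comparing Lie-algebraic factors.
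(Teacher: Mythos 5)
Your tree-level analysis and the observation that the mirror (antidiagonal) terms die because $t>0$ match the paper's argument, but your treatment of the loop term has a genuine gap. First, the claimed contribution from the direct action of $\Delta_{\Phi_\epsilon}$ on $\cO_{\alpha,t}[0]$ is actually zero: the BV heat kernel $K_{\Phi_\epsilon}$ carries de Rham degree one (it is proportional to $\d t_1-\d t_2$, resp. $\d t_1+\d t_2$), while $\cO_{\alpha,t}[0]$ only sees $\iota_t^*\phi$; contracting two slots with the kernel therefore involves $\iota_t^*\otimes\iota_t^*K_{\Phi_\epsilon}=0$, so there is no such term to combine with. Second, your ``essential analytic input'' is both incorrectly valued and attached to the wrong diagrams: the integrand $u\,e^{-u^2(\epsilon^{-1}+L'^{-1})/4}$ is odd in $u$, so the integral over $\RR$ is $0$, not $1/2$. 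This oddness is exactly the mechanism by which the paper shows that the one-loop diagrams in which the interaction vertex is joined to the special vertex by a single propagator plus the heat-kernel edge vanish in the bulk; the $[0,\infty)$ version of that integral, which does give $1/2$, belongs to the boundary computation of Lemma \ref{lem: diff1dbfobs}, where the boundary condition removes half the range.

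Consequently the trivector $\tfrac12 f^{ab}_c\,\del_a\del_b\del^c$ must come entirely from the diagram you mention only in passing and never analyze: the special vertex joined to one trivalent interaction vertex by three edges, two propagators and one heat kernel. Its analytic weight is a double scale integral,
\begin{equation}
\frac{1}{(4\pi)^{3/2}}\int_\epsilon^L\int_\epsilon^L\int_0^\infty (t-t_2)^2\, e^{-(t-t_2)^2\left(\frac{1}{4\epsilon}+\frac{1}{4L_1}+\frac{1}{4L_2}\right)}\frac{1}{\sqrt{\epsilon\,(L_1)^3(L_2)^3}}\,\d t_2\,\d L_2\,\d L_1,
\end{equation}
whose integrand involves an even power of $t-t_2$ (so no parity cancellation); it evaluates in the $\epsilon\to 0$, $L\to 0$ limit to $1/3$, and summing over the three choices of which edge carries the heat kernel yields the coefficient $1$. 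Without this computation the quantum part of $d^q$ is not established, and your stated accounting (a nonzero $\Delta_{\Phi_\epsilon}\cO$ term plus a one-propagator wheel governed by the $1/2$ identity) would in fact produce the wrong answer.
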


\begin{proof}
The proof resembles that of Lemma \ref{lem: diff1dbfobs}.
Just as in the proof there, we may reduce to diagrams in which there is one non-special vertex.
The diagrams of Figure \ref{fig: diagrams1dbf} appear (though the tails emanating from the special vertex may point either towards or away from it now).
We will need to recompute the weights associated to these diagrams, however.
We obtain in addition the two new quantum diagrams depicted in Figure \ref{fig: newdiagrams1dbf}.

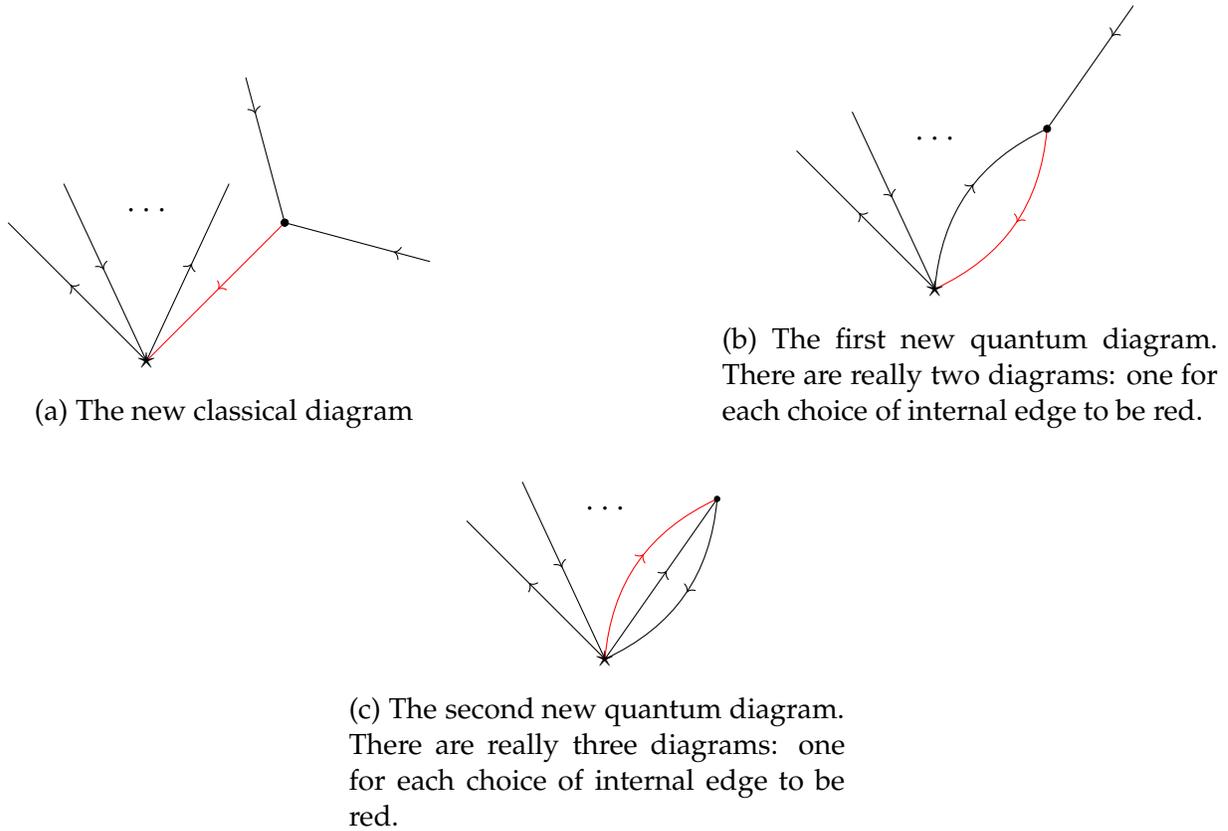
\begin{figure}
     \centering
    \begin{subfigure}[b]{0.40\textwidth}
         \centering
         \begin{tikzpicture}
         \draw[fermionbar, draw = red] (0,0) node (a) {$\star$}--+(45:2.6) node (c){};
         \draw[fermion] (a.center)--+(65:2.6);
         \draw[fermion] (a.center)--+(135:2.6);
         \draw[fermionbar] (a.center)--+(115:2.6);
         \path (a.center) --+(0,2) node (b){$\cdots$};
         \path (c.center) -- +(-15: 2) node (e){};
         \draw[Ahalfedge] (e.center)--(c.center);
         \path (c.center) -- +(105 : 2) node (d){};
         \draw[Ahalfedge] (d.center)--(c.center);
         \end{tikzpicture}
         \caption{The new classical diagram}
         \label{fig: 1newclassicaldiagram}
     \end{subfigure}
     \hfill
            \begin{subfigure}[b]{0.40\textwidth}
         \centering
         \begin{tikzpicture}
         \path (0,0) -- +(55:2.6) node (c){};
         \draw[fermionbar, draw =red] (0,0) node (a) {$\star$} to[bend right] (c.center);
         \draw[fermion] (a.center) to[bend left] (c.center);
         \draw[fermion] (a.center)--+(135:2.6);
         \draw[fermionbar] (a.center)--+(115:2.6);
         \path (a.center) --+(0,2) node (b){$\cdots$};
         \path (c.center) -- +(55: 2) node (d) {};
         \draw[Ahalfedge] (d.center)--(c.center);
         \end{tikzpicture}
         \caption{The first new quantum diagram. There are really two diagrams: one for each choice of internal edge to be red.}
         \label{fig: 1newquantumdiagram}
     \end{subfigure}

    \begin{subfigure}[b]{0.40\textwidth}
         \centering
         \begin{tikzpicture}
         \path (0,0) -- +(0,3);
         \path (0,0) -- +(55:2.6) node (c){};
         \draw[fermionbar] (0,0) node (a) {$\star$} to[bend right] (c.center);
         \draw[fermion,draw = red] (a.center) to[bend left] (c.center);
         \draw[fermion] (a.center) -- (c.center);
         \draw[fermion] (a.center)--+(135:2.6);
         \draw[fermionbar] (a.center)--+(115:2.6);
         \path (a.center) --+(0,2) node (b){$\cdots$};
         \filldraw[black] (c.center) circle (1pt);
         \end{tikzpicture}
         \caption{The second new quantum diagram. There are really three diagrams: one for each choice of internal edge to be red.}
         \label{fig: 2newquantumdiagram}
     \end{subfigure}
        \caption{The diagrams contributing to the computation of $\widehat Q \cO_{\alpha,t}$ which did not appear in Figure \ref{fig: diagrams1dbf}. The red edges are the special ones.}.
        \label{fig: newdiagrams1dbf}
\end{figure}

The original classical diagram will contribute precisely the term in the differential on  $C^\bullet(\fg,C_\bullet(\fg))$ arising from the action of $\fg$ on $C_\bullet(\fg)$.
Similarly, the new classical diagram will contribute the Chevalley-Eilenberg differential for $C^\bullet(\fg)$.
Both of these claims follow from the following identity, true for $t>0$:
\begin{equation}
    f(t) = \lim_{\epsilon \to 0} \frac{1}{\sqrt{4\pi \epsilon}}\int_{0}^\infty f(s) \left( f_0(s-t)\exp\left(-\frac{(s-t)^2}{4\epsilon}\right)\pm f_0(s+t) \exp\left(-\frac{(s+t)^2}{4\epsilon}\right)\right)ds;
\end{equation}
the limit converges uniformly as $f$ varies over bounded subsets of $\cinfty_c(\RR_{\geq 0})$.

We will only sketch the derivation of this identity, as it is similar to many other computations we have already done.
For $t>0$, the term involving $e^{-(s+t)^2/4\epsilon}$ vanishes, uniformly in $s$, faster than any power of $\epsilon$ as $\epsilon \to 0$.
It therefore does not contribute to the computation.
For the term involving $e^{-(s-t)^2/4\epsilon}$, we proceed just as in the case without boundary: we change coordinates from $s$ to $s-t$, Taylor expand $f(s)f_0(t-s)$ around $s=t$, and compute the Gaussian integral.

The first quantum diagram and the old quantum diagram give similar contributions. For $\beta:\{1,2\}\to \{-1,1\}$ a map, we need to consider
\begin{align}
    O^{q}_{\beta,blk}(\epsilon,L, f)&\nonumber\\
    = \int_\epsilon^L&\frac{1}{\sqrt{(4\pi)^2 \epsilon L_1^3}}\int_0^\infty f(t_2) f_0(t-\beta(1) t_2)(t-\beta(1)t_2)e^{-\frac{(t-\beta(1)t_2)^2}{4L_1}}\nonumber\\
    &\times f_0(t-\beta(2)t_2)e^{-\frac{(t-t_2)^2}{4\epsilon}}\d t_2\, \d L_1,
\end{align}
Any term with a factor of $e^{-(t+t_2)^2/4L_1}$ or  $e^{-(t+t_2)^2/4\epsilon}$ will vanish upon taking $\epsilon\to 0$ and $L\to 0$ because $t>0$ and $e^{-t^2/4T}$ vanishes to all orders in $T$ as $T\to 0$.
Hence, we are left to consider $\beta(1)=1=\beta(2)$:
\begin{align}
    O^{q}_{\beta=1,blk}(\epsilon,L, f)&\nonumber\\
    = \int_\epsilon^L\frac{1}{\sqrt{(4\pi)^2 \epsilon L_1^3}}\int_0^\infty& f(t_2) f_0^2(t-t_2)(t-t_2)e^{-\left(\frac{1}{4L_1}+\frac{1}{4\epsilon}\right)(t-t_2)^2}\d t_2 \, \d L_1
\end{align}
We may Taylor expand $f(t_2)f_0(t-t_2)$ in $t_2$ about $t_2=t$.
It suffices to consider only the zeroth order term in the Taylor expansion because any higher orders may be bounded by crude bounds by integrals that vanish as $\epsilon \to 0$ and $L\to 0$.
Hence, we would like to consider the integral
\begin{align}
    \int_\epsilon^L&\frac{1}{\sqrt{(4\pi)^2 \epsilon L_1^3}}\int_0^\infty (t-t_2) e^{-\left(\frac{1}{4L_1}+\frac{1}{4\epsilon}\right)(t-t_2)^2}\d t_2 \, \d L_1\nonumber\\
    = \int_\epsilon^L&\frac{1}{\sqrt{(4\pi)^2 \epsilon L_1^3}}\int_{-t}^\infty (-t'_2) e^{-\left(\frac{1}{4L_1}+\frac{1}{4\epsilon}\right)(t'_2)^2}\d t'_2 \, \d L_1.
\end{align}
We may divide the integral over $t_2$ into two regions: the first, where $t'_2\in [-t,t]$, for which the symmetry of the integrand gives 0; and the second, where $t'_2\in [t,\infty)$.
Again, because $t>0$, one may use the fact that the exponential factor will go to zero faster than any power of $\epsilon$ or $L_1$ to show that the integral vanishes upon taking $\epsilon\to 0$ and $L\to 0$.

Finally, let us consider the second new quantum diagram.
The Lie algebraic factor manifestly gives
\begin{equation}
  \frac{1}{2}f^{ab}_c \del_a\del_b \del^c;
\end{equation}
it suffices to show that the analytic integral gives unity.
As for the other quantum diagrams, the only integral which will not vanish in the $\epsilon\to 0$, $L\to 0$ limit involves $t-t_2$ (and not $t+t_2$).
The only possibly non-vanishing contribution in the $\epsilon\to 0$, $L\to 0$ limit therefore comes from the integral
\begin{equation}
    O^{q,2}_{blk}(\epsilon,L)=\frac{1}{(4\pi)^{3/2}}\int_\epsilon^L\int_\epsilon^L \int_0^\infty (t-t_2)^2 e^{ -(t-t_2)^2\left(\frac{1}{4\epsilon}+\frac{1}{4L_1}+\frac{1}{4L_2}\right)}\frac{1}{\sqrt{\epsilon (L_1)^3(L_2)^3}}\d t_2 \, \d L_2\, \d L_1
\end{equation}
We may integrate over $t_2\in (-\infty,\infty)$, since the contribution from $t_2\in (-\infty,0)$ will give zero under the limit $L\to 0$, $\epsilon\to 0$ for the same reason that the contributions with $(t+t_2)$ in them do when we integrate over $t_2\in(0,\infty)$.
Then, after performing the Gaussian integral, we find that
\begin{align}
    O^{q,2}_{blk}(\epsilon,L) & \sim \frac{1}{16\pi} \int_\epsilon^L\int_\epsilon^L \left(\frac{1}{4\epsilon}+\frac{1}{4L_1}+\frac{1}{4L_2}\right)^{-3/2}\frac{1}{\sqrt{\epsilon (L_1)^3(L_2)^3}} \d L_2\, \d L_1\nonumber\\
    & = \frac{1}{2\pi\sqrt{\epsilon}}\int_\epsilon^L \int_\epsilon^L \frac{1}{\left(L_1+L_2+L_1L_2/\epsilon\right)^{3/2}}\d L_2\, \d L_1,
\end{align}
where the symbol $\sim$ means ``equals upon taking $\lim_{L\to 0}\lim_{\epsilon\to 0}$.''
Let us now compute the integral:
\begin{align}
     &\int_\epsilon^L \frac{1}{\left(L_1+L_2+L_1L_2/\epsilon\right)^{3/2}}\d L_2\, \d L_1\nonumber\\
     &= \int_{\epsilon}^L\frac{1}{(L_1)^{3/2}}\int_{\epsilon}^L \frac{1}{\left(1+L_2\left( \frac{1}{\epsilon}+\frac{1}{L_1}\right)\right)^{3/2}}\d L_2\, \d L_1\nonumber\\
     &= -2\int_{\epsilon}^L \frac{1}{(L_1)^{3/2}}\left( \frac{1}{\epsilon}+\frac{1}{L_1}\right)^{-1}\left( \frac{1}{\sqrt{1+\frac{L}{\epsilon}+\frac{L}{L_1}}}-\frac{1}{\sqrt{1+1+\frac{\epsilon}{L_1}}}\right)\d L_1\nonumber\\
     &= -2\epsilon \int_\epsilon^L \frac{1}{\epsilon+L_1}\left( \frac{1}{\sqrt{L_1+\frac{L L_1}{\epsilon}+ L}}-\frac{1}{\sqrt{2L_1+\epsilon}}\right)\d L_1.
     \label{eq: someintermediatecomputations}
\end{align}
Let us compute an indefinite integral of the form 
\begin{equation}
    \int \frac{1}{(a+x)\sqrt{bx+c}}\d x,
\end{equation}
where $a,b,$ and $c$ are all greater than zero.
The integral in Equation \eqref{eq: someintermediatecomputations} can then be computed by taking $(a,b,c)=(\epsilon,1+L/\epsilon,L)$ or $(a,b,c)=(\epsilon, 2,\epsilon)$.
We manifestly have $ab-c=\epsilon>0$ in both cases, so we may further assume this in our computation.
Set $u=\sqrt{bx+c}/\sqrt{ab-c}$. Then, we find
\begin{equation}
    \int \frac{1}{(a+x)\sqrt{bx+c}}\d x =\int \frac{2}{\sqrt{ab-c}(1+u^2)} = \frac{2 \arctan(u)}{\sqrt{ab-c}},
\end{equation}
so the integral from Equation \eqref{eq: someintermediatecomputations} evaluates to
\begin{equation}
    -4\sqrt{\epsilon} \left( \arctan\left(\sqrt{\frac{L}{\epsilon}\left(2+\frac{L}{\epsilon}\right)}\right)-2\arctan \left(\sqrt{ 1+\frac{2L}{\epsilon}}\right) +\frac{\pi}{3}\right).
\end{equation}
Therefore, we find that
\begin{equation}
    O^{q,2}_{blk}(\epsilon,L) \sim \frac{1}{3}.
\end{equation}
Once we take into account that there are three choices of ``red edge'' amongst the quantum diagrams of this class, this completes the proof.
\end{proof}




\addtocontents{toc}{\vspace{2em}} 

\appendix 

\chapter{Functional Analysis (Going Under the Hood)}
\label{chap: appendix}
In the body of the text, we have taken a number of tensor products and inner homs of convenient vector spaces.
The goal of this appendix is to give an explicit characterization of these tensor products and inner homs.

\section{Bornological tensor products in the presence of boundary conditions}
\label{sec: tensorproduct}

How to find the correct ``natural'' tensor product of topological vector spaces is a notoriously subtle question.
In some situations there are options that are appealing for several reasons.
For instance, given two vector bundles $V_1\to M_1$ and $V_2\to M_2$, 
let $\sV_1$ and $\sV_2$ denote the locally convex topological vector spaces consisting of the smooth global sections of $V_1$ and $V_2$, respectively. 
There is a standard isomorphism (of topological vector spaces)
\begin{equation}
\sV_{1}\hotimes_\pi \sV_2 \cong \cinfty(M_1\times M_2, V_1\boxtimes V_2),
\end{equation}
where $V_1\boxtimes V_2$ is the external tensor product of the bundles $V_1$ and $V_2$
and $\hotimes_\pi$ is the completed projective tensor product of locally convex topological vector spaces.
In this case the geometrically attractive answer matches a completion that is natural from functional analysis.
Its main technical role is in the proof that the observables form a factorization algebra.

When we work with free bulk-boundary theories, we would like a similar geometric understanding of the completed bornological tensor product $\hotimes_\beta$ for spaces of sections with boundary conditions imposed.
From the point of view of the paper, 
this appendix is devoted to proving that $(\condfieldscs[1](U))^{\hotimes_\beta k}$
is isomorphic to the space of compactly-supported sections of $E^{\boxtimes k}$ over $U^{\times k}$ whose $j$th tensor factor lies in $L\oplus \Eb \,\d t$ when the corresponding $M$ coordinate lies on~$\del M$. 
But it is natural to treat several generalizations and variants of this fact. 

To state these generalizations, let $M_1,\cdots, M_k$ be manifolds with boundary; $V_1\to M_1, \cdots, V_k\to M_k$ be vector bundles on the $M_i$; and $W_1\subset V_1\big|_{\partial M_1},\cdots, W_k\subset V_k\big|_{\partial M_k}$ be subbundles of the indicated bundles. 
Breaking slightly with our usual notation, we will let $\sV_i:= \cinfty(M, V_i)$ and $\sW_i:= \cinfty(\del M, W_i)$,
i.e. we use the script letters to denote the spaces of global sections of vector bundles instead of the corresponding sheaves of sections.
\begin{notation}
Define 
\begin{equation}
(\sV_i)_{W_i}:= \{ \sigma \in \sV_i \mid \sigma|_{\del M_i} \in \cinfty (M_i,W_i)\}.
\end{equation}
The space $(\sV_i)_{W_i}$ is a closed subspace of $\sV_i$; since the latter space is nuclear Fr\'{e}chet, the former is as well. More categorically, $(\sV_i)_{W_i}$ is the pullback
\begin{equation}
\begin{tikzcd}
(\sV_i)_{W_i} \ar[r]\ar[d]& \sV_i\ar[d]\\
\sW_i \ar[r]& \cinfty(\del M, V_i\mid_{\del M})
\end{tikzcd}
\end{equation}
in any of the categories $\mathrm{LCTVS}$ (of locally convex vector spaces), $\CVS$, $\DVS$.
\end{notation}

\begin{notation}
Define
\begin{equation}
\tp:= \cinfty(M_1\times\cdots\times M_k, V_1\boxtimes \cdots \boxtimes V_k),
\end{equation}
and
\begin{align}
\tpbc&:=\\
\big \{ \sigma \in &\tp \mid \sigma (x_1,\cdots, x_k)\in (V_1)_{x_1}\otimes \cdots \otimes (W_i)_{x_i}\otimes \cdots \otimes (V_k)_{x_k} \text{ when } x_i\in \partial M_i \big\};
\end{align}
in other words, $\tpbc$ consists of sections of $V_1\boxtimes \cdots \boxtimes V_k$ whose $i$-th tensor factor belongs to $W_i$ whenever the corresponding coordinate lies in $\del M_i$. We endow $\tpbc$ with the topology which it inherits as a subspace of $\tp$. The resulting locally convex topological vector space $\tpbc$ is nuclear Fr\'{e}chet, since it is a closed subspace of $\tp$. $\tpbc$ can be described as a limit in the category of topological vector spaces. More precisely, it is the simultaneous limit of all the diagrams of the form 
\begin{equation}
\begin{tikzcd}
&\cinfty(M_1\times \cdots \times\del M_i\times\cdots \times M_k, V_1\boxtimes\cdots \boxtimes W_i\boxtimes \cdots \boxtimes V_k)\ar[d]\\
\tp\ar[r]& \cinfty(M_1\times \cdots \times \del M_i \times \cdots\times M_k, V_1\boxtimes \cdots\boxtimes (V_i)\mid_{\del M_i}\boxtimes \cdots \boxtimes V_k).
\end{tikzcd}
\end{equation}
as $i$ ranges from 1 to $k$. 
\end{notation}

Note that the continuous multilinear map 
\begin{equation}
\sV_1\times\cdots\times \sV_k \to \tp,
\end{equation}
when restricted to $(\sV_1)_{W_1}\times \cdots \times (\sV_k)_{W_k}$, has image in $\tpbc$, so there is a natural map 
\begin{equation}
\fS : (\sV_1)_{W_1}\widehat{\otimes}_\pi \cdots \widehat{\otimes}_\pi (\sV_k)_{W_k}\to \tpbc.
\end{equation}

We can establish similar notations when we require compact support for sections of the $V_i$. Let us choose compact subsets $\cK_i\subset M_i$. We choose to use a calligraphic font for the $\cK_i$ because the symbols $\cK_i$ and $W_i$ will both appear in subscripts in our notation, and we want to make clear that the two subscripts serve different purposes. 

\begin{notation}
Let 
\begin{enumerate}
\item  $(\sV_i)_{\cK_i}$ denote the space of sections of $V_i$ with compact support on $\cK_i$;
\item $(\sV_i)_{\cK_i,W_i}$ denote the space
\begin{equation}
(\sV_i)_{\cK_i}\cap (\sV_i)_{W_i},
\end{equation}
i.e. $(\sV_i)_{\cK_i,W_i}$ is the space of sections of $V_i$ satisfying both a boundary condition and a compact support condition;
\item $\tpcs$ denote the subspace of $\tp$ consisting of sections with compact support on $\cK_1\times \cdots \times \cK_k$; and
\item $\tpbccs$ denote the space
\begin{equation}
\tpcs \cap \tpbc.
\end{equation}
As with $(\sV_i)_{\cK_i,W_i}$, the sections in $\tpbccs$ satisfy both a boundary condition and a compact support condition.
\end{enumerate}
All four spaces are nuclear Fr\'{e}chet spaces.
\end{notation}

There is a map 
\begin{equation}
\fS_{c.s.} : (\sV_1)_{W_1,\cK_1}\widehat{\otimes}_\pi \cdots \widehat{\otimes}_\pi (\sV_k)_{W_k,\cK_k}\to \tpbccs.
\end{equation}

The aim of this appendix is to prove the following result.

\begin{theorem}
\label{thm: tensorofdirichlet}
The maps $\fS$ and $\fS_{c.s.}$ are isomorphisms for the topological vector space structures.
\end{theorem}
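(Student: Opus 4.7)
The plan is to exhibit $(\sV_i)_{W_i}$ as a topologically complemented closed subspace of $\sV_i$ and thereby reduce the theorem to the classical isomorphism $\sV_1 \hotimes_\pi \cdots \hotimes_\pi \sV_k \cong \tp$ for smooth sections of vector bundles. For each $i$, I would pick a complementary subbundle $W_i^\perp \subset V_i|_{\bdyM_i}$ to $W_i$, a tubular neighborhood $T_i \cong \bdyM_i \times [0,\epsilon_i)$, and a bump function $\chi_i$ supported in $T_i$ and equal to $1$ on a neighborhood of $\bdyM_i$. Trivializing $V_i$ along the normal coordinate and multiplying by $\chi_i$ defines a continuous linear extension $e_i: \cinfty(\bdyM_i, W_i^\perp) \to \sV_i$. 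Writing $r_i: \sV_i \to \cinfty(\bdyM_i, V_i|_{\bdyM_i})$ for restriction to the boundary, the operators $\pi_i^\perp := e_i \circ \mathrm{pr}_{W_i^\perp} \circ r_i$ and $\pi_i^W := \id - \pi_i^\perp$ are continuous projections realizing a topological direct sum $\sV_i \cong (\sV_i)_{W_i} \oplus e_i(\cinfty(\bdyM_i, W_i^\perp))$.

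Combining the classical isomorphism with the distributivity of $\hotimes_\pi$ over finite topological direct sums of nuclear Fr\'echet spaces yields a decomposition $\tp \cong \bigoplus_{S \subseteq \{1,\ldots,k\}} T_S$, where $T_S$ denotes the completed projective tensor product having $e_i(\cinfty(\bdyM_i, W_i^\perp))$ in the $i$-th slot for $i \in S$ and $(\sV_i)_{W_i}$ otherwise. The summand $T_\emptyset$ is precisely the domain of $\fS$. Extending each $\pi_i^\perp$ to $\tp$ by identities in the other factors produces a pairwise commuting family of continuous idempotents $\hat\pi_i^\perp$, and the product of their complements projects onto $T_\emptyset$.

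It then suffices to show that $\tpbc$ coincides with $T_\emptyset$ inside $\tp$. The inclusion $T_\emptyset \subseteq \tpbc$ is immediate on elementary tensors and extends by continuity. Conversely, for $f \in \tpbc$, $\hat\pi_i^\perp(f)$ is obtained by restricting $f$ to the $i$-th boundary factor, projecting onto the $W_i^\perp$-component, and extending; the first step lands in $W_i$ by the boundary hypothesis, so the $W_i^\perp$-component vanishes and $\hat\pi_i^\perp(f) = 0$ for every $i$. Hence $f = \hat\pi_1^W \cdots \hat\pi_k^W(f) \in T_\emptyset$, so $\tpbc = T_\emptyset$ and $\fS$ is a topological isomorphism onto $\tpbc$.

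The compactly supported statement for $\fS_{c.s.}$ follows by the same argument once the extension maps $e_i$ are chosen with controlled support; specifically, the tubular neighborhoods and cutoffs are arranged so that each $e_i$ sends boundary sections with support in $\cK_i \cap \bdyM_i$ to sections with support in a prescribed compact set contained in $\cK_i$ (enlarging $\cK_i$ slightly by a collar if necessary, and noting the final statement is independent of this enlargement). Matching up the support bookkeeping with the decomposition is the main technical obstacle; the rest of the proof carries through verbatim.
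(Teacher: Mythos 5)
Your treatment of $\fS$ is correct, and it takes a genuinely different route from the paper's. You complement $(\sV_i)_{W_i}$ inside $\sV_i$ by the restrict--project--extend idempotent $\pi_i^\perp$ (a splitting of the same kind as the one in Proposition \ref{prop: dualsofcondfields}, which the paper only introduces later, for the study of duals), and then use additivity of $\hotimes_\pi$ under finite topological direct sums to identify $\tpbc$ with the joint kernel of the commuting idempotents $\hat\pi_i^\perp$, i.e.\ with the summand $T_\emptyset$. The paper instead expresses both $(\sV_1)_{W_1}\hotimes_\pi\cdots\hotimes_\pi(\sV_k)_{W_k}$ and $\tpbc$ as simultaneous pullbacks along restriction-to-the-boundary maps and uses that $\hotimes_\pi$ commutes with these limits in each variable, together with the classical isomorphism $\sV_1\hotimes_\pi\cdots\hotimes_\pi\sV_k\cong\tp$ and its boundary analogues. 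Your version buys an explicit (if non-canonical) complement and avoids any assertion about $\hotimes_\pi$ and limits; the paper's version is choice-free and imposes the boundary and support conditions by one and the same mechanism.

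The compactly supported statement is where your argument has a genuine gap. The operator $\pi_i^\perp$ creates support in a collar $(\cK_i\cap\partial M_i)\times[0,\delta_i]$ determined by the cutoff $\chi_i$, so it preserves $(\sV_i)_{\cK_i}$ only if $\cK_i$ already contains such a collar over $\cK_i\cap\partial M_i$. For a general compact set this fails and cannot be repaired by a cleverer cutoff: if $\cK_i$ pinches down to the boundary (say $\cK_i=\{(x,t):x\in B,\ 0\le t\le \phi(x)\}$ with $\phi\ge 0$ continuous and $\phi(x_0)=0$ at a point $x_0$ lying in the closure of $\{\phi>0\}\cap B$), then a cutoff equal to $1$ along the boundary that sends every boundary section supported in $\cK_i\cap\partial M_i$ into $\cK_i$ would have to equal $1$ at $(x_0,0)$ and $0$ at $(x_0,t)$ for all $t>0$, which no continuous function can do. Your fallback---enlarge $\cK_i$ by a collar and ``note the final statement is independent of this enlargement''---is not formal: the theorem for the enlarged sets $\cK_i'$ identifies $(\sV_1)_{W_1,\cK_1'}\hotimes_\pi\cdots\hotimes_\pi(\sV_k)_{W_k,\cK_k'}$ with the corresponding section space, but to descend to $\cK_i$ you must show that the closed subspace of this completed tensor product cut out by $\supp\subseteq \cK_1\times\cdots\times\cK_k$ is exactly $(\sV_1)_{W_1,\cK_1}\hotimes_\pi\cdots\hotimes_\pi(\sV_k)_{W_k,\cK_k}$, i.e.\ that the completed tensor product commutes with this intersection of closed subspaces---a claim of the same nature as the theorem, not a consequence of it. Two repairs are available: either prove the compactly supported statement only for compacts containing a collar (your argument then works verbatim, and such compacts are cofinal in the exhaustions used for the compactly supported colimits, so the bornological corollary is unaffected---but the theorem as stated allows arbitrary $\cK_i$), or handle $\fS_{c.s.}$ as the paper does, writing $(\sV_i)_{\cK_i,W_i}$ and $\tpbccs$ as pullbacks of nuclear Fr\'echet spaces along boundary restriction and running the same limit argument as in the first half.
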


\begin{proof}
The completed projective tensor product commutes with limits separately in each variable. Hence, 
\begin{equation}
(\sV_1)_{W_1}\widehat{\otimes}_\pi \cdots \widehat{\otimes}_\pi (\sV_k)_{W_k}
\end{equation}
can be identified with the simultaneous limit of diagrams of the form 
\begin{equation}
\begin{tikzcd}
&\sV_1\hotimes_\pi \cdots \hotimes_\pi \sW_i \hotimes_\pi \cdots \hotimes_\pi\sV_k  \ar[d]\\
\sV_1\hotimes_\pi \cdots\hotimes_\pi \sV_k\ar[r]& \sV_1\hotimes_{\pi}\cdots\hotimes_\pi \cinfty(\del M, V_i\mid_{\del M}) \hotimes_\pi \cdots \hotimes_\pi \sV_k.
\end{tikzcd}
\end{equation}
as $i$ ranges from $1$ to $k$. As we have seen, $\tpbc$ is a similar limit. The isomorphism $\sV_1 \hotimes_\pi \cdots \hotimes_\pi \sV_k \to \tp$ and its analogs for the other entries of the diagrams induces an isomorphism between the diagram defining $\sV_1\hotimes_\pi \cdots \hotimes_\pi \sW_i \hotimes_\pi \cdots \hotimes_\pi\sV_k$ and the one defining $\tpbc$. $\fS$ is induced from this isomorphism of diagrams, so is an isomorphism. The same exact argument applies for~$\fS_{c.s}$.
\end{proof}

We now describe a consequence of Theorem~\ref{thm: tensorofdirichlet} that is of more direct relevance to the present context. Let us momentarily suppress the $i$ subscripts from our notation, letting $V\to M$ be a vector bundle and $W$ a subbundle of $V\mid_{\del M}$. 
We define $(\sV)_{W,c}$ to be the colimit (in $\CVS$)
\begin{equation}
\text{colim}\left(  (\sV)_{W,\cK_{1}}\to (\sV)_{W,\cK_2}\to\cdots\right),
\end{equation}
with $\cK_{j}\subset \cK_{(j+1)}$ and $\cup_{j} \cK_{j} = M$, i.e. the $\cK_j$ form a sequence of compact subsets of $M$ exhausting it. Equivalently, we can define $(\sV)_{W,c}$ via the pullback diagram
\begin{equation}
\begin{tikzcd}
(\sV)_{W,c}\arrow[r]\arrow[d] \arrow[rd, phantom, "\lrcorner", at start] & (\sV)_c\arrow[d]\\
(\sV)_{W}\arrow[r,hook] & \sV
\end{tikzcd};
\end{equation}
here $\sV_c$ is the space of compactly-supported sections of $V$ endowed with the inductive limit topology (when $\sV_c$ is endowed with this topology, the arrow on the right-hand side of the above diagram is not an embedding). The completed projective tensor product does not commute with colimits; hence Theorem \ref{thm: tensorofdirichlet} does not help us to compute completed projective tensor products of spaces of the form $(\sV)_{W,c}$. We may, however, forget the topology of all spaces involved, remembering only the bounded subsets. In other words, we remember only the underlying bornological vector spaces. Once we do, a new tensor product becomes available to us, namely the completed \emph{bornological} tensor product. The completed bornological tensor product \emph{does} commute with colimits. For nuclear Fr\'{e}chet spaces, it coincides with the completed projective tensor product. See \S B.4-5 of \cite{CG1} for details. 

In the main body of the text, we always use the completed bornological tensor product. Hence, we need to use Theorem \ref{thm: tensorofdirichlet} to infer statements about the bornological tensor products of interest to us. This task is undertaken in the following corollary:

\begin{ucorollary}
\label{crl: borntensorofdirichlet}
There are isomorphisms of bornological vector spaces
\begin{equation}
(\sV_1)_{W_1}\,\hotimes_\beta\, \cdots \,\hotimes_\beta\,(\sV_k)_{W_k}\cong \tpbc,
\end{equation}
\begin{equation}
(\sV_1)_{W_1,c}\,\hotimes_\beta\,\cdots\, \widehat{\otimes}_\beta\, (\sV_k)_{W_k,c}\cong \tpbccslf
\end{equation}
Here, $\tpbccslf$ is defined analogously to~$(\sV_i)_{W_i,c}$.
\end{ucorollary}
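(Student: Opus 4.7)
The plan is to deduce both isomorphisms from Theorem~\ref{thm: tensorofdirichlet} by exploiting two key properties of the completed bornological tensor product on $\CVS$: (i) on nuclear Fr\'echet spaces, $\hotimes_\beta$ agrees with $\hotimes_\pi$, and (ii) $\hotimes_\beta$ commutes with colimits in each variable. Both of these are recorded in \S B.4--B.5 of \autocite{CG1}.

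For the first isomorphism, I would simply observe that each $(\sV_i)_{W_i}$ is nuclear Fr\'echet (as a closed subspace of the nuclear Fr\'echet space $\sV_i$), so the map $\fS$ of Theorem~\ref{thm: tensorofdirichlet} may be reread as an isomorphism of bornological vector spaces. Since $\tpbc$ is likewise nuclear Fr\'echet, its bornology and the bornology induced from the tensor product topology agree, and the isomorphism of topological vector spaces provided by Theorem~\ref{thm: tensorofdirichlet} transports to an isomorphism of bornological vector spaces. This is essentially formal.

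For the second isomorphism, I would write each factor as a colimit over an exhaustion by compact subsets: choose $\cK_i^{(1)} \subset \cK_i^{(2)} \subset \cdots$ with $\bigcup_j \cK_i^{(j)} = M_i$, so that in $\CVS$,
\begin{equation}
(\sV_i)_{W_i,c} \;\cong\; \mathrm{colim}_j\, (\sV_i)_{W_i,\cK_i^{(j)}}.
\end{equation}
Using that $\hotimes_\beta$ commutes with colimits separately in each slot, the left-hand side of the desired isomorphism becomes
\begin{equation}
\mathrm{colim}_{j_1,\ldots,j_k}\; (\sV_1)_{W_1,\cK_1^{(j_1)}}\,\hotimes_\beta\, \cdots \,\hotimes_\beta\, (\sV_k)_{W_k,\cK_k^{(j_k)}}.
\end{equation}
Each term in this colimit is a tensor product of nuclear Fr\'echet spaces, so by the first part (equivalently, by Theorem~\ref{thm: tensorofdirichlet} applied to the compactly-supported version via $\fS_{c.s.}$), it is isomorphic to $(\sV_{1,\cdots,k})_{L_1,\cdots,L_k,\cK_1^{(j_1)}\times\cdots\times\cK_k^{(j_k)}}$. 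It remains to identify the colimit of these as $\tpbccslf$, which amounts to the cofinality observation that every compact subset of $M_1\times\cdots\times M_k$ is contained in a product $\cK_1^{(j_1)}\times\cdots\times\cK_k^{(j_k)}$ for sufficiently large indices.

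The main obstacle I anticipate is ensuring that all of these identifications take place correctly in $\CVS$ rather than in $\mathrm{LCTVS}$: the colimit presentations of $(\sV_i)_{W_i,c}$ and of the space of compactly supported sections on the product must be genuine colimits in $\CVS$, and the commutation of $\hotimes_\beta$ with these colimits requires that we work bornologically throughout. Once this bookkeeping is in place, the rest of the proof is essentially the assembly of Theorem~\ref{thm: tensorofdirichlet} with the colimit-compatibility of $\hotimes_\beta$, and no further hard analysis is needed.
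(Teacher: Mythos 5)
Your proposal is correct and follows essentially the same route as the paper: the first isomorphism is the bornological rereading of Theorem~\ref{thm: tensorofdirichlet} using that $\hotimes_\beta$ and $\hotimes_\pi$ agree on nuclear Fr\'echet spaces, and the second follows from the compactly-supported case $\fS_{c.s.}$ together with the commutation of $\hotimes_\beta$ with the colimits over exhausting compacts. Your explicit cofinality remark is a fine way to spell out what the paper leaves implicit in defining $\tpbccslf$ ``analogously,'' but it is not a different argument.
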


\begin{proof}[Proof of Corollary]
The isomorphism 
\begin{equation}
(\sV_1)_{W_1}\,\hotimes_\beta\, \cdots \,\hotimes_\beta\,(\sV_k)_{W_k}\cong \tpbc
\end{equation}
is a direct consequence of Theorem \ref{thm: tensorofdirichlet}, since the $(\sV_i)_{W_i}$ are nuclear Fr\'{e}chet spaces and the completed bornological tensor product coincides with the completed projective tensor product of such spaces, by Corollary 7.1.2 of~\cite{CG1}.

For the second isomorphism, the same argument as for $\tpbc$ gives that 
\begin{equation}
(\sV_1)_{W_1,\cK_1}\,\hotimes_\beta\, \cdots \,\hotimes_\beta\, (\sV_k)_{W_k,\cK_k}\cong \tpbccs;
\end{equation}
since the completed bornological tensor product commutes with colimits, the isomorphism 
\begin{equation}
(\sV_1)_{W_1,c}\,\hotimes_\beta\, \cdots \,\hotimes_\beta\, (\sV_k)_{W_k,c}\cong \tpbccslf
\end{equation}
follows.
\end{proof}

\section{The duals of function spaces with boundary conditions}
\label{sec: innerhom}
In the previous section, we studied the tensor product $(\sV_1)_{L_1}\widehat{\otimes}_\pi(\sV_2)_{L_2}$ .
We found that the tensor product was a very explicit subspace of $\sV_1\widehat{\otimes}_\pi \sV_2$. 
In this section, with the end of understanding the inner hom spaces
\begin{equation}
\label{eq: innerhom1}
\innerhom{\sV_L}{\RR}
\end{equation}
and 
\begin{equation}
\label{eq: innerhom2}
 \innerhom{\sV_{L,c}}{\RR}
\end{equation}
and their generalizations, 
we will provide splittings
\begin{align}
    \sV&\cong \sV_L \oplus \sC\\
    \sV_c &\cong \sV_{L,c}\oplus \sC_{c},
\end{align}
where $\sC$ is the space of global sections of a bundle on $\bdyM$ and $\sC_c$ its corresponding space of compactly-supported sections.
We will derive these isomorphisms in the category $\CVS$, but the functor from $\CVS$ to $\DVS$ preserves limits, so we obtain the isomorphism also in the category $\DVS$.
Note that these splittings will allow us to characterize the inner hom spaces in Equations \eqref{eq: innerhom1} and \eqref{eq: innerhom2} as quotients of the inner hom spaces
\begin{equation}
    \innerhom{\sV}{\RR}
\end{equation}
and 
\begin{equation}
    \innerhom{\sV_c}{\RR}
\end{equation}
respectively, in both $\CVS$ and $\DVS$.
(Recall that the functor $\CVS\to \DVS$ does not preserve colimits in general and hence a computation of a colimit in $\CVS$ does not suffice in general to give the colimit in $\DVS$.
In the present case, however, the comptuation in $\CVS$ does suffice.)

Let $C$ denote the bundle $(V\mid_{\bdyM})/L$ on $\bdyM$ and $\sC$ its space of global sections. 
There are natural surjective maps 
\begin{align}
P&:\sV\to \sC\\
P_c&:\sV_c\to \sC_c;
\end{align}
In the next proposition, we construct a map $I: \sC\to \sV$ which establishes $\sC$ as a complement to to $\sV_L$ in $\sV$.
\begin{proposition}
\label{prop: dualsofcondfields}
There are (non-canonical) isomorphisms
\begin{align}
    \sV&\cong \sV_L\oplus \sC\\
    \sV_c&\cong \sV_{L,c}\oplus \sC_c
\end{align}
of convenient vector spaces.
\end{proposition}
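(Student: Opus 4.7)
The plan is to produce an explicit continuous section $I \colon \sC \to \sV$ of the canonical surjection $P \colon \sV \to \sC$ and to deduce both splittings from this construction. All maps involved will be visibly bounded linear maps between convenient vector spaces, so verifying the isomorphisms in $\CVS$ reduces to algebraic checks.

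First I would set up the geometric data. Fix a tubular neighborhood $\tubnhd$ of $\bdyM$ in $M$ together with a diffeomorphism $\tubnhd \cong \bdyM \times [0,\epsilon)$; let $\pi \colon \tubnhd \to \bdyM$ denote the projection. Since the collar deformation-retracts onto $\bdyM$, there is a bundle isomorphism $V|_{\tubnhd} \cong \pi^*(V|_{\bdyM})$; choose and fix such an isomorphism. Next, choose a vector bundle complement $L' \subset V|_{\bdyM}$ to $L$, so that $V|_{\bdyM} = L \oplus L'$ and the restriction of the quotient map $V|_{\bdyM} \to C$ to $L'$ is an isomorphism, which I use to identify $L' \cong C$. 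Finally, fix a smooth cutoff $\chi \colon M \to [0,1]$ with support in $\tubnhd$ and $\chi \equiv 1$ in some smaller collar of $\bdyM$.

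Next I would define the section $I \colon \sC \to \sV$ by
\begin{equation}
(I c)(x,t) := \chi(x,t)\, c(x) \qquad \text{for } (x,t) \in \tubnhd,
\end{equation}
interpreting $c(x)$ as an element of $L'_x \subset V_x$ via the chosen identifications, and extending by zero on $M \setminus \tubnhd$. This is plainly a morphism in $\CVS$: it is a composition of pullback along $\pi$, multiplication by a fixed smooth function, and extension by zero, each of which is a bounded linear map between the relevant spaces of smooth sections. Because $\chi|_{\bdyM} = 1$, a direct computation gives $P \circ I = \id_\sC$. Consequently, for any $v \in \sV$ the decomposition
\begin{equation}
v = \bigl(v - I P(v)\bigr) + I P(v)
\end{equation}
expresses $v$ as a sum of an element of $\ker(P) = \sV_L$ and an element of $I(\sC)$, with the two summands intersecting only in zero because $P \circ I = \id$. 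The maps $P$ and $I$ then exhibit the desired direct-sum decomposition $\sV \cong \sV_L \oplus \sC$ as convenient vector spaces.

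The compactly supported statement follows by observing that both $P$ and $I$ preserve compact support: if $v \in \sV_c$ has compact support $K \subset M$, then $P(v)$ is supported in the compact set $K \cap \bdyM$; if $c \in \sC_c$ has compact support $K' \subset \bdyM$, then $I(c)$ is supported in the compact set $(K' \times [0,\epsilon)) \cap \supp(\chi)$. Since both maps are bounded when restricted to each step of the compactly supported LF-filtration, they induce morphisms $\sV_c \to \sC_c$ and $\sC_c \to \sV_c$ in $\CVS$, and the same algebraic argument yields $\sV_c \cong \sV_{L,c} \oplus \sC_c$. The main subtlety to keep in mind is that the colimit defining $\sV_{c}$ is taken in $\CVS$ rather than $\DVS$; however, because $I$ and $P$ are strictly compatible with the compact-support filtration, the splitting is visible already at each finite stage and therefore survives the colimit in both categories. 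The only real choices entering the construction are the tubular neighborhood, the bundle complement $L'$, and the cutoff $\chi$, so the isomorphisms are non-canonical, as stated.
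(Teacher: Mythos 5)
Your proposal is correct and follows essentially the same route as the paper: a collar neighborhood, a bundle splitting identifying $C$ with a complement $L'\subset V|_{\bdyM}$, a cutoff $\chi$ defining a section $I$ of $P$ with $P\circ I=\id$ and $\im(1-IP)\subset \sV_L$, and compatibility of $I$ and $P$ with a compact exhaustion to pass to the compactly supported case. The only refinement worth making is to take $\chi$ supported in a strictly smaller collar $\bdyM\times[0,\epsilon/2]$ (as the paper does), so that $I$ manifestly preserves compact support and, after enlarging each compact $\cK_i$ of the exhaustion to contain $(\cK_i\cap\bdyM)\times[0,\epsilon/2]$, the splitting is realized at every finite stage before taking the colimit in $\CVS$.
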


\begin{proof}
Let us focus on the first isomorphism first.
We will construct a splitting $I$ of $P$ with the property that $\im(1-IP)\subset \sV_L$. 
To see that this suffices to prove the Proposition, first note that $\sV_L= \ker P$, and consider the map 
\begin{align}
T&:\sV \to \sV_{L}\oplus \sC\\
T(v)&= ((1-IP)v,Pv).
\end{align}
$T$ is a continuous linear isomorphism, by standard arguments. $T$ has an inverse $S$ given by $i\oplus I$, where $i$ is the inclusion of $\sV_L$ as a closed subspace of $\sV$. In other words, we will have 
\begin{equation}
\label{eq: isoquot}
\sV\cong \sV_L\oplus \sC.
\end{equation}

Let us now construct the splitting $\sC\to \sV$. To this end, choose a tubular neighborhood $\tubnhd\cong \bdyM\times[0,\epsilon)$ of $\bdyM$ in $M$. 
Let $\pi$ be the projection $\tubnhd\to [0,\epsilon)$.
By the homotopy invariance of bundles, we may assume that $V|_T\, \cong \pi^* V\mid_{\bdyM}$.
In other words, we may assume that $V$ is trivial in the normal direction.
Let $\chi$ be a compactly-supported function on $[0,\epsilon)$ which is 1 in a neighborhood of $0$ and with support contained $[0,\epsilon/2]$. Let $c\in \sC$ be a section of $C$. 
Choose a splitting of bundles 
\begin{equation}
    \Psi: C\to (V|_{\bdyM}).
\end{equation}
Then, we set $I(c) = \chi \Psi (c)$.
(This is where we have used the isomorphism $V|_T \, \cong \pi^*V|_{\bdyM}$.)
It is straightforward to verify that $I$ is continuous and satisfies the equations $PI=id$ and~$\im(1-IP)\subset \sV_L$.

We now construct the second isomorphism of the lemma. 
Cover $M$ by a countable collection $
\cK_1\subset \cK_2\subset\cdots$ of compact subsets. 
We may assume, by replacing $\cK_i$ with $\cK_i\cup (\cK_i\cap \bdyM)\times [0,\epsilon/2]$ for all $i$, that $\cK_i$ contains $(\cK_i\cap \bdyM)\times [0,\epsilon/2]$. Then, the formulas for $I$ and $P$ send $\sC_{\cK_i\cap \bdyM}\to \sV_{\cK_i}$ and $\sV_{\cK_i}\to \sC_{\cK_i\cap \bdyM}$. We therefore obtain an isomorphism
\begin{equation}
\sV_{\cK_i}\cong \sV_{\cK_i,L}\oplus \sC_{\cK_i\cap \bdyM}
\end{equation}
for each $i$.
The isomorphism respects the maps induced from the inclusions $\cK_i\subset \cK_{i+1}$ and $\cK_i\cap \bdyM\subset \cK_{i+1}\cap \bdyM$, and so we have also an isomorphism 
\begin{equation}
\sV_c=\mathrm{colim}_i \sV_{\cK_i}\cong \mathrm{colim}_i(\sV_{L,\cK_i})\oplus\mathrm{colim}_i \sC_{\cK_i\cap \bdyM}= \sV_{\sL,c}\oplus \sC_c.
\end{equation}
This completes the proof.
\end{proof}

The most useful application of Proposition \ref{prop: dualsofcondfields} is the following corollary.
To make the notation more compact, we make the following definitions, valid for any convenient vector space $W$ and any $k\geq 0$:
\begin{align}
    W^{-k} &:= \innerhom{W^{\hotimes_\beta k}}{\RR}\\
    \Sym^{-k} W &:=\innerhomsym{W^{\hotimes_\beta k}}{\RR}{k}
\end{align}

\begin{corA}
There are canonical isomorphisms
\begin{align}
&\Sym^{-k}(\sV_L) \cong \frac{\Sym^{-k}(\sV)}{\innerhomsym{\sC\hotimes_\beta(\sV)^{\hotimes_\beta (k-1)}}{\RR}{k-1}}\\
&\Sym^{-k}(\sV_{L,c}) \cong \frac{\Sym^{-k}(\sV_c)}{\innerhomsym{\sC_c\hotimes_\beta(\sV_c)^{\hotimes_\beta (k-1)}}{\RR}{k-1}}
\end{align}
of convenient vector spaces.
\end{corA}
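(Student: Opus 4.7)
The plan is to exploit the explicit splittings of Proposition \ref{prop: dualsofcondfields} to decompose all the relevant inner hom spaces, and then to exhibit the claimed quotient as the cokernel of a natural symmetrization map.

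First I would apply Proposition \ref{prop: dualsofcondfields} to obtain the direct sum decompositions $\sV \cong \sV_L \oplus \sC$ and $\sV_c \cong \sV_{L,c} \oplus \sC_c$ in $\CVS$ (hence in $\DVS$, since $\CVS \to \DVS$ preserves limits and inner homs). Expanding the $k$-fold completed bornological tensor power by distributivity over finite direct sums gives an $S_k$-equivariant decomposition $\sV^{\hotimes_\beta k} \cong \sV_L^{\hotimes_\beta k} \oplus W$, where
\begin{equation*}
W := \bigoplus_{\emptyset \neq S \subseteq [k]} \Bigl(\bigotimes_{i \in S} \sC\Bigr) \otimes \Bigl(\bigotimes_{j \notin S} \sV_L\Bigr).
\end{equation*}
Applying $\innerhom{-}{\RR}$ and taking $S_k$-(co)invariants yields $\Sym^{-k}(\sV) \cong \Sym^{-k}(\sV_L) \oplus K$, with $K$ the subspace of symmetric multilinear functionals on $\sV$ that vanish on $\sV_L^{\hotimes_\beta k}$---equivalently, the kernel of the restriction induced by $\sV_L \hookrightarrow \sV$.

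Next I would construct the natural symmetrization map
\begin{equation*}
\Phi \colon \innerhomsym{\sC \hotimes_\beta \sV^{\hotimes_\beta (k-1)}}{\RR}{k-1} \longrightarrow \Sym^{-k}(\sV), \qquad \Phi(g)(v_1, \ldots, v_k) = \tfrac{1}{k}\sum_{i=1}^k g(Pv_i;\, v_1, \ldots, \hat{v}_i, \ldots, v_k),
\end{equation*}
with $P \colon \sV \to \sC$ the projection from the splitting. The $S_{k-1}$-invariance of $g$ in its last $k-1$ arguments ensures that $\Phi(g)$ is well-defined and $S_k$-invariant, and setting every $v_i \in \sV_L$ forces $\Phi(g) = 0$, so $\operatorname{image}(\Phi) \subseteq K$. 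Since $\Phi$ is not injective in general (for $k=2$ its kernel already contains the antisymmetric part of $\innerhom{\sC \otimes \sC}{\RR}$), the quotient in the corollary must be read as $\Sym^{-k}(\sV)/\operatorname{image}(\Phi)$.

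The core technical step is proving that $\Phi$ surjects onto $K$. Further decomposing $K$ by $S_k$-orbits on subsets of $[k]$ gives $K \cong \bigoplus_{j=1}^k \innerhom{\sC^{\hotimes_\beta j} \hotimes_\beta \sV_L^{\hotimes_\beta (k-j)}}{\RR}^{S_j \times S_{k-j}}$, and for $g_j$ in the $j$-th summand the explicit preimage is
\begin{equation*}
\tilde g_j(c; v_2, \ldots, v_k) := \frac{k}{j}\sum_{\substack{S' \subseteq \{2,\ldots,k\} \\ |S'| = j-1}} g_j\bigl(c, \{Pv_i\}_{i \in S'};\, \{\pi v_i\}_{i \in \{2,\ldots,k\} \setminus S'}\bigr),
\end{equation*}
where $\pi := \mathrm{id}_\sV - IP$ is the complementary projection onto $\sV_L$. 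The $S_{j-1} \times S_{k-j}$-invariance of $g_j$ makes $\tilde g_j$ genuinely $S_{k-1}$-invariant, and a short combinatorial count---each $j$-subset $S \subseteq [k]$ arises as $\{i\} \sqcup S'$ in exactly $j$ different ways, which cancels the prefactor $1/j$---shows $\Phi(\tilde g_j) = g_j$ under the natural inclusion into $K$. Keeping careful track of both symmetric group actions and the resulting multiplicities is the principal obstacle; once that is done the first isomorphism follows. Finally, the compactly supported isomorphism is proved by verbatim repetition of this argument with $\sV$ replaced by $\sV_c$, using the compact-support splitting from Proposition \ref{prop: dualsofcondfields} together with the fact that $\hotimes_\beta$ commutes with the filtered colimits defining $\sV_c$, so that the decomposition of $\sV_c^{\hotimes_\beta k}$ proceeds as in the ambient case.
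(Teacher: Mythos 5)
Your proposal is correct and follows essentially the same route as the paper, which states Corollary A as a direct consequence of the splittings of Proposition \ref{prop: dualsofcondfields} and, in the paragraph immediately after the corollary, identifies the kernel of the restriction $\Sym^{-k}(\sV)\to\Sym^{-k}(\sV_L)$ exactly as the $S_k$-orbits of functionals of the form $\Phi'(Pv_1,v_2,\ldots,v_k)$, i.e.\ as the image of your symmetrization map $\Phi$. Your observation that the quotient must be read as a quotient by this image (rather than by an injectively embedded copy) agrees with the paper's intended reading, and your explicit preimage construction just fills in the surjectivity detail the paper leaves implicit.
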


Note that Corollary A tells us the following about the relationship between $\Sym^{-k}(\sV)$ and $\Sym^{-k}(\sV_L)$.
First, there is a surjective map $\Sym^{-k}(\sV)\to \Sym^{-k}(\sV_L)$.
Second, the kernel of this map consists precisely of the $S_k$-orbits of maps of the form
\begin{equation}
    \Phi(v_1,\ldots, v_k) = \Phi'(Pv_1,v_2,\ldots, v_k), 
\end{equation}
where 
\[
\Phi' \in \innerhom{\sC\hotimes_\beta \sV^{\hotimes_\beta (k-1)}}{\RR}.
\]
Finally, $\Sym^{-k}(\sV_L)$ is identified with the quotient of the space $\Sym^{-k}(\sV)$ by this kernel, and this identification as a quotient is true both in $\CVS$ and $\DVS$.
This description of $\Sym^{-k}(\sV_L)$ will be useful to us in the body of the dissertation.

\section{A lemma of Atiyah-Bott type}
\label{sec: atiyahbott}

For an elliptic complex on a manifold without boundary, 
the complex of compactly supported smooth sections embeds into the complex of compactly supported distributional sections.
The {\it Atiyah-Bott lemma} is that this embedding is a continuous quasi-isomorphism 
(see Appendix D of \cite{CG1}),
and it plays a role in constructing the observables of free BV theories.
We wish to prove an analog relevant to free bulk-boundary theories.

Let $(\sE,\sL)$ be a free bulk-boundary system. 
We use the pairing $\ip$ to view $\condfieldscs[1]$ as a space of linear functionals on $\condfields$: 
each section $e_1\in \condfieldscs[1]$ gives a linear functional $\Phi_{e_1}$ by the formula
\begin{equation}
\Phi_{e_1}(e_2)=\ip[e_1,e_2].
\end{equation}
This inclusion has the following property.

\begin{proposition}
\label{prop: atiyahbott}
The map $\Phi_\cdot$ induces a quasi-isomorphism of complexes of cosheaves
\begin{equation}
\condfieldscs[1]\to \condfields^\vee,
\end{equation}
where $\condfields^\vee$ is the cosheaf which assigns to the open $U$, the strong topological dual to~$\condfields(U)$.
More precisely, on each open $U$, this map is a continuous linear map of topological vector spaces and a quasi-isomorphism.
\end{proposition}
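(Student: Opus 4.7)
My plan is to reduce the proposition to an explicit local calculation by exploiting the cosheaf structure and the product decomposition of $\sE$ near the boundary guaranteed by the TNBFT condition.

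First, I would note that since both $\condfieldscs[1]$ and $\condfields^\vee$ are complexes of cosheaves on $M$ and the map $\Phi_\cdot$ is a map of cosheaves, it suffices to verify the quasi-isomorphism on a basis of opens closed under finite intersections. I would take this basis to consist of opens of two forms: (i) opens $U \subset \mathring M$ not meeting the boundary, and (ii) opens of the form $U \cong U' \times [0,\delta)$ with $U'$ open in $\bdyM$ and $\delta < \epsilon$, using the fixed diffeomorphism $\tubnhd \cong \bdyM \times [0,\epsilon)$. Finite intersections of opens of this form are again of this form, and together they generate the topology of $M$.

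For type (i), we have $\condfieldscs(U) = \sE_c(U)$ and $\condfields(U) = \sE(U)$, so the statement reduces to the classical Atiyah–Bott lemma applied to the elliptic complex $(\sE,\diff)$ on the boundaryless open manifold $U$, as proved in Appendix D of \autocite{CG1}.

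For type (ii), I would use the bundle splitting $\Eb = L \oplus L'$ provided by the boundary condition data, together with the TNBFT product structure $\sE|_\tubnhd \cong \sEb \hotimes_\beta \Omega^\bu_{[0,\epsilon)}$, to write
\begin{equation*}
\condfieldscs(U) \cong \bigl(\sL_c(U') \hotimes_\beta \Omega^\bu_{[0,\delta),c}\bigr) \oplus \bigl(\sL'_c(U') \hotimes_\beta \Omega^\bu_{[0,\delta),c,D}\bigr)
\end{equation*}
as a graded cosheaf, where $\Omega^\bu_{[0,\delta),c,D}$ denotes compactly supported de~Rham forms whose $0$-form component vanishes at $t=0$, and analogously (without the compact-support subscripts) for $\condfields(U)$. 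Since $\Qb$ need not preserve $L'$, these decompositions are not respected by the differential; I would therefore filter both sides by $\sL'$-degree. On the associated graded the differential becomes $\QL + \QLprime + d_{dR}$ with no mixing term, and $\Phi_\cdot$ decomposes as an external tensor product of Atiyah–Bott-type maps on the $U'$ factor and on the $[0,\delta)$ factor. On the $U'$ factor, the map is the Atiyah–Bott quasi-isomorphism for the elliptic complex $\sL$ (respectively $\sL'$) over the boundaryless manifold $\bdyM$. On the $[0,\delta)$ factor, both the $\Omega^\bu_{[0,\delta),c}$ and $\Omega^\bu_{[0,\delta),c,D}$ complexes, as well as their distributional duals (paired with $\Omega^\bu_{[0,\delta)}$ and $\Omega^\bu_{[0,\delta),D}$ respectively via integration), are acyclic; indeed, the explicit contracting homotopies constructed in the proof of Lemma \ref{lem: fieldslagrangian} show this for the smooth side, and dualizing gives the corresponding statement for the distributional side. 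The map between these two acyclic complexes is then automatically a quasi-isomorphism. Finally, a filtered quasi-isomorphism argument (the filtration is finite, of length two) lifts the quasi-isomorphism from the associated graded to the filtered complexes themselves.

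The main obstacle will be Step (ii), specifically the verification on the $[0,\delta)$ factor with the Dirichlet-type condition: one must check carefully that the continuous linear dual to $\Omega^\bu_{[0,\delta),D}$ is computed correctly and that the pairing $\ip$ on $\sE$, restricted via the isomorphism in Corollary A of Section \ref{sec: innerhom}, descends to give the claimed map. The convenient vector space bookkeeping — in particular ensuring that the various tensor product and dual computations take place in a category where the splittings of Proposition \ref{prop: dualsofcondfields} actually yield honest direct sum decompositions — is where the technical care is concentrated.
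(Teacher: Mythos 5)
Your overall strategy---reduce by codescent to interior opens and product opens $U'\times[0,\delta)$, use the classical Atiyah-Bott lemma in the interior, and on boundary opens split $\Eb=L\oplus L'$ and filter away the mixing term---is a genuinely different route from the paper's (which instead uses the deformation retractions onto $\sL^\perp_c(U')$ and $\sL^\vee(U')$ built in the proof of Theorem \ref{thm: maingenlcl}, and concludes by a two-out-of-three argument). But there is a real error at the center of your boundary-open computation: the complex $\Omega^\bullet_{[0,\delta),c,D}$ of compactly supported forms whose $0$-form component vanishes at $t=0$ is \emph{not} acyclic. The only compactly supported primitive of a $1$-form $f\,\d t$ is $g(t)=-\int_t^\delta f$, and the Dirichlet condition $g(0)=0$ forces $\int_0^\delta f=0$; hence integration gives $H^1\cong \RR$. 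Relatedly, the contracting homotopy from the proof of Lemma \ref{lem: fieldslagrangian} does not preserve the Dirichlet condition (its value at $t=0$ is exactly $\pm\int_0^\delta$ of the $\d t$-component), so it cannot be invoked for this summand. This degree-one class is not a technicality: it is precisely what carries the boundary observables---it underlies the map $\II^{\cl}$ of Theorem \ref{thm: maingenlcl}, built from $\phi\,\d t$ with $\int\phi=1$. If your acyclicity claim were true, $\condfieldscs[1]$, and hence both sides of the Proposition, would be acyclic on every boundary open, contradicting for instance the nontrivial boundary observables of topological mechanics.

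The argument can be repaired, but the bookkeeping on the associated graded differs from what you wrote. Since $\ip_{loc,\del}$ vanishes on $L\otimes L$ and on $L'\otimes L'$, the map $\Phi_\cdot$ \emph{swaps} the two summands: $\sL_c(U')\hotimes_\beta \Omega^\bullet_{[0,\delta),c}$ lands in the dual of the Dirichlet ($L'$-valued) summand of $\condfields(U)$, while $\sL'_c(U')\hotimes_\beta\Omega^\bullet_{[0,\delta),c,D}$ lands in the dual of $\sL(U')\hotimes_\beta\Omega^\bullet([0,\delta))$. The first matched pair is acyclic on both sides (there your homotopy argument does apply, as both $\Omega^\bullet_{[0,\delta),c}$ and the non-compactly-supported Dirichlet complex are acyclic). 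For the second pair one must compare $H^1\cong\RR$ of the Dirichlet compactly supported interval complex against $H^0\cong\RR$ of $\Omega^\bullet([0,\delta))$; the induced map is the (nonzero) total-integral pairing, and what remains is exactly the Atiyah-Bott quasi-isomorphism $\sL'_c(U')\to \sL^\vee(U')$ on the closed manifold $\bdyM$, using $\ip_\del$ to identify $L'$ with $L^!$---which is in effect what the paper extracts through $\II^{\cl}$ and its dual. Finally, your opening reduction needs one more sentence: checking on a basis closed under intersections suffices only because both $\condfieldscs[1]$ and $\condfields^\vee$ satisfy homotopy \v{C}ech codescent (the paper verifies this via the filtration of the \v{C}ech complex by intersection degree), not merely because they are cosheaves.
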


\begin{proof}
The map is continuous because it is the composite 
\begin{equation}
\condfieldscs[1]\hookrightarrow \sE_c[1]\to \sE^\vee \to \condfields^\vee.
\end{equation}
The map preserves the differential $Q$ because 
\begin{equation}
\Phi_{Q e_1}(e_2)=\ip[Qe_1,e_2]=\pm \ip[e_1,Qe_2]=\Phi_{e_1}(Qe_2);
\end{equation}
this is only true because we have imposed the boundary condition $\sL$. It manifestly respects the extension maps of cosheaves. It remains only to check that it is a quasi-isomorphism. In the proof of Theorem \ref{thm: maingenlcl}, we show that $\condfieldscs[1]$ is a homotopy cosheaf; an almost identical argument shows that $\condfields^\vee$ is also a homotopy cosheaf. Hence, given any open $U\subset \del M$, and any (locally finite) cover $\fU$ of $U$, we have the following commutative diagram
\begin{equation}
\begin{tikzcd}
\check{C}(\condfieldscs[1],\fU)\ar[r,"\sim"]\ar[d,]&\condfieldscs(U)[1]\ar[d]\\
\check{C}(\condfields^\vee,\fU)\ar[r,"\sim"]&\condfields^\vee(U)
\end{tikzcd}.
\end{equation}
We will show that the left-hand downward pointing map is a quasi-isomorphism. 

Fix a tubular neighborhood $N\cong \del  M\times [0,T)$ of $\del M$. 
Let us assume that the cover $\fU$ is ``somewhat nice:'' 
it consists of open subsets $U_\alpha$ such that either $\overline{U}_\alpha\cap \del M=\emptyset$ or $V_\alpha\subset N$ of the form $V_\alpha\cong V'_\alpha\times [0,T')$ where $V'_\alpha$ is an open set in $\del M$. 
All finite intersections of somewhat nice sets are also somewhat nice, 
so all the summands in the \v{C}ech complexes will be of the form $\condfieldscs[1](U')$ or $\condfields^\vee(U')$ for $U'$ somewhat nice. 
If we prove that the map $\condfieldscs[1](U')\to \condfields^\vee(U')$ is a quasi-isomorphism for $U'$ somewhat nice, 
then the proposition follows, since the \v{C}ech complex has a filtration by degree of intersection 
(which is preserved by the map $\check{C}(\condfieldscs[1],\fU)\to \check{C}(\condfields^\vee,\fU)$) 
and the induced map on the associated graded spaces is a sum of maps $\condfieldscs[1](U')\to \condfields^\vee(U')$ for $U'$ somewhat nice. 

If $\overline{U'} \cap \del M=\emptyset$, then the map $\condfieldscs[1](U')\to \condfields^\vee(U')$ is a quasi-isomorphism, by the Atiyah-Bott lemma (see Appendix D of \cite{CG1}). Otherwise, suppose $U'=V\times [0,T')$, and let $L':=\Eb/L$. Denote by $\sL^\perp$ the sheaf of sections of $L'$. We saw in the proof of Theorem \ref{thm: maingenlcl} that there is a deformation retraction of $\condfieldscs[1](U')$ onto $\sL^\perp(V)$. Similarly, there is a deformation retraction of $\condfields(U')$ onto $\sL(V)$, and hence of $\sL^\vee(V)$ onto $\condfields^\vee(U')$. 
The map $\condfields^\vee(U')\to \sL^\vee(V)$ in this deformation retraction is dual to the inclusion $\sL(V)\to \condfields(U')$ of the $\sL$ fields as constants in the normal direction. From the characterization of the map $\sL^\perp_c(V)\to \condfieldscs[1](U')$ in Theorem \ref{thm: maingenlcl}, it follows that the composite 
\begin{equation}
\label{eq: abcomposite}
\sL^\perp_c(V)\to \condfieldscs[1](U')\to \condfields^\vee(U')\to \sL^\vee(V)
\end{equation}
is the Atiyah-Bott quasi-isomorphism (using the pairing $\ip_\partial$ to identify $L'$ with $L^!$). It follows that the map $\condfieldscs(U')\to \condfields^\vee(U')$ is a quasi-isomorphism, whence the proposition. 
\end{proof}
The following Corollary implies that the two models of classical observables for a free bulk-boundary system---namely the ones of Chapter \ref{chap: classical} and \ref{chap: freequantum}---are equivalent.

\begin{ucorollary}

\end{ucorollary}
The natural map
\begin{equation}
( \condfieldscs(U)[1]^{\hotimes_\beta k})_{S_k}\to \innerhomsym{(\condfields(U))^{\hotimes_\beta k}}{\RR}{k}
\end{equation}
is a quasi-isomorphism of differentiable vector spaces. 
\begin{proof}
It suffices to show this fact for sufficiently small neighborhoods of any point $x\in M$.
Let us first note that, since $\condfields(U)$ is nuclear Fr\'echet, we may make a number of 
If $x$ is in the interior $\mathring M$, then by choosing $U$ contained entirely in the interior, we note that $\cinfty(U^k, E^{\boxtimes k})$ is an elliptic complex.
Moreover, taking coinvariants with respect to $S_k$ is an exact functor.
Therefore, the classical Atiyah-Bott lemma \autocite{atiyahbott} applies to give the quasi-isomorphism of the corollary.
If $x\in \bdyM$, we may choose $U\cong U'\times [0,\delta)$ for $\delta<\epsilon$ and $U'$ open in $\bdyM$.
We obtain, by the usual formulas for the extension of a deformation retraction to symmetric algebras, quasi-isomorphisms
\begin{align}
    \Sym^k(\sL^\perp_c(U'))&\to \Sym^k(\condfieldscs[1](U))\\
    \innerhomsym{\condfields(U)^{\hotimes_\beta k}}{\RR}{k}&\to \innerhomsym{\sL(U')^{\hotimes_\beta k}}{\RR}{k}
\end{align}
Pre- and postcomposing the the map described in the statement of the corollary with these two quasi-isomorphisms, we obtain the Atiyah-Bott quasi-isomorphism for the $k$-th tensor power of $\sL$.
More precisely, we obtain a composite as in Equation \eqref{eq: abcomposite}.
Then we proceed just as in the discussion following that Equation.
The Corollary follows.
\end{proof}	

\addtocontents{toc}{\vspace{2em}}  

\backmatter

\label{Bibliography}
\printbibliography

\printindex[notation]
\end{document}